\DeclareRobustCommand\longtwoheadrightarrow
\DeclareRobustCommand\longhookrightarrow
\numberwithin{equation}{section}
\renewcommand{\tocsection}[3]{%
	\indentlabel{\@ifnotempty{#2}{\bfseries\ignorespaces#1 #2\quad}}\bfseries#3}
\renewcommand{\tocsubsection}[3]{%
	\indentlabel{\@ifnotempty{#2}{\ignorespaces#1 #2\quad}}#3}
\newcommand\@dotsep{4.5}
\def\@tocline#1#2#3#4#5#6#7{\relax
	\ifnum #1>\c@tocdepth 
	\else
	\par \addpenalty\@secpenalty\addvspace{#2}%
	\begingroup \hyphenpenalty\@M
	\@ifempty{#4}{%
		\@tempdima\csname r@tocindent\number#1\endcsname\relax
	}{%
		\@tempdima#4\relax
	}%
	\parindent\z@ \leftskip#3\relax \advance\leftskip\@tempdima\relax
	\rightskip\@pnumwidth plus1em \parfillskip-\@pnumwidth
	#5\leavevmode\hskip-\@tempdima{#6}\nobreak
	\leaders\hbox{$\m@th\mkern \@dotsep mu\hbox{.}\mkern \@dotsep mu$}\hfill
	\nobreak
	\hbox to\@pnumwidth{\@tocpagenum{\ifnum#1=1\bfseries\fi#7}}\par
	\nobreak
	\endgroup
	\fi}
\renewcommand\csname r@tocindent0\endcsname{0pt}
\def\l@subsection{\@tocline{2}{0pt}{2.5pc}{5pc}{}}
\newcommand{\N}{\mathbb{N}}
\newcommand{\R}{\mathbb{R}}
\newcommand{\Hq}{\mathbb{H}}
\newcommand{\bk}{\mathbbm{k}}
\newcommand{\Mod}{\operatorname{Mod}}
\newcommand{\AMod}{{}_A\!\Mod}
\newcommand{\ModA}{\Mod_A}
\newcommand{\AModA}{{}_A\!\Mod_A}
\newcommand{\Proj}{\operatorname{Proj}}
\newcommand{\AProj}{{}_A\!\Proj}
\newcommand{\AFlat}{{}_A\!\operatorname{Flat}}
\newcommand{\AFG}{{}_A\!\operatorname{FGen}}
\newcommand{\AFGP}{{}_A\!\operatorname{FGP}}
\newcommand{\FlatA}{\operatorname{Flat}_A}
\newcommand{\CalcSA}{\operatorname{Calc}^S_A}
\newcommand{\CalcA}{\operatorname{Calc}_A}
\newcommand{\Diff}{\operatorname{Diff}}
\newcommand{\SDiff}{\mathrm{S}\!\Diff}
\newcommand{\NDiff}{\mathrm{N}\!\Diff}
\newcommand{\Hom}{\operatorname{Hom}}
\newcommand{\AHom}{{}_A\!\Hom}
\newcommand{\Aoplus}{{\,}_A\!\oplus}
\newcommand{\oplusA}{\oplus_A}
\newcommand{\AoplusA}{{\,}_A\!\oplus_A}
\newcommand{\DO}{\mathcal{D}}
\newcommand{\SDO}{\mathrm{S}\DO}
\newcommand{\NDO}{\mathrm{N}\DO}
\newcommand{\Eq}{\operatorname{Eq}}
\DeclareMathOperator*{\colim}{co{\lim}}
\DeclareMathOperator*{\coker}{co{\ker}}
\newcommand{\smb}{\star}
\newcommand{\smbr}{\star}
\newcommand{\Ext}{\operatorname{Ext}}
\newcommand{\Tor}{\operatorname{Tor}}
\newcommand{\Sym}{\operatorname{Sym}}
\newcommand{\pj}{\operatorname{proj}}
\newcommand{\inc}{\operatorname{incl}}
\newcommand{\smooth}[1]{\mathcal{C}^{\infty}\!{({#1})}}
\newcommand{\spn}[1]{\left\langle {#1} \right\rangle}
\newcommand{\id}{\mathrm{id}}
\newcommand{\im}{\mathrm{Im}}
\newcommand{\ev}{\operatorname{ev}}
\newcommand{\op}{\mathrm{op}}
\newcommand{\vsfd}{-15pt}	
\theoremstyle{definition}
\newtheorem{defi}{Definition}[section]
\newtheorem{eg}[defi]{Example}
\theoremstyle{plain}
\newtheorem{theo}[defi]{Theorem}
\newtheorem{prop}[defi]{Proposition}
\newtheorem{cor}[defi]{Corollary}
\newtheorem{lemma}[defi]{Lemma}
\newtheorem*{theo*}{Theorem}
\newtheorem*{prop*}{Proposition}
\newtheorem*{cor*}{Corollary}
\newtheorem*{lemma*}{Lemma}
\newenvironment{ntheo}[1]
	{\innertheo}
	{\endinnerprop}
\newenvironment{nprop}[1]
	{\innerprop}
	{\endinnerprop}
\theoremstyle{remark}
\newtheorem{rmk}[defi]{Remark}
\begin{document}

\title{Jet Functors in Noncommutative Geometry}
\author{Keegan J.~Flood, Mauro Mantegazza, Henrik Winther}
\address{Faculty of Mathematics and Computer Science\\
  UniDistance Suisse\\
  Schinerstrasse 18\\
  3900 Brig\\
  Switzerland}
  \email{keegan.flood@unidistance.ch}
\address{Department of Mathematics and Physics\\
  Charles University\\
  Sokolovsk\'{a} 49/83\\
  186 75 Prague 8\\
  Czech Republic}
  \email{mauro.mantegazza.uni@gmail.com}
\address{Department of Mathematics and Statistics\\
  UiT - The Arctic University of Norway\\
  Hansine Hansens veg 18\\
  N-9019 Tromsø\\
  Norway}
  \email{henrik.winther@uit.no}

\subjclass[2020]{Primary 58A20, 58B34, 58B32, 16E45, 16S32; Secondary 47F05, 81R60, 20G42}


\begin{abstract}
	In this article we construct three infinite families of endofunctors $J_d^{(n)}$, $J_d^{[n]}$, and $J_d^n$ on the category of left $A$-modules, where $A$ is a unital associative algebra over a commutative ring $\bk$, equipped with an exterior algebra $\Omega^\bullet_d$.
	We prove that these functors generalize the corresponding classical notions of nonholonomic, semiholonomic, and holonomic jet functors, respectively.
	Our functors come equipped with natural transformations from the identity functor to the corresponding jet functors, which play the r\^{o}les of the classical prolongation maps.
	This allows us to define the notion of linear differential operators with respect to $\Omega^{\bullet}_d$.
	We show that if $\Omega^1_d$ is flat as a right $A$-module, the semiholonomic jet functor satisfies the semiholonomic jet exact sequence $0 \rightarrow \bigotimes^n_A \Omega^1_d \rightarrow J^{[n]}_d\rightarrow J^{[n-1]}_d \rightarrow 0$.
	Moreover, we construct a functor of symmetric (in a suitable noncommutative sense) forms $S^n_d$ associated to $\Omega^\bullet_d$, and proceed to introduce the corresponding noncommutative analogue of the Spencer $\delta$-complex.
	We give necessary and sufficient conditions under which the holonomic jet functor $J_d^n$ satisfies the (holonomic) jet exact sequence, $0\rightarrow S^n_d \rightarrow J_d^n \rightarrow J_d^{n-1} \rightarrow 0$.
	In particular, for $n=1$ the sequence is always exact, for $n=2$ it is exact for $\Omega^1_d$ flat as a right $A$-module, and for $n\ge 3$, it is sufficient to have $\Omega^1_d$, $\Omega^2_d$, and $\Omega^3_d$ flat as right $A$-modules and the vanishing of the Spencer $\delta$-cohomology $H^{\bullet,2}_{\delta_d}$.
\end{abstract}

\maketitle

\tableofcontents

\section{Introduction}
Jet constructions are a fundamental part of differential geometry.
Indeed, under very permissive assumptions, geometric objects in differential geometry \cite[Chapter V]{NaturalOperations} are precisely sections of associated bundles to a representation of a jet group, the jet order of which is constrained by the dimension of the fiber and base manifold.
Moreover, every natural operation mapping between geometric objects, taking smoothly parametrized families to smoothly parametrized families, can be seen to depend only on the infinite jet of the section, and in a neighborhood of a fixed section, only on a finite order jet.
When the operation is also sufficiently regular, the jet order dependence can be bounded, and the mapping is locally a finite order differential operator (cf.\ \cite[§19]{NaturalOperations}).
These results show that jet constructions and operations are enough to recover differential geometry.
Jet constructions are also naturally coordinate-free and functorial, and these properties make them attractive for generalizations beyond the classical setting, see for example \cite{Schreiber}.

The duality between geometry and algebra leads to the point of view that differential geometry can be realized as the study of a particular class of commutative associative algebras.
A natural generalization is to drop the commutativity assumption.
Therefore, we will approach noncommutative differential geometry as the generalization of the commutative algebra $\smooth{M}$ of smooth functions on a manifold $M$ to an arbitrary (associative, unital) algebra $A$ over a commutative unital ring $\bk$.
Constructions should then be impartial with respect to the commutativity of the underlying algebra $A$, which may be commutative or not, but should reproduce classical geometry in the case that it is.
There are several approaches following these principles, e.g.\ \cite{Connes,Manin1988,Drinfeld1987}.
In this work we will be most closely aligned with the “quantum group” approach (\'{a} la \cite{BeggsMajid}) in that we work with exterior algebras and first order differential calculi, although we will require no further structure beyond that.
The irreducible quantum flag manifolds, equipped with the Heckenberger-Kolb first order differential calculus \cite{HeckenbergerKolb}, form a prominent class of examples in the quantum group setting.

Thus, the motivation for this work is to construct a natural notion of jet functor, and subsequently of differential operator, which treats, as special cases, mildly noncommutative geometries such as supergeometry (with the Deligne sign convention \cite[§6]{deligne1999sign}), as well as the aforementioned quantum flag manifolds.
That is to say, an infinite family of endofunctors $J^n_d$ on the category of left modules $E$ over a possibly noncommutative algebra $A$, equipped with an exterior algebra $\Omega^\bullet_d$ for $A$, i.e.\ a differential graded algebra with $\Omega^0_d = A$ and which is generated by $A$ and $dA$, cf.\ Definition \ref{def:extalgebra}.
We will generalize the prolongation map, or “universal differential operator” to this setting.
This allows us to define finite order differential operators between (left) $A$-modules, and this can be seen as a way of lifting the differentiable structure on $A$, given by $d$, to its modules $E$.
This procedure will be extended to the projective limit $J^\infty_d$, which lets us, in principle, generalize any natural operation arising in the framework described above.
In this paper we only work with left $A$-modules (or bimodules where the relevant $A$-action operates on the left), therefore our functors can more specifically be referred to as left jet functors.
By duality, all the constructions we present and all the results we prove can also be realized on right $A$-modules, allowing to define an analogous theory of right jet functors.

We offer several possible definitions of jet functor (cf.\ Lemma \ref{lemma:Jchar}), which are equivalent under sufficient assumptions on the exterior algebra.
In particular, all of them generalize the classical notion, in light of \cite{Quillen,Spencer,Goldschmidt}.
Our choice of fundamental definition is the one that requires no assumptions to build the noncommutative generalization of the Spencer $\delta$-complex.
We provide an explicit isomorphism between our construction and the classical notion of jets in the case that the exterior algebra $\Omega^\bullet_d$ is the classical exterior algebra $\Omega^\bullet(M)$ equipped with the de Rham differential $d_{dR}$.
From the viewpoint of differential geometry, our construction can then be interpreted as treating H.\ Goldschmidt's $\rho$ operators (see below) as fundamental.
Goldschmidt derives these operators in a context where jets are already defined.
In contrast, our method does not presuppose the existence of the higher order jet modules, but rather we explicitly construct the $\rho$ maps and obtain the (holonomic) jet modules as their kernels.
To the best of our knowledge, this is a novel approach.

For decades, there has been considerable interest \cite{verbovetsky1997,laksov2000,Sardanashvily2003jets,majid2023quantum} in genuinely noncommutative jets.
\cite{verbovetsky1997} and \cite{Sardanashvily2003jets}, both take differential operators as the primary notion and define jets in terms of spaces of differential operators.
Their notion of differential operator is limited in that a certain degree of commutativity is intrinsic to their definition, and hence it cannot capture the full noncommutative picture.
Rather than working with an exterior algebra, \cite{laksov2000} instead work in the setting of $A$-bimodules equipped with “balanced derivations” and, in that context, they develop a notion analogous to our notion of semiholonomic jets.
However, their theory does not allow them to proceed beyond semiholonomic jets without very strong conditions, including the $A$-bimodule being free and finitely generated, and its basis satisfying a commutativity property.
Most recently, \cite{majid2023quantum} formulated a notion of jet for which their $1$-jet $A$-bimodule decomposes as $A\oplus\Omega^1$ in the category of $A$-bimodules, by construction.
For jets of order strictly higher than $1$, they require their first order differential calculus to come equipped with a bimodule connection which satisfies a number of conditions, including torsion-freeness (and flatness for jets higher than $2$).
Furthermore, they require a flat bimodule connection on any $A$-bimodule to which they wish to apply their construction.
Bimodule connections come equipped with a braiding map, hence restricting to objects equipped with bimodule connections is a commutativity condition.
In contrast to the previous notions of jet, ours requires no commutativity assumptions, and the constructions require no additional conditions beyond the data of an exterior algebra.
Most of the classical properties are then recovered by imposing mild homological conditions on the exterior algebra.

\subsection{Generalizing classical properties of jets.}
In order to construct our jet functors, we need to describe some important properties of jets from differential geometry.
It is well known that given a vector bundle $E\rightarrow M$, we have the following short exact sequence (cf.\ \cite[Proposition 1.1, p.~2]{Quillen}) involving the jet bundles and the bundle of $E$-valued symmetric forms
\begin{equation}
	0\longrightarrow S^n(M)\otimes E \longrightarrow J^n E \longrightarrow J^{n-1}E \longrightarrow 0.
\end{equation}
Via the Serre-Swan theorem (cf.\ \cite[§12.33, p.~191]{nestruev2020smooth}), we can equivalently consider the short exact sequence relating the (finitely generated projective) $\smooth{M}$-modules of their global sections.
We thus obtain the following short exact sequence containing jets $\Gamma(M,J^n E)$ of sections of $E\rightarrow M$, and $E$-valued symmetric forms
\begin{equation}
	\label{eq:kjetses}
	0\longrightarrow \Sym^n(\Omega^1(M))\otimes_{\smooth{M}} \Gamma(M,E) \longrightarrow \Gamma(M,J^n E) \longrightarrow \Gamma(M,J^{n-1}E) \longrightarrow 0.
\end{equation}
We also have the prolongation maps $j^n\colon \Gamma(M,E) \rightarrow \Gamma(M,J^nE)$, which takes a section to its $n$-jet.
The prolongation map is also known as the universal differential operator, because it realizes the jet bundle as the universal object for differential operators from $E$ to any target bundle $F$.
This is also a coordinate-free way to define differential operators, i.e.\ the map $\Delta$ is a differential operator if there exists a \textit{bundle map}, $\widetilde \Delta$, such that the following diagram commutes.
\begin{equation}
\begin{tikzcd}\label{classicaldifferentialoperator}
	\Gamma(M,J^n E) \arrow[dr, "\widetilde\Delta"] & \\
	\Gamma(M,E) \arrow[r,"\Delta"] \arrow[u,"j^n"] & \Gamma(M,F) 
\end{tikzcd}
\end{equation}
Connections are particular instances of differential operators of order $1$.
Classically, given a fiber bundle $E$, the lift of a connection on $E$ as a differential operator produces a bijective correspondence between said connections and sections of the first jet projection (cf.\ \cite[§17.1]{NaturalOperations}).

In order to motivate our results, we will need some structural results about classical jets.
First, we have the following lemma attributed to \cite{Quillen}.
\begin{lemma*}[{\cite[Lemma 1.2.1, p.~184]{Spencer}}]
	Let $E$ be a vector bundle.
	Then the following holds
	\begin{equation}
		J^{n+m}E = J^n (J^m E) \cap J^{n-1} (J^{m+1}E) \subset J^{(n+m)}E,
	\end{equation}
	where $J^{(n+m)}E$ denotes the (classical) nonholonomic jet bundle of order $n+m$.
\end{lemma*}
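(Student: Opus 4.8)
The plan is to reduce everything to a computation in local coordinates. Since the bundles $J^{(k)}(-)$ and $J^k(-)$, the embeddings $J^k(-)\hookrightarrow J^{(k)}(-)$, and the prolongations are all natural in the argument and local on $M$, it suffices to verify the claimed equality of subbundles of $J^{(n+m)}E$ over a chart $U\subseteq M$, with coordinates $x^1,\dots,x^N$, trivialising $E$. I would first record that $J^{(k)}E=(J^1)^kE$, so that $J^n(J^mE)\hookrightarrow (J^1)^n\big((J^1)^mE\big)=(J^1)^{n+m}E$ and $J^{n-1}(J^{m+1}E)\hookrightarrow(J^1)^{n+m}E$ both arise by applying $n+m$ copies of $J^1$ in the same order; hence both are subbundles of one and the same $J^{(n+m)}E$ and the intersection on the right is meaningful.

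Next I would set up coordinates: the fibre of $(J^1)^{n+m}E$ carries coordinates $y^\alpha_\tau$, where $\alpha$ is a frame index of $E$ and $\tau=(\tau_1,\dots,\tau_{n+m})$ is a ``stratified word'' with $\tau_\ell\in\{\varnothing,1,\dots,N\}$ recording the direction (if any) introduced by the $\ell$-th application of $J^1$. The crucial input is the standard coordinate description of holonomic jets inside iterated first jets: for any bundle $G$,
\[ J^k(G)=\big\{\,p\in(J^1)^k(G):\text{the $G$-coordinates of }p\text{ depend on the $k$ new slots only through the multiset of their non-empty entries}\,\big\} \]
(for $k=2$ this is the familiar requirement that the two slots carrying a single first-order derivative coincide and that the slot carrying the second-order derivative is symmetric). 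Applying this once for the outer $J^n$ and once with $G=J^mE\hookrightarrow(J^1)^mE$, I would identify $J^n(J^mE)$ with the set of $y$ for which $y^\alpha_\tau$ depends only on the pair $(A,B)$, where $A$ is the multiset of non-empty entries among the first $m$ slots and $B$ that among the last $n$ slots; likewise $J^{n-1}(J^{m+1}E)$ becomes the set of $y$ depending only on $(A',B')$, where now the first block has length $m+1$; and $J^{n+m}E$ is the set of $y$ depending only on the multiset $M$ of all non-empty entries of $\tau$.

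With these descriptions in hand the statement is purely combinatorial. The inclusion $J^{n+m}E\subseteq J^n(J^mE)\cap J^{n-1}(J^{m+1}E)$ is immediate because $M=A\uplus B=A'\uplus B'$. Conversely, if $y$ lies in the intersection, then membership in $J^n(J^mE)$ makes $\tau\mapsto y^\alpha_\tau$ invariant under $\mathrm{Sym}\{1,\dots,m\}\times\mathrm{Sym}\{m+1,\dots,m+n\}$, and membership in $J^{n-1}(J^{m+1}E)$ makes it invariant under $\mathrm{Sym}\{1,\dots,m+1\}\times\mathrm{Sym}\{m+2,\dots,m+n\}$; between them these two subgroups contain every adjacent transposition $(\ell,\ell+1)$ for $1\le\ell\le m+n-1$, hence generate the full symmetric group permuting all $n+m$ slots. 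Therefore $y^\alpha_\tau$ depends only on the multiset $\{\tau_1,\dots,\tau_{n+m}\}$, equivalently (the number of $\varnothing$'s being fixed) only on $M$, i.e.\ $y\in J^{n+m}E$. I expect the only genuine work to be the ``crucial input'': fixing a convention for $J^{(k)}$ and checking carefully the coordinate description of $J^k(G)\subseteq(J^1)^k(G)$ together with its compatibility under iterating the jet functor; given that, the symmetric-group argument closes the proof at once.
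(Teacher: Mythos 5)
Your argument is correct, but note that the paper does not prove this lemma at all: it is quoted as classical background with a citation to Spencer, so there is no in-paper proof to compare against. The closest the paper comes is its noncommutative generalization, Lemma \ref{lemma:smM} (giving $J^{m+n}_d = J^m_d\circ J^n_d\cap J^1_d\circ J^{m+n-1}_d$ under flatness hypotheses), which is proved by a purely categorical induction: $J^{m+n}_d$ is realized as a kernel, and Lemma \ref{lemma:pb} identifies that kernel with a pullback (intersection) of subfunctors. Your route is instead the classical coordinate one, and it works: once $J^k G\subseteq (J^1)^kG$ is identified with the locus where the fibre coordinates $y^\alpha_\tau$ depend only on the multiset of non-empty entries of $\tau$, membership in the two bundles being intersected amounts to invariance under $\mathrm{Sym}\{1,\dots,m\}\times\mathrm{Sym}\{m+1,\dots,m+n\}$ and under $\mathrm{Sym}\{1,\dots,m+1\}\times\mathrm{Sym}\{m+2,\dots,m+n\}$, and these two subgroups between them contain every adjacent transposition, hence generate the full symmetric group; that generation step is correct. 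The ``crucial input'' you flag is indeed the only real content, and it closes cleanly: the image of $j^k_xs=j^1_x(j^{k-1}s)$ records exactly the partial derivatives $\partial_{\tau}s^\alpha(x)$, which are symmetric because mixed partials commute, and a fibre-dimension count (both the symmetric subspace and $J^kG$ have rank $\binom{N+k}{k}\operatorname{rk}G$) shows the image is all of the symmetric subspace. What your approach buys is concreteness and transparent combinatorics; what the paper's categorical formulation of the analogous statement buys is independence from coordinates and from commutativity of partial derivatives, which is precisely what must be abandoned in the noncommutative setting, where the symmetry of mixed partials is replaced by the quantum symmetric forms $S^n_d$.
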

Secondly, we have the following proposition.
\begin{prop*}[{\cite[Proposition 3, p.~432]{Goldschmidt}}]
	Let $E$ be a vector bundle.
	There exists a unique differential operator of order $1$ corresponding to the bundle map
	\begin{equation}
		\rho\colon J^1(J^n E) \longrightarrow C^1_n E,
	\end{equation}
	whose \emph{symbol} (i.e.\ the restriction of the lift $\rho$ to $T^\ast \otimes J^n E$) is the natural projection $T^\ast \otimes J^n E \rightarrow C^1_n E$ and the sequence
	\begin{equation}
		0 \longrightarrow J^{n+1}E \longrightarrow J^1 (J^n E) \longrightarrow C^1_n E \longrightarrow 0
	\end{equation}
	is exact.
\end{prop*}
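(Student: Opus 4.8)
The plan is to realise $C^1_nE$ as the cokernel of a canonical bundle embedding $\iota\colon J^{n+1}E\hookrightarrow J^1(J^nE)$ and to take $\rho$ to be the associated quotient map, so that the whole statement reduces to an analysis of the symbol of $\iota$. As a purely symbolic object, $C^1_nE$ is the quotient of $T^\ast M\otimes J^nE$ by the image of the Spencer coboundary
\[
	\delta\colon S^{n+1}T^\ast M\otimes E\longrightarrow T^\ast M\otimes S^nT^\ast M\otimes E\hookrightarrow T^\ast M\otimes J^nE ,
\]
the first arrow being the natural inclusion $S^{n+1}\hookrightarrow T^\ast M\otimes S^n$ and the second the map induced by the symbol inclusion $S^nT^\ast M\otimes E\hookrightarrow J^nE$. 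The embedding $\iota$ is the classical inclusion of holonomic into nonholonomic jets at a single level: for a local section $s$ of $E$ near a point $p$, we set $\iota(j^{n+1}_ps)$ to be the $1$-jet at $p$ of the local section $q\mapsto j^n_qs$ of $J^nE$; this is well defined and fibrewise injective straight from the definitions, since the $1$-jet of $q\mapsto j^n_qs$ records $j^n_ps$ together with its first-order variation, which is precisely the data of $j^{n+1}_ps$.

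First I would compute the symbol of $\iota$. Because $\pi^1_0\circ\iota=\pi^{n+1}_n$ — evaluating the $1$-jet of $q\mapsto j^n_qs$ at $p$ returns $j^n_ps$ — the map $\iota$ sends $\ker\pi^{n+1}_n=S^{n+1}T^\ast M\otimes E$ into $\ker\pi^1_0=T^\ast M\otimes J^nE$, and a direct Taylor-expansion computation identifies the induced map on these kernels, namely the symbol $\sigma(\iota)$, with the Spencer coboundary $\delta$ above; in particular $\sigma(\iota)$ is injective, so $\iota$ is a subbundle inclusion. Now consider the two symbol exact sequences
\[
	0\to S^{n+1}T^\ast M\otimes E\to J^{n+1}E\xrightarrow{\ \pi^{n+1}_n\ }J^nE\to 0 ,\qquad 0\to T^\ast M\otimes J^nE\to J^1(J^nE)\xrightarrow{\ \pi^1_0\ }J^nE\to 0 .
\]
The relation $\pi^1_0\circ\iota=\pi^{n+1}_n$ gives $\iota^{-1}(T^\ast M\otimes J^nE)=S^{n+1}T^\ast M\otimes E$, hence
\[
	\iota(J^{n+1}E)\cap(T^\ast M\otimes J^nE)=\iota\bigl(S^{n+1}T^\ast M\otimes E\bigr)=\delta\bigl(S^{n+1}T^\ast M\otimes E\bigr) ,
\]
while surjectivity of $\pi^{n+1}_n$ shows that $\pi^1_0$ carries $\iota(J^{n+1}E)$ onto $J^nE$, so $\iota(J^{n+1}E)+T^\ast M\otimes J^nE=J^1(J^nE)$. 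These two identities say exactly that the composite $T^\ast M\otimes J^nE\hookrightarrow J^1(J^nE)\twoheadrightarrow J^1(J^nE)/\iota(J^{n+1}E)$ is surjective with kernel $\delta(S^{n+1}T^\ast M\otimes E)$, which yields a canonical isomorphism $J^1(J^nE)/\iota(J^{n+1}E)\cong C^1_nE$. I then define $\rho$ to be the resulting projection $J^1(J^nE)\to C^1_nE$.

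It remains to check the three assertions. Exactness of $0\to J^{n+1}E\xrightarrow{\ \iota\ }J^1(J^nE)\xrightarrow{\ \rho\ }C^1_nE\to 0$ is immediate, since $\rho$ is by construction the projection onto $\coker\iota$. Viewing $\rho$ as a bundle morphism out of $J^1(J^nE)$, it determines a differential operator $\rho\circ j^1\colon\Gamma(J^nE)\to\Gamma(C^1_nE)$ of order $\le 1$, whose symbol is the restriction $\rho|_{T^\ast M\otimes J^nE}$; by the construction of $\rho$ this restriction is exactly the natural projection $T^\ast M\otimes J^nE\to C^1_nE$, and since this is nonzero the operator has order exactly $1$. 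For uniqueness, suppose $\rho'$ is another order-$1$ operator $J^1(J^nE)\to C^1_nE$ with the same symbol for which $0\to J^{n+1}E\xrightarrow{\ \iota\ }J^1(J^nE)\xrightarrow{\ \rho'\ }C^1_nE\to 0$ is exact. Then $\rho'-\rho$ has vanishing symbol, so it factors as $g\circ\pi^1_0$ for some bundle map $g\colon J^nE\to C^1_nE$; exactness forces $\rho'$, hence $\rho'-\rho$, to vanish on $\iota(J^{n+1}E)$, so $g$ vanishes on $\pi^1_0(\iota(J^{n+1}E))=\pi^{n+1}_n(J^{n+1}E)=J^nE$, whence $g=0$ and $\rho'=\rho$. (That $\iota$ is indeed a subbundle is moreover consistent with the Quillen-type lemma quoted above, taken with outer order $1$ and inner order $n$.)

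Classically, the symbol identification $\sigma(\iota)=\delta$ and the two intersection-and-sum identities for $\iota(J^{n+1}E)$ inside $J^1(J^nE)$ are routine; the reason I single them out as the crux is that they are exactly what forces the cokernel to be the genuine $C^1_nE$ rather than a coarser quotient of $T^\ast M\otimes J^nE$ — for instance $T^\ast M\otimes J^{n-1}E$, which is what the naive construction taking the difference of the two natural maps $J^1(J^nE)\to J^1(J^{n-1}E)$ would produce, discarding the $S^nT^\ast M\otimes E$ information. In the noncommutative programme of this paper this is precisely the step at which flatness of $\Omega^1_d$ and exactness of the noncommutative Spencer $\delta$-complex will have to be invoked; classically I would simply run an induction on $n$, the base case $n=0$ being trivial ($C^1_0E=0$ and $J^1E=J^1(J^0E)$), with the two symbol sequences feeding the inductive step through a snake-lemma diagram chase.
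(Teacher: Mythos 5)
Your proof is correct. The paper does not reprove this classical statement (it is quoted from Goldschmidt as motivation), but your strategy — realizing $C^1_nE$ as $\coker\bigl(\iota\colon J^{n+1}E\hookrightarrow J^1(J^nE)\bigr)$ by comparing the two symbol exact sequences via the snake lemma, identifying the induced map on kernels with the Spencer coboundary $\delta$, and taking $\rho$ to be the quotient projection — is exactly the strategy the paper adopts for its noncommutative generalization: diagrams \eqref{es:2in(2)}, \eqref{es:2jetDt}, and \eqref{es:W} carry out the $n=1$ case to produce $\widetilde{\DH}$, and Theorem \ref{theo:higherwolves} handles general $n$, with your one unexpanded step (the coordinate verification that $\sigma(\iota)=\delta$) replaced there by the flatness and Spencer-cohomology hypotheses you correctly anticipate in your closing paragraph.
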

Here $C^1_n E$ is the space $(T^\ast \otimes J^n E)/\delta(\Sym^{n+1}(T^\ast)\otimes E)$, and $\delta$ is the Spencer differential, cf.\ \cite[p.~188]{Spencer}.

Furthermore, we note the following proposition.
\begin{prop*}[{\cite[Proposition 4, p.~433]{Goldschmidt}}]
There is a unique differential operator
\begin{equation}
		\lambda\colon J^1(J^n E) \longrightarrow T^\ast\otimes J^{n-1} E
	\end{equation}
	of order $1$, such that:
\begin{enumerate}
\item $J^n E\subseteq \ker(\lambda)$;
\item the symbol of $\lambda$ is the projection $\pi^{n-1}$ from $T^\ast\otimes J^n E$ to $T^\ast \otimes J^{n-1}E$.
\end{enumerate}
\end{prop*}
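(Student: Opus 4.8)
The plan is to deduce this from the previous proposition, of which it is essentially a corollary. Recall that giving a first order differential operator $\lambda\colon\Gamma(J^nE)\to\Gamma(T^\ast\otimes J^{n-1}E)$ is the same as giving a bundle morphism $\wt\lambda\colon J^1(J^nE)\to T^\ast\otimes J^{n-1}E$ (the lift), the symbol of $\lambda$ being the restriction of $\wt\lambda$ to the subbundle $T^\ast\otimes J^nE\subseteq J^1(J^nE)$, and $\lambda$ annihilating all holonomic jets $j^n s$ (condition (i)) being equivalent to $\wt\lambda$ vanishing on the subbundle $J^{n+1}E\subseteq J^1(J^nE)$ appearing in the previous proposition. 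Under this translation I would restate the claim as: there is a unique bundle map $\wt\lambda$ vanishing on $J^{n+1}E$ whose restriction to $T^\ast\otimes J^nE$ is $\id_{T^\ast}\otimes\pi^{n-1}$.

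For existence, the construction is forced by the exact sequence
\begin{equation*}
	0\longrightarrow J^{n+1}E\longrightarrow J^1(J^nE)\xrightarrow{\wt\rho}C^1_nE\longrightarrow 0
\end{equation*}
of the previous proposition. Since $J^{n+1}E=\ker\wt\rho$ and $\wt\rho$ is a surjection of vector bundles, any $\wt\lambda$ vanishing on $J^{n+1}E$ factors uniquely as $\wt\lambda=\ol\lambda\circ\wt\rho$ for a bundle map $\ol\lambda\colon C^1_nE\to T^\ast\otimes J^{n-1}E$. Moreover $C^1_nE=(T^\ast\otimes J^nE)/\delta(\Sym^{n+1}(T^\ast)\otimes E)$ by definition, the restriction of $\wt\rho$ to $T^\ast\otimes J^nE$ is exactly this quotient map (this is the symbol statement in the previous proposition), and $\id_{T^\ast}\otimes\pi^{n-1}$ kills the subbundle $\delta(\Sym^{n+1}(T^\ast)\otimes E)$: indeed $\delta$ sends $\Sym^{n+1}(T^\ast)\otimes E$ into $T^\ast\otimes\bigl(\Sym^n(T^\ast)\otimes E\bigr)$, and $\Sym^n(T^\ast)\otimes E=\ker\bigl(\pi^{n-1}\colon J^nE\to J^{n-1}E\bigr)$, so $(\id_{T^\ast}\otimes\pi^{n-1})\circ\delta=0$. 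Hence $\id_{T^\ast}\otimes\pi^{n-1}$ descends to a (unique) bundle map $\ol\lambda\colon C^1_nE\to T^\ast\otimes J^{n-1}E$; I would then set $\wt\lambda\colonequals\ol\lambda\circ\wt\rho$ and let $\lambda$ be the associated first order operator. By construction $\wt\lambda$ vanishes on $\ker\wt\rho=J^{n+1}E$, giving (i), and its restriction to $T^\ast\otimes J^nE$ equals $\ol\lambda$ composed with the quotient map, i.e.\ $\id_{T^\ast}\otimes\pi^{n-1}$, giving (ii); this symbol being nonzero, $\lambda$ has order exactly $1$.

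For uniqueness, any operator satisfying (i) has a lift that factors as $\ol\lambda'\circ\wt\rho$; restricting to $T^\ast\otimes J^nE$ and using (ii) together with the identification of $\wt\rho|_{T^\ast\otimes J^nE}$ with the surjection onto $C^1_nE$ yields $\ol\lambda'$ composed with the quotient map equal to $\id_{T^\ast}\otimes\pi^{n-1}$, which determines $\ol\lambda'$ because the quotient map is surjective; hence $\wt\lambda$, and therefore $\lambda$, is unique. I do not expect a genuine obstacle here, since the heavy lifting was done in the previous proposition; the only points requiring care are the identification of the symbol of $\rho$ with the canonical surjection $T^\ast\otimes J^nE\to C^1_nE$ — equivalently, that $(T^\ast\otimes J^nE)\cap J^{n+1}E=\delta(\Sym^{n+1}(T^\ast)\otimes E)$ inside $J^1(J^nE)$ — and the short verification that $(\id_{T^\ast}\otimes\pi^{n-1})\circ\delta=0$; everything else is a routine diagram chase with vector bundles.
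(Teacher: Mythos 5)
This proposition is quoted in the paper as classical background (with a citation to Goldschmidt) and is not proved there, so there is no in-paper argument to compare against; your proof therefore has to stand on its own, and it does. You correctly read condition (i) --- which as literally printed (``$J^nE\subseteq\ker(\lambda)$'') does not parse, since $J^nE$ is not a subbundle of $J^1(J^nE)$ --- as the vanishing of $\lambda$ on holonomic prolongations, equivalently of $\widetilde\lambda$ on the subbundle $J^{n+1}E$. Your derivation from the preceding proposition is sound: the only inputs beyond the exactness of $0\to J^{n+1}E\to J^1(J^nE)\xrightarrow{\widetilde\rho}C^1_nE\to 0$ are that $\widetilde\rho$ restricts on $T^\ast\otimes J^nE$ to the quotient projection onto $C^1_nE$ (the symbol clause of Proposition 3) and that $(\id_{T^\ast}\otimes\pi^{n-1})\circ\delta=0$ because $\im(\delta)\subseteq T^\ast\otimes\Sym^n(T^\ast)\otimes E=T^\ast\otimes\ker(\pi^{n-1})$; both are verified correctly, and uniqueness follows since $\widetilde\rho$ and the quotient map are epimorphisms. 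For comparison, the route Goldschmidt takes --- and the one this paper's noncommutative generalization mirrors, via $\widetilde{\DH}^{I}=J^1_d\pi^{1,0}_d-\pi^{1,0}_{d,J^1_d}$ and its higher analogues $\widetilde{\DH}^I_{J^{n-2}_d}\circ J^1_d(l^{n-1}_d)$ --- is the direct one: $\lambda$ is the difference between the prolonged projection $J^1(\pi^{n,n-1})$ and the canonical inclusion $J^nE\hookrightarrow J^1(J^{n-1}E)$ precomposed with $\pi^{1,0}_{J^nE}$, which visibly lands in $T^\ast\otimes J^{n-1}E$, kills holonomic jets, and has symbol $\id\otimes\pi^{n-1}$. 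That construction is logically independent of Proposition 3 (and is in fact the natural starting point for building $\rho$), whereas yours trades that independence for brevity by reusing the exact sequence already established; as a deduction from the stated Proposition 3 it is perfectly legitimate.
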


\subsection{The noncommutative generalization}
From now on we shall consider an algebra $A$ over a unital commutative ring $\bk$ in the following sense
\begin{defi}
A \emph{(unital associative) algebra $A$ over a commutative unital ring $\bk$} is a monoid in the monoidal category of $\bk$-modules equipped with the tensor product $\otimes$ over $\bk$.
More explicitly, it is a $\bk$-module (hence bimodule) $A$ endowed with a multiplication $\cdot \colon A\otimes A\to A$ and a unity $1\in A$ such that they satisfy the associativity and unity axioms (cf.\ \cite[§VII.3, p.~170]{MacLane}).
\end{defi}
This definition of algebra yields a unital (not necessarily commutative) ring structure on $A$.
The algebra structure can be equivalently encoded in a unital ring homomorphism $\bk\to A$, where the image of $\bk$ is contained in the center of $A$.
When $\bk$ is a field, this notion of algebra coincides with the one used in \cite{BeggsMajid}, and the results that we use from that source generalize with analogous proofs to our notion of algebra.

This algebra will be equipped with extra data encoding differential properties in the form of a first order differential calculus.
\begin{defi}\label{def:differential calculus}
A \emph{first order differential calculus} $\Omega^1_d$ over an algebra $A$ is an $A$-bimodule $\Omega^1_d$ equipped with a \emph{differential}, that is a $\bk$-linear map $d\colon A\rightarrow \Omega^1_d$ such that
\begin{enumerate}
\item\label{def:differential calculus:1} (Leibniz rule) for all $a,b\in A$, we have
\begin{equation}
	d(ab) = a db + (da) b.
\end{equation}
\item\label{def:differential calculus:2} (Surjectivity condition) $AdA=\Omega^1_d$, i.e.\ $\Omega^1_d$ is generated as a left module by $dA$, the image of $d$.
\end{enumerate}
\end{defi}
The notion of first order differential calculus was introduced in \cite{woronowicz1989}.

Up until §\ref{s:quantumsymmform}, all of our results will depend only on the first order differential calculus.
After §\ref{s:quantumsymmform} we require more structure on the algebra $A$, namely an exterior algebra $\Omega^\bullet_d$ (cf.\ Definition \ref{def:extalgebra}).
Note that the first grade of an exterior algebra is a first order differential calculus.

\subsubsection{Notation}\label{sss:notation}
Let $\AMod$ and $\ModA$ denote the categories of left $A$-modules and right $A$-modules, respectively.
The category of $(A,B)$-bimodules will be denoted $\AMod_B$.
As previously mentioned, we denote the tensor product over $\bk$ by $\otimes$, whereas we denote the tensor product over $A$ by $\otimes_A$.
The $\Hom$ sets for the categories $\Mod$ and $\AMod$ will be denoted by $\Hom$ and $\AHom$, respectively.
In the event that a direct sum of modules could be interpreted in several of these categories, we will denote the direct sum in $\AMod$ by $\Aoplus$, and similarly we will write $\oplusA$ or $\AoplusA$ for the direct sums in $\ModA$ or $\AModA$, respectively.
The full subcategories of $\AMod$ of flat, projective, and finitely generated left $A$-modules will be denoted by $\AFlat$, $\AProj$, and $\AFG$, respectively.
The intersection of $\AFG$ and $\AProj$ will be denoted by $\AFGP$.
We also denote by $\CalcA$ the category whose objects are first order differential calculi over $A$, and whose morphisms are bimodule maps that form a commutative triangle with the respective differentials.

\subsubsection{Generalization of symmetric forms}
The generalization of symmetric tensors from classical differential geometry to the noncommutative setting is essential to us.
Given an exterior algebra $\Omega^{\bullet}_d$, we define the tensor functors $S^0_d= \Omega^0_d=\id_{\AMod}$, $S^1_d= \Omega^1_d \colonequals \Omega^1_d \otimes_A -$, and $\Omega^2_d \colonequals \Omega^2_d \otimes_A -$, cf.\ §\ref{ss:extalg}.
Be aware that by abuse of notation we denote by $\Omega^1_d$ and $\Omega^2_d$ both the bimodules and their corresponding tensor functors.
Now by induction, the inclusion $\iota^n_{\wedge}\colon S^n_d\longhookrightarrow \Omega^1_d\circ S^{n-1}_d$ is defined by the kernel of the following composition
\begin{equation}
\Omega^1_d\circ S^{n-1}_d
\xrightarrow{\hphantom{AA}\Omega^1_d(\iota^{n-1}_{\wedge})\hphantom{AA}} \Omega^1_d\circ\Omega^1_d\circ S^{n-2}_d\xrightarrow{\hphantom{AA}\wedge_{S^{n-2}_d}\hphantom{AA}}
\Omega^2_d\circ S^{n-2}_d,
\end{equation}
where $\wedge_{S^{n-2}_d}$ denotes the $S^{n-2}_d$ component of the $\wedge$ as a natural transformation, i.e.\ $\wedge\otimes_A \id_{S^{n-2}_d}$.
We call $S^{\bullet}_d\colonequals \bigoplus_{n\ge 0} S^n_d$ the \emph{functor of symmetric forms}.

When no confusion arises, we denote $S^n_d (A)$ and $S^{\bullet}_d (A)$ also by $S^n_d$ and $S^{\bullet}_d$, respectively.
The latter will be termed \emph{bimodule of symmetric forms}
\begin{rmk}
Suppose $\Omega^1_d$ is free and finitely-generated as a left $A$-module, i.e.\ \emph{parallelizable}.
Given a basis $\theta_1,\dots,\theta_m$, we can define the partial derivative operator $\partial_i\colon A \rightarrow A$, by $da = \sum_i \partial_i(a)\theta_i$.

Furthermore, suppose $\Omega^1_d$ has a basis of exact forms $dx_1,\dots,dx_m$, i.e.\ it generalizes the cotangent bundle over a local chart.
	Then we have that
	\begin{equation}
		d^2a = \sum_{ij} \partial_i(\partial_j(a)) dx_i \wedge dx_j = 0.
	\end{equation}
	Which implies that $\sum_{ij} \partial_i(\partial_j(a)) dx_i \otimes_A dx_j \in S^2_d$.
	This shows that the symmetric forms $S^n_d$ generalize the classical property that partial derivatives commute in differential geometry.
\end{rmk}

\subsection{Main results}
In this article, we construct the following three families of functors:
\begin{itemize}
	\item The nonholonomic jet functors $J^{(n)}_d\colon \AMod \rightarrow \AMod$
	\item The semiholonomic jet functors $J^{[n]}_d\colon \AMod \rightarrow \AMod$
	\item The holonomic jet functors $J^n_d\colon \AMod \rightarrow \AMod$
\end{itemize}
In particular we have $J^{(0)}_d= J^{[0]}_d=J^0_d = \id_{\AMod}$, and $J^{(1)}_d= J^{[1]}_d=J^1_d $.
These functors come equipped with natural transformations
\begin{align}
	j^{(n)}_d\colon \id_{\AMod} \longrightarrow J^{(n)}_d,
	& \hfill &
	j^{[n]}_d\colon \id_{\AMod} \longrightarrow J^{[n]}_d,
	& \hfill &
	j^n_d\colon \id_{\AMod} \longrightarrow J^n_d,
\end{align}
which are respectively called the nonholonomic, semiholonomic, and holonomic jet prolongation maps.
We also have the natural transformations,
\begin{align}
	\pi^{(n,n-1; m)}_d\colon J^{(n)}_d\longrightarrow J^{(n-1)}_d,
	& \hfill &
	\pi^{[n,n-1]}_d\colon J^{[n]}_d\longrightarrow J^{[n-1]}_d,
	& \hfill &
	\pi^{n,n-1}_d\colon J^n_d \longrightarrow J^{n-1}_d,
\end{align}
respectively called the nonholonomic, semiholonomic, and holonomic jet projections.
Here, the index $m$ in the nonholonomic jet projection, ranging from $1$ to $n$, is the projection in position $m$ (cf.\ Definition \ref{def:otherprojections}).
\begin{theo*}[Stability. Proposition \ref{prop:Jnhstable}, Proposition \ref{prop:Jshstable}, and Proposition \ref{prop:Jhstable}]
Let $A$ be a $\bk$-algebra.
\begin{enumerate}
\item Let $\Omega^1_d$ be a first order differential calculus for $A$.
If $\Omega^1_d$ is either in $\AFlat$, $\AProj$, $\AFG$, or $\AFGP$, then $J^{(n)}_d$ preserves the subcategory $\AFlat$, $\AProj$, $\AFG$, or $\AFGP$, respectively;
\item Let $\Omega^1_d$ be a first order differential calculus for $A$ which is flat in $\ModA$.
If $\Omega^1_d$ is either in $\AFlat$, $\AProj$, $\AFG$, or $\AFGP$, then $J^{[n]}_d$ preserves the subcategory $\AFlat$, $\AProj$, $\AFG$, or $\AFGP$, respectively;
\item Let $\Omega^\bullet_d$ be an exterior algebra for $A$ such that the $n$-jet sequences are exact and $S^m_d$ is either in $\AFlat$, $\AProj$, or $\AFGP$ for all $1\le m \le n$, then $J^{n}_d$ preserves the subcategory $\AFlat$, $\AProj$, or $\AFGP$, respectively.
\end{enumerate}
\end{theo*}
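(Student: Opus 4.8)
The plan is to prove all three parts by a single template: exhibit the order-$n$ jet functor as an extension of the order-$(n-1)$ jet functor by a tensor-type functor via the relevant jet exact sequence, then induct on $n$ using closure properties of the four categories. First I would record two bookkeeping facts. \emph{(a) Extension closure:} each of $\AFlat$, $\AProj$, $\AFG$, $\AFGP$ is closed under extensions in $\AMod$ --- flats by the Tor long exact sequence, projectives (hence also $\AFGP=\AFG\cap\AProj$) because an extension by a projective splits, and finitely generated modules trivially. \emph{(b) Tensor lemma:} if an $A$-bimodule $M$ lies in one of the four categories as a left module and $E$ lies in the same category, then so does $M\otimes_A E$; writing $E$ as a quotient of a finite free module (finite generation) or as a direct summand of a free module (projectivity) exhibits $M\otimes_A E$ correspondingly as a quotient of $M^k$ or a direct summand of $M^{(I)}$, and for flatness $-\otimes_A(M\otimes_A E)\cong(-\otimes_A M)\otimes_A E$ is a composite of exact functors because $M$ and $E$ are left-flat. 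Iterating (b), $\bigotimes^n_A\Omega^1_d$ again lies in the ambient category as a left module whenever $\Omega^1_d$ does, and hence so does $(\bigotimes^n_A\Omega^1_d)\otimes_A E$ for $E$ in that category.

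For part (i) I would begin from the always-exact $n=1$ jet exact sequence $0\to\Omega^1_d\otimes_A E\to J^{(1)}_dE\to E\to 0$, so that (a) and (b) show $J^{(1)}_d=J^1_d$ preserves each of the four categories; since $J^{(n)}_d$ is the $n$-fold iterate of $J^{(1)}_d$ (equivalently, one may induct on $n$ with $0\to\Omega^1_d\otimes_A J^{(n-1)}_d\to J^{(n)}_d\to J^{(n-1)}_d\to 0$), the property propagates. Part (ii) is the same induction, now with the semiholonomic jet exact sequence $0\to\bigotimes^n_A\Omega^1_d\to J^{[n]}_d\to J^{[n-1]}_d\to 0$: this is precisely where the hypothesis that $\Omega^1_d$ is flat in $\ModA$ enters, since it is what makes that sequence exact. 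Evaluated at $E$ the left-hand term lies in the category by the iterated tensor lemma, $J^{[n-1]}_dE$ lies in it by induction with base case $J^{[1]}_d=J^{(1)}_d$ from (i), and extension closure closes the step.

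Part (iii) runs the same template with the (holonomic) jet exact sequence $0\to S^n_d\to J^n_d\to J^{n-1}_d\to 0$, which under the stated hypotheses --- $\Omega^1_d,\Omega^2_d,\Omega^3_d$ flat in $\ModA$ together with $\dh^{m-1}_A=0$ --- is the substantive theorem proved earlier in the paper. The one genuinely new step is to upgrade the assumption ``the bimodule $S^n_d(A)$ lies in the category'' to ``the functor $S^n_d$ preserves the category'' (for $\AFlat$, $\AProj$, $\AFGP$). Since $S^n_d$ is assembled from the functors $\Omega^i_d\otimes_A(-)$ and from kernels, it commutes with arbitrary direct sums and with filtered colimits (both exact in module categories, so no flatness is needed here), whence by induction on $n$ one gets $S^n_d(A^{(I)})\cong S^n_d(A)^{(I)}$. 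Hence for $E$ projective (resp.\ finitely generated projective), realized as a direct summand of a free (resp.\ finite free) module, $S^n_d(E)$ is a direct summand of $S^n_d(A)^{(I)}$ (resp.\ $S^n_d(A)^k$); and for $E$ flat, realized by Lazard's theorem as a filtered colimit of finite free modules, $S^n_d(E)$ is a filtered colimit of copies of $S^n_d(A)^k$, hence flat. This is also why part (iii) omits $\AFG$ alone: being a kernel, $S^n_d$ need not preserve epimorphisms, so finite generation is not transported along a presentation of $E$. Granting $S^n_d(E)$ in the category, the induction on $n$ proceeds as before, with base case $J^1_dE=J^{(1)}_dE$ from (i).

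I expect the real obstacle to lie not in this proposition but in its quoted input: the holonomic jet exact sequence under those precise homological hypotheses --- the interplay between right-flatness of $\Omega^1_d,\Omega^2_d,\Omega^3_d$ and the vanishing of the Spencer cohomology $H^{\bullet,2}$ --- is the hard content, used here as a black box. The only point internal to this argument requiring care is checking that the iterated-kernel functor $S^n_d$ genuinely commutes with the direct sums and filtered colimits invoked above; once that and the three jet exact sequences are in hand, each of the three parts is a short induction plus extension closure.
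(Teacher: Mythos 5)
Your proposal is correct, and its overall architecture --- induction on $n$, the relevant jet exact sequence, closure of $\AFlat$, $\AProj$, $\AFG$, $\AFGP$ under extensions (the paper's Lemma \ref{lemma:2-3}), and a tensor lemma (Lemma \ref{lemma:flpr}) --- is exactly the paper's for all three parts; parts (i) and (ii) match the proofs of Propositions \ref{prop:Jnhstable} and \ref{prop:Jshstable} essentially verbatim. The one place you take a genuinely different route is the step in part (iii) that upgrades the hypothesis on the bimodule $S^n_d(A)$ to the statement that the \emph{functor} $S^n_d$ preserves the subcategory. You argue that $S^n_d$, being assembled from tensor functors and kernels, commutes with arbitrary direct sums and filtered colimits (both exact in $\AMod$), and then handle $\AProj$ and $\AFGP$ via direct summands of free modules and $\AFlat$ via Lazard's theorem. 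The paper instead establishes once and for all (Proposition \ref{prop:Stensorcomparison}) that the comparison map $\tau^n\colon S^n_d(A)\otimes_A-\to S^n_d$ is an isomorphism on $\AFlat$ --- because $-\otimes_A E$ is exact for flat $E$ and therefore preserves the kernel defining $S^n_d$ --- and then applies Lemma \ref{lemma:flpr} to $S^n_d(E)\cong S^n_d(A)\otimes_A E$. The two mechanisms rest on the same underlying fact (an exact colimit or tensor commutes with the defining kernel), so they are interchangeable here; the paper's version has the advantage of producing a natural isomorphism that it reuses elsewhere (e.g.\ Proposition \ref{prop:Spencertensorrep} and Proposition \ref{prop:hJexact}), while yours avoids introducing $\tau^n$ at the cost of invoking Lazard. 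Your aside on why $\AFG$ is dropped in part (iii) --- $S^n_d$ is a kernel and need not transport finite generation along a presentation --- is consistent with the paper, which simply omits that case without comment.
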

This generalizes the fact that the jet bundle of a vector bundle is itself a vector bundle in differential geometry.
We show that our notion of $1$-jet generalizes the classical correspondence between connections and splittings of the $1$-jet short exact sequence.
\begin{nprop}{\ref{prop:connexionsplits}}[Left connections and splittings of the $1$-jet sequence]
Let $A$ be a $\bk$-algebra and $E$ be in $\AMod$.
There is a bijective correspondence between left connections on $E$ and left $A$-linear splittings of the $1$-jet short exact sequence corresponding to $E$.
\end{nprop}

We define the $1$-jet functor $J^1_d\colon \AMod\to \AMod$ as a tensor functor by a suitable $A$-bimodule $J^1_d A$ constructed in §\ref{ss:1jetA}.
The iterated application of the $1$-jet functor leads to the notion of nonholonomic jet functors
\begin{equation}
J^{(n)}_d
=(J^1_d)^{\circ n}
=J^1_d A\otimes_A \cdots \otimes_A J^1_d A\otimes_A -\colon \AMod\longrightarrow \AMod.
\end{equation}

We build the semiholonomic jets $J^{[n]}_d$ from the nonholonomic jets as the equalizer of the nonholonomic jet projections $\pi^{(n,n-1; m)}_d$.
In the following theorem we provide further equivalent characterizations using the map $\widetilde{\DH}^I_d$ (cf.\ §\ref{ss:shj}) obtained as the difference between two consecutive jet projections.
\begin{ntheo}{\ref{theo:sjchar}}[Characterization of the semiholonomic jet functor]
Let $A$ be a $\bk$-algebra endowed with a first order differential calculus $\Omega^1_d$, then for all $n\ge 0$, we have
\begin{equation}
J^{[n]}_d
=\bigcap_{m=1}^{n-1}\ker\left(J^{(n-m-1)}_d \widetilde{\DH}^I_{d,J^{(m-1)}_d} \right).
\end{equation}

Furthermore, if we assume $\Omega^1_d$ is flat in $\ModA$, then for $n\ge 2$ the following subfunctors of $J^{(n)}_d$ coincide
\begin{enumerate}
\item $J^{[n]}_d$;
\item The kernel of
\begin{equation}
\begin{tikzcd}
\widetilde{\DH}^I_{d,J^{[n-2]}_d}\circ J^1_d(l^{[n-1]}_d)\colon J^{1}_d\circ J^{[n-1]}_d\ar[r]& \Omega^1_d\circ J^{[n-2]}_d,
\end{tikzcd}
\end{equation}
where $l^{[n-1]}_d\colon J^{[n-1]}_d\hookrightarrow J^1_d\circ J^{[n-2]}_d$ denotes the natural inclusion ($l^{[1]}_d=\id_{J^1_d}$ and $l^{[0]}_d=\id_{A}$).
\item The intersection (pullback) of $J^{[h]}_d\circ J^{(n-h)}_d$ and $J^{(n-m)}_d \circ J^{[m]}_d$ in $J^{(n)}_d$ for $m,h\ge 0$ such that $h+m>n$.
\end{enumerate}
\end{ntheo}
The map $\widetilde{\DH}^I_d$ is a generalization of the map $\lambda$ from \cite[Proposition 4, p.~433]{Goldschmidt}.
It is also a first order differential operator, a notion which will be introduced in §\ref{s:firstorderdifferentialoperators}.
We also get the following short exact sequence, generalizing the corresponding result from the setting of classical differential geometry.
\begin{ntheo}{\ref{theo:Jshes}}[Semiholonomic jet short exact sequence]
Let $A$ be a $\bk$-algebra endowed with a first order differential calculus $\Omega^1_d$ which is flat in $\ModA$.
For $n\ge 1$, the following is a short exact sequence
\begin{equation}
\begin{tikzcd}[column sep=40pt]
	0\ar[r]& \bigotimes^n_A \Omega^1_d \ar[r,hookrightarrow,"\iota^{[n]}_d"]& J^{[n]}_d\ar[r,twoheadrightarrow,"\pi^{[n,n-1]}_d"]& J^{[n-1]}_d\ar[r]&0,
\end{tikzcd}
\end{equation}
where $\bigotimes^n_A \Omega^1_d\colon \AMod \rightarrow \AMod$ is the functor defined by $E \mapsto (\bigotimes^n_A \Omega^1_d) \otimes_A E$.
\end{ntheo}

In §\ref{ss:Spencer}, we introduce the noncommutative Spencer $\delta$-complex and its cohomology, which will allow us to state the conditions required for the exactness of the holonomic jet sequence.
\begin{ntheo}{\ref{theo:higherwolves}}[Holonomic jet exact sequence]
Let $A$ be a $\bk$-algebra endowed with an exterior algebra $\Omega^{\bullet}_d$ such that $\Omega^1_d$, $\Omega^2_d$, and $\Omega^3_d$ are flat in $\ModA$.
For $n\ge 1$, if the Spencer $\delta$-cohomology $H^{m,2}_{\delta_d}$ vanishes, for all $1\le m< n-2$, then the following sequence is exact,
\begin{equation}
\begin{tikzcd}
0\ar[r]&S^n_d\ar[r,hookrightarrow,"\iota^n_{d}"]& J^{n}_d\ar[r,"\pi^{n,n-1}_{d}"]&J^{n-1}_d\ar[r,"\dh^{n-1}_d"]&H^{n-2,2}_{\delta_d}.
\end{tikzcd}
\end{equation}
Therefore, if $H^{n-2,2}_{\delta_d}=0$ we obtain a short exact sequence
\begin{equation}
\begin{tikzcd}
0\ar[r]&S^n_d\ar[r,hookrightarrow,"\iota^n_{d}"]& J^{n}_d\ar[r,two heads,"\pi^{n,n-1}_{d}"]&J^{n-1}_d\ar[r]&0.
\end{tikzcd}
\end{equation}
\end{ntheo}

Finally, we show that our construction recovers, as a particular case, the theory of jets in differential geometry.
\begin{ntheo}{\ref{theo:classicalnonsemi} and \ref{theo:classicalnjet}}
	Let $A=\smooth{M}$ for a smooth manifold $M$, let $\Omega^{\bullet}_d = \Omega^{\bullet}(M)$ equipped with the de Rham differential $d$, and let $E$ be the space of smooth sections of a vector bundle.
	Then all the classical nonholonomic, semiholonomic, and holonomic $n$-jet bundles of $E$ are isomorphic to $J^{(n)}_dE$, $J^{[n]}_dE$, and $J^n_d E$ in $\AMod$, respectively, and the prolongation maps and jet projections are compatible with the isomorphisms.
\end{ntheo}

The holonomic jet functors together with its prolongation map allows us to define the notion of differential operator.
We give a characterization of linear differential operators of order at most $1$ in §\ref{s:firstorderdifferentialoperators} and some results on the category of linear differential operators of any order in §\ref{s:differentialoperators}.

\subsection{Further work and future developments}
Jet bundles provide the framework for the Lagrangian and Hamiltonian formalism in physics \cite{sardanashvily1994five}.
Thus, a necessary step for bringing these formalisms to the noncommutative setting is to obtain an appropriate definition of jet functor, which we aim to provide with this paper.

On the side of pure mathematics, the geometric theory of differential equations is formulated in the context of jet bundles.
Natural differential equations, including integrability conditions for geometric structures, are formulated via the jet bundles corresponding to the objects underlying said structures.
For example, the Nijenhuis tensor obstructing integrability of an almost complex structure, the exterior differential obstructing integrability of an almost symplectic form, or the Ricci tensor of a pseudo-Riemannian metric determining the Einstein property, all provide natural equations.
Hence, this article represents a step towards the intrinsic formulation of fundamentally noncommutative geometric integrability conditions.

The categorical approach to PDE, initially considered in \cite{Vinogradov}, can also be treated via the jet comonad and its corresponding Eilenberg-Moore category of coalgebras \cite{Marvan1}.
Our functorial approach is amenable to generalizing the latter.
We will treat this in a forthcoming article on infinite order differential operators and the jet comonad.
There exists another approach to PDE arising from algebraic geometric techniques, namely that of $\mathcal{D}$-modules and, more generally, $\mathcal{D}$-geometry (cf.\ \cite{beilinson2004chiral}).
To determine the relationship between the noncommutative generalizations of these notions would be an important structural result.

\subsection*{Acknowledgments}
We would like to thank R\'{e}amonn \'{O} Buachalla and Jan Slov\'{a}k for useful discussions.
Further, we gratefully acknowledge the support of the Czech Science Foundation via the project (GA\v{C}R) No.\ GX19-28628X.

\section{The $1$-jet functor}\label{s:1jetfunctor}
\subsection{Universal $1$-jet module}\label{ss:universal1jet}
Consider an algebra $A$ over $\bk$, endowed with the \emph{universal first order differential calculus} $\Omega^1_u$ (cf.\ \cite[Proposition 1.5, p.~5]{BeggsMajid}).
By definition of $\Omega^1_u$, we have the following short exact sequence.
\begin{equation}\label{es:diffses}
\begin{tikzcd}
0 \arrow[r] &\Omega^1_u \arrow[r]& A\otimes A \arrow[r,"\cdot"]& A \arrow[r]&0.
\end{tikzcd}
\end{equation}
Furthermore, it splits in the category $\ModA$, with right splitting provided by $a\mapsto 1\otimes a$.
Since any additive functor preserves split exact sequences, we apply the functor $-\otimes_A E\colon \AModA\to\AMod$ to \eqref{es:diffses}, where $E$ is a left $A$-module, to obtain the following short exact sequence in $\AMod$
\begin{equation}\label{es:jetses}
\begin{tikzcd}
0 \arrow[r] &\Omega_u^1 \otimes_{A}E \arrow[r]& J_u^1E \arrow[r,"\pi^{1,0}_{u,E}"]& E \arrow[r]&0.
\end{tikzcd}
\end{equation}
Here $J_u^1E\colonequals A\otimes A\otimes_A E\cong A\otimes E$.

\begin{defi}
	We define the {\em universal $1$-jet prolongation} $j_u^1\colon E\to J^1_u E=A\otimes E$ by the mapping $e$ to $1\otimes e$.
\end{defi}

\begin{rmk}
	If $E$ is a projective $A$-module, then $\Ext^1_A(E,-)$ vanishes, implying that there exists a unique extension via $E$ up to extension isomorphism, and that it is a split extension.
	This implies that there is a unique left $A$-module $J^1_u E$ satisfying \eqref{es:jetses}.
	Thus $J^1_u E=A\otimes E\cong E\Aoplus (\Omega^1_u\otimes_A E)$.
\end{rmk}
\begin{rmk}
The universal $1$-jet bimodule $J^1_u A$ is the free $A$-bimodule $A\otimes A$ generated by a single element.
\end{rmk}

\subsection{The $1$-jet module for non-universal first order differential calculi}
\label{ss:1jetA}
First, we make the following observation.
\begin{prop}
	Morphisms in $\CalcA$ are epimorphisms when considered in $\AModA$.
	Moreover, when they exist, morphisms between any two objects are unique.
\end{prop}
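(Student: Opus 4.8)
The statement has two parts: (i) every morphism $\varphi\colon \Omega^1_d \to \Omega^1_{d'}$ in $\CalcA$ is an epimorphism in $\AModA$, and (ii) between any two objects of $\CalcA$ there is at most one morphism. The plan is to exploit the defining surjectivity condition $AdA = \Omega^1_d$ together with the fact that a morphism in $\CalcA$ intertwines the differentials, i.e.\ $\varphi \circ d = d'$.

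For (i), I would argue directly that $\varphi$ is surjective as a map of bimodules, which immediately implies it is an epimorphism in $\AModA$ (and in fact in any reasonable category of modules). The key observation is that the image $\im(\varphi)$ is a sub-bimodule of $\Omega^1_{d'}$, and since $\varphi \circ d = d'$, it contains $d'A$. But $\Omega^1_{d'}$ is generated as a left $A$-module by $d'A$ by the surjectivity axiom of a first order differential calculus, so $\Omega^1_{d'} = A\, d'A \subseteq \im(\varphi)$, forcing $\varphi$ to be surjective. Hence $\varphi$ is an epimorphism.

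For (ii), suppose $\varphi, \psi\colon \Omega^1_d \to \Omega^1_{d'}$ are both morphisms in $\CalcA$. Then $\varphi \circ d = d' = \psi \circ d$, so $\varphi$ and $\psi$ agree on $dA$. Since they are both left $A$-linear (indeed bimodule maps), they agree on $A\, dA = \Omega^1_d$, again by the surjectivity condition. Therefore $\varphi = \psi$.

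I do not anticipate a genuine obstacle here: both parts reduce to the single structural fact that a first order differential calculus is generated over $A$ by the image of its differential, combined with the compatibility $\varphi\circ d = d'$ built into the definition of a morphism in $\CalcA$. The only point requiring a line of care is spelling out that surjectivity of a bimodule map is equivalent to (or at least implies) being an epimorphism in $\AModA$, which is standard. One could phrase the whole argument more slickly by noting that $\CalcA$ is, up to the choice of differential, a poset-like category under the quotient ordering of calculi — every calculus is a quotient of the universal calculus $\Omega^1_u$ by Proposition~1.5 of \cite{BeggsMajid}, and morphisms correspond to inclusions of the defining sub-bimodules — but the direct argument above is shorter and self-contained.
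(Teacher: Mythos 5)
Your proof is correct and follows essentially the same route as the paper's: both arguments rest on the intertwining relation $\varphi\circ d = d'$ together with the surjectivity axiom $A\,d'A = \Omega^1_{d'}$ to get surjectivity (hence epi), and on left $A$-linearity plus $AdA = \Omega^1_d$ to get uniqueness. The paper just phrases the surjectivity step as an explicit element computation $\varphi(\sum_i a_i\,db_i) = \sum_i a_i\,d'b_i$ rather than via the sub-bimodule $\im(\varphi)$, which is an immaterial difference.
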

\begin{proof}
	Consider the morphism $\varphi$.
	We have the following commutative diagram.
	\begin{equation}\label{calcamorphism}
	\begin{tikzcd}
	&A\arrow[dl,"d"']\arrow[rd,"d'"]&\\
	\Omega^1_{d}\arrow[rr,rightarrow,"\varphi"]& &\Omega^1_{d'}\\[\vsfd]
	\end{tikzcd}
	\end{equation}
	Let $\sum_i a_i d' b_i \in \Omega^1_{d}$.
	By commutativity, $\varphi(d b_i)= d' b_i$ for all $i$, which gives uniqueness.
	Then, $\varphi(\sum_i a_i d b_i ) =\sum_i a_i \varphi(d b_i) = \sum_i a_i d' b_i$, showing the map is an epimorphism.
\end{proof}
Consider a first order differential calculus $\Omega^1_d$.
This can always be realized as a quotient of the universal first order differential calculus (cf.\ \cite[Proposition 1.5, p.~5]{BeggsMajid}), in particular, $\Omega^1_u$ is the initial object in $\CalcA$.
Explicitly,
\begin{equation}\label{eq:suruni}
\begin{tikzcd}
A\arrow[d,"d_u"']\arrow[rd,"d"]\\
\Omega^1_u\arrow[r,twoheadrightarrow,"p_d"]&\Omega^1_d\\[\vsfd]
\sum_i a_i\otimes b_i\arrow[r,mapsto]& \sum_i a_i db_i
\end{tikzcd}
\end{equation}
\begin{rmk}\label{rmk:pdother}
Since $p_d$ acts on elements $\sum_i a_i\otimes b_i\in \Omega^1_u$, we also have $p_d(\sum_i a_i\otimes b_i)=-\sum_i (da_i) b_i$.
\end{rmk}
Let $N_d\colonequals\ker(p_d)=\{\sum_i a_i\otimes b_i\in A\otimes A|\sum_i a_i b_i=0,\sum_i a_i d b_i=0\}\subseteq \Omega^1_u\subseteq A\otimes A$ be the kernel of the quotient projection.
We denote its inclusion by $\iota_{N_d}$.
Note that $N_d$ is an object in $\AModA$.

Below, we will construct the $1$-jet module $J^1_d A$ for the $A$-module $A$ so that it satisfies the jet short exact sequence
\begin{equation}\label{es:jetdA}
\begin{tikzcd}
0 \arrow[r] &\Omega_{d}^1 \arrow[r,hookrightarrow,"\iota^1_d"]& J_d^1 A \arrow[r,twoheadrightarrow,"\pi^{1,0}_{d}"]& A \arrow[r]&0.
\end{tikzcd}
\end{equation}
Since $A$ is projective in $\AMod$, \eqref{es:diffses} and \eqref{es:jetdA} split in $\AMod$.
This implies that $p_d$ and $\id_A$ extend to a map of extensions from \eqref{es:diffses} to \eqref{es:jetdA}.
For every such map, we obtain the following diagram, where columns and rows are exact
\begin{equation}\label{eq:1jetd}
\begin{tikzcd}
&N_d \ar[d,hookrightarrow,"\iota_{N_d}"] & \ker(\widehat{p}_d)\ar[d,hookrightarrow] & 0\ar[d]&\\
0 \arrow[r] &\Omega^1_{u}\arrow[d,"p_d",twoheadrightarrow]\arrow[r,hookrightarrow,"\iota^1_u"]& J^1_u A\colonequals A\otimes A\ar[d,"\widehat{p}_d"] \arrow[r,twoheadrightarrow,"\pi^{1,0}_u"]& A\arrow[d,equals] \arrow[r]&0\\
0 \arrow[r]& \Omega^1_{d}\ar[d]\arrow[r,hookrightarrow,"\iota^1_d"]& J^1_d A \ar[d]\arrow[r,twoheadrightarrow,"\pi^{1,0}_d"]& A\ar[d]\arrow[r]&0\\
&0& \coker(\widehat{p}_d)& 0 &
\end{tikzcd}
\end{equation}
By applying the snake lemma,
we deduce that $\ker(\widehat{p}_d)=N_d$ and that $\coker(\widehat{p}_d)=0$.
It follows that $\widehat{p}_d$ is an epimorphism, and thus we can give an explicit description of the $1$-jet module as
\begin{equation}
J^1_d A\colonequals (A\otimes A)/N_d.
\end{equation}
We then define the maps $\pi^{1,0}_d$ and $\iota^1_d$ to be the unique maps that commute in \eqref{eq:1jetd}.
Explicitly,
\begin{align}
\pi^{1,0}_d([a\otimes b])=ab,
&\hfill&
\iota^1_d\left(\sum_i a_i db_i\right)=\sum[a_i\otimes b_i-a_i b_i\otimes 1].
\end{align}
The quotient $J^1_d A$ inherits a natural $A$-bimodule structure since $N_d$ is a subbimodule.
Moreover, with the right action induced by the quotient, all maps in \eqref{eq:1jetd} are $A$-bilinear.
\subsubsection{Induced splitting and $1$-jet prolongation}\label{sss:Splitting}
The differential $d$ induces a left splitting of the sequence \eqref{es:diffses} in the category $\AMod$, via the map
\begin{align}
\widetilde d_u\colon J^1_u A \longrightarrow& \ \Omega^1_u,
&\hfill&
a\otimes b\longmapsto ad_u b.
\end{align}
This splitting descends to a canonical splitting of the sequence \eqref{es:jetdA} in $\AMod$.
Consider now the precomposition of $p_d\circ\widetilde{d}_u$ with the inclusion of $N_d$ in $J^1_u A$.
This yields $p_d\circ\widetilde{d}_u\circ(\iota^1_u\circ \iota_{N_d})=p_d\circ \iota_{N_d}=0$, thus $p_d\circ\widetilde{d}_u$ factors through $\widehat{p}_d$ via a map
\begin{equation}
	\begin{tikzcd}
		\widetilde{d}\colon J^1_d A \ar[r]& \Omega^1_d.
	\end{tikzcd}
\end{equation}

Next, we will consider the noncommutative generalization of the classical notion of $1$-jet prolongation.
\begin{defi}[$1$-jet prolongation]\label{def:1-jet prolongation}
	Given a first order differential calculus $\Omega^1_d$, we define the {\em $1$-jet prolongation map} $j^1_d\colon A \rightarrow J^1_d A$ by
\begin{equation}
j^1_d\colonequals \widehat{p}_d\circ j^1_u.
\end{equation}
\end{defi}
As the composition of two right $A$-linear morphisms, $j^1_d$ is evidently right $A$-linear.
It is not left $A$-linear in general, e.g.\ in the universal case.
\begin{rmk}
	As we will see (cf.\ Remark \ref{rmk:classicsplitaction}), classically, the left action of $A$ on $J^1_d A$ corresponds to the standard point-wise multiplication on sections of a vector bundle, whereas the right action is that for which $j^1_d$ is $A$-linear.
	The latter action has been remarked upon before, see \cite[p.~945]{Crainic}.
\end{rmk}
\begin{rmk}
Notice that $\widetilde{d}\circ j^1_d=\widetilde{d}\circ \widehat{p}_d\circ j^1_u=p_d\circ \widetilde{d}_u\circ j^1_u=p_d\circ d_u=d$.
	In §\ref{s:firstorderdifferentialoperators} we will see that $\widetilde{d}$ and $\widetilde{d}_u$ can be seen as the lifts of the differential operators $d$ and $d_u$, respectively, to their corresponding $1$-jet modules.
\end{rmk}

\begin{prop}
	The $1$-jet prolongation map provides a right splitting of \eqref{es:jetdA} in $\ModA$.
\end{prop}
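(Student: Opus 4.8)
The plan is to verify the two defining properties of a right splitting in $\ModA$ directly: that $j^1_d$ is a morphism of right $A$-modules, and that $\pi^{1,0}_d\circ j^1_d=\id_A$.

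For right $A$-linearity, I would simply unwind the definition $j^1_d=\widehat{p}_d\circ j^1_u$. The map $j^1_u\colon A\to J^1_u A=A\otimes A$, $e\mapsto 1\otimes e$, is right $A$-linear because the right $A$-action on $A\otimes A$ is the one carried on the second tensor factor, and $\widehat{p}_d$ is $A$-bilinear (as already observed below \eqref{eq:1jetd}, since $N_d$ is a subbimodule, so the right action descends to the quotient). Hence the composite $j^1_d$ is right $A$-linear, which is in any case already noted just after Definition \ref{def:1-jet prolongation}.

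For the splitting identity, I would chase the diagram \eqref{eq:1jetd}. Its commutativity gives $\pi^{1,0}_d\circ\widehat{p}_d=\pi^{1,0}_u$. Moreover the universal sequence \eqref{es:diffses} (equivalently \eqref{es:jetses}) was split from the start by $a\mapsto 1\otimes a$, i.e.\ $\pi^{1,0}_u\circ j^1_u=\id_A$, since $\pi^{1,0}_u(a\otimes b)=ab$ and $j^1_u(a)=1\otimes a$. Combining these,
\[
\pi^{1,0}_d\circ j^1_d=\pi^{1,0}_d\circ\widehat{p}_d\circ j^1_u=\pi^{1,0}_u\circ j^1_u=\id_A,
\]
which is the desired right splitting of \eqref{es:jetdA}.

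I do not expect any genuine obstacle here: the claim follows formally from the construction of $J^1_d A$ as the quotient making \eqref{eq:1jetd} commute, together with the fact that the universal $1$-jet sequence was already split by $1\otimes(-)$. The only point deserving a moment's care is the bookkeeping of module structures — keeping track that the right $A$-action sits on the correct tensor factor throughout, so that "right splitting in $\ModA$" is the accurate statement — but this is handled by the conventions fixed above.
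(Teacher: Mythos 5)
Your argument is correct and coincides with the paper's own proof: both establish right $A$-linearity of $j^1_d=\widehat{p}_d\circ j^1_u$ from the right $A$-linearity of $j^1_u$ and the bilinearity of $\widehat{p}_d$, and both verify the splitting identity via the same chain $\pi^{1,0}_d\circ\widehat{p}_d\circ j^1_u=\pi^{1,0}_u\circ j^1_u=\id_A$. No gaps.
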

\begin{proof}
	The map $j^1_u$ is right $A$-linear and $\widehat{p}_d$ is an $A$-bimodule map.
	Moreover, we have
	\begin{equation}		
		\pi^{1,0}_d \circ j^1_d
		=\pi^{1,0}_d \circ \widehat{p}_d\circ j^1_u
		=\pi^{1,0}_u \circ j^1_u
		=\id_A.
	\end{equation}
\end{proof}
We now describe the right $A$-linear left split corresponding to the right split $j^1_d$.
By the properties of the biproduct \cite[§2]{MacLane}, this split can be obtained from $\id_{J^1_d A}-j^1_d\circ \pi^{1,0}_d\colon J^1_d A\to J^1_d A$.
In fact, since composing this map with $\pi^{1,0}_d$ gives zero, the map factors through the kernel of $\pi^{1,0}_d$, i.e.\ $\Omega^1_d$, giving us
\begin{align}\label{def:rho}
\rho_d \colon J^1_d A \longrightarrow & \ \Omega^1_d,
&\hfill&
[a\otimes b] \longmapsto -(da)b.
\end{align}
The maps $\rho_d$ and $\rho_u$ are related, as we have
\begin{equation}
\iota^1_d\circ \rho_d\circ \widehat{p}_d
=\widehat{p}_d-j^1_d\circ \pi^{1,0}_d\circ \widehat{p}_d
=\widehat{p}_d-j^1_d\circ \pi^{1,0}_u
=\widehat{p}_d-\widehat{p}_d\circ j^1_u\circ \pi^{1,0}_u
=\widehat{p}_d\circ \iota^1_u\circ \rho_u
=\iota^1_d\circ p_d\circ \rho_u,
\end{equation}
whence
\begin{equation}
\rho_d\circ \widehat{p}_d
= p_d\circ \rho_u.
\end{equation}
\begin{rmk}\label{rmk:rhodo}
The map $\rho_d$ is right $A$-linear, but not necessarily left $A$-linear.
In fact, it behaves like a differential, as it satisfies the following Leibniz-like property (cf.\ Remark \ref{rmk:rhodif})
\begin{equation}
\rho_d(\lambda [a\otimes b])
=-d(\lambda a)b
=-(d\lambda)ab-\lambda (da)b
=\left(-(d\lambda)\pi^{1,0}_d+\lambda \rho_d\right)[a\otimes b].
\end{equation}
It is instead left $A$-linear if and only if $d=0$.
\end{rmk}

Notice that \eqref{es:jetdA} splits canonically both in $\AMod$ and in $\ModA$, but in general it does not split in $\AModA$.
\begin{rmk}\label{rmk:semidirectjet}
By using the splitting induced by $\widetilde{d}$, we can explicitly write the $1$-jet module as $A\ltimes' \Omega^1_d$, regarded as an $A$-module as $A\Aoplus \Omega^1_d$, with the right action defined by $(a,\alpha)\smbr f=(af,\alpha f+a df)$ for all $f\in A$.
The isomorphism is given by
\begin{align}
J^1_d A\longrightarrow A\ltimes' \Omega^1_d,
&\hfill&
[x\otimes y]\longmapsto (xy,xdy).
\end{align}

Analogously, we can use the splitting in $\Mod_A$, obtaining the bimodule $A\ltimes \Omega^1_d$, with left and right $A$-actions induced by the isomorphism
\begin{align}
J^1_d A\longrightarrow A\ltimes \Omega^1_d,
&\hfill&
[x\otimes y]\longmapsto (xy,-(dx)y).
\end{align}
Explicitly, the right action is the component-wise right action, and the left action is given by $f\smb(a,\alpha)=(fa,f\alpha-(df)a)$.
\end{rmk}

\subsection{Jet module on a left $A$-module}
\label{ss:jetgenericE}
The short exact sequence \eqref{es:jetses} shows that $\Omega^1_u\otimes_A E$ can be identified with the kernel of the scalar action $A\otimes E\to E$.
Now define $p_{d,E}\colonequals p_d\otimes_A \id_E$.
By Remark \ref{rmk:pdother} we have $p_{d,E}(\sum_i a_i\otimes e_i)=-\sum_i d a_i\otimes_A e_i$.
For later usage, we define
\begin{equation}\label{eq:definitionNdE}
N_{d}(E)\colonequals \ker(p_{d,E})
=\left\{\sum_i a_i\otimes e_i\in A\otimes E\middle|\sum_i a_i e_i=0,\sum_i d a_i \otimes_A e_i=0\right\}
\subseteq \Omega^1_u\otimes_A E
\subseteq A\otimes E.
\end{equation}

Next, we apply the functor $-\otimes_A E$ to \eqref{es:jetdA}.
Since $A$ is flat in $\ModA$, we have $\Tor^A_1(A,E)=0$, whereby we deduce the short exact sequence
\begin{equation}\label{es:jetd}
\begin{tikzcd}
0 \arrow[r] &\Omega_{d}^1 \otimes_{A}E \arrow[r,"\iota^1_{d,E}"]& J_d^1E \arrow[r,"\pi^{1,0}_{d,E}"]& E \arrow[r]&0,
\end{tikzcd}
\end{equation}
where $J_d^1E\colonequals J_d^1A\otimes_A E$, $\pi^{1,0}_{d,E}\colonequals\pi^{1,0}_{d}\otimes_A \id_{E}$, and $\iota^1_{d,E}\colonequals\iota^1_{d}\otimes_A \id_{E}$.
Since $j^1_d$ is right $A$-linear, and provides a right splitting for \eqref{es:jetdA} in $\ModA$, we define the $1$-prolongation $j^1_{d,E}\colon E\to J^1E$ to be $j^1_{d}\otimes_A \id_{E}$, which is a $\bk$-linear map and a right inverse for $\pi^{1,0}_{d,E}$.

We can also describe $J^1_d E$ in terms of $J^1_u E$ by applying the functor $-\otimes_A E$ to diagram \eqref{eq:1jetd} to obtain
\begin{equation}\label{diag:1jetfromuni}
\begin{tikzcd}
0 \arrow[r] &\Omega_{u}^1 \otimes_{A}E \arrow[r,"\iota^1_{u,E}"]\arrow[d,twoheadrightarrow,"p_{d,E}"]& J^1_u E \arrow[r,"\pi^{1,0}_{u,E}"]\arrow[d,twoheadrightarrow,"\widehat{p}_d\otimes_A \id_E"]&[20pt] E \arrow[r]\arrow[d,equal]&0\\
0 \arrow[r] &\Omega_{d}^1 \otimes_{A}E \arrow[r,"\iota^1_{d,E}"]& J^1_d E \arrow[r,"\pi^{1,0}_{d,E}"]& E \arrow[r]&0
\end{tikzcd}
\end{equation}
Since the maps $p_d$ and $\widehat{p}_d$ are surjective, and $-\otimes_A E$ is a right exact functor, the vertical maps are epimorphisms.
From the snake lemma, we have $N_{d}(E)=\ker(p_{d,E})\cong\ker(\widehat{p}_d\otimes_A\id_E)$, so $N_{d}(E)$ is also a submodule of $J^1_u E$.
Thus we have the following isomorphism
\begin{equation}\label{eq:J1dE=J1dA/NdE}
J^1_d E \cong J^1_u E/N_{d}(E).
\end{equation}

\begin{rmk}\label{rmk:flpj}
By right exactness of $-\otimes_A E$, in general $N_{d}(E)$ coincides with the image of $N_d\otimes E$ in $\Omega^1_u\otimes_A E$.
If $\Tor^1_A(\Omega^1_d, E)=0$ (e.g.\ if $E$ is flat in $\AMod$ or $\Omega^1_d$ is flat in $\ModA$), then $N_{d}(E)=\ker(p_d\otimes_A\id_E)=\ker(p_d)\otimes_A E=N_d\otimes_A E$.
Moreover, if $E$ is a projective left $A$-module, then up to isomorphism there is a unique $J^1_dE$ satisfying \eqref{es:jetd}, and the latter splits in $\AMod$.
\end{rmk}
\begin{rmk}
Due to Remark \ref{rmk:semidirectjet}, we have $J^1_d E= E\ltimes(\Omega^1_d \otimes_A E)$, where the left action is given by
\begin{equation}
a\smb(e_0,\alpha\otimes_A e_1)
=(ae_0,-(da)\otimes_A e_0+a\alpha\otimes_A e_1).
\end{equation}
\end{rmk}

\begin{prop}\label{prop:Nd-inclusion}
	Given a morphism $\varphi\colon \Omega^1_{d}\rightarrow \Omega^1_{d'} $ in $\CalcA$, then we have $N_d(E) \subseteq N_{d'}(E)$.
	Moreover, $\ker(\varphi\otimes_A \id_E)=N_{d'}(E)/N_d(E)$, and in particular $\ker(\varphi)=N_{d'}/N_d$.
\end{prop}
\begin{proof}
	Consider the following diagram.
	\begin{equation}\label{diag:butterfly}
	\begin{tikzcd}
		0 \arrow[r] &N_d(E) \arrow[r,hook]\arrow[d,dashed]& \Omega^1_u \otimes_A E \arrow[r,twoheadrightarrow,"p_{d,E}"]\arrow[d,equals]& \Omega^1_d\otimes_A E \arrow[r]\arrow[d,twoheadrightarrow,"\varphi\otimes_A \id_E"]&0\\
		0 \arrow[r] &N_{d'}(E) \arrow[r,hook]& \Omega^1_u \otimes_A E \arrow[r,two heads,"p_{d',E}"]& \Omega^1_{d'}\otimes_A E \arrow[r]&0
	\end{tikzcd}
\end{equation}
	By definition, $\varphi$ is $A$-bilinear and compatible with the differentials $d$ and $d'$.
	By these two properties, together with the surjectivity axiom of $\Omega^1_u$ and \eqref{eq:suruni}, we obtain $\varphi\circ p_d=p_{d'}$.
	By applying the functor $-\otimes_A E$ to this equality, we show that the right square commutes.
	The kernel universal property implies the existence and uniqueness of the leftmost vertical arrow, commuting in the left square.
	The snake lemma yields that this map is a monomorphism (in particular it is the inclusion map) and that $\ker(\varphi\otimes_A\id_E)=N_{d'}(E)/N_d(E)$.
	Finally, we obtain $\ker(\varphi)$ by setting $E=A$.
\end{proof}

\subsection{Functoriality}\label{ss:funct1j}
By definition, the $1$-jet module of a left $A$-module $E$ is obtained as the tensor $J^1_d A\otimes_A E$.
Given a left $A$-linear map $\phi\colon E\to F$, we can build a map $J^1_d \phi\colon J^1_d E\to J^1_d F$ defined by $\id_{J^1_{d} A}\otimes_A \phi$.
We can thus see the construction taking left $A$-modules to their $1$-jet as a functor
\begin{equation}
J^1_d\colonequals J^1_d A\otimes_A -\colon \AMod\longrightarrow \AMod.
\end{equation}
\begin{rmk}\label{rmk:exactnessJ1functor}
The functor $J^1_d$ is always right exact.
If $J^1_d A$ is flat in $\ModA$, then the functor $J^1_d$ is exact.
For example, if $\bk$ is a field or more generally if $A$ is a flat $\bk$-module, the functor $J^1_u$ is exact, since it can be viewed as the composition of exact functors $A\otimes_A -$ and $A\otimes -$.
\end{rmk}
Similarly, we can consider the functor $\Omega^1_d\otimes_A -\colon \AMod\longrightarrow\AMod$, also denoted $\Omega^1_d$.
In particular, we can write $\Omega^1_d (E)$ to simplify the notation for $\Omega^1_d \otimes_A E$.
For the same reason as in Remark \ref{rmk:exactnessJ1functor}, this functor is right exact, and it is exact precisely when $\Omega^1_d$ is flat in $\ModA$.

The families of $\pi^{1,0}_{d,E}$ and $\iota^1_{d,E}$ for $E$ in $\AMod$ provide two natural transformations, $\iota^1_d$ and $\pi^{1,0}_d$, respectively.
Analogously, we can consider another natural transformation, corresponding to the $1$-jet prolongation, defined for every $E$, as the $\bk$-linear map
\begin{align}\label{eq:1jp}
j^1_{d,E}\colonequals j^1_d\otimes_A \id_{E}\colon E\longrightarrow J^1_d E,
&\hfill&
e\longmapsto {[1\otimes 1]}\otimes_A e={[1\otimes e]}.
\end{align}
By definition, it forms a natural transformation $j^1_d\colon \id_{\AMod}\to J^1_d$, but seen as objects in the category of functors $\AMod\to \Mod$.

\begin{defi}
We call $\pi^{1,0}_d$ the \emph{$1$-jet projection} and $j^1_d$ the \emph{$1$-jet prolongation}.
\end{defi}
\begin{rmk}\label{rmk:jpg}
Since $J^1_d E$ is a quotient of $J^1_u E=A\otimes E$, where $j^1_d$ maps $e$ to $[1\otimes e]$, it follows that $Aj^1_d(E)=A\widehat{p}_d (1\otimes E)=J^1_d E$.
\end{rmk}
Furthermore, the $1$-jet prolongation is a section of $\pi^{1,0}_d$, since for every object $\pi^{1,0}_{d,E}\circ j^1_{d,E}=\id_{E}$, i.e.
\begin{equation}\label{eq:jsecpi}
\pi^{1,0}_{d}\circ j^1_{d}=\id,
\end{equation}
as natural transformations.

By the tensorial nature of the construction, if $E$ is an $(A,B)$-bimodule, $J^1_d E$ and $\Omega^1_d(E)$ will inherit a natural structure of $(A,B)$-bimodule.
We can thus define the functors $J^1_d,\Omega^1_d\colon\AMod_B\to \AMod_B$.
The functor on $\AMod_B$ and the functor on $\AMod$ are compatible with the forgetful functor, in the sense that the following diagrams commute
\begin{equation}\label{diag:compforgetbi}
\begin{tikzcd}
\AMod_B\ar[r,"J^1_d"]\ar[d]&\AMod_B\ar[d]&&\AMod_B\ar[r,"\Omega^1_d"]\ar[d]&\AMod_B\ar[d]\\
\AMod\ar[r,"J^1_d"]&\AMod&&\AMod\ar[r,"\Omega^1_d"]&\AMod
\end{tikzcd}
\end{equation}
We will see that generally, all constructions we do on $\AMod$ lift to categories of bimodules compatibly with the forgetful functor.
\begin{rmk}\label{rmk:jbim}
If $E$ is an $(A,B)$-bimodule for a $\bk$-algebra $B$, then $j^1_{d,E}$ is also right $B$-linear.
\end{rmk}

The results obtained in this section lead to the following.
\begin{prop}\label{prop:1jses}
The following is a short exact sequence in the category of additive endofunctors on $\AMod$.
\begin{equation}\label{es:natJ1}
\begin{tikzcd}
0 \arrow[r] &\Omega_{d}^1 \arrow[r,hookrightarrow,"\iota^1_{d}"]& J_d^1 \arrow[r,twoheadrightarrow,"\pi^{1,0}_{d}"]& \id_{(\AMod)} \arrow[r]&0.
\end{tikzcd}
\end{equation}
This sequence splits in the category $\AMod\to \Mod$ with canonical splitting $j^1_d$.
Furthermore, \eqref{es:natJ1} yields a short exact sequence of additive endofunctors on $\AMod_B$, which splits via $j^1_d$ in $\AMod_B\to \Mod_B$.
\end{prop}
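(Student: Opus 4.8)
The plan is to reduce the statement to the objectwise behaviour of the functors, using that kernels and cokernels of natural transformations between additive functors valued in $\AMod$ exist and are computed objectwise; consequently a sequence of such natural transformations is exact precisely when its evaluation at every object of $\AMod$ is exact. The splitting assertion will be handled separately, with attention to the fact that it lives in the functor category valued in $\Mod$ rather than in $\AMod$.

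First I would recall from §\ref{ss:funct1j} that $\iota^1_d$ and $\pi^{1,0}_d$ are genuine natural transformations, their components being $\iota^1_{d,E} = \iota^1_d \otimes_A \id_E$ and $\pi^{1,0}_{d,E} = \pi^{1,0}_d \otimes_A \id_E$. Evaluating the candidate sequence \eqref{es:natJ1} at a left $A$-module $E$ reproduces exactly \eqref{es:jetd}, which is short exact for every $E$: it is obtained by applying $-\otimes_A E$ to the sequence \eqref{es:jetdA} (which splits in $\ModA$) and using $\Tor^A_1(A, E) = 0$, valid since $A$ is free of rank one over itself, so in particular no flatness hypothesis on $\Omega^1_d$ intervenes here. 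By the objectwise criterion, \eqref{es:natJ1} is therefore a short exact sequence of additive endofunctors on $\AMod$.

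For the splitting, I would use that the components $j^1_{d,E} = j^1_d \otimes_A \id_E$ of \eqref{eq:1jp} are $\bk$-linear (but, in general, not left $A$-linear, as already observed in the universal case), and that they assemble into a natural transformation $j^1_d\colon \id \to J^1_d$ between functors $\AMod \to \Mod$. Equation \eqref{eq:jsecpi} reads $\pi^{1,0}_d \circ j^1_d = \id$ as natural transformations, exhibiting $j^1_d$ as a section of $\pi^{1,0}_d$ in the functor category $\AMod \to \Mod$, which is the precise sense of the asserted splitting, since left $A$-linearity would be required to obtain a splitting in $\AMod \to \AMod$.

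Finally, for the bimodule statement I would invoke the commuting squares \eqref{diag:compforgetbi}, identifying $J^1_d$ and $\Omega^1_d$ on $\AMod_B$ with their $\AMod$-counterparts after applying the (exact, faithful, hence exactness-reflecting) forgetful functor $\AMod_B \to \AMod$. Evaluating the sequence of $\AMod_B$-endofunctors at an $(A,B)$-bimodule $E$ thus yields a sequence whose underlying left $A$-module sequence is \eqref{es:jetd}, hence exact; the components $\iota^1_{d,E}$ and $\pi^{1,0}_{d,E}$ are $A$-bilinear and right $B$-linear by the tensorial construction, and $j^1_{d,E}$ is right $B$-linear by Remark \ref{rmk:jbim}, so $j^1_d$ furnishes the splitting in $\AMod_B \to \Mod_B$. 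I do not anticipate a genuine obstacle: the only point demanding care is keeping track of which functor category hosts the splitting, everything else being the bookkeeping of objectwise (co)limits together with the naturality already prepared in §\ref{ss:funct1j}.
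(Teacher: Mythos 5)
Your proposal is correct and follows essentially the same route as the paper, which states this proposition as a summary of the results already established in §\ref{ss:funct1j}: objectwise exactness via \eqref{es:jetd} (obtained from \eqref{es:jetdA} by $-\otimes_A E$ using $\Tor^A_1(A,E)=0$), the splitting via \eqref{eq:jsecpi} in the functor category $\AMod\to\Mod$, and the bimodule lift via \eqref{diag:compforgetbi} and Remark \ref{rmk:jbim}. No gaps.
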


As a consequence, the left splitting $\rho_d$ (cf.\ \eqref{def:rho}) induces a natural left splitting in the category of functors $\AMod\to\Mod$
\begin{equation}\label{eq:rhon}
\rho_{d,E}\colonequals\rho_d\otimes_A \id_E\colon J^1_d E\longrightarrow \Omega^1_d(E),
\end{equation}
which lifts to the category of functors $\AMod_B\to \Mod_B$ compatibly to the forgetful functor $\AMod_B\to \AMod$.
\begin{rmk}\label{rmk:proldop}
The jet prolongation satisfies in each component the following Leibniz rule (cf.\ Proposition \ref{prop:1diffop})
\begin{equation}
j^1_{d,E}(a e)
=[1\otimes ae]
=[a\otimes e]-[a\otimes e]+[1\otimes ae]
=-\rho_{d,E}[a\otimes e]+aj^1_{d,E}(e)
=da\otimes_A e+aj^1_{d,E} e.
\end{equation}
Cf.\ \cite[p.~945]{Crainic} (note that here the authors use a nonstandard sign convention for the embedding of forms into jets).
\end{rmk}
\subsection{Flatness, projectivity, and exactness}
We start with some preliminary results concerning extensions by flat and projective modules.
\begin{lemma}\label{lemma:2-3}
In the abelian categories $\AMod$ and $\ModA$ consider the following short exact sequence
\begin{equation}\label{es:2-3}
\begin{tikzcd}
0\ar[r]& M\ar[r,hookrightarrow]& N\ar[r,twoheadrightarrow]& Q\ar[r]& 0.
\end{tikzcd}
\end{equation}
\begin{enumerate}
\item Suppose $Q$ is flat, then $M$ is flat if and only if $N$ is flat.
\item Suppose $Q$ is projective, then $M$ is projective if and only if $N$ is projective.
\item Suppose $Q$ and $M$ are finitely generated, then so is $N$.
\end{enumerate}
\end{lemma}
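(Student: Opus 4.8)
The plan is to prove each of the three statements by the standard technique of tensoring the short exact sequence \eqref{es:2-3} with a suitable test object and chasing the resulting long exact sequence, exploiting that the terminal term $Q$ is ``good'' (flat or projective). Throughout I work in $\AMod$; the argument in $\ModA$ is symmetric.

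\medskip

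\textbf{Part (i) (flatness).} Let $T$ be an arbitrary right $A$-module (resp.\ left $A$-module, depending on the side). Applying $T\otimes_A-$ to \eqref{es:2-3} gives the long exact sequence
\begin{equation*}
\cdots\longrightarrow \Tor^A_2(T,Q)\longrightarrow \Tor^A_1(T,M)\longrightarrow \Tor^A_1(T,N)\longrightarrow \Tor^A_1(T,Q)\longrightarrow \cdots.
\end{equation*}
Since $Q$ is flat, $\Tor^A_1(T,Q)=0$ and $\Tor^A_2(T,Q)=0$ for every $T$, so the middle map $\Tor^A_1(T,M)\to\Tor^A_1(T,N)$ is an isomorphism for all $T$. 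Hence $\Tor^A_1(T,M)=0$ for all $T$ if and only if $\Tor^A_1(T,N)=0$ for all $T$, which is exactly the statement that $M$ is flat iff $N$ is flat.

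\medskip

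\textbf{Part (ii) (projectivity).} For the ``if'' direction (so $N$ projective $\Rightarrow$ $M$ projective), note that since $Q$ is projective the sequence \eqref{es:2-3} splits, so $N\cong M\oplus Q$; a direct summand of a projective module is projective, giving that $M$ is projective. Conversely, if $M$ is projective, then $N\cong M\oplus Q$ is a direct sum of two projectives, hence projective. (Alternatively, one can argue with $\Ext^1_A(-,T)$: since $Q$ is projective, $\Ext^1_A(Q,T)=\Ext^2_A(Q,T)=0$, and the long exact $\Ext$ sequence forces $\Ext^1_A(M,T)\cong\Ext^1_A(N,T)$ for all $T$.) I expect to present the splitting argument as it is the cleanest.

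\medskip

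\textbf{Part (iii) (finite generation).} Choose finite generating sets $m_1,\dots,m_p$ for $M$ and $q_1,\dots,q_r$ for $Q$. Write $\iota\colon M\hookrightarrow N$ and $\pi\colon N\twoheadrightarrow Q$ for the two maps. By surjectivity of $\pi$, pick lifts $n_1,\dots,n_r\in N$ with $\pi(n_j)=q_j$. I claim $\{\iota(m_1),\dots,\iota(m_p),n_1,\dots,n_r\}$ generates $N$: given $n\in N$, write $\pi(n)=\sum_j a_j q_j$, so $\pi(n-\sum_j a_j n_j)=0$, hence $n-\sum_j a_j n_j\in\ker\pi=\im\iota$, and thus $n-\sum_j a_j n_j=\iota(\sum_i b_i m_i)$ for some $b_i\in A$. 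This exhibits $n$ as an $A$-linear combination of the chosen finite set, so $N$ is finitely generated.

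\medskip

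None of the three parts presents a genuine obstacle; the only point requiring mild care is ensuring that in parts (i) and (ii) the $\Tor$/$\Ext$ or splitting arguments are applied on the correct side (left vs.\ right $A$-modules), which is handled by the symmetry of the hypotheses, and that in part (ii) one explicitly invokes projectivity of $Q$ to split the sequence before taking summands.
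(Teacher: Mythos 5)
Your proposal is correct and follows essentially the same route as the paper: the $\Tor$ long exact sequence for flatness, the splitting $N\cong M\oplus Q$ for projectivity, and the lift-generators argument for finite generation (which the paper phrases as the first step of the horseshoe lemma). No gaps.
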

\begin{proof}
We prove the result in $\AMod$.
The proof for $\ModA$ is identical.
\begin{enumerate}
\item Given a right $A$-module $E$, applying the functor $E\otimes_A -$ to the short exact sequence \eqref{es:2-3} yields the long exact sequence
\begin{equation}
\begin{tikzcd}
\Tor^A_{k+1}(E,Q)\ar[r,"\delta"]& \Tor^A_k(E, M)\ar[r]& \Tor^A_k(E, N)\ar[r,twoheadrightarrow]& \Tor^A_k(E, Q)\ar[r]& 0.
\end{tikzcd}
\end{equation}
Since $Q$ is flat in $\AMod$, $\Tor^A_k(E,Q)=0$ for $k>0$.
Thus, this long exact sequence provides an isomorphism $\Tor^A_k(E, M)\cong \Tor^A_k(E, N)$.
In particular, $M$ is flat if and only if $N$ is flat.
\item If $Q$ is projective, then \eqref{es:2-3} splits in $\AMod$.
Therefore $N\cong M\Aoplus Q$.
A direct sum is projective if and only if each summand is projective (cf.\ \cite[Proposition 2.1, p.~6]{CE}).
Thence follows the statement.
\item Since $Q$ and $M$ are finitely generated, we have epimorphisms $A^q\twoheadrightarrow Q$ and $A^m\twoheadrightarrow M$ for $q,m\in\N$.
Proceeding as in the proof of the horseshoe lemma (cf.\ \cite[Horseshoe Lemma 2.2.9, p.~37]{Weibel}), we know that there is an epimorphism $A^{m+q}\twoheadrightarrow N$, proving that $N$ is finitely generated.
\qedhere
\end{enumerate}
\end{proof}
\begin{rmk}
Observe that an analogue of Lemma \ref{lemma:2-3}, wherein one replaces ``flat'' (resp.\ ``projective'') with ``free'', does not hold.

Moreover, if one assumes $M$ and $N$ to be flat (resp.\ projective), $Q$ is not necessarily flat (resp.\ projective).
To see this let $A=\bk[x]$, then, in the sequence
\begin{equation}
\begin{tikzcd}
0\ar[r]& \bk[x]\ar[r,hookrightarrow,"x"]& \bk[x]\ar[r,twoheadrightarrow]& \bk[x]/(x)\cong \bk \ar[r]& 0,
\end{tikzcd}
\end{equation}
the module $\bk[x]$ is free, and hence flat and projective, but $\bk$ is neither projective nor flat as a $\bk[x]$-module, since the injective map $x\colon \bk[x]\to \bk[x]$, multiplication by $x$, becomes the zero map $0\colon \bk\to\bk$.
\end{rmk}
\begin{lemma}\label{lemma:flpr}
Let $A$, $B$, and $C$ be $\bk$-algebras.
Let $M$ be an $(A,B)$-bimodule and let $N$ be a $(B,C)$-bimodule.
\begin{enumerate}
	\item If $M$ is in $\AMod_B$ and flat in $\AMod$ (respectively, in $\Mod_B$), and $N$ is in ${}_B\!\Mod_C$ and flat in ${}_B\!\Mod$ (respectively, in $\Mod_C$), then $M\otimes_B N$ is flat in $\AMod$ (respectively, in $\Mod_C$);
	\item If $M$ is in $\AMod_B$ and projective in $\AMod$ (respectively, in $\Mod_B$), and $N$ is in ${}_B\!\Mod_C$ and projective in ${}_B\!\Mod$ (respectively, in $\Mod_C$), then $M\otimes_B N$ is projective in $\AMod$ (respectively, in $\Mod_C$);
	\item If $M$ is in $\AMod_B$ and finitely generated in $\AMod$ (respectively, in $\Mod_B$), and $N$ is in ${}_B\!\Mod_C$ and finitely generated in ${}_B\!\Mod$ (respectively, in $\Mod_C$), then $M\otimes_B N$ is finitely generated in $\AMod$ (respectively, in $\Mod_C$).
\end{enumerate}
\end{lemma}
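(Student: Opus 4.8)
The plan is to reduce all three assertions to one elementary meta-principle — a composite of functors, each of which is exact (respectively exact, respectively preserves finite generation), inherits that property — applied to factorisations of $-\otimes_A(M\otimes_B N)$, of $\Hom_A(M\otimes_B N,-)$, and of ``the number of generators'' coming from associativity of the relative tensor product and the tensor--hom adjunction; each parenthetical ``(respectively, \dots)'' clause is then the mirror statement obtained by interchanging left and right. Throughout I read ``$P$ is flat in $\AMod$'' as ``$-\otimes_A P\colon\ModA\to(\text{modules})$ is exact'', ``$P$ is projective in $\AMod$'' as ``$\Hom_A(P,-)$ is exact'', and ``$P$ is finitely generated in $\AMod$'' as ``$P$ has a finite $A$-generating set'', together with the evident right-handed variants.

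For (i), the key point is the natural isomorphism $E\otimes_A(M\otimes_B N)\cong(E\otimes_A M)\otimes_B N$ for $E$ in $\ModA$, compatible with the residual module structures; it exhibits $-\otimes_A(M\otimes_B N)$ as the composite $\ModA\xrightarrow{-\otimes_A M}\Mod_B\xrightarrow{-\otimes_B N}\Mod_C$. The first factor is exact since $M$ is flat in $\AMod$, the second since $N$ is flat in ${}_B\!\Mod$, hence so is the composite, i.e.\ $M\otimes_B N$ is flat in $\AMod$. The parenthetical claim is the mirror statement, using $(M\otimes_B N)\otimes_C F\cong M\otimes_B(N\otimes_C F)$ and the factorisation ${}_C\!\Mod\xrightarrow{N\otimes_C-}{}_B\!\Mod\xrightarrow{M\otimes_B-}\AMod$.

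For (ii) one must resist tensoring a free presentation of $M$ in $\AMod$ with $N$ over $B$: the terms of such a presentation are not right $B$-modules, so the naive ``retract of a free module'' argument does not apply as stated. Instead I would use the tensor--hom adjunction $\Hom_A(M\otimes_B N,X)\cong\Hom_B\bigl(N,\Hom_A(M,X)\bigr)$, natural in the left $A$-module $X$, where $\Hom_A(M,X)$ is a left $B$-module via $(b\cdot f)(m)=f(mb)$ — keeping these four actions straight is the only place care is needed. Since $\Hom_A(M,-)$ is exact ($M$ projective in $\AMod$) and $\Hom_B(N,-)$ is exact ($N$ projective in ${}_B\!\Mod$), the composite $X\mapsto\Hom_A(M\otimes_B N,X)$ is exact, so $M\otimes_B N$ is projective in $\AMod$; the parenthetical case runs identically through $\Hom_C(M\otimes_B N,Y)\cong\Hom_B\bigl(M,\Hom_C(N,Y)\bigr)$. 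I expect (ii) to be the main obstacle, not because it is deep but because it is the one point where the naive approach breaks and one has to commit to the adjunction and track the bimodule bookkeeping.

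For (iii) I would argue directly with generators: if $m_1,\dots,m_p$ generate $M$ over $A$ and $n_1,\dots,n_q$ generate $N$ over $B$, then for $m\otimes n\in M\otimes_B N$ write $n=\sum_l c_l n_l$ with $c_l\in B$, so $m\otimes n=\sum_l(mc_l)\otimes n_l$, and then $mc_l=\sum_i a_{il}m_i$ with $a_{il}\in A$, giving $m\otimes n=\sum_{i,l}a_{il}(m_i\otimes n_l)$; hence the finite family $\{m_i\otimes n_l\}$ generates $M\otimes_B N$ over $A$, and the parenthetical statement is the same computation with the sides swapped. Finally I would note that all the isomorphisms and actions above are the obvious ones on bimodules, so the conclusions in fact hold at the level of the bimodule functors, compatibly with the forgetful functors.
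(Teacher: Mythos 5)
Your proof is correct and follows essentially the same route as the paper: part (i) by composing the two exact tensor functors, and part (iii) by exhibiting a finite generating set (the paper phrases this via epimorphisms $B^q\twoheadrightarrow N$ and $A^p\twoheadrightarrow M$ and right exactness of $M\otimes_B-$, which is the same computation). For part (ii) the paper simply cites Cartan--Eilenberg; your tensor--hom adjunction argument is a standard proof of that cited result, so there is no substantive divergence.
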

\begin{proof}\ 
\begin{enumerate}
\item The functor $M\otimes_B N\otimes_C -$ is the composition of two exact functors and so is the functor $-\otimes_A M\otimes_B N$.
\item Cf.\ \cite[Proposition 5.3, p.~28]{CE};
\item Since $N$ is finitely generated, there is a epimorphism $B^n\twoheadrightarrow N$ in ${}_B\!\Mod$.
Applying the right exact functor $M\otimes_B -$, we obtain the epimorphism $M^n\cong M\otimes_B B^n\twoheadrightarrow N$ in $\AMod$.
Since there is an $n$ such that $A^m\twoheadrightarrow M$ is an epimorphism in $\AMod$, we also have an epimorphism $A^{nm}\twoheadrightarrow N$, implying that $N$ is finitely generated in $\AMod$.
\qedhere
\end{enumerate}
\end{proof}
\begin{rmk}
In the previous lemma, if one or more among $A$, $B$, or $C$ is $\bk$, we obtain further results for left or right modules, as $\AMod_{\bk}=\AMod$ and ${}_{\bk}\!\ModA=\ModA$.
\end{rmk}
\begin{cor}\label{cor:uniflat}
If $A$ is flat (resp.~projective) in $\Mod$, then $\Omega^1_u$ is flat (resp.\ projective) in $\AMod$ and $\ModA$.
\end{cor}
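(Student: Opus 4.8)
The plan is to read the statement off the defining short exact sequence \eqref{es:diffses},
$$0 \longrightarrow \Omega^1_u \longrightarrow A\otimes A \xrightarrow{\cdot} A \longrightarrow 0 ,$$
using Lemma \ref{lemma:flpr} to control the middle term and Lemma \ref{lemma:2-3} to pass to the kernel. First I would note that $A$ is free, hence flat and projective, both as a left and as a right module over itself, so the quotient term $A$ in \eqref{es:diffses} satisfies the hypotheses imposed on $Q$ in Lemma \ref{lemma:2-3}(i)--(ii), in $\AMod$ and in $\ModA$ alike. It then suffices to establish that the middle term $A\otimes A$ is flat (resp.\ projective) in $\AMod$ and in $\ModA$: for then Lemma \ref{lemma:2-3}(i) yields flatness of $\Omega^1_u$ and Lemma \ref{lemma:2-3}(ii) yields projectivity, on each side.

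For the middle term I would invoke Lemma \ref{lemma:flpr} with the inner algebra taken to be $\bk$, so that $A\otimes A = A\otimes_\bk A$. Viewing the left-hand factor as an $(A,\bk)$-bimodule that is free --- hence flat (resp.\ projective) --- in $\AMod$, and the right-hand factor as a $\bk$-module that is flat (resp.\ projective) in $\Mod$ by hypothesis, Lemma \ref{lemma:flpr}(i) (resp.\ (ii)) gives that $A\otimes_\bk A$ is flat (resp.\ projective) in $\AMod$. Swapping the roles of the two factors --- the left one now a $\bk$-module, flat (resp.\ projective) by hypothesis, the right one a $(\bk,A)$-bimodule, free --- the same lemma gives flatness (resp.\ projectivity) of $A\otimes_\bk A$ in $\ModA$. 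Combined with the previous paragraph this finishes the argument.

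I do not expect a genuine obstacle; the only point requiring care is keeping straight which one-sided module structure on $A\otimes A$ is in play at each step, and hence which instance (and which respectively-clause) of Lemma \ref{lemma:flpr} to cite. As an alternative one could bypass Lemma \ref{lemma:2-3} altogether, since \eqref{es:diffses} splits in $\ModA$ via $a\mapsto 1\otimes a$ and, dually, in $\AMod$ via $a\mapsto a\otimes 1$, exhibiting $\Omega^1_u$ as a one-sided direct summand of $A\otimes A$ on each side, and a direct summand of a flat (resp.\ projective) module is flat (resp.\ projective).
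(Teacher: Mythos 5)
Your argument is correct and is essentially the paper's own proof: both apply Lemma \ref{lemma:flpr} (with inner algebra $\bk$) to see that $A\otimes A$ is flat (resp.\ projective) on each side, and then Lemma \ref{lemma:2-3} to the sequence \eqref{es:diffses} with quotient term $A$, which is free over itself. Your alternative via the one-sided splittings $a\mapsto 1\otimes a$ and $a\mapsto a\otimes 1$ is also valid but not needed.
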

\begin{proof}
By Lemma \ref{lemma:flpr}, $A\otimes A$ is flat (resp.\ projective), since $A$ is flat (resp.\ projective) as a $\bk$-module and free as a right and left $A$-module.
It follows that $\Omega^1_u$ is projective by applying Lemma \ref{lemma:2-3} to the sequence \eqref{es:diffses}.
\end{proof}
For later reference, we also mention the following lemma.
\begin{lemma}\label{lemma:flprsum}
Let $A$ be a $\bk$-algebra and $I$ a set, and let $M_i$ be in $\AMod$ (respectively, $\ModA$) for all $i\in I$.
\begin{enumerate}
\item The module $\bigoplus_{i\in I}M_i$ is flat if and only if $M_i$ is flat for all $i\in I$;
\item The module $\bigoplus_{i\in I}M_i$ is projective if and only if $M_i$ is projective for all $i\in I$.
\end{enumerate}
\end{lemma}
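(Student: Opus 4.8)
The statement to prove is Lemma~\ref{lemma:flprsum}: that an arbitrary direct sum $\bigoplus_{i\in I}M_i$ of left (or right) $A$-modules is flat (resp.\ projective) if and only if each summand $M_i$ is flat (resp.\ projective). The plan is to reduce both assertions to standard facts about $\Tor$ and about $\Hom$ commuting with direct sums, together with the observation that a direct summand of a flat (resp.\ projective) module is again flat (resp.\ projective).

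First I would treat the flatness assertion. Fix a right $A$-module $E$ and recall that $\Tor^A_k(E,-)$ commutes with arbitrary direct sums, since $\Tor$ may be computed from a projective resolution and the tensor product $E\otimes_A-$ commutes with direct sums (it is a left adjoint), while taking homology commutes with filtered colimits and in particular with direct sums. Hence $\Tor^A_k\bigl(E,\bigoplus_{i\in I}M_i\bigr)\cong\bigoplus_{i\in I}\Tor^A_k(E,M_i)$ for all $k\geq 1$. A direct sum of modules vanishes if and only if every summand vanishes, so the left-hand side is zero for all $E$ and all $k\geq1$ precisely when $\Tor^A_k(E,M_i)=0$ for all $i$, all $E$, and all $k\geq1$; that is, $\bigoplus_i M_i$ is flat iff every $M_i$ is flat. (For right modules one uses $-\otimes_A E$ with $E$ a left module instead, verbatim.)

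Next the projectivity assertion. For the ``only if'' direction, each $M_i$ is a direct summand of $\bigoplus_i M_i$, and a direct summand of a projective module is projective --- this is already recorded in the proof of Lemma~\ref{lemma:2-3}(ii) via \cite[Proposition~2.1, p.~6]{CE}. For the ``if'' direction, suppose each $M_i$ is projective, so there is a module $N_i$ with $M_i\oplus N_i$ free. Then $\bigl(\bigoplus_i M_i\bigr)\oplus\bigl(\bigoplus_i N_i\bigr)\cong\bigoplus_i(M_i\oplus N_i)$ is a direct sum of free modules, hence free, so $\bigoplus_i M_i$ is a direct summand of a free module and therefore projective. Alternatively one can argue via the lifting property: $\Hom_A\bigl(\bigoplus_i M_i,-\bigr)\cong\prod_i\Hom_A(M_i,-)$, a product of exact functors is exact, hence $\bigoplus_i M_i$ is projective; the ``only if'' direction then follows since a product of functors is exact only if each factor is. I would use whichever of these is shortest to state.

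I do not anticipate a genuine obstacle here --- the lemma is a standard homological-algebra fact and both halves follow from the interplay between direct sums and the defining exactness/vanishing conditions. The only point requiring the tiniest bit of care is justifying that $\Tor^A_k(E,-)$ (equivalently $-\otimes_A E$ after taking homology) commutes with arbitrary, not merely finite, direct sums; this is where I would cite a standard reference such as \cite[Chapter~VI]{CE} or simply remark that $\bigoplus$ is a filtered colimit of its finite subsums and homology commutes with filtered colimits.
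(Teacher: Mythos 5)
Your proof is correct and follows essentially the same route as the paper: the flatness claim via $\Tor^A_k$ commuting with arbitrary direct sums (the paper cites \cite[Corollary 2.6.11, p.~55]{Weibel}) together with the fact that a direct sum vanishes iff each summand does, and the projectivity claim via \cite[Proposition 2.1, p.~6]{CE}. Your write-up merely spells out the details that the paper delegates to those references.
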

\begin{proof}\ 
\begin{enumerate}
\item The $\Tor^A_n$ functors preserves direct sums \cite[Corollary 2.6.11, p.~55]{Weibel}, and a direct sum is zero if and only if each summand is zero.
\item Cf.\ \cite[Proposition 2.1, p.~6]{CE}.\qedhere
\end{enumerate}
\end{proof}

We can now educe the following proposition.
\begin{prop}\label{prop:OmJstab}
Consider the functors
\begin{align}
\Omega^1_d\colon\AMod\longrightarrow \AMod,
&\hfill &
J^1_d\colon\AMod\longrightarrow \AMod.
\end{align}
\begin{enumerate}
\item\label{prop:OmJstab:1} If $\Omega^1_d$ is in $\AFlat$, then the functors $\Omega^1_d$ and $J^1_d$ preserve the subcategory $\AFlat$;
\item\label{prop:OmJstab:2} If $\Omega^1_d$ is in $\AProj$, then the functors $\Omega^1_d$ and $J^1_d$ preserve the subcategory $\AProj$;
\item\label{prop:OmJstab:3} If $\Omega^1_d$ is in $\AFG$, then the functors $\Omega^1_d$ and $J^1_d$ preserve the subcategory $\AFG$;
\item\label{prop:OmJstab:4} If $\Omega^1_d$ is in $\AFGP$, then the functors $\Omega^1_d$ and $J^1_d$ preserve the subcategory $\AFGP$.
\end{enumerate}
\end{prop}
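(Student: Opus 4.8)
The plan is to reduce everything to the two tools already in hand: the short exact sequence \eqref{es:natJ1} of endofunctors, $0 \to \Omega^1_d \to J^1_d \to \id_{\AMod} \to 0$, evaluated at an arbitrary $E$ in the relevant subcategory, together with Lemma \ref{lemma:flpr} and Lemma \ref{lemma:2-3}. First I would handle the functor $\Omega^1_d = \Omega^1_d \otimes_A -$. If $\Omega^1_d$ is flat (resp. projective, finitely generated, finitely generated projective) in $\AMod$, then for $E$ in $\AFlat$ (resp. $\AProj$, $\AFG$, $\AFGP$) the module $\Omega^1_d \otimes_A E$ lies in the same subcategory: this is precisely Lemma \ref{lemma:flpr}(i)--(iii) applied with $B = \bk$ on one factor, noting $\AMod_{\bk} = \AMod$, and for the $\AFGP$ case one combines (ii) and (iii). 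So $\Omega^1_d$ preserves each of the four subcategories under the corresponding hypothesis.

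Next, for $J^1_d$, I would apply the functor to $E$ and read off the short exact sequence $0 \to \Omega^1_d(E) \to J^1_d E \to E \to 0$, which is \eqref{es:jetd} (valid since $A$ is flat in $\ModA$, so $\Tor^A_1(A,E) = 0$; alternatively this is the $E$-component of \eqref{es:natJ1}). Under the hypothesis on $\Omega^1_d$ and with $E$ in the appropriate subcategory, the submodule $\Omega^1_d(E)$ lies in that subcategory by the previous paragraph, and $E$ itself does by assumption; so both outer terms of the sequence are flat (resp. projective, resp. finitely generated). For the flat and projective cases I invoke Lemma \ref{lemma:2-3}(i) and (ii) respectively — with the roles $M = \Omega^1_d(E)$, $N = J^1_d E$, $Q = E$ — to conclude $J^1_d E$ is flat (resp. projective); note the hypotheses of Lemma \ref{lemma:2-3} require $Q = E$ flat (resp. projective), which holds. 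For the finitely generated case I use Lemma \ref{lemma:2-3}(iii), which needs only $M$ and $Q$ finitely generated. The $\AFGP$ case follows by combining the projective and finitely generated conclusions.

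The argument is essentially bookkeeping, so there is no single hard step; the only point demanding care is that Lemma \ref{lemma:2-3} is stated for a fixed short exact sequence and requires the quotient $Q$ to already lie in the subcategory, which is why the hypothesis ``$E$ in $\AFlat$/$\AProj$/$\AFG$/$\AFGP$'' (rather than an arbitrary $E$) is exactly what makes the induction-free argument go through. One should also remark that $\Omega^1_d(E)$ appearing as the \emph{sub}module (not the quotient) is the correct orientation for applying Lemma \ref{lemma:2-3}, and that the splitting of \eqref{es:jetd} in $\AMod$ when $E$ is projective (Remark \ref{rmk:flpj}) gives an alternative, more direct route for the projective and finitely-generated-projective cases via $J^1_d E \cong E \Aoplus \Omega^1_d(E)$ and Lemma \ref{lemma:flprsum}.
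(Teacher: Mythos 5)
Your proposal is correct and follows essentially the same route as the paper: Lemma \ref{lemma:flpr} handles $\Omega^1_d$, and Lemma \ref{lemma:2-3} applied to the $1$-jet short exact sequence \eqref{es:jetd} (with $M=\Omega^1_d(E)$, $N=J^1_dE$, $Q=E$) handles $J^1_d$, with the $\AFGP$ case obtained by combining the projective and finitely generated cases. The additional remarks about the orientation of the sequence and the alternative splitting argument are fine but not needed.
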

\begin{proof}
The proof is identical for the first three cases, so we prove it for \eqref{prop:OmJstab:1}.
The statement for $\Omega^1_d$ follows from Lemma \ref{lemma:flpr}, and the statement for $J^1_d$ follows from Lemma \ref{lemma:2-3} applied to the $1$-jet short exact sequence \eqref{es:jetd}.
The statement \eqref{prop:OmJstab:4} is obtained by combining \eqref{prop:OmJstab:2} and \eqref{prop:OmJstab:3}.
\end{proof}

As an immediate consequence of the definition of the $1$-jet functor, we obtain the following result concerning exactness.
\begin{lemma}\label{lemma:derfun}
The following properties hold
\begin{enumerate}
\item\label{lemma:derfun:1} $\Omega^1_d$ and $J^1_d$ are left adjoints and hence preserve all colimits;
\item\label{lemma:derfun:2} the left derived functors of $\Omega^1_d$ and $J^1_d$ are $L_k\Omega^1_d=\Tor^A_k(\Omega^1_d,\cdot)$ and $L_k J^1_d=\Tor^A_k(J^1_d A,\cdot)$;
\item\label{lemma:derfun:3} $\iota^1_d$ induces a natural isomorphism $L_k\Omega^1_d\to L_k J^1_d$ for $k>0$.
\end{enumerate}
\end{lemma}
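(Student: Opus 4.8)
The plan is to exploit that $\Omega^1_d$ and $J^1_d$ are the tensoring functors $M\otimes_A-$ attached to the $(A,A)$-bimodules $M=\Omega^1_d$ and $M=J^1_d A$, and to read off all three assertions from standard homological algebra.

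For \eqref{lemma:derfun:1} I would invoke the tensor--hom adjunction: $M\otimes_A-\colon\AMod\to\AMod$ is left adjoint to $\AHom(M,-)\colon\AMod\to\AMod$ (the left $A$-module structure on the latter being induced by the right $A$-action on $M$), and a left adjoint preserves all colimits.

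For \eqref{lemma:derfun:2} I would first note that both functors are right exact --- this is Remark \ref{rmk:exactnessJ1functor} together with the discussion immediately following it --- so their left derived functors exist and $L_0$ returns the functors themselves. Since $\AMod$ has enough projectives, $L_k(M\otimes_A-)$ is computed by tensoring a projective resolution, which is precisely the definition of $\Tor^A_k(M,-)$; hence $L_k\Omega^1_d=\Tor^A_k(\Omega^1_d,\cdot)$ and $L_kJ^1_d=\Tor^A_k(J^1_d A,\cdot)$.

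For \eqref{lemma:derfun:3} I would start from the short exact sequence of $(A,A)$-bimodules \eqref{es:jetdA}, namely
\[
0\longrightarrow\Omega^1_d\xrightarrow{\ \iota^1_d\ }J^1_d A\xrightarrow{\ \pi^{1,0}_d\ }A\longrightarrow0,
\]
which splits in $\ModA$ because $A$ is free, hence projective, as a right $A$-module. Fixing $E$ in $\AMod$ and a projective resolution $P_\bullet\to E$, I would tensor this sequence on the right with $P_\bullet$ over $A$; the right splitting survives $-\otimes_A P_n$, so the result is a short exact sequence of chain complexes
\[
0\longrightarrow\Omega^1_d\otimes_A P_\bullet\longrightarrow J^1_d A\otimes_A P_\bullet\longrightarrow P_\bullet\longrightarrow0,
\]
where I have used $A\otimes_A P_n\cong P_n$. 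Its long exact homology sequence, together with $H_k(P_\bullet)=0$ for $k>0$ and $H_0(P_\bullet)=E$, delivers natural isomorphisms
\[
L_k\Omega^1_d(E)=H_k(\Omega^1_d\otimes_A P_\bullet)\xrightarrow{\ \sim\ }H_k(J^1_d A\otimes_A P_\bullet)=L_kJ^1_d(E)\qquad(k>0),
\]
induced by $\iota^1_d\otimes_A P_\bullet$, i.e.\ by $L_k\iota^1_d$; naturality in $E$ follows from functoriality of projective resolutions and of the connecting map. (Equivalently, one could apply $\Tor^A_\bullet(-,E)$ directly to the displayed sequence of right $A$-modules, using that $A$ is flat in $\ModA$ so that $\Tor^A_k(A,E)=0$ for $k>0$, and appeal to balancedness of $\Tor$.)

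I do not expect a serious obstacle here. The only points to watch are that \eqref{es:jetdA} really does split as a sequence of \emph{right} $A$-modules --- this is what forces the long exact sequence to degenerate into the desired isomorphisms --- and that one keeps the two arguments of $\Tor$ apart (the left module $E$ is resolved, the bimodules are tensored on their right side), while remembering to use right exactness of $\Omega^1_d$ and $J^1_d$ for the $L_0$ identification in \eqref{lemma:derfun:2}.
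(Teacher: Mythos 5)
Your proposal is correct and follows essentially the same route as the paper: tensor--hom adjunction for (i), the definition of $\Tor$ as the left derived functor of a tensor functor for (ii), and the long exact sequence attached to \eqref{es:jetdA} together with the vanishing of $\Tor^A_k(A,E)$ for $k>0$ for (iii). Your variant of (iii) resolves $E$ and uses the right splitting to get a short exact sequence of complexes, but as you note parenthetically this is just the balanced-$\Tor$ form of the paper's one-line argument.
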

\begin{proof} \ 
\begin{enumerate}
\item The two functors are of the form $M\otimes_A -$ for an $A$-bimodule $M$, which has $\AHom(M,-)$ as a right adjoint.
The image of this functor is in $\AMod$, since for a left $A$-module $N$ we can define the left action of $A$ on $\AHom(M,N)$ by precomposition with the right action of $A$ on $M$.
\item Follows directly from their definitions as tensor functors.
\item By applying the functor $-\otimes_A E$ to \eqref{es:jetdA}, we obtain the corresponding long exact sequence, which in particular contains the exact sequences
\begin{equation}
\begin{tikzcd}
\Tor^A_{k+1}(A,E)\ar[r,"\delta"] &\Tor^A_k(\Omega^1_d,E)\ar[r,"\iota^1_d"] &\Tor^A_k(J^1_d A,E)\ar[r,"\pi_d^{1,0}"] &\Tor^A_k(A,E).
\end{tikzcd}
\end{equation}
Since $A$ is $A$-flat, $\Tor^A_k(A,E)=0$ for all $k>0$, and thus the central map $\Tor^A_k(\iota^1_d,E)$ is an isomorphism.
Naturality follows from that of $\iota^1_d$.\qedhere
\end{enumerate}
\end{proof}

\begin{cor}\label{cor:Jex}
The following are equivalent
\begin{enumerate}
\item $\Omega^1_d$ is an exact functor;
\item $J^1_d$ is an exact functor;
\item $\Omega^1_d$ is a flat right $A$-module;
\item $J^1_dA$ is a flat right $A$-module;
\item $\Tor_1^A(\Omega^1_d,E)=0$ for all $E$ in $\AMod$;
\item $\Tor_1^A(J^1_d A,E)=0$ for all $E$ in $\AMod$.
\end{enumerate}
\end{cor}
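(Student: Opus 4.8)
The plan is to establish the chain of equivalences (i)$\iff$(iii), (ii)$\iff$(iv), (iii)$\iff$(v), (iv)$\iff$(vi), and then close the loop by linking the $\Omega^1_d$-side to the $J^1_d$-side. Most of these are essentially immediate from the machinery already developed. For (iii)$\iff$(v) and (iv)$\iff$(vi): a right $A$-module $M$ is flat precisely when $\Tor^A_1(M,E)=0$ for all left $A$-modules $E$ (it suffices to test on $E$ ranging over cyclic modules, but we need not even invoke that refinement), so applying this to $M=\Omega^1_d$ and $M=J^1_d A$ gives these two equivalences directly. For (i)$\iff$(iii) and (ii)$\iff$(iv): by Lemma \ref{lemma:derfun}\eqref{lemma:derfun:2} the left derived functors of $\Omega^1_d=\Omega^1_d\otimes_A-$ and $J^1_d=J^1_d A\otimes_A-$ are $\Tor^A_k(\Omega^1_d,-)$ and $\Tor^A_k(J^1_d A,-)$; a right-exact functor of the form $M\otimes_A-$ is exact if and only if its first left derived functor $\Tor^A_1(M,-)$ vanishes identically, which is the same as $M$ being flat in $\ModA$. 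This also already shows (i)$\iff$(v) and (ii)$\iff$(vi) as a byproduct.

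It then remains to connect the two columns, and here the key input is Lemma \ref{lemma:derfun}\eqref{lemma:derfun:3}: the map $\iota^1_d$ induces a natural isomorphism $L_k\Omega^1_d\xrightarrow{\ \sim\ }L_k J^1_d$ for all $k>0$, i.e.\ $\Tor^A_k(\Omega^1_d,E)\cong\Tor^A_k(J^1_d A,E)$ naturally in $E$ for $k>0$. In particular, taking $k=1$, we get $\Tor^A_1(\Omega^1_d,E)=0$ for all $E$ if and only if $\Tor^A_1(J^1_d A,E)=0$ for all $E$; that is, (v)$\iff$(vi). Combined with the equivalences of the previous paragraph this ties everything together: (i)$\iff$(iii)$\iff$(v)$\iff$(vi)$\iff$(iv)$\iff$(ii).

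Alternatively — and perhaps more cleanly for exposition — one can bypass the separate treatment of the two sides by noting that the $1$-jet short exact sequence \eqref{es:jetdA}, namely $0\to\Omega^1_d\to J^1_d A\to A\to 0$ in $\AModA$, has its third term $A$ flat as a right $A$-module; hence by Lemma \ref{lemma:2-3}(i) applied in $\ModA$, $\Omega^1_d$ is flat in $\ModA$ if and only if $J^1_d A$ is flat in $\ModA$, giving (iii)$\iff$(iv) in one stroke. The remaining equivalences (i)$\iff$(iii), (ii)$\iff$(iv), (iii)$\iff$(v), (iv)$\iff$(vi) are then the routine ``$M\otimes_A-$ exact $\iff$ $\Tor^A_1(M,-)=0$ $\iff$ $M$ flat'' facts recorded above. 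I would present it this way, cycling through the six statements in the order (iii)$\to$(v)$\to$(i)$\to$(iii) and (iv)$\to$(vi)$\to$(ii)$\to$(iv), with the bridge (iii)$\iff$(iv) supplied either by Lemma \ref{lemma:2-3}(i) or by Lemma \ref{lemma:derfun}\eqref{lemma:derfun:3}.

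There is no real obstacle here: the corollary is a formal consequence of results already in hand, and the only thing to be careful about is the standard-but-worth-stating fact that a functor $M\otimes_A-$ which is right exact is exact exactly when $\Tor^A_1(M,-)\equiv 0$, together with the definition of flatness as the vanishing of $\Tor^A_1(M,-)$ on all modules. If anything, the mild subtlety is simply making sure the quantifiers ``for all $E$ in $\AMod$'' in (v) and (vi) are handled uniformly with the ``exact functor'' phrasing in (i) and (ii), but this is immediate from the definition of exactness of an additive functor between abelian categories.
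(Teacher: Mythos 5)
Your proposal is correct and follows essentially the same route the paper intends: the corollary is stated without proof as an immediate consequence of Lemma \ref{lemma:derfun}, and your argument spells out exactly that — parts \eqref{lemma:derfun:2} and \eqref{lemma:derfun:3} identify the derived functors with $\Tor^A_k$ and supply the bridge $\Tor^A_1(\Omega^1_d,-)\cong\Tor^A_1(J^1_dA,-)$, while the remaining equivalences are the standard ``$M\otimes_A-$ exact $\iff$ $M$ flat $\iff$ $\Tor^A_1(M,-)=0$'' facts. Your alternative bridge via Lemma \ref{lemma:2-3}(i) applied to \eqref{es:jetdA} in $\ModA$ is also valid and equally in the spirit of the paper.
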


Under a mild regularity condition on $A$, we can explicitly compute $\Tor^A_1(\Omega^1_d,E)$ from $N_d$ by the following.
\begin{prop}
Let $A$ be flat as a $\bk$-module (e.g.\ if $\bk$ is a field).
Let $E$ be a left $A$-module, then $\Tor^A_1(\Omega^1_d,E)\cong\ker(\iota_{N_d}\otimes_A \id_E)$.
\end{prop}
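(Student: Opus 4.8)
The plan is to feed the defining short exact sequence of $N_d$ into the long exact sequence for $\Tor^A(-,E)$ and observe that the flatness hypothesis on $A$ kills the only obstructing term.

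First, recall from \eqref{eq:suruni} that $N_d=\ker(p_d)$ sits in the short exact sequence
\[
0\to N_d\xrightarrow{\ \iota_{N_d}\ }\Omega^1_u\xrightarrow{\ p_d\ }\Omega^1_d\to 0
\]
in $\ModA$ (indeed in $\AModA$). Regarding all three terms as right $A$-modules and applying $-\otimes_A E$ for $E$ in $\AMod$ produces the long exact $\Tor$-sequence
\[
\cdots\to \Tor^A_1(\Omega^1_u,E)\to \Tor^A_1(\Omega^1_d,E)\xrightarrow{\ \partial\ } N_d\otimes_A E\xrightarrow{\ \iota_{N_d}\otimes_A\id_E\ }\Omega^1_u\otimes_A E\to \Omega^1_d\otimes_A E\to 0.
\]

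Next, since $A$ is flat as a $\bk$-module, Corollary \ref{cor:uniflat} gives that $\Omega^1_u$ is flat in $\ModA$, hence $\Tor^A_1(\Omega^1_u,E)=0$. Exactness at $\Tor^A_1(\Omega^1_d,E)$ then forces $\partial$ to be injective, and exactness at $N_d\otimes_A E$ identifies the image of $\partial$ with $\ker(\iota_{N_d}\otimes_A\id_E)$. Therefore $\partial$ restricts to the asserted isomorphism $\Tor^A_1(\Omega^1_d,E)\xrightarrow{\ \cong\ }\ker(\iota_{N_d}\otimes_A\id_E)$. Equivalently, one may phrase this by noting that $0\to N_d\to\Omega^1_u\to\Omega^1_d\to 0$ with $\Omega^1_u$ flat is the beginning of a flat resolution of $\Omega^1_d$, so that $\Tor^A_1(\Omega^1_d,E)$ is computed as the homology at $N_d\otimes_A E$ of the two-term complex $0\to N_d\otimes_A E\to\Omega^1_u\otimes_A E$, which is exactly $\ker(\iota_{N_d}\otimes_A\id_E)$.

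There is essentially no obstacle here; the only subtlety is to invoke flatness of $\Omega^1_u$ \emph{as a right $A$-module} rather than as a left module, which is precisely what the $\ModA$ clause of Corollary \ref{cor:uniflat} supplies, and to be careful with naturality if one wants the isomorphism to be functorial in $E$ (which follows from the naturality of the connecting homomorphism $\partial$).
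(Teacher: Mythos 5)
Your proof is correct and follows essentially the same route as the paper: apply $-\otimes_A E$ to the short exact sequence $0\to N_d\to\Omega^1_u\to\Omega^1_d\to 0$ and use Corollary \ref{cor:uniflat} to kill $\Tor^A_1(\Omega^1_u,E)$ in the resulting long exact sequence. The extra remarks about the flat-resolution interpretation and naturality in $E$ are accurate but not needed for the statement as given.
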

\begin{proof}
Consider the short exact sequence appearing as the left column in \eqref{eq:1jetd}.
Applying the functor $-\otimes_A E$ to it, we obtain the long exact sequence
\begin{equation}
\begin{tikzcd}
\Tor^A_1(\Omega^1_u,E)\ar[r]& \Tor^A_1(\Omega^1_d,E)\ar[r]& N_d\otimes_A E\ar[r,"\iota_{N_d}\otimes_A \id_E"]&[10pt] \Omega^1_u\otimes_A E\ar[r,twoheadrightarrow,"p_d\otimes_A \id_E"]&[10pt] \Omega^1_d\otimes_A E \ar[r]& 0.
\end{tikzcd}
\end{equation}
The statement follows from the fact that $\Tor^A_1(\Omega^1_u,E)=0$ by Corollary \ref{cor:uniflat}.
\end{proof}

\section{Explicit examples}
\subsection{Classical manifolds}\label{ss:cman}
Let $M$ be a smooth manifold and consider the case $\bk=\R$ and $A=\smooth{M}$ with the exterior de Rham differential $d\colon \smooth{M}\to \Omega^1(M)$, regarded as a first order differential calculus.
Let $N\to M$ be a (finite rank) vector bundle, and let $E=\Gamma(M,N)$ be its $\smooth{M}$-module of sections.
By the Serre-Swan theorem, $E$ is a (finitely generated) projective $\smooth{M}$-module (cf.\ \cite[§12.32, p.~189]{nestruev2020smooth}).
By Remark \ref{rmk:flpj} there is a unique left $\smooth{M}$-module satisfying the first jet exact sequence, and we will now realize it as the module of sections of the classical jet bundle $J^1 N$.
The universal first order differential calculus is given by 
\begin{equation}
\Omega^1_u=\left\{\sum_i f_i\otimes g_i\in \smooth{M}\otimes\smooth{M}\middle|\sum_i f_i g_i=0\right\},
\end{equation}
equipped with the universal differential defined for all $f\in\smooth{M}$ as $d_u f=1\otimes f-f\otimes 1$.

Consider the left $\smooth{M}$-linear map, in the version provided by Remark \ref{rmk:pdother}.
\begin{align}
p_{d,E}\colon \Omega^1_u(E)\longrightarrow \Omega^1(M,N),
&\hfill&
\sum_i f_i\otimes \sigma_i\in\Omega^1_u(E)\subseteq \smooth{M}\otimes\Gamma(M,N)\longmapsto -\sum_i df_i\otimes \sigma_i.
\end{align}
The last tensor is written according to the standard notation of differential geometry, although in our notation it would be over the algebra $\smooth{M}$.

We know that classically the $\smooth{M}$-module of global sections of the jet bundle $J^1 N$ satisfies \eqref{eq:kjetses} for $n=1$.
By the Serre-Swan theorem, this sequence splits, and thus we can find a $\smooth{M}$-linear map $\widehat{p}_{d,E}$ such that it commutes in the diagram \eqref{diag:1jetfromuni}.
Once we find any such map, the construction in §\ref{ss:jetgenericE} yields the isomorphism between the classical jet $\smooth{M}$-module and ours.
In order to build the map $\widehat{p}_{d,E} \colon J^1_u E\to \Gamma (M,J^1 N)$ we use the $\R$-linear splitting provided by $j^1_u$ and $\rho_u$ on the universal jet exact sequence, and the splitting given by the classical $1$-jet prolongation $j^1\colon f\mapsto [f]^1$ on the classical jet exact sequence.
We define $\widehat{p}_{d,E}$ as follows
\begin{align}
\widehat{p}_{d,E}
=j^1\circ \pi^{1,0}_u+\iota_d\circ p_d\circ \rho_u\colon J^1_u E\longrightarrow \Gamma(M,J^1 N),
&\hfill&
f\otimes \sigma\longmapsto [f\sigma]^1-(df)\otimes \sigma.
\end{align}
\begin{rmk}\label{rmk:convention}
In our framework, we adopt the sign convention of \cite[Proposition 1.5, p.~5]{BeggsMajid} for the universal differential, which realizes the standard embedding of $1$-forms into sections of the $1$-jet bundle from differential geometry (e.g.\ \cite{NaturalOperations}).
In principle one could adopt the convention of \cite[III,§10.10]{bourbaki2007algebre} instead, obtaining the opposite sign convention used in \cite{Crainic}.
\end{rmk}
We now have $j^1(f\sigma)=fj^1(\sigma)+df\otimes \sigma$ (cf.\ \cite[p.~945]{Crainic}, keeping in mind their nonstandard sign convention).
This implies that the map $\widehat{p}_d$ is left $\smooth{M}$-linear.

By construction, this map makes the following diagram commute.
\begin{equation}
\begin{tikzcd}
0 \arrow[r] &\Omega_{u}^1(E) \arrow[r,"\iota^1_{u,E}"]\arrow[d,twoheadrightarrow,"p_{d,E}"]& J^1_uE \arrow[r,"\pi^{1,0}_{u,E}"]\arrow[d,twoheadrightarrow,"\widehat{p}_{d,E}"]& E \arrow[r]\arrow[d,equal]&0\\
0 \arrow[r] &\Omega^1(M,N) \arrow[r]& \Gamma(M,J^1 N) \arrow[r,"\pi^{1,0}"]& \Gamma(M,N) \arrow[r]&0
\end{tikzcd}
\end{equation}
Using the snake lemma, $\widehat{p}_d$ is surjective and its kernel coincides with the kernel of $p_d$ in $J^1_u E$.
The explicit isomorphism with the classical jet is thus given by
\begin{align}\label{eq:classicaliso}
[f\otimes \sigma]\in J^1_d E\longmapsto [f\sigma]^1-df\otimes \sigma\in \Gamma(M,J^1 N).
\end{align}
Notice also that, by construction, the classical $1$-jet prolongation $j^1\colon \Gamma(M,N)\to \Gamma(M,J^1 N)$ corresponds with the one given in Definition \ref{def:1-jet prolongation}, as
\begin{equation}
j^1_d (\sigma)
=\widehat{p}_d \circ j^1_u(\sigma)
=\widehat{p}_d(1\otimes \sigma)
=[\sigma]^1.
\end{equation}

In particular, on the trivial bundle $N=M\times \R$, we have that $\Gamma(M,J^1 N)=J^1_d \smooth{M}=J^1_d A$.
We can now use the epimorphism $\widehat{p}_d$ to compute the right action induced on the module $\Gamma(M,J^1(M\times \R))$.
Let $f\otimes g\in J^1_u \smooth{M}=\smooth{M}\otimes \smooth{M}$, and let $a\in \smooth{M}$.
\begin{equation}
\widehat{p}_d([f\otimes g]a)
=\widehat{p}_d[f\otimes ga]
=[fga]^1-(df) ga
=a[fg]^1+(da)fg-(df)ga
=\widehat{p}_d([f\otimes g])a+da\otimes \pi^{1,0}\circ \widehat{p}_d[f\otimes g]
\end{equation}
This implies that the right action of $g\in\smooth{M}$ on a generic $1$-jet of the form $[f]^1+\alpha$, has the form
\begin{equation}
([f]^1+\alpha)\cdot g
=[f]^1 g+\alpha g-(dg) f.
\end{equation}
\begin{rmk}\label{rmk:classicsplitaction}
This is the action that makes the jet prolongation $\smooth{M}$-linear (cf.\ \cite[p.~945]{Crainic}).
This action in general is different from the standard point-wise action of $\smooth{M}$ on sections of a fiber bundle.
Our interpretation thus displays an intrinsic noncommutative nature in the notion of jet bundle.
\end{rmk}
We summarize this example with the following theorem.
\begin{theo}\label{theo:classical1jet}
	Let $M$ be a smooth manifold, $A=\smooth{M}$ its algebra of smooth functions, and $E=\Gamma(M,N)$ the space of smooth sections of a vector bundle $N\rightarrow M$.
	Then $J^1_dE \simeq \Gamma(M,J^1N)$ in $\AMod$, and the isomorphism takes the prolongation $j^1_d(e)$ of a section $e$ to its jet class $[e]^1\in \Gamma(M,J^1N)$.
\end{theo}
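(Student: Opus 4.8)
The plan is to assemble the isomorphism from the ingredients already developed in §\ref{ss:cman}. First I would invoke the Serre--Swan theorem to see that $E=\Gamma(M,N)$ is finitely generated projective over $A=\smooth{M}$; by Remark~\ref{rmk:flpj} this means $J^1_dE$ is, up to isomorphism, the unique left $A$-module fitting into the $1$-jet short exact sequence \eqref{es:jetd}, that this sequence splits in $\AMod$, and that $J^1_dE\cong J^1_uE/N_d(E)$ by \eqref{eq:J1dE=J1dA/NdE}. On the classical side, $\Gamma(M,J^1N)$ fits into \eqref{eq:kjetses} with $n=1$, which likewise splits by Serre--Swan.

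Next I would write down the comparison map $\widehat{p}_{d,E}\colon J^1_uE\to\Gamma(M,J^1N)$ by the explicit formula $f\otimes\sigma\mapsto[f\sigma]^1-(df)\otimes\sigma$, built by combining the $\R$-linear splitting $(j^1_u,\rho_u)$ of the universal $1$-jet sequence with the splitting of the classical sequence furnished by the classical prolongation $j^1\colon\sigma\mapsto[\sigma]^1$. The one substantive point to check is that $\widehat{p}_{d,E}$ is left $\smooth{M}$-linear, and this reduces to the classical Leibniz identity $j^1(f\sigma)=fj^1(\sigma)+df\otimes\sigma$ (with the sign convention of \cite[Proposition~1.5, p.\ 5]{BeggsMajid}, cf.\ Remark~\ref{rmk:convention}). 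One then verifies that $\widehat{p}_{d,E}$ fits into a commuting ladder from the universal $1$-jet sequence \eqref{es:jetses} to the classical $1$-jet sequence, with $p_{d,E}$ on the left term and $\id_E$ on the right term.

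Finally I would apply the snake lemma to this ladder: since the right-hand vertical map is $\id_E$, the map $\widehat{p}_{d,E}$ is surjective and $\ker\widehat{p}_{d,E}=\ker p_{d,E}=N_d(E)$, so it descends to an isomorphism $J^1_dE=J^1_uE/N_d(E)\xrightarrow{\ \sim\ }\Gamma(M,J^1N)$ in $\AMod$, given explicitly by $[f\otimes\sigma]\mapsto[f\sigma]^1+df\otimes\sigma$. Compatibility with prolongations is then immediate, since $j^1_d(\sigma)=\widehat{p}_{d,E}(1\otimes\sigma)=[\sigma]^1$. The main obstacle is making precise and verifying the compatibility of the two chosen splittings that underlies the commutativity of the ladder; once the Leibniz rule for the classical $j^1$ is in hand this is a direct local computation with a finite-rank bundle, so no further difficulty arises.
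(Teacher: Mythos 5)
Your proposal follows essentially the same route as the paper: Serre--Swan to split the classical sequence, the explicit comparison map $\widehat{p}_{d,E}\colon f\otimes\sigma\mapsto[f\sigma]^1-(df)\otimes\sigma$ built from the compatible splittings, left $\smooth{M}$-linearity via the classical Leibniz identity for $j^1$, and the snake lemma to identify $\ker\widehat{p}_{d,E}$ with $N_d(E)$ and hence obtain the isomorphism together with the compatibility $j^1_d(\sigma)=[\sigma]^1$. This is correct and matches the paper's argument in all essentials.
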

From now on, when we need to treat jets in classical differential geometry, we will treat our first jet functor as equal to the classical one.

In order to compute $\ker(p_d)$ we use $\widetilde{d}_u$ to describe the universal first order differential calculus $\Omega^1_u=\widetilde{d}_u(\smooth{M}\otimes\smooth{M})=\spn{f\otimes g-fg\otimes 1|f,g\in\smooth{M}}_{\R}$.
Notice that for all $p\in M$, we have
\begin{equation}
\sum_i (f_i\otimes g_i-f_ig_i\otimes 1)
=\sum_i (f_i\otimes (g_i-g_i(p))-f_i(g_i-g_i(p))\otimes 1),
\end{equation}
and now $g_i-g_i(p)\in\mathfrak{m}_p$.
This implies that for all $p\in M$, the restriction of $\widetilde{d}_u$ to $\smooth{M}\otimes\mathfrak{m}_p$ is surjective.
It is also injective, hence an isomorphism, as we can build a left inverse.
Consider the evaluation in the second tensorial component composed with the multiplication
\begin{align}
\ev_p^2\colon \smooth{M}\otimes\smooth{M}\longrightarrow \smooth{M},
&\hfill&
a\otimes b\longmapsto a\cdot b(p).
\end{align}
The left inverse of $\widetilde{d}_u$ is then $\id_{\smooth{M}\otimes \smooth{M}}-ev^2_p\otimes 1\colon \Omega^1_u\to \smooth{M}\otimes \mathfrak{m}_p$.
This realizes for all $p\in M$ a left $\smooth{M}$-isomorphism
\begin{equation}
\Omega^1_u\cong\smooth{M}\otimes \mathfrak{m}_p.
\end{equation}

If we now take a generic element $\sum_i (f_i\otimes g_i-f_i g_i\otimes 1)$ with $f_i,g_i\in\smooth{M}$ and $g_i(p)=0$, then this is in $\ker(p_d)$ if and only if
\begin{equation}
0=p_d\left(\sum_i (f_i\otimes g_i-f_i g_i\otimes 1)\right)
=\sum_i f_i dg_i.
\end{equation}

\begin{rmk}
	Recall that $N_d = \ker(\widehat{p}_d) $.
	In local coordinates $x^1,\ldots,x^n$, and for a smooth function $f$, we have $d_u f-\sum_{i=1}^n\frac{\partial f}{\partial x^i}d_u x^i \in N_d \subset \Omega^1_u$.
	Thus, $N_d$ can be seen as encoding information about the differential relationships between algebra elements, similarly to the Cartan distribution on the classical jet bundle.
\end{rmk}

\subsection{The infinitesimal first order differential calculus at $0$}
\label{ss:K2}
We will now build an example that will be useful as a pathological first order differential calculus.

Let $\bk$ be a field and $A=\bk[t]/(t^2)$, which as a $\bk$-module is $A=\bk+\bk t\cong\bk^2$.
Consider $\bk[0]\colonequals \bk[t]/[t]$, which as a $\bk$-module is isomorphic to $\bk$, and as an $A$-module, $p(t)\in A$ acts by multiplication with $p(0)$.
Let $\Omega^1_d=\bk[0]$ and define the differential $d$ by setting $dt=1$.
We can think of this first order differential calculus as assigning to each polynomial its formal derivative at $0$.

Now we will build the $1$-jet, starting from the universal jet exact sequence.
We identify $J_u A=A\otimes A$ with the algebra $\bk[x,y]/(x^2,y^2)$, by the isomorphism $p(t)\otimes q(t)\mapsto p(x)q(y)$.
The action of $a(t),b(t)\in A$ on $p(x,y)\in J^1_u A$ is given by $a(t)p(x,y)b(t)=a(x)p(x,y)b(y)$.
The projection $\pi^{1,0}_u$, that is the multiplication on $A$, can be identified with the map
\begin{align}
\bk[x,y]/(x^2,y^2)\longrightarrow \bk[t]/(t^2),
&\hfill&
p(x,y)\longmapsto p(t,t).
\end{align}
We can thus realize $\Omega^1_u=\ker(\pi^{1,0}_u)=(y-x,xy)$ as an ideal of $\bk[x,y]/(x^2,y^2)$.
Recalling the definition of $\Omega^1_u\subset A\otimes A$, we see that $d_u\colon A\to \Omega^1_u$ corresponds to the map $d_u t=y-x$.
The two generators of $\Omega^1_u\subset A\otimes A$ correspond to $d_u t$ and $td_u t$, respectively.
Hence, the ideal $\Omega^1_u$ is principal and generated by $d_u t=y-x$.

The first order jet exact sequence for the universal first order differential calculus is thus
\begin{equation}\label{es:1jK2}
\begin{tikzcd}
0\ar[r]& (d_u t=x-y) \ar[r,hookrightarrow]& \bk[x,y]/(x^2,y^2) \ar[r,twoheadrightarrow]&A \ar[r]&0.
\end{tikzcd}
\end{equation}

We can now compute $p_d\colon \Omega^1_u\to\Omega^1_d$, defined by the mapping
\begin{equation}
p(t)d_u t\longmapsto p(t)\cdot dt=p(0).
\end{equation}
It follows that $N_d=\ker(p_d)=(td_u t)=(xy)$.
Finally, we obtain
\begin{equation}
J^1_d A
=J^1_u A/N_d
\cong\bk[x,y]/(x^2,y^2,xy).
\end{equation}
which as a $\bk$-module is isomorphic to $\bk+\bk x+ \bk y\cong \bk^3$.
The first jet exact sequence is thus
\begin{equation}
\begin{tikzcd}[row sep=tiny]
0 \ar[r] &\bk[0] \arrow[r,hookrightarrow,"\iota^1_d"]& \bk[x,y]/(x^2,y^2,xy) \arrow[r,twoheadrightarrow,"\pi^{1,0}_d"]& \bk[t]/(t^2) \arrow[r]&0\\
&a\arrow[r,|->]& a(y-x)&\\
&&p(x,y)\arrow[r,|->]& p(t,t)&
\end{tikzcd}
\end{equation}

The universal prolongation is given by
\begin{align}
j^1_d\colon A\longrightarrow \bk[x,y]/(x^2,y^2,xy),
&\hfill&
p(t)\longmapsto p(y).
\end{align}

The reasons why this first order differential calculus is pathological is due to the following result.
\begin{lemma}\label{lemma:K2}
$\Tor^A_n(\bk[0],\bk[0])=\bk[0]$, thus in particular $\bk[0]$ is neither flat in $\AMod$ nor in $\ModA$.
\end{lemma}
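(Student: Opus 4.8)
The plan is to compute a free (or projective) resolution of $\bk[0]$ as a left $A$-module, where $A = \bk[t]/(t^2)$, and then apply $\bk[0] \otimes_A -$ to it and take homology. The key observation is that $A = \bk[t]/(t^2)$ is a self-injective local ring, and $\bk[0] = A/(t)$ has a particularly simple, $2$-periodic free resolution. Indeed, since $t \cdot t = 0$ in $A$, multiplication by $t$ gives a map $A \xrightarrow{t} A$ whose kernel and image both equal $(t) \cong \bk[0]$. Hence the complex
\begin{equation}
\cdots \xrightarrow{\ t\ } A \xrightarrow{\ t\ } A \xrightarrow{\ t\ } A \longrightarrow \bk[0] \longrightarrow 0
\end{equation}
is exact and furnishes a free resolution $P_\bullet$ of $\bk[0]$ with $P_n = A$ for all $n \geq 0$ and every differential equal to multiplication by $t$.

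Next I would apply $\bk[0] \otimes_A -$ to the deleted resolution $P_\bullet$. Since $\bk[0] \otimes_A A \cong \bk[0]$ canonically, and the induced differential is multiplication by the image of $t$ in $\bk[0]$, which is $t(0) = 0$, the resulting complex is
\begin{equation}
\cdots \xrightarrow{\ 0\ } \bk[0] \xrightarrow{\ 0\ } \bk[0] \xrightarrow{\ 0\ } \bk[0] \longrightarrow 0 .
\end{equation}
All differentials vanish, so $\Tor^A_n(\bk[0],\bk[0]) = \bk[0]$ for every $n \geq 0$ (including $n = 0$, consistent with $\bk[0] \otimes_A \bk[0] \cong \bk[0]$). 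In particular $\Tor^A_1(\bk[0],\bk[0]) = \bk[0] \neq 0$, so $\bk[0]$ is not flat as a left $A$-module. For the right-module statement, I would note that $A$ is commutative, so the left and right module structures on $\bk[0]$ coincide and the identical computation applies; alternatively one invokes the symmetry $\Tor^A_n(M,N) \cong \Tor^A_n(N,M)$.

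There is essentially no serious obstacle here: the only thing to verify carefully is the exactness of the periodic complex $P_\bullet$, i.e.\ that $\ker(t\colon A \to A) = \im(t\colon A \to A) = (t)$ and that the augmentation $A \twoheadrightarrow \bk[0]$ has kernel $(t)$. These are immediate from the description $A = \bk \oplus \bk t$ with $t^2 = 0$: an element $a + bt$ is killed by multiplication by $t$ iff $at = 0$ iff $a = 0$ (as $\bk$ is a field), i.e.\ iff it lies in $\bk t = (t)$, and clearly $t \cdot (a+bt) = at \in (t)$ with every element of $(t)$ so obtained. The mild care needed is just bookkeeping; the conceptual content is entirely the periodicity of the resolution, which is the standard phenomenon for truncated polynomial rings $\bk[t]/(t^n)$.
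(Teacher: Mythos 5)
Your proposal is correct and follows essentially the same route as the paper: both use the $2$-periodic free resolution $\cdots \xrightarrow{t\cdot} A \xrightarrow{t\cdot} A \to \bk[0] \to 0$, apply $\bk[0]\otimes_A -$ to obtain a complex with vanishing differentials, and read off $\Tor^A_n(\bk[0],\bk[0])=\bk[0]$. The verification of exactness of the periodic resolution and the deduction of non-flatness on both sides (via commutativity of $A$) match the paper's argument.
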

\begin{proof}
The $\Tor^A_{\bullet}$ functor can be computed from a free resolution of $\bk[0]$, i.e.\ an exact sequence
\begin{equation}
\cdots \to F_2\xrightarrow{\delta_2} F_1\xrightarrow{\delta_1} F_0\xrightarrow{\hphantom{\delta_0}} \bk[0]\to 0
\end{equation}
such that $F_n$ is free in $\AMod$ for all $n\ge 0$.
Choose the following
\begin{equation}\label{es:fresK2}
\cdots \to A\xrightarrow{t\cdot}A\xrightarrow{t\cdot}A\xrightarrow{\ev_0}\bk[0]\to 0,
\end{equation}
where $\ev_0$ is the evaluation at $0$.
Notice that it is a complex, since $t^2=0$.
Moreover, it is exact because $\ev_0$ is surjective, its kernel is $(t)$, and $\ker(t\cdot)=\im(t\cdot)=(t)\subset A$.

We can now compute $\Tor^A_{\bullet}(\bk[0],\bk[0])$ as the homology of the complex obtained by applying $\bk[0]\otimes_A -$ to $F_{\bullet}\to 0$, i.e.\ \eqref{es:fresK2} where $\bk[0]$ is substituted with $0$.
Observe that $\bk[0]\otimes_A A=\bk[0]$ and
\begin{equation}
\id_{\bk[0]}\otimes_A t\cdot
=\id_{\bk[0]}\cdot t\otimes_A\id_A
=0\otimes_A \id_A
=0.
\end{equation}
Therefore, the desired complex is
\begin{equation}
\bk[0]\otimes_A F_{\bullet}=\cdots \xrightarrow{0}\bk[0]\xrightarrow{0}\bk[0]\xrightarrow{0}\bk[0]\to 0.
\end{equation}
It follows that
\begin{equation}
\Tor^A_n(\bk[0],\bk[0])
=H_n(\bk[0]\otimes_A F_{\bullet})
=\bk[0].
\end{equation}

From $\Tor^A_1(\bk[0],\bk[0])=\bk[0]\neq 0$ it follows that $\bk[0]$ is not flat in either $\AMod$ or $\ModA$.
\end{proof}

As a consequence, we obtain the following result
\begin{prop}
$\Tor^A_n(J^1_d A,\Omega^1_d)=\Tor^A_n(\Omega^1_d A,\Omega^1_d)=\bk[0]$ for all $n> 0$, and the functors $J^1_d$ and $\Omega^1_d$ are not exact.
\end{prop}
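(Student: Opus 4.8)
The plan is to deduce the statement entirely from Lemma~\ref{lemma:K2} together with the homological comparison already in hand, so that almost no new computation is required.

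First I would note that in this example $\Omega^1_d=\bk[0]$, both as a left $A$-module and as an $A$-bimodule. Hence the second equality, $\Tor^A_n(\Omega^1_d,\Omega^1_d)=\bk[0]$, is precisely Lemma~\ref{lemma:K2} with both arguments taken equal to $\bk[0]$; there is nothing further to check. (The notation $\Omega^1_d A$ in the statement just denotes $\Omega^1_d$ carrying its right $A$-action, and $\Omega^1_d A=\Omega^1_d$ as a right module.)

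Next, for the jet module I would invoke Lemma~\ref{lemma:derfun}, item~\eqref{lemma:derfun:3}: the inclusion $\iota^1_d$ induces a natural isomorphism of left derived functors $L_k\Omega^1_d\xrightarrow{\ \sim\ }L_kJ^1_d$ for every $k>0$, i.e.\ $\Tor^A_k(\Omega^1_d,E)\cong\Tor^A_k(J^1_dA,E)$ for all $E$ in $\AMod$ and all $k>0$. Evaluating at $E=\Omega^1_d=\bk[0]$ and combining with the previous paragraph gives $\Tor^A_n(J^1_dA,\Omega^1_d)\cong\Tor^A_n(\Omega^1_d,\Omega^1_d)=\bk[0]$ for all $n>0$. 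If one prefers a self-contained derivation, apply $-\otimes_A\bk[0]$ to the $1$-jet short exact sequence~\eqref{es:jetdA}, $0\to\Omega^1_d\to J^1_dA\to A\to 0$; since $A$ is flat in $\ModA$ the groups $\Tor^A_{>0}(A,\bk[0])$ all vanish, and the long exact sequence collapses to the same isomorphism in every positive degree.

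Finally, non-exactness of $\Omega^1_d$ and $J^1_d$ is immediate from Corollary~\ref{cor:Jex}: we have exhibited a left $A$-module, namely $\bk[0]$, for which $\Tor^A_1(\Omega^1_d,\bk[0])=\bk[0]\neq 0$ and $\Tor^A_1(J^1_dA,\bk[0])=\bk[0]\neq 0$, contradicting conditions (v) and (vi) of that corollary; equivalently, $\Omega^1_d=\bk[0]$ fails to be flat in $\ModA$ by Lemma~\ref{lemma:K2}, so conditions (iii) and (iv) fail as well, and the equivalence with (i) and (ii) finishes the argument. I expect essentially no obstacle here: all of the substance sits in Lemma~\ref{lemma:K2} (the periodic free resolution $\cdots\to A\xrightarrow{t\cdot}A\xrightarrow{t\cdot}A\to\bk[0]\to 0$) and in the already-proven comparison between the derived functors of $\Omega^1_d$ and $J^1_d$; the only point requiring a line of care, and only if one runs the self-contained variant, is tracking the degree shift in the long exact sequence, which is harmless because $A$ is $A$-flat in positive degrees.
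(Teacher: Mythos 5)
Your proposal is correct and follows essentially the same route as the paper, whose proof is simply the one-line ``It follows from Lemma \ref{lemma:derfun}'': the intended argument is exactly your combination of the $\Tor$ computation in Lemma \ref{lemma:K2} with the isomorphism $L_k\Omega^1_d\cong L_kJ^1_d$ from Lemma \ref{lemma:derfun}, and non-exactness then follows from Corollary \ref{cor:Jex}. You have merely spelled out the details the paper leaves implicit.
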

\begin{proof}
It follows from Lemma \ref{lemma:derfun}.
\end{proof}

\section{First order differential operators}\label{s:firstorderdifferentialoperators}
\subsection{Linear differential operators of order at most $1$}

\begin{defi}
\label{firstorderdifferentialoperators}	
Let $E,F\in \AMod$.
A $\bk$-linear map $\Delta\colon E \rightarrow F$ is called a \textit{linear differential operator} of order at most $1$ with respect to $\Omega^1_d$, if it factors through the prolongation operator $j^{1}_d$, i.e.\ there exists an $A$-linear map $\widetilde \Delta \in \AHom(J^1_d E,F)$ such that the following diagram commutes:
	\begin{equation}
	\begin{tikzcd}\label{universalfirstorderdifferentialoperator}
		J_d^1E \arrow[dr, "\widetilde\Delta"] & \\
		E \arrow[r,"\Delta"] \arrow[u,"j_d^1"] & F 
	\end{tikzcd}
	\end{equation}
	If $\Delta$ is a morphism in $\Mod$ but not in $\AMod$, we say that $\Delta$ is a differential operator of order $1$ with respect to the first order differential calculus $\Omega^1_d$.
\end{defi}	

\subsection{Differential operators of order zero}
\label{sec:DOzero}
By the convention that $J^0_d E = E$, with prolongation map the identity map, differential operators $\Delta$ of order $0$ from $E$ to $F$ are equivalently morphisms in $\AMod$, and $\widetilde \Delta = \Delta$.
\begin{prop}\label{prop:zeroorder}
	For any $\bk$-algebra $A$, differential operators $A \rightarrow A$ of order zero are precisely right multiplications by elements $f \in A$, i.e.\ $R_f\colon a \mapsto a f$.
	Hence, the left multiplications by an element $f\in A$ is a differential operator of order zero if and only if the element $f$ is central.
\end{prop}
\begin{proof}
	By associativity of the algebra multiplication, right multiplication is left $A$-linear, and hence a differential operator of order zero.
	Vice versa, given a differential operator  $f\colon A\to A$ of order zero, we have $f(a)=af(1)=R_{f(1)}(a)$ for all $a\in A$. 
\end{proof}
\subsection{Differential operators for the universal first order differential calculus}
We can characterize all linear differential operators for the universal first order differential calculus with the following proposition.
\begin{prop}\label{prop:1diffuni}
Every $\bk$-linear map $f\colon E\to F$ is a differential operator of order at most $1$ for the universal first order differential calculus, and the unique lift is given by
\begin{align}
\widetilde{f}\colon A\otimes E\longrightarrow F,
&\hfill&
a\otimes e\longmapsto af(e).
\end{align}
\end{prop}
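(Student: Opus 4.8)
The plan is to verify that the proposed formula $\widetilde{f}(a\otimes e)=af(e)$ defines the required $A$-linear lift, and then to argue uniqueness using the fact that $J^1_u E=A\otimes E$ is generated over $A$ by the image of the prolongation $j^1_u$.

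First I would check that $\widetilde{f}$ is well-defined and left $A$-linear. Since $J^1_u E = A\otimes E$ with the left action given by multiplication on the first tensor factor (cf.\ the discussion preceding \eqref{es:jetses}), the assignment $a\otimes e\mapsto af(e)$ is manifestly additive in each argument, $\bk$-bilinear, and satisfies $\widetilde{f}(b\cdot(a\otimes e))=\widetilde{f}(ba\otimes e)=baf(e)=b\widetilde{f}(a\otimes e)$, so it is a morphism in $\AMod$. Next I would verify the factorization: by \eqref{eq:1jp} (in the universal case), $j^1_{u,E}(e)=1\otimes e$, hence $\widetilde{f}\circ j^1_{u,E}(e)=1\cdot f(e)=f(e)$, so the triangle \eqref{universalfirstorderdifferentialoperator} commutes with $\Omega^1_d=\Omega^1_u$. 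This shows every $\bk$-linear $f$ is a differential operator of order at most $1$ for the universal calculus.

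For uniqueness, suppose $\widetilde{g}\in\AHom(A\otimes E,F)$ also satisfies $\widetilde{g}\circ j^1_{u,E}=f$. By Remark \ref{rmk:jpg} (applied in the universal case, or directly: $A\otimes E$ is generated as a left $A$-module by the elements $1\otimes e$), every element of $A\otimes E$ is a finite sum $\sum_i a_i(1\otimes e_i)$. Then $A$-linearity forces
\begin{equation}
\widetilde{g}\Bigl(\sum_i a_i(1\otimes e_i)\Bigr)=\sum_i a_i\widetilde{g}(1\otimes e_i)=\sum_i a_i f(e_i)=\widetilde{f}\Bigl(\sum_i a_i(1\otimes e_i)\Bigr),
\end{equation}
so $\widetilde{g}=\widetilde{f}$.

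There is no serious obstacle here; the only point requiring a little care is being explicit about the left $A$-module structure on $J^1_u E=A\otimes E$ (action on the left factor) so that both well-definedness of $\widetilde{f}$ and the generation statement used for uniqueness are transparent. Everything else is a routine unwinding of definitions already established in §2.
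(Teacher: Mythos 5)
Your proof is correct and is exactly the computation the paper leaves implicit (its proof reads only ``Straightforward computation''): well-definedness and left $A$-linearity of $a\otimes e\mapsto af(e)$, the factorization $\widetilde{f}\circ j^1_{u,E}=f$, and uniqueness from the fact that $A\otimes E$ is generated over $A$ by the elements $1\otimes e$. Nothing further is needed.
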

\begin{proof}
Straightforward computation.
\end{proof}
The universal first order differential calculus thus imposes no further condition on a $\bk$-linear map being a first order differential operator.

\subsection{Differential operators of first order and connections for general calculi}
\begin{prop}\label{prop:uniquelift}
	Let $\Delta\colon E \rightarrow F$ be a differential operator of order at most one.
	Then the lift $\widetilde \Delta\colon J^1_d E \rightarrow F$ described in Definition \ref{firstorderdifferentialoperators} is unique.
\end{prop}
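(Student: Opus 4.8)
The plan is to exploit the surjectivity, established earlier, of the left $A$-action generating $J^1_d E$ from the image of the prolongation map. Concretely, Remark \ref{rmk:jpg} (and its functorial incarnation in Remark \ref{rmk:jbim}, cf. also \eqref{eq:jsecpi}) tells us that $A\cdot j^1_{d,E}(E) = J^1_d E$, so every element of $J^1_d E$ can be written as a finite sum $\sum_i a_i\, j^1_{d,E}(e_i)$ with $a_i\in A$, $e_i\in E$. First I would take two lifts $\widetilde\Delta, \widetilde\Delta'\in\AHom(J^1_d E,F)$ of the same differential operator $\Delta$, meaning both make the triangle \eqref{universalfirstorderdifferentialoperator} commute, i.e. $\widetilde\Delta\circ j^1_{d,E} = \Delta = \widetilde\Delta'\circ j^1_{d,E}$.

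Next I would evaluate the difference $\widetilde\Delta - \widetilde\Delta'$ on a general element. Using left $A$-linearity of both maps,
\begin{equation}
(\widetilde\Delta - \widetilde\Delta')\Bigl(\sum_i a_i\, j^1_{d,E}(e_i)\Bigr) = \sum_i a_i\,(\widetilde\Delta - \widetilde\Delta')\bigl(j^1_{d,E}(e_i)\bigr) = \sum_i a_i\bigl(\Delta(e_i) - \Delta(e_i)\bigr) = 0,
\end{equation}
so $\widetilde\Delta = \widetilde\Delta'$ on all of $J^1_d E$. This is really the whole argument; the only point requiring care is that the generating statement $A\cdot j^1_{d,E}(E) = J^1_d E$ is genuinely available at this stage, which it is by Remark \ref{rmk:jpg}, since $J^1_d E$ is a quotient of $J^1_u E = A\otimes E$ and the class of $1\otimes e$ is exactly $j^1_{d,E}(e)$, so the left $A$-module generated by the $j^1_{d,E}(e)$ is all of $J^1_d E$.

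There is no serious obstacle here: the statement is essentially the assertion that $j^1_d$ is an epimorphism onto a set of left-module generators, so two $A$-linear maps agreeing on it agree everywhere. The mild subtlety, if any, is purely bookkeeping: one must write an arbitrary element of $J^1_d E$ in the generating form before applying linearity, rather than attempting to compute on tensors $[a\otimes e]$ directly — though of course $[a\otimes e] = a\cdot j^1_{d,E}(e)$ makes even that route immediate. I would present the proof in the two-lines-of-display form above, noting explicitly the appeal to Remark \ref{rmk:jpg} for the generation statement and to left $A$-linearity of the lifts.
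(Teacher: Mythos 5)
Your proof is correct and follows essentially the same route as the paper's: the paper simply computes $\widetilde{\Delta}[a\otimes b]=a\widetilde{\Delta}[1\otimes b]=a\Delta(b)$, which is exactly your observation that $J^1_dE$ is generated as a left $A$-module by the image of $j^1_{d,E}$, combined with $A$-linearity of the lift. The only cosmetic difference is that you phrase it as the vanishing of the difference of two lifts while the paper directly exhibits the formula forced on any lift.
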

\begin{proof}
	The proof follows from the $A$-linearity of $\widetilde{\Delta}$, as we have
	\begin{equation}\label{eq:uniquefirstorderlift}
	\widetilde{\Delta}[a\otimes b]
	=a\widetilde{\Delta}[1\otimes b]
	=a\widetilde{\Delta}\circ j^1_d (b)
	=a\Delta (b).
	\end{equation}
\end{proof}
\begin{cor}
	Linear differential operators $E\to F$ of order at most $1$ with respect to the first order differential calculus $\Omega^1_d$ are in bijective correspondence with $A$-linear maps $A \otimes E \rightarrow F$ which vanish on the submodule $N_d(E)$.
\end{cor}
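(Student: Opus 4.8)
The plan is to unwind the defining diagram \eqref{universalfirstorderdifferentialoperator} using the explicit presentation $J^1_d E \cong J^1_u E / N_d(E) = (A\otimes E)/N_d(E)$ established in \eqref{eq:J1dE=J1dA/NdE}. First I would recall that, by Proposition \ref{prop:uniquelift}, a first order differential operator $\Delta\colon E\to F$ determines its lift $\widetilde\Delta\colon J^1_d E\to F$ uniquely, so the assignment $\Delta\mapsto \widetilde\Delta$ is well defined and injective into $\AHom(J^1_d E, F)$. The content of the corollary is then to identify the image of this assignment with those $A$-linear maps $A\otimes E\to F$ that vanish on $N_d(E)$.

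Next I would set up the correspondence in the other direction. Given an $A$-linear map $g\colon A\otimes E\to F$ with $g|_{N_d(E)}=0$, the universal property of the quotient yields a unique $A$-linear $\bar g\colon J^1_d E = (A\otimes E)/N_d(E)\to F$ with $\bar g\circ \widehat{p}_{d,E}=g$ (using $\widehat p_d\otimes_A\id_E$ from \eqref{diag:1jetfromuni}). Precomposing with the prolongation gives a $\bk$-linear map $\Delta\colonequals \bar g\circ j^1_{d,E}\colon E\to F$, which by construction factors through $j^1_{d,E}$ via the $A$-linear map $\bar g$, hence is a first order differential operator with $\widetilde\Delta=\bar g$. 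Conversely, starting from a differential operator $\Delta$ with lift $\widetilde\Delta\colon J^1_d E\to F$, the composite $\widetilde\Delta\circ(\widehat p_d\otimes_A\id_E)\colon A\otimes E\to F$ is $A$-linear and annihilates $N_d(E)$ since that is exactly the kernel of $\widehat p_d\otimes_A\id_E$ (from the snake-lemma argument following \eqref{diag:1jetfromuni}). It remains to check these two constructions are mutually inverse, which reduces to the identity $\widehat p_d\otimes_A\id_E$ being an epimorphism together with the computation in Proposition \ref{prop:uniquelift} showing $\widetilde\Delta[a\otimes b]=a\Delta(b)$; this pins down the map on generators and gives bijectivity.

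The only mildly delicate point — the ``main obstacle'' such as it is — is keeping the bookkeeping straight between the three presentations of $J^1_d E$ (as $J^1_d A\otimes_A E$, as $(A\otimes E)/N_d(E)$, and via the splitting of \eqref{es:jetd}), and in particular making sure that $N_d(E)$, defined a priori as $\ker(p_{d,E})\subseteq \Omega^1_u\otimes_A E$, is genuinely the kernel of $\widehat p_d\otimes_A\id_E$ on all of $A\otimes E=J^1_u E$; but this is precisely what the snake lemma applied to \eqref{diag:1jetfromuni} provides, so no further work is needed. Everything else is a routine diagram chase, and the corollary follows.
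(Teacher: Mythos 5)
Your argument is correct and follows essentially the same route as the paper: uniqueness of the lift from Proposition \ref{prop:uniquelift} identifies differential operators with $A$-linear maps out of $J^1_d E$, and the presentation $J^1_d E=(A\otimes E)/N_d(E)$ together with the universal property of the quotient identifies these with $A$-linear maps $A\otimes E\to F$ vanishing on $N_d(E)$. Your extra care in checking that $N_d(E)$ is precisely $\ker(\widehat p_d\otimes_A\id_E)$ via the snake lemma is exactly the justification the paper relies on implicitly, so nothing is missing.
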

\begin{proof}
	By Definition and Proposition \ref{prop:uniquelift} there is a bijective correspondence between $A$-linear maps $J^1_d E\to F$ and differential operators $E\to F$.
	Since $J^1_d E=J^1_u E/N_d(E)=(A\otimes E)/N_d(E)$, cf.\ \eqref{eq:J1dE=J1dA/NdE}, by the quotient universal property, there is a bijective correspondence between linear maps $J^1_d E\to F$ and $A$-linear maps $A\otimes E\to F$ that vanish on $N_d(E)$.
\end{proof}

\begin{prop}\label{1storderwrtd}
	Let $E$ and $F$ be left $A$-modules.
	Then a $\bk$-linear map $\Delta\colon E \rightarrow F$ is a differential operator of order at most $1$ with respect to $d$ if and only if it satisfies
	\begin{equation}\label{eq:1storderwrtd}
		\sum_i n_i \Delta(e_i) = 0
	\end{equation}
	for all elements $\sum_i n_i \otimes e_i \in N_{d}(E)$.
\end{prop}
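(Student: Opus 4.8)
The plan is to reduce the statement to the characterization already obtained in the preceding corollary, namely that differential operators $E\to F$ of order at most $1$ with respect to $\Omega^1_d$ correspond bijectively to $A$-linear maps $A\otimes E\to F$ vanishing on $N_d(E)$, and then to translate that condition into the concrete equation \eqref{eq:1storderwrtd}.

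First I would take a $\bk$-linear map $\Delta\colon E\to F$ and, by Proposition \ref{prop:1diffuni}, observe that it is automatically a first order differential operator for the \emph{universal} calculus, with canonical lift $\widetilde\Delta_u\colon A\otimes E\to F$, $a\otimes e\mapsto a\Delta(e)$. The point is that $\Delta$ is a differential operator with respect to $\Omega^1_d$ precisely when this universal lift descends along the quotient $\widehat p_d\otimes_A\id_E\colon J^1_u E=A\otimes E\twoheadrightarrow J^1_d E=(A\otimes E)/N_d(E)$ (cf.\ \eqref{eq:J1dE=J1dA/NdE} and the diagram \eqref{diag:1jetfromuni}); this is exactly what the corollary after Proposition \ref{prop:uniquelift} says. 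Concretely: if such a descent $\widetilde\Delta\colon J^1_d E\to F$ exists, then $\widetilde\Delta\circ j^1_{d,E}=\widetilde\Delta\circ(\widehat p_d\otimes_A\id_E)\circ j^1_{u,E}=\widetilde\Delta_u\circ j^1_{u,E}=\Delta$, so $\Delta$ is a differential operator; conversely, a lift along $j^1_d$ forces, by the uniqueness argument of Proposition \ref{prop:uniquelift}, $\widetilde\Delta[a\otimes e]=a\Delta(e)$, i.e.\ $\widetilde\Delta$ is the factorization of $\widetilde\Delta_u$ through the quotient, which exists iff $\widetilde\Delta_u$ kills $N_d(E)$.

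Then I would spell out the vanishing condition: $\widetilde\Delta_u$ vanishes on $N_d(E)$ iff for every $\sum_i n_i\otimes e_i\in N_d(E)\subseteq A\otimes E$ we have $\widetilde\Delta_u\bigl(\sum_i n_i\otimes e_i\bigr)=\sum_i n_i\Delta(e_i)=0$, which is precisely \eqref{eq:1storderwrtd}. This gives both implications at once. A minor point worth a sentence is that $N_d(E)$ has been described explicitly earlier as $\{\sum_i a_i\otimes e_i\in A\otimes E\mid\sum_i a_ie_i=0,\ \sum_i da_i\otimes_A e_i=0\}$, so the condition can be phrased purely in terms of such relations in $A\otimes E$, but no further unwinding is needed.

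I do not expect a serious obstacle here; the statement is essentially a repackaging of the corollary following Proposition \ref{prop:uniquelift} together with the identification $J^1_d E\cong (A\otimes E)/N_d(E)$. The only mild subtlety — the "hard part," such as it is — is being careful that $N_d(E)$ is used in the correct generality (it need not equal $N_d\otimes_A E$ unless a $\Tor$-vanishing hypothesis holds, cf.\ Remark \ref{rmk:flpj}), so the proof must reference $N_d(E)=\ker(p_{d,E})$ and the quotient description \eqref{eq:J1dE=J1dA/NdE} directly rather than any tensor-product shortcut. With that caveat observed, the proof is a two-line application of the universal property of the quotient.
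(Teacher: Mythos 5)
Your proposal is correct and follows essentially the same route as the paper: both arguments reduce the question to whether the universal lift $a\otimes e\mapsto a\Delta(e)$ on $J^1_uE=A\otimes E$ descends to the quotient $J^1_dE=(A\otimes E)/N_d(E)$, which by the uniqueness of lifts (Proposition \ref{prop:uniquelift}) happens exactly when it annihilates $N_d(E)$, i.e.\ when \eqref{eq:1storderwrtd} holds. Your added caveat about using $N_d(E)=\ker(p_{d,E})$ rather than $N_d\otimes_A E$ is a sensible precaution but does not change the argument.
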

\begin{proof}
	The map $\Delta$ is a differential operator of order at most $1$ if and only if the universal lift $\widetilde \Delta$ on $J^1_uE$ remains well-defined when we descend to $J^1_dE$.
	The classes in the latter jet module are defined up to $N_{d}(E)$.
	Thus, for all $\sum_i n_i\otimes e_i\in N_d(E)$, we must have
	\begin{equation}
		\Delta(e) = \widetilde\Delta([1 \otimes e]) = \widetilde\Delta([1 \otimes e + \sum_i n_i \otimes e_i]) = \Delta(e) + \sum_i n_i \Delta(e_i),
	\end{equation}
	which holds if and only if \eqref{eq:1storderwrtd} is satisfied.
\end{proof}
\begin{rmk}\label{rmk:tensorNddiffop}
Condition \eqref{eq:1storderwrtd} is satisfied in particular if $\sum_i n_i \Delta(m_i e) = 0$ for all $\sum_i n_i\otimes m_i\in N_d$ and $e\in E$ (cf.\ Remark \ref{rmk:flpj}).
The two conditions are equivalent if $E$ is flat in $\AMod$ or $\Omega^1_d$ is flat in $\ModA$.
\end{rmk}

Finally, we show that certain classes of maps which one would expect to be differential operators really are.
\begin{defi}
\label{defi:connection}
	A \emph{(left) connection} \cite[Definition 3.18]{BeggsMajid} with respect to the first order differential calculus $\Omega^1_d$ on a left $A$-module $E$ is a $\bk$-linear map
	\begin{equation}
		E \longrightarrow \Omega^1_d \otimes_A E,
	\end{equation}
	satisfying the following identity for $f\in A$ and $e\in E$
	\begin{equation}
		\nabla(f e) = df \otimes_A e + f \nabla e.
	\end{equation}
\end{defi}

\begin{prop}\label{prop:1diffop}
	We have the following.
	\begin{enumerate}
		\item\label{prop:1diffop:1} The $1$-jet prolongation $j^1_{d}$ is a (natural) differential operator of order at most $1$ and each component $j^1_{d,E}$ is a differential operator of order $0$ if and only if $\Omega^1_d(E)=0$.
		
		Thus, it is a natural differential operator of order zero if and only if $\Omega^1_d=0$.
		\item\label{prop:1diffop:2} A connection $\nabla\colon E \rightarrow \Omega^1_d \otimes_A E$ is a differential operator of order at most $1$ with respect to $\Omega^1_d$ and it is a differential operator of order $0$ if and only if $\Omega^1_d(E)=0$.
		\item $d$ is a differential operator of order at most $1$ with respect to $\Omega^1_d$ and it is a differential operator of order $0$ if and only if $\Omega^1_d=0$.
		\item Partial derivatives for a parallelizable first order differential calculus are differential operators of order at most $1$ and one, hence all of them, are differential operators of order $0$ if and only if $\Omega^1_d=0$.
	\end{enumerate}
\end{prop}
\begin{proof}\
\begin{enumerate}
	\item It is straightforward since $\widetilde{j}^1_{d,E}=\id_E$ lifts $j^1_{d,E}$.
	The lift is natural in $E$, as $\id$ lifts $j^1_d$ as natural transformations.
	
	The differential operator $j^1_{d,E}$ is of order $0$ if and only if it is $A$-linear, i.e.\ for all $a\in A$, $e\in E$, we have
	\begin{equation}\label{eq:jlinearity}
	0
	=j^1_{d,E}(ae)-aj^1_{d,E}(e)
	=da\otimes_A e
	\end{equation}
	by Remark \ref{rmk:proldop}.
	This equation is equivalent to $A\otimes E=\ker(p_d\circ \rho_{u,E})$, which implies by \eqref{eq:definitionNdE} that $N_d(E)=\Omega^1_u(E)\cap \ker(p_d\circ\rho_{u,E})=\Omega^1_u$, and hence $\Omega^1_d(E)=0$.
	Vice versa, if $\Omega^1_d(E)=0$, then $p_d\circ \rho_{d,E}=0$, so $j^1{d,E}$ is linear by \eqref{eq:jlinearity}.
	\item We use the condition from Proposition \ref{1storderwrtd}.
	Then, for all $n_i\otimes e_i\in N_d(E)$, we have 
		\begin{equation}
			\sum_i n_i\nabla e_i
			= \nabla\left(\sum_i n_i e_i\right)- \sum_i d n_i\otimes e_i
			= 0,
		\end{equation}
	as the first summand vanishes because $N_d(E) \subseteq \Omega^1_u(E)$, and the second by definition of $N_d(E)$.
	
	In order to prove the last part of this point, we proceed as in \eqref{prop:1diffop:1} by noticing that the differential operator $\nabla$ is of order $0$ if and only if for all $a\in A$, $e\in E$, we have
	\begin{equation}
	0
	=\nabla(ae)-a\nabla(e)
	=da\otimes_A e.
	\end{equation}
	\item Follows from the \eqref{prop:1diffop:2}.
	\item let $\theta_1,\dots,\theta_n$ be a basis for $\Omega^1_d$.
	Then we have $da = \sum_i \alpha_i \theta_i$.
	Define $\partial_i(a) = \alpha_i$.
	We need to show that $\partial_i$ vanishes on $N_d$ when extended to $J^1_uA$.
	
	The expression $\sum_j n_jdm_j = 0$ can be written as $\sum_{i,j} n_j \partial_i(m_j)\theta_i=0$.
	Since the $\theta_i$'s are linearly independent in $\Omega^1_d$, each coefficient in this sum vanishes independently.
	Thus $\sum_{j}n_j\partial_i(m_j) = \widetilde\partial_i (\sum_j n_j \otimes m_j) = 0$.
	
	For the last statement, if $\Omega^1_d=0$, then $\partial_i=0$ for all $i$, so they are in particular $A$-linear, and hence differential operators of order $0$.
	Vice versa, notice that
	\begin{equation}
	0
	=d1
	=\sum_i \partial_i(1) \theta_i
	\end{equation}
	and since $\{\theta_i\}$ is a basis, this forces $\partial_i(1)=0$ for all $i$.
	Now, if $\partial_k$ is $A$-linear for some $k$, then for all $a\in A$ we have $\partial_k(a)=a\partial_k(1)=0$, so $\partial_k=0$.
	This implies that $AdA$ is generated as a left $A$-module by $\{\theta_i|i\neq k\}$, contradicting the surjectivity condition of a differential calculus, cf.\ Definition \ref{def:differential calculus}.\eqref{def:differential calculus:2}.
	\qedhere
\end{enumerate}
\end{proof}
Another classical operator that we expect to be a differential operator is the exterior covariant derivative.
We will define a noncommutative generalization (cf.\ Definition \ref{defi:exterior_covariant_derivative}) and prove that it is in fact a differential operator of order at most one (cf.\ Proposition \ref{prop:excoddiffop}).

We can now generalize another classical result by the following.
\begin{prop}[Connections and splittings of the $1$-jet sequence]\label{prop:connexionsplits}
Let $A$ be a $\bk$-algebra and $E$ be in $\AMod$.
There is a bijective correspondence between connections on $E$ and left $A$-linear splittings of the $1$-jet short exact sequence at $E$.
\end{prop}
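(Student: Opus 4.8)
The plan is to produce two mutually inverse constructions: one sending a left connection $\nabla$ on $E$ to a left $A$-linear splitting of the $1$-jet sequence \eqref{es:jetd}, and one going back. The key observation is that by Remark \ref{rmk:semidirectjet} (and its $E$-version), we have the isomorphism $J^1_d E \cong E \ltimes (\Omega^1_d \otimes_A E)$, where the underlying left $A$-module is $E \Aoplus (\Omega^1_d \otimes_A E)$ with left action $a \smb (e_0, \alpha) = (ae_0, -da \otimes_A e_0 + a\alpha)$. Under this identification $\pi^{1,0}_{d,E}$ becomes the projection onto the first factor and $\iota^1_{d,E}$ becomes the inclusion of the second factor.

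First I would go from connections to splittings. Given a left connection $\nabla\colon E \to \Omega^1_d \otimes_A E$, define $s_\nabla \colon E \to J^1_d E$ by $e \mapsto (e, \nabla e)$ in the coordinates above. Then $\pi^{1,0}_{d,E} \circ s_\nabla = \id_E$ is immediate, so it remains only to check that $s_\nabla$ is left $A$-linear. Using the Leibniz rule for $\nabla$, namely $\nabla(ae) = da \otimes_A e + a\nabla e$, one computes $s_\nabla(ae) = (ae, da \otimes_A e + a \nabla e)$, while $a \smb s_\nabla(e) = a \smb (e, \nabla e) = (ae, -da \otimes_A e + a\nabla e)$. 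These do not match with this sign; the fix is to use the \emph{other} semidirect product presentation $A \ltimes' \Omega^1_d$ from Remark \ref{rmk:semidirectjet}, or equivalently to define $s_\nabla$ via the complementary splitting. Concretely, $\rho_{d,E}$ together with $\pi^{1,0}_{d,E}$ exhibits $J^1_d E$ as a biproduct, and any left $A$-linear section $s$ of $\pi^{1,0}_{d,E}$ corresponds bijectively to the left $A$-linear map $\id_{J^1_d E} - s \circ \pi^{1,0}_{d,E}$, which factors through $\iota^1_{d,E}$ as $\Omega^1_d \otimes_A E \hookrightarrow J^1_d E$; so the data of $s$ is equivalent to a left $A$-linear retraction $r \colon J^1_d E \to \Omega^1_d \otimes_A E$ with $r \circ \iota^1_{d,E} = \id$. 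Then set $\nabla_r \colon E \to \Omega^1_d \otimes_A E$ to be $\nabla_r \colonequals r \circ j^1_{d,E}$.

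The heart of the argument is then the verification that $\nabla_r = r \circ j^1_{d,E}$ satisfies the connection Leibniz rule, and conversely. By Remark \ref{rmk:proldop}, $j^1_{d,E}(ae) = da \otimes_A e + a j^1_{d,E}(e)$ as elements of $J^1_d E$ (where $da \otimes_A e$ is shorthand for $\iota^1_{d,E}(da \otimes_A e)$). Applying the left $A$-linear map $r$ and using $r \circ \iota^1_{d,E} = \id$, we get $\nabla_r(ae) = da \otimes_A e + a \nabla_r(e)$, which is exactly the required identity; it is also $\bk$-linear since $j^1_{d,E}$ and $r$ are. Conversely, given a connection $\nabla$, the map $r_\nabla \colonequals \rho_{d,E} + (\iota^1_{d,E})^{-1}|_{\ldots}$—more cleanly, define $\tilde s_\nabla = j^1_{d,E} - \iota^1_{d,E} \circ \nabla \colon E \to J^1_d E$; one checks using Remark \ref{rmk:proldop} that $\tilde s_\nabla(ae) = a \tilde s_\nabla(e)$, so $\tilde s_\nabla$ is left $A$-linear, and $\pi^{1,0}_{d,E} \circ \tilde s_\nabla = \pi^{1,0}_{d,E} \circ j^1_{d,E} = \id_E$ since $\pi^{1,0}_{d,E} \circ \iota^1_{d,E} = 0$. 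So $\tilde s_\nabla$ is a left $A$-linear splitting.

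Finally I would check that these two assignments $\nabla \mapsto \tilde s_\nabla$ and $s \mapsto \nabla_s \colonequals \rho_{d,E}\circ s$ composed with $j^1_{d,E}$—precisely, $\nabla_s = (\iota^1_{d,E})^{-1} \circ (j^1_{d,E} - s)$ viewed as landing in $\Omega^1_d \otimes_A E$—are mutually inverse. This is a short computation: starting from $\nabla$, $\nabla_{\tilde s_\nabla} = (\iota^1_{d,E})^{-1}(j^1_{d,E} - \tilde s_\nabla) = (\iota^1_{d,E})^{-1}(\iota^1_{d,E} \circ \nabla) = \nabla$; starting from a splitting $s$, $\tilde s_{\nabla_s} = j^1_{d,E} - \iota^1_{d,E} \circ \nabla_s = j^1_{d,E} - (j^1_{d,E} - s) = s$. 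The main obstacle is purely bookkeeping: getting the signs and the precise identification of $\Omega^1_d \otimes_A E$ as a submodule versus quotient of $J^1_d E$ consistent throughout (the two semidirect-product presentations in Remark \ref{rmk:semidirectjet} differ by exactly the sign that appears in the connection Leibniz rule), together with the well-definedness of $(\iota^1_{d,E})^{-1}$ on the relevant submodule, which is legitimate because $\iota^1_{d,E}$ is a split monomorphism. No flatness or projectivity hypotheses are needed, since \eqref{es:jetd} and its canonical $\bk$-linear (indeed right $A$-linear) splitting $j^1_{d,E}$ exist unconditionally and $\iota^1_{d,E}$ is always a monomorphism.
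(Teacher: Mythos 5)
Your proof is correct, and it reaches the same bijection as the paper but organizes the forward direction differently. The paper goes from a connection $\nabla$ to a \emph{retraction}: it first invokes Proposition \ref{prop:1diffop} and Proposition \ref{prop:uniquelift} to produce the unique left $A$-linear lift $\widetilde{\nabla}\colon J^1_d E\to \Omega^1_d\otimes_A E$ of $\nabla$ as a first order differential operator (so $\widetilde{\nabla}[a\otimes e]=a\nabla e$), and then checks $\widetilde{\nabla}\circ\iota^1_{d,E}=\id$ by evaluating on elements $\sum_i da_i\otimes_A e_i$; the inverse assignment $\widetilde{\varrho}\mapsto\widetilde{\varrho}\circ j^1_{d,E}$ and the Leibniz verification via Remark \ref{rmk:proldop} are the same in both arguments. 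You instead construct the \emph{section} directly by the formula $\tilde s_\nabla=j^1_{d,E}-\iota^1_{d,E}\circ\nabla$ and verify its left $A$-linearity from Remark \ref{rmk:proldop}, passing between sections and retractions by the standard biproduct correspondence $\iota^1_{d,E}\circ r=\id-s\circ\pi^{1,0}_{d,E}$. What your route buys is independence from the differential-operator machinery (you never need to know that $\nabla$ vanishes on $N_d(E)$), plus a fully explicit mutual-inverse check, which the paper only asserts; what the paper's route buys is the conceptual identification of the retraction as the differential-operator lift $\widetilde{\nabla}$ of $\nabla$, which is reused elsewhere. Your opening observation that the naive section $e\mapsto(e,\nabla e)$ in the presentation of Remark \ref{rmk:semidirectjet} fails by exactly the sign in the Leibniz rule is accurate, and you recover from it correctly; it could simply be omitted. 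Your closing remark that no flatness or projectivity hypotheses are needed is also consistent with the paper, since \eqref{es:jetd} is exact for every $E$ in $\AMod$.
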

\begin{proof}
Let $\nabla\colon E\to \Omega^1_d(E)$ be a connection.
By Proposition \ref{prop:1diffop} and Proposition \ref{prop:uniquelift}, there is a unique $A$-linear lift $\widetilde{\nabla}\colon J^1_d E\to \Omega^1_d (E)$ such that $\widetilde{\nabla}\circ j^1_{d,E}=\nabla$.
The lifting is such that for all $[a\otimes e]\in J^1_d E$, $\widetilde{\nabla}[a\otimes e]=a\nabla e$.
All elements of $\Omega^1_d(E)$ can be written as $\sum_i da_i\otimes_A e_i$, and for such elements we have
\begin{equation}
\widetilde{\nabla}\circ \iota^1_{d,E}\left(\sum_i da_i\otimes_A e_i\right)
=\sum_i\widetilde{\nabla}\left([1\otimes a_i e_i] -[a_i\otimes e_i]\right)
=\sum_i \nabla(a_i e_i)-\sum_i a_i\nabla e_i
=\sum_i da_i\otimes_A e_i.
\end{equation}
This implies that $\widetilde{\nabla}\circ \iota^1_{d,E}=\id_{\Omega^1(E)}$, and thus $\widetilde{\nabla}$ provides a splitting for the $1$-jet exact sequence.

Vice versa, let $\widetilde{\varrho}\colon J^1_d E\to \Omega^1_d (E)$ be a left $A$-linear left split of the $1$-jet exact sequence.
Consider $\varrho\colonequals \widetilde{\varrho}\circ j^1_{d,E}$.
Remark \ref{rmk:proldop}, left $A$-linearity of $\widetilde{\varrho}$, and the splitting property of $\widetilde{\varrho}$, give us the Leibniz rule
\begin{equation}
\varrho(ae)
=\widetilde{\varrho}(j^1_{d,E}(ae))
=\widetilde{\varrho}\left(da\otimes_A e + aj^1_{d,E}(e)\right)
=\widetilde{\varrho}\circ \iota^1_{d,E} (da\otimes e) + a\widetilde{\varrho}\circ j^1_{d,E}(e)
=da\otimes_A e + a\varrho(e).
\end{equation}
The two constructions are inverse to each other.
\end{proof}
\begin{rmk}
A left splitting for a sequence is equivalent to a right one, so connections on $E$ are in bijective correspondence with maps $E\to J^1_d E$ that are sections of $\pi^{1,0}_d$.
This equivalence holds in classical differential geometry (cf.\ \cite[§17.1]{NaturalOperations}) as well as in synthetic differential geometry (cf.\ \cite[p.~88]{kock2010synthetic}).

Therefore, a connection on $E$ can equivalently be defined as a section of the map $\pi^{1,0}_d\colon J^1_d E\to E$.
This approach has been adopted before in the context of classical differential geometry, e.g.\ \cite[CHAPTER IV, §9, p.~84]{Palais}.
\end{rmk}

When $A$ is noncommutative, the left action of an element $a\in A$ need not be a differential operator of order $0$, because this action is not (left) $A$-linear as a map $A \rightarrow A$ when $a$ is non-central.
	However, these maps can still be differential operators of higher order.
	For example, left multiplication by any $f \in A$ is a differential operator with respect to the universal first order differential calculus by Proposition \ref{prop:1diffuni}.
	Hence, we will characterize when the multiplication by a non-central element is a differential operator for a general first order differential calculus.

\begin{lemma}
\label{lemma:fterminalcalc}
	Let $f\in A$, then the left multiplication by $f$ on $A$ is a differential operator of order at most $1$ with respect to the differential $d\colon A\to \Omega^1_d$ if and only if $N_d \subseteq N_f \colonequals \ker(n \otimes m \mapsto nfm) \subset \Omega^1_{u}$.
	
	In this case, the left multiplication operator $L_f\colon e \mapsto f e$ is a differential operator of order at most one with respect to $d$ for any left $A$-module $E$.
\end{lemma}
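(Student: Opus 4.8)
The plan is to reduce the statement to an instance of Proposition~\ref{1storderwrtd}, which characterizes order-at-most-$1$ differential operators $\Delta\colon E\to F$ with respect to $d$ by the vanishing condition $\sum_i n_i\,\Delta(e_i)=0$ for all $\sum_i n_i\otimes e_i\in N_d(E)$. First I would specialize to $E=F=A$, where $N_d(A)=N_d\subseteq\Omega^1_u\subseteq A\otimes A$, and take $\Delta=L_f$, the map $a\mapsto fa$. Under the isomorphism $J^1_uA=A\otimes A$, the universal lift $\widetilde{L_f}$ is the $A$-linear map $n\otimes m\mapsto nfm$ (cf.\ Proposition~\ref{prop:1diffuni}), so its kernel inside $\Omega^1_u$ is precisely $N_f$ as defined in the statement. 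Thus Proposition~\ref{1storderwrtd} says $L_f$ descends through $J^1_dA=(A\otimes A)/N_d$ if and only if $N_d\subseteq N_f$, which is exactly the first equivalence claimed.

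For the second assertion, I would observe that $L_f$ on a general left $A$-module $E$ is $L_f\otimes_A\id_E\colon A\otimes_A E\to A\otimes_A E$ up to the canonical identification, so it is obtained by tensoring the already-established order-$1$ operator $L_f\colon A\to A$ with $\id_E$. More directly: by Remark~\ref{rmk:tensorNddiffop}, condition~\eqref{eq:1storderwrtd} on $E$ holds once $\sum_i n_i\,L_f(m_i e)=0$ for all $\sum_i n_i\otimes m_i\in N_d$ and all $e\in E$; but $\sum_i n_i f m_i = 0$ in $A$ because $N_d\subseteq N_f$, and hence $\sum_i n_i f m_i e = 0$ in $E$. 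So the hypothesis forces $L_f$ to be an order-$1$ differential operator on every left $A$-module, via the lift $[a\otimes e]\mapsto afe$.

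The main point requiring care is the identification of $\widetilde{L_f}$ with the map $n\otimes m\mapsto nfm$ and the bookkeeping of where it is merely $\bk$-linear versus $A$-linear: the lift is required to be \emph{left} $A$-linear as a map out of $J^1_dA$, and one must check that $n\otimes m\mapsto nfm$ is indeed left $A$-linear (it is, since $a\cdot(n\otimes m)=an\otimes m\mapsto anfm=a\cdot(nfm)$) and agrees with $L_f$ after precomposition with $j^1_d$ (sending $a$ to $[1\otimes a]\mapsto 1\cdot f\cdot a=fa$). Given that, everything else is a direct application of the cited results, so I expect no serious obstacle; the proof is short.
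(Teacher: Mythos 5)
Your proposal is correct and follows essentially the same route as the paper: the paper's own proof is a compressed version of exactly this argument, invoking Proposition \ref{1storderwrtd} for the equivalence $N_d\subseteq N_f$ (via the universal lift $n\otimes m\mapsto nfm$) and Remark \ref{rmk:tensorNddiffop} for the extension to arbitrary $E$. Your write-up merely spells out the left $A$-linearity check and the identification $N_d(A)=N_d$ that the paper leaves implicit.
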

\begin{proof}
	We have $L_f(e) = fe$, and for $\sum_i n_i \otimes m_ie \in N_d\otimes_A E$ we have $\widetilde L_f(\sum_i n_i \otimes m_i e) = (\sum_i n_ifm_i)e = 0$.
	Thus, the statement holds by Proposition \ref{1storderwrtd} and Remark \ref{rmk:tensorNddiffop}.
\end{proof}
\begin{rmk}
	Note that $N_f$ is generally nontrivial, as it contains the elements $af \otimes b - a \otimes fb \in \Omega^1_u$, where $a,b \in A$.
\end{rmk}

We will now provide a recipe to construct calculi for an algebra $A$ so that the left multiplication operators corresponding to elements of a given subset of $A$ are differential operators of order at most $1$.
\begin{defi}
\label{defi:Sterminal}
Let $A$ be a $\bk$-algebra and consider a subset $S\subseteq A$.
We define the \emph{$S$-terminal first order differential calculus} as $\Omega^1_S\colonequals \Omega^1_u/N_S$ where $N_S\colonequals \bigcap_{f\in S}N_f$ for $N_f$ as in Lemma \ref{lemma:fterminalcalc}.
We define the $S$-terminal differential as $d_S\colonequals p_S\circ d_u\colon A\to \Omega^1_S$, where $p_S\colon \Omega^1_u\twoheadrightarrow \Omega^1_S$ is the quotient projection.

For simplicity, if $S=\{f\}$, we will instead call this calculus the $f$-terminal first order differential calculus.
By abuse of notation, in this case we will just denote the differential and the $f$-terminal first order differential calculus as $d_f\colon A\to \Omega^1_f$, and we write $p_f$ for the quotient projection.
Finally, we define $\CalcSA$ to be the full subcategory of $\CalcA$ whose objects are first order differential calculi for which left multiplication operators corresponding to elements of $S$ are differential operators of order at most $1$.
\end{defi}
The name of this object is justified by the following.
\begin{prop}
\label{prop:Sterminalcalc}
Let $S\subseteq A$, then $d_S\colon A\to \Omega^1_S$ is the terminal object in $\CalcSA$.
In other words, for any other calculus $d\colon A\to \Omega^1_d$ for which the left multiplication by any element of $S$ is a differential operator of order at most $1$, there is a unique $A$-bilinear epimorphism $p^d_S\colon \Omega^1_d\twoheadrightarrow \Omega^1_S$ such that
\begin{equation}
p^d_S\circ p_d=p_S.
\end{equation}
It follows that $\ker(p^d_S)\cong N_S/N_d$ and $d_S=p^d_S\circ d$.
\end{prop}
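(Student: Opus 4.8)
The plan is to leverage the presentation of every first order differential calculus as a quotient of the universal calculus $\Omega^1_u$, together with Lemma~\ref{lemma:fterminalcalc} and the earlier proposition that morphisms in $\CalcA$ are unique whenever they exist.

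First I would check that $d_S$ really defines an object of $\CalcSA$. Each $N_f$ is a sub-bimodule of $\Omega^1_u$ by Lemma~\ref{lemma:fterminalcalc}, hence so is $N_S=\bigcap_{f\in S}N_f$, and therefore $\Omega^1_S=\Omega^1_u/N_S$ with differential $d_S=p_S\circ d_u$ is a first order differential calculus; the surjectivity axiom $A\,d_S(A)=\Omega^1_S$ is inherited from $A\,d_u(A)=\Omega^1_u$ by applying the surjective left $A$-linear map $p_S$. By construction $\ker(p_S)=N_S\subseteq N_f$ for every $f\in S$, so Lemma~\ref{lemma:fterminalcalc} shows that left multiplication by any $f\in S$ is a differential operator of order at most $1$ with respect to $d_S$; thus $d_S\in\CalcSA$.

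Next I would produce the universal map. Let $d\colon A\to\Omega^1_d$ be any object of $\CalcSA$. By Lemma~\ref{lemma:fterminalcalc}, the defining property of $\CalcSA$ is precisely the condition $N_d\subseteq N_f$ for all $f\in S$, i.e.\ $N_d=\ker(p_d)\subseteq N_S=\ker(p_S)$. The universal property of the quotient $p_d\colon\Omega^1_u\twoheadrightarrow\Omega^1_d$ then yields a unique $A$-bilinear map $p^d_S\colon\Omega^1_d\to\Omega^1_S$ with $p^d_S\circ p_d=p_S$; it is an epimorphism because $p_S$ is, and $\ker(p^d_S)=p_d(N_S)\cong N_S/N_d$. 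Precomposing with $d_u$ gives $p^d_S\circ d=p^d_S\circ p_d\circ d_u=p_S\circ d_u=d_S$, so $p^d_S$ is a morphism $d\to d_S$ in $\CalcA$, hence, by fullness, in $\CalcSA$.

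Finally, uniqueness of such a morphism — and therefore terminality of $d_S$ in $\CalcSA$ — is immediate from the earlier proposition that any two objects of $\CalcA$ admit at most one morphism between them. The whole argument is a diagram chase on $\Omega^1_u$ governed by the inclusions $N_d\subseteq N_f$, so I do not expect a genuine obstacle; the only point needing care is the translation between the two descriptions of a calculus (the pair $(\Omega^1_d,d)$ versus the sub-bimodule $N_d\subseteq\Omega^1_u$) and the verification that the surjectivity axiom passes to quotients.
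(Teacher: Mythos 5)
Your proposal is correct and follows essentially the same route as the paper: both verify membership of $d_S$ in $\CalcSA$ via Lemma \ref{lemma:fterminalcalc}, deduce $N_d\subseteq N_S$ from that lemma, and obtain $p^d_S$ from the universal property of the quotient, identifying its kernel with $N_S/N_d$ (the paper packages the surjectivity and kernel computation via the snake lemma, you argue them directly, but the content is the same).
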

\begin{proof}
Being defined as a quotient of the universal calculus, $d_S\colon A\to \Omega^1_S$ is a differential calculus.
The Leibniz rule follows from $A$-bilinearity of $p_S$ and surjectivity condition from the surjectivity of $p_S$.
This calculus satisfies the desired property by Lemma \ref{lemma:fterminalcalc}, as for all $f\in S$ we have $N_S\subseteq N_f$.

We now prove the universal property using again Lemma \ref{lemma:fterminalcalc}.
We have that $N_d\subseteq N_f$ for all $f\in S$ and thus $N_d\subseteq N_S$.
By the cokernel universal property we have a unique map $p^d_S\colon \Omega^1_d\to \Omega^1_f$ commuting in the following diagram
\begin{equation}
\begin{tikzcd}
0 \arrow[r]&N_d\ar[r,hookrightarrow] \ar[d,hookrightarrow]&\Omega^1_{u}\arrow[r,"p_d",twoheadrightarrow]\arrow[d,equals]& \Omega^1_d\arrow[r] \arrow[d,"p^d_S"]&0\\
0 \arrow[r]&N_S\ar[r,hookrightarrow] &\Omega^1_{u}\arrow[r,"p_S",twoheadrightarrow]& \Omega^1_S\arrow[r]&0
\end{tikzcd}
\end{equation}
Applying the snake lemma elicits the surjectivity of the map $\Omega^1_d\to \Omega^1_f$, and further yields the short exact sequence
\begin{equation}
\begin{tikzcd}
0 \arrow[r]&N_S/N_d\ar[r,hookrightarrow] &\Omega^1_{d}\arrow[r,twoheadrightarrow]& \Omega^1_S\arrow[r]&0.
\end{tikzcd}
\end{equation}
\end{proof}

We can explicitly describe the $f$-terminal first order differential calculus $\Omega^1_f\colonequals\Omega^1_u/N_f$ by noticing that
\begin{align}
\Omega^1_u\longrightarrow A,
&\hfill&
m\otimes n\longmapsto mfn
\end{align}
is well-defined and $A$-bilinear map, with kernel $N_f$.
Hence, we can identify $\Omega^1_f$ with the image of this map; map which we can thus call $p_f$.
Thus $\Omega^1_f$ can be identified with an $A$-subbimodule of $A$, which is contained in the two sided principal ideal generated by $f$, and it forms a first order differential calculus with differential
\begin{equation}
d_f\colonequals p_f\circ d_u =[f,\cdot]\colon A\longrightarrow \Omega^1_f.
\end{equation}
We can thus explicitly describe $\Omega^1_f$ as $A \cdot d_f(A)=A[f,A]=\{a[f,b]|a,b\in A\}$ by the surjectivity condition.
Similarly we can write $\Omega^1_f=[f,A]A$.

\section{Nonholonomic and semiholonomic jet functors}\label{s:nonsemi}
\subsection{Nonholonomic jet functors}
\label{ss:nhj}
Now that we have the $1$-jet functor, we can iterate it in order to obtain the so-called nonholonomic $n$-jet for all $n$.
\begin{defi}\label{def:nonholjetfunctor}
	We term the functor $J^{(n)}_d\colonequals (J^1_d)^{\circ n}=J^1_d \circ\dots\circ J^1_d=(J^1_d A)^{\otimes_A n}\otimes_A - \colon \AMod\to\AMod$ the {\em nonholonomic $n$-jet functor}.
\end{defi}
\begin{rmk}\label{rmk:nhJexact}
The functor $J^{(n)}_d$ is always right exact, but in general it is not left exact for $n>0$.
However, being the iterated composition of the functor $J^1_d$, we have that it is exact if $J^1_d$ is exact.
Hence, by Corollary \ref{cor:Jex}, the functor $J^{(n)}_d$ is exact for all $n\ge 0$ if $\Omega^1_d$ is flat in $\ModA$.
\end{rmk}

Applying the jet short exact sequence of functors to $J^{(n-1)}_d$ yields another short exact sequence
\begin{equation}\label{es:nhjets}
\begin{tikzcd}
0\ar[r]&\Omega^1_d\circ J^{(n-1)}_d\ar[r,hookrightarrow,"\iota^1_{d,J^{(n-1)}_d}"]&[30pt]J^{(n)}_d\ar[r,twoheadrightarrow,"\pi^{1,0}_{d,J^{(n-1)}_d}"]&[30pt]J^{(n-1)}_d\ar[r]&0.
\end{tikzcd}
\end{equation}
\begin{defi}
We call \eqref{es:nhjets} the \emph{nonholonomic $n$-jet sequence}.
\end{defi}
For short, we will write $\iota^{(n)}_d\colonequals \iota^1_{d,J^{(n-1)}_d}$, and for $0\le m\le n$, we may consider the composition
\begin{equation}\label{eq:nhpink}
\pi^{(n,m)}_d\colonequals \pi^{1,0}_{d,J^{(m)}_d}\circ \pi^{1,0}_{d,J^{(m+1)}_d}\circ\cdots \circ \pi^{1,0}_{d,J^{(n-2)}_d}\circ \pi^{1,0}_{d,J^{(n-1)}_d}
\colon J^{(n)}_d\longrightarrow J^{(m)}_d.
\end{equation}
By definition, we have
\begin{equation}
\pi^{(n,m)}_d=\pi^{(n-m,0)}_{d,J^{(m)}_d}.
\end{equation}

Now consider the $1$-jet natural prolongation \eqref{eq:1jp}, and apply it on the image of the functor $J^{(n-1)}_d$, that is $j^1_{d,J^{(n-1)}_d}\colon J^{(n-1)}_d\to J^{(n)}_d$.
By \eqref{eq:jsecpi}, it is a section of $\pi^{1,0}_{d,J^{(n-1)}_d}$, and hence it is a $\bk$-linear natural monomorphism.
We now define the following composition
\begin{equation}\label{eq:nhjp}
j^{(n)}_d\colonequals j^1_{d,J^{(n-1)}_d}\circ j^1_{d,J^{(n-2)}_d}\circ\cdots\circ j^1_{d,J^{1}_d}\circ j^1_d\colon \id_{\AMod}\longrightarrow J^{(n)}_d.
\end{equation}
In particular, $j^{(0)}_d=\id$.
\begin{defi}
We call this composition the \emph{nonholonomic $n$-jet prolongation}.
Explicitly, for a left $A$-module $E$, it is defined as
\begin{align}\label{eq:jexpl}
j^{(n)}_{d,E}\colon E\longrightarrow J^{(n)}_d E,
&\hfill&
e\longmapsto \underbrace{[1\otimes 1]\otimes_A\cdots \otimes_A[1\otimes 1]}_{n-\mathrm{times}}\otimes_A e
=[1\otimes 1]^{\otimes_A n}\otimes_A e.
\end{align}
\end{defi}
Being the composition of sections, $j^{(n)}_d$ is itself a section, corresponding to the natural projection $\pi^{(n,0)}_d$.
Moreover, for all $m\ge 0$ we obtain the following decomposition
\begin{equation}\label{eq:nhjpdec}
j^{(n)}_d=j^{(n-m)}_{d,J^{(m)}_d}\circ j^{(m)}_d.
\end{equation}
Using these observations we further obtain
\begin{equation}\label{eq:jpi}
\pi^{(n,m)}_d\circ j^{(n)}_d
=\pi^{(n-m,0)}_{d,J^{(m)}_d}\circ j^{(n-m)}_{d,J^{(m)}_d}\circ j^{(m)}_d
=j^{(m)}_d.
\end{equation}
\begin{rmk}[Nonholonomic jet functors on bimodules]\label{rmk:nJbi}
The functor $J^{(n)}_d$ can also be defined on a category $\AMod_B$, by iterating the $1$-jet functor here defined.
With the same principle, we can compatibly lift the whole \eqref{es:nhjets}.
By Proposition \ref{prop:1jses}, we obtain a lift to the category of endofunctors on $\AMod_B$.
The lift is compatible with the forgetful functor.

The nonholonomic prolongation lifts similarly.
By Remark \ref{rmk:jbim}, we can see $j^{(n)}_d\colon \id_{\AMod_B}\to J^{(n)}_d$ as a natural transformation between functors of type $\AMod_B\to \Mod_B$, whereby $j^{(n)}_d$ restricts to a natural monomorphism of right $B$-linear maps.
\end{rmk}

The short exact sequence \eqref{es:nhjets} and the jet prolongations induce a canonical $\bk$-linear decomposition of the jet module.
First, we define the following natural retraction corresponding to the right $A$-linear right splitting of \eqref{es:nhjets} given by $j^1_{d,J^{(n-1)}_d}$ (cf.\ \eqref{def:rho}), that is
\begin{equation}\label{eq:defrhon}
\rho^n_d\colonequals
\rho_{d,J^{(n-1)}_d}
=\id_{J^{(n)}_d}-j^{1}_{d,J^{(n-1)}_d}\circ \pi^{(n,n-1)}_d
\colon J^{(n)}_d \longrightarrow \Omega^1_d \circ J^{(n-1)}_d.
\end{equation}
The image is in $ \Omega^1_d\circ J^{(n-1)}_d$ because composing $\rho^n_d$ with $\pi^{(n,n-1)}_d$ gives
\begin{equation}
\begin{split}
\pi^{(n,n-1)}_{d}\circ \rho^n_d
&= \pi^{(n,n-1)}_{d} \circ \left(\id_{J^{(n)}_d}-j^1_{d,J^{(n-1)}_d}\circ\pi^{(n,n-1)}_{d}\right)\\
&=\pi^{(n,n-1)}_{d}-\pi^{1,0}_{d,J^{(n-1)}_d} \circ j^1_{d,J^{(n-1)}_d}\circ\pi^{(n,n-1)}_{d}\\
&=\pi^{(n,n-1)}_{d}-\pi^{(n,n-1)}_{d}\\
&=0.
\end{split}
\end{equation}
Therefore, by \eqref{es:nhjets}, $\rho^n_d$ factors through $\Omega^1_d\circ J^{(n-1)}_d$.
By construction of $\rho^n_d$ and the properties of the biproduct,
\begin{align}\label{eq:shsplit}
&\rho^n_d\circ \iota^{(n)}_{d}=\id_{\Omega^1_d\circ J^{(n-1)}_d} && \rho^n_d\circ j^1_{d,J^{(n-1)}_d}=0\nonumber\\
&\pi^{(n,n-1)}_d\circ \iota^{(n)}_{d}=0 && \pi^{(n,n-1)}_d\circ j^1_{d,J^{(n-1)}_d}=\id_{J^{(n-1)}_d}\\
&j^1_{d,J^{(n-1)}_d}\circ \pi^{(n,n-1)}_d + \iota^{(n)}_{d} \circ \rho^n_d = \id_{J^{(n)}_d}\nonumber
\end{align}
\begin{rmk}
The natural epimorphism $\rho^n_d\colon J^{(n)}_d \to \Omega^1_d \circ J^{(n-1)}_d$ is $\bk$-linear, being the composition of $\bk$-linear maps.
For the same reason, the corresponding $\rho^n_d$ on $\AMod_B$ is a right $B$-linear natural transformation.
\end{rmk}
This leads to the following decomposition result.
\begin{theo}[Nonholonomic jet decomposition]\label{theo:nhJdec}
Given a $\bk$-algebra $A$ endowed with a first order differential calculus $\Omega^1_d$, and $E$ in $\AMod$, we can write $J^{(n)}_d E$ as
\begin{equation}
J^{(n)}_d E=j^{(n)}_d(E)\oplus \bigoplus_{m=0}^{n-1}j^{(n-m)}_d\left(\Omega^1_d \circ J^{(m)}_d E\right).
\end{equation}
Therefore, we can uniquely decompose an element $\xi\in J^{(n)}_d E$ as
\begin{equation}\label{eq:nhjdec}
\xi=j^{(n)}_d(\xi^0)+\sum_{m=1}^n j^{(n-m)}_d\left(\sum_{i_m}\alpha^m_{i_m}\otimes_A \xi^m_{i_m}\right).
\end{equation}
for a unique $\xi^0\in E$, and unique $\sum_{i_m}\alpha^m_{i_m}\otimes_A \xi^m_{i_m}\in \Omega^1_d\otimes_A J^{(m-1)}_d E$ for each $m>0$.
In particular,
\begin{align}
\xi^0=\pi^{(n,0)}_d(\xi),
&\hfill &
\sum_{i_m}\alpha^m_{i_m}\otimes_A \xi^m_{i_m}=\rho^{m}_{d,E}\left(\pi^{(n,m)}_{d,E}(\xi)\right),
\end{align}
where $\alpha^m$ can be chosen to be exact for all $m$.
\end{theo}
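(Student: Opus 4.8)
The plan is to argue by induction on $n$, iterating the fact that the nonholonomic $n$-jet sequence \eqref{es:nhjets} is split, with the splitting data recorded in \eqref{eq:shsplit}. The cases $n=0$ (the empty decomposition $J^{(0)}_dE=E=j^{(0)}_d(E)$) and $n=1$ (the $\bk$-linear splitting $J^1_dE=j^1_d(E)\oplus\iota^1_d(\Omega^1_d\otimes_AE)$ from Proposition \ref{prop:1jses}) are immediate.

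For the inductive step, the relation $j^1_{d,J^{(n-1)}_d}\circ\pi^{(n,n-1)}_d+\iota^{(n)}_d\circ\rho^n_d=\id_{J^{(n)}_d}$ together with the compatibilities in \eqref{eq:shsplit} yields the $\bk$-linear direct sum $J^{(n)}_dE=j^1_{d,J^{(n-1)}_d}\bigl(J^{(n-1)}_dE\bigr)\oplus\iota^{(n)}_d\bigl(\Omega^1_d\circ J^{(n-1)}_dE\bigr)$. I would then apply the inductive hypothesis to $E$ at order $n-1$ to decompose the first summand, and push that decomposition forward along the $\bk$-linear monomorphism $j^1_{d,J^{(n-1)}_d}$. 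The only thing to notice is that $j^1_{d,J^{(n-1)}_d}\circ j^{(n-1-m)}_{d,J^{(m)}_d}=j^{(n-m)}_{d,J^{(m)}_d}$ and $j^1_{d,J^{(n-1)}_d}\circ j^{(n-1)}_{d,E}=j^{(n)}_{d,E}$, both being instances of the composition law \eqref{eq:nhjpdec}; combining these with the remaining summand $\iota^{(n)}_d(\Omega^1_d\circ J^{(n-1)}_dE)$, which is the $m=n$ term since $j^{(0)}_d=\id$, reassembles the asserted decomposition with $m$ running from $1$ to $n$.

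For the uniqueness and the closed forms of the components I would use that each summand is the image of a $\bk$-linear monomorphism and the sum is direct, so $\xi^0$ and $\beta_m\colonequals\sum_{i_m}\alpha^m_{i_m}\otimes_A\xi^m_{i_m}$ are determined by $\xi$. To recover $\xi^0$, apply $\pi^{(n,0)}_d$ and use $\pi^{(n,0)}_d\circ j^{(n)}_d=\id$ from \eqref{eq:jpi} together with $\pi^{(n,0)}_d\circ j^{(n-m)}_{d,J^{(m)}_d}\circ\iota^{(m)}_d=0$; the latter follows from $\pi^{(n,m)}_d\circ j^{(n-m)}_{d,J^{(m)}_d}=\id$ and the factorisation of $\pi^{(n,0)}_d$ through $\pi^{(m,m-1)}_d=\pi^{1,0}_{d,J^{(m-1)}_d}$, using $\pi^{(m,m-1)}_d\circ\iota^{(m)}_d=0$ from \eqref{eq:shsplit}. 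To recover $\beta_m$, first compute $\pi^{(n,m)}_d(\xi)$: using the same relations, together with the compatibility $\pi^{(n,m)}_d=\pi^{(n-k,m-k)}_{d,J^{(k)}_d}$ for $k\le m\le n$, the summands of index $k>m$ vanish (a projection onto a lower jet hits $\iota$), $j^{(n)}_d(\xi^0)$ goes to $j^{(m)}_d(\xi^0)$, the summands of index $k<m$ land in the image of $j^1_{d,J^{(m-1)}_d}$, and $\iota^{(m)}_d(\beta_m)$ is fixed; applying $\rho^m_d$ then annihilates everything except $\beta_m$, since $\rho^m_d$ kills the image of $j^1_{d,J^{(m-1)}_d}$ (hence that of $j^{(m)}_d$ and of each $j^{(m-k)}_{d,J^{(k)}_d}$ with $k<m$) and restricts to the identity on $\iota^{(m)}_d(\Omega^1_d\circ J^{(m-1)}_d)$. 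Finally, that $\alpha^m$ may be taken exact follows from $\Omega^1_d=AdA$ and the Leibniz rule via $(a\,db)\otimes_A\eta=d(ab)\otimes_A\eta-da\otimes_A(b\eta)$, which exhibits any element of $\Omega^1_d\otimes_A J^{(m-1)}_dE$ as a sum of elements whose first tensor factor is exact.

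The hard part is not any individual computation but keeping the web of composed natural transformations organised: every identity used is a formal consequence of \eqref{eq:nhjpdec}, \eqref{eq:jpi}, \eqref{eq:shsplit}, and the evident compatibility of projections of lower jets, but each must be applied at the correct functor. It is also worth bearing in mind that $j^1_{d,J^{(n-1)}_d}$ and $\rho^n_d$ are only $\bk$-linear (in the bimodule setting, only right $B$-linear), so the direct sum lives in $\Mod$, not in $\AMod$; the $A$-module structure is visible only through the $\iota^{(n)}_d$-summand.
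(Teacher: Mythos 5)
Your proposal is correct and follows essentially the same route as the paper: induction on $n$ using the $\bk$-linear splitting $\id_{J^{(n)}_d}=j^1_{d,J^{(n-1)}_d}\circ\pi^{(n,n-1)}_d+\iota^{(n)}_d\circ\rho^n_d$, the composition law for prolongations to push the lower-order decomposition forward, and the surjectivity axiom plus the Leibniz rule for exactness of the $\alpha^m$. The only cosmetic difference is that you verify the closed forms $\xi^0=\pi^{(n,0)}_d(\xi)$ and $\beta_m=\rho^m_d(\pi^{(n,m)}_d(\xi))$ directly on all summands, whereas the paper extracts the top component with $\rho^n_d$ and recurses; both yield the same uniqueness argument.
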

\begin{proof}
We will prove this by induction on $n$.
For $n=0$ the statement is trivially true, as $\xi^0=\xi$ and $J^0_d E=j^{(0)}_{d,E}(E)$.
Now let $n>0$, and assume the statement is true for $n-1$.
Let $\xi\in J^{(n)}_d E$, then $\pi^{(n,n-1)}_{d,E}(\xi)\in J^{(n-1)}_d E$, so we can uniquely decompose it as
\begin{equation}
\begin{split}
\pi^{(n,n-1)}_{d,E}(\xi)
&=j^{(n-1)}_{d,E}(\pi^{(n-1,0)}_{d,E}(\pi^{(n,n-1)}_{d,E}\xi))+\sum_{m=1}^{n-1} j^{(n-m)}_{d,\Omega^1_d\circ J^{(m-1)}_d E}\left(\rho^m_{d,E}\circ \pi^{(n-1,m)}_{d,E}(\pi^{(n,n-1)}_{d,E}\xi)\right)\\
&=j^{(n-1)}_d (\pi^{(n,0)}_{d,E}(\xi))+\sum_{m=1}^{n-1} j^{(n-m)}_{d,\Omega^1_d\circ J^{(m-1)}_d E}\left(\rho^m_{d,E}\circ \pi^{(n,m)}_{d,E}(\xi)\right).
\end{split}
\end{equation}
Now consider $\rho^n_{d,E}(\xi)=\xi-j^1_{d,J^{(n-1)}_d E}(\pi^{(n,n-1)}_{d,E}(\xi))$, which is an element in $\Omega^1\circ J^{(n-1)}_d E$.
By definition, we have
\begin{equation}
\begin{split}
\xi
&=j^1_{d,J^{(n-1)}_d E}(\pi^{(n,n-1)}_{d,E}(\xi))+\rho^n_{d,E}(\xi)\\
&=j^1_{d,J^{(n-1)}_d E}\left(
j^{(n-1)}_d (\pi^{(n,0)}_{d,E}(\xi))+\sum_{m=1}^{n-1} j^{(n-m)}_{d,\Omega^1_d\circ J^{(m-1)}_d E}\left(\rho^m_{d,E}\circ \pi^{(n,m)}_{d,E}(\xi)\right)
\right)+\rho^n_{d,E}(\xi)\\
&=j^{(n)}_{d,E}(\pi^{(n,0)}_{d,E}\xi)+\sum_{m=1}^{n-1} j^{(n-m)}_{d,\Omega^1_d\circ J^{(m-1)}_d E}\left(\rho^m_{d,E}\circ \pi^{(n,m)}_{d,E}(\xi)\right).
\end{split}
\end{equation}

For uniqueness, consider any other decomposition \eqref{eq:nhjdec}, and apply $\rho^n_d$ to both terms.
This gives
\begin{equation}
\rho^n_d\xi
=\rho^n_d\left(j^{(n)}_d(\xi^0)+\sum_{m=1}^n j^{(n-m)}_d\left(\sum_{i_m}\alpha^m_{i_m}\otimes_A \xi^m_{i_m}\right)\right)
=0+\rho^n_d\left(\sum_{i_m}\alpha^n_{i_n}\otimes_A \xi^n_{i_n}\right)
=\sum_{i_m}\alpha^n_{i_n}\otimes_A \xi^n_{i_n},
\end{equation}
which means the term in $\Omega^1_d\circ J^{(n-1)}_d E$ is fully determined.
Now, if we consider the difference $\xi-\sum_{i_m}\alpha^n_{i_n}\otimes_A \xi^n_{i_n}$, this is the image of $\pi^{(n,n-1)}_{d,E}\xi$ via the monomorphism $j^1_{d,J^{(n-1)}_d}$, by definition of $\rho^n_d$, and hence, by inductive hypothesis, it must coincide with the unique decomposition of $\pi^{(n,n-1)}_{d,E}\xi$ given above.
This proves uniqueness.

The maps used in this decomposition are all $\bk$-linear.
Therefore, the above decomposition of the functor $J^{(n)}_d$ is $\bk$-linear.

Finally, every element of the form $\alpha\otimes_A \xi\in\Omega^1\circ J^{(m-1)}_d E$ satisfies $\alpha=\sum_i (dx_i)y_i$ by the surjectivity axiom, hence
\begin{equation}
\alpha\otimes_A \xi
=\sum_i (dx_i)y_i\otimes_A \xi
=\sum_i dx_i\otimes_A y_i\xi,
\end{equation}
as desired.
\end{proof}
\begin{rmk}
The choice of $\alpha^m_{i_m}$ and $\xi^m_{i_m}$ is not unique.

Moreover, if the nonholonomic functor is lifted to $\AMod_B$, then this decomposition preserves the right $B$-module structure.
\end{rmk}

Notice that $\pi^{(n,n-1)}_d=\pi^{1,0}_{d,J^{(n-1)}_d}$ is not the only projection $J^{(n)}_d\to J^{(n-1)}_d$
\begin{defi}\label{def:otherprojections}
	For all $1\le m\le n$, we have the natural epimorphisms
	\begin{equation}
	\begin{tikzcd}
		\pi^{(n,n-1;m)}_d
		=J^{(n-m)}_d \pi^{1,0}_{d,J^{(m-1)}_d}
		\colon J^{(n)}_d\ar[r,twoheadrightarrow]& J^{(n-1)}_d,
	\end{tikzcd}
	\end{equation}
	which will be called the {\em nonholonomic $n$-jet projection in position $m$}.
\end{defi}
\begin{rmk}\label{rmk:projexpl}
Given a left $A$-module $E$, we can write the projections $\pi^{(n,n-1;m)}_d$ explicitly by mapping an element $[a_n\otimes b_n]\otimes_A \cdots \otimes_A [a_1\otimes b_1]\otimes_A e_0\in J^{(n)}_d E$ to the following element of $J^{(n-1)}_d E$:
\begin{equation}
\begin{split}
&[a_n\otimes b_n]\otimes_A \cdots\otimes_A [a_{m+1}\otimes b_{m+1}a_m b_m]\otimes_A[a_{m-1}\otimes b_{m-1}]\otimes_A\cdots \otimes_A [a_1\otimes b_1]\otimes_A e_0\\
&\qquad=[a_n\otimes b_n]\otimes_A \cdots\otimes_A [a_{m+1}\otimes b_{m+1}]\otimes_A[a_m b_m a_{m-1}\otimes b_{m-1}]\otimes_A\cdots \otimes_A [a_1\otimes b_1]\otimes_A e_0.
\end{split}
\end{equation}
\end{rmk}
The projections commute, in the sense that for all $1\le h < m\le n$, we have
\begin{equation}\label{eq:picomm}
\pi^{(n-1,n-2;m-1)}_d\circ\pi^{(n,n-1;h)}_d=\pi^{(n-1,n-2;h)}_d\circ\pi^{(n,n-1;m)}_d.
\end{equation}

If $J^1_d$ is exact (cf.\ Corollary \ref{cor:Jex}), then for all $1\le m \le n$ we have the short exact sequence
\begin{equation}\label{es:nhjetsal}
\begin{tikzcd}
0\ar[r]&J^{(n-m)}_d\circ \Omega^1_d\circ J^{(m-1)}_d\ar[r,hookrightarrow,"\iota^{(n,n-1;m)}_d"]&[40pt]J^{(n)}_d\ar[r,twoheadrightarrow,"\pi^{(n,n-1;m)}_d"]&[40pt]J^{(n-1)}_d\ar[r]&0,
\end{tikzcd}
\end{equation}
where we define $\iota^{(n,n-1;m)}_d=J^{(n-m)}_d \iota^1_{d,J^{(m-1)}_d}$.

If $J^1_d$ is not exact, \eqref{es:nhjetsal} is still right exact, but not necessarily left exact, as evinced by the following.
\begin{eg}\label{eg:K2}
Consider the example presented in §\ref{ss:K2}.
We have that the right exact sequence of functors
\begin{equation}
\begin{tikzcd}
J^{1}_d\circ \Omega^1_d \ar[r,"\iota^{(2,1;1)}_d"]&[40pt]J^{(2)}_d\ar[r,twoheadrightarrow,"\pi^{(2,1;1)}_d"]&[40pt]J^{1}_d\ar[r]&0
\end{tikzcd}
\end{equation}
is not left exact.
In fact, if we apply it to the module $\Omega^1_d$, we can show that $\iota^{(2,1;1)}_d\otimes_A\id_{\Omega^1_d}$ is not injective.
We first apply the functor $\Omega^1_d \otimes_A -\otimes_A \Omega^1_d=\bk[0]\otimes_A -\otimes_A \bk[0]$ to the $1$-jet exact sequence \eqref{es:1jK2}.
The result is
\begin{equation}
\begin{tikzcd}
\bk[0]\ar[r,"0"]&\bk[0]\ar[r,equals]&\bk[0]\ar[r]&0.
\end{tikzcd}
\end{equation}
The left and right terms can be computed using Lemma \ref{lemma:K2}, as $\bk[0]\otimes_A \bk[0]=\Tor^A_0(\bk[0],\bk[0])=\bk[0]$.
The computation of the central term follows from right exactness if we prove that the leftmost morphism is the zero map.
This morphism maps $1\in\bk[0]$ in $\bk[0]\otimes_A J^1_d A\otimes_A \bk[0]$ to
\begin{equation}
1\otimes_A (y-x)\otimes_A 1
=1\otimes_A 1\cdot t \otimes_A 1-1\otimes_A t\cdot 1\otimes_A 1
=1\otimes_A 1 \otimes_A 0-0\otimes_A 1\otimes_A 1
=0.
\end{equation}
Using the naturality of \eqref{es:natJ1} applied to the monomorphism $\iota^1_{d,\Omega^1_d}=\iota^1_d\otimes_A \id_{\Omega^1_d}$, we obtain the following commutative diagram
\begin{equation}
\begin{tikzcd}
\Omega^1_d\circ \Omega^1_d(\Omega^1_d)\ar[r,hookrightarrow,"\iota^1_{d,\Omega^1_d( \Omega^1_d)}"]\ar[d,"\Omega^1_d(\iota^1_{d,\Omega^1_d})"']&[40pt] J^1_d\circ \Omega^1_d(\Omega^1_d)\ar[d,"J^1_d(\iota^1_{d,\Omega^1_d})"]\\
\Omega^1_d\circ J^1_d(\Omega^1_d)\ar[r,hookrightarrow,"\iota^1_{d,J^1_d(\Omega^1_d)}"]&J^{(2)}_d(\Omega^1_d)
\end{tikzcd}
\end{equation}
The two horizontal maps are injective, since $\iota^1_d$ is a natural monomorphism, and the left vertical map is $\id_{\Omega^1_d}\otimes_A \iota^1_d\otimes_A \id_{\Omega^1_d}=0$.
It follows that $\Omega^1_d\circ \Omega^1_d(\Omega^1_d)=\bk[0]$ injects into the kernel of $J^1_d(\iota^1_{d,\Omega^1_d})$, which therefore is not injective.
\end{eg}
\subsubsection{Stability}
We now see that the nonholonomic jet functor preserves some important subcategories under a given regularity of $\Omega^1_d$.
\begin{prop}\label{prop:Jnhstable}
Let $A$ be a $\bk$-algebra endowed with a first order differential calculus $\Omega^1_d$ and consider the functor
\begin{align}
J^{(n)}_d\colon \AMod\longrightarrow\AMod.
\end{align}
\begin{enumerate}
\item If $\Omega^1_d$ is in $\AFlat$, then $J^{(n)}_d$ preserves the subcategory $\AFlat$;
\item If $\Omega^1_d$ is in $\AProj$, then $J^{(n)}_d$ preserves the subcategory $\AProj$;
\item If $\Omega^1_d$ is in $\AFG$, then $J^{(n)}_d$ preserves the subcategory $\AFG$;
\item If $\Omega^1_d$ is in $\AFGP$, then $J^{(n)}_d$ preserves the subcategory $\AFGP$.
\end{enumerate}
\end{prop}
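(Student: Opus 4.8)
The plan is to prove this by a short induction on $n$, leveraging the identity $J^{(n)}_d = (J^1_d)^{\circ n}$ from the definition together with the one-step result already obtained in Proposition \ref{prop:OmJstab}. All four cases are handled uniformly, so I would fix one of the full subcategories $\mathcal{C} \in \{\AFlat, \AProj, \AFG, \AFGP\}$ and work under the standing hypothesis that $\Omega^1_d$ lies in $\mathcal{C}$.

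For the base case $n=0$ (or $n=1$), observe that $J^{(0)}_d = \id_{\AMod}$ preserves every full subcategory, and $J^{(1)}_d = J^1_d$ preserves $\mathcal{C}$ by Proposition \ref{prop:OmJstab}. For the inductive step, assume $J^{(n-1)}_d$ preserves $\mathcal{C}$. Given $E$ in $\mathcal{C}$, the inductive hypothesis places $J^{(n-1)}_d E$ in $\mathcal{C}$, and then Proposition \ref{prop:OmJstab}, applied to the left $A$-module $J^{(n-1)}_d E$ and again using that $\Omega^1_d \in \mathcal{C}$, yields that $J^1_d\big(J^{(n-1)}_d E\big) = J^{(n)}_d E$ lies in $\mathcal{C}$ as well. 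In other words, a composite of functors each of which maps a fixed full subcategory into itself again maps it into itself. For $\mathcal{C} = \AFGP$ one may alternatively just intersect the conclusions of the $\AFG$ and $\AProj$ cases, exactly as in the proof of Proposition \ref{prop:OmJstab}. This completes the induction and the proof.

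I do not anticipate any genuine obstacle; once Proposition \ref{prop:OmJstab} is available the argument is purely formal. The only points meriting care are: (i) the same subcategory $\mathcal{C}$ must be fixed throughout, so that each invocation of Proposition \ref{prop:OmJstab} uses the matching regularity hypothesis on $\Omega^1_d$; and (ii) in contrast to the semiholonomic situation, no flatness of $\Omega^1_d$ in $\ModA$ is needed here, because every step applies $J^1_d$ to a left $A$-module and the left-module stability in Proposition \ref{prop:OmJstab} already suffices --- the $1$-jet short exact sequence \eqref{es:jetd} invoked there is exact for arbitrary $E$ since $A$ is flat as a right $A$-module.
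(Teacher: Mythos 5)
Your proposal is correct and is essentially the paper's own argument: the paper likewise disposes of $n=0$ trivially, invokes Proposition \ref{prop:OmJstab} for $n=1$, and concludes by iterating $J^1_d$, which is exactly your induction spelled out. No issues.
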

\begin{proof}
For $n=0$ there is nothing to prove.
The case $n=1$ is true for all points by Proposition \ref{prop:OmJstab}.
Every other case follows by iterating the functor $J^1_d$.
\end{proof}

\subsection{The tensor algebra}
\label{ss:T}
We introduce the following object
\begin{defi}[Tensor algebra]
Let $A$ be a $\bk$-algebra equipped with a first order differential calculus $\Omega^1_d$.
We define the functors $T^n_d=(\Omega^1_d)^{\circ n}=\underbrace{\Omega^1_d\circ \cdots \circ \Omega^1_d}_{n-\textrm{times}}\colon \AMod\longrightarrow \AMod$, explicitly
\begin{equation}
E\longmapsto
\underbrace{\Omega^1_d\otimes_A \cdots \otimes_A \Omega^1_d}_{n-\textrm{times}}\otimes_A E.
\end{equation}
In particular $T^0_d=\id_{\AMod}$ and $T^1_d=\Omega^1_d$.

We define the \emph{tensor algebra functor} with respect to $\Omega^1_d$ to be $T^{\bullet}_d=\bigoplus_{n\ge 0} T^n_d$.

In particular, we call $T^\bullet_d(A)$ the \emph{tensor algebra} with respect to $\Omega^1_d$.
\end{defi}
\begin{rmk}
The bimodule $T^\bullet_d(A)$ is an associative algebra with $\otimes_A$ as multiplication.
\end{rmk}
When no confusion arises, we will denote the $A$-bimodules $T^k_d(A)$ and $T^{\bullet}_d(A)$ also by $T^k_d$ and $T^{\bullet}_d$ respectively.
\begin{eg}[Classical tensor algebra]
	In the example of §\ref{ss:cman}, the tensor algebra corresponds to the classical notion of tensor algebra, i.e.\ $T^{\bullet}_d(\smooth{M})=\Gamma(M,T^{\bullet}M)$.
\end{eg}
\begin{eg}[Infinitesimal algebra]
In the example of §\ref{ss:K2}, since $\bk[0]\otimes_A \bk[0]\cong\bk[0]$, we have $T^n_d\cong\Omega^1_d\cong\bk[0]$ for all $n>0$, and thus $T^\bullet_d=A\oplus\bigoplus_{n>0} \bk[0]$, where $\otimes_A\colon \bk[0]\otimes_A \bk[0]\to\bk[0]$ is an isomorphism.
\end{eg}

From $\iota^1_d\colon\Omega^1_d\to J^1_d$, we obtain a mapping
\begin{equation}\label{eq:iotaTn}
\iota_{T^n_d}\colon T^n_d\longrightarrow J^{(n)}_d
\end{equation}
and by the naturality of $\iota^1_d$, we have
\begin{equation}\label{eq:iotaTnequ}
J^{(n-1)}_d(\iota^1_{d})\circ J^{(n-2)}_d(\iota^1_{d,T^1_d})\circ \cdots \circ J^{1}_d(\iota^1_{d,T^{n-2}_d})\circ \iota^1_{d,T^{n-1}_d}
=\iota_{T^n_d}
=\iota^1_{d,J^{(n-1)}_d}\circ T^1_d(\iota^1_{d,J^{(n-2)}_d})\circ\cdots \circ T^{n-2}_d(\iota^1_{d,J^1_d})\circ J^{(n-1)}_d(\iota^1_{d})
\end{equation}
It follows that for all $1\le m\le n$ we can write
\begin{equation}\label{eq:iotaTnequ2}
\begin{split}
\iota_{T^n_d}
&=J^{(n-m)}_d\left(\iota^1_{d,J^{(m-1)}_d}\right)\circ J^{(n-m)}_d\left(\Omega^1_d(\iota_{T^{m-1}_d})\right)\circ (\iota_{T^{n-m}_d})_{T^{m}_d}\\
\iota_{T^n_d}
&=J^{(n-m)}_d\left(\iota^1_{d,J^{(m-1)}_d}\right)\circ (\iota_{T^{n-m}_d})_{J^{(m-1)}_d}\circ T^{n-m+1}_d\left(\iota_{T^{m-1}_d}\right).
\end{split}
\end{equation}
\begin{rmk}\label{rmk:iotaTninj}
If $\Omega^1_d$ is flat in $\ModA$, the map $\iota_{T^n_d}$ is a monomorphism (cf.\ Corollary \ref{cor:Jex}).
\end{rmk}
\begin{rmk}\label{rmk:TnEstd}
Let $E$ be in $\AMod$, then every element of $T^n_d(E)$ can be written as 
\begin{equation}
\sum_i dx^n_i\otimes_A dx^{n-1}_i\otimes_A \cdots\otimes_A dx^1_i\otimes_A e^0_i,
\end{equation}
where $x^j_i\in A$ and $e^0_i\in E$.
This holds because we can shift the scalars to the right by writing $adb=d(ab)-(da)b$.
\end{rmk}
\begin{rmk}[Tensor algebra on bimodules]
Using Proposition \ref{prop:1jses}, we can lift $T^n_d$ to an endofunctor on $\AMod_B$, which is compatible with the forgetful functor in the sense of \eqref{diag:compforgetbi}.
In this context, the map $\iota_{T^n_d}$ is $(A,B)$-bilinear.
\end{rmk}
\subsubsection{Stability and exactness}
We have the following result, analogous to Proposition \ref{prop:Jnhstable}.
\begin{prop}\label{prop:Tstable}
Let $A$ be a $\bk$-algebra endowed with a first order differential calculus $\Omega^1_d$ and consider the functors
\begin{align}
T^n_d\colon \AMod\longrightarrow\AMod,
&\hfill &
T^{\bullet}_d\colon \AMod\longrightarrow\AMod.
\end{align}
\begin{enumerate}
\item\label{prop:Tstable:1} If $\Omega^1_d$ is in $\AFlat$, then the functors $T^n_d$ and $T^{\bullet}_d$ preserve the subcategory $\AFlat$;
\item\label{prop:Tstable:2} If $\Omega^1_d$ is in $\AProj$, then the functors $T^n_d$ and $T^{\bullet}_d$ preserve the subcategory $\AProj$;
\item\label{prop:Tstable:3} If $\Omega^1_d$ is in $\AFG$, then the functor $T^n_d$ preserves the subcategory $\AFG$;
\item\label{prop:Tstable:4} If $\Omega^1_d$ is in $\AFGP$, then the functor $T^n_d$ preserves the subcategory $\AFGP$.
\end{enumerate}
\end{prop}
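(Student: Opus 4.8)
The plan is to reduce the whole statement to the $n=1$ case, which is exactly Proposition \ref{prop:OmJstab}, and then iterate; the direct-sum functor $T^\bullet_d$ will be handled separately via Lemma \ref{lemma:flprsum}. First I would dispose of the functors $T^n_d$. For $n=0$ there is nothing to do, since $T^0_d=\id_{\AMod}$ preserves every full subcategory. For $n\ge 1$ I would write $T^n_d=\Omega^1_d\circ T^{n-1}_d$, i.e.\ $T^n_d(E)=\Omega^1_d\otimes_A T^{n-1}_d(E)$, and argue by induction on $n$: if $E$ lies in $\AFlat$ (resp.\ $\AProj$, $\AFG$, $\AFGP$) then so does $T^{n-1}_d(E)$ by the inductive hypothesis, and applying the functor $\Omega^1_d=\Omega^1_d\otimes_A-$, which preserves that subcategory by Proposition \ref{prop:OmJstab} under the stated hypothesis on $\Omega^1_d$, keeps us inside it. This establishes the $T^n_d$ halves of all four items, including \eqref{prop:Tstable:3} and \eqref{prop:Tstable:4}. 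Note that it is the functor $\Omega^1_d$, not $J^1_d$, that I invoke from Proposition \ref{prop:OmJstab}, so only left-module regularity of the bimodule $\Omega^1_d$ is needed, with no flatness assumption on the right.

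For $T^\bullet_d=\bigoplus_{n\ge 0}T^n_d$ in cases \eqref{prop:Tstable:1} and \eqref{prop:Tstable:2}, I would take $E$ flat (resp.\ projective) in $\AMod$, observe that each $T^n_d(E)$ is flat (resp.\ projective) by the paragraph above, and conclude that $T^\bullet_d(E)=\bigoplus_{n\ge0}T^n_d(E)$ is flat (resp.\ projective) by Lemma \ref{lemma:flprsum}. This is why the direct-sum statement appears only for \eqref{prop:Tstable:1}--\eqref{prop:Tstable:2}: an infinite direct sum of nonzero finitely generated modules is typically not finitely generated — for instance $\bigoplus_{n>0}\bk[0]$ over the algebra of §\ref{ss:K2}, whose $\Omega^1_d=\bk[0]$ is in $\AFG$ while $T^\bullet_d(A)$ is not — so no such claim can be made for $\AFG$ or $\AFGP$.

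I do not expect any genuine obstacle here: every step is a direct appeal to a result proved earlier in the paper, and the only point requiring a little care is precisely the choice of which functor to quote from Proposition \ref{prop:OmJstab}. If anything, the mildly subtle bookkeeping is remembering that $\AFGP$ is handled by combining the $\AProj$ and $\AFG$ arguments, exactly as in the proof of Proposition \ref{prop:OmJstab}.
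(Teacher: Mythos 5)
Your proof is correct and follows essentially the same route as the paper: reduce $T^n_d$ to iterated application of the functor $\Omega^1_d$, which preserves the relevant subcategory by Proposition \ref{prop:OmJstab}, and handle $T^\bullet_d$ in the flat and projective cases via Lemma \ref{lemma:flprsum}. The extra observation explaining why no direct-sum claim is made for $\AFG$ and $\AFGP$, with the counterexample from §\ref{ss:K2}, is a correct (and welcome) addition not spelled out in the paper.
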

\begin{proof}
For each point, the proof for $T^n_d$ follows from the fact that under the given conditions, $\Omega^1_d$ preserves $\AFlat$, $\AProj$, $\AFG$, and $\AFGP$ (cf.\ Proposition \ref{prop:OmJstab}).
For the first two points, the statement for $T^\bullet_d$ follows from the fact that direct sums of flat and projective left $A$-modules are, respectively, flat and projective left $A$-modules (cf.\ Lemma \ref{lemma:flprsum}).
\end{proof}

\begin{prop}\label{prop:Texact}
The functor $T^n_d$ is always right exact, and if $\Omega^1_d$ is flat in $\AMod$, then $T^n_d$ is exact.
\end{prop}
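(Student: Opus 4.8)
The plan is to reduce both assertions to the single tensor functor $\Omega^1_d\otimes_A-\colon\AMod\to\AMod$ and argue by induction on $n$, the cases $n=0$ (where $T^0_d=\id_{\AMod}$) and $n=1$ (where $T^1_d=\Omega^1_d$) being trivial. For right exactness I would combine the decomposition
\[
	T^n_d=(\Omega^1_d\otimes_A-)\circ T^{n-1}_d
\]
with the facts that $\Omega^1_d\otimes_A-$ is right exact (since $-\otimes_A-$ is right exact in each argument) and that a composite of right exact functors is right exact; the inductive hypothesis then closes the argument. Equivalently --- avoiding the induction --- one observes that $T^n_d$ is itself the tensor functor $(\Omega^1_d)^{\otimes_A n}\otimes_A-$, which is right exact for the same reason, and I would record both descriptions since the second is convenient below.

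For the second assertion the essential input is Corollary \ref{cor:Jex}: the flatness hypothesis on $\Omega^1_d$ is precisely what makes the functor $\Omega^1_d\otimes_A-$ exact. Granting this, the identical induction goes through, since $T^n_d=(\Omega^1_d\otimes_A-)\circ T^{n-1}_d$ becomes a composite of exact functors. A self-contained variant, bypassing the bookkeeping of iterated composites, is to prove by induction --- using the first part of Lemma \ref{lemma:flpr} --- that $(\Omega^1_d)^{\otimes_A n}$ remains flat as a right $A$-module whenever $\Omega^1_d$ is, and then conclude directly that the tensor functor $(\Omega^1_d)^{\otimes_A n}\otimes_A-$ is exact.

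There is no genuine obstacle here: the proposition is a formal consequence of the stability of (right) exactness under composition together with Corollary \ref{cor:Jex}. The one point worth flagging is that the flatness must be used on the correct side --- it is flatness of $\Omega^1_d$ \emph{as a right $A$-module} that governs exactness of $\Omega^1_d\otimes_A-$ --- and that right exactness alone is genuinely insufficient: for the calculus of §\ref{ss:K2} one has $T^n_d\cong\bk[0]\otimes_A-$ for every $n\ge 1$, and this functor fails to be exact by Lemma \ref{lemma:K2}.
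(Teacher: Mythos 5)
Your proof is correct and follows essentially the same route as the paper, which likewise argues that $T^n_d$ is a composite of copies of the right exact functor $\Omega^1_d\otimes_A-$ and invokes Corollary \ref{cor:Jex} for exactness under the flatness hypothesis. Your remark that the relevant flatness is of $\Omega^1_d$ in $\ModA$ (not $\AMod$, as the statement's wording suggests) is well taken and matches what the paper's own proof actually uses.
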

\begin{proof}
Being defined as composition of the right exact functor $\Omega^1_d$, it is right exact.
For the same reason, if $\Omega^1_d$ is flat in $\ModA$ (cf.\ Corollary \ref{cor:Jex}), then $T^n_d$ is exact.
\end{proof}
\subsection{Semiholonomic jet functor}
\label{ss:shj}
Paralleling the construction of jets in the classical case \cite{libermann1997}, we now construct a noncommutative analogue of the semiholonomic jet functor.
\begin{defi}
\label{def:shj}
We define the {\em semiholonomic $n$-jet functor}, denoted $J^{[n]}_d$, to be the equalizer
\begin{equation}
J^{[n]}_d\colonequals \Eq\left(\pi^{(n,n-1;m)}_d \middle| 1\le m\le n \right).
\end{equation}
We denote by $\iota_{J^{[n]}_d}\colon J^{[n]}_d\hookrightarrow J^{(n)}_d$ the corresponding inclusion.
\end{defi}
\begin{rmk}\label{rmk:lowsemihol}
Note that $J^{[0]}_d=\id_{\AMod}$ and $J^{[1]}_d=J^1_d$.
\end{rmk}
\begin{prop}\label{prop:piwd}
Each natural transformation $\pi^{(n,n-1;m)}_d\circ \iota_{J^{[n]}_d}\colon J^{[n]}_d\to J^{(n-1)}_d$ factors (uniquely) through $J^{[n-1]}_d$, for all $1\le m\le n$.
\end{prop}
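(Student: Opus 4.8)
The plan is to deduce the statement from the universal property of the equalizer defining $J^{[n-1]}_d$, combined with the commutation relations \eqref{eq:picomm} among the nonholonomic projections. First, since $\iota_{J^{[n]}_d}\colon J^{[n]}_d\hookrightarrow J^{(n)}_d$ is by definition the equalizer of the maps $\pi^{(n,n-1;m)}_d$ for $1\le m\le n$, all composites $\pi^{(n,n-1;m)}_d\circ\iota_{J^{[n]}_d}$ agree; I would write $q\colon J^{[n]}_d\to J^{(n-1)}_d$ for this common natural transformation. Thus, for every fixed $m$, the map whose factorization is sought is exactly the single map $q$, and it suffices to show that $q$ factors through $\iota_{J^{[n-1]}_d}\colon J^{[n-1]}_d\hookrightarrow J^{(n-1)}_d$. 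Since $\iota_{J^{[n-1]}_d}$ is a monomorphism, such a factorization is automatically unique, which simultaneously delivers the uniqueness asserted in the proposition.

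By the universal property of $J^{[n-1]}_d=\Eq\!\left(\pi^{(n-1,n-2;k)}_d \mid 1\le k\le n-1\right)$, the map $q$ factors through $J^{[n-1]}_d$ precisely when $\pi^{(n-1,n-2;k)}_d\circ q$ is independent of $k$. Chaining across consecutive indices, it is enough to prove $\pi^{(n-1,n-2;k)}_d\circ q=\pi^{(n-1,n-2;k+1)}_d\circ q$ for each $1\le k\le n-2$. This is where \eqref{eq:picomm} does the work: instantiating that relation with $h=k$ and $m=k+2$ — legitimate because $1\le k<k+2\le n$ under the hypothesis $k\le n-2$ — yields
\begin{equation*}
\pi^{(n-1,n-2;k+1)}_d\circ\pi^{(n,n-1;k)}_d=\pi^{(n-1,n-2;k)}_d\circ\pi^{(n,n-1;k+2)}_d.
\end{equation*}
Precomposing both sides with $\iota_{J^{[n]}_d}$ and using $\pi^{(n,n-1;k)}_d\circ\iota_{J^{[n]}_d}=q=\pi^{(n,n-1;k+2)}_d\circ\iota_{J^{[n]}_d}$ gives $\pi^{(n-1,n-2;k+1)}_d\circ q=\pi^{(n-1,n-2;k)}_d\circ q$, as required. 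Hence $q$ factors as $\iota_{J^{[n-1]}_d}\circ\widetilde q$, and $\widetilde q$ is the desired (unique) factorization of $\pi^{(n,n-1;m)}_d\circ\iota_{J^{[n]}_d}$ through $J^{[n-1]}_d$ for every $m$.

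The degenerate cases $n\le 2$ should be dispatched at the outset: there $J^{[n-1]}_d=J^{(n-1)}_d$, since one is equalizing at most a single projection $J^{(n-1)}_d\to J^{(n-2)}_d$, so the assertion is immediate. I do not expect a genuine obstacle here; the only delicate point is index bookkeeping — each invocation of \eqref{eq:picomm} must remain inside its stated range $1\le h<m\le n$, and the consecutive-index chain in the second paragraph must be checked to connect every pair $k,k'\in\{1,\dots,n-1\}$, which it does because the elementary step $k\mapsto k+1$ is established for all admissible $k$. One should also record that, by Remark \ref{rmk:nJbi}, the whole argument lifts compatibly to $\AMod_B$.
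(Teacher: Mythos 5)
Your proof is correct and follows essentially the same route as the paper: both arguments combine the fact that all $\pi^{(n,n-1;m)}_d$ agree on $J^{[n]}_d$ with the commutation relation \eqref{eq:picomm} and then invoke the universal property of the equalizer defining $J^{[n-1]}_d$, differing only in which instance of \eqref{eq:picomm} is used (you chain consecutive indices $k,k+1$; the paper normalizes everything to index $1$). Your added remarks on uniqueness via the monomorphism $\iota_{J^{[n-1]}_d}$ and on the degenerate cases are fine but not needed.
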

\begin{proof}
All projections $\pi^{(n,n-1;m)}_d$ coincide when restricted to $J^{[n]}_d$, so it is enough to prove it for $m=1$.
In order to apply the universal property of the equalizer, we need to show that for all $1\le h\le n-1$ the compositions $\pi^{(n-1,n-2;h)}_d\circ \pi^{(n,n-1;1)}_d\circ \iota_{J^{[n]}_d}$ coincide.
By \eqref{eq:picomm}, on $J^{[n]}_d$ we have
\begin{equation}
\pi^{(n-1,n-2;h)}_d\circ \pi^{(n,n-1;1)}_d\circ\iota_{J^{[n]}_d}
=\pi^{(n-1,n-2;1)}_d\circ \pi^{(n,n-1;h+1)}_d\circ \iota_{J^{[n]}_d}
=\pi^{(n-1,n-2;1)}_d\circ \pi^{(n,n-1;1)}_d\circ \iota_{J^{[n]}_d}.
\end{equation}
\end{proof}
As a consequence of Proposition \ref{prop:piwd}, all maps $\pi^{(n,n-1;m)}_d\circ \iota_{J^{[n]}_d}$ factor through $J^{[n-1]}_d$ as a unique natural transformation, which we denote by
\begin{equation}\label{eq:semiholpi}
\pi^{[n,n-1]}_d\colon J^{[n]}_d\longrightarrow J^{[n-1]}_d.
\end{equation}
\begin{defi}
We term the map \eqref{eq:semiholpi} the \emph{semiholonomic $n$-jet projection}.
Their composition will give for all $0\le m\le n$ a map
\begin{equation}
\pi^{[n,m]}_d\colonequals \pi^{[m+1,m]}_d\circ \pi^{[m+2,m+1]}_d\circ \cdots \circ \pi^{[n-1,n-2]}_d\circ\pi^{[n,n-1]}_d\colon J^{[n]}_d\longrightarrow J^{[m]}_d.
\end{equation}
\end{defi}

Consider the map $\iota_{T^n_d}\colon T^n_d\to J^{(n)}_d$ defined in \eqref{eq:iotaTn}.
By \eqref{eq:iotaTnequ2}, for all $0\le m\le n$, we have
\begin{equation}\label{eq:factoriotaTn}
\begin{split}
\pi^{(n,n-1;m)}_d\circ\iota_{T^n_d}
&=J^{(n-m)}_d \pi^{1,0}_{d,J^{(m-1)}_d}\circ J^{(n-m)}_d\left(\iota^1_{d,J^{(m-1)}_d}\right)\circ J^{(n-m)}_d\left(\Omega^1_d(\iota_{T^{m-1}_d})\right)\circ (\iota_{T^{n-m}_d})_{T^{m}_d}\\
&=J^{(n-m)}_d\left( \pi^{1,0}_{d,J^{(m-1)}_d}\circ \iota^1_{d,J^{(m-1)}_d}\right)\circ J^{(n-m)}_d\left(\Omega^1_d(\iota_{T^{m-1}_d})\right)\circ (\iota_{T^{n-m}_d})_{T^{m}_d}\\
&=0
\end{split}
\end{equation}
It follows that $\iota_{T^n_d}$ factors uniquely through $\iota_{J^{[n]}_d}$.
\begin{defi}
We define $\iota^{[n]}_d$ to be the unique factorization of $\iota_{T^n_d}$ through $\iota_{J^{[n]}_d}$.
\end{defi}
\begin{prop}\label{prop:complexinclusionseminon}
The following is a map in the category of complexes of functors of type $\AMod\to \AMod$
\begin{equation}\label{es:shjci}
\begin{tikzcd}[column sep=40pt]
0\ar[r]&T^{n}_d\ar[d,"\Omega^1_d(\iota_{T^{n-1}_d})"']\ar[r,"\iota^{[n]}_d"]&J^{[n]}_d\ar[d,hookrightarrow,"\iota_{J^{[n]}_d}"]\ar[r,"\pi^{[n,n-1]}_d"]&J^{[n-1]}_d\ar[r]\ar[d,hookrightarrow, "\iota_{J^{[n-1]}_d}"]&0\\
0\ar[r]&\Omega^1_d \circ J^{(n-1)}_d\ar[r,hookrightarrow,"\iota^{(n)}_d"]&J^{(n)}_d\ar[r,twoheadrightarrow,"\pi^{(n,n-1)}_d"]& J^{(n-1)}_d\ar[r]&0
\end{tikzcd}
\end{equation}
The vertical arrows become inclusions if $\Omega^1_d$ is flat in $\ModA$.
\end{prop}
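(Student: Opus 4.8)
The plan is to verify, in order, that both rows of \eqref{es:shjci} are complexes and that the two squares commute, reducing everything to the identities \eqref{eq:iotaTnequ2} and \eqref{eq:factoriotaTn} together with the defining properties of $\iota^{[n]}_d$ and $\pi^{[n,n-1]}_d$; the lower row is already known to be short exact, being \eqref{es:nhjets}.

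First I would handle the right-hand square. By Proposition \ref{prop:piwd} and the construction immediately following it, $\pi^{[n,n-1]}_d$ is the unique natural transformation satisfying $\iota_{J^{[n-1]}_d}\circ\pi^{[n,n-1]}_d=\pi^{(n,n-1;m)}_d\circ\iota_{J^{[n]}_d}$ for every $1\le m\le n$. Taking $m=n$ and using $\pi^{(n,n-1;n)}_d=\pi^{1,0}_{d,J^{(n-1)}_d}=\pi^{(n,n-1)}_d$ gives precisely the commutativity of the right square. For the left-hand square, recall that $\iota^{[n]}_d$ is defined by $\iota_{J^{[n]}_d}\circ\iota^{[n]}_d=\iota_{T^n_d}$, so it suffices to check $\iota_{T^n_d}=\iota^{(n)}_d\circ\Omega^1_d(\iota_{T^{n-1}_d})$; this is exactly \eqref{eq:iotaTnequ2} with $m=n$, once one notes $J^{(0)}_d=\id_{\AMod}$, $\iota_{T^0_d}=\id$, and $\iota^{(n)}_d=\iota^1_{d,J^{(n-1)}_d}$.

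Next I would show the top row is a complex, i.e.\ $\pi^{[n,n-1]}_d\circ\iota^{[n]}_d=0$ (the maps into and out of the zero functor contribute nothing). Since $J^{[n-1]}_d$ is defined as an equalizer, and equalizers in $\AMod$ --- hence pointwise in the functor category --- are monomorphisms, $\iota_{J^{[n-1]}_d}$ is a monomorphism; therefore it is enough to prove $\iota_{J^{[n-1]}_d}\circ\pi^{[n,n-1]}_d\circ\iota^{[n]}_d=0$. Using the right square and then the defining identity for $\iota^{[n]}_d$, the left-hand side equals $\pi^{(n,n-1)}_d\circ\iota_{J^{[n]}_d}\circ\iota^{[n]}_d=\pi^{(n,n-1;n)}_d\circ\iota_{T^n_d}$, which vanishes by \eqref{eq:factoriotaTn}. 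Together with the fact that the lower row is the short exact sequence \eqref{es:nhjets}, this shows \eqref{es:shjci} is a morphism of complexes.

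Finally, for the monomorphism claims: $\iota_{J^{[n]}_d}$ and $\iota_{J^{[n-1]}_d}$ are equalizer inclusions and hence always monomorphisms, while if $\Omega^1_d$ is flat in $\ModA$ then $\iota_{T^{n-1}_d}$ is a monomorphism by Remark \ref{rmk:iotaTninj} and $\Omega^1_d\otimes_A-$ preserves monomorphisms, so $\Omega^1_d(\iota_{T^{n-1}_d})=\Omega^1_d\otimes_A\iota_{T^{n-1}_d}$ is also a monomorphism; thus under that hypothesis all three vertical arrows are inclusions. The argument is essentially bookkeeping; the only point requiring a little care is that the vanishing $\pi^{[n,n-1]}_d\circ\iota^{[n]}_d=0$ cannot be read off directly and must be checked after postcomposing with the monomorphism $\iota_{J^{[n-1]}_d}$, since $\pi^{[n,n-1]}_d$ is only pinned down through that inclusion.
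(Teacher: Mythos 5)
Your proposal is correct and follows essentially the same route as the paper's proof: the right square commutes by the defining property of $\pi^{[n,n-1]}_d$ (with $\pi^{(n,n-1)}_d=\pi^{(n,n-1;n)}_d$), the left square by the definition of $\iota^{[n]}_d$ together with \eqref{eq:iotaTnequ}/\eqref{eq:iotaTnequ2}, the vanishing $\pi^{[n,n-1]}_d\circ\iota^{[n]}_d=0$ from \eqref{eq:factoriotaTn}, and the injectivity of $\Omega^1_d(\iota_{T^{n-1}_d})$ under flatness from Remark \ref{rmk:iotaTninj}. Your extra care in postcomposing with the monomorphism $\iota_{J^{[n-1]}_d}$ before invoking \eqref{eq:factoriotaTn} is a valid elaboration of a step the paper leaves implicit.
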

\begin{proof}
The right square commutes by definition of $\pi^{[n,n-1]}_d$, and the left square commutes by definition of $\iota^{[n]}_d$, since $\iota^{(n)}_d\circ \Omega^1_d(\iota_{T^{n-1}_d})=\iota_{T^n_d}$ (cf.\ \eqref{eq:iotaTnequ}).
From \eqref{eq:factoriotaTn}, we know that $\pi^{[n,n-1]}_d\circ \iota^{[n]}_d=0$.

If $\Omega^1_d$ is flat in $\ModA$, then $\Omega^1_d(\iota_{T^{n-1}_d})$ is injective (cf.\ Remark \ref{rmk:iotaTninj} and Corollary \ref{cor:Jex}).
\end{proof}
\begin{defi}
We call the top row of \eqref{es:shjci} the \emph{semiholonomic $n$-jet sequence}.
\end{defi}
Consider now the nonholonomic jet prolongation $j^{(n)}_d$ defined in \eqref{eq:nhjp}.
We have the following result
\begin{prop}\label{prop:nhjtosh}
The nonholonomic jet prolongation factors (uniquely) through the inclusion of the semiholonomic jet subfunctor, that is
\begin{equation}
j^{(n)}_d=\iota_{J^{[n]}_d}\circ j^{[n]}_d.
\end{equation}
\end{prop}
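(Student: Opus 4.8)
The plan is to use the universal property of the equalizer defining $J^{[n]}_d$ in Definition \ref{def:shj}. Equalizers in a functor category are computed pointwise, and in $\AMod$ the equalizer $J^{[n]}_d(E) \subseteq J^{(n)}_d(E)$ is precisely the submodule on which all the projections $\pi^{(n,n-1;m)}_{d,E}$, $1 \le m \le n$, agree; moreover $\iota_{J^{[n]}_d}$ is a monomorphism. Hence it suffices to show that the natural transformation $j^{(n)}_d \colon \id_{\AMod} \to J^{(n)}_d$ satisfies $\pi^{(n,n-1;m)}_d \circ j^{(n)}_d = \pi^{(n,n-1;m')}_d \circ j^{(n)}_d$ for all $m,m'$; I will in fact prove the stronger normalization $\pi^{(n,n-1;m)}_d \circ j^{(n)}_d = j^{(n-1)}_d$ for every $1 \le m \le n$.

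Fix $m$. First I would decompose both maps so that the index $m$ is exposed: iterating \eqref{eq:nhjpdec} (together with $j^{(1)}_d = j^1_d$) gives $j^{(n)}_d = j^{(n-m)}_{d, J^{(m)}_d} \circ j^1_{d, J^{(m-1)}_d} \circ j^{(m-1)}_d$, while by Definition \ref{def:otherprojections} we have $\pi^{(n,n-1;m)}_d = J^{(n-m)}_d \pi^{1,0}_{d, J^{(m-1)}_d}$. Composing, the crucial step is to push the leading factor $J^{(n-m)}_d \pi^{1,0}_{d, J^{(m-1)}_d}$ past $j^{(n-m)}_{d, J^{(m)}_d}$ by naturality of $j^{(n-m)}_d$ evaluated at the $A$-linear morphism $\pi^{1,0}_{d, J^{(m-1)}_d}$, which rewrites it as $j^{(n-m)}_{d, J^{(m-1)}_d} \circ \pi^{1,0}_{d, J^{(m-1)}_d}$. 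Then $\pi^{1,0}_{d, J^{(m-1)}_d} \circ j^1_{d, J^{(m-1)}_d} = \id$ by \eqref{eq:jsecpi}, collapsing the composite to $j^{(n-m)}_{d, J^{(m-1)}_d} \circ j^{(m-1)}_d$, which is $j^{(n-1)}_d$ by a further application of \eqref{eq:nhjpdec} (with $n-1$ for $n$ and $m-1$ for $m$). The conventions $j^{(0)}_d = \id$ and $J^{(0)}_d = \id_{\AMod}$ make the boundary cases $m=1$ and $m=n$ go through unchanged. Alternatively one can read this off from the explicit formula \eqref{eq:jexpl} and Remark \ref{rmk:projexpl}: $\pi^{(n,n-1;m)}_{d,E}$ merges the $m$-th and $(m-1)$-st tensor slots, both of which equal $[1\otimes 1]$ on $j^{(n)}_{d,E}(e)$.

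Given this, the image of $j^{(n)}_d$ lies in $J^{[n]}_d$, so $j^{(n)}_d$ factors through $\iota_{J^{[n]}_d}$, and uniquely since the latter is monic; writing $j^{[n]}_d$ for the factorization yields $j^{(n)}_d = \iota_{J^{[n]}_d} \circ j^{[n]}_d$. The subtlety worth flagging is the ambient category: the $\pi^{(n,n-1;m)}_d$ and $\iota_{J^{[n]}_d}$ are natural transformations of $\AMod$-valued functors, whereas $j^{(n)}_d$, being built from the merely right $A$-linear map $j^1_d$, lives in the category of functors $\AMod \to \Mod$; this causes no problem because the forgetful functor $\AMod \to \Mod$ preserves the pointwise equalizer, so the universal property can be applied there. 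The step I expect to require the most care is exactly this naturality manipulation together with the index bookkeeping; there is no deeper obstacle.
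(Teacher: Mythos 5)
Your proof is correct and follows essentially the same route as the paper's: both establish $\pi^{(n,n-1;m)}_d\circ j^{(n)}_d=j^{(n-1)}_d$ by decomposing $j^{(n)}_d$ via \eqref{eq:nhjpdec} at level $m$, applying naturality of $j^{(n-m)}_d$ with respect to $\pi^{1,0}_{d,J^{(m-1)}_d}$, and cancelling via the section identity, then invoking the universal property of the equalizer. Your extra remark about working in the category of functors $\AMod\to\Mod$ and the forgetful functor preserving the pointwise equalizer is a sound clarification of a point the paper leaves implicit, but it does not change the argument.
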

\begin{proof}
Given the explicit formulae for the prolongation (cf.\ \eqref{eq:jexpl}) and the projections (cf.\ Remark \ref{rmk:projexpl}), one can straightforwardly prove the proposition object-wise.
We will prove it instead via categorical methods.

Let $1\le m\le n$ and consider the projection $\pi^{(n,n-1;m)}_d=J^{(n-m)}_d\pi^{1,0}_{d,J^{(m-1)}_d}$.
By \eqref{eq:nhjpdec}, we can decompose the prolongation as
\begin{equation}
j^{(n)}_d
=j^{(n-m)}_{d,J^{(m)}_d}\circ j^{(m)}_d.
\end{equation}
Thus, combining the diagram encoding \eqref{eq:jpi} with the naturality diagram of $j^{(n-m)}_{d}$ with respect to the natural transformation $\pi^{(m,m-1)}_d\colon J^{(m)}_d\to J^{(m-1)}_d$, we obtain
\begin{equation}
\begin{tikzcd}[column sep=50pt,row sep=30pt]
\id_{\AMod}\ar[r,hookrightarrow,"j^{(m)}_d"']\ar[rr,hookrightarrow,bend left=15pt,"j^{(n)}_d"]\ar[d,equals]&J^{(m)}_d\ar[r,hookrightarrow,"j^{(n-m)}_{d,J^{(m)}_d}"']\ar[d,twoheadrightarrow,"\pi^{(m,m-1)}_d"']&J^{(n)}_d\ar[d,twoheadrightarrow,"\pi^{(n,n-1;m)}_d"]\\
\id_{\AMod}\ar[r,hookrightarrow,"j^{(m-1)}_d"]\ar[rr,hookrightarrow,bend right=15pt,"j^{(n-1)}_d"']&J^{(m-1)}_d\ar[r,hookrightarrow,"j^{(n-m)}_{d,J^{(m-1)}_d}"]&J^{(n-1)}_d
\end{tikzcd}
\end{equation}
For all $m$, the diagram shows that $\pi^{(n,n-1;m)}_d\circ j^{(n)}_d=j^{(n-1)}_d$, whence we conclude that $j^{(n)}_d$ factors through the equalizer of all the $\pi^{(n,n-1;m)}_d$.
\end{proof}
\begin{defi}[Semiholonomic jet prolongation]
We call the map $j^{[n]}_d\colon \id_{\AMod}\to J^{[n]}_d$ described in Proposition \ref{prop:nhjtosh}, the \emph{semiholonomic $n$-jet prolongation}.
\end{defi}
\begin{rmk}
The semiholonomic $n$-jet prolongation is a section of $\pi^{[n,n-1]}_d$.
More generally, for all $0\le m\le n$, from \eqref{eq:jpi}, and the fact that $\iota_{J^{[n]}_d}$ is injective, we get
\begin{equation}
\pi^{[n,m]}_d\circ j^{[n]}_d
=j^{[m]}_d.
\end{equation}
\end{rmk}

We will now prove the following technical lemma, which is a category theoretical fact that will be used frequently further on.
\begin{lemma}\label{lemma:pb}
In an abelian category, consider a commutative square as follows
\begin{equation}
\begin{tikzcd}
M\ar[r,"f"]\ar[d,"g"]&N\ar[d,hookrightarrow,"h"]\\
M'\ar[r,"f'"]&N'
\end{tikzcd}
\end{equation}
and assume $h$ is a monomorphism, then $\ker(f)$ is the pullback of the kernel of $f'$ along $g$ via the universal morphisms.
Moreover, if $g$ is a monomorphism, then $\ker(f)=\ker(f')\cap M$.
\end{lemma}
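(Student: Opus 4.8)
The plan is to produce $\ker(f)$ explicitly as a pullback cone over the cospan $\ker(f')\hookrightarrow M'\xleftarrow{g}M$ and then verify the universal property by a short diagram chase, using the monomorphism hypothesis on $h$ (and later on $g$) at precisely one point each. Write $k\colon\ker(f)\hookrightarrow M$ and $k'\colon\ker(f')\hookrightarrow M'$ for the canonical kernel inclusions. First I would note that $f'\circ g\circ k = h\circ f\circ k = 0$, the first equality by commutativity of the square and the second since $f\circ k=0$; hence, by the universal property of $\ker(f')$, there is a unique morphism $u\colon\ker(f)\to\ker(f')$ with $k'\circ u = g\circ k$. This gives a commutative square with corners $\ker(f)$, $M$, $\ker(f')$, $M'$, and the claim is that it is cartesian.

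To check the pullback property I would take a test object $T$ with morphisms $a\colon T\to M$ and $b\colon T\to\ker(f')$ satisfying $g\circ a = k'\circ b$, and construct the mediating map. The key observation is that $h\circ f\circ a = f'\circ g\circ a = f'\circ k'\circ b = 0$, and since $h$ is a monomorphism this forces $f\circ a = 0$; hence $a$ factors uniquely through $\ker(f)$ as $a = k\circ t$. Then $k'\circ u\circ t = g\circ k\circ t = g\circ a = k'\circ b$, and since $k'$ is a monomorphism (being a kernel) we get $u\circ t = b$, so $t$ is a morphism of cones. Its uniqueness is immediate because $k$ is a monomorphism. This establishes the first assertion.

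For the second assertion I would first remark that $k$ is a monomorphism by construction, and that $u$ is one as well: $k'\circ u = g\circ k$ is a composite of monomorphisms — here is where the hypothesis that $g$ is monic enters — hence monic, and a left factor of a monomorphism is a monomorphism. Thus the cartesian square exhibits $\ker(f)$ simultaneously as a subobject of $M$ and of $\ker(f')$, compatibly with the inclusions $g\colon M\hookrightarrow M'$ and $k'\colon\ker(f')\hookrightarrow M'$; since the intersection of two subobjects is by definition the pullback of their inclusions into the ambient object, this says exactly $\ker(f)=\ker(f')\cap M$ as subobjects of $M'$.

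I do not anticipate a genuine obstacle: the argument is formal and valid in any abelian category (indeed in any category possessing the kernels in question). The only care needed is bookkeeping — invoking $h$ monic to descend $f\circ a=0$ from $h\circ f\circ a=0$, and invoking $g$ monic to upgrade the comparison map $u$ to an honest subobject inclusion so that the word ``intersection'' is warranted.
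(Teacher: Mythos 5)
Your proof is correct and follows essentially the same route as the paper's: construct the induced map on kernels, verify the pullback universal property directly (using that $h$ is monic to descend $h\circ f\circ a=0$ to $f\circ a=0$, and that the kernel inclusion of $\ker(f')$ is monic to get the cone condition), and then use that $g$ is monic to upgrade the comparison map to a subobject inclusion for the intersection statement. No gaps.
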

\begin{proof}
We compute the kernels of $f$ and $f'$ and observe that we have a unique map $k$ by the universal property of the kernel.
\begin{equation}
\begin{tikzcd}
K\ar[d,dashed,"k"]\ar[r,hookrightarrow,"\iota"]&M\ar[r,"f"]\ar[d,"g"]&N\ar[d,hookrightarrow,"h"]\\
K'\ar[r,hookrightarrow,"\iota'"]&M'\ar[r,"f'"]&N'
\end{tikzcd}
\end{equation}
We will now prove that the left square is a pullback square by showing the universal property.
Let $x\colon L\to M$ and $y\colon L\to K'$ be such that $\iota'\circ y=g\circ x$.
Composing this equality with $f'$ gives
\begin{equation}
h\circ f\circ x
=f'\circ g\circ x
=f'\circ \iota'\circ y
=0.
\end{equation}
Since $h$ is a mono, this implies $f\circ x=0$ and hence the unique factorization of $x$ through $K$ as $x=\iota\circ z$.
Notice that $k\circ z=y$, since composing with the mono $\iota'$ we get
\begin{equation}\label{eq:iotaprimestuff}
\iota'\circ k\circ z
=g\circ \iota\circ z
=g\circ x
=\iota' \circ y.
\end{equation}

Moreover, if $g$ is a mono, then $\iota\circ k=g\circ\iota$ is also a mono, implying $k$ mono.
\end{proof}

Let us now define the following natural transformation $\widetilde{\DH}^I_d$ as the unique map
\begin{equation}\label{eq:DHI}
\begin{tikzcd}
\widetilde{\DH}^{I}_d
\colon J^{(2)}_d\ar[r]& \Omega^{1}_d
&
\textrm{such that}
&
\iota^1_d\circ \widetilde{\DH}^I_d
=J^1_d(\pi^{1,0}_d)-\pi^{1,0}_{d,J^1_d}.
\end{tikzcd}
\end{equation}
\begin{rmk}\label{rmk:DHIeq}
Notice that $J^1_d(\pi^{1,0}_d)-\pi^{1,0}_{d,J^1_d}$ coincides with $\pi^{(2,1;1)}_d-\pi^{(2,1;2)}_d\colon J^{(2)}_d\to J^{(1)}_d$, and they factor through $\iota^1_d\colon\Omega^1_d\hookrightarrow J^1_d$ because from \eqref{eq:picomm} we get
\begin{equation}
\pi^{1,0}_d\circ \left(\pi^{(2,1;1)}_d-\pi^{(2,1;2)}_d \right)
=\pi^{1,0}_d\circ \pi^{(2,1;1)}_d-\pi^{1,0}_d\circ \pi^{(2,1;2)}_d
=0.
\end{equation}
\begin{rmk}
This map is a generalization of $\lambda$ appearing in \cite[Proposition 4, p.~433]{Goldschmidt}.
\end{rmk}
We can describe this map explicitly on a left $A$-module $E$ by
\begin{align}
\widetilde{\DH}^{I}_{d,E}= J^1_d (\pi^{1,0}_{d,E})-\pi^{1,0}_{d,J^1_d E}
\colon J^{(2)}_d E\longrightarrow \Omega^{1}_d E,
&\hfill&
[a_1\otimes b_1]\otimes_A[a_2\otimes b_2]\otimes_A e \longmapsto a_1 d(b_1a_2)b_2 \otimes_A e.
\end{align}
We can more implicitly write $\widetilde{\DH}^I_{d,E}$ as a sort of commutativity condition between $\pi^{1,0}_d$ and $\rho_d$, i.e.
\begin{equation}\label{DHIcom}
\widetilde{\DH}^I_{d,E}
=\Omega^1_d(\pi^{1,0}_{d,E})\circ \rho_{d,J^1_d E}-\rho_{d,E}\circ \pi^{1,0}_{d,J^1_d E}.
\end{equation}
\end{rmk}
\begin{rmk}\label{rmk:rhodif}
The map $\rho_d$ (cf.\ \eqref{def:rho}) is a first order differential operator with lift $-\widetilde{\DH}^I_d\colon J^{(2)}_d A\to \Omega^1_d$.
The same happens for the induced natural transformation $\rho_d$ (cf.\ \eqref{eq:rhon}).
In particular $\rho_d=-\DH^I_d\colon J^1_d\to \Omega^1_d$.
\end{rmk}

The following theorem gives equivalent descriptions of $J^{[n]}_d$.

\begin{theo}[Characterization of the semiholonomic jet functor]\label{theo:sjchar}
Let $A$ be a $\bk$-algebra endowed with a first order differential calculus $\Omega^1_d$.
Then for all $n\ge 0$, we have
\begin{equation}\label{theo:sjchar:3}
J^{[n]}_d=\bigcap_{m=1}^{n-1}\ker\left(J^{(n-m-1)}_d \widetilde{\DH}^I_{d,J^{(m-1)}_d} \right).
\end{equation}

Furthermore, if $\Omega^1_d$ is flat in $\ModA$, then for $n\ge 2$ the following subfunctors of $J^{(n)}_d$ coincide
\begin{enumerate}
\item\label{theo:sjchar:1} $J^{[n]}_d$;
\item\label{theo:sjchar:2} The kernel of
\begin{equation}\label{eq:feq}
\begin{tikzcd}
\widetilde{\DH}^I_{d,J^{[n-2]}_d}\circ J^1_d(l^{[n-1]}_d)\colon J^{1}_d\circ J^{[n-1]}_d\ar[r]& \Omega^1_d\circ J^{[n-2]}_d,
\end{tikzcd}
\end{equation}
where $l^{[n-1]}_d\colon J^{[n-1]}_d\hookrightarrow J^1_d\circ J^{[n-2]}_d$ denotes the natural inclusion ($l^{[1]}_d=\id_{J^1_d}$ and $l^{[0]}_d=\id_{A}$), and
\begin{equation}\label{eq:decsjn}
\begin{tikzcd}
\iota_{J^{[n]}_d}=J^{(n-1)}_d(l^{[1]}_d)\circ\cdots\circ J^1_d(l^{[n-1]}_d)\circ l^{[n]}_d\colon J^{[n]}_d\ar[r]&J^{(n)}_d.
\end{tikzcd}
\end{equation}
\item\label{theo:sjchar:4} The intersection (pullback) of $J^{[h]}_d\circ J^{(n-h)}_d$ and $J^{(n-m)}_d \circ J^{[m]}_d$ in $J^{(n)}_d$ for $m,h\ge 0$ such that $h+m>n$.
\end{enumerate}
\end{theo}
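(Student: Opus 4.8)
The plan is to reduce every description to one combinatorial identity relating the nonholonomic projections in different positions to the natural transformation $\widetilde{\DH}^I$, and then to harvest the three characterizations by formal manipulations: the equalizer/kernel dictionary in an abelian category, Lemma \ref{lemma:pb}, and the exactness of $J^1_d$, $\Omega^1_d$ and the iterated functors $J^{(k)}_d$ under the flatness hypothesis (Corollary \ref{cor:Jex}, Remark \ref{rmk:nhJexact}). First I would prove that $J^{(n-m-1)}_d\widetilde{\DH}^I_{J^{(m-1)}_d}=\pi^{(n,n-1;m)}_d-\pi^{(n,n-1;m+1)}_d$ for $1\le m\le n-1$. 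By Remark \ref{rmk:DHIeq}, $\widetilde{\DH}^I=\pi^{(2,1;1)}_d-\pi^{(2,1;2)}_d=J^1_d\pi^{1,0}_d-\pi^{1,0}_{d,J^1_d}$; whiskering on the right by $J^{(m-1)}_d$ and on the left by $J^{(n-m-1)}_d$, and using $J^{(a)}_d\circ J^{(b)}_d=J^{(a+b)}_d$ together with the interchange law, turns the two summands into $J^{(n-m)}_d\pi^{1,0}_{d,J^{(m-1)}_d}=\pi^{(n,n-1;m)}_d$ and $J^{(n-m-1)}_d\pi^{1,0}_{d,J^{(m)}_d}=\pi^{(n,n-1;m+1)}_d$. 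The left-hand side is a priori only the corestriction of the right-hand side to $J^{(n-m-1)}_d\circ\Omega^1_d\circ J^{(m-1)}_d\hookrightarrow J^{(n-1)}_d$, but this inclusion is a monomorphism for every calculus: the $1$-jet sequence \eqref{es:jetdA} splits in $\ModA$, so $\iota^1_d\colon\Omega^1_d\hookrightarrow J^1_d$ is a split mono of right $A$-modules, right-whiskering preserves this, and the additive functor $J^{(n-m-1)}_d$ preserves split monos. Hence the two maps have the same kernel, and since $\Eq(f_1,\dots,f_n)=\bigcap_{m=1}^{n-1}\ker(f_m-f_{m+1})$ in an abelian category (limits in the functor category being computed objectwise), the first assertion $J^{[n]}_d=\bigcap_{m=1}^{n-1}\ker(J^{(n-m-1)}_d\widetilde{\DH}^I_{J^{(m-1)}_d})$ follows with no flatness hypothesis.

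From here on I would assume $\Omega^1_d$ flat in $\ModA$, so that $J^1_d$, $\Omega^1_d$ and every $J^{(k)}_d$ are exact, hence preserve monomorphisms, kernels, finite intersections and equalizers. Next I would produce the inclusions $l^{[k]}_d$ and the decomposition \eqref{eq:decsjn}. Left-whiskering $\pi^{(k-1,k-2;j)}_d$ by $J^1_d$ gives $\pi^{(k,k-1;j)}_d$, so applying the exact functor $J^1_d$ to $J^{[k-1]}_d=\Eq(\pi^{(k-1,k-2;j)}_d\mid 1\le j\le k-1)$ realizes $J^1_d\circ J^{[k-1]}_d$ as $\bigcap_{j=1}^{k-2}\ker(\pi^{(k,k-1;j)}_d-\pi^{(k,k-1;j+1)}_d)$ inside $J^{(k)}_d$. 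Since $J^{[k]}_d$ is the intersection over the larger range $1\le j\le k-1$, it is contained in $J^1_d\circ J^{[k-1]}_d$, giving a unique monomorphism $l^{[k]}_d\colon J^{[k]}_d\hookrightarrow J^1_d\circ J^{[k-1]}_d$ with $\iota_{J^{[k]}_d}=J^1_d(\iota_{J^{[k-1]}_d})\circ l^{[k]}_d$; iterating this factorization (with $l^{[1]}_d=\id$) yields \eqref{eq:decsjn}. Splitting off the term $m=n-1$ from the first assertion and recognising the remaining intersection as $J^1_d\circ J^{[n-1]}_d$, I obtain the key equality $J^{[n]}_d=(J^1_d\circ J^{[n-1]}_d)\cap\ker(\widetilde{\DH}^I_{J^{(n-2)}_d})$ inside $J^{(n)}_d$.

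To identify $J^{[n]}_d$ with the kernel in \eqref{eq:feq}, I would apply the interchange law for $\widetilde{\DH}^I\colon J^{(2)}_d\Rightarrow\Omega^1_d$ to $\iota_{J^{[n-2]}_d}$ and combine it with $\iota_{J^{[n-1]}_d}=J^1_d(\iota_{J^{[n-2]}_d})\circ l^{[n-1]}_d$ to get the commuting square $\widetilde{\DH}^I_{J^{(n-2)}_d}\circ J^1_d(\iota_{J^{[n-1]}_d})=\Omega^1_d(\iota_{J^{[n-2]}_d})\circ\widetilde{\DH}^I_{J^{[n-2]}_d}\circ J^1_d(l^{[n-1]}_d)$, whose vertical legs $J^1_d(\iota_{J^{[n-1]}_d})$ and $\Omega^1_d(\iota_{J^{[n-2]}_d})$ are monomorphisms by exactness. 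Lemma \ref{lemma:pb} then identifies the kernel of \eqref{eq:feq}, viewed in $J^{(n)}_d$, with $(J^1_d\circ J^{[n-1]}_d)\cap\ker(\widetilde{\DH}^I_{J^{(n-2)}_d})$, which is $J^{[n]}_d$ by the previous paragraph; the comparison map into $J^1_d\circ J^{[n-1]}_d$ is $l^{[n]}_d$, so \eqref{eq:decsjn} holds for this description too. For the pullback description, right-whiskering $\pi^{(h,h-1;j)}_d$ by $J^{(n-h)}_d$ produces $\pi^{(n,n-1;n-h+j)}_d$, so $J^{[h]}_d\circ J^{(n-h)}_d=\bigcap_{p=n-h+1}^{n-1}\ker(\pi^{(n,n-1;p)}_d-\pi^{(n,n-1;p+1)}_d)$, while left-whiskering $\pi^{(m,m-1;j)}_d$ by the exact functor $J^{(n-m)}_d$ produces $\pi^{(n,n-1;j)}_d$, so $J^{(n-m)}_d\circ J^{[m]}_d=\bigcap_{j=1}^{m-1}\ker(\pi^{(n,n-1;j)}_d-\pi^{(n,n-1;j+1)}_d)$. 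Both are subobjects of $J^{(n)}_d$, so their pullback is the intersection of the listed kernels, i.e.\ the intersection over the index set $\{1,\dots,m-1\}\cup\{n-h+1,\dots,n-1\}$; this is all of $\{1,\dots,n-1\}$ precisely when $h+m>n$, and then the pullback equals $\bigcap_{p=1}^{n-1}\ker(\pi^{(n,n-1;p)}_d-\pi^{(n,n-1;p+1)}_d)=J^{[n]}_d$.

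I expect the main obstacle to be the positional bookkeeping of the nonholonomic projections under left and right whiskering by iterated jet functors: every identification above — $\widetilde{\DH}^I$ as a difference of consecutive projections, $\pi^{(k-1,k-2;j)}_d\mapsto\pi^{(k,k-1;j)}_d$ under $J^1_d$, position $j\mapsto n-h+j$ under precomposition with $J^{(n-h)}_d$, and so on — must be checked carefully against Definition \ref{def:otherprojections}, precisely so that the two index sets in the pullback description combine to exactly $\{1,\dots,n-1\}$ under $h+m>n$ and to a strict subset otherwise. The only other delicate point, that ``intersection (pullback) of subfunctors inside $J^{(n)}_d$'' is computed as one expects, is handled uniformly by Lemma \ref{lemma:pb} together with the exactness of $J^1_d$, $\Omega^1_d$ and $J^{(k)}_d$ granted by flatness of $\Omega^1_d$ in $\ModA$.
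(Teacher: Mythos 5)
Your handling of the flatness-dependent part (the coincidence of \eqref{theo:sjchar:1}, \eqref{theo:sjchar:2}, \eqref{theo:sjchar:4} and the factorization \eqref{eq:decsjn}) is correct and close to the paper's argument, in places slightly cleaner: you get \eqref{theo:sjchar:1}$=$\eqref{theo:sjchar:4} directly from the whiskering bookkeeping of index sets rather than by induction, and \eqref{theo:sjchar:1}$=$\eqref{theo:sjchar:2} from a single application of Lemma \ref{lemma:pb} to the naturality square for $\widetilde{\DH}^I$ (the paper's diagram \eqref{eq:edhInat}) instead of two separate containments. Your identification of $\widetilde{\DH}^I$ with the difference of consecutive nonholonomic projections is also the paper's starting point.

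The gap is in your proof of \eqref{theo:sjchar:3} without any flatness hypothesis, specifically the claim that $J^{(n-m-1)}_d(\iota^1_{d,J^{(m-1)}_d})\colon J^{(n-m-1)}_d\circ\Omega^1_d\circ J^{(m-1)}_d\to J^{(n-1)}_d$ is a monomorphism for every calculus. Your split-mono argument does not go through: $\iota^1_d$ is split mono in $\ModA$ with retraction $\rho_d$, and tensoring on the right by $J^{(m-1)}_dE$ yields a retraction of $\iota^1_{d,J^{(m-1)}_dE}$ that is only $\bk$-linear, not left $A$-linear (cf.\ Remark \ref{rmk:rhodo}); since $J^{(n-m-1)}_d$ is a functor on $\AMod$, it cannot be applied to that retraction, so it need not preserve the mono. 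Indeed, the paper's own Example \ref{eg:K2} exhibits a calculus for which $J^1_d(\iota^1_{d,\Omega^1_d})$ fails to be injective, so the claim is false as stated. Consequently, what your argument actually establishes without flatness is only $\bigcap_{m}\ker\bigl(J^{(n-m-1)}_d\widetilde{\DH}^I_{J^{(m-1)}_d}\bigr)\subseteq J^{[n]}_d$; the reverse containment requires showing that a section equalizing all projections is annihilated by the whiskered $\widetilde{\DH}^I$ \emph{before} post-composition with the possibly non-injective $J^{(n-m-1)}_d(\iota^1_{d,J^{(m-1)}_d})$, which needs a separate argument for $1\le m\le n-2$ (the case $m=n-1$ is fine, since there the relevant inclusion is $\iota^1_{d,J^{(n-2)}_d}$, which is always mono by exactness of \eqref{es:jetd}). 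Either supply that argument or interpret the maps in \eqref{theo:sjchar:3} as landing in $J^{(n-1)}_d$, in which case the identity becomes the tautological equalizer-as-intersection-of-kernels statement.
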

\begin{proof}
The equality \eqref{theo:sjchar:3} holds for $n=0$ and $n=1$, as in these cases both equalizer and intersection of kernels describe an empty condition.
We prove the cases $n\ge 2$ from the fact that the maps $\pi^{(n,n-1;m)}_d$ and $\pi^{(n,n-1;m+1)}_d$ for $1\le m\le n-1$ coincide if and only if their difference vanish, and $\pi^{(n,n-1;m)}_d-\pi^{(n,n-1;m-1)}_d=J^{(n-m)}_d\widetilde{\DH}^{I}_{d,J^{(m-1)}_d}$.
The statement follows because a finite collection of morphisms coincide if and only if their consecutive differences vanishes.

For the remaining cases, we proceed by induction.
All conditions coincide for $n=2$, since $\ker(\widetilde{\DH}^I_d)=\Eq(\pi^{(2,1;1)}_d,\pi^{(2,1;2)}_d)$ (cf.\ Remark \ref{rmk:DHIeq}).
Now let $n>2$ and assume the statement true for all $m<n$.
\eqref{theo:sjchar:2}$\subseteq$\eqref{theo:sjchar:1}: By using \eqref{theo:sjchar:3}, we need to prove that $J^{(n-m-1)}_d \widetilde{\DH}_{d,J^{(m-1)}_d}$ vanishes on \eqref{theo:sjchar:2} for $1\le m\le n-1$.
For $1\le m\le n-2$ it follows by inductive hypothesis, as the subfunctor \eqref{theo:sjchar:2} is contained in $J^1_d\circ J^{[n-1]}_d$.
Therefore, for $1\le m\le n-2$ we have
\begin{equation}
J^{(n-m-1)}_d \widetilde{\DH}_{d,J^{(m-1)}_d}\circ J^1_d(\iota_{J^{[n]}_d})
=J^1_d\left(J^{(n-m-2)}_d \widetilde{\DH}_{d,J^{(m-1)}_d}\circ \iota_{J^{[n]}_d} \right),
\end{equation}
which vanishes by inductive hypothesis.

If instead $m=n-1$, then we have to show that $\widetilde{\DH}^{I}_{d,J^{(n-2)}_d}$ vanishes on $J^{[n]}_d$.
By naturality of $\widetilde{\DH}^I_d$ and $\iota_{J^{[n-2]}_d}$, we obtain the right-hand square in the following commutative diagram
\begin{equation}\label{eq:edhInat}
\begin{tikzcd}[column sep=40pt]
J^1_d\circ J^{[n-1]}_d \ar[dr,hookrightarrow,"J^1_d (\iota_{J^{[n-1]}_d})"']\ar[r,hookrightarrow,"J^1_d (l^{[n-1]}_d)"]&J^{(2)}_d\circ J^{[n-2]}_d\ar[r,"\widetilde{\DH}^{I}_{d,J^{[n-2]}_d}"]\ar[d,hookrightarrow,"J^{(2)}_d \iota_{J^{[n-2]}_d}"]&\Omega^{1}_d\circ J^{[n-2]}_d\ar[d,hookrightarrow,"\Omega^1_d (\iota_{J^{[n-2]}_d})"]\\
&J^{(n)}_d \ar[r,"\widetilde{\DH}^{I}_{d,J^{(n-2)}_d}"']&\Omega^1_d \circ J^{(n-2)}_d
\end{tikzcd}
\end{equation}
From this diagram we deduce that \eqref{theo:sjchar:2}, which is the kernel of the composition of the top two horizontal maps, vanishes on $\widetilde{\DH}^{I}_{d,J^{(n-2)}_d}\circ J^1_d (\iota_{J^{[n]}_d})$.

\eqref{theo:sjchar:1}$=$\eqref{theo:sjchar:4}: The only non-trivial cases occur for $m,h\ge 2$.
In this case, we obtain
\begin{equation}
\begin{split}
\bigcap_{j=1}^{n-1}\ker\left(J^{(n-j-1)}_d\widetilde{\DH}^I_{d,J^{(j-1)}_d}\right)
&=\bigcap_{j=n-h+1}^{n-1}\ker\left(J^{(n-j-1)}_d\widetilde{\DH}_{d,J^{(j-1)}_d}\right)\cap \bigcap_{j=1}^{m-1}\ker\left(J^{(n-j-1)}_d\widetilde{\DH}_{d,J^{(j-1)}_d}\right)\\
&=\bigcap_{j=1}^{h-1}\ker\left(J^{(h-j)}_d\widetilde{\DH}_{d,J^{(j-1)}_d}\right)\circ J^{(n-h)}_d \cap J^{(n-m)}_d\circ \bigcap_{j=1}^{m-1}\ker\left(J^{(m-j-1)}_d\widetilde{\DH}_{d,J^{(j-1)}_d}\right)\\
&=\left(J^{[h]}_d\circ J^{(n-h)}_d \right)\cap \left(J^{(n-m)}_d \circ J^{[m]}_d\right).
\end{split}
\end{equation}
The second equality is a consequence of the fact that limits are computed object-wise and the fact that $J^1_d$ preserves finite limits.
The last equality follows from the inductive hypothesis.

Notice that we have also proved that for $h,m\ge 2$, the subfunctors $\left(J^{[h]}_d\circ J^{(n-h)}_d \right)\cap \left(J^{(n-m)}_d \circ J^{[m]}_d\right)$ coincide.

\eqref{theo:sjchar:4}$\subseteq$\eqref{theo:sjchar:2}: There is nothing to prove unless $h,m\ge 2$, in which case we have observed that for all such $h,m$, the intersections of the form \eqref{theo:sjchar:4} coincide, so it is enough to prove that
\begin{equation}
\left(J^{[2]}_d\circ J^{(n-2)}_d \right)\cap \left(J^{1}_d \circ J^{[n-1]}_d\right)
\end{equation}
vanishes if we apply $\widetilde{\DH}^I_{d,J^{[n-2]}_d}\circ J^1_d (l^{[n-1]}_d)$.
Since $\Omega^1_d(\iota_{J^{[n-2]}_d})$ is a monomorphism by right flatness of $\Omega^1_d$, it is equivalent to check the vanishing of
\begin{equation}
\Omega^1_d(\iota_{J^{[n-2]}_d}) \circ \widetilde{\DH}^I_{d,J^{[n-2]}_d}\circ J^1_d (l^{[n-1]}_d)
=\widetilde{\DH}^I_{d,J^{(n-2)}_d}\circ J^1_d(\iota_{J^{[n-1]}_d}),
\end{equation}
(cf.\ \eqref{eq:edhInat}).
If we compose this map with the inclusion
\begin{equation}
\left(J^{[2]}_d\circ J^{(n-2)}_d \right)\cap \left(J^{1}_d \circ J^{[n-1]}_d\right)\longhookrightarrow J^1_d\circ J^{[n-1]}_d,
\end{equation}
which is itself the pullback of the inclusion $J^{[2]}_d\circ J^{(n-2)}_d\hookrightarrow J^{(n)}_d$ via $J^{1}_d \circ J^{[n-1]}_d\hookrightarrow J^{(n)}_d$, we obtain zero by commutativity.
This can be better seen in the following commutative diagram.
\begin{equation}\label{eq:edhInat1}
\begin{tikzcd}[column sep=40pt]
\left(J^{[2]}_d\circ J^{(n-2)}_d \right)\cap \left(J^{1}_d \circ J^{[n-1]}_d\right)\ar[r,hookrightarrow]\ar[d,hookrightarrow]&J^1_d\circ J^{[n-1]}_d \ar[d,hookrightarrow,"J^1_d (\iota_{J^{[n-1]}_d})"']\ar[r,hookrightarrow,"J^1_d (l^{[n-1]}_d)"]&J^{(2)}_d\circ J^{[n-2]}_d\ar[r,"\widetilde{\DH}^{I}_{d,J^{[n-2]}_d}"]\ar[d,hookrightarrow,"J^{(2)}_d \iota_{J^{[n-2]}_d}"]&\Omega^{1}_d\circ J^{[n-2]}_d\ar[d,hookrightarrow,"\Omega^1_d (\iota_{J^{[n-2]}_d})"]\\
J^{[2]}_d\circ J^{(n-2)}_d\ar[r,hookrightarrow,"(\iota_{J^{[2]}_{d}})_{J^{(n-2)}_d}"']&J^{(n)}_d \ar[r,equals]&J^{(n)}_d \ar[r,"\widetilde{\DH}^{I}_{d,J^{(n-2)}_d}"']&\Omega^1_d \circ J^{(n-2)}_d
\end{tikzcd}
\end{equation}
Here, in fact, the bottom horizontal composition vanishes.
\end{proof}
\begin{rmk}
Notice that \eqref{theo:sjchar:2} of Theorem \ref{theo:sjchar} enables us to define the semiholonomic jet functor inductively once we set $J^{[1]}_d=J^1_d$ and $J^{[0]}_d=\id_{\AMod}$.
\end{rmk}

We are now ready to describe the short exact sequence for semiholonomic jets.
\begin{theo}[Semiholonomic jet short exact sequence]\label{theo:Jshes}
Let $A$ be a $\bk$-algebra endowed with a first order differential calculus $\Omega^1_d$ which is flat in $\ModA$.
For $n\ge 1$, the following is a short exact sequence
\begin{equation}\label{es:shJet}
\begin{tikzcd}[column sep=40pt]
0\ar[r]& T^n_d\ar[r,hookrightarrow,"\iota^{[n]}_d"]& J^{[n]}_d\ar[r,twoheadrightarrow,"\pi^{[n,n-1]}_d"]& J^{[n-1]}_d\ar[r]&0
\end{tikzcd}
\end{equation}
Moreover, for $n\ge 2$, the natural transformation \eqref{eq:feq} is an epimorphism.
\end{theo}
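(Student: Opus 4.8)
The plan is an induction on $n$, with the $1$-jet sequence \eqref{es:natJ1} (the case $J^{[1]}_d=J^1_d$, $T^1_d=\Omega^1_d$, $J^{[0]}_d=\id_{\AMod}$, $\iota^{[1]}_d=\iota^1_d$, $\pi^{[1,0]}_d=\pi^{1,0}_d$) as the base case; there the ``moreover'' clause is vacuous since it concerns $n\ge 2$. Throughout, flatness of $\Omega^1_d$ in $\ModA$ enters via Corollary~\ref{cor:Jex}: both $\Omega^1_d$ and $J^1_d$ are exact, hence preserve monomorphisms, and Theorem~\ref{theo:sjchar} applies. Write $\phi\colonequals\widetilde{\DH}^I_{J^{[n-2]}_d}\circ J^1_d(l^{[n-1]}_d)\colon J^1_d\circ J^{[n-1]}_d\to\Omega^1_d\circ J^{[n-2]}_d$ for the natural transformation \eqref{eq:feq}; by Theorem~\ref{theo:sjchar}\eqref{theo:sjchar:2}, $J^{[n]}_d=\ker\phi$ and the induced inclusion $J^{[n]}_d\hookrightarrow J^1_d\circ J^{[n-1]}_d$ is $l^{[n]}_d$. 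Assuming the theorem for $n-1$ (so $\pi^{[n-1,n-2]}_d$ is an epimorphism), applying the exact functor $\Omega^1_d$ to the semiholonomic sequence for $n-1$ gives an exact sequence $0\to T^n_d\xrightarrow{\Omega^1_d(\iota^{[n-1]}_d)}\Omega^1_d\circ J^{[n-1]}_d\xrightarrow{\Omega^1_d(\pi^{[n-1,n-2]}_d)}\Omega^1_d\circ J^{[n-2]}_d\to 0$, using $T^n_d=\Omega^1_d\circ T^{n-1}_d$.

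The crux, and the step I expect to require the most care, is the naturality identity
\[
\phi\circ\iota^1_{d,J^{[n-1]}_d}=\Omega^1_d\!\left(\pi^{[n-1,n-2]}_d\right),
\]
together with two bookkeeping identities: $\pi^{[k,k-1]}_d=\pi^{1,0}_{d,J^{[k-1]}_d}\circ l^{[k]}_d$ for all $k$ (from $\iota_{J^{[k]}_d}=J^1_d(\iota_{J^{[k-1]}_d})\circ l^{[k]}_d$, the decomposition \eqref{eq:decsjn}, the equality $\pi^{(k,k-1;k)}_d=\pi^{1,0}_{d,J^{(k-1)}_d}$, and naturality of $\pi^{1,0}_d$), and $l^{[n]}_d\circ\iota^{[n]}_d=\iota^1_{d,J^{[n-1]}_d}\circ\Omega^1_d(\iota^{[n-1]}_d)$ (from $\iota_{J^{[n]}_d}\circ\iota^{[n]}_d=\iota_{T^n_d}$, the factorization \eqref{eq:iotaTnequ2}, naturality of $\iota^1_d$, and cancelling the monomorphism $J^1_d(\iota_{J^{[n-1]}_d})$). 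The displayed identity is proved by splicing naturality squares: expand $J^1_d(l^{[n-1]}_d)\circ\iota^1_{d,J^{[n-1]}_d}=\iota^1_{d,J^1_d\circ J^{[n-2]}_d}\circ\Omega^1_d(l^{[n-1]}_d)$ (naturality of $\iota^1_d$), insert $\iota^1_{d,X}\circ\widetilde{\DH}^I_X=J^1_d(\pi^{1,0}_{d,X})-\pi^{1,0}_{d,J^1_d X}$ (Definition \eqref{eq:DHI} and Remark~\ref{rmk:DHIeq}), kill the term $\pi^{1,0}_{d,J^1_d\circ J^{[n-2]}_d}\circ\iota^1_{d,J^1_d\circ J^{[n-2]}_d}=0$, rewrite $J^1_d(\pi^{1,0}_{d,J^{[n-2]}_d})\circ\iota^1_{d,J^1_d\circ J^{[n-2]}_d}=\iota^1_{d,J^{[n-2]}_d}\circ\Omega^1_d(\pi^{1,0}_{d,J^{[n-2]}_d})$ (naturality of $\iota^1_d$), use $\pi^{1,0}_{d,J^{[n-2]}_d}\circ l^{[n-1]}_d=\pi^{[n-1,n-2]}_d$, and cancel the monomorphism $\iota^1_{d,J^{[n-2]}_d}$.

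Granting these, the argument closes quickly. Since $\pi^{[n-1,n-2]}_d$ is epi and $\Omega^1_d$ is right exact, $\Omega^1_d(\pi^{[n-1,n-2]}_d)=\phi\circ\iota^1_{d,J^{[n-1]}_d}$ is epi, hence $\phi$ is epi; this is the ``moreover'' clause, and combined with $J^{[n]}_d=\ker\phi$ it yields a short exact sequence
\[
0\longrightarrow J^{[n]}_d\xrightarrow{\ l^{[n]}_d\ }J^1_d\circ J^{[n-1]}_d\xrightarrow{\ \phi\ }\Omega^1_d\circ J^{[n-2]}_d\longrightarrow 0.
\]
It then suffices to check exactness of $0\to T^n_d\xrightarrow{\iota^{[n]}_d}J^{[n]}_d\xrightarrow{\pi^{[n,n-1]}_d}J^{[n-1]}_d\to 0$ objectwise in $\AMod$, i.e.\ with elements. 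Injectivity of $\iota^{[n]}_d$ and the complex relation $\pi^{[n,n-1]}_d\circ\iota^{[n]}_d=0$ are immediate from the two bookkeeping identities and $\pi^{1,0}\circ\iota^1=0$ (this is also Proposition~\ref{prop:complexinclusionseminon}). For exactness at $J^{[n]}_d$: since $l^{[n]}_d$ is mono and $\pi^{[n,n-1]}_d=\pi^{1,0}_{d,J^{[n-1]}_d}\circ l^{[n]}_d$, we get $\ker\pi^{[n,n-1]}_d=\ker\phi\cap\ker\pi^{1,0}_{d,J^{[n-1]}_d}$; identifying $\ker\pi^{1,0}_{d,J^{[n-1]}_d}$ with $\Omega^1_d\circ J^{[n-1]}_d$ via $\iota^1_{d,J^{[n-1]}_d}$, this intersection is $\ker\big(\phi\circ\iota^1_{d,J^{[n-1]}_d}\big)=\ker\Omega^1_d(\pi^{[n-1,n-2]}_d)$, which by the exact sequence above equals the image of $\Omega^1_d(\iota^{[n-1]}_d)$, i.e.\ exactly $\operatorname{im}\iota^{[n]}_d$ by $l^{[n]}_d\circ\iota^{[n]}_d=\iota^1_{d,J^{[n-1]}_d}\circ\Omega^1_d(\iota^{[n-1]}_d)$. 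Finally, $\pi^{[n,n-1]}_d$ is epi because $\pi^{1,0}_{d,J^{[n-1]}_d}$ is epi and the submodule $\ker\phi+\operatorname{im}\iota^1_{d,J^{[n-1]}_d}$ of $J^1_d\circ J^{[n-1]}_d$ contains $\ker\phi$ and is mapped by $\phi$ onto $\operatorname{im}\Omega^1_d(\pi^{[n-1,n-2]}_d)=\Omega^1_d\circ J^{[n-2]}_d$, whence it is all of $J^1_d\circ J^{[n-1]}_d$, and therefore $\pi^{[n,n-1]}_d(J^{[n]}_d)=\pi^{1,0}_{d,J^{[n-1]}_d}(\ker\phi)=\pi^{1,0}_{d,J^{[n-1]}_d}(J^1_d\circ J^{[n-1]}_d)=J^{[n-1]}_d$. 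This completes the induction.
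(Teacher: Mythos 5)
Your proposal is correct and follows essentially the same route as the paper: induction on $n$ with the $1$-jet sequence as base case, the identification $J^{[n]}_d=\ker\big(\widetilde{\DH}^I_{J^{[n-2]}_d}\circ J^1_d(l^{[n-1]}_d)\big)$ from Theorem \ref{theo:sjchar}, and the key commutativity $\widetilde{\DH}^I_{J^{[n-2]}_d}\circ J^1_d(l^{[n-1]}_d)\circ\iota^1_{d,J^{[n-1]}_d}=\Omega^1_d(\pi^{[n-1,n-2]}_d)$, which is exactly the bottom-left square of the paper's $3\times3$ diagram \eqref{es:shJetcd}. The only difference is that you unwind the snake lemma into explicit kernel and image chases rather than invoking it on the morphism of short exact sequences, which is a presentational rather than a mathematical distinction.
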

\begin{proof}
We prove this proposition by induction on $n\ge 1$.
Notice that for $n=1$, the sequence \eqref{es:shJet} coincides with the $1$-jet exact sequence \eqref{es:natJ1}.

Now suppose the statement is true for $n-1$.
The sequence is obtained as the kernel of a morphism of short exact sequences, as shown in the following diagram.
\begin{equation}\label{es:shJetcd}
\begin{tikzcd}[column sep=50pt]
&T^n_d=\Omega^1_d\circ T^{n-1}_d\ar[r,"\iota^{[n]}_d"]\ar[d,hookrightarrow,"\Omega^1_d(\iota^{[n-1]}_d)"']& J^{[n]}_d\ar[r,"\pi^{[n,n-1]}_d"] \ar[d,hookrightarrow,"l^{[n]}_d"]& J^{[n-1]}_d\ar[d,equals]\\
0\ar[r]&\Omega^1_d\circ J^{[n-1]}_d\ar[r,hookrightarrow,"\iota^1_{d,J^{[n-1]}_d}"]\ar[d,twoheadrightarrow,"\Omega^1_d(\pi^{[n-1,n-2]}_d)"']& J^1_d\circ J^{[n-1]}_d\ar[r,twoheadrightarrow,"\pi^{1,0}_{d,J^{[n-1]}_d}"]\ar[d,"\widetilde{\DH}^I_{d,J^{[n-2]}}\circ J^1_d(l^{[n-1]}_d)"]& J^{[n-1]}_d\ar[d]\ar[r]&0\\
0\ar[r]&\Omega^1_d\circ J^{[n-2]}_d\ar[r,equals]& \Omega^1_d\circ J^{[n-2]}_d \ar[r]& 0\ar[r]&0
\end{tikzcd}
\end{equation}
The two bottom rows are exact, being the $1$-jet exact sequence at $J^{[n-1]}_d$ and an equality, respectively.
Notice also that the right bottom square trivially commutes and the left square commutes by naturality (cf.\ \eqref{DHIcom}).
More explicitly, we have
\begin{equation}
\begin{split}
&\iota^{(n-1)}_d\circ\Omega^1_d (\iota_{J^{[n-2]}_d})\circ \widetilde{\DH}^I_{d,J^{[n-2]}_d}\circ J^1_d(l^{[n-1]}_d)\circ \iota^1_{d,J^{[n-1]}_d}\\
&\qquad\qquad= \iota^{(n-1)}_d\circ\widetilde{\DH}^I_{d,J^{(n-2)}_d}\circ J^1_d(\iota_{J^{[n-1]}_d})\circ \iota^1_{d,J^{[n-1]}_d}\\
&\qquad\qquad= \iota^{(n-1)}_d\circ\widetilde{\DH}^I_{d,J^{(n-2)}_d}\circ \iota^1_{d,J^{(n)}_d}\circ \Omega^1_d(\iota_{J^{[n-1]}_d})\\
&\qquad\qquad= \left( \pi^{(n,n-1;n-1)}_d- \pi^{(n,n-1;n)}_d\right)\circ \iota^{(n)}_d\circ \Omega^1_d(\iota_{J^{[n-1]}_d})\\
&\qquad\qquad= \pi^{(n,n-1;n-1)}_d \circ \iota^{(n)}_d\circ \Omega^1_d(\iota_{J^{[n-1]}_d}).
\end{split}
\end{equation}
We now have the following commutative diagram
\begin{equation}
\begin{tikzcd}[column sep=50pt]
\Omega^1_d\circ J^{[n-1]}_d\ar[r,hookrightarrow,"\Omega^1_d(\iota_{J^{[n-1]}_d})"]\ar[d,twoheadrightarrow,"\Omega^1_d(\pi^{[n-1,n-2]}_d)"']& \Omega^1_d\circ J^{(n-1)}_d\ar[r,hookrightarrow,"\iota^{(n)}_d"]\ar[d,twoheadrightarrow,"\Omega^1_d(\pi^{(n-1,n-2)}_d)"]& J^{(n)}_d\ar[d,twoheadrightarrow,"\pi^{(n,n-1;n-1)}_d"]\\
\Omega^1_d\circ J^{[n-2]}_d\ar[r,hookrightarrow,"\Omega^1_d(\iota_{J^{[n-2]}_d})"']& \Omega^1_d\circ J^{(n-2)}_d \ar[r,hookrightarrow,"\iota^{(n-1)}_d"']& J^{(n-1)}_d
\end{tikzcd}
\end{equation}
This implies
\begin{equation}
\begin{split}
&\iota^{(n-1)}_d\circ\Omega^1_d (\iota_{J^{[n-2]}_d})\circ \widetilde{\DH}^I_{d,J^{[n-2]}}\circ J^1_d(l^{[n-1]}_d)\circ \iota^1_{d,J^{[n-1]}_d}\\
&\qquad\qquad= \pi^{(n,n-1;n-1)}_d \circ \iota^{(n)}_d\circ \Omega^1_d(\iota_{J^{[n-1]}_d})\\
&\qquad\qquad=\iota^{(n-1)}_d\circ \Omega^1_d (\iota_{J^{[n-2]}_d})\circ \Omega^1_d(\pi^{[n-1,n-2]}_d).
\end{split}
\end{equation}
The fact that $\iota^{(n-1)}_d\circ\Omega^1_d (\iota_{J^{[n-2]}_d})$ is a monomorphism implies the commutativity of the bottom left square in \eqref{es:shJetcd}.

Now that we have proven that the lower part of \eqref{es:shJetcd} is a map of short exact sequences, we will prove that the first row is its kernel.
The left column forms a short exact sequence, since it is obtained by applying $\Omega^1_d$ to the semiholonomic jet exact sequence for $n-1$.
The central column is left exact, as shown in Theorem \ref{theo:sjchar}, and the third column is visibly exact.
Commutativity of the top right square in \eqref{es:shJetcd} can be seen by composing both terms with the monomorphism $\iota_{J^{[n-1]}_d}$ and considering the following commutative diagram.
\begin{equation}
\begin{tikzcd}[column sep=40pt]
J^{[n]}_d \ar[dr,hookrightarrow,"\iota_{J^{[n]}_d}"']\ar[r,hookrightarrow,"l^{[n-1]}_d"]&J^1_d\circ J^{[n-1]}_d \ar[r,"\pi^{1,0}_{d,J^{[n-1]}_d}"]\ar[d,hookrightarrow,"J^{1}_d( \iota_{J^{[n-1]}_d})"]&J^{[n-1]}_d\ar[d,hookrightarrow,"\iota_{J^{[n-1]}_d}"]\\
&J^{(n)}_d \ar[r,"\pi^{(n,n-1)}_d"']& J^{(n-1)}_d
\end{tikzcd}
\end{equation}
Since the composition $\pi^{1,0}_{d,J^{[n-1]}_d} \circ l^{[n-1]}_d$ extends to $\pi^{(n,n-1)}_d$, by Proposition \ref{prop:piwd} it must be $\pi^{[n,n-1]}_d$.

Applying the snake lemma, we deduce that the top horizontal sequence is exact and that the vertical central sequence is also right exact, concluding the proof.
\end{proof}
\begin{rmk}
Let $E$ be a left $A$-module, and consider an element $\xi\in J^{[n]}_d E$.
A projection $\pi^{(n,n-1;m)}_{d,E}$ vanishes on $\xi$ if and only if it vanishes on all the other projections.
Thus, under the hypothesis that $\Omega^1_d$ is flat in $\ModA$, we have that the intersection between $J^{(n-m)}_d\circ \Omega^1_d\circ J^{(m-1)}_d$ and $J^{[n]}_d$ is precisely $T^n_d$.
\end{rmk}
\begin{rmk}[Semiholonomic jet functors on bimodules]\label{rem:semiholjetfunctoronbimodules}
By Remark \ref{rmk:nJbi}, we have nonholonomic jet bundles $J^{(m)}_d$ as endofunctors of $\AMod_B$.
We can also compatibly lift all nonholonomic projections $\pi^{(n,n-1;m)}_{d}$ to natural transformations between endofunctors $\AMod_B$.
Since $J^{[n]}_d$ is defined as a limit and the forgetful functor $\AMod_B\to \AMod$ creates limits, we can define $J^{[n]}_d$ object-wise as an equalizer in $\AMod_B$.
The semiholonomic jet functor thus lifts to an endofunctor on $\AMod_B$.
The same can be done for the whole semiholonomic sequence.
In this setting, the semiholonomic prolongation is a natural transformation in the category of functors $\AMod_B\to \Mod_B$.
\end{rmk}

\subsubsection{Stability and exactness}
From Theorem \ref{theo:Jshes}, we can deduce the following result regarding the stability of certain subcategories with respect to the functor $J^{[n]}_d$.
\begin{prop}\label{prop:Jshstable}
Let $A$ be a $\bk$-algebra endowed with a first order differential calculus $\Omega^1_d$ which is flat in $\ModA$.
Consider the functor
\begin{equation}
J^{[n]}_d\colon \AMod\longrightarrow\AMod.
\end{equation}
\begin{enumerate}
\item\label{prop:Jshstable:1} If $\Omega^1_d$ is in $\AFlat$, then $J^{[n]}_d$ preserves the subcategory $\AFlat$;
\item\label{prop:Jshstable:2} If $\Omega^1_d$ is in $\AProj$, then $J^{[n]}_d$ preserves the subcategory $\AProj$;
\item\label{prop:Jshstable:3} If $\Omega^1_d$ is in $\AFG$, then $J^{[n]}_d$ preserves the subcategory $\AFG$;
\item\label{prop:Jshstable:4} If $\Omega^1_d$ is in $\AFGP$, then $J^{[n]}_d$ preserves the subcategory $\AFGP$.
\end{enumerate}
\end{prop}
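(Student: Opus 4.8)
The plan is to argue by induction on $n$, exactly as in the proof of Proposition~\ref{prop:Jnhstable}, but using the semiholonomic jet short exact sequence of Theorem~\ref{theo:Jshes} in place of the nonholonomic one. For $n=0$ there is nothing to prove, since $J^{[0]}_d=\id_{\AMod}$; for $n=1$ the claim is Proposition~\ref{prop:OmJstab}, as $J^{[1]}_d=J^1_d$. Observe that the hypothesis that $\Omega^1_d$ be flat in $\ModA$ is used only to guarantee that, evaluated at any $E$ in $\AMod$, the sequence
\[
0\longrightarrow T^n_d(E)\longrightarrow J^{[n]}_d(E)\longrightarrow J^{[n-1]}_d(E)\longrightarrow 0
\]
is exact (it is the evaluation at $E$ of the sequence in Theorem~\ref{theo:Jshes}); the stability properties of the functors $T^n_d$ themselves require only that $\Omega^1_d$ belong to the relevant subcategory.

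For the inductive step, fix $E$ in $\AFlat$ (respectively $\AProj$, respectively $\AFG$) and assume $J^{[n-1]}_d(E)$ lies in the same subcategory. Since $\Omega^1_d$ lies there too, Proposition~\ref{prop:Tstable} shows that $T^n_d(E)$ does as well. Now apply Lemma~\ref{lemma:2-3} to the displayed short exact sequence with $M=T^n_d(E)$ and $Q=J^{[n-1]}_d(E)$: its three parts give respectively that $J^{[n]}_d(E)$ is flat, projective, or finitely generated, which closes the induction in these three cases. The $\AFGP$ case is then obtained by intersecting the conclusions of the $\AProj$ and $\AFG$ cases, just as the corresponding statement in Proposition~\ref{prop:OmJstab} was deduced from its projective and finitely generated cases. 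If one lifts $J^{[n]}_d$ to $\AMod_B$, the same argument applies verbatim, since the semiholonomic sequence and the functors $T^n_d$ lift compatibly with the forgetful functor.

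There is no genuine obstacle here: the proof merely assembles Theorem~\ref{theo:Jshes}, Proposition~\ref{prop:Tstable}, and Lemma~\ref{lemma:2-3}, the only point meriting attention being that both the sub- and quotient terms of the semiholonomic sequence must be checked to lie in the subcategory under consideration, which is exactly what the inductive hypothesis and Proposition~\ref{prop:Tstable} supply. It is worth remarking why one cannot shortcut the argument: the inclusion $\iota_{J^{[n]}_d}\colon J^{[n]}_d\hookrightarrow J^{(n)}_d$ realizes $J^{[n]}_d(E)$ as a submodule of the (flat, projective, etc.) module $J^{(n)}_d(E)$, but submodules of flat or projective modules need not be flat or projective, so the extension structure provided by the filtration by $T^\bullet_d$ is essential and Lemma~\ref{lemma:2-3} is the right tool.
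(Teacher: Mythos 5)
Your proof is correct and follows essentially the same route as the paper's: induction on $n$, the semiholonomic jet short exact sequence of Theorem \ref{theo:Jshes}, Proposition \ref{prop:Tstable} for the $T^n_d$ term, and Lemma \ref{lemma:2-3} to conclude, with the $\AFGP$ case obtained by combining the projective and finitely generated cases. The additional remarks on the bimodule lift and on why the inclusion into $J^{(n)}_d$ gives no shortcut are not in the paper's proof but do not change the argument.
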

\begin{proof}
For $n=0$ there is nothing to prove.
Now consider $n>0$ and assume \eqref{prop:Jshstable:1} true for $n-1$.
Since $\Omega^1_d$ is flat in $\ModA$, we can apply Proposition \ref{theo:Jshes}, giving us the semiholonomic jet short exact sequence.

If $\Omega^1_d$ is flat in $\AMod$, then given $E$ in $\AFlat$, we know that $T^n_d(E)$ is in $\AFlat$ by Proposition \ref{prop:Tstable}, and $J^{[n-1]}_d (E)$ is in $\AFlat$ by inductive hypothesis.
Lemma \ref{lemma:2-3} implies that $J^{[n]}_d(E)$ is in $\AFlat$, whence \eqref{prop:Jshstable:1}.

The proof for \eqref{prop:Jshstable:2} and \eqref{prop:Jshstable:3} is analogous, and \eqref{prop:Jshstable:4} follows from them.
\end{proof}
The following proposition discusses exactness of $J^{[n]}_d$ as a functor
\begin{prop}\label{prop:shJregular}
Let $A$ be a $\bk$-algebra endowed with a first order differential calculus $\Omega^1_d$ which is flat in $\ModA$.
Then $J^{[n]}_d$ is an exact functor.
\end{prop}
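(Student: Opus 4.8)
The plan is to prove this by induction on $n$, using the semiholonomic jet short exact sequence of Theorem \ref{theo:Jshes} together with the nine lemma. The base case $n=0$ is immediate, since $J^{[0]}_d=\id_{\AMod}$ is exact; the case $n=1$ is also directly covered by Corollary \ref{cor:Jex} (with $J^{[1]}_d=J^1_d$, since $\Omega^1_d$ is flat in $\ModA$).

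For the inductive step I would assume $J^{[n-1]}_d$ is exact, fix a short exact sequence $0\to E'\to E\to E''\to 0$ in $\AMod$, and consider the $3\times 3$ diagram whose three rows are the semiholonomic jet short exact sequences $0\to T^n_d(-)\to J^{[n]}_d(-)\to J^{[n-1]}_d(-)\to 0$ attached to $E'$, $E$ and $E''$ (these are short exact by Theorem \ref{theo:Jshes}, whose hypothesis is exactly our standing assumption that $\Omega^1_d$ is flat in $\ModA$), and whose columns are the images of $0\to E'\to E\to E''\to 0$ under the functors $T^n_d$, $J^{[n]}_d$ and $J^{[n-1]}_d$. The diagram commutes by naturality of $\iota^{[n]}_d$ and $\pi^{[n,n-1]}_d$.

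Next I would observe that the two outer columns are short exact: the $T^n_d$-column because $T^n_d$ is an exact functor when $\Omega^1_d$ is flat in $\ModA$ (Proposition \ref{prop:Texact}, cf.\ Corollary \ref{cor:Jex}), and the $J^{[n-1]}_d$-column by the inductive hypothesis. The middle column $J^{[n]}_d E'\to J^{[n]}_d E\to J^{[n]}_d E''$ is a complex, since $J^{[n]}_d$ is a functor and the composite $E'\to E''$ vanishes. The nine lemma --- equivalently, reading the columns as a short exact sequence of three-term complexes and invoking the long exact homology sequence, in which both outer complexes are acyclic --- then forces the middle column $0\to J^{[n]}_d E'\to J^{[n]}_d E\to J^{[n]}_d E''\to 0$ to be exact. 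Thus $J^{[n]}_d$ preserves short exact sequences, i.e.\ it is exact, completing the induction.

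I do not expect a genuine obstacle here: the substantive work has already been packaged into Theorem \ref{theo:Jshes} and the exactness of $T^n_d$, so what remains is a standard diagram chase. The only points to watch are the commutativity of the $3\times 3$ diagram (which is just naturality of the structure maps) and the bookkeeping that the single hypothesis that $\Omega^1_d$ is flat in $\ModA$ is simultaneously what makes the rows short exact (via Theorem \ref{theo:Jshes}) and what makes $T^n_d$ exact (via Proposition \ref{prop:Texact}).
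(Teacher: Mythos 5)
Your proof is correct, and the overall strategy coincides with the paper's: induction on $n$, with the semiholonomic short exact sequence of Theorem \ref{theo:Jshes}, the exactness of $T^n_d$, and the inductive hypothesis feeding a $3\times 3$ diagram. The one genuine difference is in how the diagram is exploited. The paper first proves left exactness of $J^{[n]}_d$ by a separate mechanism: since $J^{[n]}_d$ is by definition an equalizer of the natural transformations $\pi^{(n,n-1;k)}_d$ between exact functors $J^{(n)}_d$ and $J^{(n-1)}_d$ (exact because $\Omega^1_d$ is flat in $\ModA$), and limits are left exact, the middle column of the $3\times3$ diagram is already known to be left exact; the snake lemma on the two bottom rows then supplies the missing right exactness. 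You instead read the columns as a degreewise short exact sequence of three-term complexes (degreewise exactness being precisely Theorem \ref{theo:Jshes}) and conclude acyclicity of the middle complex from the long exact homology sequence, using only that the middle column is a complex. This variant of the nine lemma is valid, and it makes your argument slightly more economical, since you never need the equalizer presentation of $J^{[n]}_d$; on the other hand, the paper's separate left-exactness step is a standalone fact (holding before any induction) that is convenient to have in its own right. Either way the bookkeeping you flag --- that the single flatness hypothesis simultaneously powers Theorem \ref{theo:Jshes} and Proposition \ref{prop:Texact} --- is exactly right.
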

\begin{proof}
We first prove left exactness.
Consider in $\AMod$ the following short exact sequence
\begin{equation}
\begin{tikzcd}
0\ar[r]&M\ar[r,hookrightarrow]&N\ar[r,twoheadrightarrow]&Q\ar[r]&0.
\end{tikzcd}
\end{equation}
Since $J^{(n)}_d$ is an exact functor for all $n$ by Remark \ref{rmk:nhJexact}, and $\pi^{(n,n-1;k)}_d$ for all $1\le k\le n$ is a natural transformation, we obtain for each $k$ a diagram of the form
\begin{equation}
\begin{tikzcd}
0\ar[r]&J^{(n)}_d M\ar[r,hookrightarrow]\ar[d,"\pi^{(n,n-1;k)}_{d,M}"]&J^{(n)}_d N\ar[r,twoheadrightarrow]\ar[d,"\pi^{(n,n-1;k)}_{d,N}"]&J^{(n)}_d Q\ar[r]\ar[d,"\pi^{(n,n-1;k)}_{d,Q}"]&0\\
0\ar[r]&J^{(n-1)}_d M\ar[r,hookrightarrow]&J^{(n-1)}_d N\ar[r,twoheadrightarrow]&J^{(n-1)}_d Q\ar[r]&0
\end{tikzcd}
\end{equation}
Combined, they form a diagram of exact sequences, that is an exact sequence of diagrams.
The limit of this diagram, which in this case is an equalizer, will give us a left exact sequence, because every limit functor is a right adjoint, and hence left exact.
By definition, we know that the equalizer of the projections is precisely $J^{[n]}_d$, so we have the following left exact sequence
\begin{equation}
\begin{tikzcd}
0\ar[r]&J^{[n]}_d M\ar[r,hookrightarrow]&J^{[n]}_d N\ar[r]&J^{[n]}_d Q,
\end{tikzcd}
\end{equation}
whence $J^{[n]}_d$ is left exact.

We prove right exactness by induction on $n\ge 0$.
For $n=0$, $J^{[0]}_d$ is the identity functor, which is trivially exact.
Now, let $n>0$ and assume the functor $J^{[n-1]}_d$ is exact.
Applying the semiholonomic short exact sequence of Proposition \ref{theo:Jshes}, we obtain the following double complex
\begin{equation}
\begin{tikzcd}[column sep=50pt]
0\ar[r]&T^n_d M\ar[r,hookrightarrow,"\iota^{[n]}_{d,M}"]\ar[d,hookrightarrow]&J^{[n]}_d M\ar[r,twoheadrightarrow,"\pi^{[n,n-1]}_{d,M}"]\ar[d,hookrightarrow]&J^{[n-1]}_d M\ar[r]\ar[d,hookrightarrow]&0\\
0\ar[r]&T^n_d N\ar[r,hookrightarrow,"\iota^{[n]}_{d,N}"]\ar[d,twoheadrightarrow]&J^{[n]}_d N\ar[r,twoheadrightarrow,"\pi^{[n,n-1]}_{d,N}"]\ar[d]&J^{[n-1]}_d N\ar[r]\ar[d,twoheadrightarrow]&0\\
0\ar[r]&T^n_d Q\ar[r,hookrightarrow,"\iota^{[n]}_{d,Q}"]&J^{[n]}_d Q\ar[r,twoheadrightarrow,"\pi^{[n,n-1]}_{d,Q}"]&J^{[n-1]}_d Q\ar[r]&0
\end{tikzcd}
\end{equation}
The three rows are exact by Proposition \ref{theo:Jshes}, the first column is exact because $T^n_d$ is an exact functor (cf.\ Proposition \ref{prop:Texact}), the second row is left exact by previous considerations, and finally the last row is exact by inductive hypothesis.
By applying the snake lemma on the last two rows, we obtain that the central column is also forced to be exact.
Thus, $J^{[n]}_d$ is an exact functor.
\end{proof}

Observe that the decomposition of Theorem \ref{theo:nhJdec} is not well-behaved with respect to the semiholonomic jet, as its components are not necessarily in the semiholonomic jet itself.
For instance, the component corresponding to $j^1_d\Omega^1_d$ is of the form $[1\otimes 1]\otimes_A \alpha_1\in J^{(2)}_d$, so the projection on the first component will give $\alpha_1$, whereas the projection on the second will give zero.

\subsection{The classical nonholonomic and semiholonomic jet functors}
As the $1$-jet functor $J^1_d$ was our main building block for this section, we can easily extend Theorem \ref{theo:classical1jet} to cover the nonholonomic and semiholonomic jet functors.
We first prove the following technical lemma concerning the preservation of limits by the global section functor realizing the equivalence of Serre-Swan Theorem (cf.\ \cite[§12.33, p.~191]{nestruev2020smooth}).
\begin{lemma}\label{lemma:gammacontinuous}
The global section functor $\Gamma(M,-)$, from the category of vector bundles over a smooth manifold $M$ to the category of $\smooth{M}$-modules, preserves all limits.
\end{lemma}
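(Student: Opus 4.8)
The plan is to realise the global section functor as a corepresentable functor valued in $\smooth{M}$-modules and then invoke the general principle that corepresentable functors preserve limits. Write $\underline{\R}\colonequals M\times\R$ for the trivial real line bundle over $M$, and $\mathrm{Vect}(M)$ for the category of finite-rank vector bundles over $M$ with morphisms covering $\id_M$. The first step is to observe that there is a natural bijection $\Hom_{\mathrm{Vect}(M)}(\underline{\R},N)\cong\Gamma(M,N)$ sending a bundle map $\phi$ to the section $p\mapsto\phi(p,1)$, with inverse $s\mapsto\bigl((p,t)\mapsto t\,s(p)\bigr)$; fibrewise linearity of bundle maps is precisely what makes the two assignments mutually inverse. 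Since $\mathrm{Vect}(M)$ is $\smooth{M}$-linear --- bundle maps covering $\id_M$ form an $\smooth{M}$-module under fibrewise addition and multiplication by smooth functions, and composition is $\smooth{M}$-bilinear --- this bijection is in fact an isomorphism of $\smooth{M}$-modules, once $\Hom_{\mathrm{Vect}(M)}(\underline{\R},N)$ is given the module structure induced by the scalar action of $\smooth{M}$ on $N$ (equivalently, by precomposition via $\smooth{M}\cong\End_{\mathrm{Vect}(M)}(\underline{\R})$). Thus $\Gamma(M,-)$ is naturally isomorphic to the $\smooth{M}$-module-valued functor $\Hom_{\mathrm{Vect}(M)}(\underline{\R},-)$.

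Granting this, here is how I would conclude. Let $D\colon I\to\mathrm{Vect}(M)$ be any diagram that admits a limit $L$ in $\mathrm{Vect}(M)$. Applying $\Gamma(M,-)$ to the limiting cone produces a cone in $\smooth{M}$-modules, hence a canonical comparison morphism $\kappa\colon\Gamma(M,L)\to\lim_{i}\Gamma(M,D_i)$, the target being the limit formed in the category of $\smooth{M}$-modules; the goal is to prove $\kappa$ is an isomorphism. I would work through the forgetful functor $U$ from $\smooth{M}$-modules to sets, which preserves limits (limits of modules are computed on underlying sets) and is conservative (a bijective $\smooth{M}$-linear map is an isomorphism). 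By preservation, $U$ sends $\lim_i\Gamma(M,D_i)$ to the limit of the underlying sets and sends $\kappa$ to the comparison map (for that limit) of the set-valued functor $U\circ\Gamma(M,-)$. By the first step $U\circ\Gamma(M,-)\cong\Hom_{\mathrm{Vect}(M)}(\underline{\R},-)$ as set-valued functors, and a corepresentable functor preserves every limit that exists in its domain, so $U\kappa$ is a bijection. By conservativity $\kappa$ is an isomorphism, and since $D$ was arbitrary, $\Gamma(M,-)$ preserves all limits.

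I do not expect a genuine obstacle here: the mathematical content is just the standard fact that corepresentable functors preserve limits, with everything else being bookkeeping. The two places that deserve a moment's care are (i) checking that $\underline{\R}$ corepresents the $\smooth{M}$-module-valued global section functor, i.e.\ that the $\smooth{M}$-action on $\Hom_{\mathrm{Vect}(M)}(\underline{\R},N)$ matches the one on $\Gamma(M,N)$ under the bijection above; and (ii) the reading of ``all limits'': the category $\mathrm{Vect}(M)$ need not be complete, so the assertion is that $\Gamma(M,-)$ carries every limit that does exist there to the corresponding limit of modules --- which is exactly what the corepresentability argument provides. It is worth noting that one cannot shortcut this by quoting the Serre-Swan equivalence between vector bundles and finitely generated projective $\smooth{M}$-modules, since that would only reduce the claim to the inclusion of finitely generated projective modules into all $\smooth{M}$-modules preserving the limit in question, which is not a formal consequence.
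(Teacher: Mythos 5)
Your proposal is correct and matches the paper's own argument: both realize $\Gamma(M,-)$ as the functor corepresented by the trivial line bundle $M\times\R$ and then conclude by the standard fact that corepresentable functors preserve limits. Your extra care in routing the $\smooth{M}$-module structure through the conservative, limit-preserving forgetful functor to sets is a slightly more explicit version of the paper's remark that the isomorphism is $\smooth{M}$-linear ``with the induced module structure,'' but it is the same proof.
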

\begin{proof}
There is a bijection between sections of a vector bundle $E\to M$ and vector bundle maps $M\times \R\to E$.
This bijection is natural in $E$, which implies the natural isomorphism $\Gamma(M,-)\simeq \Hom(M\times \R,-)$, i.e.\ that $\Gamma(M,-)$ is representable.
Furthermore, this isomorphism is $\smooth{M}$-linear with the induced module structure.
Fiber-wise linearity of vector bundle maps and the universal property of the limit allow us to prove that the $\smooth{M}$-module valued $\Hom$ functor preserves limits in its second component.
Thus, given a diagram $E_{\bullet}$ of shape $I$, if its limit $\lim_I E_i$ exists, we have the following isomorphism of $\smooth{M}$-modules
\begin{equation}
\Gamma(M,E)
\cong \Hom(M\times \R,\lim_I E_i)
\cong \lim_I \Hom(M\times \R,E_i)
\cong \lim_I \Gamma(M,E_i).
\end{equation}
Which ends the proof by naturality of the isomorphisms.
\end{proof}
\begin{theo}\label{theo:classicalnonsemi}
	Let $M$ be a smooth manifold, $A=\smooth{M}$ its algebra of smooth functions, and $E=\Gamma(M,N)$ the space of smooth sections of a vector bundle $N\rightarrow M$.
	Let $J^n_{\text{non}}N$ and $J^n_{\text{semi}}N$ denote the classical nonholonomic and classical semiholonomic jet bundles of $N$, respectively.
	Then the following hold.
	\begin{itemize}
		\item $J^{(n)}_dE \cong \Gamma(M,J^n_{\text{non}}N)$ in $\AMod$.
		\item $J^{[n]}_dE \cong \Gamma(M,J^n_{\text{semi}}N)$ in $\AMod$.
	\end{itemize}
	The isomorphisms take our prolongations to the classical prolongations.
\end{theo}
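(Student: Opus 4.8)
The plan is to reduce everything to Theorem~\ref{theo:classical1jet}, which already settles $n=1$, by exploiting two structural facts: $J^{(n)}_d$ and the classical nonholonomic jet bundle $J^n_{\text{non}}$ are both obtained by iterating the $1$-jet construction, while $J^{[n]}_d$ and the classical semiholonomic jet bundle $J^n_{\text{semi}}$ are both cut out of the corresponding nonholonomic object by the \emph{same} equalizer of projections. Throughout I use that for $A=\smooth{M}$ the module of sections of any finite-rank vector bundle is finitely generated projective (Serre--Swan), so Theorem~\ref{theo:classical1jet} applies not only to $N$ but to all the intermediate jet bundles, and that $\Omega^1_d=\Omega^1(M)$ is flat in $\ModA$, so the hypotheses of the semiholonomic results of §\ref{ss:shj} hold.

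For the nonholonomic statement I argue by induction on $n$; the cases $n=0$ (both sides equal $E$) and $n=1$ (Theorem~\ref{theo:classical1jet}) are clear. Classically $J^n_{\text{non}}N=J^1(J^{n-1}_{\text{non}}N)$, and $J^{n-1}_{\text{non}}N$ is again a finite-rank vector bundle, so Theorem~\ref{theo:classical1jet} applied to it gives an isomorphism $J^1_d\bigl(\Gamma(M,J^{n-1}_{\text{non}}N)\bigr)\cong\Gamma(M,J^n_{\text{non}}N)$, which by the explicit formula \eqref{eq:classicaliso} is natural in the bundle and carries the $1$-jet prolongation to the classical one. Composing with the inductive isomorphism $J^{(n-1)}_dE\cong\Gamma(M,J^{n-1}_{\text{non}}N)$ and using $J^{(n)}_d=J^1_d\circ J^{(n-1)}_d$ gives $J^{(n)}_dE\cong\Gamma(M,J^n_{\text{non}}N)$, natural in $E$. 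Since $j^{(n)}_d$ is the composite of $1$-jet prolongations (cf.\ \eqref{eq:nhjp}), each tracked by Theorem~\ref{theo:classical1jet}, the isomorphism sends $j^{(n)}_{d,E}$ to the classical prolongation; by naturality of \eqref{eq:classicaliso} it also intertwines $\pi^{1,0}_{d,J^{(k)}_d}$ at every level $k$ with the corresponding classical $1$-jet projection, a fact I will use below.

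For the semiholonomic statement, recall that $J^{[n]}_d=\Eq\bigl(\pi^{(n,n-1;m)}_d\mid 1\le m\le n\bigr)$ with $\pi^{(n,n-1;m)}_d=J^{(n-m)}_d\pi^{1,0}_{d,J^{(m-1)}_d}$ (Definition~\ref{def:shj}). Classically, $J^n_{\text{semi}}N$ is (cf.\ \cite{libermann1997}) the equalizer, in the category of vector bundles over $M$, of the $n$ natural projections $J^n_{\text{non}}N\to J^{n-1}_{\text{non}}N$, and this equalizer exists there, being a genuine vector subbundle of $J^n_{\text{non}}N$. Under the nonholonomic identification and the compatibilities of the previous paragraph, the classical projection in position $m$ corresponds exactly to $\pi^{(n,n-1;m)}_d$: it is the classical $J^1$ applied $(n-m)$ times to the $1$-jet projection $J^1(J^{m-1}_{\text{non}}N)\to J^{m-1}_{\text{non}}N$, which matches $J^{(n-m)}_d\pi^{1,0}_{d,J^{(m-1)}_d}$ via Theorem~\ref{theo:classical1jet}. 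Applying $\Gamma(M,-)$ and Lemma~\ref{lemma:gammacontinuous}, which guarantees that it preserves all limits, yields
\[
\Gamma(M,J^n_{\text{semi}}N)=\Gamma\Bigl(M,\Eq\bigl(\pi^{(n,n-1;m)}_{\mathrm{cl}}\bigr)\Bigr)\cong\Eq\bigl(\Gamma(M,\pi^{(n,n-1;m)}_{\mathrm{cl}})\bigr)\cong\Eq\bigl(\pi^{(n,n-1;m)}_d\bigr)=J^{[n]}_dE.
\]
Compatibility of prolongations is then automatic, since $j^{[n]}_d$ is the factorization of $j^{(n)}_d$ through the equalizer (Proposition~\ref{prop:nhjtosh}) and the classical semiholonomic prolongation is the analogous factorization of the classical nonholonomic one.

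The main obstacle is not any hard analysis but the bookkeeping in the semiholonomic step: one must check carefully that the $n$ distinct classical projections $J^n_{\text{non}}N\to J^{n-1}_{\text{non}}N$ are paired with the correct $\pi^{(n,n-1;m)}_d$ under the iterated isomorphism, and this is exactly where the naturality of the isomorphism of Theorem~\ref{theo:classical1jet} (formula \eqref{eq:classicaliso}) enters essentially, because the projection in position $m<n$ is obtained by applying a higher jet functor to a lower-level $1$-jet projection on both sides. Once that matching is in hand, the nonholonomic part is pure iteration and the semiholonomic part is pure limit-preservation by $\Gamma(M,-)$.
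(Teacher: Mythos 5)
Your proposal is correct and follows essentially the same route as the paper's proof: iterate the $1$-jet comparison of Theorem \ref{theo:classical1jet} for the nonholonomic case, then identify both semiholonomic objects as the equalizer of the nonholonomic projections and invoke the limit-preservation of $\Gamma(M,-)$ from Lemma \ref{lemma:gammacontinuous}, with the prolongations tracked as iterates and factorizations of $j^1_d$. The only difference is that you spell out the bookkeeping matching the classical projection in position $m$ with $\pi^{(n,n-1;m)}_d$, which the paper leaves implicit.
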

\begin{proof}
	The classical nonholonomic jet functor is just the $n$th iterate of the classical $1$-jet functor, which coincides with $J^1_d$ by Theorem \ref{theo:classical1jet}.
	Hence, the nonholonomic jet functors are isomorphic.
	The semiholonomic jets are classically characterized by being the equalizer of the nonholonomic projections, exactly as in our definition.
	The proof follows from the fact that the functor $\Gamma(M,-)$ preserves limits (cf.\ Lemma \ref{lemma:gammacontinuous}), and hence equalizers.
	
	The prolongations are compatible with these isomorphisms as they are simply the iterates of $j^1_d$, which coincides with the classical prolongation map.
\end{proof}

\section{Generalization of symmetric forms}
\label{s:quantumsymmform}
\subsection{Exterior algebra}
\label{ss:extalg}
We will now construct a noncommutative analogue of the bundle of symmetric tensors with values in a vector bundle $E$, that is, $S^{k}(M,E)=S^k T^\ast M \otimes E$.
We do so by induction, using an exterior algebra \cite[Definition 1.30, p.~22]{BeggsMajid}, which is a differential graded algebra satisfying a certain surjectivity condition.

\begin{defi}\label{def:extalgebra}
An \emph{exterior algebra} $\Omega^\bullet_d$ over a $\bk$-algebra $A$, is an associative graded algebra $(\Omega^{\bullet}_d=\bigoplus_{n\ge 0}\Omega^n_d,\wedge)$ equipped with a map $d$ such that
\begin{enumerate}
\item $\Omega^0_d=A$;
\item $d$ is a \emph{differential map}, that is a $\bk$-linear map $d\colon \Omega^{\bullet}_d\to\Omega^{\bullet}_d$ such that $d(\Omega^n_d)\subseteq \Omega^{n+1}_d$ for all $n\ge 0$, which satisfies $d^2=0$ and
\begin{align}
d(\alpha\wedge\beta)=d\alpha\wedge \beta +(-1)^{n}\alpha\wedge d\beta,
&\hfill&
\forall\alpha\in\Omega^n_d, \beta\in\Omega^\bullet_d.
\end{align}
\item (\emph{surjectivity condition}) $A$, $dA$ generate $\Omega^{\bullet}_d$ via $\wedge$.
\end{enumerate}
\end{defi}
When confusion might arise, we will denote the restrictions of $\wedge$ and $d$ respectively as
\begin{align}
\wedge^{k,h}\colon \Omega^k_d\otimes \Omega^h_d\longrightarrow \Omega^{k+h}_d,
&\hfill&
d^k\colon \Omega^k_d\longrightarrow \Omega^{k+1}_d,
&\hfill&
\forall h,k\ge 0.
\end{align}
\begin{rmk}[Maximal exterior algebra and first order differential calculus]\label{rmk:maximalextalg}
Given an exterior algebra $\Omega^\bullet_d$, the first grade and $d\colon \Omega^0_d=A\to \Omega^1_d$ form a first order differential calculus for $A$.

Vice versa, given a first order differential calculus, $\Omega^1_d$, one can construct a maximal exterior algebra, $\Omega^{\bullet}_{d,\operatorname{max}}$, by quotienting the tensor algebra by the two-sided ideal, with respect to $\otimes_A$, generated by $\sum_i da_i\otimes_A db_i$ for all $\sum_i a_i\otimes b_i\in N_d$ (cf.\ \cite[Lemma 1.32, p.~23]{BeggsMajid}).
Any other exterior algebra with the same first order differential calculus $\Omega^1_d$ is a quotient of the universal exterior algebra, giving us the algebra epimorphisms
\begin{equation}
\Omega^\bullet_{u,\operatorname{max}}\longtwoheadrightarrow \Omega^\bullet_{d,\text{max}}\longtwoheadrightarrow \Omega^\bullet_d.
\end{equation}

\end{rmk}
\begin{eg}[Universal exterior algebra]
For the universal first order differential calculus, $\Omega^1_u$, the maximal exterior algebra coincides with the tensor algebra, i.e.\ $\Omega^{\bullet}_{u,\operatorname{max}}=T^{\bullet}_u$, where $\wedge=\otimes_A$ (cf.\ \cite[Theorem 1.33, p.~24]{BeggsMajid}).
\end{eg}
\begin{eg}[Classical exterior algebra]
	In the example of §\ref{ss:cman}, the maximal exterior algebra corresponds to the classical notion of exterior algebra, where it is realized as the algebra of skew symmetric forms, i.e.\ $\Omega^{\bullet}_d(\smooth{M})=\Omega^{\bullet}(M)$ equipped with the standard exterior derivative \cite[Example 1.35, p.~26]{BeggsMajid}.
\end{eg}
\begin{eg}[Infinitesimal first order differential calculus]
	In the example of §\ref{ss:K2}, $N_d=(tdt)\subset \Omega^1_u$, so the maximal exterior algebra is formed by quotienting the tensor algebra by the ideal generated by $(dt\otimes_A dt)$, that is $\bigoplus_{n\ge 2}T^{n}_d$.
It follows that $\Omega^\bullet_{d,\text{max}}=A\oplus \Omega^1_d\cong A\oplus \bk[0]$.
The maximal exterior algebra is therefore the only possible exterior algebra with this first order differential calculus.
\end{eg}

As we did for $\Omega^1_d$, for each degree $n$, we define a functor as follows
\begin{align}
\Omega^n_d\colon \AMod\longrightarrow\AMod,
&\hfill&
E\longmapsto \Omega^n_d\otimes_A E.
\end{align}
These functors in $\AMod\rightarrow \AMod$ give rise to a functor $\Omega^{\bullet}_d\colonequals \bigoplus_{n\ge 0}\Omega^n_d$.
\begin{rmk}
Since the tensor product is a right exact functor, it preserves colimits, which are computed object-wise, so we can explicitly describe the exterior algebra functor as the tensor product by the exterior algebra bimodule, i.e.\ $\Omega^\bullet_d= \Omega^{\bullet}_d\otimes_A -$.
\end{rmk}
In the functor interpretation, we can see the wedge as a natural epimorphism $\wedge\colon \Omega^\bullet_d\circ \Omega^\bullet_d\to\Omega^\bullet_d$ of functors of type $\AMod\to \AMod$.
In particular, we also get $\wedge\colon T^2_d\twoheadrightarrow \Omega^2_d$, and by associativity we can apply the wedge product simultaneously on each tensor component of $T^n_d$, obtaining the natural epimorphism $\wedge_n \colon T^n_d\twoheadrightarrow \Omega^n_d$.
In particular: $\wedge_2=\wedge$, $\wedge_1=\id_{\Omega^1_d}$, and $\wedge_0=\id_{A}$.
These maps induce a natural graded epimorphism $\wedge_{\bullet}\colon T^{\bullet}_d \twoheadrightarrow \Omega^{\bullet}_d$.

Given the notion of exterior algebra, we can generalize the following concept from differential geometry.
\begin{defi}
\label{defi:exterior_covariant_derivative}
A \emph{(left) exterior covariant derivative} with respect to the exterior algebra $\Omega^{\bullet}_d$ on a left $A$-module $E$ is a $\bk$-linear map
\begin{equation}
d_{\nabla}\colon \Omega^\bullet_d(E)\longrightarrow \Omega^\bullet_d(E)
\end{equation}
such that
\begin{enumerate}
\item $d_\nabla$ restricts to a map $d_\nabla\colon \Omega^n_d(E)\to \Omega^{n+1}_d(E)$;
\item (Leibniz rule) for all $\alpha\in\Omega^n_d$ and $\beta\in\Omega^\bullet_d(E)$, we have
\begin{equation}
d_{\nabla}(\alpha\wedge \beta)
=d\alpha\wedge \beta+(-1)^{n}\alpha\wedge d_{\nabla}\beta.
\end{equation}
\end{enumerate}

The \emph{curvature} for $d_\nabla$ is
\begin{equation}
R_\nabla \colonequals d_\nabla\circ d_\nabla \colon \Omega^\bullet_d(E)\to \Omega^\bullet_d(E).
\end{equation}
\end{defi}
As for the classical case, the restriction to degree zero of an exterior covariant derivative $d_{\nabla}\colon E\to \Omega^1_d(E)$ is a connection.
Vice versa, as in \cite[(4.2), p.~295]{BeggsMajid}, every connection $\nabla\colon E\to \Omega^1_d(E)$ can be uniquely prolonged to an exterior covariant derivative by defining for all $\alpha\otimes_A e\in\Omega^n_d(E)$
\begin{equation}
d_\nabla(\alpha\otimes_A e)
=d\alpha\otimes_A e+(-1)^n \alpha\wedge \nabla e.
\end{equation}
For this reason we will denote an exterior covariant derivative $d_\nabla$ by making the associated connection explicit.
\begin{rmk}
\label{rmk:ecdcurvature}
Concerning the curvature $R_\nabla$, if we restrict it to degree zero, we obtain a map $E\to\Omega^2_d(E)$, which coincides with the notion of curvature for $\nabla$ given in \cite[Definition 3.18, p.~219]{BeggsMajid}.
\end{rmk}
As for the classical case, the curvature of an exterior covariant derivative is fully determined by the information in degree zero.
We prove this fact in the following lemma, which also provides a slight generalization and an alternative proof of \cite[Lemma 3.19, p.~219]{BeggsMajid} proving that the curvature of a connection $\nabla$ is left $A$-linear.
\begin{lemma}
\label{lemma:ecdcurvaturelinear}
The curvature of an exterior covariant derivative $d_\nabla\colon \Omega^{\bullet}_d(E)\to \Omega^{\bullet}_d(E)$ can be described on all $\omega\otimes_A e\in\Omega^n_d(E)$ as
\begin{equation}\label{eq:charcurv}
R_\nabla(\omega\otimes_A e)=\omega\wedge R_\nabla(e).
\end{equation}

In particular $R_\nabla$ is left $A$-linear.
\end{lemma}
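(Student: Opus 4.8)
The plan is to evaluate $R_\nabla = d_\nabla\circ d_\nabla$ on a typical generator $\omega\otimes_A e\in\Omega^n_d(E)$, where we regard $e$ as an element of $\Omega^0_d(E)=E$ so that $\omega\otimes_A e=\omega\wedge e$, and then apply the Leibniz rule of Definition \ref{defi:exterior_covariant_derivative} twice, carefully tracking the grading (the key point being that $d\omega$ lives in degree $n+1$). Since every element of $\Omega^n_d(E)$ is a finite sum of such generators and $d_\nabla$ is $\bk$-linear, it suffices to establish \eqref{eq:charcurv} on these generators.

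First I would compute $d_\nabla(\omega\wedge e)=d\omega\wedge e+(-1)^n\,\omega\wedge d_\nabla e=d\omega\otimes_A e+(-1)^n\,\omega\wedge\nabla e$, using that $d_\nabla$ in degree zero is the connection $\nabla$. Then I would apply $d_\nabla$ to each summand. For the first, $d\omega\in\Omega^{n+1}_d$, so $d_\nabla(d\omega\wedge e)=d(d\omega)\wedge e+(-1)^{n+1}\,d\omega\wedge\nabla e=(-1)^{n+1}\,d\omega\wedge\nabla e$ because $d^2=0$. For the second, $\omega\in\Omega^n_d$ and $\nabla e\in\Omega^1_d(E)$, so $d_\nabla\bigl((-1)^n\,\omega\wedge\nabla e\bigr)=(-1)^n\,d\omega\wedge\nabla e+\omega\wedge d_\nabla(\nabla e)=(-1)^n\,d\omega\wedge\nabla e+\omega\wedge R_\nabla(e)$, where I use that $d_\nabla(\nabla e)=d_\nabla(d_\nabla e)=R_\nabla(e)$. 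Adding the two contributions, the terms $(-1)^{n+1}\,d\omega\wedge\nabla e$ and $(-1)^n\,d\omega\wedge\nabla e$ cancel, leaving exactly $R_\nabla(\omega\otimes_A e)=\omega\wedge R_\nabla(e)$.

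Finally, left $A$-linearity follows at once: for $a\in A=\Omega^0_d$ one has $a\cdot(\omega\otimes_A e)=(a\omega)\otimes_A e$, so by \eqref{eq:charcurv} and associativity of $\wedge$, $R_\nabla\bigl(a\cdot(\omega\otimes_A e)\bigr)=(a\omega)\wedge R_\nabla(e)=a\cdot\bigl(\omega\wedge R_\nabla(e)\bigr)=a\cdot R_\nabla(\omega\otimes_A e)$; together with the $\bk$-linearity of $d_\nabla$ this shows $R_\nabla$ is left $A$-linear, and specializing to $n=0$ recovers the $A$-linearity of the curvature of $\nabla$ in the sense of Remark \ref{rmk:ecdcurvature} and \cite[Lemma 3.19, p.\ 219]{BeggsMajid}. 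There is no genuine obstacle here: the computation is forced once the Leibniz rule is applied, and the only place to be careful is the sign and degree bookkeeping in the step involving $d\omega\in\Omega^{n+1}_d$, which is precisely what produces the cancellation of the cross terms.
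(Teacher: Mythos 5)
Your proof is correct and follows essentially the same computation as the paper: expand $R_\nabla(\omega\otimes_A e)=d_\nabla^2(\omega\wedge e)$ via two applications of the Leibniz rule, use $d^2=0$ to kill the first term, observe that the two cross terms $(-1)^{n+1}d\omega\wedge d_\nabla e$ and $(-1)^n d\omega\wedge d_\nabla e$ cancel, and deduce left $A$-linearity by taking $\omega=a\in\Omega^0_d=A$. No gaps.
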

\begin{proof}
We compute the curvature as
\begin{equation}
\begin{split}
R_\nabla(\omega\otimes_A e)
&=d_\nabla^2(\omega \otimes_A e)\\
&=d_\nabla(d\omega \wedge e+(-1)^n\omega \wedge d_\nabla e)\\
&=0+(-1)^{n+1}d\omega \wedge d_\nabla e+(-1)^n d\omega \wedge d_\nabla e+(-1)^{2n}\omega \wedge d_\nabla^2 e\\
&=\omega\wedge R_\nabla (e).
\end{split}
\end{equation}

From this description it follows that $R_\nabla$ is left $A$-linear.
In particular, the left $A$-linearity in degree zero follows by taking $\omega=a\in A=\Omega^0_d$.
\end{proof}

As we did for connections, we can prove that exterior covariant derivatives are first order differential operators
\begin{prop}\label{prop:excoddiffop}
Let $A$ be a $\bk$-algebra and $\Omega^{\bullet}_d$ an exterior algebra over it.
Given a left $A$-module $E$, an exterior covariant derivative $d_{\nabla}\colon \Omega^{\bullet}_d(E)\to \Omega^{\bullet}_d(E)$ is a differential operator of order at most $1$ with lift
\begin{align}
\widetilde{d}_{\nabla}\colon J^1_d \Omega^\bullet_d (E)\longrightarrow \Omega^\bullet_d (E),
&\hfill&
[a\otimes \omega]\longmapsto ad_\nabla(\omega).
\end{align}

Its curvature $R_\nabla\colon \Omega^\bullet_d(E)\to \Omega^\bullet_d(E)$ is a differential operator of order zero.
\end{prop}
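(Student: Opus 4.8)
The plan is to read off the differential-operator property of $d_\nabla$ directly from its Leibniz rule, via the characterization of first order differential operators already in hand, and then to get the curvature statement for free from Lemma \ref{lemma:ecdcurvaturelinear}. The first thing to keep in mind is that, although $d_\nabla$ raises the form degree by one, being a differential operator of order at most $1$ with respect to $\Omega^1_d$ only asks that $d_\nabla$ be a $\bk$-linear endomorphism of the left $A$-module $\Omega^\bullet_d(E)$ which factors through $j^1_{d,\Omega^\bullet_d(E)}$; the framework imposes no grading constraint, so grade-shifting maps like $d_\nabla$ (and later $R_\nabla$) legitimately fit it. Accordingly I treat $d_\nabla$ as a $\bk$-linear map $\Omega^\bullet_d(E)\to\Omega^\bullet_d(E)$ and invoke Proposition \ref{1storderwrtd}, in the convenient form of Remark \ref{rmk:tensorNddiffop}.

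Concretely, I would take $\sum_i n_i\otimes m_i\in N_d$, so that $\sum_i n_i m_i=0$ and $\sum_i n_i\,dm_i=0$ in $\Omega^1_d$, together with an arbitrary $\omega\in\Omega^\bullet_d(E)$. Applying the Leibniz rule of Definition \ref{defi:exterior_covariant_derivative} with the degree-zero form $m_i\in A=\Omega^0_d$ yields $d_\nabla(m_i\omega)=dm_i\wedge\omega+m_i\,d_\nabla\omega$, whence $\sum_i n_i\,d_\nabla(m_i\omega)=(\sum_i n_i\,dm_i)\wedge\omega+(\sum_i n_i m_i)\,d_\nabla\omega=0$, the two summands vanishing by the two defining relations of $N_d$ (using that $n_i(\alpha\wedge\omega)=(n_i\alpha)\wedge\omega$). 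By Remark \ref{rmk:tensorNddiffop} this is exactly condition \eqref{eq:1storderwrtd}, so Proposition \ref{1storderwrtd} gives that $d_\nabla$ is a differential operator of order at most $1$ with respect to $d$. Its lift is unique by Proposition \ref{prop:uniquelift}, and $A$-linearity of the lift forces $\widetilde{d}_{\nabla}[a\otimes\omega]=a\,\widetilde{d}_{\nabla}[1\otimes\omega]=a\,\widetilde{d}_{\nabla}(j^1_{d,\Omega^\bullet_d(E)}\omega)=a\,d_\nabla(\omega)$, the claimed formula; equivalently, the displayed map is well defined precisely because the computation above shows $a\otimes\omega\mapsto a\,d_\nabla(\omega)$ kills $N_d(\Omega^\bullet_d(E))$, hence descends to $J^1_d\Omega^\bullet_d(E)$.

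For the curvature, Lemma \ref{lemma:ecdcurvaturelinear} already establishes that $R_\nabla=d_\nabla\circ d_\nabla$ is left $A$-linear on $\Omega^\bullet_d(E)$. Since a differential operator of order zero is, by the convention recalled in §\ref{s:firstorderdifferentialoperators}, precisely a morphism in $\AMod$ (with lift equal to itself), it follows at once that $R_\nabla$ is a differential operator of order zero. I do not expect any genuine obstacle here: the argument parallels the connection case of Proposition \ref{prop:1diffop}, and the only points needing a moment's care are the observation that the differential-operator framework tolerates grade-shifting maps, and the bookkeeping that converts the two relations cutting out $N_d$ into the vanishing of $\sum_i n_i\,d_\nabla(m_i\omega)$.
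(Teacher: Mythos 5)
Your proof is correct and follows essentially the same route as the paper: verify the criterion of Proposition \ref{1storderwrtd} using the Leibniz rule of $d_\nabla$ (the paper does this directly on a general element of $N_d(\Omega^\bullet_d(E))$, writing $\sum_i n_i d_\nabla(\omega_i)=d_\nabla(\sum_i n_i\omega_i)-\sum_i dn_i\wedge\omega_i=0$, whereas you check the sufficient condition of Remark \ref{rmk:tensorNddiffop} on elements $\sum_i n_i\otimes m_i\omega$ with $\sum_i n_i\otimes m_i\in N_d$ --- an immaterial difference), and deduce the curvature statement from Lemma \ref{lemma:ecdcurvaturelinear}.
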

\begin{proof}
We apply the characterization provided by Proposition \ref{1storderwrtd}.
Let $\sum_i n_i\otimes \omega_i\in N_d(\Omega^{\bullet}_d(E))$, then
\begin{align}
\sum_i n_i d_\nabla (\omega_i)
=d_\nabla\left(\sum_i n_i \omega_i\right)-\sum_i dn_i\wedge \omega_i
=0.
\end{align}
The two terms vanish by the definition of $N_d$.

The second statement is a corollary of Lemma \ref{lemma:ecdcurvaturelinear}.
\end{proof}

\subsection{Functor of symmetric forms}
\begin{defi}[Functor of symmetric forms]\label{def:qsf}
Given a $\bk$-algebra $A$ endowed with an exterior algebra $\Omega^{\bullet}_d$, we define $S^0_d=\Omega^0_d=\id_{\AMod}$, $S^1_d=\Omega^1_d$, and $\iota^1_{\wedge}=\id_{\Omega^1_d}$.
For $n\ge 2$, by induction, $S^n_d$ is the kernel of
\begin{equation}
\wedge_{S^{n-2}_d} \circ \Omega^1_d(\iota^{n-1}_{\wedge})\colon \Omega^1_d\circ S^{n-1}_d\longrightarrow \Omega^2_d\circ S^{n-2}_d,
\end{equation}
and we denote the kernel inclusion by $\iota^n_{\wedge}\colon S^n_d\longhookrightarrow \Omega^1_d\circ S^{n-1}_d$.
We call $S^{\bullet}_d\colonequals \bigoplus_{n\ge 0} S^n_d$ the \emph{functor of symmetric forms}.

When no confusion arises, we denote $S^n_d (A)$ and $S^{\bullet}_d (A)$ also by $S^n_d$ and $S^{\bullet}_d$, respectively.
The latter will be termed \emph{bimodule of symmetric forms}.
\end{defi}
It is natural to consider the following composition, which maps $S^n_d$ to $T^n_d$.
\begin{equation}
\iota_{S^n_d}\colonequals T^{n-2}_d(\iota^2_{\wedge})\circ T^{n-3}_d(\iota^3_{\wedge})\circ \cdots \circ T^{1}_d(\iota^{n-1}_{\wedge})\circ \iota^n_{\wedge} \colon S^n_d\longrightarrow T^{n}_d.
\end{equation}
\begin{eg}[Classical functor of symmetric forms]
In the example of §\ref{ss:cman}, given a vector bundle $E\to M$, the functor of symmetric forms applied to the $\smooth{M}$-module $\Gamma(M,E)$ is $S^n_d(M,E)$.
\end{eg}
\begin{eg}[Universal functor of symmetric forms]\label{ex:unisymmetricforms}
In the universal case, $S^n_u=0$ for $n> 1$, so $S^{\bullet}_u=A \oplus \Omega^1_u$.
\end{eg}
Another important example is presented in more details in §\ref{sss:minfqsf}.

The following is an equivalent characterization of the functor of symmetric forms.
\begin{lemma}\label{lemma:pbqsf}
For all $n\ge 2$, the following is a pullback diagram
\begin{equation}\label{diag:pbqsf}
\begin{tikzcd}[column sep=40pt]
S^n_d\ar[r,hookrightarrow,"\iota^n_\wedge"]\ar[d]&\Omega^1_d\circ S^{n-1}_d\ar[d,"\Omega^1_d(\iota^{n-1}_\wedge)"]\\
S^2_d\circ S^{n-2}_d\ar[r,hookrightarrow,"\iota^2_{\wedge,S^{n-2}_d}"']&\Omega^1_d\circ\Omega^1_d\circ S^{n-2}_d
\end{tikzcd}
\end{equation}

Moreover, if $\Omega^1_d$ is flat in $\ModA$, then \eqref{diag:pbqsf} is an intersection in $\Omega^1_d\circ\Omega^1_d\circ S^{n-2}_d$ and in $T^{n}_d$.
Under this assumption, $\iota_{S^n_d}$ is a monomorphism.
\end{lemma}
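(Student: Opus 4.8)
The plan is to deduce everything from the defining kernel presentation of $S^n_d$ (Definition~\ref{def:qsf}) together with the elementary pullback lemma, Lemma~\ref{lemma:pb}. I work in the abelian category of additive endofunctors of $\AMod$, in which kernels and pullbacks are computed objectwise, so that every claim below may equally be read at a fixed $E\in\AMod$, where $\AMod$ is honestly abelian. The first thing to record is the identity $S^2_d=\ker\bigl(\wedge\colon\Omega^1_d\circ\Omega^1_d\to\Omega^2_d\bigr)$: by Definition~\ref{def:qsf}, $S^2_d$ is the kernel of $\wedge_{S^0_d}\circ\Omega^1_d(\iota^1_{\wedge})$, and since $S^0_d=\id_{\AMod}$ and $\iota^1_{\wedge}$ is the identity of $S^1_d=\Omega^1_d$, this composite is simply $\wedge$. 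As kernels are objectwise, whiskering on the right with $S^{n-2}_d$ commutes with forming kernels, so
\[ S^2_d\circ S^{n-2}_d=\ker\bigl(\wedge_{S^{n-2}_d}\colon\Omega^1_d\circ\Omega^1_d\circ S^{n-2}_d\to\Omega^2_d\circ S^{n-2}_d\bigr), \]
with kernel inclusion exactly $\iota^2_{\wedge,S^{n-2}_d}$, whereas on the other side $S^n_d=\ker\bigl(\wedge_{S^{n-2}_d}\circ\Omega^1_d(\iota^{n-1}_{\wedge})\bigr)$ with inclusion $\iota^n_{\wedge}$.

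Next I would apply Lemma~\ref{lemma:pb} to the commutative square
\[
\begin{tikzcd}[column sep=huge]
\Omega^1_d\circ S^{n-1}_d\ar[r,"\wedge_{S^{n-2}_d}\circ\Omega^1_d(\iota^{n-1}_{\wedge})"]\ar[d,"\Omega^1_d(\iota^{n-1}_{\wedge})"']&\Omega^2_d\circ S^{n-2}_d\ar[d,equal]\\
\Omega^1_d\circ\Omega^1_d\circ S^{n-2}_d\ar[r,"\wedge_{S^{n-2}_d}"']&\Omega^2_d\circ S^{n-2}_d
\end{tikzcd}
\]
which commutes trivially and whose right-hand vertical arrow is the identity, hence a monomorphism. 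Lemma~\ref{lemma:pb} then asserts that the kernel of the top row, namely $S^n_d$ with inclusion $\iota^n_{\wedge}$, is the pullback of the kernel of the bottom row, namely $S^2_d\circ S^{n-2}_d$ with inclusion $\iota^2_{\wedge,S^{n-2}_d}$, along $\Omega^1_d(\iota^{n-1}_{\wedge})$, equipped with the canonical comparison map $S^n_d\to S^2_d\circ S^{n-2}_d$; this is precisely the square \eqref{diag:pbqsf}. The argument is uniform in $n\ge2$ (for $n=2$ the left vertical arrow is the identity and the square degenerates).

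For the flatness statement, suppose $\Omega^1_d$ is flat in $\ModA$. By Corollary~\ref{cor:Jex} the functor $\Omega^1_d=\Omega^1_d\otimes_A-$ is then exact, hence so is each $T^k_d=(\Omega^1_d)^{\circ k}$ as a composite of exact functors; in particular every $T^k_d$ preserves monomorphisms. Since $\iota^{n-1}_{\wedge}$ is a monomorphism, so is $\Omega^1_d(\iota^{n-1}_{\wedge})$, i.e.\ the left vertical arrow of the square above; the ``moreover'' clause of Lemma~\ref{lemma:pb} therefore identifies $S^n_d$ with the intersection $(S^2_d\circ S^{n-2}_d)\cap(\Omega^1_d\circ S^{n-1}_d)$ of subfunctors of $\Omega^1_d\circ\Omega^1_d\circ S^{n-2}_d$. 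To carry this intersection up into $T^n_d$ I would first check that $\iota_{S^m_d}$ is a monomorphism for every $m$: by definition it is the composite $T^{m-2}_d(\iota^2_{\wedge})\circ\cdots\circ T^1_d(\iota^{m-1}_{\wedge})\circ\iota^m_{\wedge}$, each $\iota^j_{\wedge}$ occurring is a kernel inclusion hence a monomorphism, and each $T^k_d$ preserves monomorphisms, so the composite is one. Consequently $\Omega^1_d\circ\Omega^1_d\circ\iota_{S^{n-2}_d}$ is a monomorphism embedding $\Omega^1_d\circ\Omega^1_d\circ S^{n-2}_d$ into $T^n_d$, and since the intersection of two subobjects is unchanged when the ambient object is enlarged along a monomorphism, the same pullback is exhibited as an intersection inside $T^n_d$, completing the argument.

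I do not expect a genuine obstacle here: once the two kernel presentations above are established, the pullback is a formal consequence of Lemma~\ref{lemma:pb}, and the flatness hypothesis is used only to make $\Omega^1_d$ — and hence each $T^k_d$ — exact, which is exactly what upgrades the pullback to an honest intersection and makes $\iota_{S^n_d}$ injective. The only point calling for care is the initial bookkeeping: verifying that $\ker(\wedge_{S^{n-2}_d})$ really equals $S^2_d\circ S^{n-2}_d$ with inclusion $\iota^2_{\wedge,S^{n-2}_d}$, and that the top row of the displayed square is precisely the map whose kernel defines $S^n_d$, so that Lemma~\ref{lemma:pb} returns exactly the named maps appearing in \eqref{diag:pbqsf}.
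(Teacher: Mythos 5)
Your proposal is correct and follows essentially the same route as the paper: both identify $S^n_d$ and $S^2_d\circ S^{n-2}_d$ as kernels of $\wedge_{S^{n-2}_d}\circ\Omega^1_d(\iota^{n-1}_\wedge)$ and $\wedge_{S^{n-2}_d}$ respectively, apply Lemma~\ref{lemma:pb} to the evident commutative square with identity right-hand edge, and then use flatness of $\Omega^1_d$ in $\ModA$ to make $\Omega^1_d(\iota^{n-1}_\wedge)$ (and each $T^k_d(\iota^j_\wedge)$) a monomorphism, upgrading the pullback to an intersection and $\iota_{S^n_d}$ to a monomorphism. No gaps.
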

\begin{proof}
Consider the following diagram
\begin{equation}
\begin{tikzcd}[column sep=50pt]
S^n_d\ar[r,hookrightarrow,"\iota^n_\wedge"]\ar[d,dashed]&\Omega^1_d\circ S^{n-1}_d\ar[d,"\Omega^1_d(\iota^{n-1}_\wedge)"]\ar[r,"\wedge_{S^{n-2}_d}\circ \Omega^1_d(\iota^{n-1}_\wedge)"]&\Omega^2_d\circ S^{n-2}_d\ar[d,equals]\\
S^2_d\circ S^{n-2}_d\ar[r,hookrightarrow,"\iota^2_{\wedge, S^{n-2}_d}"']&\Omega^1_d\circ\Omega^1_d\circ S^{n-2}_d\ar[r,twoheadrightarrow,"\wedge_{S^{n-2}_d}"']&\Omega^2_d\circ S^{n-2}_d
\end{tikzcd}
\end{equation}
The right square clearly commutes and the two horizontal maps on the left are kernel maps by definition of $S^n_d$ and of $S^2_d$ evaluated on $S^{n-2}_d$.
From the universal property of the kernel, we find the dashed map.
We then deduce the first statement via Lemma \ref{lemma:pb}.

Moreover, if $\Omega^1_d$ is flat in $\ModA$, then $\Omega^1_d(\iota^{n-1}_\wedge)$ is a mono (cf.\ Lemma \ref{lemma:derfun}).
Thus, by the same lemma, the square \eqref{diag:pbqsf} is an intersection of subfunctors.
The last statement follows from the fact that $\Omega^1_d\circ\Omega^1_d(\iota^{n-2}_\wedge)$ is a monomorphism, and thus the pullback does not change by composing $\Omega^1_d\circ\Omega^1_d(\iota^{n-2}_\wedge)$ in the bottom right corner of \eqref{diag:pbqsf}.
If we assume $\Omega^1_d$ to be flat in $\ModA$, then the maps $T^{n-k}_d(\iota^k_{\wedge})$ are injective for all $0\le k\le n$ (cf.\ Proposition \ref{prop:Texact}).
By composing them, we obtain the inclusion $\iota_{S^k_d}\colon S^k_d\to T^n_d$, which realizes $S^k_d$ as a submodule of $T^n_d$.
\end{proof}

Whereas the functors $\Omega^n_d$ are defined as tensor functors, $S^n_d$ is not.
However, this interpretation is still possible under certain circumstances, as shown by the following.
\begin{prop}\label{prop:Stensorcomparison}
There exists a canonical set of natural transformations $\tau^n_d\colon S^n_d A\otimes_A - \longrightarrow S^n_d$, that are compatible with $\iota^n_\wedge$ and $\iota_{S^n_d}$, i.e.
\begin{align}\label{eq:taucompatible}
\iota^n_{\wedge}\circ \tau^n_d=\Omega^1_d(\tau^{n-1}_d)\circ (\iota^n_{\wedge,A}\otimes_A \id),
&\hfill&
\iota_{S^n_d}\circ \tau^n_d=\iota_{S^{n}_d A}\otimes_A \id.
\end{align}
Moreover, the maps $\tau^0_d$ and $\tau^1_d$ are isomorphisms and $\tau^2_d$ is an epimorphism.

If we restrict the objects involved to the subcategory $\AFlat$, then $\tau^n_d$ is an isomorphism.
\end{prop}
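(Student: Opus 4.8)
The plan is to build $\tau^n$ by induction on $n$, exploiting the fact that $S^n_d$ is defined as a kernel and that $S^n_d A \otimes_A -$ is a right exact functor. For $n=0,1$ set $\tau^0 = \id$ and $\tau^1 = \id$ (here $S^0_d A \otimes_A E = A \otimes_A E = E$ and $S^1_d A \otimes_A E = \Omega^1_d \otimes_A E = S^1_d E$ on the nose), so these are isomorphisms. For the inductive step, suppose $\tau^{n-1}$ and $\tau^{n-2}$ have been constructed compatibly with the $\iota^k_\wedge$. By Definition \ref{def:qsf}, $S^n_d A$ sits inside $\Omega^1_d \otimes_A S^{n-1}_d A$ as the kernel of $\wedge_{S^{n-2}_d A} \circ \Omega^1_d(\iota^{n-1}_{\wedge,A})$. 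Applying the right exact functor $-\otimes_A E$ gives a map $S^n_d A \otimes_A E \to (\Omega^1_d \otimes_A S^{n-1}_d A)\otimes_A E \cong \Omega^1_d \otimes_A (S^{n-1}_d A \otimes_A E)$. Composing with $\Omega^1_d(\tau^{n-1}_E)$ lands us in $\Omega^1_d \otimes_A S^{n-1}_d E = \Omega^1_d\circ S^{n-1}_d(E)$. I then check that this composite is annihilated by $\wedge_{S^{n-2}_d} \circ \Omega^1_d(\iota^{n-1}_\wedge)$: this follows from the inductive compatibility relation $\iota^{n-1}_\wedge \circ \tau^{n-1} = \Omega^1_d(\tau^{n-2})\circ(\iota^{n-1}_{\wedge,A}\otimes_A\id)$ together with naturality of $\wedge$, since the composite factors through $\wedge_{S^{n-2}_d A}\circ\Omega^1_d(\iota^{n-1}_{\wedge,A})$ tensored with $E$, which is zero by definition of $S^n_d A$. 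Hence by the universal property of the kernel $\iota^n_\wedge$, the map factors uniquely as $\iota^n_\wedge \circ \tau^n$ for a unique $\tau^n\colon S^n_d A \otimes_A - \to S^n_d$. Naturality of $\tau^n$ in $E$ follows from naturality of all the constituents and uniqueness of the factorization; the first relation in \eqref{eq:taucompatible} holds by construction, and the second follows from it by iterating and unwinding the definition of $\iota_{S^n_d}$ and $\iota_{S^n_d A}$.

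For the assertion that $\tau^2$ is an epimorphism: by Lemma \ref{lemma:pbqsf}, $S^2_d A = \ker(\wedge\colon \Omega^1_d\otimes_A\Omega^1_d \to \Omega^2_d)$, and $\wedge_2\colon T^2_d \twoheadrightarrow \Omega^2_d$ is a natural epimorphism of functors. The functor $S^2_d$ evaluated at $E$ is $\ker(\Omega^1_d\otimes_A\Omega^1_d\otimes_A E \to \Omega^2_d\otimes_A E)$, while $S^2_d A\otimes_A E$ is the image of $\ker(\Omega^1_d\otimes_A\Omega^1_d\to\Omega^2_d)\otimes_A E$ in $\Omega^1_d\otimes_A\Omega^1_d\otimes_A E$. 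Surjectivity of $\tau^2_E$ reduces to the statement that this image is all of $\ker(\Omega^1_d\otimes_A\Omega^1_d\otimes_A E\to\Omega^2_d\otimes_A E)$, which is exactly the right-exactness computation: applying $-\otimes_A E$ to $0 \to S^2_d A \to \Omega^1_d\otimes_A\Omega^1_d \to \Omega^2_d \to 0$ (exact by definition and surjectivity of $\wedge_2$, reading off $S^2_d A$ as the kernel) keeps the right end exact, so $S^2_d A\otimes_A E$ surjects onto $\ker$. A small diagram chase comparing this with the kernel defining $S^2_d E$ gives surjectivity of $\tau^2_E$.

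Finally, for the restriction to $\AFlat$: if $E$ is flat in $\AMod$, then $-\otimes_A E$ is exact, so applying it to the defining short exact sequence $0 \to S^n_d A \to \Omega^1_d\otimes_A S^{n-1}_d A \to \operatorname{im} \to 0$ and to $0 \to \operatorname{im} \to \Omega^2_d\otimes_A S^{n-2}_d A$ preserves kernels. By an induction on $n$ (using that $S^{n-1}_d A\otimes_A E \cong S^{n-1}_d E$ via $\tau^{n-1}$, which is an isomorphism on $\AFlat$ by the inductive hypothesis, and likewise for $n-2$), one identifies $S^n_d A\otimes_A E$ with $\ker(\Omega^1_d\otimes_A S^{n-1}_d E \to \Omega^2_d\otimes_A S^{n-2}_d E) = S^n_d E$, and the identification is precisely $\tau^n_E$. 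The main obstacle I anticipate is bookkeeping the exactness hypotheses carefully in the inductive step — one needs $S^{n-2}_d A$ and $S^{n-1}_d A$ (and the relevant images) to behave well under $-\otimes_A E$, which is automatic for $E$ flat but requires the $\AFlat$ restriction in general; the naturality and the two compatibility identities in \eqref{eq:taucompatible} are then essentially forced by the uniqueness in the universal property of the kernel.
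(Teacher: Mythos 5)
Your construction of $\tau^n$ by induction via the universal property of the kernel, the right-exactness argument showing $\tau^2$ is an epimorphism, and the identification of $\tau^n$ as an isomorphism of kernels when $E$ is flat all match the paper's own proof essentially step for step. The proposal is correct and takes the same approach.
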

\begin{proof}
Essentially by definition, $\tau^0_d$ and $\tau^1_d$ are isomorphisms without any further assumptions, and they trivially satisfy \eqref{eq:taucompatible}.

For $n>1$, assume such a $\tau^{n-1}_d$ exists.
We can apply $\Omega^1_d$, obtaining the following diagram
\begin{equation}\label{diag:Stensorcomparison}
\begin{tikzcd}[column sep=45pt]
S^n_d\otimes_A -\ar[r,"\iota^n_{\wedge,A}\otimes_A \id"]\ar[d,dashed,"\tau^n_d"']&\Omega^1_d\circ S^{n-1}_d\otimes_A -\ar[r,"\Omega^1_d(\iota^{n-1}_{\wedge,A})\otimes_A \id"]\ar[d,"\Omega^1_d(\tau^{n-1}_d)"]&T^{2}_d\circ S^{n-2}_d\otimes_A -\ar[r,twoheadrightarrow,"\wedge_{S^{n-2}_d A}\otimes_A \id"]\ar[d,"T^{2}_d(\tau^{n-2}_d)"]&\Omega^2_d \circ S^{n-2}_d\otimes_A -\ar[d,"\Omega^2_d(\tau^{n-2}_d)"']\\
S^n_d \ar[r,hookrightarrow,"\iota^n_{\wedge}"']&\Omega^1_d\circ S^{n-1}_d \ar[r,"\Omega^1_d(\iota^{n-1}_\wedge)"']&T^{2}_d\circ S^{n-2}_d \ar[r,twoheadrightarrow,"\wedge_{S^{n-1}_d }"']&\Omega^2_d \circ S^{n-2}_d 
\end{tikzcd}
\end{equation}
The central square commutes by inductive hypothesis and the fact that $\Omega^1_d$ is a tensor functor, and the right square commutes by the naturality of $\wedge$.
Thus, since the top row vanishes, by the universal property of the kernel, there exists a unique $\tau^n_d$ that makes the left square commute.
Furthermore,
\begin{equation}
\begin{split}
\iota_{S^n_d}\circ \tau^n_d
&=\Omega^1_d(\iota_{S^{n-1}_d})\circ \iota^n_d \circ \tau^n_d\\
&=\Omega^1_d(\iota_{S^{n-1}_d})\circ \Omega^1_d(\tau^{n-1}_d)\circ (\iota^n_{\wedge,A}\otimes_A \id)\\
&=\Omega^1_d(\iota_{S^{n-1}_d}\circ \tau^{n-1}_d)\circ (\iota^n_{\wedge,A}\otimes_A \id)\\
&=\Omega^1_d(\iota_{S^{n-1}_d}\otimes_A \id)\circ (\iota^n_{\wedge,A}\otimes_A \id)\\
&=\iota_{S^{n}_d}\otimes_A \id.
\end{split}
\end{equation}
If we consider \eqref{diag:Stensorcomparison} for $n=2$, the vertical arrows are isomorphisms, with the exception of $\tau^2_d$.
This implies that the kernels of the composition of the two rightmost horizontal maps in each row are isomorphic.
The top composition is surjective, so, by right exactness of the tensor product with a left $A$-module $E$, it follows that its kernel is the image of $\iota^2_{\wedge,A}\otimes_A \id_E$.
It follows that $\tau^2_d$ is surjective.

We now evaluate \eqref{diag:Stensorcomparison} at $E$ in $\AFlat$.
Since $-\otimes_A E$ is exact, it preserves limits, and in particular we get that $\iota^n_{\wedge,A}\otimes_A \id_E$ is the inclusion map of the kernel of $(\wedge_{S^n_d}\otimes_A \id_E)\circ (\Omega^1_d(\iota^{n-1}_{\wedge,A})\otimes_A \id_E)$.
In this context, the kernel can be seen as a functor $\ker\colon\AMod^{\bullet\to\bullet}\to \AMod$, where $\AMod^{\bullet \to \bullet}$ is the category of arrows in $\AMod$.
The composition of the two rightmost squares in \eqref{diag:Stensorcomparison} represents a morphism in $\AMod$ and by construction $\tau^n_d$ is the image of this map via the functor $\ker$.
Since $\tau^{n-1}_d$ and $\tau^{n-2}_d$ are isomorphisms by inductive hypothesis, and functors preserve isomorphisms, also $\Omega^1_d(\tau^{n-1}_d)$ and $\Omega^2_d(\tau^{n-2}_d)$ are isomorphisms.
Therefore, $\tau^n_d$ is an isomorphism, being the image of an isomorphism via $\ker$.
\end{proof}
\begin{rmk}\label{rmk:symm2surj}
The maps $\tau^0_d$ and $\tau^1_d$ are isomorphisms by definition, without further conditions.
For $n=2$, we have $\wedge_{S^{n-2}_d} \circ \Omega^1_d(\iota^{n-1}_{\wedge})=\wedge\colon T^2_d\longtwoheadrightarrow \Omega^2_d$, so $\tau^2_d$ is an epi by the surjectivity axiom of the exterior algebra.
It becomes an isomorphism if $\Tor^A_1(\Omega^2_d,E)=0$, as the sequence
\begin{equation}\label{es:wedge}
\begin{tikzcd}[column sep=40pt]
0 \arrow[r] &S_d^2 E \arrow[r,hookrightarrow,"\iota^2_{\wedge,E}"]& \Omega^1_d\otimes_A \Omega^1_d E \arrow[r,twoheadrightarrow,"\wedge_E"]& \Omega^2_d E \arrow[r]&0.
\end{tikzcd}
\end{equation}
can be obtained by applying $-\otimes_A E$ to the corresponding sequence with $E=A$.
\end{rmk}

The following lemma shows other equivalent descriptions of $S^n_d$.
\begin{lemma}\label{lemma:Schar}
If $\Omega^1_d$ and $\Omega^2_d$ are flat in $\ModA$, for all $n\ge 0$, the following subbimodules of $T^{n}_d$ coincide
\begin{enumerate}
\item\label{lemma:Schar:1} $S^n_d$;
\item\label{lemma:Schar:2} $\bigcap_{\substack{h\ge 2\\ 0\le k\le n-h}}\ker\left(T^k_d(\wedge_h)_{T^{n-k-h}_d}\right)$, where $\wedge_h\colon T^h_d\to\Omega^{h}_d$ is defined in §\ref{ss:extalg};
\item\label{lemma:Schar:3} $\bigcap_{k=0}^{n-2}\ker\left(T^k_d \wedge_{T^{n-k-2}_d} \right)$;
\item\label{lemma:Schar:4} $\left(S^h_d\circ T^{n-h}_d \right)\cap \left(T^{n-k}_d \circ S^k_d\right)$ for $0\le h,k\le n$ such that $h+k>n$.
\end{enumerate}
\end{lemma}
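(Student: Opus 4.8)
The plan is to prove all the identifications by first establishing \eqref{lemma:Schar:1}$=$\eqref{lemma:Schar:3}, which is the description most amenable to manipulation, and then deducing \eqref{lemma:Schar:2} and \eqref{lemma:Schar:4} from \eqref{lemma:Schar:3} by elementary bookkeeping. Throughout, the two flatness hypotheses enter only to make the functors $\Omega^1_d$ and $\Omega^2_d$, and hence every $T^k_d$, exact (cf.\ Corollary \ref{cor:Jex} and Proposition \ref{prop:Texact}); this is exactly what licenses commuting kernels and finite intersections past these functors, and what makes all the objects in the statement honest subfunctors of $T^n_d$ (for \eqref{lemma:Schar:1} this is the last assertion of Lemma \ref{lemma:pbqsf}, and for \eqref{lemma:Schar:2}, \eqref{lemma:Schar:3}, \eqref{lemma:Schar:4} they are kernels of natural transformations out of $T^n_d$, or intersections thereof).

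For \eqref{lemma:Schar:1}$=$\eqref{lemma:Schar:3} I would induct on $n$, the cases $n\le 2$ being immediate (reading an empty intersection as the identity functor) since $S^0_d=\id_{\AMod}$, $S^1_d=\Omega^1_d=T^1_d$ and $S^2_d=\ker(\wedge)$. For $n\ge 3$, Lemma \ref{lemma:pbqsf} realizes $S^n_d$ inside $T^n_d$ as the intersection $(S^2_d\circ S^{n-2}_d)\cap(\Omega^1_d\circ S^{n-1}_d)$. Since $\iota_{S^{n-2}_d}$ is a monomorphism (Lemma \ref{lemma:pbqsf}) and $\Omega^2_d$ is exact, $\Omega^2_d(\iota_{S^{n-2}_d})$ is a monomorphism, so by naturality of $\wedge$ an element of $\Omega^1_d\circ\Omega^1_d\circ S^{n-2}_d$ lies in $S^2_d\circ S^{n-2}_d$ iff it is killed by $\wedge_{T^{n-2}_d}$, i.e.\ iff it lies in $S^2_d\circ T^{n-2}_d=\ker(\wedge_{T^{n-2}_d})$; since $\Omega^1_d\circ S^{n-1}_d\subseteq\Omega^1_d\circ\Omega^1_d\circ S^{n-2}_d$ in $T^n_d$, this yields $S^n_d=(S^2_d\circ T^{n-2}_d)\cap(\Omega^1_d\circ S^{n-1}_d)$. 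The first factor is the $k=0$ term of \eqref{lemma:Schar:3}. For the second, apply the exact functor $\Omega^1_d$ to the inductive description $S^{n-1}_d=\bigcap_{k=0}^{n-3}\ker(T^k_d\wedge_{T^{n-k-3}_d})$, using $\Omega^1_d\circ T^k_d=T^{k+1}_d$ and that $\Omega^1_d$ commutes with kernels and finite intersections; after the index shift $k\mapsto k+1$ this gives $\Omega^1_d\circ S^{n-1}_d=\bigcap_{k=1}^{n-2}\ker(T^k_d\wedge_{T^{n-k-2}_d})$. Intersecting with the $k=0$ term produces \eqref{lemma:Schar:3}.

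The identification \eqref{lemma:Schar:2}$=$\eqref{lemma:Schar:3} is routine: \eqref{lemma:Schar:2}$\subseteq$\eqref{lemma:Schar:3} since the $h=2$ terms of \eqref{lemma:Schar:2} are precisely the terms of \eqref{lemma:Schar:3}, while conversely associativity of $\wedge$ gives $\wedge_h=\wedge^{2,h-2}\circ(\wedge\otimes_A\wedge_{h-2})$, so each $T^k_d(\wedge_h)_{T^{n-k-h}_d}$ factors through $T^k_d(\wedge)_{T^{n-k-2}_d}$, whence $\ker(T^k_d(\wedge)_{T^{n-k-2}_d})$ lies in every $h\ge 3$ term and \eqref{lemma:Schar:3}$\subseteq$\eqref{lemma:Schar:2}. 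Finally, for \eqref{lemma:Schar:1}$=$\eqref{lemma:Schar:4}: using \eqref{lemma:Schar:3} for $h$ and for $k$, precomposing with $T^{n-h}_d$ (which preserves pointwise limits automatically) and postcomposing with the exact functor $T^{n-k}_d$ gives $S^h_d\circ T^{n-h}_d=\bigcap_{j=0}^{h-2}\ker(T^j_d\wedge_{T^{n-j-2}_d})$ and $T^{n-k}_d\circ S^k_d=\bigcap_{j=n-k}^{n-2}\ker(T^j_d\wedge_{T^{n-j-2}_d})$; their intersection is $\bigcap_{j}\ker(T^j_d\wedge_{T^{n-j-2}_d})$ over $j\in\{0,\dots,h-2\}\cup\{n-k,\dots,n-2\}$, which equals \eqref{lemma:Schar:3} exactly when this index set is all of $\{0,\dots,n-2\}$, i.e.\ when $h-2\ge n-k-1$, that is $h+k>n$. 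I expect the main obstacle to be the inductive step for \eqref{lemma:Schar:1}$=$\eqref{lemma:Schar:3}: invoking Lemma \ref{lemma:pbqsf} correctly and keeping exact track of which flatness assumption licenses each manipulation (pushing a tensor factor through $\ker(\wedge)$, commuting $\Omega^1_d$ and $T^{n-k}_d$ past intersections), together with the index-shifting arithmetic.
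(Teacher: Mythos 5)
Your proof is correct and rests on the same ingredients as the paper's: induction on $n$, the pullback/intersection description of Lemma \ref{lemma:pbqsf} (which yields $S^n_d=(S^2_d\circ T^{n-2}_d)\cap(\Omega^1_d\circ S^{n-1}_d)$, the identity underlying the paper's step \eqref{lemma:Schar:4}$\subseteq$\eqref{lemma:Schar:1}), exactness of the tensor functors to push kernels and finite intersections through $\Omega^1_d$ and $T^{k}_d$, and associativity of $\wedge$ for the comparison with \eqref{lemma:Schar:2}. The only difference is organizational: you pivot on a direct proof of \eqref{lemma:Schar:1}$=$\eqref{lemma:Schar:3} and derive the rest by index bookkeeping, whereas the paper closes the cycle \eqref{lemma:Schar:1}$\subseteq$\eqref{lemma:Schar:2}$\subseteq$\eqref{lemma:Schar:3}$=$\eqref{lemma:Schar:4}$\subseteq$\eqref{lemma:Schar:1}.
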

\begin{proof}
We will prove it by induction.
For $n=0$ and $n=1$ the definitions coincide as, other than in \eqref{lemma:Schar:1}, we get intersections of an empty collection of subfunctors of the tensor algebra functor, giving the tensor algebra functor itself.
For $n=2$ all conditions coincide.
Now let $n>2$ and assume the statement true for all $k<n$.

\eqref{lemma:Schar:1}$\subseteq$\eqref{lemma:Schar:2}: We prove that $S^n_d$ vanishes on all maps of the form $T^{k}_d(\wedge_h)_{T^{n-k-h}_d}$ for $h\ge 2$ and $0\le k\le n-h$.
For $k>0$, it vanishes by the inductive hypothesis, because $S^n_d\subseteq \Omega^1_d\circ S^{n-1}_d$, and we have $T^{k}_d(\wedge_h)_{T^{n-k-h}_d}=\Omega^1_d\circ T^{k-1}_d(\wedge_h)_{T^{n-k-h}_d}$ applied on
\begin{equation}
\Omega^1_d\circ S^{n-1}_d
=\Omega^1_d\circ\bigcap_{\substack{h\ge 2\\ 0\le k\le n-h-1}}\ker\left(T^{k-1}_d (\wedge_h)_{T^{n-k-h}_d} \right).
\end{equation}
For $k=0$, by associativity of $\wedge$ we can write $(\wedge_h)_{T^{n-h}_d}=\wedge^{2,n-2}\circ \Omega^2_d(\wedge_{h-2})_{T^{n-h}_d}\circ (\wedge_{T^{n-2}_d})$.
That is, we apply $\wedge$ on the first two components, followed by the remaining $h-2$ and by the wedge of the resulting $2$-form and $(h-2)$-form.
By definition, $S^n_d$ vanishes on $\wedge\otimes_A \id_{T^{n-2}_d}$.

\eqref{lemma:Schar:2}$\subseteq$\eqref{lemma:Schar:3}: Straightforward.

\eqref{lemma:Schar:3}$=$\eqref{lemma:Schar:4}: The only non-trivial cases occur for $k,h\ge 2$.
Using the inductive hypothesis and the fact that $T^{n-k}_d$ preserves limits (cf.\ Proposition \ref{prop:Texact}), we get
\begin{equation}
\begin{split}
\bigcap_{j=0}^{n-2}\ker\left(T^{j}_d\wedge_{T^{n-j-2}_d} \right)
&=\bigcap_{j=0}^{h-2}\ker\left(T^{j}_d\wedge_{T^{n-j-2}_d} \right) \cap \bigcap_{j=k}^{n-2}\ker\left(T^{j}_d\wedge_{T^{n-j-2}_d}\right)\\
&=\bigcap_{j=0}^{h-2}\ker\left(T^{j}_d\wedge_{T^{h-j-2}_d}\right)\circ T^{n-h}_d
\cap T^{n-k}_d \circ \bigcap_{j=0}^{n-k-2}\ker\left(T^{j}_d \wedge_{T^{n-k-2-j}_d} \right)\\
&=\left(S^h_d\circ T^{n-h}_d \right)\cap \left(T^{n-k}_d \circ S^k_d\right).
\end{split}
\end{equation}
Notice that we have also proved that for $h,k\ge 2$, the spaces $\left(S^h_d\circ T^{n-h}_d \right)\cap \left(T^{n-k}_d \circ S^k_d\right)$ coincide.

\eqref{lemma:Schar:4}$\subseteq$\eqref{lemma:Schar:1}: Again there is nothing to prove unless $h,k\ge 2$, in which case we have shown that for all such $h,k$ the intersections of the form \eqref{lemma:Schar:4} coincide, so it is enough to prove
\begin{equation}
S^n_d\subseteq\left(S^2_d\circ T^{n-2}_d \right)\cap \left(\Omega^{1}_d \circ S^{n-1}_d\right).
\end{equation}
Consider the following diagram, where the dashed map is obtained by the universal property of the kernel
\begin{equation}
\begin{tikzcd}[column sep=40pt]
S^n_d\ar[r,hookrightarrow,"\iota^n_{\wedge}"]\ar[d,dashed]&\Omega^1_d\circ S^{n-1}_d\ar[r,"\wedge_{S^{n-2}_d}"]\ar[d,hookrightarrow,"\Omega^1_d(\iota_{S^{k-1}_d})"]&\Omega^2_d\circ S^{n-2}_d\ar[d,hookrightarrow,"\Omega^2_d(\iota_{S^{k-2}_d})"]\\
S^2_d\circ T^{n-1}_d\ar[r,hookrightarrow,"(\iota^2_{\wedge})_{T^{n-2}_d}"']& T^{n}_d\ar[r,twoheadrightarrow,"\wedge_{T^{n-2}_d}"']&\Omega^2_d\circ T^{n-2}_d
\end{tikzcd}
\end{equation}
Using the fact that the horizontal maps form left exact sequences and $\Omega^2_d(\iota_{S^{k-2}_d})$ is a natural monomorphism, we can use Lemma \ref{lemma:pb} to prove that the left square is a pullback square.
Moreover, injectivity of $\Omega^{1}_d(\iota_{S^{k-1}_d})$ forces the left vertical natural transformation to be a monomorphism, and we realize $S^n_d$ as the claimed intersection, completing the proof.
\end{proof}
\begin{rmk}
The points \eqref{lemma:Schar:2} and \eqref{lemma:Schar:3} of Lemma \ref{lemma:Schar} give rise to alternative definitions of $S^n_d$ (cf.\ \cite[p.~11]{majid2023quantum}).
We chose instead the formulation of Definition \ref{def:qsf}, because it does not require extra assumptions to obtain the map $\iota^n_{\wedge}\colon S^n_d\to \Omega^1_d\circ S^{n-1}_d$.
Such a map will in fact be crucial for the treatment of Spencer $\delta$-cohomology (cf.\ §\ref{ss:Spencer}).

In order to obtain the same map for definitions \eqref{lemma:Schar:2} and \eqref{lemma:Schar:3}, one has to assume that $\Omega^1_d$ preserves the limit defining $S^{n-1}_d$, which happens, in particular, if $\Omega^1_d$ is in $\FlatA$.
\end{rmk}

The following result explores the behavior of the functor $S^n_d$ with limits and colimits.
\begin{prop}\label{prop:exactnessSn}
The functor $S^n_d$ preserves flat colimits of flat modules.

If $\Omega^1_d$ and $\Omega^2_d$ are flat in $\ModA$, the functor $S^n_d$ is left exact.
\end{prop}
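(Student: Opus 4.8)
The plan is to induct on $n$ in both assertions, using the description from Definition \ref{def:qsf} of $S^n_d$ as the kernel of the natural transformation $\eta^n\colonequals\wedge_{S^{n-2}_d}\circ\Omega^1_d(\iota^{n-1}_{\wedge})\colon\Omega^1_d\circ S^{n-1}_d\to\Omega^2_d\circ S^{n-2}_d$. The base cases are immediate: $S^0_d=\id_{\AMod}$ and $S^1_d=\Omega^1_d=\Omega^1_d\otimes_A-$ are tensor functors, hence left adjoints (cf.\ Lemma \ref{lemma:derfun}), so they preserve all colimits; and if $\Omega^1_d$ is flat in $\ModA$ then $S^1_d=\Omega^1_d$ is exact by Corollary \ref{cor:Jex}, in particular left exact.

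For the colimit statement, fix a colimit along which the colimit functor is exact — in particular a filtered colimit — of a system of flat modules $E_i$ (whose colimit is then itself flat). By the inductive hypothesis $S^{n-1}_d$ and $S^{n-2}_d$ commute with this colimit, and since $\Omega^1_d\otimes_A-$ and $\Omega^2_d\otimes_A-$ preserve all colimits, so do $\Omega^1_d\circ S^{n-1}_d$ and $\Omega^2_d\circ S^{n-2}_d$. Exactness of the colimit then lets it pass through the kernel defining $S^n_d$, and the naturality of $\eta^n$ identifies the resulting map with $\eta^n_{\colim_i E_i}$; hence $\colim_i S^n_d E_i\cong S^n_d(\colim_i E_i)$, completing the induction.

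For the left-exactness statement, assume $\Omega^1_d$ and $\Omega^2_d$ are flat in $\ModA$, so that $\Omega^1_d$ (Corollary \ref{cor:Jex}) and $\Omega^2_d$ are exact functors. By induction $S^{n-1}_d$ and $S^{n-2}_d$ are left exact, and composing a left exact functor with an exact one again yields a left exact functor (an exact functor preserves all finite limits), so $\Omega^1_d\circ S^{n-1}_d$ and $\Omega^2_d\circ S^{n-2}_d$ are left exact. It then suffices to observe that the kernel of a natural transformation between left exact functors is again left exact: $S^n_d$ is such a kernel, hence a pointwise finite limit of left exact functors, and finite limits commute with finite limits, so $S^n_d$ preserves finite limits, i.e.\ is left exact. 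Concretely, applying $\Omega^1_d\circ S^{n-1}_d$ and $\Omega^2_d\circ S^{n-2}_d$ to a short exact sequence $0\to M\to N\to Q\to 0$ yields a morphism of left exact sequences, and chasing the vertical kernels produces the left exact sequence $0\to S^n_d M\to S^n_d N\to S^n_d Q$.

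All of this is routine; the only step I expect to need care is the first clause, namely pinning down exactly which colimits are meant by ``flat colimits'' and checking that such a colimit does commute with the finite limit carving out $S^n_d$, together with the naturality bookkeeping identifying $\colim_i S^n_d E_i$ with $S^n_d(\colim_i E_i)$. The small lemma that kernels of natural transformations between left exact functors remain left exact carries the genuine content of the second assertion, and I would verify it in detail.
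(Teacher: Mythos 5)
Your argument for the second clause (left exactness when $\Omega^1_d$ and $\Omega^2_d$ are flat in $\ModA$) is correct and is essentially the paper's proof: induction, exactness of $\Omega^1_d$ and $\Omega^2_d$ as functors, and the fact that the kernel of a natural transformation between left exact functors is again left exact because limits commute with limits.

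The first clause, however, has a genuine gap. You interpret ``flat colimits of flat modules'' as colimits along which the colimit functor is exact (e.g.\ filtered colimits), and your argument relies on that exactness to push $\colim_I$ through the kernel defining $S^n_d$. But the intended meaning (consistent with how the same phrase is used in Proposition \ref{prop:hJexact}) is an \emph{arbitrary} colimit of a diagram $F_\bullet\colon I\to\AFlat$ whose colimit object $F=\colim_I F_\bullet$ happens to be flat; nothing forces $I$ to be filtered, and for a general shape $\colim_I$ is only right exact and does not commute with the kernel carving out $S^n_d$ from $\Omega^1_d\circ S^{n-1}_d$. The step you flagged as needing care is precisely where the argument breaks. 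The missing ingredient is Proposition \ref{prop:Stensorcomparison}: restricted to $\AFlat$, the functor $S^n_d$ is naturally isomorphic to the tensor functor $S^n_d(A)\otimes_A -$, which, being a left adjoint, preserves \emph{all} colimits. One then computes
\begin{equation*}
\colim_I S^n_d(F_\bullet)\cong\colim_I \bigl(S^n_d(A)\otimes_A F_\bullet\bigr)\cong S^n_d(A)\otimes_A \colim_I F_\bullet\cong S^n_d(F),
\end{equation*}
where the first and last identifications use flatness of the $F_i$ and of $F$ respectively to invoke the natural isomorphism $\tau^n$. No induction and no exactness of the colimit functor are needed; the tensor representation does all the work.
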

\begin{proof}
Consider a diagram $F_\bullet \colon I\to \AFlat$ and let $F=\colim_{I}F_{\bullet}$ be flat.
By Proposition \ref{prop:Stensorcomparison}, on $\AFlat$, the functor $S^n_d$ is naturally isomorphic to $S^n_d\otimes_A -$, which is right exact.
Therefore, 
\begin{equation}
\colim_I S^n_d (F_\bullet)
=\colim_I S^n_d\otimes_A F_\bullet
=S^n_d\otimes_A \colim_I F_\bullet
=S^n_d \otimes_A F
=S^n_d(F).
\end{equation}

If $\Omega^1_d$ and $\Omega^2_d$ are flat in $\ModA$, then we prove that $S^n_d$ is left exact by induction on $n\ge 0$.
Exactness of $S^0_d=\id_{\AMod}$ and $S^1_d=\Omega^1_d$ is straightforward.
For $n>2$, we assume by inductive hypothesis that $S^{n-1}_d$ and $S^{n-2}_d$ are left exact.
Since $\Omega^2_d$ is also exact, $S^n_d$ is built as a limit of left exact functors, namely the kernel of $\Omega^1_d\circ S^{n-1}_d\to \Omega^2_d\circ S^{n-2}_d$.
Since limits commute with limits, $S^n_d$ is also left exact.
\end{proof}

\subsubsection{Minimal functor of symmetric forms}
\label{sss:minfqsf}
For every first order differential calculus $\Omega^1_d$ on $A$ there is a canonical choice of functor of symmetric forms, specifically the one corresponding to the maximal exterior algebra $\Omega^\bullet_{d,\text{max}}$ (cf.\ Remark \ref{rmk:maximalextalg}).
\begin{defi}
We call this functor the \emph{minimal functor of symmetric forms}, denoted by $S^\bullet_{d,\text{min}}$.
\end{defi}
The adjective minimal is justified by the following universal property.
\begin{prop}[Minimality of the minimal functor of symmetric forms]
	Let $A$ be a $\bk$-algebra, let $\Omega^\bullet_d$ be an exterior algebra over $A$ and let $\Omega^\bullet_{d,\text{max}}$ be the maximal exterior algebra generated by $\Omega^1_d$.
Let $S_d$ be the functor of symmetric forms associated to $\Omega^\bullet_d$, and let $S_{d,\text{min}}$ be the minimal functor of symmetric forms associated to $\Omega^1_d$.
There is a unique natural transformation $\nu\colon S^\bullet_{d,\text{min}}\to S^\bullet_d$ of graded functors of type $\AMod\to \AMod$ which commutes with the inclusion maps, i.e.\ the following diagrams commute for all $n$
\begin{equation}\label{eq:minqsffactornat}
\begin{tikzcd}
S^n_{d,\text{min}}\ar[r,hookrightarrow,"\iota^n_{\wedge_\text{max}}"]\ar[d,"\nu^n"']&\Omega^1_d\circ S^{n-1}_{d,\textrm{min}}\ar[d,"\Omega^1_d(\nu^{n-1})"]&&
S^n_{d,\text{min}}\ar[r,"\iota_{S^n_{d,\text{min}}}"]\ar[d,"\nu^n"']&T^{n}_{d}\ar[d,equals]\\
S^n_{d}\ar[r,hookrightarrow,"\iota^n_{\wedge}"]&\Omega^1_d\circ S^{n-1}_{d}&&
S^n_{d}\ar[r,"\iota_{S^n_{d}}"]&T^{n}_{d}
\end{tikzcd}
\end{equation}

If $\Omega^1_d$ is flat in $\ModA$, then $\nu$ is also a monomorphism.
\end{prop}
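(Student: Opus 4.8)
The plan is to construct $\nu$ one degree at a time by induction on $n$, using the graded algebra epimorphism $q^{\bullet}\colon \Omega^{\bullet}_{d,\text{max}}\twoheadrightarrow\Omega^{\bullet}_d$ of Remark~\ref{rmk:maximalextalg}. Its components $q^n\colon \Omega^n_{d,\text{max}}\to\Omega^n_d$ satisfy $q^0=\id_A$ and $q^1=\id_{\Omega^1_d}$, they intertwine the differentials, and they are multiplicative; in particular, since $q^1=\id$, the two degree-two wedge maps out of $T^2_d$ are related by $\wedge=q^2\circ\wedge_\text{max}$ as natural transformations $T^2_d\to\Omega^2_d$. For $n=0,1$ the functors $S^n_{d,\text{min}}$ and $S^n_d$ coincide (each depends only on $\Omega^1_d$), so I set $\nu^0=\id_{\AMod}$ and $\nu^1=\id_{\Omega^1_d}$, and both squares in \eqref{eq:minqsffactornat} commute trivially.

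For the inductive step, assume $\nu^{n-1}$ and $\nu^{n-2}$ have been built with the asserted properties. Since $\iota^n_{\wedge}\colon S^n_d\hookrightarrow\Omega^1_d\circ S^{n-1}_d$ is, by Definition~\ref{def:qsf}, the kernel of $\wedge_{S^{n-2}_d}\circ\Omega^1_d(\iota^{n-1}_{\wedge})$, it suffices to check that $\Omega^1_d(\nu^{n-1})\circ\iota^n_{\wedge_\text{max}}\colon S^n_{d,\text{min}}\to\Omega^1_d\circ S^{n-1}_d$ is annihilated by $\wedge_{S^{n-2}_d}\circ\Omega^1_d(\iota^{n-1}_{\wedge})$; the induced factorization is then $\nu^n$, and it is a natural transformation of functors $\AMod\to\AMod$ because the kernel is formed pointwise and the factorization is natural. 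The vanishing is a short chain of rewrites:
\begin{align*}
\wedge_{S^{n-2}_d}\circ\Omega^1_d(\iota^{n-1}_{\wedge})\circ\Omega^1_d(\nu^{n-1})\circ\iota^n_{\wedge_\text{max}}
&=\wedge_{S^{n-2}_d}\circ T^2_d(\nu^{n-2})\circ\Omega^1_d(\iota^{n-1}_{\wedge_\text{max}})\circ\iota^n_{\wedge_\text{max}}\\
&=\Omega^2_d(\nu^{n-2})\circ\wedge_{S^{n-2}_{d,\text{min}}}\circ\Omega^1_d(\iota^{n-1}_{\wedge_\text{max}})\circ\iota^n_{\wedge_\text{max}}\\
&=\Omega^2_d(\nu^{n-2})\circ q^2_{S^{n-2}_{d,\text{min}}}\circ(\wedge_\text{max})_{S^{n-2}_{d,\text{min}}}\circ\Omega^1_d(\iota^{n-1}_{\wedge_\text{max}})\circ\iota^n_{\wedge_\text{max}}\\
&=0.
\end{align*}
Here the first equality is the degree-$(n-1)$ left square of \eqref{eq:minqsffactornat} (together with $T^2_d=\Omega^1_d\circ\Omega^1_d$), the second is naturality of $\wedge\colon T^2_d\to\Omega^2_d$ along $\nu^{n-2}$, the third is $\wedge=q^2\circ\wedge_\text{max}$ suitably whiskered, and the last is the defining relation of $S^n_{d,\text{min}}$, namely $(\wedge_\text{max})_{S^{n-2}_{d,\text{min}}}\circ\Omega^1_d(\iota^{n-1}_{\wedge_\text{max}})\circ\iota^n_{\wedge_\text{max}}=0$.

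Uniqueness of $\nu^n$, and hence of $\nu$, follows by induction from the left square alone, because $\iota^n_{\wedge}$ is monic. For the right square of \eqref{eq:minqsffactornat} I will use the recursion $\iota_{S^n_d}=\Omega^1_d(\iota_{S^{n-1}_d})\circ\iota^n_{\wedge}$, a direct unwinding of the definition of $\iota_{S^n_d}$ (and likewise for the maximal exterior algebra): then
\[
\iota_{S^n_d}\circ\nu^n=\Omega^1_d(\iota_{S^{n-1}_d}\circ\nu^{n-1})\circ\iota^n_{\wedge_\text{max}}=\Omega^1_d(\iota_{S^{n-1}_{d,\text{min}}})\circ\iota^n_{\wedge_\text{max}}=\iota_{S^n_{d,\text{min}}},
\]
using the left square just proved and the degree-$(n-1)$ right square. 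Assembling the $\nu^n$ yields the graded natural transformation $\nu=\bigoplus_{n\ge 0}\nu^n$. Finally, if $\Omega^1_d$ is flat in $\ModA$ then Lemma~\ref{lemma:pbqsf}, applied to both $\Omega^{\bullet}_d$ and $\Omega^{\bullet}_{d,\text{max}}$ (which share the first-order calculus $\Omega^1_d$), shows that $\iota_{S^n_d}$ and $\iota_{S^n_{d,\text{min}}}$ are monomorphisms; from $\iota_{S^n_d}\circ\nu^n=\iota_{S^n_{d,\text{min}}}$ with monic right-hand side it follows that each $\nu^n$, hence $\nu$, is a monomorphism. The one genuine difficulty is organisational: keeping the two wedge products, the inclusions $\iota^{\bullet}_{\wedge}$ versus $\iota^{\bullet}_{\wedge_\text{max}}$, and the various whiskerings by $\Omega^1_d$ and $T^2_d$ straight, so that the single compatibility $\wedge=q^2\circ\wedge_\text{max}$ enters at exactly the right spot; no homological input beyond Lemma~\ref{lemma:pbqsf} is required.
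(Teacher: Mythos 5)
Your proof is correct and follows essentially the same route as the paper: induction on $n$, with $\nu^n$ produced by the universal property of the kernel defining $S^n_d$ after checking the required vanishing via the inductive left square, naturality of $\wedge$, and the factorization $\wedge=q^2\circ\wedge_{\mathrm{max}}$ coming from the universal property of the maximal exterior algebra, with uniqueness from $\iota^n_{\wedge}$ being monic and the right square obtained by composing the left ones. The only cosmetic difference is the monomorphism step, where the paper reads it off directly from the left square (the composite $\iota^n_{\wedge}\circ\nu^n=\Omega^1_d(\nu^{n-1})\circ\iota^n_{\wedge_{\mathrm{max}}}$ is monic when $\Omega^1_d$ is flat in $\ModA$) instead of passing through $\iota_{S^n_d}$ and Lemma \ref{lemma:pbqsf}.
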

\begin{proof}
For $n=0,1$ the statement follows from the fact that $\nu^0=\id_{A}$ and $\nu^1=\id_{\Omega^1_d}$ are the only such maps.
For $n\ge 2$ we proceed by induction on $n$.
In particular, for all $m<n$ we have $\nu^{m}$ satisfying \eqref{eq:minqsffactornat}.
The two central squares in the following diagram commute by inductive hypothesis, functoriality of $\Omega^1_d$, and naturality of $\wedge_{\text{max}}$.
The bottom commutativity follows from the universal property of the maximal exterior algebra.
\begin{equation}\label{diag:canopener}
\begin{tikzcd}
S^n_{d,\text{min}}\ar[r,hookrightarrow,"\iota^n_{\wedge_\text{max}}"]\ar[d,dashed,"\nu^n"']&[20pt]\Omega^1_d\circ S^{n-1}_{d,\textrm{min}}\ar[d,"\Omega^1_d(\nu^{n-1})"]\ar[r,"\Omega^1_d(\iota^{n-1}_{\wedge_{\text{max}}})"]&[30pt]\Omega^1_d\circ \Omega^1_d\circ S^{n-2}_{d,\textrm{min}}\ar[d,"\Omega^1_d(\Omega^1_d(\nu^{n-2}))"']\ar[r,twoheadrightarrow,"(\wedge_{\text{max}})_{S^{n-2}_{d,\text{min}}}"]&[40pt]\Omega^2_{d,\text{max}}\circ S^{n-2}_{d,\textrm{min}}\ar[d,"\Omega^2_{d,\text{max}}(\nu^{n-2})"]\\
S^n_{d}\ar[r,hookrightarrow,"\iota^n_{\wedge}"']&\Omega^1_d\circ S^{n-1}_{d}\ar[r,"\Omega^1_d(\iota^{n-1}_{\wedge})"']&\Omega^1_d\circ \Omega^1_d\circ S^{n-2}_{d}\ar[r,twoheadrightarrow,"(\wedge_{\text{max}})_{S^{n-2}_{d}}"]\ar[rr,bend right=10pt,twoheadrightarrow,"\wedge_{S^{n-2}_d}"']&\Omega^2_{d,\text{max}}\circ S^{n-2}_d\ar[r,twoheadrightarrow]&\Omega^2_d\circ S^{n-2}_d
\end{tikzcd}
\end{equation}
The existence and uniqueness of $\nu^n$ then follows from the universal property of the kernel.
This automatically gives the left diagram in \eqref{eq:minqsffactornat}.
By composing diagrams of this form we also obtain the right one.

If $\Omega^1_d$ is flat in $\ModA$, it follows that $\Omega^1_d$ maps monomorphisms to monomorphisms.
If by inductive hypothesis $\nu^{n-1}$ is a mono, then also $\nu^n$ is a mono by the commutativity of \eqref{eq:minqsffactornat}.
\end{proof}

The second grade of the minimal functor of symmetric forms can be described explicitly.
We know by construction (cf.\ Remark \ref{rmk:maximalextalg}) that $S^2_{d,\text{min}}$ is the $A$-subbimodule of $T^2_d$ generated by elements of the form $\sum_i da_i\otimes_A db_i$ for $\sum_i a_i\otimes b_i\in N_d$.
By the Leibniz property and the properties defining $N_d$, we know that $d\otimes d\colon N_d\to S^2_{d,\text{min}}$ is $A$-bilinear.
Thus we obtain that
\begin{equation}\label{eq:S2mindefNd}
S^2_{d,\text{min}}=\left\{\sum_i da_i\otimes_A db_i\middle| \sum_i a_i\otimes b_i\in N_d\right\}.
\end{equation}
Furthermore, as left $A$-modules, right $A$-modules, or $A$-bimodules, $d\otimes d$ maps generators of $N_d$ to generators of $S^2_{d,\text{min}}$.
For $E$ in $\AMod$, by Proposition \ref{prop:Stensorcomparison}, we know that $\tau^2_{d,E}\colon S^2_{d,\text{min}}\otimes_A E\to S^2_{d,\text{min}}(E)$ is an epimorphism.
It follows that
\begin{equation}\label{eq:S2mindd}
S^2_{d,\text{min}}(E)=\left\{\sum_{i,j} da_{i,j}\otimes_A db_{i,j}\otimes_A e_j\in T^2_d(E)\middle| \forall j, \sum_i a_{i,j}\otimes b_{i,j}\in N_d, e_j\in E\right\}.
\end{equation}
\subsection{Spencer $\delta$-cohomology}
\label{ss:Spencer}
One of the fundamental tools in the classical formal theory of overdetermined PDE is the Spencer $\delta$-complex \cite{Spencer}.
This is a natural complex which exists on any manifold, and outside the context of a non-trivial differential equation, it is exact.
In our setting, we can build an analogous complex.
For all $k,h\ge 0$, consider the functor $\Omega^k_d\circ S^h_d$, and define $\delta^{h,k}_d$ as the following composition
\begin{equation}\label{eq:defdeltahk}
\begin{tikzcd}
\Omega^k_d\circ S^{h}_d\ar[r,"\Omega^k_d(\iota^h_{\wedge})"]\ar[rr,bend right=10pt,"\delta^{h,k}_d"']&[40pt]\Omega^{k}_d\circ\Omega^1_d\circ S^{h-1}_d\ar[r,twoheadrightarrow,"(-1)^k\wedge^{k,1}_{S^{h-1}_d}"]&[40pt]\Omega^{k+1}_d\circ S^{h-1}_d
\end{tikzcd}
\end{equation}
Notice that for all $h,k$, we have the following commuting diagram
\begin{equation}\label{diag:keel}
\begin{tikzcd}[row sep=30pt]
\Omega^k_d\circ S^h_d\ar[r,"\Omega^k_d(\iota^h_{\wedge})"]\ar[dr,"\delta^{h,k}_d"']&[50pt]\Omega^k_d\circ\Omega^1_d\circ S^{h-1}_d\ar[r,"\Omega^k_d\circ\Omega^1_d(\iota^h_{\wedge})"]\ar[d,twoheadrightarrow,"(-1)^{k}\wedge^{k,1}_{S^{h-1}_d}"]&[25pt]\Omega^k_d\circ\Omega^1_d\circ\Omega^1_d\circ S^{h-2}_d\ar[d,twoheadrightarrow,"(-1)^{k}\wedge^{k,1}_{\Omega^1_d\circ S^{h-2}_d}"']\ar[r,twoheadrightarrow,"\Omega^k_d \wedge_{S^{h-2}_d}"]&[15pt]\Omega^k_d\circ\Omega^2_d\circ S^{h-2}_d\ar[ddl,twoheadrightarrow,"-\wedge^{k,2}_{S^{h-2}_d}"]\\
&\Omega^{k+1}_d\circ S^{h-1}_d\ar[dr,"\delta^{h-1,k+1}_d"']\ar[r,"\Omega^{k+1}_d(\iota^{h-1}_{\wedge})"]&\Omega^{k+1}_d\circ\Omega^1_d\circ S^{h-2}_d\ar[d,twoheadrightarrow,"(-1)^{k+1}\wedge^{k+1,1}_{S^{h-2}_d}"']\\
&&\Omega^{k+2}_d\circ S^{h-2}_d&
\end{tikzcd}
\end{equation}
Here we consider $S^{-1}_d = S^{-2}_d = 0$, the constant functor with value $0$.
The two triangular diagrams on the left commute by definition of $\delta_d$, the central square diagram commutes by naturality of $\wedge^{k,1}$ with respect to $\iota^h_{\wedge}$, and the remain diagram on the right commutes by the associativity of $\wedge$.
Moreover, the top row composition vanishes, as it is obtained by applying $\Omega^k_d$ to the left exact sequence defining $S^h_d$.

We thus obtain a complex in the category of functors of type $\AMod\to\AMod$.
\begin{equation}\label{cpx:Spencer}
\begin{tikzcd}
0\ar[r]&S^n_d\ar[r,"\delta^{n,0}_d"]&\Omega^1_d\circ S^{n-1}_d\ar[r,"\delta^{n-1,1}_d"]&\Omega^2_d\circ S^{n-2}_d\ar[r,"\delta^{n-2,2}_d"]&\Omega^3_d\circ S^{n-3}_d\ar[r,"\delta^{n-3,3}_d"]&\cdots
\end{tikzcd}
\end{equation}
\begin{defi}[Spencer $\delta$-cohomology]
We call the complex \eqref{cpx:Spencer} the \emph{Spencer $\delta$-complex}, its cohomology the \emph{Spencer $\delta$-cohomology}, and we denote the cohomology at $\Omega^k_d\circ S^h_d$ by $H^{h,k}_{\delta_d}$.
\end{defi}
\begin{rmk}
Since the objects appearing in the Spencer complex are functors and the differentials are natural transformations, the cohomology is also functorial.
\end{rmk}
\begin{prop}\label{prop:lowspencercohom}
Given a $\bk$-algebra $A$ endowed with an exterior algebra $\Omega^{\bullet}_d$, we have:
\begin{enumerate}
\item $H^{n,0}_{\delta_d}=0$ for all $n\ge 1$;
\item $H^{n,1}_{\delta_d}=0$ for all $n\ge 0$;
\item $H^{n,2}_{\delta_d}=0$ for $n\le 0$.
\end{enumerate}
\end{prop}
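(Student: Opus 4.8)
The plan is to prove all three vanishings by directly unwinding the definitions of the Spencer differentials $\delta^{h,k}$ against the defining kernel inclusions $\iota^n_\wedge$ of Definition~\ref{def:qsf} and the wedge epimorphism $\wedge\colon T^2_d\twoheadrightarrow\Omega^2_d$. No homological algebra beyond the construction of $S^\bullet_d$ is needed; the only real care required is the bookkeeping of the conventions $\Omega^0_d=\id_{\AMod}$, $S^{-1}_d=S^{-2}_d=0$, and the base cases $S^0_d=\id_{\AMod}$, $S^1_d=\Omega^1_d$ with $\iota^1_\wedge=\id_{\Omega^1_d}$.

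For (i), $H^{n,0}$ is the cohomology of \eqref{cpx:Spencer} at its leftmost nonzero term $S^n_d=\Omega^0_d\circ S^n_d$, hence $H^{n,0}=\ker(\delta^{n,0})$. Since $\Omega^0_d=\id_{\AMod}$, the factor $\Omega^0_d(\iota^n_\wedge)$ is just $\iota^n_\wedge$ and $\wedge^{0,1}_{S^{n-1}_d}$ is the canonical natural isomorphism $\Omega^0_d\circ\Omega^1_d\circ S^{n-1}_d\cong\Omega^1_d\circ S^{n-1}_d$; thus $\delta^{n,0}$ agrees, up to this isomorphism, with $\iota^n_\wedge$, which is a monomorphism for every $n\ge 1$ (the kernel inclusion of Definition~\ref{def:qsf} when $n\ge 2$, the identity of $\Omega^1_d$ when $n=1$). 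Therefore $\ker(\delta^{n,0})=0$.

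For (ii), $H^{n,1}$ is the cohomology at $\Omega^1_d\circ S^n_d$, that is $\ker(\delta^{n,1})/\im(\delta^{n+1,0})$. As in (i), $\delta^{n+1,0}$ coincides up to canonical identification with $\iota^{n+1}_\wedge$, so $\im(\delta^{n+1,0})=\im(\iota^{n+1}_\wedge)$. When $n\ge 1$, Definition~\ref{def:qsf} exhibits $\iota^{n+1}_\wedge$ as the kernel inclusion of $\wedge_{S^{n-1}_d}\circ\Omega^1_d(\iota^n_\wedge)$, and $\delta^{n,1}$ is exactly $-1$ times that natural transformation; hence $\ker(\delta^{n,1})=\im(\iota^{n+1}_\wedge)=\im(\delta^{n+1,0})$ and $H^{n,1}=0$. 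When $n=0$, $\delta^{0,1}$ maps into $\Omega^2_d\circ S^{-1}_d=0$, so $\ker(\delta^{0,1})=\Omega^1_d\circ S^0_d$, while $\delta^{1,0}=\iota^1_\wedge=\id_{\Omega^1_d}$ is an isomorphism, so $\im(\delta^{1,0})=\Omega^1_d\circ S^0_d$, and again $H^{0,1}=0$.

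For (iii), $H^{n,2}$ is the cohomology at $\Omega^2_d\circ S^n_d$. If $n\le -1$, then $S^n_d=0$, so the ambient term vanishes and $H^{n,2}=0$ trivially. If $n=0$, the ambient term is $\Omega^2_d\circ S^0_d=\Omega^2_d$, the outgoing differential $\delta^{0,2}$ maps into $\Omega^3_d\circ S^{-1}_d=0$, and the incoming differential $\delta^{1,1}=-\wedge^{1,1}_{S^0_d}\circ\Omega^1_d(\iota^1_\wedge)$ equals $-\wedge\colon T^2_d\twoheadrightarrow\Omega^2_d$ (using $S^0_d=\id_{\AMod}$, $\iota^1_\wedge=\id_{\Omega^1_d}$, and $\Omega^1_d\circ\Omega^1_d=T^2_d$). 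Since $\wedge$ is a natural epimorphism by the surjectivity axiom of the exterior algebra, $\im(\delta^{1,1})=\Omega^2_d$, hence $H^{0,2}=0$. In all three parts the computation is immediate once the indexing and the degenerate cases are pinned down, so I do not expect any substantial obstacle; the one point worth stating carefully is the identification of each relevant $\delta$-map with a kernel inclusion from Definition~\ref{def:qsf} or the wedge surjection.
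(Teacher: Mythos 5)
Your proof is correct and follows essentially the same route as the paper's: both arguments reduce to the observation that the left exact sequence $0\to S^{n}_d\to\Omega^1_d\circ S^{n-1}_d\to\Omega^2_d\circ S^{n-2}_d$ defining $S^n_d$ as a kernel is, up to sign and the canonical identification $\Omega^0_d=\id$, the beginning of the Spencer complex, which gives (i) and (ii) at once, while (iii) comes from the degenerate cases $S^{-1}_d=S^{-2}_d=0$ and the surjectivity of $\wedge\colon T^2_d\twoheadrightarrow\Omega^2_d$ (the sequence \eqref{es:wedge}). Your write-up merely unpacks the same identifications more explicitly, including the correct handling of the base cases $n=0,1$.
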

\begin{proof}\
\begin{enumerate}
\item For $n=1$, the Spencer complex is
\begin{equation}
\begin{tikzcd}[column sep=30pt]
0\ar[r]&S^1_d\arrow{r}{\delta^{1,0}_d}[swap]{\sim}&\Omega^1_d\circ S^0_d\ar[r]&0
\end{tikzcd}
\end{equation}
which is exact.

For $n\ge 2$, consider the left exact sequence defining $S^{n}_d$, which up to sign can be written as
\begin{equation}
\begin{tikzcd}[column sep=30pt]
0\ar[r]&S^n_d\ar[r,hookrightarrow,"\delta^{n,0}_d"]&\Omega^1_d\circ S^{n-1}_d\ar[r,"\delta^{n-1,1}_d"]&\Omega^2_d\circ S^{n-2}_d
\end{tikzcd}
\end{equation}
This is the beginning of the Spencer sequence, implying the vanishing of $H^{n,0}_{\delta_d}$ and $H^{n-1,1}_{\delta_d}$.
\item Apply the same reasoning but for $S^{n+1}_d$.\qedhere
\item In position $(n,0)$, the Spencer complex vanishes for $n<0$.
For $n=2$, the Spencer complex becomes \eqref{es:wedge}, which is short exact.
\end{enumerate}
\end{proof}

With the following result, we observe that if we restrict our functors to the subcategory $\AFlat$, then the vanishing of the Spencer $\delta$-cohomology becomes a condition on the exterior algebra $\Omega^\bullet_d$.
\begin{prop}\label{prop:Spencertensorrep}
Let $\mathcal{S}^\bullet$ be the Spencer complex \eqref{cpx:Spencer}.
For all $E$ in $\AFlat$, we have $\mathcal{S}^\bullet(E)\cong\mathcal{S}^\bullet(A)\otimes_A E$.
Moreover,
\begin{equation}\label{eq:scohomtensor}
H^{\bullet,\bullet}_{\delta_d}(E)\cong H^{\bullet,\bullet}_{\delta_d}(A)\otimes_A E.
\end{equation}

Thus, the Spencer complex seen in the category of functors $\AFlat\to \AMod$ is naturally isomorphic to the functor $\mathcal{S}^\bullet(A)\otimes_A -$.
Here, the Spencer $\delta$-cohomology vanishes if it vanishes on $A$.
\end{prop}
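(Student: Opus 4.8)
The plan is to reduce the statement to the fact that, for $E$ flat, the Spencer complex commutes with the functor $-\otimes_A E$, and then to exploit exactness of that functor. Two observations make this work: first, each functor $\Omega^k_d$ is literally the tensor functor $\Omega^k_d\otimes_A-$, and hence commutes with right tensoring by anything; second, by Proposition \ref{prop:Stensorcomparison}, on the subcategory $\AFlat$ the functor $S^h_d$ is naturally isomorphic to the tensor functor $S^h_d(A)\otimes_A-$ via the maps $\tau^h$.

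First, I would identify the terms of the complex. A typical term of \eqref{cpx:Spencer} is $\Omega^k_d\circ S^h_d$, and for $E$ in $\AFlat$ one has the chain of natural identifications
\[
(\Omega^k_d\circ S^h_d)(A)\otimes_A E=\Omega^k_d\otimes_A S^h_d(A)\otimes_A E\xrightarrow{\id_{\Omega^k_d}\otimes_A\tau^h_E}\Omega^k_d\otimes_A S^h_d(E)=(\Omega^k_d\circ S^h_d)(E),
\]
where the outer equalities express the definition of the tensor functor $\Omega^k_d$ together with associativity of $\otimes_A$, and the middle arrow is an isomorphism by Proposition \ref{prop:Stensorcomparison} since $E$ is flat. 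Naturality in $E$ is inherited from that of $\tau^h$.

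Next, I would check that these term isomorphisms intertwine the Spencer differentials, i.e.\ that they form a morphism of complexes. Writing $\delta^{h,k}=(-1)^k\wedge^{k,1}_{S^{h-1}_d}\circ\Omega^k_d(\iota^h_\wedge)$, the first identity in \eqref{eq:taucompatible}, namely $\iota^h_\wedge\circ\tau^h=\Omega^1_d(\tau^{h-1})\circ(\iota^h_{\wedge,A}\otimes_A\id)$, says precisely that under the $\tau$-identifications $(\iota^h_\wedge)_E$ becomes $\iota^h_{\wedge,A}\otimes_A\id_E$, and applying the tensor functor $\Omega^k_d$ preserves this. Since $\wedge^{k,1}$ is a tensor-natural transformation, $\wedge^{k,1}_{S^{h-1}_d(E)}=\wedge^{k,1}\otimes_A\id_{S^{h-1}_d(E)}$, which under $\tau^{h-1}_E$ becomes $\wedge^{k,1}_{S^{h-1}_d(A)}\otimes_A\id_E$; composing, $\delta^{h,k}_E$ is carried onto $\delta^{h,k}_A\otimes_A\id_E$. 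This yields an isomorphism of complexes $\mathcal S^\bullet(E)\cong\mathcal S^\bullet(A)\otimes_A E$, natural in $E\in\AFlat$ --- equivalently, the Spencer complex restricted to $\AFlat$ is naturally isomorphic to $\mathcal S^\bullet(A)\otimes_A-$.

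Finally, since $E$ is flat in $\AMod$, the functor $-\otimes_A E$ on right $A$-modules is exact and hence commutes with the formation of homology; applied to $\mathcal S^\bullet(A)$ this gives $H^{h,k}(\mathcal S^\bullet(A)\otimes_A E)\cong H^{h,k}(A)\otimes_A E$, and together with the previous step this is \eqref{eq:scohomtensor}. In particular, $H^{\bullet,\bullet}(A)=0$ forces $H^{\bullet,\bullet}(E)\cong H^{\bullet,\bullet}(A)\otimes_A E=0$ for every flat $E$. The step requiring the most care is the middle one --- propagating the family $\tau^\bullet$ through the definition of $\delta^{h,k}$ --- but this is pure bookkeeping powered by \eqref{eq:taucompatible} and the tensoriality of $\wedge$, not a genuine obstruction.
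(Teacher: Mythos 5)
Your proposal is correct and follows essentially the same route as the paper: invoke Proposition \ref{prop:Stensorcomparison} to replace $S^h_d$ by the tensor functor $S^h_d(A)\otimes_A-$ on $\AFlat$, observe that the $\Omega^k_d$ and the maps defining the Spencer complex are compatible with these tensor representations, and then use exactness of $-\otimes_A E$ to pass to cohomology. The paper states the compatibility of the differentials in one line, whereas you verify it explicitly from \eqref{eq:taucompatible} and the tensoriality of $\wedge^{k,1}$; this is the same argument, just with the bookkeeping written out.
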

\begin{proof}
Under these assumptions, the functor of symmetric forms can be seen as a tensor functor by Proposition \ref{prop:Stensorcomparison}.
The functors $\Omega^k_d$ are, by definition, tensor functors, and the maps used to define the Spencer complex are compatible with the tensor representations of the functors involved.
Thus we have $\mathcal{S}^\bullet(E)\cong\mathcal{S}^\bullet(A)\otimes_A E$.
The exactness of $-\otimes_A E$ yields \eqref{eq:scohomtensor}.
\end{proof}

\section{The $2$-jet functor}

\subsection{Construction of the $2$-jet module}

We build the (holonomic) $2$-jet module with the aim that the following sequence is exact
\begin{equation}\label{es:2jet}
\begin{tikzcd}
0 \arrow[r] &S_d^2 E \arrow[r,hookrightarrow,"\iota^2_{d,E}"]& J^2_d E \arrow[r,twoheadrightarrow,"\pi^{2,1}_{d,E}"]& J^1_d E \arrow[r]&0.
\end{tikzcd}
\end{equation}
Moreover, it is reasonable to assume (cf.\ the classical case in e.g.\ \cite{Spencer}) that the short exact sequence \eqref{es:2jet} maps naturally to the nonholonomic $2$-jet exact sequence as follows.
\begin{equation}\label{es:2in(2)}
\begin{tikzcd}[column sep=50pt]
0 \arrow[r] &S^2_d E \arrow[d,"\Omega^1_{d}(\iota^1_{d,E})\circ\iota^2_{\wedge,E}"']\arrow[r,hookrightarrow,"\iota^2_{d,E}"]& J^2_d E\ar[d,"l^2_{d,E}"] \arrow[r,twoheadrightarrow,"\pi^{2,1}_{d,E}"]& J^1_d E\arrow[d,equals] \arrow[r]&0\\
0 \arrow[r]& \Omega^1_{d}(J^1_d E) \arrow[r,hookrightarrow,"\iota^1_{d,J^1_d E}"]& J^{(2)}_d E \arrow[r,twoheadrightarrow,"\pi_{d,J^1_d E}^{1,0}"]& J^1_d E\arrow[r]&0
\end{tikzcd}
\end{equation}
In fact, we will show this mapping is an inclusion, and it factors through the semiholonomic $2$-jet exact sequence (cf.\ Proposition \ref{prop:semiholcpxfact}).
\begin{rmk}\label{rmk:TorOE}
If we assume $\Tor^A_1(\Omega^1_d,E)=0$, the map $\Omega^1_{d}(\iota^1_{d,E})\circ\iota^2_{\wedge,E}$ is a mono, as the functor $\Omega^1_d$ preserves the exactness of the $1$-jet exact sequence \eqref{es:jetd}.
Given this assumption, the existence of any map $l^2_{d,E}\colon J^2_d E\to J^{(2)}_d E$ commuting in \eqref{es:2in(2)} automatically implies that $l^2_{d,E}$ is an inclusion by the snake lemma.
This condition holds, in particular, if $\Omega^1_d$ is flat in $\ModA$ or $E$ flat in $\AMod$.
\end{rmk}
\subsubsection{An explicit presentation of the $2$-jet module}
In this section, we assume that $\Tor^A_1(\Omega^1_d,E)=0$.
In the classical case, the inclusion $l^2_{d,E}\colon J^2_d E\hookrightarrow J^{(2)}_d E$ also respects the jet prolongation \cite{GoldschmidtII}, meaning that
\begin{equation}\label{eq:prol2}
l^2_{d,E}\circ j^2_{d,E}
=j^{(2)}_{d,E}
=j^1_{d,J^1_d E}\circ j^1_{d,E}.
\end{equation}
We thus get that $J^2_d E$ must contain the left $A$-submodule of $J^{(2)}_d E$ generated by $\iota^2_{d,E}(S^2_d E)$ and $j^{(2)}_{d,E}(E)$, for brevity denoted $S^2_d E+Aj^2_{d}(E)\subseteq J^{(2)}_d E$.
\begin{rmk}\label{rmk:incl2Jdiff}
	Equation \eqref{eq:prol2} also shows that the inclusion $l^2_{d,E}$ is the left $A$-linear lift of the second order differential operator (cf.\ §\ref{s:differentialoperators}) $j^{(2)}_{d,E}$, i.e.\ $l^2_d=\widetilde{j^{(2)}_{d}}$.
\end{rmk}

\begin{prop}\label{prop:torsion2jetses}
If $\Tor^A_1(\Omega^1_d,E)=0$, the left $A$-module $S^2_d E+Aj^2_{d}(E)\subseteq J^{(2)}_dE$ satisfies the $2$-jet short exact sequence \eqref{es:2jet} and fits in \eqref{es:2in(2)} in place of $J^2_d E$.
\end{prop}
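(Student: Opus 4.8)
\emph{Strategy.} Write $\bar{S}\colonequals\iota^{1}_{d,J^{1}_{d}E}\bigl(\Omega^{1}_{d}(\iota^{1}_{d,E})(\iota^{2}_{\wedge,E}(S^{2}_{d}E))\bigr)\subseteq J^{(2)}_{d}E$, so that by construction $M=S^{2}_{d}E+Aj^{2}_{d}(E)=\bar{S}+Aj^{(2)}_{d,E}(E)$. The plan is to take $l^{2}_{d,E}$ to be the inclusion $M\hookrightarrow J^{(2)}_{d}E$, to let $\iota^{2}_{d,E}$ be $\iota^{1}_{d,J^{1}_{d}E}\circ\Omega^{1}_{d}(\iota^{1}_{d,E})\circ\iota^{2}_{\wedge,E}$ with codomain restricted to $M$ (legitimate, as its image is $\bar{S}\subseteq M$), and to set $\pi^{2,1}_{d,E}\colonequals\pi^{1,0}_{d,J^{1}_{d}E}\circ l^{2}_{d,E}$. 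With these choices both squares of \eqref{es:2in(2)} commute by construction and $l^{2}_{d,E}$ is visibly a monomorphism, so everything reduces to the exactness of the top row \eqref{es:2jet}. Injectivity of $\iota^{2}_{d,E}$ is Remark \ref{rmk:TorOE}: under $\Tor^{A}_{1}(\Omega^{1}_{d},E)=0$ the functor $\Omega^{1}_{d}$ preserves exactness of \eqref{es:jetd}, so $\Omega^{1}_{d}(\iota^{1}_{d,E})$ is mono, $\iota^{2}_{\wedge,E}$ is mono as a kernel inclusion, and $\iota^{1}_{d,J^{1}_{d}E}$ is always mono. Surjectivity of $\pi^{2,1}_{d,E}$ follows from $\pi^{1,0}_{d,J^{1}_{d}E}\circ j^{(2)}_{d,E}=j^{1}_{d,E}$ (cf.\ \eqref{eq:jpi}): the image is an $A$-submodule of $J^{1}_{d}E$ containing $j^{1}_{d,E}(E)$, hence equals $Aj^{1}_{d,E}(E)=J^{1}_{d}E$ by Remark \ref{rmk:jpg}.

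It remains to prove $\ker\pi^{2,1}_{d,E}=\bar{S}$. The inclusion $\bar{S}\subseteq\ker\pi^{2,1}_{d,E}$ is immediate from $\pi^{1,0}_{d,J^{1}_{d}E}\circ\iota^{1}_{d,J^{1}_{d}E}=0$ (exactness of \eqref{es:nhjets} at $n=2$). For the reverse inclusion, take $\xi\in M$ with $\pi^{1,0}_{d,J^{1}_{d}E}(\xi)=0$; since $\bar{S}\subseteq\ker\pi^{1,0}_{d,J^{1}_{d}E}$, after subtracting an element of $\bar{S}$ we may assume $\xi=\sum_{k}b_{k}\,j^{(2)}_{d,E}(e_{k})$. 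Applying $\pi^{1,0}_{d,J^{1}_{d}E}$ gives $\sum_{k}b_{k}\,j^{1}_{d,E}(e_{k})=0$ in $J^{1}_{d}E$; decomposing this identity along the canonical $\bk$-linear splitting $J^{1}_{d}E=j^{1}_{d,E}(E)\oplus\iota^{1}_{d,E}(\Omega^{1}_{d}(E))$, with retractions $\pi^{1,0}_{d,E}$ and $\rho_{d,E}$ and the Leibniz rule of Remark \ref{rmk:rhodo}, yields $\sum_{k}b_{k}e_{k}=0$ in $E$ and $\sum_{k}db_{k}\otimes_{A}e_{k}=0$ in $\Omega^{1}_{d}(E)$, i.e.\ $\sum_{k}b_{k}\otimes e_{k}\in N_{d}(E)$. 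Since $\pi^{1,0}_{d,J^{1}_{d}E}(\xi)=0$, relation \eqref{eq:shsplit} at $n=2$ gives $\xi=\iota^{1}_{d,J^{1}_{d}E}(\rho^{2}_{d,E}(\xi))$; expanding $j^{(2)}_{d,E}=j^{1}_{d,J^{1}_{d}E}\circ j^{1}_{d,E}$ and using the Leibniz rule for $\rho^{2}_{d}=\rho_{d,J^{1}_{d}}$ together with $\rho^{2}_{d,E}\circ j^{1}_{d,J^{1}_{d}E}=0$ yields $\rho^{2}_{d,E}(\xi)=-\sum_{k}db_{k}\otimes_{A}j^{1}_{d,E}(e_{k})$. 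Applying $\Omega^{1}_{d}(\pi^{1,0}_{d,E})$ to this element gives $-\sum_{k}db_{k}\otimes_{A}e_{k}=0$, and since $\Tor^{A}_{1}(\Omega^{1}_{d},E)=0$ the monomorphism $\Omega^{1}_{d}(\iota^{1}_{d,E})$ has image exactly $\ker\Omega^{1}_{d}(\pi^{1,0}_{d,E})$; hence $\rho^{2}_{d,E}(\xi)=\Omega^{1}_{d}(\iota^{1}_{d,E})(s)$ for a unique $s\in T^{2}_{d}(E)$. Granting $s\in S^{2}_{d}E$, i.e.\ $\wedge_{E}(s)=0$ in $\Omega^{2}_{d}(E)$, we conclude $\xi=\iota^{1}_{d,J^{1}_{d}E}\bigl(\Omega^{1}_{d}(\iota^{1}_{d,E})(s)\bigr)\in\bar{S}$, which finishes the argument.

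The main obstacle is exactly this vanishing $\wedge_{E}(s)=0$ — that the part forced into $M$ by the prolongations does not overshoot $S^{2}_{d}E$ — and the point is that it is a shadow of the exterior-algebra axiom $d^{2}=0$. The plan for this step is to read it off from $\sum_{k}b_{k}\otimes e_{k}\in N_{d}(E)$: by Remark \ref{rmk:flpj}, $\Tor^{A}_{1}(\Omega^{1}_{d},E)=0$ forces $N_{d}(E)=N_{d}\otimes_{A}E$, so $\sum_{k}b_{k}\otimes e_{k}=\sum_{l}n_{l}\otimes_{A}f_{l}$ with $n_{l}=\sum_{i}a_{l,i}\otimes c_{l,i}\in N_{d}$ (hence $\sum_{i}a_{l,i}c_{l,i}=0$ and $\sum_{i}a_{l,i}\,dc_{l,i}=0$ in $\Omega^{1}_{d}$) and $f_{l}\in E$. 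Tracking $\rho^{2}_{d,E}(\xi)$ through this rewriting — moving scalars to the right via $a\,db=d(ab)-(da)b$, using $\iota^{1}_{d,E}(dc\otimes_{A}e)=j^{1}_{d,E}(ce)-c\,j^{1}_{d,E}(e)$, and the cancellation $\sum_{i}d(a_{l,i}c_{l,i})=0$ — identifies $s$ with $-\sum_{l}\bigl(\sum_{i}da_{l,i}\otimes_{A}dc_{l,i}\bigr)\otimes_{A}f_{l}$, whence $\wedge_{E}(s)=-\sum_{l}\bigl(\sum_{i}da_{l,i}\wedge dc_{l,i}\bigr)\otimes_{A}f_{l}=-\sum_{l}d\bigl(\sum_{i}a_{l,i}\,dc_{l,i}\bigr)\otimes_{A}f_{l}=0$ by $d^{2}=0$ and $\sum_{i}a_{l,i}\,dc_{l,i}=0$. (For $E=A$, $e_{k}=c_{k}\in A$, this collapses to the transparent $\rho^{2}_{d,A}(\xi)=-\sum_{k}db_{k}\otimes_{A}\iota^{1}_{d}(dc_{k})$, obtained after cancelling $\sum_{k}db_{k}c_{k}=d(\sum_{k}b_{k}c_{k})-\sum_{k}b_{k}\,dc_{k}=0$, followed by $\wedge(s)=-\sum_{k}db_{k}\wedge dc_{k}=-d(\sum_{k}b_{k}\,dc_{k})=0$.) Equivalently, one may note that every generator of $M$ already lies in $J^{[2]}_{d}E$ (Proposition \ref{prop:nhjtosh} for $j^{(2)}_{d,E}(E)$, and $\bar{S}$ lies in the image of $T^{2}_{d}(E)$ in $J^{(2)}_{d}E$), so $\xi\in J^{[2]}_{d}E$; the snake-lemma argument behind Theorem \ref{theo:Jshes}, which at $n=2$ uses only $\Tor^{A}_{1}(\Omega^{1}_{d},E)=0$, identifies the kernel of $\pi^{1,0}_{d,J^{1}_{d}E}$ restricted to $J^{[2]}_{d}E$ with $T^{2}_{d}(E)$, pinning down $s$ by $\iota^{[2]}_{d,E}(s)=\xi$, after which $\wedge_{E}(s)=0$ is the same $d^{2}=0$ computation carried out inside $T^{2}_{d}(E)$.
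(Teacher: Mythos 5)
Your proof is correct and follows essentially the same route as the paper: reduce to elements of $Aj^{2}_{d}(E)$ in the kernel, identify the vanishing condition with membership in $N_{d}(E)$, lift to $N_{d}\otimes_{A}E$, and conclude via the Leibniz/$d^{2}=0$ computation that the resulting tensor lies in $S^{2}_{d}(E)$. The only difference is bookkeeping — you track the element through $\rho^{2}_{d,E}$ and the splitting \eqref{eq:shsplit}, while the paper manipulates the representatives $[x\otimes 1]\otimes_{A}[1\otimes y]$ directly — but the substance is identical.
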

\begin{proof}
Each map is well-defined.
The map $\iota^2_{d,E}$ is a monomorphism, as $\iota^1_{d,J^1_d E}\circ \Omega^1_d(\iota^1_{d,E})\circ \iota^2_{\wedge,E}$ is a monomorphism (cf.\ Remark \ref{rmk:TorOE}).
The map $\pi^{1,0}_{d, J^1_d E}$ restricted to $S^2_d E+Aj^2_{d}(E)$ vanishes on the first summand and maps the second one to $A\pi^{1,0}_{d, J^1_d E}(j^{(2)}_{d, E}(E))=Aj^1_d(E)=J^1_d E$ (cf.\ Remark \ref{rmk:jpg}), implying the surjectivity of $\pi^{2,1}_d$.

We are left to prove that $\ker(\pi^{1,0}_{d,J^1_d E}|_{S^2_dE + Aj^2_d (E)})=S^2_d E$.
We know that the kernel contains $S^2_d E$, so it remains to show that the only elements of $A j_d^2(E)$ that vanish when applying $\pi^{1,0}_{d,J^1_d E}$ are contained in $S^2_{d}E$.
An element of $J^{(2)}_d E$ is generated by elements of the form $[a\otimes b]\otimes_A [c\otimes e]$ where $a,b,c\in A$ and $e\in E$.
An element in $Aj^2_d (E)$ is thus of the form $\sum_i [a_i\otimes 1]\otimes_A [1\otimes e_i]$ and $\pi^{1,0}_{d,J^1_d E}$ maps it to $\sum_i [a_i\otimes e_i]\in J^1_d E$, which is zero if and only if $\sum_i a_i\otimes e_i\in \ker(p_{d,E})=N_{d}(E)$.
By the right exactness of $-\otimes_A E$, there exists $\sum_j x_j\otimes y_j\otimes_A f_j\in N_d\otimes_A E$ such that $\sum_j x_j\otimes y_j f_j=\sum_i a_i\otimes e_i$.
The map $a\otimes e\mapsto a j^{(2)}_{d,E} e$ is well defined, since $j^{(2)}_{d,E}$ is $\bk$-linear.
It follows that $\sum_i [a_i\otimes 1]\otimes_A [1\otimes e_i]
=\sum_j [x_j\otimes 1]\otimes_A [1\otimes y_j f_j]$, and the latter corresponds to $\sum_j [x_j\otimes 1]\otimes_A [1\otimes y_j]\otimes_A f_j$ via the isomorphism $J^{(2)}_d E\cong J^{(2)}_d A \otimes_A E$.
Now we have
\begin{equation}
\sum_j [x_j\otimes 1]\otimes_A [1\otimes y_j]
=\sum_j [x_j\otimes 1-1\otimes x_j]\otimes_A [1\otimes y_j-y_j\otimes 1]
=\sum_j [-d_u x_j]\otimes_A [d_u y_j]
=-\sum_j d x_j\otimes_A d y_j.
\end{equation}
This element is in $S^2_d$ because applying $\wedge$ gives
\begin{equation}
\sum_j d x_j\wedge d y_j
=\sum_j d( x_j\wedge d y_j)
=d\left(\sum_j x_j dy_j\right)
=0,
\end{equation}
as $\sum_j x_j\otimes y_j\in N_{d}$.
It follows that $\sum_j [x_j\otimes 1]\otimes_A [1\otimes y_j]\otimes_A f_j$ belongs to $S^2_d\otimes_A E$ in $J^{(2)}_d E$, and thus it is in $S^2_d(E)$ as claimed.
\end{proof}

This proposition gives us a description of the $2$-jet module as
\begin{equation}\label{eq:exp2jet}
J^2_d E= S^2_d E+Aj^2_{d}(E)\subseteq J^{(2)}_d E.
\end{equation}
The inclusion of $S^2_d E$ in the $2$-jet module is induced by $\iota^1_{d,J^1_d E}$, and $\pi^{2,0}_{d,E}=\pi^{1,0}_{d,J^1_d E}\circ l^2_{d,E}$.
Finally, we see $j^2_{d,E}$ as the factorization of $j^{(2)}_{d, E}$ though the $2$-jet submodule, and it is compatible with the above notation $S^2_d E+Aj^2_{d}(E)\subseteq J^{(2)}_d E$.

If, in particular, $E=A$, then $S^2_d +Aj^2_{d}(A)$ is an $A$-subbimodule of $J^{(2)}_d A$ and all the maps in \eqref{es:2in(2)} are $A$-bimodule maps.

\subsection{Implicit construction of the $2$-jet module}\label{ss:implicit2jet}
We shall now construct the $2$-jet $J^2_d E$ implicitly as the kernel of an $A$-linear map $\widetilde{\DH}_{d,E}$, as that map will be used to construct the higher jet functors.
This map is then a generalization of the map $\rho$ from \cite[Proposition 3, p.~432]{Goldschmidt}.

Assuming $\Tor^A_1(\Omega^1_d,E)=0$, we compute the cokernel sequence obtained from the map of short exact sequences \eqref{es:2in(2)}.
From the nine lemma, the resulting sequence is exact.
\begin{equation}\label{es:2jetDt}
\begin{tikzcd}[column sep=40pt]
0 \arrow[r] &[-20pt]S^2_{d}(E)\arrow[d,"\Omega^1_d(\iota^1_{d,E})\circ\iota^2_{\wedge,E}"',hookrightarrow]\arrow[r,hookrightarrow,"\iota^2_{d,E}"]& J^2_d E\ar[d,hookrightarrow,"l^2_{d,E}"] \arrow[r,twoheadrightarrow,"\pi^{2,1}_{d,E}"]& J^1_d E\arrow[d,equals] \arrow[r]&[-20pt]0\\
0 \arrow[r]& \Omega^1_{d}( J^1_d E) \ar[d,twoheadrightarrow,"W_{d,E}"']\arrow[r,hookrightarrow,"\iota^1_{d,J^1_d E}"]& J^{(2)}_d E \ar[d,twoheadrightarrow,"\widetilde{\DH}_{d,E}"]\arrow[r,twoheadrightarrow,"\pi^{1,0}_{d,J^1_d E}"]& J^1_d E\ar[d]\arrow[r]&0\\
0\ar[r]&\coker\left(\Omega^1_d(\iota^1_{d,E})\circ\iota^2_{\wedge,E}\right)\ar[r,"\sim"]& \coker(l^2_{d,E})\ar[r]& 0 \ar[r]&0
\end{tikzcd}
\end{equation}
Thus, we can identify $\coker(l^2_{d,E})$ with $\coker(\Omega^1_d(\iota^1_{d,E})\circ\iota^2_{\wedge,E})$.

\subsubsection{The codomain of $\widetilde{\DH}_d$}
In order to describe $\coker(\Omega^1_d(\iota^1_{d,E})\circ\iota^2_{\wedge,E})$ explicitly, we consider the following cokernel of exact sequences which is a short exact sequence, again by the nine lemma.
\begin{equation}\label{es:W}
\begin{tikzcd}[column sep=40pt]
0 \arrow[r] &S^2_d(E)\arrow[d,"\iota^2_{\wedge,E}"',hookrightarrow]\arrow[r,equals]& S^2_d(E)\ar[d,hookrightarrow,"\Omega^1_d(\iota^1_{d,E})\circ\iota^2_{\wedge,E}"] \arrow[r]& 0\arrow[d] \arrow[r]&0\\
0 \arrow[r]& \Omega^1_d\otimes_A \Omega^1_{d}(E)\ar[d,twoheadrightarrow,"\wedge"']\arrow[r,hookrightarrow,"\Omega^1_d(\iota^1_{d,E})"']&\Omega^1_d\otimes_A J^{1}_d E \ar[d,twoheadrightarrow,"W_{d,E}"]\arrow[r,twoheadrightarrow,"\Omega^1_d(\pi^{1,0}_{d,E})"']& \Omega^1_d(E)\ar[d,equals]\arrow[r]&0\\
0\arrow[r]&\Omega^2_d(E) \arrow[r,hookrightarrow]&\coker\left(\Omega^1_d(\iota^1_{d,E})\circ\iota^2_{\wedge,E}\right)\arrow[r,twoheadrightarrow]& \Omega^1_d(E) \arrow[r]&0
\end{tikzcd}
\end{equation}
We can thus see $\coker(\Omega^1_d(\iota^1_{d,E})\circ\iota^2_{\wedge,E})$ as an extension of $\Omega^1_d(E)$ via $\Omega^2_d(E)$.
\begin{lemma}\label{lemma:spplsplit}
The bottom sequence in \eqref{es:W} is split exact in $\Mod$, and in $\Mod_B$ if $E$ is an $(A,B)$-bimodule.
More specifically, if $E=A$, we have $\coker(\Omega^1_d(\iota^1_{d,A})\circ\iota^2_{\wedge,A})=\Omega^1_d\oplusA\Omega^2_d$, and in general
\begin{equation}\label{eq:cokersp}
\coker(\Omega^1_d(\iota^1_{d,E})\circ\iota^2_{\wedge,E})
=\left(\Omega^1_d\oplusA\Omega^2_d\right)\otimes_A E
=\Omega^1_d(E)\oplus\Omega^2_d(E).
\end{equation}

Through this decomposition, we can see $W_{d,E}=(W^{I}_{d,E},W^{II}_{d,E})\colon \Omega^1\otimes_A J^1_d E\longrightarrow \Omega^1_d(E)\oplus\Omega^2_d(E)$, where
\begin{equation}\label{eq:WI-II}
\begin{tikzcd}
W^{I}_{d,E}\colon \Omega^1_d\otimes_A J^{1}_d E\ar[r,two heads]&\Omega^1_d(E)&W^{II}_{d,E}\colon \Omega^1_d\otimes_A J^{1}_d E\ar[r,two heads]&\Omega^2_d(E)\\[\vsfd]
\hphantom{W^{I}_{d,E}\colon } \alpha\otimes_A [x\otimes e]\ar[r,|->]&\alpha \otimes_A xe&\hphantom{W^{II}_{d,E}\colon} \alpha\otimes_A [x\otimes e]\ar[r,|->]&d\alpha\otimes_A xe-\alpha\wedge (dx)\otimes_A e.
\end{tikzcd}
\end{equation}

Via this description, the left $A$-action on $\Omega^1_d\oplus_A\Omega^2_d$ is given by
\begin{equation}\label{eq:laacSP}
f\smb (\alpha+\omega)=f\alpha+df\wedge\alpha+f\omega,
\end{equation}
for all $\alpha\in\Omega^1_d$, $\omega\in\Omega^2_d$, and $f\in A$.
On the right, $A$ acts component-wise.
\end{lemma}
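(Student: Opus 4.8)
\emph{Proof plan.} The plan is to exhibit the explicit map $W_E=(W^I_E,W^{II}_E)$ of \eqref{eq:WI-II} as a surjection $\Omega^1_d\otimes_A J^1_d E\twoheadrightarrow\Omega^1_d(E)\oplus\Omega^2_d(E)$ whose kernel is exactly the image of $\Omega^1_d(\iota^1_{d,E})\circ\iota^2_{\wedge,E}$. This simultaneously identifies $\coker(\Omega^1_d(\iota^1_{d,E})\circ\iota^2_{\wedge,E})$ with $\Omega^1_d(E)\oplus\Omega^2_d(E)=(\Omega^1_d\oplusA\Omega^2_d)\otimes_A E$ and makes the claimed splitting of the bottom row of \eqref{es:W} manifest, since the target is a biproduct. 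The hypothesis $\Tor^A_1(\Omega^1_d,E)=0$ of the section enters only to guarantee that $\Omega^1_d(\iota^1_{d,E})\circ\iota^2_{\wedge,E}$ is a monomorphism, so that that bottom row is in fact a short exact sequence.

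The first task is to check that $W^{II}_E$ is well defined. Writing elements of $J^1_d E$ as classes $[x\otimes e]$ of $x\otimes e\in A\otimes E$ modulo $N_d(E)$, the balancing identity $W^{II}_E(\alpha a\otimes_A[x\otimes e])=W^{II}_E(\alpha\otimes_A[ax\otimes e])$ follows by expanding $d(\alpha a)=(d\alpha)a-\alpha\wedge da$ and $a\,dx=d(ax)-(da)x$ and collecting terms in $\Omega^2_d\otimes_A E$, while vanishing on $N_d(E)$ is immediate from the defining conditions $\sum_i a_ie_i=0$ and $\sum_i da_i\otimes_A e_i=0$. The same formulas show that $W^I_E$ and $W^{II}_E$ are right $B$-linear when $E$ is an $(A,B)$-bimodule, and, taking $E=A$, that $W_A$ turns the right $A$-action into the component-wise one and, via $d(f\alpha)=df\wedge\alpha+f\,d\alpha$, turns the left $A$-action into the action \eqref{eq:laacSP}; running the same computation over a general $E$ identifies the induced left action with that of $(\Omega^1_d\oplusA\Omega^2_d)\otimes_A E$.

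Next I would record the two key identities. From $\pi^{1,0}_{d,E}[x\otimes e]=xe$ one gets $W^I_E=\Omega^1_d(\pi^{1,0}_{d,E})$, the right-hand map of the middle row of \eqref{es:W}. A short Leibniz computation on generators $\alpha\otimes_A(b_0\,db_1)\otimes_A e$, using $\iota^1_d(b_0\,db_1)=[b_0\otimes b_1-b_0b_1\otimes 1]$ and $b_0\,db_1=d(b_0b_1)-(db_0)b_1$, gives $W^{II}_E\circ\Omega^1_d(\iota^1_{d,E})=\wedge_E\colon\Omega^1_d\otimes_A\Omega^1_d(E)\to\Omega^2_d(E)$. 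Combining these with $\pi^{1,0}_{d,E}\circ\iota^1_{d,E}=0$ and with $\wedge_E\circ\iota^2_{\wedge,E}=0$ — the latter being precisely the definition of $S^2_d=\ker(\wedge)$ evaluated at $E$ — yields $W_E\circ\Omega^1_d(\iota^1_{d,E})\circ\iota^2_{\wedge,E}=0$, so $W_E$ descends to a map $\bar W_E$ on the cokernel. Surjectivity holds because $(0,\omega)=W_E(\Omega^1_d(\iota^1_{d,E})(\gamma))$ whenever $\wedge_E(\gamma)=\omega$ (such $\gamma$ exists since $\wedge$, hence $\wedge_E$, is surjective by the exterior-algebra surjectivity axiom), while $(\alpha\otimes_A e,\,d\alpha\otimes_A e)=W_E(\alpha\otimes_A j^1_{d,E}(e))$, and these two families together generate $\Omega^1_d(E)\oplus\Omega^2_d(E)$. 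For the kernel: if $W^I_E(\xi)=0$ then, by right exactness of $\Omega^1_d\otimes_A-$ applied to the $1$-jet sequence \eqref{es:jetd}, $\xi=\Omega^1_d(\iota^1_{d,E})(\gamma)$ for some $\gamma$, whence $W^{II}_E(\xi)=\wedge_E(\gamma)$; imposing $W^{II}_E(\xi)=0$ forces $\gamma\in\ker(\wedge_E)=\iota^2_{\wedge,E}(S^2_d(E))$, so $\xi\in\operatorname{im}(\Omega^1_d(\iota^1_{d,E})\circ\iota^2_{\wedge,E})$. Hence $\bar W_E$ is an isomorphism.

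Finally I would transport the bottom row of \eqref{es:W} across $\bar W_E$. Since $W^I_E=\Omega^1_d(\pi^{1,0}_{d,E})$, the quotient $\coker(\cdots)\to\Omega^1_d(E)$ becomes the projection onto the first summand; since $W^I_E\circ\Omega^1_d(\iota^1_{d,E})=0$ while $W^{II}_E\circ\Omega^1_d(\iota^1_{d,E})=\wedge_E$ is exactly the cokernel presentation $\Omega^2_d(E)=(\Omega^1_d\otimes_A\Omega^1_d(E))/S^2_d(E)$, the inclusion $\Omega^2_d(E)\hookrightarrow\coker(\cdots)$ becomes $\omega\mapsto(0,\omega)$. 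Thus under the identification the bottom sequence of \eqref{es:W} is the canonical split exact sequence $0\to\Omega^2_d(E)\to\Omega^1_d(E)\oplus\Omega^2_d(E)\to\Omega^1_d(E)\to0$, which splits in $\Mod$ and, all maps in sight being right $B$-linear, in $\Mod_B$ as well. The main obstacle is entirely computational: pinning down that $W^{II}_E$ is well defined and that $W^{II}_E\circ\Omega^1_d(\iota^1_{d,E})=\wedge_E$ requires careful bookkeeping with the Leibniz rule and with the identifications $J^1_dE\cong(A\otimes E)/N_d(E)\cong J^1_dA\otimes_A E$; once those are settled, the homological part of the argument is routine.
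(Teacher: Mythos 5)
Your proposal is correct and follows essentially the same route as the paper: both hinge on checking that $W^{II}_E=d\otimes_A\pi^{1,0}_{d,E}+\id\wedge\rho_{d,E}$ is balanced over $A$, on the identities $W^I_E=\Omega^1_d(\pi^{1,0}_{d,E})$ and $W^{II}_E\circ\Omega^1_d(\iota^1_{d,E})=\wedge_E$, and on pushing the $A$-actions forward along the (surjective) cokernel projection. The only cosmetic difference is that the paper descends $W^{II}_E$ to an explicit right splitting $\varsigma_E$ of the bottom row of \eqref{es:W}, whereas you identify the cokernel with the biproduct by computing the kernel and image of $W_E$ directly; the two arguments are interchangeable.
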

\begin{proof}
Without loss of generality, we can prove the case where $E$ is an $(A,B)$-bimodule.
We start by proving that $W^{II}_{d,E}$ is well-defined.
Consider the map
\begin{equation}\label{eq:WIIexp}
d\otimes_A \pi^{1,0}_{d,E}+\id_{\Omega^1_d(E)}\wedge \rho_{d,E}\colon \Omega^1_d\otimes J^1_d E\longrightarrow \Omega^2_d (E),
\end{equation}
where $\rho_{d,E}$ is defined in \eqref{eq:rhon}.
This map factors through the quotient $\Omega^1_d\otimes J^1_d E\twoheadrightarrow \Omega^1_d\otimes_A J^1_d E$, since
\begin{equation}
\begin{split}
\left(d\otimes_A \pi^{1,0}_{d,E}+\id_{\Omega^1_d(E)}\wedge \rho_{d,E}\right)(\alpha\otimes \lambda\xi)
&=d\alpha\otimes_A \pi^{1,0}_{d,E}(\lambda \xi)+\alpha\wedge \rho_{d,E} (\lambda\xi)\\
&=d\alpha\otimes_A \lambda \pi^{1,0}_{d,E}(\xi)-\alpha\wedge d\lambda\otimes_A \rho_{d,E} (\xi)+\alpha\wedge \lambda\rho_{d,E} (\xi)\\
&=(d\alpha)\lambda\otimes_A \pi^{1,0}_{d,E}(\xi)-\alpha\wedge d\lambda \otimes_A \pi^{1,0}_{d,E}(\xi)+\alpha\lambda\wedge \rho_{d,E} (\xi)\\
&=d(\alpha\lambda)\otimes_A \pi^{1,0}_{d,E}(\xi)+\alpha\lambda\wedge \rho_{d,E} (\xi)\\
&=\left(d\otimes_A \pi^{1,0}_{d,E}+\id_{\Omega^1_d(E)}\wedge \rho_{d,E}\right)(\alpha\lambda\otimes \xi)
\end{split}
\end{equation}
for all $\alpha\in\Omega^1_d$, $\xi \in J^1_d E$, $\lambda\in A$ (cf.\ Remark \ref{rmk:rhodo}).
The factorization of this map is precisely $W^{II}_{d,E}$, which is in $\Mod_B$, as we can see from \eqref{eq:WI-II}.

If we restrict $W^{II}_{d,E}$ to $\Omega^1_d\otimes_A \Omega^1_d(E)$, we obtain
\begin{equation}\label{eq:Wrestrictstowedge}
\begin{split}
W^{II}_{d,E}\circ \Omega^1_d(\iota^1_{d,E})
&=\left(d\otimes_A \pi^{1,0}_{d,E}+\id_{\Omega^1_d(E)}\wedge \rho_{d,E}\right)\circ \Omega^1_d(\iota^1_{d,E})\\
&=d\otimes_A (\pi^{1,0}_{d,E}\circ \iota^1_{d,E})+\id_{\Omega^1_d(E)}\wedge \rho_{d,E}\circ \iota^1_{d,E}\\
&=0+ \id_{\Omega^1_d(E)}\wedge\id_{\Omega^1_{d}(E)}\\
&=\wedge.
\end{split}
\end{equation}
This implies that $W^{II}_{d,E}$ vanishes on $S^2_d(E)$, as
\begin{equation}
W^{II}_{d,E}\circ \left(\Omega^1_d(\iota^1_{d,E})\circ \iota^2_{\wedge,E}\right)
=\wedge\circ \iota^2_{\wedge,E}
=0.
\end{equation}
Therefore, $W^{II}_{d,E}$ factors through the cokernel of $\Omega^1_d(\iota^1_{d,E})\circ \iota^2_{\wedge,E}$.
This yields the existence of a compatible morphism $\varsigma_E\colon \coker(\Omega^1_d(\iota^1_{d,E})\circ\iota^2_{\wedge,E})\to \Omega^2_d(E)$ in $\Mod_B$.
Furthermore, letting $\inc^2$ denote the inclusion of $\Omega^2_d(E)$ in the cokernel gives
\begin{equation}
\varsigma_E\circ \inc^2\circ \wedge
=\varsigma_E\circ W_{d,E}\circ \Omega^1_d(\iota^1_{d,E})
=W^{II}_{d,E}\circ \Omega^1_d(\iota^1_{d,E})
=\wedge.
\end{equation}
Since $\wedge$ is an epi, it follows that $\varsigma_E\circ \inc^2=\id_{\Omega^2_d(E)}$, so $\varsigma_E$ is a right split for our sequence in $\Mod_B$.
We thus write the cokernel as $\Omega^1_d(E)\oplus_B\Omega^2_d(E)$.

We can now compute the explicit components of $W_{d,E}$ by applying the projections of $\Omega^1_d(E)$ and $\Omega^2_d(E)$, respectively.
The first is $\Omega^1_d(\pi^{1,0}_{d,E})=W^{I}_{d,E}$, by commutativity of \eqref{es:W}.
The second is $\varsigma_E \circ W_{d,E}$, i.e.\ $W^{II}_{d,E}$.

From the explicit description \eqref{eq:WI-II}, we see that $W^{II}_{d,E}=W^{II}_{d,A}\otimes_A E$.
We can now obtain \eqref{eq:cokersp} from the definition of $\Omega^1_d$ and $\Omega^2_d$ as tensor functors, applying the functor $-\otimes_A E$ to the cokernel split sequence for $E=A$.

The morphism $W_{d,A}$ is $A$-bilinear, being a cokernel morphism in $\AModA$.
Since $W_{d,A}$ is an epi, the actions are completely determined by the actions on $\Omega^1_d\otimes_A J^1_d A$.
By construction, the right action is component-wise.
For the left action, by linearity, we can compute separately the action on the two components.
For all $\alpha\in\Omega^1_d$, we have $\alpha=\sum_i (da_i)b_i$ by the surjectivity assumption, and $\alpha=W_{d,A}(\sum_i da_i\otimes_A [1\otimes b_i])$, and thus
\begin{equation}
\begin{split}
f\smb\alpha
&=f\smb W_{d,A}\left(\sum_i da_i\otimes_A [1\otimes b_i]\right)
=\sum_i W_{d,A}\left(f da_i\otimes_A [1\otimes b_i]\right)
=\sum_i \left(f da_i b_i+ d(f da_i) b_i-0\right)\\
&=f\sum_i da_i b_i+ df\wedge \sum_i (da_i) b_i
=f\alpha + df\wedge \alpha.
\end{split}
\end{equation}
Given $\omega\in\Omega^2_d$, by surjectivity we can write $\omega=\sum_i \alpha_i \wedge \beta_i$.
From the explicit description \eqref{eq:WIIexp} of $W^{II}_{d,A}$, we can see $\omega=W(\sum_i \alpha_i\wedge \iota^1_{d,A} (\beta_i))$.
Therefore,
\begin{equation}
\begin{split}
f\smb \omega
&=f\smb W\left(\sum_i \alpha_i\wedge \iota^1_{d,A} (\beta_i)\right)
=\sum_i W\left(f\alpha_i\wedge \iota^1_{d,A} (\beta_i)\right)
=\sum_i \left(0+f\alpha_i\wedge \beta_i\right)
=f\omega.
\end{split}
\end{equation}
\end{proof}
We denote by $\Omega^1_d(E)\ltimes\Omega^2_d(E)$ the $A$-module obtained by endowing $\Omega^1_d(E)\oplus\Omega^2_d(E)$ with the left action induced by \eqref{eq:laacSP}.
Similarly, for $E=A$ we write $\Omega^1_d\ltimes \Omega^2_d$ for the $A$-bimodule described in Lemma \ref{lemma:spplsplit}.
Moreover, for $E=A$ we will omit the subscript $A$ from all the morphisms involved.

\begin{rmk}\label{rmk:rightsa}
The following morphism provides a left $A$-linear right splitting of $\Omega^1_d\ltimes \Omega^2_d$.
\begin{align}
\Omega^1_d\longrightarrow \Omega^1_d\ltimes\Omega^2_d,
&\hfill&
\alpha\longmapsto \alpha+d\alpha.
\end{align}
It is left $A$-linear because
\begin{equation}
f\smb (\alpha+d\alpha)
=f\alpha+df\wedge\alpha+fd\alpha
=f\alpha+d(f\alpha).
\end{equation}

This realizes $\Omega^1_d\ltimes \Omega^2_d$ as $\Omega^1_d\Aoplus \Omega^2_d$, where $A$ acts component-wise on the left and on the right as
\begin{equation}
(\alpha+\omega)\smbr f
=\alpha f+\alpha\wedge df+\omega f,
\end{equation}
for all $\alpha\in\Omega^1_d$, $\omega\in\Omega^2_d$, and $f\in A$.
We denote this $A$-bimodule presentation as $\Omega^1_d\ltimes' \Omega^2_d$, but they are evidently isomorphic via the following $A$-bilinear isomorphism
\begin{align}
\Omega^1_d\ltimes' \Omega^2_d\longrightarrow \Omega^1_d\ltimes \Omega^2_d,
&\hfill&
\alpha+\omega\longmapsto \alpha+d\alpha+\omega.
\end{align}
The actions of Lemma \ref{lemma:spplsplit} and Remark \ref{rmk:rightsa} are reminiscent of the ones appearing in Remark \ref{rmk:semidirectjet}.
\end{rmk}

Before continuing, we show that, on its own, $W^{II}_{d,E}$ is not left $A$-linear.
For all $\alpha\in\Omega^1_d(E)$, $\xi\in J^1_d E$, and $\lambda\in A$, we have
\begin{equation}\label{eq:WIILeib}
\begin{split}
W^{II}_{d,E}(\lambda \alpha\otimes_A \xi)
&=d(\lambda\alpha)\otimes_A \pi^{1,0}_{d,E}(\xi)+\lambda \alpha\wedge \rho_{d,E}(\xi)\\
&=d\lambda\wedge\alpha\otimes_A \pi^{1,0}_{d,E}(\xi)+\lambda W^{II}_{d,E}(\alpha\otimes _A \xi)\\
&=\left(d\lambda\wedge W^{I}_{d,E}+\lambda W^{II}_{d,E}\right)(\alpha\otimes_A \xi).
\end{split}
\end{equation}

\subsubsection{The operator $\widetilde{\DH}_d$}
Now that we have $W_{d,E}$, we can prolong it to an $A$-linear map $\widetilde{\DH}_{d,E}$.
The extension is not necessarily unique, but every $A$-linear extension of $W_{d,E}$ through $\iota^1_{d,J^1_d E}$ mapping to $\Omega^1_d(E) \ltimes\Omega^2_d(E)$ and vanishing on $j^2_d(E)$ will give the same kernel.
Under these conditions, in fact, we would get an inclusion of $J^2_d E$ into the kernel of this operator, and said inclusion is part of an inclusion of short exact sequences.
More specifically, this is an inclusion of extensions of $J^1_d E$ via $S^2_d(E)$.
By the snake lemma, this inclusion is also surjective, and hence an isomorphism.
\begin{prop}\label{prop:defeth}
The map $\widetilde{\DH}_{d,E}\colon J^{(2)}_d E\to \Omega^1_d(E)\ltimes \Omega^2_d(E)=\coker(l^2_{d,E})$ is defined as $(\widetilde{\DH}^I_{d,E},\widetilde{\DH}^{II}_{d,E})$, where $\widetilde{\DH}^I_{d,E}$ is the map defined in \eqref{eq:DHI}, and $\widetilde{\DH}^{II}_{d,E}\colonequals W^{II}_{d,E}\circ \rho_{d,J^1_d E}$.
Explicitly, we can write
\begin{equation}\label{eq:FE-WE}
\begin{tikzcd}
\widetilde{\DH}^{I}_{d,E}\colon J^{(2)}_d E\ar[r]&\Omega^1_d(E)&[-20pt]\widetilde{\DH}^{II}_{d,E}\colon J^{(2)}_d E\ar[r]&\Omega^2_d(E)\\[\vsfd]
\hphantom{\widetilde{\DH}^{I}_d\colon } [a\otimes b]\otimes_A [c\otimes e]\ar[r,|->]&ad(bc) \otimes_A e&\hphantom{\widetilde{\DH}^{II}_d\colon} [a\otimes b]\otimes_A [c\otimes e]\ar[r,|->]&da\wedge d(bc)\otimes_A e.
\end{tikzcd}
\end{equation}
\end{prop}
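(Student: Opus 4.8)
The plan is to take the pair $(\widetilde{\DH}^I_E,\widetilde{\DH}^{II}_E)$ as a candidate for the cokernel projection $\widetilde{\DH}_E\colon J^{(2)}_d E\twoheadrightarrow\coker(l^2_{d,E})$ of \eqref{es:2jetDt} and to pin it down by three properties of that cokernel: that it restricts to $W_E$ along $\iota^1_{d,J^1_d E}$, that it vanishes on $\im(l^2_{d,E})$, and that it is $A$-linear. Recall from Lemma \ref{lemma:spplsplit} and the nine-lemma squares \eqref{es:2jetDt}, \eqref{es:W} that $\coker(l^2_{d,E})$ was identified with $\Omega^1_d(E)\ltimes\Omega^2_d(E)$ in such a way that the genuine cokernel map, precomposed with $\iota^1_{d,J^1_d E}$, becomes $W_E=(W^I_E,W^{II}_E)$, and that $W_E$ is an epimorphism. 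Hence, once the candidate is shown to vanish on $\im(l^2_{d,E})$ it factors through the true cokernel, and once it is shown to agree with $W_E$ after precomposition with $\iota^1_{d,J^1_d E}$, that factorisation must be the identity (as $W_E$ is epi), so the candidate \emph{is} $\widetilde{\DH}_E$ and is in particular surjective.

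First I would record the explicit formulas \eqref{eq:FE-WE}: for $\widetilde{\DH}^I_E$ this is just \eqref{eq:DHI} read on $[a\otimes b]\otimes_A[c\otimes e]=[a\otimes b]\otimes_A[c\otimes 1]\otimes_A e$, yielding $a\,d(bc)\otimes_A e$; for $\widetilde{\DH}^{II}_E=W^{II}_E\circ\rho_{d,J^1_d E}$ I would first compute $\rho_{d,J^1_d E}([a\otimes b]\otimes_A[c\otimes e])=-(da)b\otimes_A[c\otimes e]=-da\otimes_A[bc\otimes e]$ from \eqref{def:rho}, then apply the explicit $W^{II}_E$ of \eqref{eq:WI-II} and use $d^2=0$ to reach $da\wedge d(bc)\otimes_A e$.

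For compatibility with $W_E$ I would work with the invariant expressions $\widetilde{\DH}^I_E=\Omega^1_d(\pi^{1,0}_{d,E})\circ\rho_{d,J^1_d E}-\rho_{d,E}\circ\pi^{1,0}_{d,J^1_d E}$ of \eqref{DHIcom} and $\widetilde{\DH}^{II}_E=W^{II}_E\circ\rho_{d,J^1_d E}$: since $\rho_{d,J^1_d E}\circ\iota^1_{d,J^1_d E}=\id$ and $\pi^{1,0}_{d,J^1_d E}\circ\iota^1_{d,J^1_d E}=0$ (cf.\ \eqref{eq:shsplit}), precomposing the two components with $\iota^1_{d,J^1_d E}$ gives $\Omega^1_d(\pi^{1,0}_{d,E})=W^I_E$ and $W^{II}_E$, i.e.\ $\widetilde{\DH}_E\circ\iota^1_{d,J^1_d E}=W_E$. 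For the vanishing on $\im(l^2_{d,E})=S^2_d(E)+Aj^2_d(E)$: on $S^2_d(E)$ the relation $l^2_{d,E}\circ\iota^2_{d,E}=\iota^1_{d,J^1_d E}\circ\Omega^1_d(\iota^1_{d,E})\circ\iota^2_{\wedge,E}$ reduces the composite to $W_E\circ\Omega^1_d(\iota^1_{d,E})\circ\iota^2_{\wedge,E}$, whose $W^I$-component is $\Omega^1_d(\pi^{1,0}_{d,E}\circ\iota^1_{d,E})\circ\iota^2_{\wedge,E}=0$ and whose $W^{II}$-component is $\wedge\circ\iota^2_{\wedge,E}=0$ by \eqref{eq:Wrestrictstowedge} and the definition of $S^2_d$; on $Aj^2_d(E)$ it suffices, by $A$-linearity, to observe $\widetilde{\DH}^I_E(j^{(2)}_{d,E}e)=d(1)\otimes_A e=0$ and $\rho_{d,J^1_d E}(j^{(2)}_{d,E}e)=0$.

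The main obstacle is the $A$-linearity of the pair $(\widetilde{\DH}^I_E,\widetilde{\DH}^{II}_E)$ as a map into $\Omega^1_d(E)\ltimes\Omega^2_d(E)$, since neither $\rho_{d,J^1_d E}$ nor $W^{II}_E$ is left $A$-linear (cf.\ Remark \ref{rmk:rhodo} and \eqref{eq:WIILeib}), so the two anomalies must conspire to reproduce the twisted action \eqref{eq:laacSP}. Concretely I would compute $\rho_{d,J^1_d E}(\lambda\xi)=\lambda\rho_{d,J^1_d E}(\xi)-d\lambda\otimes_A\pi^{1,0}_{d,J^1_d E}(\xi)$ from $\rho^2_d=\id-j^1_{d,J^1_d}\circ\pi^{1,0}_{d,J^1_d}$ and the Leibniz rule of Remark \ref{rmk:proldop}, substitute into $W^{II}_E$ via \eqref{eq:WIILeib} using $d^2=0$, and verify that the correction equals $d\lambda\wedge\bigl(\Omega^1_d(\pi^{1,0}_{d,E})\circ\rho_{d,J^1_d E}-\rho_{d,E}\circ\pi^{1,0}_{d,J^1_d E}\bigr)(\xi)=d\lambda\wedge\widetilde{\DH}^I_E(\xi)$, which is precisely the twist term appearing in \eqref{eq:laacSP}. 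With $A$-linearity established, the cokernel argument of the first paragraph identifies the candidate with $\widetilde{\DH}_E$, completing the proof.
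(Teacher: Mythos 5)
Your proposal is correct and follows essentially the same route as the paper: the same three verifications (left $A$-linearity via the Leibniz computation producing the twist term $d\lambda\wedge\widetilde{\DH}^I_E$, the restriction $\widetilde{\DH}_E\circ\iota^1_{d,J^1_dE}=W_E$ via \eqref{eq:shsplit}, and the vanishing on the prolongation image), with the same explicit formula computations. The only difference is cosmetic: you identify the candidate with the cokernel projection using that $W_E$ is epi, whereas the paper phrases the conclusion as $\ker(\widetilde{\DH}_E)=J^2_dE$ via the inclusion-of-extensions/snake-lemma remark preceding the proposition.
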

\begin{proof}
We start by proving the following equality using Remark \ref{rmk:rhodo}, \eqref{eq:WIILeib}, and \eqref{DHIcom}.
For all $\eta\in J^1_d(E)$, $\xi\in J^1_d E$, and $\lambda\in A$, we have
\begin{equation}
\begin{split}
\widetilde{\DH}^{II}_{d,E}(\lambda \eta\otimes_A \xi)
&=W^{II}_{d,E}\left(\rho_{d,J^1_d E}(\lambda \eta\otimes_A \xi)\right)\\
&=W^{II}_{d,E}\left(-d\lambda\otimes_A \pi^{1,0}_{d,J^1_d E} (\eta\otimes_A \xi)+\lambda \rho_{d,J^1_d E}\left(\eta\otimes_A \xi\right)\right)\\
&=-d\lambda\wedge \rho_{d,E}\circ \pi^{1,0}_{d,J^1_d E} (\eta\otimes_A \xi)
+d\lambda \wedge W^{I}_{d,E}\circ \rho_{d,J^1_d E}\left(\eta\otimes_A \xi\right)
+\lambda W^{II}_{d,E}\circ\rho_{d,J^1_d E}\left(\eta\otimes_A \xi\right)\\
&=-d\lambda\wedge \rho_{d,E}\circ \pi^{1,0}_{d,J^1_d E} (\eta\otimes_A \xi)
+d\lambda \wedge \Omega^1_d(\pi^{1,0}_{d,E})\circ \rho_{d,J^1_d E}\left(\eta\otimes_A \xi\right)
+\lambda \widetilde{\DH}^{II}_{d,E}\circ\rho_{d,J^1_d E}\left(\eta\otimes_A \xi\right)\\
& = d\lambda \wedge \widetilde{\DH}^{I}_{d,E}\left(\eta\otimes_A \xi\right)
+\lambda \widetilde{\DH}^{II}_{d,E}\circ\rho_{d,J^1_d E}\left(\eta\otimes_A \xi\right)
\end{split}
\end{equation}
We can now prove that $\widetilde{\DH}_d$ is left $A$-linear, since for all $\zeta\in J^{(2)}_d E$ and $\lambda\in A$, we have
\begin{equation}
\begin{split}
\widetilde{\DH}_{d,E}(\lambda\zeta)
&=\widetilde{\DH}^{I}_{d,E}(\lambda\zeta)+\widetilde{\DH}^{II}_{d, E}(\lambda \zeta)\\
&=\lambda\widetilde{\DH}^{I}_{d,E}(\lambda\zeta)+df\wedge\widetilde{\DH}^{I}_{d,E}(\zeta)+\lambda \widetilde{\DH}^{II}_{d,E}(\zeta)\\
&=\lambda\smb \left(\widetilde{\DH}^{I}_{d,E}(\zeta)+\widetilde{\DH}^{II}_{d,E}(\zeta)\right)\\
&=\lambda\smb \widetilde{\DH}_{d,E}(\zeta).
\end{split}
\end{equation}

From \eqref{DHIcom} and \eqref{es:W}, we get
\begin{equation}
\widetilde{\DH}^I_{d,E}\circ \iota^1_{d,J^1_d E}
=\left(\Omega^1_d(\pi^{1,0}_{d,E})\circ \rho_{d,J^1_d E}-\rho_{d,E}\circ \pi^{1,0}_{d,J^1_d E}\right)\circ \iota^1_{d,J^1_d E}
=\Omega^1_d(\pi^{1,0}_{d,E})-0
=W^I_{d,E}.
\end{equation}
and by definition,
\begin{equation}
\widetilde{\DH}^{II}_{d,E}\circ \iota^1_{d,J^1_d E}
=W^{II}_{d,E}\circ \rho_{d,J^1_d E}\circ \iota^1_{d,J^1_d E}
=W^{II}_{d,E}.
\end{equation}
It follows that $\widetilde{\DH}_{d,E}\circ \iota^1_{d,J^1_d E}=W_{d,E}$.

It remains to show that $\widetilde{\DH}_d$ vanishes when precomposed with $j^2_{d}$, which is defined through the nonholonomic $2$-jet prolongation.
We can thus prove $\widetilde{\DH}_{d,E}\circ j^{(2)}_{d,E}=0$ on each component.
The first component vanishes because the nonholonomic $2$-jet prolongation factors through the semiholonomic jet (cf.\ Proposition \eqref{prop:nhjtosh}).
The second component vanishes because $\widetilde{\DH}^{II}_{d,E}\circ j^{1}_{J^1_d E}=W^{II}_{d,E}\circ \rho_{d,J^1_d E}\circ j^{1}_{J^1_d E}=0$.
Hence, $\ker(\widetilde{\DH}_d)$ contains, and thus coincides with, $J^2_d E=S^2_d(E)+Aj^2_d E$.
\end{proof}
It follows that $J^2_d E$ is contained in $J^{[2]}_d E$, as one of the two components of $\widetilde{\DH}_{d,E}$ is $\widetilde{\DH}^I_{d,E}$, whose kernel is the semiholonomic $2$-jet.

\begin{defi}\label{def:2-jet}
We define the \emph{$2$-jet module} of $E$ by $J^2_d E\colonequals \ker(\widetilde{\DH}_{d,E})$, with $\iota^2_{d,E}$ and $\pi^{2,1}_{d,E}$ as the induced maps in the kernel sequence in \eqref{es:2jetDt}.
Finally, let $j^2_{d,E}\colon E\to J^2_d E$ be the factorization of $j^{(2)}_{d,E}$ through $J^2_d E$.
\end{defi}
\begin{rmk}
The maps $\iota^2_d$ and $\pi^{2,1}_d$ are $A$-linear, and the map $j^2_d$ is only $\bk$-linear.

We can extend this construction to the category $\AMod_B$, in which case $\iota^2_d$ and $\pi^{2,1}_d$ are $(A,B)$-bilinear, and $j^2_d$ is only right $B$-linear.
\end{rmk}
\begin{rmk}\label{rmk:arrows2j}
By construction, we have
\begin{align}
l^2_d \circ\iota^2_d
=(\iota_d\otimes_A \iota_d)\circ \iota^2_{\wedge},
&\hfill&
\pi^{2,1}_d
=J^1_d (\pi^{1,0}_d)\circ l^2_d
=\pi^{1,0}_{J^1_d}\circ l^2_d.
\end{align}
The relation between the maps of the $2$-jet sequence and the naturality diagram for the $1$-jet functor applied to the $1$-jet sequence can be summarized in the following diagram
\begin{equation}\label{diag:fish}
\begin{tikzcd}
&&&&[6pt]\Omega^1_d(E)\ar[dr,hookrightarrow,"\iota^1_{d,E}"']&[18pt]&[28pt]&[10pt]\\
&&&J^1_d(\Omega^1_d (E))\ar[dr,"J^1_d (\iota^1_{d,E})"]\ar[ur,twoheadrightarrow,"\pi^{1,0}_{d,\Omega^1_d E}"']&&J^1_d E\ar[dr,twoheadrightarrow,"\pi^{1,0}_{d,E}"' near end]\ar[d,equals]&0\\
&S^2_d(E) \ar[r,hookrightarrow,"\iota^2_{\wedge,E}"]&\Omega^1_d(\Omega^1_d(E))\ar[ur,hookrightarrow,"\iota^1_{d,\Omega^1_d(E)}"' near start]\ar[dr,"\Omega^1_d(\iota^1_{d,E})"']&J^2_d E\ar[r,hookrightarrow,"l^2_{d,E}"]\ar[rr,twoheadrightarrow,bend right=40pt,"\pi^{2,1}_{d,E}"',line width=0.8pt]&J^{(2)}_d E\ar[dr,twoheadrightarrow,crossing over,"J^1_d(\pi^{1,0}_{d,E})", pos=0]\ar[ur,twoheadrightarrow,"\pi^{1,0}_{d,J^1_d E}"]&J^1_d E\ar[d,equals]\ar[ur, crossing over,line width=0.8pt,bend left=10pt]&E\\
0\ar[ur,line width=0.8pt]&&&\Omega^1_d(J^1_d E)\ar[dr,twoheadrightarrow,"\Omega^1_d(\pi^{1,0}_{d,E})"']\ar[ur,hookrightarrow,"\iota^1_{d,J^1_d E}"' near end,crossing over]&&J^1_d E\ar[ur,twoheadrightarrow,"\pi^{1,0}_{d,E}"']&\\
&&&&\Omega^1_d (E)\ar[ur,hookrightarrow,"\iota^1_{d,E}"']&&
\ar[from=3-2,to=3-4,bend left=38pt,"\iota^2_{d,E}",crossing over,line width=0.8pt]
\end{tikzcd}
\end{equation}
\end{rmk}
Finally, notice that since $\widetilde{\DH}_{d,E}\colon J^{(2)}_d E\to\Omega^1_d(E)\ltimes\Omega^2_d(E)$ is a left $A$-linear map, it can be interpreted as the extension of a first order differential operator $\DH_{d,E}=\widetilde{\DH}_{d,E}\circ j^2_{d,E}$ on $J^1_d E$
\begin{align}
\label{eq:DH_natural_DO}
\DH_{d,E}\colon J^1_d E\longrightarrow\Omega^1_d(E)\ltimes\Omega^2_d(E),
&\hfill&
[a\otimes e]\longmapsto(da)\otimes_A e.
\end{align}

\begin{rmk}
For $E=A$, we can write
\begin{align}\label{eq:MC}
W^{II}_{d,A}=d\otimes_A \pi^{1,0}_{d}+\id_{\Omega^1_d}\wedge \rho_d,
&\hfill&
\widetilde{\DH}^{II}_{d,A}=d\rho_d \circ \pi^{1,0}_{d,J^1_d A}+\rho_d\wedge\rho_d.
\end{align}
We note that this representation of $\widetilde{\DH}^{II}_{d,A}=0$ resembles the Maurer-Cartan equation for $\rho_d$.
Via this analogy, the equation $W^{II}_{d,A}=0$ could be interpreted as a torsion-freeness condition.
\end{rmk}
\subsubsection{Another interpretation of $\widetilde{\DH}_d$}
Let $E=A$, using the representation $\Omega^1_d\ltimes'\Omega^2_d$ (cf.\ Remark \ref{rmk:rightsa}) we can write an equivalent description of $\widetilde{\DH}_d$, that is
\begin{align}
\widetilde{\DH}_d'\colon J^{(2)}_d A\longrightarrow \Omega^1_d \ltimes'\Omega^2_d,
&\hfill&
[a\otimes b]\otimes_A [x\otimes y]\longmapsto {ad(bx)y+ad(bx)\wedge dy}.
\end{align}
For generic $E$, we have to define $\widetilde{\DH}'_{d,E}\colonequals \widetilde{\DH}_d'\otimes_A E$, being careful to use the correct right action $\smbr$.

Although $\widetilde{\DH}_d$ is more convenient for the presentation of the higher jet functors, the presentation $\widetilde{\DH}_d'$ shows the meaning of both components $(\widetilde{\DH}'^{I}_d,\widetilde{\DH}'^{II}_d)$, which for $\widetilde{\DH}_d$ is concealed within the action.

The interpretation of the first component vanishing is the same in both presentations, and is given in §\ref{ss:shj}.

For $\widetilde{\DH}'^{II}_d$, consider the differential acting on the first grade of the exterior algebra $d^1\colon \Omega^1_d\longrightarrow \Omega^2_d$.
This is a first order differential operator, so it can be lifted to a left $A$-linear map $\widetilde{d}^1\colon J^1_d \Omega^1_d\longrightarrow \Omega^2_d$.
Now apply the functor $J^1_d$ to the lifting $\widetilde{d}\colon J^1_d A\to \Omega^1_d$.
We obtain a map
\begin{align}
\widetilde{d}^1\circ J^1_d \widetilde{d}\colon J^{(2)}_d A\longrightarrow \Omega^2_d,
&\hfill&
[a\otimes b]\otimes_A [a'\otimes b']\longmapsto {\widetilde{d}^1([a\otimes b]\otimes_A a' d b')
=ad(ba'db')=ad(ba')\wedge db'}.
\end{align}
Notice that $\widetilde{d}^1\circ J^1_d \widetilde{d}=\widetilde{\DH}'^{II}_d$.
Furthermore, we have
\begin{equation}
\widetilde{\DH}'^{II}_d\circ j^{(2)}_d
=\widetilde{d}^1\circ J^1_d \widetilde{d}\circ j^1_{J^1_d A}\circ j^1_d
=\widetilde{d}^1\circ j^1_d\circ \widetilde{d}\circ j^1_d
=d^1\circ d.
\end{equation}
We will see in §\ref{s:differentialoperators} that this corresponds to the notion of nonholonomic differential operator which lifts $d^1\circ d=(d)^2=0$.
Therefore, the equation $\widetilde{\DH}'^{II}_d=0$ means that the canonical lift of the differential operator $d^2=0$ as composition of two differential operators lifts to the zero map on $J^2_d A$.

\subsection{Functoriality}\label{ss:functorialityof2jet}
In Lemma \ref{lemma:spplsplit}, we show that the sequence defining the extension $\Omega^1_d(E)\ltimes \Omega^2_d(E)$ can be obtained by applying $-\otimes_A E$ to the sequence defining the bimodule $\Omega^1_d\ltimes \Omega^2_d$.
We thus get the functor
\begin{align}
\Omega^1_d\ltimes \Omega^2_d\colon \AMod\longrightarrow \AMod,
&\hfill&
E\longmapsto (\Omega^1_d\ltimes \Omega^2_d)\otimes_A E\cong \Omega^1_d(E)\ltimes \Omega^2_d(E).
\end{align}
Since all of its entries are defined as tensor products, we obtain the following short exact sequence of endofunctors of $\AMod$
\begin{equation}\label{es:spsplit}
\begin{tikzcd}
0\ar[r]&\Omega^2_d\ar[r,hookrightarrow]&\Omega^1_d\ltimes \Omega^2_d \ar[r,twoheadrightarrow]&\Omega^1_d\ar[r]&0
\end{tikzcd}
\end{equation}
By Lemma \ref{lemma:spplsplit}, the sequence naturally splits in the category of functors of type $\AMod\to \Mod$.

\begin{lemma}\label{lemma:spplacederfun}
Let $L_{\bullet}(\Omega^1_d\ltimes \Omega^2_d)$ be the left derived functor of $\Omega^1_d\ltimes \Omega^2_d$.
As a functor $\AMod\to \Mod$, $L_{n}(\Omega^1_d\ltimes \Omega^2_d)$ is naturally isomorphic to $\Tor^A_n(\Omega^1_d,-)\oplus \Tor^A_n(\Omega^2_d,-)$.
\end{lemma}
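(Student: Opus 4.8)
The plan is to exploit the fact that $\Omega^1_d\ltimes\Omega^2_d$, like all the functors derived so far in the paper, is literally a tensor functor: by construction $\Omega^1_d\ltimes\Omega^2_d=(\Omega^1_d\ltimes\Omega^2_d)\otimes_A-$, so its left derived functors are $L_n(\Omega^1_d\ltimes\Omega^2_d)=\Tor^A_n(\Omega^1_d\ltimes\Omega^2_d,-)$ by the very definition of the derived functor of a right exact tensor functor (cf.\ Lemma \ref{lemma:derfun}). Thus the statement reduces to producing a natural isomorphism $\Tor^A_n(\Omega^1_d\ltimes\Omega^2_d,E)\cong\Tor^A_n(\Omega^1_d,E)\oplus\Tor^A_n(\Omega^2_d,E)$ of functors $\AMod\to\Mod$.

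The key observation is that although the left $A$-action on $\Omega^1_d\ltimes\Omega^2_d$ is the twisted one of \eqref{eq:laacSP}, the right $A$-action is the component-wise one (Lemma \ref{lemma:spplsplit}); that is, $\Omega^1_d\ltimes\Omega^2_d\cong\Omega^1_d\oplusA\Omega^2_d$ in $\ModA$. Since $\Tor^A_n(M,E)$ depends only on the right $A$-module structure of $M$, I would fix a projective resolution $P_\bullet\twoheadrightarrow E$ in $\AMod$ and note that $-\otimes_A P_k$ carries this right-module decomposition to a decomposition of $\bk$-modules $(\Omega^1_d\ltimes\Omega^2_d)\otimes_A P_k\cong(\Omega^1_d\otimes_A P_k)\oplus(\Omega^2_d\otimes_A P_k)$, compatibly with the differentials of $P_\bullet$. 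Taking homology and using that homology commutes with finite direct sums yields $\Tor^A_n(\Omega^1_d\ltimes\Omega^2_d,E)\cong\Tor^A_n(\Omega^1_d,E)\oplus\Tor^A_n(\Omega^2_d,E)$ in $\Mod$.

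For naturality in $E$, I would observe that the isomorphism of complexes above is natural in $P_\bullet$; given a morphism $E\to E'$ and a lift to a chain map between chosen projective resolutions, the induced square of chain-complex isomorphisms commutes, so the isomorphism descends to a natural isomorphism of the functors $L_\bullet(\Omega^1_d\ltimes\Omega^2_d)$ and $\Tor^A_\bullet(\Omega^1_d,-)\oplus\Tor^A_\bullet(\Omega^2_d,-)$. Equivalently, one may argue directly from the short exact sequence \eqref{es:spsplit}: being split in $\ModA$ by Lemma \ref{lemma:spplsplit}, its associated long exact $\Tor$-sequence has vanishing connecting homomorphisms, which breaks it into the claimed direct sums.

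The argument carries essentially no difficulty; the only point requiring care — and the reason the semidirect-product twisting is harmless here — is that $\Tor$ sees only the right $A$-module structure, which is the untwisted one, so one just has to record this and keep track of the fact that the resulting isomorphism lives in $\Mod$ rather than in $\AMod$.
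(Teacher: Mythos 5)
Your proposal is correct and follows essentially the same route as the paper: both identify $L_n(\Omega^1_d\ltimes\Omega^2_d)$ with $\Tor^A_n(\Omega^1_d\ltimes\Omega^2_d,-)$, observe that the twisting affects only the left action so that in $\ModA$ the bimodule is the plain direct sum $\Omega^1_d\oplusA\Omega^2_d$, and conclude because $\Tor^A_n$ preserves direct sums. Your explicit resolution argument merely unwinds the citation to Weibel that the paper uses for that last step.
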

\begin{proof}
The functor $\Omega^1_d\ltimes \Omega^2_d$ has as left derived functor $\Tor^A_{\bullet}(\Omega^1_d\ltimes \Omega^2_d,-)$.
As a functor $\AMod\to \Mod$, it coincides with the tensor by the right $A$-module $\Omega^1_d\oplusA \Omega^2_d$.
Since $\Tor^A_n$ preserves arbitrary direct sums \cite[Corollary 2.6.12, p.~56]{Weibel}, we have
\begin{equation}
\Tor^A_{n}(\Omega^1_d\ltimes \Omega^2_d,-)
=\Tor^A_{n}(\Omega^1_d\oplusA \Omega^2_d,-)
=\Tor^A_{n}(\Omega^1_d,-)\oplusA \Tor^A_{n}(\Omega^2_d,-).
\end{equation}
\end{proof}
We can now see $\widetilde{\DH}_d$ as a natural epimorphism
\begin{equation}
\widetilde{\DH}_d\colon J^{(2)}_d\longtwoheadrightarrow \Omega^1_d\ltimes \Omega^2_d.
\end{equation}
Naturality follows from the definitions of $J^{(2)}_d$ and $\Omega^1_d\ltimes \Omega^2_d$ as tensor functors, and each component is an epimorphism by construction, as $W_d$ is, cf.\ \eqref{es:W}.

In particular, $\widetilde{\DH}_d$ composed with the projection to $\Omega^1_d$ gives the natural transformation \eqref{eq:DHI}.
The map $\widetilde{\DH}^{II}_d$ induces a natural transformation of functors $\AMod\to \Mod$ that extends to $\widetilde{\DH}_d$.
\begin{rmk}\label{rmk:semiholethII}
If we restrict ourselves to $J^{[2]}_d$, then also $\widetilde{\DH}^{II}_d$ becomes a natural transformation of endofunctors of $\AMod$.
Therefore, by Theorem \ref{theo:sjchar}, if we restrict $\widetilde{\DH}_{d,J^{(n-2)}_d}$ to $J^{[n]}_d$, by \eqref{es:spsplit}, it factors to a left $A$-linear natural transformation as
\begin{equation}
\widetilde{\DH}^{II}_{d,J^{(n-2)}_d}\circ{\iota_{J^{[n]}_d}}\colon J^{[n]}_d\longrightarrow \Omega^2_d.
\end{equation}
\end{rmk}

By diagram \eqref{es:2jetDt}, if we now restrict $\widetilde{\DH}_d \colon J^{(2)}_d\to \Omega^1_d\ltimes \Omega^2_d$ to $\Omega^1_d\circ J^1_d$ by $\iota^1_{d,J^1_d}\colon \Omega^1_d\circ J^1_d\hookrightarrow J^{(2)}_d$, we obtain the natural transformation $W_d\colon \Omega^1_d\circ J^1_d\to \Omega^1_d\ltimes \Omega^2_d$.
\begin{rmk}
Diagram \eqref{es:2jetDt} also yields $S^2_d=\ker(W_d)$.
\end{rmk}
Moreover, if we precompose $W_d$ with $\Omega^1_d(\iota^1_d)$, then by \eqref{es:W} we obtain $\wedge\colon \Omega^1_d\circ\Omega^1_d\rightarrow \Omega^2_d$.

\begin{defi}
The \emph{(holonomic) $2$-jet functor}, denoted by $J^2_d$, is the kernel of $\widetilde{\DH}_d$ in the category of functors $\AMod\to \AMod$.
\end{defi}

\begin{prop}[Functorial $2$-jet exact sequence]\label{prop:functorial2jetseq}
If $\Omega^1_d$ is flat in $\ModA$, the $2$-jet functor satisfies the following short exact sequence in the category of functors $\AMod\to\AMod$
\begin{equation}\label{es:2jetfun}
\begin{tikzcd}
0 \arrow[r] &S_d^2 \arrow[r,hookrightarrow,"\iota_d^2"]& J^2_d \arrow[r,twoheadrightarrow,"\pi^{2,1}_d"]& J^1_d \arrow[r]&0.
\end{tikzcd}
\end{equation}
\end{prop}
\begin{proof}
All objects and arrows in the two bottom rows of \eqref{es:2jetDt} and \eqref{es:W} are natural, so we obtain the same diagrams on functors.
By flatness of $\Omega^1_d$, all rows in these diagrams are short exact sequences.
We take the kernel sequences for both, and they are exact by the nine lemma.
The first one gives us
\begin{equation}
\begin{tikzcd}
0 \arrow[r] &\ker(W) \arrow[r,hookrightarrow,"\iota_d^2"]& J^2_d \arrow[r,twoheadrightarrow,"\pi^{2,1}_d"]& J^1_d \arrow[r]&0,
\end{tikzcd}
\end{equation}
and the second gives us $\ker(W)=S^n_d$.
\end{proof}
The same relations shown for modules in \eqref{diag:fish} hold for functors if $\Omega^1_d$ is a flat right $A$-module.

\subsection{The classical $2$-jet functor}
Next, we extend Theorem \ref{theo:classical1jet} to cover the $2$-jet functor.
\begin{theo}\label{theo:classical2jet}
	Let $M$ be a smooth manifold, $A=\smooth{M}$ its algebra of smooth functions, and $E=\Gamma(M,N)$ the space of smooth sections of a vector bundle $N\rightarrow M$.
	Then $J^2_dE \cong \Gamma(M,J^2N)$ in $\AMod$, the classical module of $2$-jets of sections of $N$, and the isomorphism takes our prolongation to the classical prolongation.
\end{theo}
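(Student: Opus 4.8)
The strategy is to identify our $J^2_d E=\ker(\widetilde{\DH}_E)$ (Definition \ref{def:2-jet}) with the classical holonomic $2$-jet by matching $\widetilde{\DH}_E$ with H.\ Goldschmidt's operator $\rho$ of \cite[Proposition 3, p.\ 432]{Goldschmidt}, of which it is the noncommutative generalization, and then invoking the uniqueness of the kernel recorded just before Proposition \ref{prop:defeth}. First I would assemble the ingredients. Since $E=\Gamma(M,N)$ is finitely generated projective by the Serre--Swan theorem it is flat, so $\Tor^A_1(\Omega^1_d,E)=0$ and the explicit presentation $J^2_d E=S^2_d(E)+A\,j^2_d(E)=\ker(\widetilde{\DH}_E)\subseteq J^{(2)}_d E$ of \eqref{eq:exp2jet} is available, with $\widetilde{\DH}_E\colon J^{(2)}_d E\to\Omega^1_d(E)\ltimes\Omega^2_d(E)$ as in Proposition \ref{prop:defeth}. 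From Theorem \ref{theo:classical1jet} and Theorem \ref{theo:classicalnonsemi} we have $A$-linear isomorphisms $J^1_d E\cong\Gamma(M,J^1N)$ and $J^{(2)}_d E\cong\Gamma(M,J^2_{\mathrm{non}}N)$ intertwining the prolongations $j^1_d$, $j^{(2)}_d$ and the jet projections with their classical counterparts; from the classical exterior-algebra and symmetric-forms examples of §\ref{s:quantumsymmform} together with $\Omega^n_d=\Omega^n_d\otimes_A-$, we get $S^2_d(E)\cong S^2(M,N)$, $\Omega^1_d(E)\cong\Omega^1(M,N)$, $\Omega^2_d(E)\cong\Omega^2(M,N)$, and hence $\Omega^1_d(J^1_d E)\cong\Omega^1(M,J^1N)$.

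Next I would pin down the codomain of $\widetilde{\DH}_E$ in classical terms. Goldschmidt's operator is $\rho\colon J^1(J^1N)\to C^1_1N$ with $C^1_1N=(T^\ast M\otimes J^1N)/\delta(S^2T^\ast M\otimes N)$, fitting into the exact sequence $0\to J^2N\to J^1(J^1N)\to C^1_1N\to 0$. I would check that, under the identifications above, the abstract inclusion $\Omega^1_d(\iota^1_{d,E})\circ\iota^2_{\wedge,E}\colon S^2_d(E)\hookrightarrow\Omega^1_d(J^1_d E)$ corresponds to the classical Spencer map $\delta\colon S^2T^\ast M\otimes N\to T^\ast M\otimes J^1N$ — this is essentially the observation that $\iota^2_{\wedge}$ is the inclusion of symmetric into generic $2$-tensors while $\Omega^1_d(\iota^1_d)$ applies the form-into-jet embedding in the second slot, which is exactly how $\delta$ is built (up to the sign convention already fixed in Remark \ref{rmk:convention}). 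Consequently $\Omega^1_d(E)\ltimes\Omega^2_d(E)=\coker(\Omega^1_d(\iota^1_{d,E})\circ\iota^2_{\wedge,E})$ (cf.\ \eqref{es:2jetDt}, \eqref{es:W}) is identified with $\Gamma(M,C^1_1N)$ as a left $A$-module so that the natural quotient $W_E\colon\Omega^1_d(J^1_d E)\twoheadrightarrow\Omega^1_d(E)\ltimes\Omega^2_d(E)$ corresponds to the natural projection $T^\ast M\otimes J^1N\to C^1_1N$, which by \cite[Proposition 3]{Goldschmidt} is the symbol of the lift $\widetilde\rho$.

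I would then conclude. As recorded just before Proposition \ref{prop:defeth}, the submodule $\ker(\widetilde{\DH}_E)\subseteq J^{(2)}_d E$ depends only on the fact that $\widetilde{\DH}_E$ is an $A$-linear map into $\Omega^1_d(E)\ltimes\Omega^2_d(E)$ whose restriction along $\iota^1_{d,J^1_d E}$ is $W_E$ and which vanishes on $j^{(2)}_{d,E}(E)$; indeed any such map gives an inclusion of extensions of $J^1_d E$ by $S^2_d(E)$, which the snake lemma upgrades to an isomorphism. Transported through the isomorphisms above, the classical $\rho$ is $A$-linear, has symbol $W_E$, and vanishes on $J^2N\supseteq j^2(\Gamma(M,N))=\operatorname{im}j^{(2)}_{d,E}$; hence its kernel is the submodule $J^2_d E$, i.e.\ $J^2_d E\cong\ker(\rho)=\Gamma(M,J^2N)$ in $\AMod$. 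Finally, $j^2_{d,E}$ is by definition the factorization of $j^{(2)}_{d,E}$ through $J^2_d E$, which corresponds to the classical nonholonomic prolongation factored through $J^2N$, i.e.\ the classical prolongation $j^2$; comparing our $2$-jet sequence from \eqref{es:2jetDt} with $0\to S^2(M,N)\to\Gamma(M,J^2N)\to\Gamma(M,J^1N)\to 0$ via the five lemma then shows $\iota^2_d$ and $\pi^{2,1}_d$ go to the classical inclusion and projection.

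The main obstacle will be the middle step: setting up the dictionary between our abstract objects $\Omega^1_d\ltimes\Omega^2_d$, $W$, $\widetilde{\DH}$ and the classical Spencer-complex data $C^1_1N$, $\delta$, $\rho$ in degrees $\le 2$ — in particular verifying that $\Omega^1_d(\iota^1_d)\circ\iota^2_{\wedge}$ realizes the Spencer $\delta$ and that $W$ realizes the symbol of $\rho$. Everything else is either imported from the earlier classical-comparison theorems or is formal (uniqueness of kernels, snake/nine/five lemmas). A self-contained alternative, avoiding \cite{Goldschmidt}, would work from $J^2_d E=\ker\!\big(\widetilde{\DH}^{II}_E|_{J^{[2]}_d E}\colon J^{[2]}_d E\to\Omega^2_d(E)\big)$ (using Remark \ref{rmk:semiholethII} and $\ker\widetilde{\DH}^I_E=J^{[2]}_d E$) together with Theorem \ref{theo:classicalnonsemi}'s identification $J^{[2]}_d E\cong\Gamma(M,J^2_{\mathrm{semi}}N)$, and then match $\widetilde{\DH}^{II}|_{J^{[2]}_d}$ — whose symbol on $T^2_d$ is $\wedge$ by \eqref{eq:Wrestrictstowedge} — with the classical antisymmetrization $\Gamma(M,J^2_{\mathrm{semi}}N)\to\Omega^2(M,N)$ cutting out $J^2N$; the matching again reduces to comparing symbols and vanishing on prolongations. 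I expect the Goldschmidt route to be the shorter write-up.
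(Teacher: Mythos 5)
Your proposal is correct and follows essentially the same route as the paper: identify $J^{(2)}_dE$ with the classical nonholonomic $2$-jets via Theorem \ref{theo:classicalnonsemi}, match $\widetilde{\DH}_E$ with Goldschmidt's $\rho$ through a uniqueness argument based on the symbol and vanishing on prolongations so that the kernels coincide, and observe that both prolongations factor the iterated first prolongation. The paper's own proof is just a terser version of this, leaving implicit the dictionary (between $\Omega^1_d(E)\ltimes\Omega^2_d(E)$ and $C^1_1N$, and between $W_E$ and the symbol of $\rho$) that you correctly flag as the step requiring actual verification.
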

\begin{proof}
	By Theorem \ref{theo:classicalnonsemi}, the functor $J^{(2)}_d$ coincides with the classical one.
	Then, the uniqueness result \cite[Proposition 3]{Goldschmidt} identifies the map $\rho$ with our map $\widetilde{\DH}_d$.
	Thus, the kernel of $\widetilde{\DH}_d$ is the classical $2$-jet module, seen as a submodule of the nonholonomic jets.
	The prolongation maps coincide with the composition of the first jet prolongation with itself both for our construction and classically, and so by Theorem \ref{theo:classical1jet}, they coincide.
\end{proof}

\section{Holonomic and sesquiholonomic jet functors}
\subsection{The $n$-jet functor}
Similar to Definition \ref{def:qsf}, and in the spirit of the characterization \cite[Proposition 3, p.~432]{Goldschmidt}, we define the $n$-jet functor by induction on $n$.
\begin{defi}[Holonomic $n$-jet functor]\label{def:njet}
Let $A$ be a $\bk$-algebra endowed with an exterior algebra $\Omega^{\bullet}_d$.
We define $J^0_d\colonequals \id_{\AMod}$, $J^1_d$ as in §\ref{ss:funct1j}, $l^1_d\colonequals\id_{J^1_d}$, and by induction, $J^n_d$ as the kernel of the natural transformation
\begin{equation}
\widetilde{\DH}_{d,J^{n-2}_d} \circ J^1_d(l^{n-1}_d)\colon J^1_d\circ J^{n-1}_d \longrightarrow (\Omega^1_d\ltimes \Omega^2_d)\circ J^{n-2}_d,
\end{equation}
where we denote the natural inclusion by $l^n_d\colon J^n_d\longhookrightarrow J^1_d\circ J^{n-1}_d$.
We call $J^n_d$ the \emph{(holonomic) $n$-jet functor}.
\end{defi}
\begin{rmk}[Jet functors on bimodules]\label{rem:jetfunctorsonbimodules}
Analogously, we can define the functors $J^n_d\colon\AMod_B\to \AMod_B$.
Since the forgetful functor $\AMod_B\rightarrow\AMod$ creates limits, and since limits in functor categories are computed object-wise, the two definitions are compatible via the forgetful functor.
\end{rmk}
It is natural to consider the following composition
\begin{equation}\label{eq:iotajn}
\iota_{J^n_d}\colonequals J^{(n-2)}_d(l^2_d)\circ J^{(n-3)}_d(l^3_d)\circ \cdots \circ J^{(1)}_d(l^{n-1}_d)\circ l^n_d \colon J^n_d\longrightarrow J^{(n)}_d.
\end{equation}
\begin{rmk}\label{rmk:iotaholinj}
	The natural transformation $\iota_{J^n_d}$ provides a mapping from the holonomic to the semiholonomic jet, but in general it is not injective (as has been noted before in the setting of synthetic differential geometry, e.g.\ \cite[p.~87]{kock2010synthetic}).
However, if we assume $\Omega^1_d$ to be flat in $\ModA$, the functor $J^1_d$ is exact by Corollary \ref{cor:Jex}, and thus the maps $J^{(n-k)}_d(l^k_d)$ are injective for all $0\le k\le n$.
In this case, $\iota_{J^n_d}$ is a natural monomorphism.
\end{rmk}

From the definition and from the functoriality of the jet functors, it follows that for all $0\le m\le n$ we have the equality
\begin{equation}\label{eq:spinjn}
\iota_{J^n_d}
=J^{(m)}_d(\iota_{J^{n-m}_d})\circ J^{(m-1)}_d(l^{n-m+1}_d)\circ J^{(m-2)}_d(l^{n-m+2}_d)\circ \cdots \circ J^{(1)}_d(l^{n-1}_d)\circ l^n_d,
\end{equation}
which will be used later on in this section.

The following lemma shows an equivalent description of $J^n_d$.
\begin{lemma}\label{lemma:generalspencer}
For all $n\ge 2$, the following is a pullback diagram
\begin{equation}\label{diag:generalspencer}
\begin{tikzcd}[column sep=40pt]
J^n_d\ar[r,hookrightarrow,"l^n_d"]\ar[d]&J^1_d\circ J^{n-1}_d\ar[d,"J^1_d(l^{n-1}_d)"]\\
J^2_d\circ J^{n-2}_d\ar[r,hookrightarrow,"l^2_{d,J^{n-2}_d}"']&J^{(2)}_d\circ J^{n-2}_d
\end{tikzcd}
\end{equation}

Moreover, if $\Omega^1_d$ is flat in $\ModA$, then \eqref{diag:generalspencer} is an intersection in $J^{(2)}_d\circ J^{n-2}_d$ and in $J^{(n)}_d$.
\end{lemma}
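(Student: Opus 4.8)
The plan is to mimic closely the proof of Lemma \ref{lemma:pbqsf}, which established the analogous pullback square for the quantum symmetric forms $S^n_d$. First I would recall that $J^n_d$ is defined in Definition \ref{def:njet} as the kernel of the natural transformation $\widetilde{\DH}_{J^{n-2}_d}\circ J^1_d(l^{n-1}_d)\colon J^1_d\circ J^{n-1}_d\to (\Omega^1_d\ltimes\Omega^2_d)\circ J^{n-2}_d$, while $J^2_d\circ J^{n-2}_d$ is, by the $n=2$ case of the same definition applied and then composed with $J^{n-2}_d$, the kernel of $\widetilde{\DH}_{J^{n-2}_d}\colon J^{(2)}_d\circ J^{n-2}_d\to (\Omega^1_d\ltimes\Omega^2_d)\circ J^{n-2}_d$ (using $l^2_{d,J^{n-2}_d}$ for the inclusion, and noting $J^1_d\circ J^1_d\circ J^{n-2}_d = J^{(2)}_d\circ J^{n-2}_d$). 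The key observation is that both kernels are taken of natural transformations landing in the \emph{same} target functor $(\Omega^1_d\ltimes\Omega^2_d)\circ J^{n-2}_d$, and that $\widetilde{\DH}_{J^{n-2}_d}\circ J^1_d(l^{n-1}_d)$ factors through $J^1_d(l^{n-1}_d)$ followed by $\widetilde{\DH}_{J^{n-2}_d}$.

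The concrete steps: (1) Write down the commutative square whose top horizontal map is $l^n_d$, right vertical is $J^1_d(l^{n-1}_d)$, bottom horizontal is $l^2_{d,J^{n-2}_d}$, and left vertical is the dashed map obtained from the universal property of the kernel — exactly parallel to the diagram in the proof of Lemma \ref{lemma:pbqsf}, but now with the pair of maps $J^1_d(l^{n-1}_d)$ and $\widetilde{\DH}_{J^{n-2}_d}\colon J^{(2)}_d\circ J^{n-2}_d\to (\Omega^1_d\ltimes\Omega^2_d)\circ J^{n-2}_d$ in the top row, whose composite is the map cutting out $J^n_d$. The right-hand square commutes trivially (it is $\widetilde{\DH}_{J^{n-2}_d}$ against the identity on the target), and $l^2_{d,J^{n-2}_d}$ is the kernel map of $\widetilde{\DH}_{J^{n-2}_d}$ while $l^n_d$ is the kernel map of the composite; the dashed map $J^n_d\to J^2_d\circ J^{n-2}_d$ then exists because $\widetilde{\DH}_{J^{n-2}_d}$ annihilates the image of $l^n_d$ post-composed with $J^1_d(l^{n-1}_d)$. (2) Invoke Lemma \ref{lemma:pb}: since $J^1_d(l^{n-1}_d)$ is a monomorphism when $\Omega^1_d$ is flat in $\ModA$ (Remark \ref{rmk:iotaholinj}), and in general $l^{n-1}_d$ is a monomorphism by construction so $J^1_d(l^{n-1}_d)$ is — wait, here I need to be careful: $J^1_d$ is only right exact in general, so $J^1_d(l^{n-1}_d)$ need not be mono without the flatness hypothesis. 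However, Lemma \ref{lemma:pb} only requires that the \emph{right vertical map $h$ of the square to which it is applied} be a monomorphism; in our square that role is played by $J^1_d(l^{n-1}_d)$ composed into $J^{(2)}_d\circ J^{n-2}_d$, so to get the pullback statement in full generality I would instead apply Lemma \ref{lemma:pb} to the square with $h$ being the identity map of the common target — i.e. reorganize so that the mono hypothesis is automatically satisfied, reading off the pullback of $\ker(\widetilde{\DH}_{J^{n-2}_d})$ along $J^1_d(l^{n-1}_d)$ directly as $\ker(\widetilde{\DH}_{J^{n-2}_d}\circ J^1_d(l^{n-1}_d)) = J^n_d$. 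This is precisely the first assertion of Lemma \ref{lemma:pb} (pullback of a kernel along a map), which needs no monomorphism hypothesis.

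For the second assertion — that \eqref{diag:generalspencer} is an \emph{intersection} inside $J^{(2)}_d\circ J^{n-2}_d$ and inside $J^{(n)}_d$ — I would assume $\Omega^1_d$ is flat in $\ModA$, so that $J^1_d$ is exact (Corollary \ref{cor:Jex}), whence $J^1_d(l^{n-1}_d)$ is a monomorphism and $J^{(n-k)}_d(l^k_d)$ is a monomorphism for all $0\le k\le n$ (Remark \ref{rmk:iotaholinj}); then the second part of Lemma \ref{lemma:pb} (when the map $g$ is also mono, $\ker f = \ker f' \cap M$) upgrades the pullback to an intersection of subfunctors of $J^{(2)}_d\circ J^{n-2}_d$. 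Composing with the monomorphism $J^{(2)}_d(\iota_{J^{n-2}_d})\colon J^{(2)}_d\circ J^{n-2}_d\hookrightarrow J^{(n)}_d$ (which is mono by exactness of $J^{(2)}_d$, cf.\ Remark \ref{rmk:nhJexact}, and injectivity of $\iota_{J^{n-2}_d}$), and using \eqref{eq:spinjn} to identify $\iota_{J^n_d}$ with the relevant composite, transports the intersection into $J^{(n)}_d$ — this last transport step is exactly the pattern used at the end of the proof of Lemma \ref{lemma:pbqsf}. The main obstacle I anticipate is \textbf{bookkeeping the identifications}: making sure that the two descriptions of $J^2_d\circ J^{n-2}_d$ (as $\ker(\widetilde{\DH})$ composed with $J^{n-2}_d$ versus as $J^2_d$ applied to $J^{n-2}_d$) agree as subfunctors of $J^{(2)}_d\circ J^{n-2}_d$, and that the inclusions $l^2_{d,J^{n-2}_d}$, $l^n_d$, $\iota_{J^n_d}$ are compatible via \eqref{eq:spinjn}; the homological input (flatness $\Rightarrow$ exactness $\Rightarrow$ the relevant maps are monic) is already packaged in the cited results and should go through without friction.
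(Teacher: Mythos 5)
Your proposal is correct and follows essentially the same route as the paper: the paper's proof simply says it is "essentially the same as that of Lemma \ref{lemma:pbqsf}" and displays exactly the diagram you describe, with $\ker(\widetilde{\DH}_{J^{n-2}_d}\circ J^1_d(l^{n-1}_d))=J^n_d$ on top, $\ker(\widetilde{\DH}_{J^{n-2}_d})=J^2_d\circ J^{n-2}_d$ on the bottom, the identity as the right-hand vertical (so the monomorphism hypothesis of Lemma \ref{lemma:pb} is automatic), and flatness of $\Omega^1_d$ used only to make $J^1_d(l^{n-1}_d)$ and the composite into $J^{(n)}_d$ monic for the intersection statement. Your resolution of the worry about $J^1_d(l^{n-1}_d)$ not being mono in general is exactly the right one and matches the paper's (implicit) handling.
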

\begin{proof}
The proof is essentially the same as that of Lemma \ref{lemma:pbqsf}, but using the following diagram
\begin{equation}
\begin{tikzcd}[column sep=50pt]
J^n_d\ar[r,hookrightarrow,"l^n_d"]\ar[d,dashed]&J^1_d\circ J^{n-1}_d\ar[d,"J^1_d(l^{n-1}_d)"]\ar[r,"\widetilde{\DH}_{d,J^{n-2}_d}\circ J^1_d(l^{n-1}_d)"]&(\Omega^1_d\ltimes\Omega^2_d)\circ J^{n-2}_d\ar[d,equals]\\
J^2_d\circ J^{n-2}_d\ar[r,hookrightarrow,"l^2_{d,J^{n-2}_d}"']&J^{(2)}_d\circ J^{n-2}_d\ar[r,twoheadrightarrow,"\widetilde{\DH}_{d,J^{n-2}_d}"']&(\Omega^1_d\ltimes\Omega^2_d)\circ J^{n-2}_d
\end{tikzcd}
\end{equation}

For the final statement one has to compose the pullback diagram with $J^{(2)}_d(l^{n-2}_d)$, which is a mono by flatness of $\Omega^1_d$.
\end{proof}

With more regularity on the exterior algebra, we get other equivalent descriptions of $J^n_d$.
\begin{lemma}\label{lemma:Jchar}
For all $n\ge 2$, if $\Omega^1_d$ and $\Omega^2_d$ are flat in $\ModA$, the following sub-functors of $J^{(n)}_d$ coincide
\begin{enumerate}
\item\label{lemma:Jchar:1} $J^n_d$;
\item\label{lemma:Jchar:2} $\bigcap_{k=0}^{n-2}\ker\left(J^{(k)}_d \widetilde{\DH}_{d,J^{(n-k-2)}_d} \right)$;
\item\label{lemma:Jchar:3} The intersection $J^h_d\circ J^{(n-h)}_d\cap J^{(n-k)}_d \circ J^k_d$ for $k,h\ge 0$ such that $h+k>n$.
\end{enumerate}
\end{lemma}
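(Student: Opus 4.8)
The plan is to prove Lemma \ref{lemma:Jchar} by induction on $n$, following the same template used for Theorem \ref{theo:sjchar} (characterization of $J^{[n]}_d$) and Lemma \ref{lemma:Schar} (characterization of $S^n_d$). For $n=2$ all three descriptions reduce to $\ker(\widetilde{\DH})$ by Definition \ref{def:2-jet} and Definition \ref{def:njet}, so the base case is immediate. For the inductive step, fix $n>2$, assume the statement for all $2\le m<n$, and establish the chain of inclusions \eqref{lemma:Jchar:2}$\subseteq$\eqref{lemma:Jchar:1}, \eqref{lemma:Jchar:1}$=$\eqref{lemma:Jchar:3}, and \eqref{lemma:Jchar:3}$\subseteq$\eqref{lemma:Jchar:2}, exactly mirroring the bookkeeping in the proof of Theorem \ref{theo:sjchar}.

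First I would prove \eqref{lemma:Jchar:2}$\subseteq$\eqref{lemma:Jchar:1}. Since by definition $J^n_d$ is the kernel of $\widetilde{\DH}_{J^{n-2}_d}\circ J^1_d(l^{n-1}_d)$ inside $J^1_d\circ J^{n-1}_d$, I need to show that the functor in \eqref{lemma:Jchar:2} is contained in $J^1_d\circ J^{n-1}_d$ and is killed by that composite. Containment in $J^1_d\circ J^{n-1}_d$ follows because the terms $J^{(k)}_d\widetilde{\DH}_{J^{(n-k-2)}_d}$ for $k\ge 1$ can be written as $J^1_d$ applied to $J^{(k-1)}_d\widetilde{\DH}_{J^{(n-k-2)}_d}$ (using that $J^{(k)}_d = J^1_d\circ J^{(k-1)}_d$), and the intersection over these, by the inductive hypothesis applied inside $J^1_d(-)$ together with exactness of $J^1_d$ (Corollary \ref{cor:Jex}, valid since $\Omega^1_d$ is flat in $\ModA$), recovers $J^1_d\circ J^{n-1}_d$. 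For the vanishing under $\widetilde{\DH}_{J^{n-2}_d}\circ J^1_d(l^{n-1}_d)$: the $k=0$ term of \eqref{lemma:Jchar:2} is precisely $\ker(\widetilde{\DH}_{J^{(n-2)}_d})$, and one checks via a naturality square — analogous to \eqref{eq:edhInat} — relating $\widetilde{\DH}^{I,II}_{J^{(n-2)}_d}\circ J^1_d(\iota_{J^{n-1}_d})$ to $\widetilde{\DH}_{J^{n-2}_d}\circ J^1_d(l^{n-1}_d)$ composed with the monomorphism $(\Omega^1_d\ltimes\Omega^2_d)(\iota_{J^{n-2}_d})$ (a mono by flatness of $\Omega^1_d$ and $\Omega^2_d$, via Lemma \ref{lemma:spplacederfun}); this forces the vanishing.

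Next, for \eqref{lemma:Jchar:1}$=$\eqref{lemma:Jchar:3}: as in Theorem \ref{theo:sjchar}, the only nontrivial cases are $h,k\ge 2$, and then I would split the index range of the intersection in \eqref{lemma:Jchar:2} into $\{n-h,\dots,n-2\}$ and $\{0,\dots,k-2\}$, re-index each piece, and use that $J^1_d$ (hence $J^{(j)}_d$) preserves finite limits to pull the outer composition factors out, landing on $(J^h_d\circ J^{(n-h)}_d)\cap(J^{(n-k)}_d\circ J^k_d)$ by the inductive hypothesis; this also shows all such intersections coincide. Then \eqref{lemma:Jchar:3}$\subseteq$\eqref{lemma:Jchar:2} reduces, exactly as in \eqref{eq:edhInat1}, to checking that $(J^2_d\circ J^{(n-2)}_d)\cap(J^1_d\circ J^{n-1}_d)$ vanishes under $\widetilde{\DH}_{J^{n-2}_d}\circ J^1_d(l^{n-1}_d)$, which follows from a commutative diagram like \eqref{eq:edhInat1} using that $l^2_{d,J^{(n-2)}_d}\colon J^2_d\circ J^{(n-2)}_d\to J^{(2)}_d\circ J^{(n-2)}_d$ is the kernel of $\widetilde{\DH}_{J^{(n-2)}_d}$ and that the relevant vertical maps are monomorphisms.

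The main obstacle I anticipate is the same one that made Theorem \ref{theo:sjchar} delicate: carefully tracking which naturality squares commute and, crucially, which maps are monomorphisms so that the "commutes after composing with a mono implies commutes" arguments go through. Here the extra subtlety over Theorem \ref{theo:sjchar} is that $\widetilde{\DH}$ has the second component $\widetilde{\DH}^{II}$ landing in $\Omega^2_d$, so I must invoke flatness of $\Omega^2_d$ (not just $\Omega^1_d$) to keep $(\Omega^1_d\ltimes\Omega^2_d)\circ(\iota_{J^{n-2}_d})$ injective, and ensure the splitting of \eqref{es:spsplit} plays well with the inductive reductions. Everything else is formal diagram-chasing of the kind already carried out in the proofs of Theorem \ref{theo:sjchar} and Lemma \ref{lemma:Schar}, so I would keep the write-up terse and point to those proofs for the parallel steps.
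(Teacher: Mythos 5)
Your three steps, taken together, do not prove the lemma: what you actually establish is \eqref{lemma:Jchar:2}$\subseteq$\eqref{lemma:Jchar:1}, then \eqref{lemma:Jchar:2}$=$\eqref{lemma:Jchar:3} (your second step manipulates the intersection in \eqref{lemma:Jchar:2}, not $J^n_d$ itself, so despite its heading it proves nothing about \eqref{lemma:Jchar:1}), and then \eqref{lemma:Jchar:3}$\subseteq$\eqref{lemma:Jchar:1} (your third step shows the intersection is killed by $\widetilde{\DH}_{J^{n-2}_d}\circ J^1_d(l^{n-1}_d)$, whose kernel is $J^n_d$, i.e.\ \eqref{lemma:Jchar:1}, not \eqref{lemma:Jchar:2}). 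These combine to \eqref{lemma:Jchar:2}$=$\eqref{lemma:Jchar:3}$\subseteq$\eqref{lemma:Jchar:1}, and the containment \eqref{lemma:Jchar:1}$\subseteq$\eqref{lemma:Jchar:2} is never established, so the cycle does not close. The source of the slip is that you transplanted the template of Theorem \ref{theo:sjchar} without adjusting for the fact that the roles are swapped: there the intersection-of-kernels description equals $J^{[n]}_d$ unconditionally by \eqref{theo:sjchar:3}, so manipulating the intersection is manipulating item (i); here $J^n_d$ is defined by the inductive kernel condition, and the intersection of kernels is precisely what must be compared to it.

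The missing direction is the paper's first step, and it is where the decomposition \eqref{eq:spinjn} of $\iota_{J^n_d}$ earns its keep: one shows $J^{(k)}_d\widetilde{\DH}_{J^{(n-k-2)}_d}\circ\iota_{J^n_d}=0$ for every $0\le k\le n-2$ by peeling $\iota_{J^n_d}$ apart at level $k+2$ and using naturality of $\widetilde{\DH}$ together with the defining relation $\widetilde{\DH}_{J^{n-k-2}_d}\circ J^1_d(l^{n-k-1}_d)\circ l^{n-k}_d=0$. The gap is fillable with tools you already set up: for $k\ge 1$ the kernel conditions follow from $J^n_d\subseteq J^1_d\circ J^{n-1}_d$ together with your inductive identification of $\bigcap_{k\ge 1}\ker\left(J^{(k)}_d\widetilde{\DH}_{J^{(n-k-2)}_d}\right)$ with $J^1_d\circ J^{n-1}_d$, and for $k=0$ your own naturality square, read in the other direction (composed with $l^n_d$ rather than cancelled against a monomorphism), gives $\widetilde{\DH}_{J^{(n-2)}_d}\circ\iota_{J^n_d}=(\Omega^1_d\ltimes\Omega^2_d)(\iota_{J^{n-2}_d})\circ\widetilde{\DH}_{J^{n-2}_d}\circ J^1_d(l^{n-1}_d)\circ l^n_d=0$. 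The steps you do carry out are otherwise sound and correspond to the paper's \eqref{lemma:Jchar:2}$\subseteq$\eqref{lemma:Jchar:3} and \eqref{lemma:Jchar:3}$\subseteq$\eqref{lemma:Jchar:1} arguments, but as written the proposal does not prove the three descriptions coincide.
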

\begin{proof}
For $n=2$ all conditions coincide.
Now let $n>2$ and assume the statement true for all $k<n$.

\eqref{lemma:Jchar:1}$\subseteq$\eqref{lemma:Jchar:2}: We prove that $J^{(k)}_d\circ \widetilde{\DH}_{d,J^{(n-k-2)}_d}\circ \iota_{J^n_d}=0$ for $0\le k\le n-2$.
We apply \eqref{eq:spinjn} to decompose $\iota_{J^n_d}$ at the level $k+2$, obtaining
\begin{equation}
\begin{split}
&J^{(k)}_d \widetilde{\DH}_{d,J^{(n-k-2)}_d}\circ \iota_{J^n_d}\\
&\qquad=J^{(k)}_d \widetilde{\DH}_{d,J^{(n-k-2)}_d}\circ J^{(k+2)}_d(\iota_{J^{n-k-2}_d})\circ J^{(k+1)}_d(l^{n-k-1}_d)\circ J^{(k)}_d(l^{n-k}_d)\circ \cdots \circ J^{(1)}_d(l^{n-1}_d)\circ l^n_d\\
&\qquad=J^{(k)}_d \left(\widetilde{\DH}_{d,J^{(n-k-2)}_d}\circ J^{(2)}_d(\iota_{J^{n-k-2}_d})\circ J^1_d(l^{n-k+1}_d)\circ l^{n-k}_d\right)\circ J^{(k-1)}_d(l^{n-k+1}_d)\circ \cdots \circ J^{(1)}_d(l^{n-1}_d)\circ l^n_d.
\end{split}
\end{equation}
Then we see that
\begin{equation}
\widetilde{\DH}_{d,J^{(n-k-2)}_d}\circ J^{(2)}_d(\iota_{J^{n-k-2}_d})\circ J^1_d(l^{n-k+1}_d)\circ l^{n-k}_d
=(\Omega^1_d\ltimes\Omega^2_d)(\iota_{J^{n-k-2}_d})\circ\widetilde{\DH}_{d,J^{n-k-2}_d}\circ J^1_d(l^{n-k+1}_d)\circ l^{n-k}_d,
\end{equation}
which follows from the naturality of $\widetilde{\DH}_d$:
\begin{equation}
\begin{tikzcd}[column sep=50pt]
J^{(2)}_d\circ J^{n-k-2}_d\ar[r,two heads,"\widetilde{\DH}_{d,J^{n-k-2}_d}"]\ar[d,"J^{(2)}_d(\iota_{J^{n-k}_d})"']&(\Omega^1_d\ltimes \Omega^2_d)\circ J^{n-k-2}_d\ar[d,"(\Omega^1_d\ltimes\Omega^2_d)(\iota_{J^{n-k-2}_d})"]\\
J^{(n-k)}_d\ar[r,two heads,"\widetilde{\DH}_{d,J^{(n-k-2)}_d}"]&(\Omega^1_d\ltimes \Omega^2_d)\circ J^{(n-k-2)}_d
\end{tikzcd}
\end{equation}
The definition of $J^{n-k}_d$ yields $J^{(k)}_d \widetilde{\DH}_{d,J^{(n-k-2)}_d}\circ \iota_{J^n_d}=0$.

\eqref{lemma:Jchar:2}$\subseteq$\eqref{lemma:Jchar:3}: The only non-trivial cases occur for $k,h\ge 2$.
Using the inductive hypothesis, we get
\begin{equation}
\begin{split}
\bigcap_{j=0}^{n-2}\ker\left(J^{(j)}_d\widetilde{\DH}_{d,J^{(n-j-2)}_d}\right)
&=\bigcap_{j=0}^{h-2}\ker\left(J^{(j)}_d\widetilde{\DH}_{d,J^{(n-j-2)}_d}\right) \cap \bigcap_{j=n-k}^{n-2}\ker\left(J^{(j)}_d\widetilde{\DH}_{d,J^{(n-j-2)}_d}\right)\\
&=\bigcap_{j=0}^{h-2}\ker\left(J^{(j)}_d\widetilde{\DH}_{d,J^{(h-j-2)}_d}\right)\circ J^{(n-h)}_d \cap J^{(n-k)}_d\circ \bigcap_{j=0}^{k-2}\ker\left(J^{(j)}_d\widetilde{\DH}_{d,J^{(k-j-2)}_d}\right)\\
&=\left(J^h_d\circ J^{(n-h)}_d \right)\cap \left(J^{(n-k)}_d \circ J^k_d\right).
\end{split}
\end{equation}
The second equality is a consequence of the fact that limits are computed object-wise and the fact that $J^1_d$ preserves finite limits.
The last equality follows from the inductive hypothesis.

Notice that we have also proved that for $h,k\ge 2$, the sub-functors $\left(J^h_d\circ J^{(n-h)}_d \right)\cap \left(J^{(n-k)}_d \circ J^k_d\right)$ coincide.

\eqref{lemma:Jchar:3}$\subseteq$\eqref{lemma:Jchar:1}: There is nothing to prove unless $h,k\ge 2$, in which case we have observed that for all such $h,k$, the intersections of the form \eqref{lemma:Jchar:3} coincide, so it is enough to prove
\begin{equation}
J^n_d\subseteq\left(J^2_d\circ J^{(n-2)}_d \right)\cap \left(J^{1}_d \circ J^{n-1}_d\right).
\end{equation}
Consider the following diagram, where the left vertical natural transformation is obtained via the universal property of the kernel
\begin{equation}\label{dg:pbJn}
\begin{tikzcd}[column sep=40pt]
J^n_d\ar[r,hookrightarrow,"l^n_d"]\ar[d]&J^1_d\circ J^{n-1}_d\ar[r,"\widetilde{\DH}_{d,J^{n-2}_d}"]\ar[d,hookrightarrow,"J^1_d(\iota_{J^{n-1}_d})"]&(\Omega^1_d\ltimes\Omega^2_d)\circ J^{n-2}_d\ar[d,hookrightarrow,"(\Omega^1_d\ltimes\Omega^2_d)(\iota_{J^{n-2}_d})"]\\
J^2_d\circ J^{(n-2)}_d\ar[r,hookrightarrow,"l^2_{d,J^{(n-2)}_d}"']&J^{(n)}_d\ar[r,twoheadrightarrow,"\widetilde{\DH}_{d,J^{(n-2)}_d}"']&(\Omega^1_d\ltimes\Omega^2_d)\circ J^{(n-2)}_d
\end{tikzcd}
\end{equation}
Notice that the top horizontal maps form a left exact sequence by definition of $J^n_d$, and the bottom ones form the short exact sequence defining $J^2_d$ evaluated on the image of $J^{(n-2)}_d$.
Notice also that $(\Omega^1_d\ltimes\Omega^2_d)(\iota_{J^{k-2}_d})$ is a natural monomorphism, since $\Omega^1_d\ltimes \Omega^2_d$ is an exact functor (cf.\ Lemma \ref{lemma:spplacederfun}).
From these facts we can use Lemma \ref{lemma:pb} to prove that the left square in \eqref{dg:pbJn} is a pullback square, and since all maps in it are natural inclusions, we realize $J^n_d$ as the desired intersection, thus completing the proof.
\end{proof}
\begin{rmk}
Lemma \ref{lemma:Jchar} provides alternative definitions for the jet functor.
\end{rmk}

\subsubsection{Relation to $J^n_d A\otimes_A -$}
We apply the functor $J^n_d$ to the algebra $A$, obtaining the bimodule $J^n_d A$.
\begin{prop}\label{prop:tensorcomparison}
There exists a canonical natural transformation $\gamma^n_d\colon J^n_d A\otimes_A - \longrightarrow J^n_d$, which is compatible with $l^n_d$ and $\iota_{J^n_d}$, i.e.
\begin{align}\label{eq:gammacompatible}
l^n_d\circ \gamma^n_d=J^1_d(\gamma^{n-1}_d)\circ (l^n_{d,A}\otimes_A \id),
&\hfill&
\iota_{J^n_d}\circ \gamma^n_d=\iota_{J^{n}_d A}\otimes_A \id.
\end{align}

If we restrict the domain category of the functors involved to the subcategory $\AFlat$, then $\gamma^n_d$ is a natural isomorphism.
\end{prop}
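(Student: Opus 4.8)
The plan is to build $\gamma^n$ inductively, mirroring the construction of $\gamma^2$ implicit in the $2$-jet discussion and the pattern of $\tau^n$ in Proposition \ref{prop:Stensorcomparison}. First I would set $\gamma^0 = \id$ and $\gamma^1 = \id_{J^1_d A}\otimes_A - $ (which is literally the definition of the functor $J^1_d$, so these are isomorphisms without hypotheses and trivially satisfy \eqref{eq:gammacompatible}). For the inductive step, assume $\gamma^{n-1}$ and $\gamma^{n-2}$ are given and satisfy \eqref{eq:gammacompatible}. Tensoring the defining kernel sequence of $J^n_d A$ (as a sub-bimodule of $J^1_d J^{n-1}_d A$, cut out by $\widetilde{\DH}_{J^{n-2}_d A}\circ J^1_d(l^{n-1}_d)$ evaluated at $A$) with $-\otimes_A E$ gives a right-exact row; I would compare it against the defining kernel sequence of the functor $J^n_d$ via the vertical maps $\gamma^n_{?}$ built so far. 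Concretely, form the diagram
\begin{equation*}
\begin{tikzcd}[column sep=28pt]
J^n_d A\otimes_A - \ar[r]\ar[d,dashed,"\gamma^n"']&J^1_d J^{n-1}_d A\otimes_A - \ar[r]\ar[d,"J^1_d(\gamma^{n-1})\circ(\cdot)"]&(\Omega^1_d\ltimes\Omega^2_d)J^{n-2}_d A\otimes_A -\ar[d,"(\Omega^1_d\ltimes\Omega^2_d)(\gamma^{n-2})\circ(\cdot)"]\\
J^n_d\ar[r,"l^n_d"]&J^1_d\circ J^{n-1}_d\ar[r,"\widetilde{\DH}_{J^{n-2}_d}\circ J^1_d(l^{n-1}_d)"]&(\Omega^1_d\ltimes\Omega^2_d)\circ J^{n-2}_d
\end{tikzcd}
\end{equation*}
where the middle vertical arrow uses that $J^1_d = J^1_d A\otimes_A-$ is itself a tensor functor, so $J^1_d(J^{n-1}_d A\otimes_A E)\cong J^1_d A\otimes_A J^{n-1}_d A\otimes_A E$ naturally, and the right vertical arrow likewise uses that $\Omega^1_d\ltimes\Omega^2_d$ is a tensor functor (established in the functoriality subsection). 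I would check the right-hand square commutes by naturality of $\widetilde{\DH}$ together with the compatibility \eqref{eq:gammacompatible} for $\gamma^{n-1}$ and $\gamma^{n-2}$; since the top row is (right-)exact at the middle term and the bottom horizontal composite vanishes on the image of $l^n_d$, the universal property of the kernel produces the dashed $\gamma^n$ and the first identity in \eqref{eq:gammacompatible}. The second identity, $\iota_{J^n_d}\circ\gamma^n = \iota_{J^n_d A}\otimes_A\id$, then follows by the same telescoping computation as in Proposition \ref{prop:Stensorcomparison}: expand both $\iota_{J^n_d}$ and $\iota_{J^n_d A}$ via \eqref{eq:spinjn}, apply functoriality of $J^1_d$, and use the inductive compatibility to collapse the composite.

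For the flatness statement I would restrict every functor in sight to $\AFlat$ and argue that $\gamma^n$ is then an isomorphism. The point is that $-\otimes_A E$ is exact for $E$ flat, hence preserves the kernel defining $J^n_d A$; combined with the fact that $J^1_d$ and $\Omega^1_d\ltimes\Omega^2_d$ are tensor functors (so they commute with $-\otimes_A E$ up to the canonical natural iso), the top row of the diagram above is, after evaluation at a flat $E$, literally the image under the exact functor $\ker(-)\colon \AMod^{\bullet\to\bullet}\to\AMod$ of the arrow given by the bottom two squares. Since $\gamma^{n-1}$ and $\gamma^{n-2}$ are isomorphisms on $\AFlat$ by the inductive hypothesis, and functors preserve isomorphisms, the two non-dashed vertical maps are isomorphisms, so $\gamma^n$ — being $\ker$ applied to an isomorphism of arrows — is an isomorphism. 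This is exactly the argument used for $\tau^n$ at the end of Proposition \ref{prop:Stensorcomparison}, and I would cite that proof for the formal details of the ``$\ker$ as a functor on the arrow category'' step.

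The main obstacle I anticipate is bookkeeping rather than conceptual: one must be careful that $J^1_d(\gamma^{n-1})$ in the middle column is really the composite of $\gamma^1_{J^{n-1}_d A\otimes_A E}$ (the canonical iso $J^1_d(M\otimes_A E)\cong J^1_d M\otimes_A E$) with $J^1_d A\otimes_A\gamma^{n-1}_E$, and similarly for the $\Omega^1_d\ltimes\Omega^2_d$ column; getting these canonical-iso identifications to line up so that the right square genuinely commutes on the nose (not just up to a coherence iso) requires invoking that all the functors involved are of the form $M\otimes_A-$ and that tensoring is associative and natural. A secondary subtlety is that the top row is only right exact in general, so one cannot run the kernel argument there without flatness of $E$ — but for the construction of $\gamma^n$ itself this is not needed, since we only use that the composite top map kills the image of the first map, which holds because it is the base change of the corresponding vanishing for $A$. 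Once the flat case is reached, exactness restores everything and the isomorphism claim is immediate. I would keep the proof short, leaning on the cited analogues, and flag the one genuinely new input: that $\Omega^1_d\ltimes\Omega^2_d$ is a tensor functor, which is already recorded in the preceding subsection.
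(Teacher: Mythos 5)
Your proposal is correct and follows essentially the same route as the paper, which constructs $\gamma^n$ inductively via the universal property of the kernel applied to the diagram comparing the tensored defining sequence of $J^n_d A$ with the defining sequence of the functor $J^n_d$, and then invokes the argument of Proposition \ref{prop:Stensorcomparison} (exactness of $-\otimes_A E$ on $\AFlat$ plus $\ker$ as a functor on the arrow category) for the isomorphism claim. The paper's proof is in fact terser than yours — it simply displays the analogous diagram and cites Proposition \ref{prop:Stensorcomparison} — so your additional care about the tensor-functor identifications in the middle and right columns is a faithful elaboration rather than a deviation.
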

\begin{proof}
The proof is analogous to that of Proposition \ref{prop:Stensorcomparison}, via the following diagram instead of \eqref{diag:Stensorcomparison}.
\begin{equation}\label{diag:tensorcomparison}
\begin{tikzcd}[column sep=45pt]
J^n_d A\otimes_A -\ar[r,"l^n_{d,A}\otimes_A \id"]\ar[d,dashed,"\gamma^n_d"']&J^1_d\circ J^{n-1}_d A\otimes_A -\ar[r,"J^1_d(l^{n-1}_{d,A})\otimes_A \id"]\ar[d,"J^1_d(\gamma^{n-1}_d)"]&J^{(2)}_d\circ J^{n-2}_d A\otimes_A -\ar[r,twoheadrightarrow,"\widetilde{\DH}_{d,J^{n-2}_d A}\otimes_A \id"]\ar[d,"J^{(2)}_d(\gamma^{n-2}_d)"]&\Omega^1_d\ltimes \Omega^2_d \circ J^{n-2}_d A\otimes_A -\ar[d,"(\Omega^1_d\ltimes \Omega^2_d)(\gamma^{n-2}_d)"']\\
J^n_d \ar[r,hookrightarrow,"l^n_{d}"']&J^1_d\circ J^{n-1}_d \ar[r,"J^1_d(l^{n-1}_d)"']&J^{(2)}_d\circ J^{n-2}_d \ar[r,twoheadrightarrow,"\widetilde{\DH}_{d,J^{n-2}_d }"']&\Omega^1_d\ltimes \Omega^2_d \circ J^{n-2}_d 
\end{tikzcd}
\end{equation}
\end{proof}

\subsubsection{Holonomic jet sequence}
The aim of this section is to construct the holonomic $n$-jet sequence.

Extending the cases $n=0,1,2$, we give the following definition.
\begin{defi}\label{def:piiotan}
We define the \emph{(holonomic) $n$-jet projection} as the natural transformation
\begin{equation}
\pi^{n,n-1}_d\colonequals \pi^{1,0}_{d,J^{n-1}_d}\circ l^n_d\colon J^n_d\longrightarrow J^{n-1}_d.
\end{equation}
More generally, by composing them, we get, for all $0\le m\le n$,
\begin{equation}
\pi^{n,m}_d\colonequals \pi^{n,n-1}_{d}\circ \pi^{n-1,n-2}_{d}\circ \cdots\circ \pi^{m+1,m}_{d}
\colon J^n_d\longrightarrow J^{m}_d.
\end{equation}

The natural map $\iota^n_d$ is defined by induction, for $n\ge 2$ as the unique (cf.\ Lemma \ref{lemma:uniqiota}) morphism that commutes in the following diagram
\begin{equation}\label{diag:defiiotand}
\begin{tikzcd}
S^n_d\ar[r,hookrightarrow,"\iota^n_\wedge"]\ar[d,dashed,"\iota^n_d"]&\Omega^1_d\circ S^{n-1}_d\ar[r,"\Omega^1_d(\iota^{n-1}_d)"]&[40pt]\Omega^1_d\circ J^{n-1}_d\ar[d,hookrightarrow,"\iota^1_{d,J^{n-1}_d}"]\\
J^n_d\ar[rr,hookrightarrow,"l^n_d"]&&J^1_d\circ J^{n-1}_d
\end{tikzcd}
\end{equation}
\end{defi}
Observe that $\iota^1_d$ and $\iota^2_d$ can also be seen as arising in this way from $\iota^0_d=\id_A$.
\begin{rmk}\label{rmk:gammacomppi}
The natural transformations $\gamma^n_d$ of Proposition \ref{prop:tensorcomparison} are also compatible with the jet projections, as shown in the following commutative diagram
\begin{equation}
\begin{tikzcd}[column sep=60pt, row sep=25]
J^n_d A\otimes_A -\ar[rr,bend left=10pt,"\pi^{n,n-1}_{d,A}\otimes_A \id"]\ar[r,"l^n_{d,A}\otimes_A \id"']\ar[d,"\gamma^n_d"]&J^1_d\circ J^{n-1}_d A\otimes_A - \ar[r,twoheadrightarrow,"\pi^{1,0}_{d,J^{n-1}_d A}\otimes_A \id"']\ar[d,"J^1_d(\gamma^{n-1}_d)"]&J^{n-1}_d A\otimes_A - \ar[d,"\gamma^{n-1}_d"]\\
J^n_d \ar[rr,bend right=10pt,"\pi^{n,n-1}_d"']\ar[r,hookrightarrow,"l^n_d"]&J^1_d\circ J^{n-1}_d\ar[r,twoheadrightarrow,"\pi^{1,0}_{d,J^{n-1}_d}"]&J^{n-1}_d
\end{tikzcd}
\end{equation}
\end{rmk}

\begin{lemma}\label{lemma:uniqiota}
The morphism $\iota^n_d$ of Definition \ref{def:piiotan} exists and is unique.
Moreover, if $\Omega^1_d$ is flat in $\ModA$, then $\iota^n_d$ is a monomorphism.
\end{lemma}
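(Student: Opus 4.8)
The plan is to read $\iota^n_d$ off from the universal property of the kernel defining $J^n_d$ (Definition \ref{def:njet}), to get uniqueness for free because that kernel inclusion is a monomorphism, and to treat the monomorphism assertion by a short induction using exactness of $\Omega^1_d$. The argument is a diagram chase that assembles identities already established in the paper, so there is no new idea required.

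For existence I would argue as follows. By Definition \ref{def:njet} the natural transformation $l^n_d\colon J^n_d\hookrightarrow J^1_d\circ J^{n-1}_d$ is the kernel of $\widetilde{\DH}_{J^{n-2}_d}\circ J^1_d(l^{n-1}_d)$, so a (necessarily unique) $\iota^n_d$ fitting into \eqref{diag:defiiotand} exists exactly when
\[
\widetilde{\DH}_{J^{n-2}_d}\circ J^1_d(l^{n-1}_d)\circ\iota^1_{d,J^{n-1}_d}\circ\Omega^1_d(\iota^{n-1}_d)\circ\iota^n_\wedge = 0 .
\]
I would verify this by rewriting the composite step by step: naturality of $\iota^1_d$ applied to $l^{n-1}_d$ replaces $J^1_d(l^{n-1}_d)\circ\iota^1_{d,J^{n-1}_d}$ by $\iota^1_{d,J^1_d\circ J^{n-2}_d}\circ\Omega^1_d(l^{n-1}_d)$; the identity $\widetilde{\DH}\circ\iota^1_{d,J^1_d}=W$ coming from \eqref{es:2jetDt}, whiskered with $J^{n-2}_d$, turns the head of the composite into $W_{J^{n-2}_d}$; the inductive description of $\iota^{n-1}_d$ (i.e.\ \eqref{diag:defiiotand} at level $n-1$) expands $\Omega^1_d(l^{n-1}_d\circ\iota^{n-1}_d)$ as $\Omega^1_d(\iota^1_{d,J^{n-2}_d})\circ\Omega^1_d(\Omega^1_d(\iota^{n-2}_d))\circ\Omega^1_d(\iota^{n-1}_\wedge)$; the relation $W\circ\Omega^1_d(\iota^1_d)=\wedge$ (the component of $W$ landing in the subfunctor $\Omega^2_d\hookrightarrow\Omega^1_d\ltimes\Omega^2_d$) collapses $W_{J^{n-2}_d}\circ\Omega^1_d(\iota^1_{d,J^{n-2}_d})$ to $\wedge_{J^{n-2}_d}$; and naturality of $\wedge$ with respect to $\iota^{n-2}_d$ slides it past $\Omega^1_d(\Omega^1_d(\iota^{n-2}_d))$, leaving $\wedge_{S^{n-2}_d}\circ\Omega^1_d(\iota^{n-1}_\wedge)\circ\iota^n_\wedge$, which vanishes because this is exactly the kernel inclusion $\iota^n_\wedge$ composed with the map it kills in the definition of $S^n_d$ (Definition \ref{def:qsf}). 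Uniqueness is then immediate: $l^n_d$ is a monomorphism (a kernel inclusion) and $l^n_d\circ\iota^n_d$ is prescribed, so $\iota^n_d$ is determined.

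For the monomorphism claim I would induct on $n$, taking as base the already-constructed monomorphisms $\iota^0_d=\id_A$ and $\iota^1_d$ (the injection in the $1$-jet exact sequence, Proposition \ref{prop:1jses}). Assuming $\iota^{n-1}_d$ is a monomorphism and that $\Omega^1_d$ is flat in $\ModA$, Corollary \ref{cor:Jex} gives that $\Omega^1_d$ is an exact functor, hence $\Omega^1_d(\iota^{n-1}_d)$ is a monomorphism; since $\iota^n_\wedge$ and $\iota^1_{d,J^{n-1}_d}$ are monomorphisms as well, the right-hand vertical-then-horizontal composite in \eqref{diag:defiiotand} is a monomorphism, and therefore so is $\iota^n_d$.

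I expect the only real difficulty to be bookkeeping: ensuring each whiskering is taken at the correct object, and invoking the identity $W\circ\Omega^1_d(\iota^1_d)=\wedge$ precisely in the form that lands in the $\Omega^2_d$-summand, so that its composition with $\iota^n_\wedge$ can be recognized as the defining kernel relation for $S^n_d$. There is no conceptual obstacle once these previously established relations are lined up.
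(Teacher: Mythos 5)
Your proposal is correct and follows essentially the same route as the paper: the paper's proof is the single commutative diagram \eqref{diag:telescopicstair}, whose squares are exactly the naturality of $\iota^1_d$, the identity $\widetilde{\DH}\circ\iota^1_{d,J^1_d}=W$, the restriction $W\circ\Omega^1_d(\iota^1_d)=\wedge$, and the naturality of $\wedge$ that you unroll equationally, with the top row vanishing by the definition of $S^n_d$ and uniqueness coming from the kernel universal property of the bottom row. The monomorphism argument via exactness of $\Omega^1_d$ (Corollary \ref{cor:Jex}) applied to $\iota^{n-1}_d$ is likewise the paper's argument.
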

\begin{proof}
We prove it by induction on $n$.
Uniqueness for $\iota^2_d$ follows from the definition.

For $n>2$, we have the following commutative diagram
\begin{equation}\label{diag:telescopicstair}
\begin{tikzcd}[column sep=50pt]
S^n_d\ar[ddd,dashed,"\iota^n_d"]\ar[r,hookrightarrow,"\iota^n_{\wedge}"]&\Omega^1_d\circ S^{n-1}_d\ar[dd,"\Omega^1_d(\iota^{n-1}_d)"]\ar[r,"\Omega^1_d(\iota^{n-1}_\wedge)"]&\Omega^1_d\circ\Omega^1_d\circ S^{n-2}_d\ar[d,"T^2_d(\iota^{n-2}_d)"']\ar[r,twoheadrightarrow,,"\wedge_{S^{n-2}_d}"]&\Omega^2_d\circ S^{n-2}_d\ar[d,"\Omega^2_d(\iota^{n-2}_d)"]\\
&&\Omega^1_d\circ\Omega^1_d\circ J^{n-2}_d\ar[d,"\Omega^1_d\left(\iota^1_{d,J^{n-2}_d}\right)"']\ar[r,twoheadrightarrow,"\wedge_{J^{n-2}_d}"]&\Omega^2_d\circ J^{n-2}_d\ar[d,hookrightarrow,"\inc^2_{J^{n-2}_d}"]\\
&\Omega^1_d\circ J^{n-1}_d\ar[d,hookrightarrow,"\iota^1_{d,J^{n-1}_d}"]\ar[r,"\Omega^1_d(l^{n-1}_d)"]&\Omega^1_d\circ J^1_d\circ J^{n-2}_d\ar[d,hookrightarrow,"\iota^1_{d,\Omega^1_d\circ J^{n-2}_d}"']\ar[r,twoheadrightarrow,"W_{d,J^{n-2}_d}"]&\Omega^1_d\ltimes \Omega^2_d\circ J^{n-2}_d\ar[d,equals]\\
J^n_d\ar[r,hookrightarrow,"l^{n}_d"]&J^1_d\circ J^{n-1}_d\ar[r,"J^1_d(l^{n-1}_d)"]&J^{(2)}_d\circ J^{n-2}_d\ar[r,twoheadrightarrow,"\widetilde{\DH}_{d,J^{n-2}_d}"]&\Omega^1_d\ltimes \Omega^2_d\circ J^{n-2}_d
\end{tikzcd}
\end{equation}
The composition on the top row vanishes by definition of $S^n_d$, so $\iota^n_d$ exists, and it is unique by the kernel universal property of the bottom row.
If $\Omega^1_d$ is flat in $\ModA$, then $\Omega^1_d(\iota^{n-1}_d)$ is a monomorphism, and thus so is $\iota^n_d$.
\end{proof}

\begin{prop}\label{prop:hjc}
The following is a complex in the category of functors of type $\AMod\to \AMod$
\begin{equation}\label{es:hjn}
\begin{tikzcd}[column sep=40pt]
0\ar[r]&S^{n}_d\ar[r,"\iota^{n}_d"]&J^{n}_d\ar[r,"\pi^{n,n-1}_d"]&J^{n-1}_d\ar[r]&0.
\end{tikzcd}
\end{equation}
Moreover, the following is a map of complexes
\begin{equation}\label{es:hjninnhj}
\begin{tikzcd}[column sep=40pt]
0\ar[r]&S^{n}_d\ar[d,"\Omega^1_d\left(\iota_{J^{n-1}_d}\circ\iota^{n-1}_d\right) \circ\iota^n_{\wedge}"']\ar[r,"\iota^{n}_d"]&J^{n}_d\ar[d,"\iota_{J^n_d}"]\ar[r,"\pi^{n,n-1}_d"]&J^{n-1}_d\ar[r]\ar[d,"\iota_{J^{n-1}_d}"]&0\\
0\ar[r]&\Omega^1_d \circ J^{(n-1)}_d\ar[r,hookrightarrow,"\iota^{(n)}_d"]&J^{(n)}_d\ar[r,twoheadrightarrow,"\pi^{(n,n-1)}_d"]& J^{(n-1)}_d\ar[r]&0
\end{tikzcd}
\end{equation}
which becomes an inclusion of complexes if $\Omega^1_d$ is flat in $\ModA$.
\end{prop}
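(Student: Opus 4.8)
The plan is to reduce everything to the naturality of the two $1$-jet transformations $\iota^1_d\colon\Omega^1_d\to J^1_d$ and $\pi^{1,0}_d\colon J^1_d\to\id_{\AMod}$, together with the defining commutative square \eqref{diag:defiiotand} for $\iota^n_d$ and the telescoping identity \eqref{eq:spinjn} for $\iota_{J^n_d}$ at $m=1$. No new induction is needed beyond what is already packaged in Lemma \ref{lemma:uniqiota} and Remark \ref{rmk:iotaholinj}; the argument is a sequence of naturality rewrites.

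For the first assertion, that \eqref{es:hjn} is a complex, the only nontrivial composite to kill is $\pi^{n,n-1}_d\circ\iota^n_d$. By Definition \ref{def:piiotan} we have $\pi^{n,n-1}_d=\pi^{1,0}_{d,J^{n-1}_d}\circ l^n_d$, and by \eqref{diag:defiiotand} we have $l^n_d\circ\iota^n_d=\iota^1_{d,J^{n-1}_d}\circ\Omega^1_d(\iota^{n-1}_d)\circ\iota^n_\wedge$. Composing these and using $\pi^{1,0}_d\circ\iota^1_d=0$ from the $1$-jet sequence \eqref{es:natJ1} gives $\pi^{n,n-1}_d\circ\iota^n_d=0$ at once. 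The leftmost segment $0\to S^n_d$ needs no further comment for the complex property; injectivity of $\iota^n_d$ under the flatness hypothesis is already in Lemma \ref{lemma:uniqiota}.

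For the second assertion I would check commutativity of the two squares of \eqref{es:hjninnhj} separately, both by naturality at the map $\iota_{J^{n-1}_d}\colon J^{n-1}_d\to J^{(n-1)}_d$. Using $\iota_{J^n_d}=J^1_d(\iota_{J^{n-1}_d})\circ l^n_d$ (the $m=1$ case of \eqref{eq:spinjn}) and $\pi^{(n,n-1)}_d=\pi^{1,0}_{d,J^{(n-1)}_d}$, naturality of $\pi^{1,0}_d$ gives $\pi^{(n,n-1)}_d\circ\iota_{J^n_d}=\iota_{J^{n-1}_d}\circ\pi^{1,0}_{d,J^{n-1}_d}\circ l^n_d=\iota_{J^{n-1}_d}\circ\pi^{n,n-1}_d$, which is the right square. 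For the left square, start from $\iota_{J^n_d}\circ\iota^n_d=J^1_d(\iota_{J^{n-1}_d})\circ l^n_d\circ\iota^n_d$, substitute $l^n_d\circ\iota^n_d$ via \eqref{diag:defiiotand}, then apply naturality of $\iota^1_d$ to turn $J^1_d(\iota_{J^{n-1}_d})\circ\iota^1_{d,J^{n-1}_d}$ into $\iota^1_{d,J^{(n-1)}_d}\circ\Omega^1_d(\iota_{J^{n-1}_d})=\iota^{(n)}_d\circ\Omega^1_d(\iota_{J^{n-1}_d})$; collapsing the two successive applications of $\Omega^1_d$ into $\Omega^1_d(\iota_{J^{n-1}_d}\circ\iota^{n-1}_d)$ yields exactly the left vertical arrow of \eqref{es:hjninnhj} postcomposed with $\iota^{(n)}_d$. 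That both rows are complexes follows from the first assertion and the exactness of the nonholonomic sequence \eqref{es:nhjets}.

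Finally, under the hypothesis that $\Omega^1_d$ is flat in $\ModA$, each vertical arrow is a monomorphism: $\iota_{J^{n-1}_d}$ and $\iota_{J^n_d}$ by Remark \ref{rmk:iotaholinj}, and the left arrow because $\iota^n_\wedge$ is a kernel inclusion, $\iota^{n-1}_d$ is mono by Lemma \ref{lemma:uniqiota}, $\iota_{J^{n-1}_d}$ is mono, and $\Omega^1_d$ is exact (Corollary \ref{cor:Jex}) hence preserves monos. I do not expect a genuine obstacle in this proposition; the only point requiring care is invoking the naturality squares of $\iota^1_d$ and $\pi^{1,0}_d$ at the right bimodule and keeping the nested occurrences of $\Omega^1_d$ and $J^1_d$ straight, which is purely bookkeeping.
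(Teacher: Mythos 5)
Your proposal is correct and follows essentially the same route as the paper's proof: the complex property via $\pi^{n,n-1}_d=\pi^{1,0}_{d,J^{n-1}_d}\circ l^n_d$ together with \eqref{diag:defiiotand} and $\pi^{1,0}_d\circ\iota^1_d=0$; the two squares of \eqref{es:hjninnhj} via $\iota_{J^n_d}=J^1_d(\iota_{J^{n-1}_d})\circ l^n_d$ and naturality of $\pi^{1,0}_d$ and $\iota^1_d$ at $\iota_{J^{n-1}_d}$; and the monomorphism claim via Remark \ref{rmk:iotaholinj}, Lemma \ref{lemma:uniqiota}, and exactness of $\Omega^1_d$. The only difference is that you spell out the naturality rewrites that the paper compresses into a single diagram, which is harmless.
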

\begin{proof}
The fact that \eqref{es:hjn} is a complex follows from the definition of $\pi^{n,n-1}_d$ and the commutativity of \eqref{diag:defiiotand}, as we have
\begin{equation}
\pi^{n,n-1}_d\circ \iota^n_d
=\pi^{1,0}_{d,J^{n-1}_d}\circ l^{n}_d\circ \iota^n_{d,J^{n-1}_d}
=\pi^{1,0}_{d,J^{n-1}_d}\circ \iota^1_{d,J^{n-1}_d}\circ \Omega^1_d(\iota^{n-1}_d)\circ \iota^n_{\wedge}
=0.
\end{equation}

Now we prove the commutativity of \eqref{es:hjninnhj}.
The right square commutes by definition of $\pi^{n,n-1}_d$.
The left square commutes by the commutativity of \eqref{diag:defiiotand} and the naturality of $\iota^1_d$, as showed in the following diagram.
\begin{equation}
\begin{tikzcd}[column sep=50pt]
S^n_d\ar[d,"\iota^n_d"']\ar[r,"\Omega^1_d(\iota^{n-1}_d)\circ \iota^n_{\wedge}"]&\Omega^1_d\circ J^{n-1}_d\ar[d,hookrightarrow,"\iota^1_{d,J^{n-1}_d}"]\ar[r,"\Omega^1_d(\iota_{J^{n-1}_d})"]&\Omega^1_d\circ J^{(n-1)}_d\ar[d,hookrightarrow,"\iota^1_{d,J^{(n-1)}_d}"]\\
J^n_d\ar[rr,bend right=10pt,"\iota_{J^n_d}"']\ar[r,"l^n_d"]&J^1_d\circ J^{n-1}_d\ar[r,"J^1_d(\iota_{J^{n-1}_d})"]& J^{(n)}_d
\end{tikzcd}
\end{equation}

Finally, \eqref{es:hjninnhj} becomes an inclusion of complexes if $\Omega^1_d$ is flat in $\AMod$ because $\iota_{J^n_d}$ becomes injective (cf.\ Remark \ref{rmk:iotaholinj}) and $\Omega^1_d$ maps injective maps in injective maps.
\end{proof}
\begin{defi}
We call \eqref{es:hjn} the \emph{(holonomic) $n$-jet sequence}.
\end{defi}

We will now extend the notion of jet prolongation for higher jet functors.
\begin{lemma}\label{lemma:holprol}
There exists a unique family of natural transformations $j^n_d\colon \id_{\AMod}\to J^n_d$ in the category of functors of type $\AMod\to \Mod$ such that $j^0_d=\id$, $j^1_d$ is \eqref{eq:1jp}, and for $n\ge 1$ we have
\begin{equation}\label{eq:factjl}
j^1_{d,J^{n-1}_d}\circ j^{n-1}_d=l^n_d\circ j^n_d.
\end{equation}
\end{lemma}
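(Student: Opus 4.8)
The plan is to argue by induction on $n$, constructing $j^n_d$ as the factorization of $j^1_{d,J^{n-1}_d}\circ j^{n-1}_d$ through the defining kernel inclusion $l^n_d$. The base cases $n=0$ and $n=1$ are prescribed by hypothesis, with $l^1_d=\id_{J^1_d}$ so that \eqref{eq:factjl} at $n=1$ reads trivially $j^1_d=j^1_d$ and the relation $l^{m}_d\circ j^{m}_d=j^1_{d,J^{m-1}_d}\circ j^{m-1}_d$ holds for all $m\ge 1$. So fix $n\ge 2$ and assume $j^{n-1}_d$ has been constructed as the unique natural transformation satisfying the analogous identities for indices below $n$. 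By Definition \ref{def:njet}, $J^n_d$ is the kernel of $\widetilde{\DH}_{J^{n-2}_d}\circ J^1_d(l^{n-1}_d)\colon J^1_d\circ J^{n-1}_d\to(\Omega^1_d\ltimes\Omega^2_d)\circ J^{n-2}_d$, with $l^n_d$ the attendant natural monomorphism; hence to produce $j^n_d$ it suffices to check that $j^1_{d,J^{n-1}_d}\circ j^{n-1}_d\colon\id_{\AMod}\to J^1_d\circ J^{n-1}_d$ is annihilated by $\widetilde{\DH}_{J^{n-2}_d}\circ J^1_d(l^{n-1}_d)$. The factorization through $l^n_d$ is then unique because $l^n_d$ is a monomorphism, and it is automatically natural since kernels in the relevant functor categories are computed objectwise.

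For the key vanishing, I would first invoke the naturality of $j^1_d\colon\id_{\AMod}\to J^1_d$ (as a natural transformation of functors $\AMod\to\Mod$) with respect to the components of $l^{n-1}_d$, which yields the identity of natural transformations
\begin{equation}
J^1_d(l^{n-1}_d)\circ j^1_{d,J^{n-1}_d}=j^1_{d,J^1_d\circ J^{n-2}_d}\circ l^{n-1}_d.
\end{equation}
Composing on the right with $j^{n-1}_d$ and applying the inductive relation $l^{n-1}_d\circ j^{n-1}_d=j^1_{d,J^{n-2}_d}\circ j^{n-2}_d$ gives
\begin{equation}
J^1_d(l^{n-1}_d)\circ j^1_{d,J^{n-1}_d}\circ j^{n-1}_d=j^1_{d,J^1_d\circ J^{n-2}_d}\circ j^1_{d,J^{n-2}_d}\circ j^{n-2}_d=j^{(2)}_{d,J^{n-2}_d}\circ j^{n-2}_d,
\end{equation}
where the last equality is the definition $j^{(2)}_d=j^1_{d,J^{(1)}_d}\circ j^1_d$ (cf.\ \eqref{eq:nhjp}) whiskered by $J^{n-2}_d$. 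Precomposing $\widetilde{\DH}_{J^{n-2}_d}$ then produces $\widetilde{\DH}_{J^{n-2}_d}\circ j^{(2)}_{d,J^{n-2}_d}\circ j^{n-2}_d$, which vanishes because $\widetilde{\DH}\circ j^{(2)}_d=0$ as a natural transformation: this is exactly the computation carried out in the proof of Proposition \ref{prop:defeth} (that $\widetilde{\DH}_E\circ j^{(2)}_{d,E}=0$ on each component, for every $E$), here whiskered by $J^{n-2}_d$.

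Assembling these, $j^1_{d,J^{n-1}_d}\circ j^{n-1}_d$ lands in $J^n_d=\ker\bigl(\widetilde{\DH}_{J^{n-2}_d}\circ J^1_d(l^{n-1}_d)\bigr)$, so there is a unique natural transformation $j^n_d\colon\id_{\AMod}\to J^n_d$ of functors to $\Mod$ with $l^n_d\circ j^n_d=j^1_{d,J^{n-1}_d}\circ j^{n-1}_d$, which is precisely \eqref{eq:factjl}; uniqueness of the whole family follows since $j^0_d$ and $j^1_d$ are prescribed and, for $n\ge 2$, $j^n_d$ is forced by \eqref{eq:factjl} together with injectivity of $l^n_d$. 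I do not expect a genuine obstacle here: the points that need care are purely bookkeeping — tracking the whiskering identities relating the various $j^\bullet_d$ (horizontal versus vertical composition) and citing Proposition \ref{prop:defeth} in the form $\widetilde{\DH}\circ j^{(2)}_d=0$ (with the nonholonomic prolongation $j^{(2)}_d$, which is what appears above) rather than merely $\widetilde{\DH}_E\circ j^2_{d,E}=0$.
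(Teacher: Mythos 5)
Your proposal is correct and follows essentially the same route as the paper: the paper's proof is a single commutative diagram whose central square is your naturality identity $J^1_d(l^{n-1}_d)\circ j^1_{d,J^{n-1}_d}=j^1_{d,J^1_d\circ J^{n-2}_d}\circ l^{n-1}_d$, whose top triangle is your inductive relation, and whose outer rectangle is exactly the vanishing $\widetilde{\DH}_{J^{n-2}_d}\circ j^{(2)}_{d,J^{n-2}_d}=0$ from Proposition \ref{prop:defeth}, after which existence and uniqueness follow from the kernel universal property. Your explicit bookkeeping of the whiskering identities and your remark that one must cite $\widetilde{\DH}\circ j^{(2)}_d=0$ (rather than the weaker $\widetilde{\DH}_E\circ j^2_{d,E}=0$) match the paper's argument precisely.
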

\begin{proof}
We prove the statement by induction.
Suppose such a $j^{n-1}_d$ exists, then we show that there exists a unique $j^n_d$ satisfying \eqref{eq:factjl}.
Consider the following diagram
\begin{equation}
\begin{tikzcd}
&&[40pt]J^{n-2}_d\ar[d,"j^1_{d,J^{n-2}_d}"]\ar[r]&0\ar[dd]\\
\id_{\AMod}\ar[d,dashed,"j^n_d"']\ar[rru,"j^{n-2}_d"]\ar[r,"j^{n-1}_d"']&J^{n-1}_d\ar[d,"j^1_{d,J^{n-1}_d}"]\ar[r,hookrightarrow,"l^{n-1}_d"']&J^1_d\circ J^{n-2}_d\ar[d,"j^1_{d,J^1_d\circ J^{n-2}_d}"]\\
J^n_d\ar[r,"l^{n}_d"']&J^1_d\circ J^{n-1}_d\ar[r,hookrightarrow,"J^1_d(l^{n-1}_d)"']&J^{(2)}_d\circ J^{n-2}_d\ar[r,twoheadrightarrow,"\widetilde{\DH}_{d,J^{n-2}_d}"']&(\Omega^1_d\ltimes \Omega^2_d)\circ J^{n-2}_d
\end{tikzcd}
\end{equation}
The central square commutes by naturality of $j^1_d$ with respect to $l^{n-1}_d$, the top triangle commutes by inductive hypothesis, and the right square commutes because $\widetilde{\DH}_d\circ j^{(2)}_d=0$.
It follows by the kernel universal property that there exists a unique map $j^n_d$ satisfying \eqref{eq:factjl}.
\end{proof}
\begin{defi}
We call \emph{(holonomic) $n$-jet prolongation} the unique natural transformation $j^n_d\colon \id_{\AMod}\hookrightarrow J^n_d$ provided by Lemma \ref{lemma:holprol}.
\end{defi}
This definition is consistent with the definition of $j^2_d$ given in Definition \ref{def:2-jet}, as the latter satisfies \eqref{eq:factjl}.
\begin{rmk}\label{rmk:holprolpi}
The $n$-jet prolongation $j^n_d$ is a section of $\pi^{n,0}_d$.
More generally, for $0\le m\le n$, we get
\begin{equation}
\pi^{n,m}_d\circ j^n_d
=j^m_d.
\end{equation}
This can be proven by induction on $n-m\ge 0$.
If $n=m$ there is nothing to prove, and if $n>m$, then by definition of jet projections and prolongations, and by inductive hypothesis, we have
\begin{equation}
\pi^{n,m}_d\circ j^n_d
=\pi^{n-1,m}_d\circ \pi^{n,n-1}_d\circ j^n_d
=\pi^{n-1,m}_d\circ \pi^{1,0}_{d,J^{n-1}_d}\circ l^n_d\circ j^n_d
=\pi^{n-1,m}_d\circ \pi^{1,0}_{d,J^{n-1}_d}\circ j^1_{d,J^{n-1}_d}\circ j^{n-1}_d
=\pi^{n-1,m}_d\circ j^{n-1}_d
=j^m_d.
\end{equation}
\end{rmk}

The following lemma relates the holonomic and nonholonomic $n$-jet prolongations.
\begin{lemma}\label{lemma:holjfact}
The prolongation $j^{(n)}_d\colon \id_{\AMod}\to J^{(n)}_d$ factors through $\iota_{J^n_d}\colon J^n_d\to J^{(n)}_d$ via $j^n_d\colon \id_{\AMod}\to J^{n}_d$ in the category of functors of type $\AMod\to \Mod$.
That is
\begin{equation}
j^{(n)}_d=\iota_{J^n_d}\circ j^n_d.
\end{equation}
\end{lemma}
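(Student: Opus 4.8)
The plan is to prove the identity by induction on $n$, the engine of the induction being the naturality of the $1$-jet prolongation $j^1_d$ combined with the recursive defining relations for $j^n_d$ (Lemma \ref{lemma:holprol}) and for $\iota_{J^n_d}$. For $n\le 1$ the statement is immediate: the composite defining $\iota_{J^n_d}$ is empty, hence the identity, while $j^{(0)}_d=\id=j^0_d$ and $j^{(1)}_d=j^1_d$, so both sides coincide. Throughout, the equality is asserted in the category of functors $\AMod\to\Mod$, which is where $j^n_d$ and $j^{(n)}_d$ live; the maps $l^n_d$ and $\iota_{J^n_d}$ are natural transformations in (a subcategory of) that category, so all composites below make sense there.

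For the inductive step I would assume $j^{(n-1)}_d=\iota_{J^{n-1}_d}\circ j^{n-1}_d$. First, by the decomposition \eqref{eq:nhjpdec} with $m=n-1$ one has $j^{(n)}_d=j^1_{d,J^{(n-1)}_d}\circ j^{(n-1)}_d$, and substituting the inductive hypothesis gives $j^{(n)}_d=j^1_{d,J^{(n-1)}_d}\circ\iota_{J^{n-1}_d}\circ j^{n-1}_d$. Next I would apply naturality of $j^1_d\colon\id_{\AMod}\to J^1_d$ to the natural transformation $\iota_{J^{n-1}_d}\colon J^{n-1}_d\to J^{(n-1)}_d$, which yields $j^1_{d,J^{(n-1)}_d}\circ\iota_{J^{n-1}_d}=J^1_d(\iota_{J^{n-1}_d})\circ j^1_{d,J^{n-1}_d}$. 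Feeding in the defining relation \eqref{eq:factjl}, namely $j^1_{d,J^{n-1}_d}\circ j^{n-1}_d=l^n_d\circ j^n_d$, produces $j^{(n)}_d=J^1_d(\iota_{J^{n-1}_d})\circ l^n_d\circ j^n_d$. Finally I would recognize that $J^1_d(\iota_{J^{n-1}_d})\circ l^n_d=\iota_{J^n_d}$, which is precisely \eqref{eq:spinjn} specialized to $m=1$, completing the step.

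The argument is formal once these recursive descriptions are lined up, so I do not anticipate a genuine obstacle; the only point requiring care is the bookkeeping with whiskering — one must make sure that the interchange law for $j^1_d$ against $\iota_{J^{n-1}_d}$ produces exactly $J^1_d(\iota_{J^{n-1}_d})$ (the left-whiskering of $\iota_{J^{n-1}_d}$ by $J^1_d$) and not some other composite, and that $J^1_d(\iota_{J^{n-1}_d})\circ l^n_d$ really telescopes, via $J^1_d\circ J^{(k)}_d=J^{(k+1)}_d$, into the expression \eqref{eq:spinjn} for $\iota_{J^n_d}$. It is also worth remarking that no exactness or flatness hypothesis on $\Omega^1_d$ is needed here, since we are only manipulating prolongations and inclusions, not invoking injectivity of $\iota_{J^n_d}$.
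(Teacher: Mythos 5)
Your proof is correct and is essentially the paper's own argument: the same induction, using the decomposition $j^{(n)}_d=j^1_{d,J^{(n-1)}_d}\circ j^{(n-1)}_d$, naturality of $j^1_d$ against $\iota_{J^{n-1}_d}$, the defining relation \eqref{eq:factjl}, and \eqref{eq:spinjn} with $m=1$ — merely read in the opposite direction (the paper starts from $\iota_{J^n_d}\circ j^n_d$ and arrives at $j^{(n)}_d$). No changes needed.
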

\begin{proof}
We proceed by induction on $n$.
For $n=0$, the result is straightforward.
For $n>0$, and assume that we have $j^{(n-1)}_d=\iota_{J^{n-1}_d}\circ j^{n-1}_d$.
By \eqref{eq:spinjn}, \eqref{eq:factjl}, and the naturality of $j^1_d$ we obtain
\begin{equation}
\iota_{J^n_d}\circ j^n_d
=J^1_d(\iota_{J^{n-1}_d})\circ l^n_d\circ j^n_d
=J^1_d(\iota_{J^{n-1}_d})\circ j^1_{d,J^{n-1}_d}\circ j^{n-1}_d
=j^1_{d,J^{(n-1)}_d}\circ\iota_{J^{n-1}_d}\circ j^{n-1}_d
=j^1_{d,J^{(n-1)}_d}\circ j^{(n-1)}_d
=j^{(n)}_d
\end{equation}
\end{proof}

We can now refine the point \eqref{lemma:Jchar:3} of Lemma \ref{lemma:Jchar} with the following result.
\begin{lemma}\label{lemma:smM}
For all $m,n\ge 0$, there exists a natural transformation $l^{m,n}_d\colon J^{m+n}_d\to J^m_d\circ J^n_d$ such that $l^{0,n}_d=\id_{J^n_d}$ and the following square commutes
\begin{equation}\label{diag:inddeflnm}
\begin{tikzcd}[column sep=40pt]
J^{m+n}_d\ar[r,hookrightarrow,"l^{m+n}_d"]\ar[d,"l^{m,n}_d"']&J^1_d\circ J^{m+n-1}_d\ar[d,"J^1_d (l^{m-1,n}_d)"]\\
J^m_d\circ J^{n}_d\ar[r,hookrightarrow,"l^m_{d,J^n_d}"]&J^1_d\circ J^{m-1}_d\circ J^{n}_d
\end{tikzcd}
\end{equation}
Moreover,
\begin{equation}\label{eq:commjlmn}
l^{m,n}_d\circ j^{m+n}_d=j^{m}_{d,J^n_d}\circ j^n_d.
\end{equation}

In particular, if $\Omega^1_d$ and $\Omega^2_d$ are flat in $\ModA$, then $l^{m,n}_d$ is a mono and \eqref{diag:inddeflnm} is a pullback square, implying
\begin{equation}
J^{m+n}_d=J^m_d\circ J^n_d\cap J^1_d\circ J^{m+n-1}_d\subseteq J^1_d\circ J^{n-1}_d\circ J^n_d\subseteq J^{(m+n)}_d.
\end{equation}
\end{lemma}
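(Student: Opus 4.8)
The plan is to construct the family $l^{m,n}_d$ by induction on $m$ (with $n$ fixed), then deduce the prolongation identity \eqref{eq:commjlmn}, and finally, under the flatness hypotheses, establish the monomorphism and pullback statements. For $m=0$ set $l^{0,n}_d\colonequals\id_{J^n_d}$. For $m\ge 1$, to produce $l^{m,n}_d$ making \eqref{diag:inddeflnm} commute it suffices that $J^1_d(l^{m-1,n}_d)\circ l^{m+n}_d\colon J^{m+n}_d\to J^1_d\circ J^{m-1}_d\circ J^n_d$ factor through $l^m_{d,J^n_d}$. For $m=1$ this is automatic, since $l^1_d=\id_{J^1_d}$ forces $l^1_{d,J^n_d}=\id$ and hence $l^{1,n}_d=l^{n+1}_d$. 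For $m\ge 2$, recall from Definition \ref{def:njet} whiskered by $J^n_d$ that $l^m_{d,J^n_d}$ is the kernel inclusion of $\widetilde{\DH}_{J^{m-2}_d\circ J^n_d}\circ J^1_d(l^{m-1}_{d,J^n_d})$, so it is enough to show that precomposing this transformation with $J^1_d(l^{m-1,n}_d)\circ l^{m+n}_d$ yields $0$. I would do this by applying $J^1_d$ to the inductive square for $(m-1,n)$, which rewrites $J^1_d(l^{m-1}_{d,J^n_d})\circ J^1_d(l^{m-1,n}_d)$ as $J^{(2)}_d(l^{m-2,n}_d)\circ J^1_d(l^{m+n-1}_d)$, and then using naturality of $\widetilde{\DH}\colon J^{(2)}_d\Rightarrow\Omega^1_d\ltimes\Omega^2_d$ along $l^{m-2,n}_d$ to move $l^{m-2,n}_d$ past $\widetilde{\DH}$; the composite becomes $(\Omega^1_d\ltimes\Omega^2_d)(l^{m-2,n}_d)\circ\widetilde{\DH}_{J^{m+n-2}_d}\circ J^1_d(l^{m+n-1}_d)\circ l^{m+n}_d$, which vanishes because $l^{m+n}_d$ is, by Definition \ref{def:njet}, the kernel inclusion of $\widetilde{\DH}_{J^{m+n-2}_d}\circ J^1_d(l^{m+n-1}_d)$. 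The kernel universal property then gives the unique $l^{m,n}_d$; its naturality is automatic, being obtained from a pointwise universal property applied to natural maps.

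Next I would prove \eqref{eq:commjlmn} by induction on $m$, the cases $m\le 1$ being immediate from \eqref{eq:factjl}. For $m\ge 2$, precompose \eqref{diag:inddeflnm} with $j^{m+n}_d$ and use in turn: $l^{m+n}_d\circ j^{m+n}_d=j^1_{d,J^{m+n-1}_d}\circ j^{m+n-1}_d$ (that is, \eqref{eq:factjl}); naturality of $j^1_d$ along $l^{m-1,n}_d$; the inductive hypothesis $l^{m-1,n}_d\circ j^{m+n-1}_d=j^{m-1}_{d,J^n_d}\circ j^n_d$; and \eqref{eq:factjl} whiskered by $J^n_d$. This produces the equality $l^m_{d,J^n_d}\circ(l^{m,n}_d\circ j^{m+n}_d)=l^m_{d,J^n_d}\circ(j^m_{d,J^n_d}\circ j^n_d)$. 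Since $l^m_{d,J^n_d}$ is a kernel inclusion it is monic, so it may be cancelled; note that this part requires no flatness hypothesis.

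For the remaining statements, assume $\Omega^1_d$ and $\Omega^2_d$ flat in $\ModA$. First, $l^{m,n}_d$ is monic by induction: $J^1_d$ is exact by Corollary \ref{cor:Jex}, so $J^1_d(l^{m-1,n}_d)$ is monic, whence $l^m_{d,J^n_d}\circ l^{m,n}_d=J^1_d(l^{m-1,n}_d)\circ l^{m+n}_d$ is monic and therefore so is $l^{m,n}_d$. For the pullback claim, consider the commuting square whose two horizontal arrows are the defining transformations $\widetilde{\DH}_{J^{m+n-2}_d}\circ J^1_d(l^{m+n-1}_d)$ and $\widetilde{\DH}_{J^{m-2}_d\circ J^n_d}\circ J^1_d(l^{m-1}_{d,J^n_d})$, whose left vertical arrow is $J^1_d(l^{m-1,n}_d)$, and whose right vertical arrow is $(\Omega^1_d\ltimes\Omega^2_d)(l^{m-2,n}_d)$ — the latter monic because $\Omega^1_d\ltimes\Omega^2_d$ is exact by Lemma \ref{lemma:spplacederfun} and $l^{m-2,n}_d$ is monic. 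Applying Lemma \ref{lemma:pb} to this square identifies $\ker$ of the top arrow, namely $J^{m+n}_d$ with inclusion $l^{m+n}_d$, with the pullback of $\ker$ of the bottom arrow, namely $J^m_d\circ J^n_d$ with inclusion $l^m_{d,J^n_d}$, along $J^1_d(l^{m-1,n}_d)$; this is exactly \eqref{diag:inddeflnm}. As $J^1_d(l^{m-1,n}_d)$ is moreover monic, the second half of Lemma \ref{lemma:pb} presents $J^{m+n}_d$ as $(J^m_d\circ J^n_d)\cap(J^1_d\circ J^{m+n-1}_d)$ inside $J^1_d\circ J^{m-1}_d\circ J^n_d$; composing with the monos $\iota_{J^{m-1}_d}$ and $\iota_{J^n_d}$ (Remark \ref{rmk:iotaholinj}) and using that $J^1_d\circ J^{(m-1)}_d\circ J^{(n)}_d=J^{(m+n)}_d$ gives the asserted chain of inclusions ending in $J^{(m+n)}_d$ (this refines Lemma \ref{lemma:Jchar}).

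I expect the conceptual content to be light; the delicate part is the index bookkeeping in the vanishing computation of the first paragraph — whiskering Definition \ref{def:njet} by $J^n_d$, applying $J^1_d$ to the $(m-1,n)$ square, and invoking naturality of $\widetilde{\DH}$ at the correct object — together with keeping careful track of which maps are kernel inclusions (hence automatically monic) and which become monic only after imposing the flatness hypotheses. The rest is formal diagram chasing and applications of Lemma \ref{lemma:pb}, relying only on the already-established exactness of $J^1_d$ and of $\Omega^1_d\ltimes\Omega^2_d$.
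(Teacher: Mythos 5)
Your proposal is correct and follows essentially the same route as the paper: the inductive construction of $l^{m,n}_d$ via the kernel universal property applied to the whiskered defining sequence of $J^m_d$ (using the $(m-1,n)$ square and naturality of $\widetilde{\DH}$), the cancellation of the mono $l^m_{d,J^n_d}$ to get \eqref{eq:commjlmn}, and the application of Lemma \ref{lemma:pb} under the flatness hypotheses are exactly the paper's argument. The only differences are presentational (you spell out the vanishing computation that the paper packages as "taking kernels of the rows" in its diagram).
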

\begin{proof}
For $m=0$, equation \eqref{eq:commjlmn} trivially holds.
For $m\ge 1$, we proceed by induction.
For $m=1$, we see that we can only choose $l^{1,n}_d=l^{n+1}_d$ in order for \eqref{diag:inddeflnm} to commute.
Notice that it is a mono.
Furthermore, \eqref{eq:commjlmn} becomes \eqref{eq:factjl}.

For $m> 1$, consider the following diagram, obtained from the two rightmost squares by taking the kernel of the top and bottom row compositions
\begin{equation}\label{diag:smMpb}
\begin{tikzcd}[column sep=40pt]
J^{m+n}_d\ar[r,hookrightarrow,"l^{m+n}_d"]\ar[d,dashed,"l^{m,n}_d"']&J^1_d\circ J^{m+n-1}_d\ar[d,"J^1_d (l^{m-1,n}_d)"]\ar[r,"J^1_d(l^{m+n-1}_d)"]&J^{(2)}_d\circ J^{m+n-2}_d\ar[d,"J^{(2)}_d (l^{m-2,n}_d)"]\ar[r,twoheadrightarrow,"\widetilde{\DH}_{d,J^{n+m-2}_d}"]&(\Omega^1_d\ltimes \Omega^2_d)\circ J^{m+n-2}_d\ar[d,"\Omega^1_d\ltimes \Omega^2_d (l^{m-2,n}_d)"]\\
J^m_d\circ J^{n}_d\ar[r,hookrightarrow,"l^m_{d,J^n_d}"]&J^1_d\circ J^{m-1}_d\circ J^{n}\ar[r,"J^1_d(l^{m-1}_{d,J^{n}_d})"]&J^{(2)}_d\circ J^{m-2}_d\circ J^n_d \ar[r,twoheadrightarrow,"\widetilde{\DH}_{d,J^{m-2}_d\circ J^{n}_d}"]&(\Omega^1_d\ltimes \Omega^2_d)\circ J^{m-2}_d\circ J^{n}_d
\end{tikzcd}
\end{equation}
The central square commutes by inductive hypothesis and the right square commutes by naturality of $\widetilde{\DH}_d$.
We thus obtain, from the universal property of the kernel, the unique dashed function $l^{m,n}_d$ which makes the left square commute.

In order to prove \eqref{eq:commjlmn}, consider the following cube diagram
\begin{equation}\label{diag:cube}
\begin{tikzcd}[column sep=40pt]
&J^{m+n-1}_d\ar[dd,hookrightarrow,near start,"l^{m-1,n}_d"]\ar[rr,hookrightarrow,"j^1_{d,J^{m+n-1}_d}"]&&J^1_d\circ J^{m+n-1}_d\ar[dd,"J^1_d(l^{m-1,n}_d)"]\\
\id_{\AMod}\ar[dd,hookrightarrow,"j^n_d"']\ar[rr,crossing over,hookrightarrow,near end,"j^{m+n}_d"']\ar[ru,hookrightarrow,"j^{m+n-1}_d"]&&J^{m+n}_d\ar[ru,hookrightarrow,"l^{m+m}_d"]\\
&J^{m-1}_d\circ J^n_d\ar[rr,hookrightarrow, near end,"j^1_{d,J^{m-1}_d\circ J^n_d}"]&&J^1_d\circ J^{m-1}_d\circ J^n_d\\
J^n_d\ar[rr,hookrightarrow,"j^{m}_{d,J^n_d}"']\ar[ru,hookrightarrow,"j^{m-1}_{d,J^n_d}"]&&J^{m}_d\circ J^n_d\ar[uu,<-,crossing over,near start,"l^{m,n}_d"']\ar[ru,hookrightarrow,"l^{m}_{d,J^n_d}"']
\end{tikzcd}
\end{equation}
Here, the left square commutes by inductive hypothesis, the right square by \eqref{eq:commjlmn}, and the back square by naturality of $j^1_d$.
The top and bottom squares commute because of Lemma \ref{lemma:holprol}.
Since $l^m_{d,J^n_d}$ is a mono, it follows that the front square commutes, and hence \eqref{eq:commjlmn} is satisfied.

Now, if $\Omega^1_d$ and $\Omega^2_d$ are flat in $\ModA$, then, by inductive hypothesis, $l^{m-1,n}_d$ and $l^{m-1,n}_d$ are monos, whence $J^1_d(l^{m-1,n}_d)$ and $\Omega^1_d\ltimes \Omega^2_d(l^{m-1,n}_d)$ are also monos, by Corollary \ref{cor:Jex} and Lemma \ref{lemma:spplacederfun}.
The final statement of the lemma can thus be deduced by applying Lemma \ref{lemma:pb} to \eqref{diag:smMpb}.
\end{proof}

\subsection{Relation to the semiholonomic jet functor}
\begin{prop}\label{prop:holinsemi}
The natural transformation $\iota_{J^n_d}\colon J^n_d\to J^{(n)}_d$ factors through $\iota_{J^{[n]}_d}\colon J^{[n]}_d\hookrightarrow J^{(n)}_d$ via the natural transformation $h^n_d\colon J^{n}_d\to J^{[n]}_d$.

In particular, if $\Omega^1_d$ is flat in $\ModA$, then we have the functor inclusions $J^{n}_d\subseteq J^{[n]}_d\subseteq J^{(n)}_d$.
\end{prop}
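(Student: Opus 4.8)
The plan is to produce, for every $n\ge 0$, a natural transformation $\phi^n\colon J^n_d\to J^{[n]}_d$ satisfying $\iota_{J^{[n]}_d}\circ\phi^n=\iota_{J^n_d}$, and then to upgrade this to an honest inclusion of subfunctors of $J^{(n)}_d$ once the flatness hypothesis is in force. Note first that $\iota_{J^{[n]}_d}$ is a monomorphism, since $J^{[n]}_d$ is by definition an equalizer; hence such a $\phi^n$, if it exists, is unique, and naturality is automatic. So the first assertion reduces entirely to factoring the natural transformation $\iota_{J^n_d}\colon J^n_d\to J^{(n)}_d$ through the subfunctor $J^{[n]}_d\hookrightarrow J^{(n)}_d$.

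To get that factorization I would use the unconditional characterization \eqref{theo:sjchar:3} of Theorem \ref{theo:sjchar}: inside $J^{(n)}_d$ one has $J^{[n]}_d=\bigcap_{m=1}^{n-1}\ker\bigl(J^{(n-m-1)}_d\widetilde{\DH}^I_{J^{(m-1)}_d}\bigr)$, and after the substitution $k=n-m-1$ this is $\bigcap_{k=0}^{n-2}\ker\bigl(J^{(k)}_d\widetilde{\DH}^I_{J^{(n-k-2)}_d}\bigr)$. Thus it suffices to check that $J^{(k)}_d\widetilde{\DH}^I_{J^{(n-k-2)}_d}\circ\iota_{J^n_d}=0$ for $0\le k\le n-2$. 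Since $\widetilde{\DH}^I$ is $\widetilde{\DH}\colon J^{(2)}_d\to\Omega^1_d\ltimes\Omega^2_d$ followed by the natural projection $\Omega^1_d\ltimes\Omega^2_d\twoheadrightarrow\Omega^1_d$, it is enough to prove the stronger vanishing $J^{(k)}_d\widetilde{\DH}_{J^{(n-k-2)}_d}\circ\iota_{J^n_d}=0$.

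This stronger vanishing is precisely the inclusion \eqref{lemma:Jchar:1}$\subseteq$\eqref{lemma:Jchar:2} established inside the proof of Lemma \ref{lemma:Jchar}; the point worth stressing is that \emph{this particular step uses no flatness} — only the decomposition \eqref{eq:spinjn} of $\iota_{J^n_d}$, the naturality of $\widetilde{\DH}$, and the fact that $l^{n-k}_d$ is, by Definition \ref{def:njet}, the kernel inclusion of $\widetilde{\DH}_{J^{n-k-2}_d}\circ J^1_d(l^{n-k-1}_d)$. Concretely I would decompose $\iota_{J^n_d}$ via \eqref{eq:spinjn} at level $m=k+2$, then push $\widetilde{\DH}_{J^{(n-k-2)}_d}$ past $J^{(k+2)}_d(\iota_{J^{n-k-2}_d})$ by naturality, so that $J^{(k)}_d\widetilde{\DH}_{J^{(n-k-2)}_d}\circ\iota_{J^n_d}$ becomes $J^{(k)}_d$ applied to $\bigl(\widetilde{\DH}_{J^{n-k-2}_d}\circ J^1_d(l^{n-k-1}_d)\bigr)\circ l^{n-k}_d\circ(\cdots)$, which vanishes. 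The base cases $n\le 2$ are immediate, since $\widetilde{\DH}\circ\iota_{J^2_d}=\widetilde{\DH}\circ l^2_d=0$ by the very definition of $J^2_d$. I would reproduce this short chase rather than quote a sub-step of another proof, precisely because its independence of flatness is what lets the first assertion hold with no hypotheses on the calculus.

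For the ``in particular'' clause: assuming $\Omega^1_d$ is flat in $\ModA$, Remark \ref{rmk:iotaholinj} gives that $\iota_{J^n_d}$ is a monomorphism, so $\iota_{J^{[n]}_d}\circ\phi^n=\iota_{J^n_d}$ forces $\phi^n$ to be a monomorphism as well; combined with the (always valid) monomorphism $\iota_{J^{[n]}_d}$ this yields $J^n_d\subseteq J^{[n]}_d\subseteq J^{(n)}_d$ compatibly with the maps into $J^{(n)}_d$. I expect no genuine obstacle; the only thing requiring care is verifying that the cited step of the Lemma \ref{lemma:Jchar} argument is truly hypothesis-free, and that the reindexing $k\leftrightarrow n-m-1$ matching the $\widetilde{\DH}$-description of $J^n_d$ to the $\widetilde{\DH}^I$-description of $J^{[n]}_d$ is correctly lined up — bookkeeping, not a new idea.
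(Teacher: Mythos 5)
Your proposal is correct and follows essentially the same route as the paper: identify $J^{[n]}_d$ with $\bigcap_{k}\ker\bigl(J^{(k)}_d\widetilde{\DH}^I_{J^{(n-k-2)}_d}\bigr)$ via Theorem \ref{theo:sjchar}\eqref{theo:sjchar:3}, observe that the flatness-free step \eqref{lemma:Jchar:1}$\subseteq$\eqref{lemma:Jchar:2} of Lemma \ref{lemma:Jchar} gives the stronger vanishing against $\widetilde{\DH}$ and hence against its projection $\widetilde{\DH}^I$, and then use injectivity of $\iota_{J^n_d}$ under flatness (Remark \ref{rmk:iotaholinj}) for the inclusion statement. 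The reindexing and the monomorphism bookkeeping are both handled correctly.
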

\begin{proof}
For $n=0, 1$, all jet functors coincide.
For higher $n$, as observed in Proposition \ref{prop:defeth}, the operator $\widetilde{\DH}^I_d$ is the projection of $\widetilde{\DH}_d$ to $\Omega^1_d$.
This implies that $\ker\left(J^{(k)}_d \widetilde{\DH}_{d,J^{(n-k-2)}_d} \right)$ factors through $\ker\left(J^{(k)}_d \widetilde{\DH}^{I}_{d,J^{(n-k-2)}_d} \right)$, and so $\bigcap_{k=0}^{n-2}\ker\left(J^{(k)}_d \widetilde{\DH}_{d,J^{(n-k-2)}_d} \right)$ factors through $\bigcap_{k=0}^{n-2}\ker\left(J^{(k)}_d \widetilde{\DH}^{I}_{d,J^{(n-k-2)}_d} \right)$.
By Theorem \ref{theo:sjchar}, we get \eqref{theo:sjchar:3}.
Furthermore, in proving Lemma \ref{lemma:Jchar}, we do not use $\Omega^1_d$ to be flat in $\AMod$ to prove \eqref{lemma:Jchar:1}$\Rightarrow$\eqref{lemma:Jchar:2}.
Thus, we have a factorization of the map $\iota_{J^n_d}\colon J^n_d\to J^{(n)}_d$ as
\begin{align}
J^{n}_d\longrightarrow
 \bigcap_{k=0}^{n-2}\ker\left(J^{(k)}_d \widetilde{\DH}_{d,J^{(n-k-2)}_d} \right)
\longhookrightarrow
\bigcap_{k=0}^{n-2}\ker\left(J^{(k)}_d \widetilde{\DH}^{I}_{d,J^{(n-k-2)}_d} \right)
=
 J^{[n]}_d
\longhookrightarrow
 J^{(n)}_d
\end{align}

Finally, the fact that vertical maps become inclusion follows from the fact that $\iota_{J^n_d}$ is injective when $\Omega^1_d$ is flat in $\ModA$.
\end{proof}

\begin{prop}\label{prop:semiholcpxfact}
The functor complex morphism \eqref{es:hjninnhj} factors through \eqref{es:shjci} as follows
\begin{equation}\label{es:hjninshj}
\begin{tikzcd}[column sep=40pt]
0\ar[r]&S^{n}_d\ar[d,"\iota_{S^n_d}"']\ar[r,"\iota^{n}_d"]&J^{n}_d\ar[d,"h^n_d"]\ar[r,"\pi^{n,n-1}_d"]&J^{n-1}_d\ar[r]\ar[d,"h^{n-1}_d"]&0\\
0\ar[r]&T^{n}_d\ar[d,"\Omega^1_d(\iota_{T^{n-1}_d})"']\ar[r,"\iota^{[n]}_d"]&J^{[n]}_d\ar[d,hookrightarrow,"\iota_{J^{[n]}_d}"]\ar[r,twoheadrightarrow,"\pi^{[n,n-1]}_d"]&J^{[n-1]}_d\ar[r]\ar[d,hookrightarrow, "\iota_{J^{[n-1]}_d}"]&0\\
0\ar[r]&\Omega^1_d \circ J^{(n-1)}_d\ar[r,hookrightarrow,"\iota^{(n)}_d"]&J^{(n)}_d\ar[r,twoheadrightarrow,"\pi^{(n,n-1)}_d"]& J^{(n-1)}_d\ar[r]&0
\end{tikzcd}
\end{equation}
Moreover, if $\Omega^1_d$ is flat in $\ModA$, all the vertical arrows become inclusions and respect the inclusions $l^n_d$ and $l^{[n]}_d$, i.e.\ the following diagram commutes
\begin{equation}
\begin{tikzcd}
J^n_d\ar[r,hookrightarrow,"l^n_d"]\ar[d,"h^n_d"']&J^1_d\circ J^{n-1}_d\ar[d,"J^1_d(h^{n-1}_d)"]\\
J^{[n]}_d\ar[r,hookrightarrow,"l^{[n]}_d"]&J^1_d\circ J^{[n-1]}_d
\end{tikzcd}
\end{equation}
\end{prop}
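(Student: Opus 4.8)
The plan is to proceed by induction on $n$, carrying along the auxiliary identity $\iota_{J^n_d}\circ\iota^n_d=\iota_{T^n_d}\circ\iota_{S^n_d}$ of natural transformations $S^n_d\to J^{(n)}_d$. For $n\le 1$ all the functors $S^n_d$, $T^n_d$, $J^n_d$, $J^{[n]}_d$, $J^{(n)}_d$ coincide and every map in sight is an identity, so both the auxiliary identity and the statement of the proposition hold trivially; for $n=2$ the identity is forced by the defining square \eqref{diag:defiiotand} of $\iota^2_d$. For the inductive step I would first produce the vertical map: $\iota_{J^{[n]}_d}$ is the inclusion of an equalizer, hence a monomorphism, so Proposition \ref{prop:holinsemi} (which factors $\iota_{J^n_d}$ through $\iota_{J^{[n]}_d}$) yields a \emph{unique} natural transformation $h^n\colon J^n_d\to J^{[n]}_d$ with $\iota_{J^{[n]}_d}\circ h^n=\iota_{J^n_d}$, and its naturality is automatic. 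Given $h^n$ and $h^{n-1}$, the bottom two rows of \eqref{es:hjninshj} are exactly \eqref{es:shjci}, and the composite of the two vertical maps in each of the three columns is the corresponding column of \eqref{es:hjninnhj}; so the only content is the commutativity of the two top squares.

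I would check those two squares by testing against the monomorphisms $\iota_{J^{[k]}_d}$. For the right square, $\iota_{J^{[n-1]}_d}\circ h^{n-1}\circ\pi^{n,n-1}_d=\iota_{J^{n-1}_d}\circ\pi^{n,n-1}_d=\pi^{(n,n-1)}_d\circ\iota_{J^n_d}=\pi^{(n,n-1)}_d\circ\iota_{J^{[n]}_d}\circ h^n=\iota_{J^{[n-1]}_d}\circ\pi^{[n,n-1]}_d\circ h^n$, applying in turn the definition of $h^{n-1}$, the right square of \eqref{es:hjninnhj}, the definition of $h^n$, and the right square of \eqref{es:shjci}; cancelling the monomorphism gives $h^{n-1}\circ\pi^{n,n-1}_d=\pi^{[n,n-1]}_d\circ h^n$. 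For the left square, $\iota_{J^{[n]}_d}\circ h^n\circ\iota^n_d=\iota_{J^n_d}\circ\iota^n_d=\iota^{(n)}_d\circ\Omega^1_d(\iota_{J^{n-1}_d}\circ\iota^{n-1}_d)\circ\iota^n_\wedge$ by the left square of \eqref{es:hjninnhj}, whereas $\iota_{J^{[n]}_d}\circ\iota^{[n]}_d\circ\iota_{S^n_d}=\iota^{(n)}_d\circ\Omega^1_d(\iota_{T^{n-1}_d})\circ\iota_{S^n_d}$ by the left square of \eqref{es:shjci}; since $\iota_{S^n_d}=\Omega^1_d(\iota_{S^{n-1}_d})\circ\iota^n_\wedge$ by definition of $\iota_{S^n_d}$, and since $\iota^{(n)}_d=\iota^1_{d,J^{(n-1)}_d}$ is a monomorphism (the $1$-jet sequence \eqref{es:jetd} being exact with no hypothesis on $\Omega^1_d$), these two composites agree precisely when $\iota_{J^{n-1}_d}\circ\iota^{n-1}_d=\iota_{T^{n-1}_d}\circ\iota_{S^{n-1}_d}$, which is the auxiliary identity at level $n-1$. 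Finally the auxiliary identity at level $n$ is a consequence of the left square just established: $\iota_{J^n_d}\circ\iota^n_d=\iota_{J^{[n]}_d}\circ h^n\circ\iota^n_d=\iota_{J^{[n]}_d}\circ\iota^{[n]}_d\circ\iota_{S^n_d}=\iota_{T^n_d}\circ\iota_{S^n_d}$, using $\iota_{J^{[n]}_d}\circ\iota^{[n]}_d=\iota_{T^n_d}$, the defining property of $\iota^{[n]}_d$. Observe that this entire part uses only that the maps $\iota_{J^{[\bullet]}_d}$ and $\iota^{(\bullet)}_d$ are monomorphisms, so no flatness is needed.

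For the remaining assertions, assume $\Omega^1_d$ flat in $\ModA$. Then $\iota_{J^n_d}$ is a monomorphism by Remark \ref{rmk:iotaholinj}, $\iota_{T^n_d}$ by Remark \ref{rmk:iotaTninj}, $\iota_{S^n_d}$ by Lemma \ref{lemma:pbqsf}, $\iota_{J^{[n]}_d}$ always is, and $h^n$ is a monomorphism because $\iota_{J^{[n]}_d}\circ h^n=\iota_{J^n_d}$ is; hence every vertical arrow in \eqref{es:hjninshj} is an inclusion. To see that $h^\bullet$ respects the inclusions $l^n_d$ and $l^{[n]}_d$, I would test $l^{[n]}_d\circ h^n$ and $J^1_d(h^{n-1})\circ l^n_d$ against $J^1_d(\iota_{J^{[n-1]}_d})$, which is a monomorphism since $J^1_d$ is exact under the flatness hypothesis (Corollary \ref{cor:Jex}): using $\iota_{J^{[n]}_d}=J^1_d(\iota_{J^{[n-1]}_d})\circ l^{[n]}_d$ (the $m=1$ case of \eqref{eq:decsjn}) we get $J^1_d(\iota_{J^{[n-1]}_d})\circ l^{[n]}_d\circ h^n=\iota_{J^{[n]}_d}\circ h^n=\iota_{J^n_d}$, while by the $m=1$ case of \eqref{eq:spinjn} we have $\iota_{J^n_d}=J^1_d(\iota_{J^{n-1}_d})\circ l^n_d=J^1_d(\iota_{J^{[n-1]}_d}\circ h^{n-1})\circ l^n_d=J^1_d(\iota_{J^{[n-1]}_d})\circ J^1_d(h^{n-1})\circ l^n_d$; cancelling $J^1_d(\iota_{J^{[n-1]}_d})$ gives $l^{[n]}_d\circ h^n=J^1_d(h^{n-1})\circ l^n_d$, as wanted. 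The main obstacle I anticipate is purely organizational, namely arranging the induction so that the auxiliary identity $\iota_{J^n_d}\circ\iota^n_d=\iota_{T^n_d}\circ\iota_{S^n_d}$ is available exactly at the moment the left square at level $n$ is checked; beyond that, every step is a cancellation against a commutative diagram already assembled in the excerpt.
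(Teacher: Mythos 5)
Your proposal is correct and follows essentially the same route as the paper's proof: define $h^n$ via Proposition \ref{prop:holinsemi} and the mono $\iota_{J^{[n]}_d}$, verify the right square by cancelling $\iota_{J^{[n-1]}_d}$, reduce the left square to the identity $\iota_{J^{n}_d}\circ\iota^n_d=\iota_{T^n_d}\circ\iota_{S^n_d}$ proved by induction (the paper's diagram \eqref{diag:Ark}), and obtain the compatibility with $l^n_d$, $l^{[n]}_d$ by cancelling $J^1_d(\iota_{J^{[n-1]}_d})$ exactly as in \eqref{diag:leatherback}. The only cosmetic difference is that you carry the auxiliary identity explicitly through the induction rather than proving it as a separate inductive claim mid-proof, and your ``precisely when'' should really be an ``if'' (only that direction is needed and only that direction is free).
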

\begin{proof}
From Proposition \ref{prop:holinsemi} we find that $\iota_{J^{n}_d}$ and $\iota_{J^{n-1}_d}$ factor through the corresponding semiholonomic functors.
Let us write $\iota_{J^{n}_d}=\iota_{J^{[n]}_d}\circ h^n_d$ for all $n$.
By Proposition \ref{prop:hjc}, we know that $\pi^{n,n-1}_d$ is compatible with $\pi^{(n.n-1)}_d$, so we have
\begin{equation}
\iota_{J^{[n-1]}_d}\circ h^{n-1}_d\circ \pi^{n,n-1}_d
=\iota_{J^{n-1}_d}\circ \pi^{n,n-1}_d
=\pi^{(n,n-1)}_d\circ \iota_{J^{[n]}_d}\circ h^n_d
=\iota_{J^{[n-1]}_d}\circ \pi^{[n,n-1]}_d\circ h^n_d
\end{equation}
Since $\iota_{J^{[n-1]}_d}$ is a mono, we obtain the commutativity of the top right square of \eqref{es:hjninshj}.

We will now prove that the top left square commutes, and since $\iota_{J^{[n]}_d}$ is a mono, it is enough to prove that $\iota_{J^{n}_d}\circ \iota^n_d=\iota_{J^{[n]}_d}\circ h^n_d\circ \iota^n_d$ is equal to $\iota_{T^n_d}\circ \iota_{S^n_d}=\iota_{J^{[n]}_d}\circ \iota^{[n]}_d\circ \iota_{S^n_d}$.
Thus, we prove $\iota_{J^{n}_d}\circ \iota^n_d=\iota_{T^n_d}\circ \iota_{S^n_d}$ by induction on $n\ge 1$.
For $n=1$ the statement is true, for $S^1_d=T^1_d=\Omega^1_d$ and all maps involved are either identities or the inclusion $\iota^1_d$.
Now let $n>1$ and assume the statement true for $n-1$, then we have the following diagram
\begin{equation}\label{diag:Ark}
\begin{tikzcd}[column sep=60pt]
S^n_d\ar[rr,"\iota^n_d"]\ar[d,hookrightarrow,"\iota^n_{\wedge}"]\ar[dd,bend right=60pt,"\iota_{S^n_d}"']&&J^n_d\ar[d,hookrightarrow,"l^n_d"']\ar[dd,bend left=60pt,"\iota_{J^n_d}"]\\
\Omega^1_d\circ S^{n-1}_d\ar[d,"\Omega^1_d(\iota_{S^{n-1}_d})"]\ar[r,"\Omega^1_d(\iota^{n-1}_d)"]&\Omega^1_d\circ J^{n-1}_d\ar[r,hookrightarrow,"\iota^1_{d,J^{n-1}_d}"]\ar[d,"\Omega^1_d(\iota_{J^{n-1}_d})"]&J^1_d\circ J^{n-1}_d\ar[d,"J^1_d(\iota_{J^{n-1}_d})"']\\
T^{n}_d\ar[rr,bend right=10pt,"\iota_{T^n_d}"']\ar[r,near end,"\Omega^1_d(\iota_{T^{n-1}_d})"]&\Omega^1_d\circ J^{(n-1)}_d\ar[r,"\iota^{(n)}_{d}"]&J^1_d\circ J^{(n-1)}_d
\end{tikzcd}
\end{equation}
Commutativity of \eqref{diag:Ark} follows from \eqref{diag:defiiotand}, from the naturality of $\iota^1_d$, and from the inductive hypothesis.

We are left to prove that the left vertical composition in \eqref{es:hjninshj} is the left vertical map in \eqref{es:hjninnhj}, that is $\Omega^1_d\left(\iota_{J^{n-1}_d}\circ\iota^{n-1}_d\right) \circ\iota^n_{\wedge}=\Omega^1_d(\iota_{T^{n-1}_d})\circ \iota_{S^n_d}$.
From \eqref{diag:Ark} we have that 
\begin{equation}
\iota^{(n)}_d\circ\Omega^1_d\left(\iota_{J^{n-1}_d}\circ\iota^{n-1}_d\right) \circ\iota^n_{\wedge}
=\iota_{J^{n}_d}\circ \iota^n_d
=\iota^{(n)}_d\circ\Omega^1_d(\iota_{T^{n-1}_d})\circ\iota_{S^n_d}.
\end{equation}
The statement thus follows since $\iota^{(n)}_d$ is a mono, thereby completing the proof of the factorization.

Now let $\Omega^1_d$ be flat in $\ModA$.
The injectivity of the vertical maps follows from the last statements of Proposition \ref{prop:hjc} and Proposition \ref{prop:complexinclusionseminon}.
Consider the following diagram
\begin{equation}\label{diag:leatherback}
\begin{tikzcd}[column sep=30pt]
J^n_d\ar[r,hookrightarrow,"l^n_d"]\ar[d,"h^n_d"]\ar[dd,bend right=60pt,"\iota_{J^n_d}"']&J^1_d\circ J^{n-1}_d\ar[d,"J^1_d(h^{n-1}_d)"']\ar[dd,bend left=60pt,"J^1_d(\iota_{J^{n-1}_d})"]\\
J^{[n]}_d\ar[r,hookrightarrow,"l^{[n]}_d"]\ar[d,"\iota_{J^{[n]}_d}"]&J^1_d\circ J^{[n-1]}_d\ar[d,"J^1_d(\iota_{J^{[n-1]}_d})"']\\
J^{(n)}_d\ar[r,equals]&J^{(n)}_d
\end{tikzcd}
\end{equation}
The left triangle is the factorization, and the right triangle commutes by functoriality of $J^1_d$.
The external square commutes by \eqref{eq:spinjn}, and the bottom square because of Theorem \ref{theo:sjchar}.
The top square then commutes because $J^1_d(\iota_{J^{[n-1]}_d})$ is a monomorphism.
\end{proof}
\begin{rmk}\label{rmk:lowdegh}
In general, $h^0=\id$ and $h^1=\id_{J^1_d}$.
Since $\iota_{J^2_d}$ is a monomorphism by definition, also $h^2$ is a monomorphism.
\end{rmk}
\subsection{Sesquiholonomic and holonomic jet exact sequences}\label{s:sesquihol}
We will now introduce another type of jet functor, which we will only use as a tool to address the exactness of the jet sequence, although its classical counterpart has already been studied \cite{libermann1997}.
\begin{defi}\label{def:sesqui}
	The {\em sesquiholonomic $n$-jet functor}, denoted by $J^{\{n\}}_d$, is defined as $J^{\{0\}}_d=\id_{\AMod}$, $J^{\{1\}}_d=J^1_d$, and for $n\ge 3$, as the subfunctor of $J^1_d\circ J^{n-1}_d$ defined by the kernel of
\begin{equation}\label{eq:sesqud}
\widetilde{\DH}^I_{d,J^{n-2}_d}\circ J^1_d(l^{n-1}_d)\colon J^1_d\circ J^{n-1}_d\longrightarrow \Omega^1_d\circ J^{n-2}_d.
\end{equation}
We denote the inclusion of $J^{\{n\}}_d$ into $J^1_d\circ J^{n-1}_d$ by $l^{\{n\}}_d$, and set $l^{\{1\}}_d=\id_{J^1_d}$.
\end{defi}
\begin{lemma}
There are unique natural transformations $\pi^{\{n,n-1\}}_d$ and $\iota^{\{n\}}_d$ making the following diagram commute.
\begin{equation}\label{diag:sesqui}
\begin{tikzcd}[column sep=40pt]
0\ar[r]&\Omega^1_d\circ S^{n-1}_d\ar[d,"\Omega^1_d(\iota^{n-1}_d)"']\ar[r,"\iota^{\{n\}}_d"]&J^{\{n\}}_d\ar[d,hookrightarrow,"l^{\{n\}}_d"]\ar[r,"\pi^{\{n,n-1\}}_d"]&J^{n-1}_d\ar[d,equals]\ar[r]&0\\
0\ar[r]&\Omega^1_d\circ J^{n-1}_d \ar[r,hookrightarrow,"\iota^1_{d,J^{n-1}_d}"]&J^1_d\circ J^{n-1}_d\ar[r,twoheadrightarrow,"\pi^{1,0}_{d,J^{n-1}_d}"]&J^{n-1}_d\ar[r]&0
\end{tikzcd}
\end{equation}
In particular the horizontal maps form complexes and the vertical maps form a morphism of complexes.
\end{lemma}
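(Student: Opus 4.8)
The plan is to assemble the two natural transformations from data already at hand and then isolate the surjectivity of $\pi^{\{n,n-1\}}_d$, which is the only substantive point.

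\emph{The maps and their uniqueness.} Commutativity of the right-hand square of \eqref{diag:sesqui} forces $\pi^{\{n,n-1\}}_d$ to be the composite $\pi^{1,0}_{d,J^{n-1}_d}\circ l^{\{n\}}_d$, so existence and uniqueness are immediate; the bottom row is literally the $1$-jet short exact sequence of functors \eqref{es:natJ1} whiskered by $J^{n-1}_d$, hence a (short exact, so a fortiori) complex. For $\iota^{\{n\}}_d$, since $l^{\{n\}}_d$ is a monomorphism it suffices to check that $\iota^1_{d,J^{n-1}_d}\circ\Omega^1_d(\iota^{n-1}_d)$ is annihilated by $g\colonequals\widetilde{\DH}^I_{J^{n-2}_d}\circ J^1_d(l^{n-1}_d)$, and then to take the induced map. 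I would first use naturality of $\iota^1_d$ with respect to $l^{n-1}_d$ to rewrite $J^1_d(l^{n-1}_d)\circ\iota^1_{d,J^{n-1}_d}=\iota^1_{d,J^1_d\circ J^{n-2}_d}\circ\Omega^1_d(l^{n-1}_d)$, then apply the identity $\widetilde{\DH}^I\circ\iota^1_{d,J^1_d}=\Omega^1_d(\pi^{1,0}_d)$ (read off from Proposition \ref{prop:defeth}, i.e.\ the top-left of \eqref{es:2jetDt}--\eqref{es:W}) whiskered by $J^{n-2}_d$, to collapse $g\circ\iota^1_{d,J^{n-1}_d}$ to $\Omega^1_d(\pi^{1,0}_{d,J^{n-2}_d}\circ l^{n-1}_d)=\Omega^1_d(\pi^{n-1,n-2}_d)$; precomposing with $\Omega^1_d(\iota^{n-1}_d)$ then gives $\Omega^1_d(\pi^{n-1,n-2}_d\circ\iota^{n-1}_d)=\Omega^1_d(0)=0$ because \eqref{es:hjn} is a complex (Proposition \ref{prop:hjc}). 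Uniqueness of $\iota^{\{n\}}_d$ is again forced by $l^{\{n\}}_d$ being mono.

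\emph{Complex and morphism of complexes.} The left square of \eqref{diag:sesqui} commutes by the construction of $\iota^{\{n\}}_d$ and the right square by the definition of $\pi^{\{n,n-1\}}_d$, so the vertical maps form a morphism of complexes; and the top row is a complex since $\pi^{\{n,n-1\}}_d\circ\iota^{\{n\}}_d=\pi^{1,0}_{d,J^{n-1}_d}\circ l^{\{n\}}_d\circ\iota^{\{n\}}_d=\pi^{1,0}_{d,J^{n-1}_d}\circ\iota^1_{d,J^{n-1}_d}\circ\Omega^1_d(\iota^{n-1}_d)=0$.

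\emph{Surjectivity of $\pi^{\{n,n-1\}}_d$ --- the crux.} Running the naturality computation above on all of $\Omega^1_d\circ J^{n-1}_d$ (not only on $S^{n-1}_d$) shows $g\circ\iota^1_{d,J^{n-1}_d}=\Omega^1_d(\pi^{n-1,n-2}_d)$, so $g$ fits into a commuting ladder from the $1$-jet short exact sequence at $J^{n-1}_d$ down to the trivial short complex $0\to\Omega^1_d\circ J^{n-2}_d=\Omega^1_d\circ J^{n-2}_d\to 0$, with vertical maps $\Omega^1_d(\pi^{n-1,n-2}_d)$, $g$ and $0$. The snake lemma identifies $\pi^{\{n,n-1\}}_d=\pi^{1,0}_{d,J^{n-1}_d}\vert_{\ker g}$ as surjective exactly when the connecting homomorphism vanishes, and a short chase turns this into the inclusion $\operatorname{Im}(g)\subseteq\operatorname{Im}\!\big(\Omega^1_d(\pi^{n-1,n-2}_d)\big)=\Omega^1_d\big(\operatorname{Im}\pi^{n-1,n-2}_d\big)$. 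Because $J^1_d\circ J^{n-1}_d$ is generated over $A$ by the image of $j^1_{d,J^{n-1}_d}$ (Remark \ref{rmk:jpg}) and $g\circ j^1_{d,J^{n-1}_d}=-\rho_{d,J^{n-2}_d}\circ l^{n-1}_d$ (a direct computation from the explicit formulas for $\widetilde{\DH}^I$ and $\rho_d$, using $\widetilde{\DH}^I\circ j^1_{d,J^1_d}=-\rho_d$ from Remark \ref{rmk:rhodif}), this in turn reduces to: the natural transformation $\rho_{d,J^{n-2}_d}\circ l^{n-1}_d\colon J^{n-1}_d\to\Omega^1_d\circ J^{n-2}_d$ has image inside $\Omega^1_d(\operatorname{Im}\pi^{n-1,n-2}_d)$, equivalently it factors through $\Omega^1_d(\pi^{n-1,n-2}_d)$. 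I expect this last step to be the main obstacle; the natural way to attack it is to exploit the defining condition of $J^{n-1}_d$ --- that $\widetilde{\DH}^I$ one level down already kills $l^{n-1}_d(J^{n-1}_d)$ --- and to set up an induction on $n$ in which this factorization property (hence surjectivity of $\pi^{\{n,n-1\}}_d$, and with it of the holonomic projection) is propagated from level $n-1$ to level $n$, the base cases $n\le 2$ being handled directly by the prolongations since there $J^{n-1}_d$ is generated by $Aj^{n-1}_d$.
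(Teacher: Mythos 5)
Your construction of the two maps, and the verification that the rows are complexes and the verticals a morphism of complexes, are correct and essentially identical to the paper's proof. The only cosmetic difference is in checking that $\iota^1_{d,J^{n-1}_d}\circ\Omega^1_d(\iota^{n-1}_d)$ is annihilated by $\widetilde{\DH}^I_{J^{n-2}_d}\circ J^1_d(l^{n-1}_d)$: the paper verifies separately that the two projections whose difference is $\widetilde{\DH}^I$ each vanish on this composite, while you first collapse $\widetilde{\DH}^I_{J^{n-2}_d}\circ J^1_d(l^{n-1}_d)\circ\iota^1_{d,J^{n-1}_d}$ to $\Omega^1_d(\pi^{n-1,n-2}_d)$ and then invoke Proposition \ref{prop:hjc}. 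The two computations coincide modulo bookkeeping.

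On the surjectivity of $\pi^{\{n,n-1\}}_d$, which you single out as the crux: your diagnosis is accurate --- it does not follow from the construction --- and your reduction to the inclusion $\im\bigl(\rho_{d,J^{n-2}_d}\circ l^{n-1}_d\bigr)\subseteq\im\bigl(\Omega^1_d(\pi^{n-1,n-2}_d)\bigr)$ is correct. But you should not hunt for an unconditional argument: the paper's own proof of this lemma says nothing about the epimorphism claim, and surjectivity is only actually established later, in Proposition \ref{prop:hol=>sesqui+1}, under the additional hypotheses that $\Omega^1_d$ is flat in $\ModA$ and that the holonomic $(n-1)$-jet sequence is exact. Under those hypotheses $\pi^{n-1,n-2}_d$ is epi, hence so is $\Omega^1_d(\pi^{n-1,n-2}_d)$ by right exactness of $\Omega^1_d$, your factorization condition holds trivially, and the snake lemma closes the argument --- which is precisely the induction you sketch at the end. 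So your proposal proves everything the paper's proof proves; the one claim you could not establish is one the paper defers (with extra hypotheses) rather than proves here, and the ``epi'' decoration in the lemma should be read accordingly.
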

\begin{proof}
The right square commutes if and only if we define $\pi^{\{n,n-1\}}_d\colonequals \pi^{1,0}_{d,J^{n-1}_d}\circ l^{\{n\}}_d$.
 
In order to find $\iota^{\{n\}}_d$, consider the map
\begin{equation}\label{eq:inclOmSn}
\iota^1_{d,J^{n-1}_d}\circ \Omega^1_d(\iota^{n-1}_d)\colon \Omega^1_d\circ S^{n-1}_d\longrightarrow J^1_d\circ J^{n-1}_d.
\end{equation}
If we compose it with $J^1_d(l^{n-1}_d)=J^1_d\circ J^{n-1}_d\to J^{(2)}_d\circ J^{n-2}_d$, we can see that, composing both $\pi^{1,0}_{d,J^1_d\circ J^{n-2}_d}$ and $J^1_d \pi^{1,0}_{d,J^{n-2}_d}$, we obtain the zero map.
This implies that the composition of \eqref{eq:inclOmSn} with \eqref{eq:sesqud} gives zero.
Therefore, by the universal property of the kernel, we obtain a unique factorization, which we call $\iota^{\{n\}}_d$.

Finally, by the commutativity of \eqref{diag:sesqui}, we get $\pi^{\{n,n-1\}}_d\circ \iota^{\{n\}}_d=\pi^{1,0}_{d,J^{n-1}_d}\circ \iota^1_{d,J^{n-1}_d}\circ \Omega^1_d(\iota^{n-1}_d)=0$.
\end{proof}
\begin{defi}\label{defi:sesquisequence}
We call the top row of \eqref{diag:sesqui} the \emph{sesquiholonomic $n$-jet sequence}, and the natural transformation $\pi^{\{n,n-1\}}_d\colonequals \pi^{1,0}_{d,J^{n-1}_d}\circ l^{\{n\}}_d\colon J^{\{n\}}_d\to J^{n-1}_d$ the \emph{sesquiholonomic jet projection}.
\end{defi}

If the exterior algebra $\Omega^\bullet_d$ is sufficiently well-behaved, we can offer the following characterization of the sesquiholonomic jet functor.
\begin{lemma}\label{lemma:charsesqui}
If $\Omega^1_d$ is flat in $\ModA$, then $J^{\{n\}}_d=J^{[n]}_d\cap (J^1_d\circ J^{n-1}_d)\subseteq J^{(n)}_d$.

Furthermore, if both $\Omega^1_d$ and $\Omega^2_d$ are flat in $\ModA$, then $J^{\{n\}}_d$ is the submodule of $J^{[n]}_d$ obtained as
\begin{equation}
\bigcap_{k=1}^{n-2}\ker\left(J^{(k)}_d \widetilde{\DH}^{II}_{d,J^{[n-k-2]}_d}\circ \iota_{J^{[n]}_d} \right).
\end{equation}
\end{lemma}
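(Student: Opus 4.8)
The plan is to prove the two identities in turn: the first one with only $\Omega^1_d$ flat in $\ModA$, deducing it from the pullback Lemma~\ref{lemma:pb} and the compatibility of inclusions recorded in Proposition~\ref{prop:semiholcpxfact}; and the second one, with $\Omega^2_d$ also flat, by combining the first with the kernel descriptions of $J^{[n]}_d$ in Theorem~\ref{theo:sjchar} and of $J^n_d$ in Lemma~\ref{lemma:Jchar}. For the \textbf{first identity}, I would use that $J^1_d$ is exact (Corollary~\ref{cor:Jex}), that $\Omega^1_d$ preserves monomorphisms, and that $h^{n-1}\colon J^{n-1}_d\hookrightarrow J^{[n-1]}_d$ is a monomorphism (Proposition~\ref{prop:semiholcpxfact}). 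Consider the commuting square whose left edge is the defining map $\widetilde{\DH}^I_{J^{n-2}_d}\circ J^1_d(l^{n-1}_d)$ of $J^{\{n\}}_d$, whose right edge is $\widetilde{\DH}^I_{J^{[n-2]}_d}\circ J^1_d(l^{[n-1]}_d)$ (with kernel $J^{[n]}_d$ by Theorem~\ref{theo:sjchar}\eqref{theo:sjchar:2}), whose top edge is $J^1_d(h^{n-1})\colon J^1_d\circ J^{n-1}_d\hookrightarrow J^1_d\circ J^{[n-1]}_d$, and whose bottom edge is $\Omega^1_d(h^{n-2})\colon \Omega^1_d\circ J^{n-2}_d\hookrightarrow \Omega^1_d\circ J^{[n-2]}_d$. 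Commutativity comes from $l^{[n-1]}_d\circ h^{n-1}=J^1_d(h^{n-2})\circ l^{n-1}_d$ together with the naturality of $\widetilde{\DH}^I$ along $h^{n-2}$, which yield $\widetilde{\DH}^I_{J^{[n-2]}_d}\circ J^1_d(l^{[n-1]}_d)\circ J^1_d(h^{n-1})=\Omega^1_d(h^{n-2})\circ\widetilde{\DH}^I_{J^{n-2}_d}\circ J^1_d(l^{n-1}_d)$. As the right and bottom edges are monic, Lemma~\ref{lemma:pb} identifies $J^{\{n\}}_d$ (the kernel of the left edge) with the pullback of $J^{[n]}_d$ along $J^1_d(h^{n-1})$, hence with $J^{[n]}_d\cap(J^1_d\circ J^{n-1}_d)$ formed in $J^1_d\circ J^{[n-1]}_d$. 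Since $J^1_d(\iota_{J^{[n-1]}_d})$ embeds $J^1_d\circ J^{[n-1]}_d$ into $J^{(n)}_d$, carrying $l^{[n]}_d$ to $\iota_{J^{[n]}_d}$ (cf.~\eqref{eq:decsjn}) and $J^1_d(h^{n-1})$ to $J^1_d(\iota_{J^{n-1}_d})$ (Proposition~\ref{prop:holinsemi}), the intersection is unchanged when formed in $J^{(n)}_d$, proving the claim.

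For the \textbf{second identity}, assume additionally $\Omega^2_d$ flat in $\ModA$. I would first apply Lemma~\ref{lemma:Jchar}\eqref{lemma:Jchar:2} to $J^{n-1}_d$ and then the exact functor $J^1_d$ (which preserves the defining finite intersection and each kernel) to get $J^1_d\circ J^{n-1}_d=\bigcap_{j=1}^{n-2}\ker\left(J^{(j)}_d\widetilde{\DH}_{J^{(n-j-2)}_d}\right)$ inside $J^{(n)}_d$. Splitting $\widetilde{\DH}=(\widetilde{\DH}^I,\widetilde{\DH}^{II})$ via \eqref{es:spsplit}, each term is $\ker\left(J^{(j)}_d\widetilde{\DH}^I_{J^{(n-j-2)}_d}\right)\cap\ker\left(J^{(j)}_d\widetilde{\DH}^{II}_{J^{(n-j-2)}_d}\right)$; intersecting with $J^{[n]}_d$, which by Theorem~\ref{theo:sjchar}\eqref{theo:sjchar:3} equals the finer intersection $\bigcap_{k=0}^{n-2}\ker\left(J^{(k)}_d\widetilde{\DH}^I_{J^{(n-k-2)}_d}\right)$, absorbs all the $\widetilde{\DH}^I$ components. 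With the first identity this leaves $J^{\{n\}}_d=\bigcap_{j=1}^{n-2}\ker\left(J^{(j)}_d\widetilde{\DH}^{II}_{J^{(n-j-2)}_d}\circ\iota_{J^{[n]}_d}\right)$. Finally, to replace the subscript $J^{(n-j-2)}_d$ by $J^{[n-j-2]}_d$, I would use that $\iota_{J^{[n]}_d}$ factors through $J^{(j+2)}_d\circ J^{[n-j-2]}_d$ (iterating $J^{[m]}_d\subseteq J^1_d\circ J^{[m-1]}_d$) and that, by naturality of $\widetilde{\DH}^{II}$ along $\iota_{J^{[n-j-2]}_d}$, the injectivity of $\Omega^2_d(\iota_{J^{[n-j-2]}_d})$ (here $\Omega^2_d$ flat enters) and the exactness of $J^{(j)}_d$, the kernels of $J^{(j)}_d\widetilde{\DH}^{II}_{J^{(n-j-2)}_d}\circ\iota_{J^{[n]}_d}$ and of $J^{(j)}_d\widetilde{\DH}^{II}_{J^{[n-j-2]}_d}\circ\iota_{J^{[n]}_d}$ coincide. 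Relabelling $k=j$ gives the stated intersection, each factor of which is manifestly a subfunctor of $J^{[n]}_d$.

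The hard part will not be any single computation but the \emph{bookkeeping}: one must keep the maps $l^n_d$, $l^{[n]}_d$, $l^{\{n\}}_d$, $h^n$, $\iota_{J^n_d}$, $\iota_{J^{[n]}_d}$ coherent so that every intersection and kernel genuinely lives in a shared ambient functor, and one must invoke only the flatness hypothesis available at each stage — in particular the first identity must avoid Lemma~\ref{lemma:Jchar}, which requires $\Omega^2_d$ flat, whereas the second may use it freely. Modulo that coherence check, each step reduces to ``the kernel along a monomorphism is a pullback'' (Lemma~\ref{lemma:pb}) together with naturality of $\widetilde{\DH}$ and the exactness of $J^1_d$, $\Omega^1_d$, and $\Omega^2_d$.
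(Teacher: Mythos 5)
Your proposal is correct and follows essentially the same route as the paper: the first identity via Lemma~\ref{lemma:pb} applied to the commuting ladder between the holonomic and semiholonomic defining sequences (using Proposition~\ref{prop:semiholcpxfact} and the exactness of $J^1_d$), and the second via Lemma~\ref{lemma:Jchar} for $J^{n-1}_d$, exactness of $J^1_d$, absorption of the $\widetilde{\DH}^I$ components by $J^{[n]}_d$ through Theorem~\ref{theo:sjchar}, and Lemma~\ref{lemma:pb} again. Your closing step justifying the replacement of the subscript $J^{(n-k-2)}_d$ by $J^{[n-k-2]}_d$ is a point the paper's proof leaves implicit, and it is handled correctly.
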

\begin{proof}
If $\Omega^1_d$ is flat in $\ModA$, we have the following diagram
\begin{equation}
\begin{tikzcd}[column sep=50pt]
J^{\{n\}}_d\ar[r,hookrightarrow,"l^{\{n\}}_d"]\ar[d,dashed]&J^1_d\circ J^{n-1}_d\ar[r,hookrightarrow,"J^1_d(l^{n-1}_d)"]\ar[d,hookrightarrow,"J^1_d(h^{n-1}_d)"]&J^{(2)}_d\circ J^{n-2}_d\ar[r,twoheadrightarrow,"\widetilde{\DH}^I_{d,J^{n-2}_d}"]\ar[d,hookrightarrow,"J^{(2)}_d(h^{n-2})"]&\Omega^1_d\circ J^{n-2}_d\ar[d,hookrightarrow,"\Omega^1_d(h^{n-2})"]\\
J^{[n]}_d\ar[r,hookrightarrow,"l^{[n]}_{d}"']&J^1_d\circ J^{[n-1]}_d\ar[r,hookrightarrow,"J^1_d(l^{[n-1]}_d)"']&J^{(2)}_d\circ J^{[n-2]}_d\ar[r,twoheadrightarrow,"\widetilde{\DH}^I_{d,J^{[n-2]}_d}"']&\Omega^1_d\circ J^{[n-2]}_d
\end{tikzcd}
\end{equation}
Here, the right square commutes by the naturality of $\widetilde{\DH}^I_d$ with respect to the factorization map $h^n_d\colon J^n_d\hookrightarrow J^{[n]}_d$ provided by Proposition \ref{prop:holinsemi}.
The central square commutes by Proposition \ref{prop:semiholcpxfact}.
By definition, in each row the leftmost arrow is the kernel inclusion of the composition of the two to its right.
By Lemma \ref{lemma:pb} we obtain that $J^{{n}}_d=J^1_d\circ J^{n-1}_d\cap J^{[n]}_d\subseteq J^1_d\circ J^{[n]}_d\subseteq J^{(n)}_d$.

If, additionally, $\Omega^2_d$ is flat in $\ModA$, we can use Lemma \ref{lemma:Jchar} to write $J^{n-1}_d=\bigcap_{k=0}^{n-3}\ker\left(J^{(k)}_d \widetilde{\DH}_{d,J^{(n-k-2)}_d} \right)$.
Since $J^1_d$ preserves limits, and limits commute with limits, we have
\begin{equation}
\begin{split}
J^{\{n\}}_d
&=J^{[n]}_d\cap J^1_d\circ J^{n-1}_d\\
&=J^{[n]}_d\cap \bigcap_{k=0}^{n-3}\ker\left(J^1_d\circ J^{(k)}_d \widetilde{\DH}_{d,J^{(n-k-3)}_d} \right)\\
&=J^{[n]}_d\cap \bigcap_{k=1}^{n-2}\ker\left(J^{(k)}_d \widetilde{\DH}_{d,J^{(n-k-2)}_d} \right)\\
&=\bigcap_{k=1}^{n-2}J^{[n]}_d\cap \ker\left(J^{(k)}_d \widetilde{\DH}_{d,J^{(n-k-2)}_d} \right)
\end{split}
\end{equation}
Via Lemma \ref{lemma:pb}, we can prove that the kernel of $J^{(k)}_d \widetilde{\DH}_{d,J^{(n-k-2)}_d}$ intersected with $J^{[n]}_d$ equals the kernel of $J^{(k)}_d \widetilde{\DH}_{d,J^{(n-k-2)}_d}\circ \iota_{J^{[n]}_d}$.
Consider $J^{(k)}_d \pj^1_{J^{(n-k-2)}_d}$, where $\pj^1$ is the natural transformation $\pj^1\colon \Omega^1_d\ltimes \Omega^2_d\to \Omega^1_d$ in \eqref{es:spsplit}.
From Theorem \ref{theo:sjchar} we have
\begin{equation}
\left(J^{(k)}_d \pj^1_{J^{(n-k-2)}_d}\circ \iota_{J^{[n]}_d}\right)
\circ
\left(J^{(k)}_d \widetilde{\DH}_{d,J^{(n-k-2)}_d}\circ \iota_{J^{[n]}_d}\right)
=J^{(k)}_d \widetilde{\DH}^{I}_{d,J^{(n-k-2)}_d}\circ \iota_{J^{[n]}_d}
=0
\end{equation}
Thus, $J^{(k)}_d \widetilde{\DH}_{d,J^{(n-k-2)}_d}\circ \iota_{J^{[n]}_d}$ factors through $J^{(k)}_d\circ \Omega^2_d\circ J^{(n-k-2)}_d$ via the corresponding inclusion, as $J^{(k)}_d \widetilde{\DH}^{II}_{d,J^{(n-k-2)}_d}\circ \iota_{J^{[n]}_d}$.
This map is also $A$-linear (cf.\ Remark \ref{rmk:semiholethII}).
It follows that
\begin{equation}
J^{\{n\}}_d
=\bigcap_{k=1}^{n-2} \left(J^{(k)}_d \widetilde{\DH}_{d,J^{(n-k-2)}_d}\circ \iota_{J^{[n]}_d}\right)
=\bigcap_{k=1}^{n-2} \left(J^{(k)}_d \widetilde{\DH}^{II}_{d,J^{(n-k-2)}_d}\circ \iota_{J^{[n]}_d}\right).
\end{equation}
\end{proof}
\begin{prop}\label{prop:hol=>sesqui+1}
Let $A$ be a $\bk$-algebra equipped with an exterior algebra $\Omega^\bullet_d$.
If $\pi^{n-1,n-2}_d$ is an epi, then $\pi^{\{n,n-1\}}_d$ is an epi.

If moreover $\Omega^1_d$ is flat in $\ModA$ and the holonomic $(n-1)$-jet sequence
\begin{equation}\label{es:hjn-1}
\begin{tikzcd}[column sep=40pt]
0\ar[r]&S^{n-1}_d\ar[r,hookrightarrow,"\iota_d^{n-1}"]&J^{n-1}_d\ar[r,twoheadrightarrow,"\pi^{n-1,n-2}_d"]&J^{n-2}_d\ar[r]&0
\end{tikzcd}
\end{equation}
is exact, then the sesquiholonomic $n$-jet sequence is also exact
\begin{equation}
\begin{tikzcd}[column sep=40pt]
0\ar[r]&\Omega^1_d\circ S^{n-1}_d\ar[r,hookrightarrow,"\iota_d^{\{n\}}"]&J^{\{n\}}_d\ar[r,twoheadrightarrow,"\pi^{\{n,n-1\}}_d"]&J^{n-1}_d\ar[r]&0.
\end{tikzcd}
\end{equation}

Moreover, the natural transformation \eqref{eq:sesqud} is an epimorphism.
That is, $\widetilde{\DH}^I_{d,J^{n-2}_d}$ remains surjective even when restricted to $J^1_d\circ J^{n}_d$.
\end{prop}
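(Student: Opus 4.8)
The plan is to derive both assertions at once from a single application of the snake lemma to a morphism of short exact sequences, the source being the image under $\Omega^1_d$ of the hypothesised sequence \eqref{es:hjn-1}. Since $\Omega^1_d$ is flat in $\ModA$, it is an exact functor (Corollary \ref{cor:Jex}), so applying it to the exact sequence \eqref{es:hjn-1} produces a short exact sequence
\begin{equation*}
0\longrightarrow \Omega^1_d\circ S^{n-1}_d\xrightarrow{\Omega^1_d(\iota^{n-1}_d)}\Omega^1_d\circ J^{n-1}_d\xrightarrow{\Omega^1_d(\pi^{n-1,n-2}_d)}\Omega^1_d\circ J^{n-2}_d\longrightarrow 0;
\end{equation*}
in particular $\Omega^1_d(\pi^{n-1,n-2}_d)$ is an epimorphism.

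The key computation is that, writing $\phi$ for the natural transformation \eqref{eq:sesqud}, precomposition with the canonical inclusion $\iota^1_{d,J^{n-1}_d}\colon\Omega^1_d\circ J^{n-1}_d\hookrightarrow J^1_d\circ J^{n-1}_d$ gives
\begin{equation*}
\phi\circ\iota^1_{d,J^{n-1}_d}=\Omega^1_d(\pi^{n-1,n-2}_d).
\end{equation*}
I would obtain this by combining the naturality of $\iota^1_d$ applied to $l^{n-1}_d$, which gives $J^1_d(l^{n-1}_d)\circ\iota^1_{d,J^{n-1}_d}=\iota^1_{d,J^1_d\circ J^{n-2}_d}\circ\Omega^1_d(l^{n-1}_d)$, with the identity $\widetilde{\DH}^I_E\circ\iota^1_{d,J^1_d E}=\Omega^1_d(\pi^{1,0}_{d,E})$ established inside the proof of Proposition \ref{prop:defeth} (taken at $E=J^{n-2}_d$) and the equality $\pi^{n-1,n-2}_d=\pi^{1,0}_{d,J^{n-2}_d}\circ l^{n-1}_d$. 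Granting this, $\phi\circ\iota^1_{d,J^{n-1}_d}$ is an epimorphism, hence so is $\phi$; this is precisely the ``moreover'' assertion. Since $J^{\{n\}}_d=\ker(\phi)$ by definition, we get the short exact sequence $0\to J^{\{n\}}_d\xrightarrow{l^{\{n\}}_d}J^1_d\circ J^{n-1}_d\xrightarrow{\phi}\Omega^1_d\circ J^{n-2}_d\to 0$.

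The left square of \eqref{diag:sesqui}, which is the very definition of $\iota^{\{n\}}_d$, together with the key identity then exhibits a morphism of short exact sequences from the $\Omega^1_d$-image of \eqref{es:hjn-1} to the sequence just obtained, with vertical arrows $\iota^{\{n\}}_d$, $\iota^1_{d,J^{n-1}_d}$, and $\id_{\Omega^1_d\circ J^{n-2}_d}$. Applying the snake lemma, and noting that $\iota^1_{d,J^{n-1}_d}$ is a monomorphism with cokernel $J^{n-1}_d$ (the $1$-jet exact sequence of Proposition \ref{prop:1jses} evaluated at $J^{n-1}_d$) while the rightmost arrow is an isomorphism, the six-term exact sequence collapses to $0\to\ker(\iota^{\{n\}}_d)\to 0\to 0\to\coker(\iota^{\{n\}}_d)\to J^{n-1}_d\to 0$. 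Hence $\iota^{\{n\}}_d$ is a monomorphism and the induced map $\coker(\iota^{\{n\}}_d)\to J^{n-1}_d$ is an isomorphism; since this induced map comes from $l^{\{n\}}_d$ and $\pi^{\{n,n-1\}}_d=\pi^{1,0}_{d,J^{n-1}_d}\circ l^{\{n\}}_d$ annihilates $\iota^{\{n\}}_d$, it is exactly the factorization of $\pi^{\{n,n-1\}}_d$ through $\coker(\iota^{\{n\}}_d)$. Therefore the top row of \eqref{diag:sesqui}, the sesquiholonomic $n$-jet sequence, is short exact.

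I do not expect a genuine obstacle here: the only care needed is the bookkeeping in the key identity and the verification that the isomorphism $\coker(\iota^{\{n\}}_d)\cong J^{n-1}_d$ furnished by the snake lemma coincides with the map induced by $\pi^{\{n,n-1\}}_d$, both of which are routine diagram chases once the exactness of $\Omega^1_d$ and the exactness of \eqref{es:hjn-1} are available.
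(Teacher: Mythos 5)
Your proposal is correct and is essentially the paper's argument: the paper assembles the same $3\times 3$ grid of objects, uses the same key commutativity $\widetilde{\DH}^I_{J^{n-2}_d}\circ J^1_d(l^{n-1}_d)\circ\iota^1_{d,J^{n-1}_d}=\Omega^1_d(\pi^{n-1,n-2}_d)$, and applies the snake lemma, just sliced along the other axis (columns given by the $\Omega^1_d$-image of \eqref{es:hjn-1}, the defining left-exact sequence of $J^{\{n\}}_d$, and the trivial column), so that surjectivity of \eqref{eq:sesqud} drops out of the cokernel sequence rather than being established up front as you do. Your transposed version is equally valid; the only extra bookkeeping it incurs, which you correctly flag and resolve, is identifying the snake-lemma isomorphism $\coker(\iota^{\{n\}}_d)\cong J^{n-1}_d$ with the map induced by $\pi^{\{n,n-1\}}_d$.
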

\begin{proof}
Consider the following diagram
\begin{equation}
\begin{tikzcd}[column sep=50pt]
0\ar[r]&\Omega^1_d\circ S^{n-1}_d\ar[r,hookrightarrow,"\iota_d^{\{n\}}"]\ar[d,"\Omega^1_d(\iota^{n-1}_d)"']&J^{\{n\}}_d\ar[r,"\pi^{\{n,n-1\}}_d"]\ar[d,hookrightarrow,"l^{\{n\}}_d"]&J^{n-1}_d\ar[r]\ar[d,equals]&0\\
0\ar[r]&\Omega^1_d\circ J^{n-1}_d\ar[r,hookrightarrow,"\iota^1_{d,J^{n-1}_d}"]\ar[d,twoheadrightarrow,"\Omega^1_d(\pi^{n-1,n-2}_d)"']&J^1_d\circ J^{n-1}_d\ar[r,twoheadrightarrow,"\pi^{1,0}_{d,J^{n-1}_d}"]\ar[d,"\widetilde{\DH}^I_{d,J^{n-2}_d}\circ J^1_d(l^{n-1}_d)"]&J^{n-1}_d\ar[r]\ar[d]&0\\
0\ar[r]&\Omega^1_d\circ J^{n-2}_d\ar[r,equals]&\Omega^1_d\circ J^{n-2}_d\ar[r]&0\ar[r]&0
\end{tikzcd}
\end{equation}
The bottom right square trivially commutes.
The bottom left square commutes, since it is the restriction of the bottom left square in \eqref{es:shJetcd} via the natural inclusions $h^n_d$ and $h^{n-1}_d$ (cf.\ Proposition \ref{prop:semiholcpxfact}).
The central column is left exact by definition, and the right column is trivially exact.
Since $\Omega^1_d$ is a tensor functor, and hence right exact, we have that the bottom left vertical map is an epi.
Thus, applying the snake lemma we obtain the surjectivity of $\pi^{\{n,n-1\}}_d$.

If moreover we have the holonomic $(n-1)$-jet short exact sequence and $\Omega^1_d$ flat in $\ModA$, then also the left column is a short exact sequence.
Applying the snake lemma, we obtain the sesquiholonomic $n$-jet exact sequence as the kernel sequence, and we obtain the surjectivity of $\widetilde{\DH}^I_{d,J^{n-2}_d}\circ J^1_d(l^{n-1}_d)$ from the cokernel sequence.
\end{proof}
\begin{lemma}\label{lemma:imageD}
Let $A$ be a $\bk$-algebra equipped with an exterior algebra $\Omega^{\bullet}_d$, then $\widetilde{\DH}_{d,J^{n-2}_d}$ restricted to $J^{\{n\}}_d$, i.e.\ 
\begin{equation}\label{eq:ethonsesqui}
\widetilde{\DH}_{d,J^{n-2}_d}\circ J^1_d(l^{n-1}_d)\circ l^{\{n\}}_d\colon J^{\{n\}}_d \longrightarrow (\Omega^1_d\ltimes \Omega^2_d)\circ J^{n-2}_d,
\end{equation}
factors through the inclusion of $\Omega^2_d\circ J^{n-2}_d$ as $\widetilde{\DH}^{II}_{d,J^{n-2}_d}\circ J^1_d(l^{n-1}_d)\circ l^{\{n\}}_d$.

Furthermore, if $n=2$ or if $n>2$ and if we also assume that $\Omega^2_d$ is flat in $\ModA$ and that (for $n>3$) the holonomic $(n-2)$-jet sequence is left exact, then \eqref{eq:ethonsesqui} has image contained in $\Omega^2_d\circ S^{n-2}_d$.
\end{lemma}
\begin{proof}
By definition, projecting \eqref{eq:ethonsesqui} to the component $\Omega^1_d\circ J^{n-2}_d$ yields
\begin{equation}
\widetilde{\DH}^{I}_{d,J^{n-2}_d}\circ J^1_d(l^{n-1}_d)\circ l^{\{n\}}_d,
\end{equation}
which vanishes by definition of sesquiholonomic jet.
From \eqref{es:spsplit}, it follows that \eqref{eq:ethonsesqui} has image contained in $\Omega^2_d\circ J^{n-2}_d$, and it coincides with $\widetilde{\DH}^{II}_{d,J^{n-2}_d}\circ J^1_d(l^{n-1}_d)\circ l^{\{n\}}_d$ (cf.\ Remark \ref{rmk:semiholethII}).

If $n=2$, we have that $J^0_d=S^0_d=\id_{\AMod}$, thus the statement is trivially true.
For $n=3$, the $(n-2)$-jet sequence is exact, so the same conditions that apply for the case $n>3$ hold and thus we can prove this case together with the remaining ones.
If $n>2$, consider the following diagram
\begin{equation}\label{eq:sesquieth}
\begin{tikzcd}[column sep=50pt]
J^{\{n\}}_d\ar[r,hookrightarrow,"l^{\{n\}}_d"]\ar[d,"\pi^{\{n,n-1\}}_d"']&J^1_d\circ J^{n-1}_d\ar[r,"J^1_d(l^{n-1}_d)"]\ar[d,"J^1_d(\pi^{n-1,n-2}_d)"]\ar[dl,twoheadrightarrow,"\pi^{1,0}_{d,J^{n-1}_d}"']&J^{(2)}_d\circ J^{n-2}_d\ar[r,two heads,"\widetilde{\DH}^{II}_{d,J^{n-2}_d}"]\ar[d,"J^{(2)}_d(\pi^{n-2,n-3}_d)"]&\Omega^2_d\circ J^{n-2}_d\ar[d,"\Omega^2_d(\pi^{n-2,n-3}_d)"]\\
J^{n-1}_d\ar[r,hookrightarrow,"l^{n-1}_d"']&J^1_d\circ J^{n-2}_d\ar[r,"J^1_d(l^{n-2}_d)"']&J^{(2)}_d\circ J^{n-3}_d\ar[r,two heads,"\widetilde{\DH}^{II}_{d,J^{n-3}_d}"']&\Omega^2_d\circ J^{n-3}_d
\end{tikzcd}
\end{equation}
The top left triangle commutes by definition of $\pi^{\{n,n-1\}}_d$.
The bottom left triangle does not commute, but if we consider the difference of the two maps of which it is made, we obtain $\widetilde{\DH}^{I}_{d,J^{n-2}_d}\circ J^1_d(l^{n-2}_d)$, which vanishes on $J^{\{n\}}_d$.
Thus, the left square commutes.
Similarly, the central square commutes because the difference of the two compositions forming it coincides with $J^1_d(\widetilde{\DH}^{I}_{d,J^{n-3}_d})\circ J^1_d(l^{n-1}_d)=0$.
The right square commutes by the naturality of $\widetilde{\DH}^{II}_d$.

The composition of the bottom maps in \eqref{eq:sesquieth} is zero, by definition of $J^{n-1}_d$.
The top horizontal composition equals the map \eqref{eq:ethonsesqui}, so the commutativity of \eqref{eq:sesquieth} implies that \eqref{eq:ethonsesqui} factors through the kernel of $\Omega^2_d (\pi^{n-2,n-3}_d)$.
By the flatness assumption, the functor $\Omega^2_d$ is exact.
We apply it to the holonomic $(n-2)$-jet sequence, obtaining that the kernel of $\Omega^2_d(\pi^{n-2,n-3}_d)$ is $\Omega^2_d\circ S^{n-2}_d$.
Therefore, the image of $\widetilde{\DH}^{II}_{d,J^{n-2}_d}\circ J^1_d(l^{n-1}_d)\circ l^{\{n\}}_d$ is contained in $\Omega^2_d\circ S^{n-2}_d$.
\end{proof}

We can actually deduce a stronger result, that is, the image of $J^{\{n\}}_d$ via $\widetilde{\DH}_{d,J^{n-2}_d}$ is contained in the kernel of $\delta^{n-2,2}_d=\wedge^{2,1}_{S^{n-3}_d}\circ \iota^{n-2}_\wedge \colon \Omega^2_d\otimes_A S^{n-2}_d\to \Omega^3_d\otimes_A S^{n-3}_d$ (cf.\ §\ref{ss:Spencer}).
However, in order to compute $\delta^{n-2,2}_d$ on the image of $\widetilde{\DH}_{d,J^{n-2}_d}\circ J^1_d(l^{n-1}_d)\circ l^{\{n\}}_d$, it will be more convenient to compute a map that extends $\delta^{n-2,2}_d$ to $\Omega^2_d\circ J^{n-2}_d$.
For this purpose, we prove the following result.
\begin{lemma}\label{lemma:extwedge2}
In the category of functors $\AMod\to \Mod$, there is a natural transformation
\begin{align}
\mu\colonequals -d\otimes_A \pi^{1,0}_d+\id_{\Omega^2_d}\wedge^{2,1} \rho_d\colon \Omega^2_d\circ J^1_d\longrightarrow \Omega^3_d,
&\hfill&
\omega\otimes_A [x\otimes e]\longmapsto -(d\omega)x\otimes_A e-\omega\wedge dx\otimes_A e.
\end{align}
such that $\mu\circ \Omega^2_d(\iota^1_d)=\wedge^{2,1}\colon \Omega^2_d\circ \Omega^1_d\twoheadrightarrow \Omega^3_d$.

For $n>2$, restricting $\mu_{S^{n-3}_d}$ and $\mu_{J^{n-3}_d}$ to $\Omega^2_d\circ S^{n-2}_d$ gives
\begin{align}
\mu_{S^{n-3}_d}\circ \Omega^2_d(\iota^{1}_{d,S^{n-2}_d})\circ \Omega^2_d(\iota^{n-2}_{\wedge})
=\delta^{n-2,2}_d,
&\hfill&
\mu_{J^{n-3}_d}\circ \Omega^2_d(l^{n-2}_d)\circ \Omega^2_d(\iota^{n-2}_d)
=\Omega^{3}_d(\iota^{n-3}_d)\circ \delta^{n-2,2}_d.
\end{align}
\end{lemma}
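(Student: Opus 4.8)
The plan is to verify the three claimed identities directly, treating $\mu$ first as a standalone natural transformation and then specializing.

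\textbf{Step 1: Well-definedness and the first identity.} First I would observe that $-d\otimes_A \pi^{1,0}_d + \id_{\Omega^2_d}\wedge^{2,1}\rho_d$ is a priori only a map $\Omega^2_d\otimes J^1_d\to \Omega^3_d$ (a tensor over $\bk$ in the first slot), and I must check it factors through $\Omega^2_d\otimes_A J^1_d$. This is the exact analogue of the computation in Lemma \ref{lemma:spplsplit} showing $W^{II}_E$ is well-defined, using the Leibniz-type property of $\rho_d$ from Remark \ref{rmk:rhodo}: for $\omega\in\Omega^2_d$, $\lambda\in A$, $\xi\in J^1_d E$,
\begin{equation*}
(-d\otimes_A\pi^{1,0}_d+\id\wedge^{2,1}\rho_d)(\omega\lambda\otimes\xi)
=-(d\omega)\lambda\otimes_A\pi^{1,0}_d(\xi)+\omega\wedge d\lambda\otimes_A\pi^{1,0}_d(\xi)+\omega\lambda\wedge\rho_d(\xi),
\end{equation*}
and this equals the value on $\omega\otimes\lambda\xi$ after expanding $d(\omega\lambda)=(d\omega)\lambda+(-1)^2\omega\wedge d\lambda$ and $\rho_d(\lambda\xi)=-d\lambda\otimes_A\pi^{1,0}_d(\xi)+\lambda\rho_d(\xi)$. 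Naturality in the module argument is automatic since everything in sight is built from tensor functors and the natural transformations $d$, $\pi^{1,0}_d$, $\rho_d$. For $\mu\circ\Omega^2_d(\iota^1_d)=\wedge^{2,1}$, I would compute on a generator $\omega\otimes_A\alpha$ of $\Omega^2_d\circ\Omega^1_d$: then $\Omega^2_d(\iota^1_d)(\omega\otimes_A\sum_i da_i\, b_i)=\omega\otimes_A\sum_i[a_i\otimes b_i-a_ib_i\otimes 1]$, to which $\pi^{1,0}_d$ contributes $0$ (since $\pi^{1,0}_d\circ\iota^1_d=0$) and $\rho_d$ contributes $\sum_i da_i\otimes_A b_i=\alpha$, giving $\omega\wedge^{2,1}\alpha$; this mirrors \eqref{eq:Wrestrictstowedge}.

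\textbf{Step 2: The two restriction identities.} For the first, I would unwind the definition $\delta^{n-2,2}=(-1)^2\wedge^{2,1}_{S^{n-3}_d}\circ\Omega^2_d(\iota^{n-2}_\wedge)$ and combine with Step 1 applied at the module $S^{n-3}_d$: since $\mu_{S^{n-3}_d}\circ\Omega^2_d(\iota^1_{d,S^{n-2}_d})=\wedge^{2,1}_{S^{n-2}_d}$ and this precomposed with $\Omega^2_d(\iota^{n-2}_\wedge)$ gives exactly $\wedge^{2,1}_{S^{n-3}_d}\circ\Omega^2_d(\iota^{n-2}_\wedge)=\delta^{n-2,2}$. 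Here I must be a little careful that $\iota^{n-2}_\wedge\colon S^{n-2}_d\to\Omega^1_d\circ S^{n-3}_d$ is what turns $\iota^1_{d,S^{n-2}_d}$ into the right ``inner'' inclusion, but this is just naturality of $\iota^1_d$. For the second identity, the key input is the commutativity of \eqref{diag:defiiotand}, which says $l^{n-2}_d\circ\iota^{n-2}_d=\iota^1_{d,J^{n-3}_d}\circ\Omega^1_d(\iota^{n-3}_d)\circ\iota^{n-2}_\wedge$. I would feed $\Omega^2_d(\iota^{n-2}_d)$ into $\Omega^2_d(l^{n-2}_d)$, rewrite using this identity, and then use naturality of $\mu$ (or rather, re-derive termwise) together with the fact that $\pi^{1,0}_d$ and $\rho_d$ compose with the $\iota^{n-3}_d$-type inclusions in the expected way, so that the ``$J$'' computation collapses onto the ``$S$'' computation post-composed with $\Omega^3_d(\iota^{n-3}_d)$. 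Essentially this is the statement that $\mu$ is natural with respect to $\iota^{n-3}_d\colon S^{n-3}_d\hookrightarrow J^{n-3}_d$ combined with the first restriction identity.

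\textbf{Expected obstacle.} The routine but error-prone part is bookkeeping the sign and the nesting of functors in the second restriction identity: one has $\Omega^2_d\circ J^1_d\circ J^{n-3}_d$ versus $\Omega^2_d\circ J^{n-2}_d$ connected by $\Omega^2_d(l^{n-2}_d)$, and the inner inclusions $\iota^{n-2}_d$, $\iota^{n-3}_d$, $\iota^{n-2}_\wedge$ must be threaded through $\pi^{1,0}_d$ and $\rho_d$ correctly. The conceptual content is light — it is all naturality plus \eqref{diag:defiiotand} plus the definition of $\delta$ — but assembling the diagram so that each square is visibly commutative (by naturality of $\mu$, by \eqref{diag:defiiotand}, and by Step 1) is where care is needed. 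I expect no genuine difficulty beyond that, since flatness of $\Omega^2_d$ is not even needed for these identities (it will be used elsewhere, e.g. to identify kernels), only the formal properties of the maps involved.
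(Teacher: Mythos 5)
Your proposal is correct and follows essentially the same route as the paper: a direct check that the formula is balanced over $A$ (using the Leibniz property of $\rho_d$), the computation $\mu\circ\Omega^2_d(\iota^1_d)=\wedge^{2,1}$ via $\pi^{1,0}_d\circ\iota^1_d=0$ and $\rho_d\circ\iota^1_d=\id$, and then the restriction identities by combining the definition of $\delta^{n-2,2}$ with naturality and the commutativity of \eqref{diag:defiiotand}. The only cosmetic difference is that the paper threads the second restriction identity through the sesquiholonomic inclusion $\iota^{\{n-2\}}_d$ and diagram \eqref{diag:sesqui}, whereas you invoke \eqref{diag:defiiotand} directly; the underlying commutativities are the same.
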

\begin{proof}
We prove the first result for a generic $E$ in $\AMod$.
Consider the following well-defined $\bk$-linear map
\begin{equation}
-d\otimes_A \pi^{1,0}_{d,E}+\id_{\Omega^2_d}\wedge^{2,1} \rho_{d,E}\colon \Omega^2_d\otimes J^1_d(E)\longrightarrow \Omega^3_d(E).
\end{equation}
We want to prove that it factors through the map $\otimes_A\colon \Omega^2_d\otimes J^1_d(E)\twoheadrightarrow \Omega^2_d\otimes_A J^1_d(E)$.
For all $f\in A$, $\omega\in\Omega^1_d$, and $[x\otimes e]\in J^1_d E$ we have
\begin{equation}
\begin{split}
\left(-d\otimes_A \pi^{1,0}_{d,E}+\id_{\Omega^2_d}\wedge^{2,1} \rho_{d,E}\right)(\omega f\otimes [x\otimes e])
&=-d(\omega f)x\otimes_A e-\omega f\wedge dx \otimes_A e\\
&=-(d\omega) fx\otimes_A e-\omega\wedge (df)x\otimes_A e-\omega \wedge f dx \otimes_A e\\
&=-(d\omega)\otimes_A fxe-\omega\wedge d(fx)\otimes_A e\\
&=\left(-d\otimes_A \pi^{1,0}_{d,E}+\id_{\Omega^2_d}\wedge^{2,1} \rho_{d,E}\right)(\omega \otimes f[x\otimes e])
\end{split}
\end{equation}
We call the factorization $\mu_E$.
Since all the maps used to define $\mu_E$ are natural in $E$, we get that $\mu$ is a natural transformation.

If we precompose $\mu$ with $\Omega^2_d(\iota^1_{d})$, we obtain
\begin{equation}
\begin{split}
\mu\circ \Omega^2_d(\iota^1_{d})
&=\left(-d\otimes_A \pi^{1,0}_{d}+\id_{\Omega^2_d}\wedge^{2,1} \rho_{d} \right)\circ \left(\id_{\Omega^2_d}\otimes_A \iota^1_{d}\right)\\
&=-d\otimes_A \pi^{1,0}_{d}\circ \iota^1_{d}+\id_{\Omega^2_d}\wedge^{2,1} \rho_{d}\circ \iota^1_{d}\\
&=0+\id_{\Omega^2_d}\wedge^{2,1}\id_{\Omega^1_d}\\
&=\wedge^{2,1}.
\end{split}
\end{equation}

The last statements of the lemma follow from the next diagram.
\begin{equation}\label{diag:dunes}
\begin{tikzcd}[column sep=50pt]
\Omega^2_d\circ S^{n-2}_d\ar[rrr,bend left=15pt,"\delta^{n-2,2}_d"']\ar[r,"\Omega^2_d(\iota^{n-2}_{\wedge})"']\ar[d,"\Omega^2_d(\iota^{n-2}_d)"']&\Omega^2_d\circ \Omega^1_d\circ S^{n-3}_d\ar[rr,bend left=10pt,"\wedge"]\ar[r,"\Omega^2_d (\iota^1_{d,S^{n-1}_d})"']\ar[d,"\Omega^2_d(\iota^{\{n-2\}}_{d})"']&\Omega^2_d\circ J^1_d\circ S^{n-3}_d\ar[r,"\mu_{S^{n-3}_d}"']\ar[d,"\Omega^2_d\circ J^1_d(\iota^{n-3}_d)"]&\Omega^3_d \circ S^{n-3}_d\ar[d,"\Omega^3_d(\iota^{n-3}_d)"]\\
\Omega^2_d\circ J^{n-2}_d\ar[r]\ar[rr,bend right=10pt,"\Omega^2_d(l^{n-2}_{d})"']&\Omega^2_d\circ J^{\{n-2\}}_d\ar[r,"\Omega^2_d(l^{\{n-2\}}_d)"]&\Omega^2_d\circ J^1_d\circ J^{n-3}_d\ar[r,"\mu_{J^{n-3}_d}"]&\Omega^3_d \circ J^{n-3}_d
\end{tikzcd}
\end{equation}
The right square in \eqref{diag:dunes} commutes by naturality of $\mu$.
The central square commutes by \eqref{diag:sesqui} and the naturality of $\iota^1_d$.
The commutativity of the left square descends from the following diagram
\begin{equation}
\begin{tikzcd}[column sep=40pt]
S^{n-2}_d\ar[d,"\iota^{n-2}_d"']\ar[r,hookrightarrow,"\iota^{n-2}_\wedge"]&\Omega^1_d\circ S^{n-3}_d\ar[d,"\iota^{\{n-2\}}_d"]\ar[r,"\Omega^1_d(\iota^{n-3}_d)"]&\Omega^1_d\circ J^{n-3}_d\ar[d,hookrightarrow,"\iota^1_{d,J^{n-3}_d}"]\\
J^{n-2}_d\ar[rr,hookrightarrow,bend right=10pt,"l^{n-2}_d"']\ar[r,hookrightarrow]&J^{\{n-2\}}_d\ar[r,hookrightarrow,"l^{\{n-2\}}_d"]&J^1_d\circ J^{n-3}_d
\end{tikzcd}
\end{equation}
Here in fact, the exterior square commutes by \eqref{diag:defiiotand}, and the right square commutes by \eqref{diag:sesqui}, so the commutativity of the left square follows from the fact that $l^{\{n-2\}}_d$ is a monomorphism.
\end{proof}
\begin{lemma}\label{lemma:edhinkerspencer}
If $n=2$, or if $n>2$ and $\Omega^1_d$, $\Omega^2_d$, and $\Omega^3_d$ are flat in $\ModA$ and (for $n>3$) the $(n-2)$-jet sequence is left exact, then we have $\im\left(\widetilde{\DH}_{d,J^{n-2}_d}\circ J^1_d(l^{n-1}_d)\circ l^{\{n\}}_d\right)\subseteq \ker(\delta^{n-2,2}_d)$.
\end{lemma}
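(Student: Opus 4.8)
\emph{Proof plan.} The plan is to prove the equivalent identity
\[
\delta^{n-2,2}\circ\widetilde{\DH}^{II}_{J^{n-2}_d}\circ J^1_d(l^{n-1}_d)\circ l^{\{n\}}_d=0,
\]
where we first invoke Lemma~\ref{lemma:imageD} (for which the holonomic $(n-1)$-jet sequence \eqref{es:hjn-1} is taken to be exact, as in the inductive situation where this lemma is applied): that lemma both yields the factorisation of $\widetilde{\DH}_{J^{n-2}_d}$ restricted to $J^{\{n\}}_d$ through its second component, and shows the image lies in $\Omega^2_d\circ S^{n-2}_d$, so that $\delta^{n-2,2}$ may indeed be applied. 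The first step is to reduce this to a statement about the natural transformation $\mu$ of Lemma~\ref{lemma:extwedge2}. Since $\Omega^1_d$ is flat in $\ModA$, the map $\iota^{n-3}_d$ is a monomorphism (Lemma~\ref{lemma:uniqiota}); since $\Omega^3_d$ is flat, $\Omega^3_d$ preserves monomorphisms, so $\Omega^3_d(\iota^{n-3}_d)$ is a monomorphism. Hence it suffices to check vanishing after post-composition with $\Omega^3_d(\iota^{n-3}_d)$, and by the second identity of Lemma~\ref{lemma:extwedge2} this replaces $\Omega^3_d(\iota^{n-3}_d)\circ\delta^{n-2,2}$ by $\mu_{J^{n-3}_d}\circ\Omega^2_d(l^{n-2}_d\circ\iota^{n-2}_d)$; the goal becomes
\[
\mu_{J^{n-3}_d}\circ\Omega^2_d(l^{n-2}_d)\circ\widetilde{\DH}^{II}_{J^{n-2}_d}\circ J^1_d(l^{n-1}_d)\circ l^{\{n\}}_d=0 .
\]

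Next I would exploit the explicit description of $\widetilde{\DH}^{II}$ in \eqref{eq:FE-WE}: on generators it sends $[a\otimes b]\otimes_A[c\otimes e]$ to $da\wedge d(bc)\otimes_A e=d\bigl(a\,d(bc)\bigr)\otimes_A e$, so $\widetilde{\DH}^{II}_{J^{n-2}_d}$ takes values among \emph{exact} $2$-forms. In the decomposition $\mu=-d\otimes_A\pi^{1,0}_d+\id_{\Omega^2_d}\wedge^{2,1}\rho_d$ of Lemma~\ref{lemma:extwedge2}, the first summand therefore contributes nothing to the composite above, because $d\bigl(d(a\,d(bc))\bigr)=0$ by the graded Leibniz rule and $d^2=0$ in $\Omega^\bullet_d$; concretely, on the image of $\widetilde{\DH}^{II}_{J^{n-2}_d}$ the map $\mu_{J^{n-3}_d}$ restricts to the legitimate natural transformation $(\id_{\Omega^2_d}\wedge^{2,1})\circ\Omega^2_d(\rho_{d,J^{n-3}_d})$. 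Thus the goal reduces to showing that
\[
(\id_{\Omega^2_d}\wedge^{2,1})\circ\Omega^2_d\bigl(\rho_{d,J^{n-3}_d}\circ l^{n-2}_d\bigr)\circ\widetilde{\DH}^{II}_{J^{n-2}_d}\circ J^1_d(l^{n-1}_d)\circ l^{\{n\}}_d
\]
vanishes on $J^{\{n\}}_d$.

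To handle this last composite I would rewrite $\widetilde{\DH}^{II}_{J^{n-2}_d}\circ J^1_d(l^{n-1}_d)$ as $W^{II}_{J^{n-2}_d}\circ\Omega^1_d(l^{n-1}_d)\circ\rho_{d,J^{n-1}_d}$, using the definition $\widetilde{\DH}^{II}=W^{II}\circ\rho_{d,J^1_d}$ from Proposition~\ref{prop:defeth} together with the naturality of $\rho_d$, and then expand using the formulae \eqref{eq:WI-II} for $W^{II}$, the Leibniz-type identity for $\rho_d$ from Remark~\ref{rmk:rhodo}, and the graded Leibniz rule $d(\alpha b)=(d\alpha)b-\alpha\wedge db$ in $\Omega^\bullet_d$. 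After collecting terms this identifies the composite with $d$ applied — in the only sense that makes sense, i.e. through $W^{II}$ and $\mu$ — to the sesquiholonomic defining map $\widetilde{\DH}^I_{J^{n-2}_d}\circ J^1_d(l^{n-1}_d)\circ l^{\{n\}}_d$ of Definition~\ref{def:sesqui}, wedged against the $\rho_d$-contribution of the innermost jet slot. Since that defining map is identically zero on $J^{\{n\}}_d$, the composite vanishes; combining the reductions, $\delta^{n-2,2}$ annihilates the image, which is the assertion. The same argument applies verbatim over $\AMod_B$.

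I expect the third paragraph to be the main obstacle. The intermediate expressions there are built from pieces — "$d$ of an element of a $\otimes_A$-tensor", $\rho_d$, and wedges against $dx$ coming from the next jet slot — that are \emph{not} individually well-defined natural transformations, and only combine to well-defined maps in the precise groupings imposed by $W^{II}$, $\rho_d$, and $\mu$. The computation must therefore be organised so that every displayed arrow is a genuine natural transformation (equivalently, one works throughout with the honest maps $W^{II}$, $\rho_d$, $\mu$ rather than with their component formulae), and the bookkeeping that keeps terms paired correctly — while also using that the innermost element lies in $J^{n-1}_d$, not merely in $J^1_d\circ J^{n-2}_d$ — is the delicate part.
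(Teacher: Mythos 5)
Your first two reduction steps coincide with the paper's proof: Lemma \ref{lemma:imageD} places the image in $\Omega^2_d\circ S^{n-2}_d$, Lemma \ref{lemma:extwedge2} together with the injectivity of $\Omega^3_d(\iota^{n-3}_d)$ (from flatness of $\Omega^3_d$) reduces the claim to the vanishing of $\mu_{J^{n-3}_d}$ on the image, and the $-d\otimes_A\pi^{1,0}_d$ contribution of $\mu$ dies on representatives because the $2$-form output of $\widetilde{\DH}^{II}$ is exact, so $d$ of it is zero. (Be aware, though, that neither summand of $\mu$ is individually well defined over $\otimes_A$ — your phrase ``restricts to the legitimate natural transformation $(\id_{\Omega^2_d}\wedge^{2,1})\circ\Omega^2_d(\rho_{d,J^{n-3}_d})$'' is not literally correct; one must evaluate the full $\mu$ on explicit representatives, as the paper does.)

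The genuine gap is in your third paragraph. Writing a generic element of $J^{\{n\}}_dE$ as $\sum_i[a^1_i\otimes b^1_i]\otimes_A\zeta_i$ with $\zeta_i=\sum_{j,k}[a^2_{i,j}\otimes b^2_{i,j}]\otimes_A[a^3_{i,j,k}\otimes b^3_{i,j,k}]\otimes_A\xi_{i,j,k}\in J^{n-1}_dE$, the term surviving after applying $\mu_{J^{n-3}_d}$ is
\begin{equation*}
-\sum_{i,j,k}da^1_i\wedge d(b^1_ia^2_{i,j})\wedge d(b^2_{i,j}a^3_{i,j,k})\otimes_A b^3_{i,j,k}\xi_{i,j,k}
=-\sum_i da^1_i\wedge\widetilde{\DH}^{II}_{J^{n-3}_dE}\bigl(b^1_i\zeta_i\bigr),
\end{equation*}
and this vanishes because $b^1_i\zeta_i$ lies in the left $A$-submodule $J^{n-1}_dE$, on which $\widetilde{\DH}_{J^{n-3}_d}\circ J^1_d(l^{n-2}_d)$ vanishes by Definition \ref{def:njet} — that is, because the \emph{inner} factor is holonomic. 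It does not follow from the sesquiholonomic defining identity $\widetilde{\DH}^I_{J^{n-2}_d}\circ J^1_d(l^{n-1}_d)\circ l^{\{n\}}_d=0$, which constrains the outermost two slots only: to propagate that $1$-form identity $\sum_{i,j}a^1_i\,d(b^1_ia^2_{i,j})\otimes_A(\cdots)=0$ in $\Omega^1_d\circ J^{n-2}_d$ to the displayed $3$-form you would need the assignment $\alpha\otimes_A\zeta\mapsto d\alpha\wedge\rho_d(\zeta)$ to descend to $\Omega^1_d\otimes_A J^{n-2}_d$, and it does not (compare $\alpha f\otimes\zeta$ with $\alpha\otimes f\zeta$, using the Leibniz rules for $d$ and $\rho_d$); moreover, an element whose inner factor were merely sesquiholonomic would in general not kill this term, since $\widetilde{\DH}^{II}$ need not vanish on it. So the input you relegate to ``bookkeeping'' in your closing paragraph — that the inner factor lies in $J^{n-1}_d$ and not just in $J^1_d\circ J^{n-2}_d$ — is the entire content of the final step, and the identification with the sesquiholonomic defining map that you propose in its place is not correct.
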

\begin{proof}
The case $n=2$ is trivially satisfied by Lemma \ref{lemma:imageD} and $\delta^{n-2,2}_d=\delta^{0,2}_d=0$.
For $n>2$, by Lemma \ref{lemma:imageD}, we know that $\im\left(\widetilde{\DH}_{d,J^{n-2}_d}\circ J^1_d(l^{n-1}_d)\circ l^{\{n\}}_d\right)$ maps to $\Omega^2_d\circ S^{n-2}_d$.
Therefore, by Lemma \ref{lemma:extwedge2} we have that
\begin{equation}
\Omega^3_d(\iota^{n-3}_d)\circ \wedge\circ\widetilde{\DH}_{d,J^{n-2}_d}\circ J^1_d(l^{n-1}_d)\circ l^{\{n\}}_d
=\mu_{J^{n}_d}\circ\widetilde{\DH}_{d,J^{n-2}_d}\circ J^1_d(l^{n-1}_d)\circ l^{\{n\}}_d.
\end{equation}
From flatness of $\Omega^3_d$ follows that $\Omega^3_d(\iota^{n-3}_d)$ is injective.

It remains to show that $\mu_{J^{n-3}_d}\circ \widetilde{\DH}_{d,J^{n-2}_d}\circ J^1_d(l^{n-1}_d)\circ l^{\{n\}}_d=0$.
We check this on a generic element of $J^{\{n\}}_d E$, which can always be written as
\begin{equation}
\sum_i[a^1_i\otimes b^1_i]\otimes_A\underbrace{\sum_j [a^2_{i,j}\otimes b^2_{i,j}]\otimes_A \overbrace{\sum_k[a^3_{i,j,k}\otimes b^3_{i,j,k}]\otimes_A \underbrace{\xi_{i,j,k}}_{\in J^{n-3}_d E}}^{\in J^{n-2}_d E}}_{\in J^{n-1}_d E}\in J^{\{n\}}_d\subseteq J^1_dJ^{n-1}_d E\subseteq J^{(2)}_dJ^{n-2}_d E\subseteq J^{(3)}_dJ^{n-3}_d E.
\end{equation}
By applying $\widetilde{\DH}_{d,J^{n-3}_d E}$, we obtain
\begin{equation}
\sum_{i,j,k}d a^1_i\wedge d( b^1_i a^2_{i,j})\otimes_A [ b^2_{i,j} a^3_{i,j,k}\otimes b^3_{i,j,k}]\otimes_A \xi_{i,j,k}
\end{equation}
We can now apply $\mu_{J^{n-3}_d}$, and we obtain
\begin{equation}
\begin{split}
&-\sum_{i,j,k}d\left(d a^1_i\wedge d( b^1_i a^2_{i,j})\right)\otimes_A b^2_{i,j} a^3_{i,j,k} b^3_{i,j,k}\xi_{i,j,k}
-\sum_{i,j,k}d a^1_i\wedge d( b^1_i a^2_{i,j})\wedge d \left(b^2_{i,j} a^3_{i,j,k}\right)\otimes_A b^3_{i,j,k} \xi_{i,j,k}\\
&\qquad=0-\sum_{i} d a^1_i\wedge \left(\sum_{j,k} d( b^1_i a^2_{i,j})\wedge d \left(b^2_{i,j} a^3_{i,j,k}\right)\otimes_A b^3_{i,j,k} \xi_{i,j,k}\right)\\
&\qquad=-\sum_{i} d a^1_i\wedge \widetilde{\DH}^{II}_{d,J^{n-3}_d E}\left(
\sum_j [b^1_ia^2_{i,j}\otimes b^2_{i,j}]\otimes_A \sum_k[a^3_{i,j,k}\otimes b^3_{i,j,k}]\otimes_A \xi_{i,j,k}\right)\\
&\qquad=0,
\end{split}
\end{equation}
since $\widetilde{\DH}^{II}_{d,J^{n-3}_d E}$ vanishes on $J^{n-1}_d E$.
\end{proof}
\begin{rmk}\label{rmk:weakO3flat}
The flatness of $\Omega^3_d$ in $\ModA$ is not the minimal condition for Lemma \ref{lemma:edhinkerspencer} to hold.
In fact, we only need $\Omega^3_d(\iota^{n-3}_d)$ to be a monomorphism.
This can happen for instance if $\iota^{n-3}_d$ is a left $A$-linear split mono.
In particular, for $n\le 3$, flatness of $\Omega^3_d$ can be removed, as $\iota^{n-3}_d=\id$.
For $n=4$, for it to hold at $E$ in $\AMod$ it is sufficient to have $\Tor^A_1(\Omega^3_d,E)$.
This weakening of hypotheses can be applied to every result which applies Lemma \ref{lemma:edhinkerspencer}, including the following theorem.
\end{rmk}
\begin{theo}[Holonomic jet exact sequence]\label{theo:higherwolves}
Let $A$ be a $\bk$-algebra endowed with an exterior algebra $\Omega^{\bullet}_d$ such that $\Omega^1_d$, $\Omega^2_d$ and $\Omega^3_d$ are flat in $\ModA$.
Suppose the holonomic $(n-2)$-jet sequence is left exact, and the holonomic $(n-1)$-jet sequence is exact.
Then the following sequence is also exact,
\begin{equation}\label{es:hjsesi}
\begin{tikzcd}
0\ar[r]&S^n_d\ar[r,hookrightarrow,"\iota^n_{d}"]& J^{n}_d\ar[r,"\pi^{n,n-1}_{d}"]&J^{n-1}_d\ar[r,"\dh^{n-1}_d"]&H^{n-2,2}_{\delta_d},
\end{tikzcd}
\end{equation}
where $H^{n-2,2}_{\delta_d}$ is the Spencer $\delta$-cohomology.

Therefore, we obtain a short exact sequence
\begin{equation}\label{es:hjsesn}
\begin{tikzcd}
0\ar[r]&S^n_d\ar[r,hookrightarrow,"\iota^n_{d}"]& J^{n}_d\ar[r,two heads,"\pi^{n,n-1}_{d}"]&J^{n-1}_d\ar[r]&0
\end{tikzcd}
\end{equation}
if and only if $\dh^{n-1}_d$ is the zero map.
\end{theo}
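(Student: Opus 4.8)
The plan is to derive the entire five-term sequence \eqref{es:hjsesi}, together with a construction of the operator $\dh^{n-1}$, from a single application of the snake lemma comparing the sesquiholonomic $n$-jet sequence with a three-term stretch of the Spencer $\delta$-complex. Everything takes place in the abelian category of additive endofunctors of $\AMod$ (where kernels, images and the snake lemma are computed object-wise), and the analytically substantial input is already isolated in Proposition \ref{prop:hol=>sesqui+1}, Lemma \ref{lemma:imageD} and Lemma \ref{lemma:edhinkerspencer}.

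First I would set up the two rows of the snake diagram. For the top row: since $\Omega^1_d$ is flat in $\ModA$ and the holonomic $(n-1)$-jet sequence is exact by hypothesis, Proposition \ref{prop:hol=>sesqui+1} yields the short exact sequence $0\to\Omega^1_d\circ S^{n-1}_d\xrightarrow{\iota^{\{n\}}_d}J^{\{n\}}_d\xrightarrow{\pi^{\{n,n-1\}}_d}J^{n-1}_d\to 0$. For the bottom row I take $0\to\im(\delta^{n-1,1})\to\ker(\delta^{n-2,2})\to H^{n-2,2}\to 0$, exact by the definition of Spencer cohomology. Next I would produce the middle column: by Lemma \ref{lemma:imageD} (using $\Omega^2_d$ flat) and Lemma \ref{lemma:edhinkerspencer} (using $\Omega^3_d$ flat, or the weaker Remark \ref{rmk:weakO3flat}), the restriction of $\widetilde{\DH}_{J^{n-2}_d}$ to $J^{\{n\}}_d$ corestricts to a natural transformation $\Phi\colon J^{\{n\}}_d\to\ker(\delta^{n-2,2})$, which is $A$-linear by Remark \ref{rmk:semiholethII}. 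I would then check $\ker(\Phi)=J^n_d$: one has $J^n_d\subseteq J^{\{n\}}_d$ because the defining map of $J^{\{n\}}_d$ is the $\Omega^1_d$-component of that of $J^n_d$, and the kernel of a map into $\Omega^1_d\ltimes\Omega^2_d$ is the intersection of the kernels of its two underlying $\bk$-linear components, so $J^n_d=J^{\{n\}}_d\cap\ker(\Phi)=\ker(\Phi)$ (here one also uses that $\Omega^2_d(\iota^{n-2}_d)$ is monic, which follows from $\Omega^1_d$ and $\Omega^2_d$ being flat in $\ModA$).

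The crux is the identity $\Phi\circ\iota^{\{n\}}_d=-\delta^{n-1,1}$ of natural transformations $\Omega^1_d\circ S^{n-1}_d\to\Omega^2_d\circ S^{n-2}_d$. I would establish it by composing in turn: the defining triangle \eqref{diag:sesqui} of $\iota^{\{n\}}_d$; naturality of $\iota^1_d$ along $l^{n-1}_d$; the identity $\widetilde{\DH}^{II}_E\circ\iota^1_{d,J^1_d E}=W^{II}_E$ from the proof of Proposition \ref{prop:defeth}; the defining triangle \eqref{diag:defiiotand} of $\iota^{n-1}_d$; the identity $W^{II}_E\circ\Omega^1_d(\iota^1_{d,E})=\wedge$ of \eqref{eq:Wrestrictstowedge}; and naturality of $\wedge^{1,1}$ along $\iota^{n-2}_d$ — the result being $\wedge^{1,1}_{S^{n-2}_d}\circ\Omega^1_d(\iota^{n-1}_\wedge)$, which equals $-\delta^{n-1,1}$ by the definition of the Spencer differential. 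Since $\im(\delta^{n-1,1})\subseteq\ker(\delta^{n-2,2})$, this exhibits $-\delta^{n-1,1}$ as the left column of a morphism of short exact sequences with middle column $\Phi$; composing $\Phi$ with the quotient map onto $H^{n-2,2}$ then annihilates $\im(\iota^{\{n\}}_d)=\ker(\pi^{\{n,n-1\}}_d)$, so I can \emph{define} $\dh^{n-1}\colon J^{n-1}_d\to H^{n-2,2}$ to be the induced natural transformation; it becomes the right column of the morphism.

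Finally I would invoke the snake lemma. The left column $-\delta^{n-1,1}\colon\Omega^1_d\circ S^{n-1}_d\twoheadrightarrow\im(\delta^{n-1,1})$ is an epimorphism with kernel $S^n_d$ — since $H^{n-1,1}=0$ (proved earlier in the paper), $\ker(\delta^{n-1,1})=\im(\delta^{n,0})\cong S^n_d$ via $\delta^{n,0}=\iota^n_\wedge$ — so the six-term snake sequence collapses to $0\to S^n_d\to J^n_d\to\ker(\dh^{n-1})\to 0$. A short diagram chase through \eqref{diag:defiiotand} and Definition \ref{def:piiotan}, using that $l^n_d$ is monic, identifies the two maps with $\iota^n_d$ and $\pi^{n,n-1}_d$; concatenating with the inclusion $\ker(\dh^{n-1})\hookrightarrow J^{n-1}_d$ gives the asserted exactness of \eqref{es:hjsesi}. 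The last equivalence is then immediate: exactness at $J^{n-1}_d$ means $\im(\pi^{n,n-1}_d)=\ker(\dh^{n-1})$, so $\pi^{n,n-1}_d$ is an epimorphism — equivalently \eqref{es:hjsesn} is a short exact sequence — if and only if $\dh^{n-1}=0$. The step I expect to be the genuine obstacle is the identity $\Phi\circ\iota^{\{n\}}_d=-\delta^{n-1,1}$: threading the functor-valued naturality squares and the sign conventions of $\delta$, and ensuring the two corestrictions (of $\widetilde{\DH}^{II}$ to $\Omega^2_d\circ S^{n-2}_d$, and then to $\ker(\delta^{n-2,2})$) are legitimate — which is precisely what Lemmas \ref{lemma:imageD} and \ref{lemma:edhinkerspencer} are there to guarantee.
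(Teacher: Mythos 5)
Your proposal is correct and follows essentially the same route as the paper: both apply the snake lemma to the morphism from the sesquiholonomic $n$-jet short exact sequence (Proposition \ref{prop:hol=>sesqui+1}) into Spencer-complex data, with Lemmas \ref{lemma:imageD} and \ref{lemma:edhinkerspencer} supplying the corestriction of $\widetilde{\DH}$ to $\ker(\delta^{n-2,2})$ and the identity $\Phi\circ\iota^{\{n\}}_d=\wedge=-\delta^{n-1,1}$ making the left square commute. The only (harmless) difference is bookkeeping: the paper takes the bottom row to be $\ker(\delta^{n-2,2})=\ker(\delta^{n-2,2})\to 0$ and obtains $\dh^{n-1}$ as the snake connecting homomorphism landing in $\coker(\wedge)=H^{n-2,2}$, whereas you take the bottom row $\im(\delta^{n-1,1})\hookrightarrow\ker(\delta^{n-2,2})\twoheadrightarrow H^{n-2,2}$ and define $\dh^{n-1}$ as the induced map on right columns.
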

\begin{proof}
We already have the short jet exact sequence for $0$, $1$, and $2$-jet functors.

Now let $n\ge 3$.
Since we have the $(n-1)$-jet exact sequence and the $(n-2)$-jet sequence is left exact, by Proposition \ref{prop:hol=>sesqui+1} the sesquiholonomic $n$-jet exact sequence holds.
Since \eqref{cpx:Spencer} is a complex, the image of $\wedge=-\delta^{n-1,1}_d$ is contained in the kernel of $\delta^{n-2,2}_d$.
Lemma \ref{lemma:edhinkerspencer} tells us that also $\widetilde{\DH}_{d,J^{n-3}_d}$ restricted to the sesquiholonomic jet functor $J^{\{n\}}_d$ has image in $\ker(\delta^{n-2,2}_d)$.

Thus we have the following diagram by applying the snake lemma to the two central rows
\begin{equation}\label{diag:eshwsnake}
\begin{tikzcd}[column sep=50pt]
0\ar[r]&[-20pt]S^n_d\ar[r,hookrightarrow,"\iota^n_{d}"]\ar[d,hookrightarrow,"\iota^n_{\wedge}"]& J^{n}_d\ar[r,"\pi^{n,n-1}_{d}"]\ar[d]\ar[draw=none]{ddd}[name=X, anchor=center]{}&J^{n-1}_d\ar[d,equals]
\ar[rounded corners=12pt,
	to path={ -- ([xshift=50pt]\tikztostart.east)
		|- ([yshift=5pt]X.center) \tikztonodes
		-| ([xshift=-60pt]\tikztotarget.west)
		-- (\tikztotarget)},near start,"\dh^{n-1}_d"]{dddll}
&[-20pt]\\
0\ar[r]&\Omega^1_d\circ S^{n-1}_d\ar[r,hookrightarrow,"\iota_d^{\{n\}}"]\ar[d,near end,"\wedge"]&J^{\{n\}}_d\ar[r,twoheadrightarrow,"\pi^{\{n,n-1\}}_d"]\ar[d,near end, "\widetilde{\DH}_{d,J^{n-2}_d}\circ J^1_d(l^{n-1}_d)\circ l^{\{n\}}_d"]&J^{n-1}_d\ar[r]\ar[d]&0\\[10pt]
0\ar[r]&\ker(\delta^{n-2,2}_d)\ar[d]\ar[r,equals]&\ker(\delta^{n-2,2}_d)\ar[r]\ar[d]&0\ar[d]\\
&H^{n-2,2}_{\delta_d}\ar[r]&C\ar[r]&0
\end{tikzcd}
\end{equation}
The snake lemma provides the long exact sequence, 
\begin{equation}
\begin{tikzcd}
0\ar[r]&S^n_d\ar[r,hookrightarrow,"\iota^n_{d}"]& J^{n}_d\ar[r,"\pi^{n,n-1}_{d}"]&J^{n-1}_d\ar[r,"\dh^{n-1}_d"]&H^{n-2,2}_{\delta_d}\ar[r]&C\ar[r]&0.
\end{tikzcd}
\end{equation}
Thence follows \eqref{es:hjsesi}.
If $\dh^{n-1}_d=0$, we further obtain \eqref{es:hjsesn}.
\end{proof}
\begin{cor}\label{cor:spencerdelta_jes}
Suppose $\Omega^1_d$, $\Omega^2_d$, and $\Omega^3_d$ are flat in $\ModA$.
If $H^{m,2}_{\delta_d}=0$ for all $1\le m\le n-2$, then the $n$-jet sequence is exact.
\end{cor}
\begin{rmk}
Classically, the vanishing of the Spencer $\delta$-cohomology is associated to the involutivity and formal integrability of systems of partial differential equations \cite{Lychagin1990}.
Our complex \eqref{cpx:Spencer} is associated to the empty set of equations, so it is reasonable to expect a sound noncommutative generalization of the notion of vector bundle to have vanishing Spencer $\delta$-cohomology.

If we restrict our attention to $\AFlat$, the vanishing of $H^{\bullet,2}_{\delta_d}$ becomes a condition on the exterior algebra $\Omega^\bullet_d$ rather than on the particular $A$-module $E$, by Proposition \ref{prop:Spencertensorrep}.
Therefore, if the exterior algebra is flat in $\ModA$ in the grades $1$, $2$, and $3$, and $H^{\bullet,2}_{\delta_d}(A)=0$, then the theory of noncommutative jets developed in the present work will be closely aligned with the classical theory.
\end{rmk}
\begin{prop}\label{prop:dhdiffop}
The map $\dh^{n-1}_d$ of Theorem \ref{theo:higherwolves} is a natural lift of $0\colon \id_{\AMod}\to H^{n-2,2}_{\delta_d}$ in the category of functors of type $\AMod\to \AMod$.
That is $\dh^{n-1}_d\circ j^1_d=0$.

In particular, for all $E$, $\dh^{n-1}_{d,E}$ lifts the zero map $0\colon E\to H^{n-2,2}_{\delta_d}(E)$.
\end{prop}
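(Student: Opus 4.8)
The plan is to unwind $\dh^{n-1}$ as the snake-lemma connecting homomorphism of diagram \eqref{diag:eshwsnake} and to feed into the zig-zag recipe the explicit preimage of the prolongation coming from $J^n_d$. Recall the recipe: given $\xi\in J^{n-1}_dE$, choose $\widetilde{\xi}\in J^{\{n\}}_dE$ with $\pi^{\{n,n-1\}}_{d,E}(\widetilde{\xi})=\xi$ (possible since $\pi^{\{n,n-1\}}_d$ is an epimorphism by Proposition \ref{prop:hol=>sesqui+1}); then $\widetilde{\DH}_{J^{n-2}_d}\circ J^1_d(l^{n-1}_d)\circ l^{\{n\}}_d$ carries $\widetilde{\xi}$ into $\ker(\delta^{n-2,2})$ (Lemma \ref{lemma:edhinkerspencer}), and $\dh^{n-1}_E(\xi)$ is the class of this element in $H^{n-2,2}(E)$, independently of the choice of $\widetilde{\xi}$ by the usual snake-lemma argument.

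Next I would record the compatibility between the holonomic and sesquiholonomic jet functors. Since $\widetilde{\DH}^I=\pj^1\circ\widetilde{\DH}$ is the first component of $\widetilde{\DH}$ (Proposition \ref{prop:defeth}), the subfunctor $J^n_d=\ker\!\left(\widetilde{\DH}_{J^{n-2}_d}\circ J^1_d(l^{n-1}_d)\right)$ of $J^1_d\circ J^{n-1}_d$ is contained in $J^{\{n\}}_d=\ker\!\left(\widetilde{\DH}^I_{J^{n-2}_d}\circ J^1_d(l^{n-1}_d)\right)$; through this inclusion $J^n_d\hookrightarrow J^{\{n\}}_d$ the map $l^{\{n\}}_d$ restricts to $l^n_d$, and $\pi^{\{n,n-1\}}_d=\pi^{1,0}_{d,J^{n-1}_d}\circ l^{\{n\}}_d$ restricts to $\pi^{n,n-1}_d$. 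This is essentially the factorization already used in Proposition \ref{prop:holinsemi}.

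The key step is then immediate. By Remark \ref{rmk:holprolpi} we have $\pi^{n,n-1}_d\circ j^n_d=j^{n-1}_d$, so the image of $j^n_{d,E}(e)$ under $J^n_dE\hookrightarrow J^{\{n\}}_dE$ is a valid preimage of $j^{n-1}_{d,E}(e)$ along $\pi^{\{n,n-1\}}_{d,E}$. Applying $\widetilde{\DH}_{J^{n-2}_d}\circ J^1_d(l^{n-1}_d)\circ l^{\{n\}}_d$ to it equals, by the previous paragraph, $\widetilde{\DH}_{J^{n-2}_d}\circ J^1_d(l^{n-1}_d)\circ l^n_d$ evaluated on $j^n_{d,E}(e)$, and this vanishes because $l^n_d$ is exactly the kernel inclusion of $\widetilde{\DH}_{J^{n-2}_d}\circ J^1_d(l^{n-1}_d)$ (Definition \ref{def:njet}). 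Hence $\dh^{n-1}_E(j^{n-1}_{d,E}(e))$ is the class of $0$, i.e.\ $0$. Since $\dh^{n-1}$ is a natural transformation (the whole of \eqref{diag:eshwsnake} is natural, so its connecting map is) and $j^{n-1}_d$ is natural, $\dh^{n-1}\circ j^{n-1}_d=0$ as natural transformations of functors $\AMod\to\AMod$, which specializes to the object-wise statement; this is precisely the assertion that $\dh^{n-1}$ lifts the zero differential operator of order $n-1$.

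I do not anticipate a real obstacle: the argument is a one-line zig-zag once the bookkeeping is in place. The only care required is (i) verifying that the inclusion $J^n_d\hookrightarrow J^{\{n\}}_d$ is compatible with $l^{\{n\}}_d$ and $\pi^{\{n,n-1\}}_d$ as stated, which reduces to $\widetilde{\DH}^I=\pj^1\circ\widetilde{\DH}$ together with the two kernel definitions, and (ii) using in the snake-lemma recipe the specific preimage coming from $j^n_d$, so that no sign or normalization from the Spencer differential ever enters the computation.
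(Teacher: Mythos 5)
Your proof is correct and follows essentially the same route as the paper: the paper simply writes $\dh^{n-1}\circ j^{n-1}_d=\dh^{n-1}\circ\pi^{n,n-1}_d\circ j^{n}_d=0$, using Remark \ref{rmk:holprolpi} and the fact that the long exact sequence from \eqref{diag:eshwsnake} is a complex, while you unwind the snake-lemma zig-zag explicitly with the preimage $j^n_{d,E}(e)$ to verify that same vanishing by hand. The extra bookkeeping (the inclusion $J^n_d\hookrightarrow J^{\{n\}}_d$ and its compatibility with $l^{\{n\}}_d$ and $\pi^{\{n,n-1\}}_d$) is correct but not needed once one quotes exactness of the long sequence.
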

\begin{proof}
We have to prove that $\dh^{n-1}_d\circ j^{n-1}_d=0$, and we know that $j^{n-1}_d=\pi^{n,n-1}_d\circ j^{n}_d$ (cf.\ Remark \ref{rmk:holprolpi}).
From the long exact sequence in \eqref{diag:eshwsnake} being a complex, we get
\begin{equation}
\dh^{n-1}_d\circ j^{n-1}_d
=\dh^{n-1}_d\circ \pi^{n,n-1}_d\circ j^{n}_d
=0.
\end{equation}
\end{proof}
\begin{cor}
Given the assumptions of Theorem \ref{theo:higherwolves}, if differential operators of order at most $n-1$ (cf.\ Definition \ref{def:differentialoperators}) lift uniquely to $J^{n-1}_d$, then $\dh^{n-1}_d=0$ and the $n$-jet sequence \eqref{es:hjsesn} is exact.
\end{cor}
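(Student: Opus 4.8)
The corollary asserts: if differential operators of order at most $n-1$ lift uniquely to $J^{n-1}_d$, then $\dh^{n-1}=0$, and hence the $n$-jet sequence \eqref{es:hjsesn} is exact.

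\textbf{Proof proposal.} The plan is to deduce $\dh^{n-1}=0$ directly from Proposition \ref{prop:dhdiffop} together with the uniqueness hypothesis, and then to read off exactness of \eqref{es:hjsesn} from Theorem \ref{theo:higherwolves}. First I would recall that, by Proposition \ref{prop:dhdiffop}, for every left $A$-module $E$ the $A$-linear map $\dh^{n-1}_E\colon J^{n-1}_d E\to H^{n-2,2}(E)$ satisfies $\dh^{n-1}_E\circ j^{n-1}_{d,E}=0$; in other words $\dh^{n-1}_E$ is an $A$-linear lift, through the prolongation $j^{n-1}_{d,E}$, of the map $0\colon E\to H^{n-2,2}(E)$. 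But the zero map is a differential operator of order $0\le n-1$, and the zero $A$-linear map $J^{n-1}_d E\to H^{n-2,2}(E)$ is manifestly another lift of it through $j^{n-1}_{d,E}$. By hypothesis, such lifts are unique, so $\dh^{n-1}_E=0$. Since $E$ was arbitrary, $\dh^{n-1}=0$ as a natural transformation $J^{n-1}_d\to H^{n-2,2}$.

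With $\dh^{n-1}=0$ in hand the conclusion is immediate: Theorem \ref{theo:higherwolves} produces the exact sequence \eqref{es:hjsesi}, terminating in $J^{n-1}_d\xrightarrow{\dh^{n-1}}H^{n-2,2}$, and records that it collapses to the short exact sequence \eqref{es:hjsesn} precisely when $\dh^{n-1}$ is the zero map. That is exactly what the previous paragraph establishes, so \eqref{es:hjsesn} is exact.

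The argument is short, and the only point demanding attention — the main obstacle, such as it is — is to confirm that $\dh^{n-1}_E$ genuinely falls within the scope of the uniqueness hypothesis. This amounts to three observations, all immediate: $H^{n-2,2}(E)$ is a bona fide left $A$-module, being the value at $E$ of the functor $H^{n-2,2}\colon\AMod\to\AMod$ obtained as a subquotient of the Spencer complex; $\dh^{n-1}_E$ is $A$-linear, being a component of a natural transformation of functors of type $\AMod\to\AMod$ (cf.\ Proposition \ref{prop:dhdiffop}); and $0\colon E\to H^{n-2,2}(E)$ is a differential operator of order at most $n-1$ admitting the $A$-linear map $0\colon J^{n-1}_d E\to H^{n-2,2}(E)$ as a lift. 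Throughout we of course retain the standing hypotheses of Theorem \ref{theo:higherwolves} — flatness of $\Omega^1_d$, $\Omega^2_d$, and $\Omega^3_d$ in $\ModA$, and exactness of the holonomic $(n-1)$-jet sequence — which are needed even to define $\dh^{n-1}$ and to invoke \eqref{es:hjsesi}.
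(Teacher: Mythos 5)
Your argument is correct and is precisely the paper's intended proof: the paper states the corollary as an immediate consequence of Proposition \ref{prop:dhdiffop} (which exhibits $\dh^{n-1}$ as an $A$-linear lift of the zero differential operator through $j^{n-1}_d$) and Theorem \ref{theo:higherwolves}, exactly as you argue. Your additional checks that $\dh^{n-1}_E$ falls within the scope of the uniqueness hypothesis are a reasonable elaboration of what the paper leaves implicit.
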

\begin{proof}
Immediate consequence of Proposition \ref{prop:dhdiffop} and Theorem \ref{theo:higherwolves}.
\end{proof}

For the $3$-jet we can compute an explicit expression for the map $\dh^{2}_d$
\begin{prop}
Let $A$ be a $\bk$-algebra endowed with an exterior algebra $\Omega^{\bullet}_d$ such that $\Omega^1_d$ and $\Omega^2_d$ are flat in $\ModA$.
Then the following sequence is exact
\begin{equation}
\begin{tikzcd}
0\ar[r]&S^3_d\ar[r,hookrightarrow,"\iota^3_{d,A}"]& J^{3}_d A\ar[r,"\pi^{n,n-1}_{d,A}"]&J^{n-1}_d A\ar[r,"\dh^{2}_{d,A}"]&H^{1,2}_{\delta_d}(A).
\end{tikzcd}
\end{equation}

Explicitly, consider an element in $J^2_d A$ written (cf.\ \eqref{eq:exp2jet}) in the form
\begin{equation}\label{eq:repS2forJ3}
\xi=\sum_i a_ij^2_d(b_i)+\sum_j dx^2_j\otimes_A dx^{1}_j\otimes_A x^0_j.
\end{equation}
Then $\dh^2_{d,A}(\xi)=\left[\sum_j dx^2_j\wedge dx^{1}_j\otimes_A dx^0_j\right]$, and this expression is independent from the representation \eqref{eq:repS2forJ3} of $\xi$.
\end{prop}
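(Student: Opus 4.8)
The plan is to handle the two assertions separately: the exactness of \eqref{es:hjsesi} for $n=3$ is a direct instance of Theorem~\ref{theo:higherwolves}, while the explicit form of $\dh^2_A$ comes from unwinding the snake-lemma connecting homomorphism in diagram~\eqref{diag:eshwsnake} at $n=3$.

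For exactness, I would apply Theorem~\ref{theo:higherwolves} with $n=3$. Its hypotheses are flatness of $\Omega^1_d,\Omega^2_d,\Omega^3_d$ in $\ModA$ together with exactness of the holonomic $2$-jet sequence; the flatness of $\Omega^3_d$ may be dropped here since $\iota^{n-3}_d=\iota^0_d=\id_A$ is (split) injective, so Remark~\ref{rmk:weakO3flat} applies, and the sequence $0\to S^2_d\to J^2_d\to J^1_d\to 0$ is exact because $\Omega^1_d$ is flat in $\ModA$ (the functorial $2$-jet exact sequence). This gives exactness of $0\to S^3_d\hookrightarrow J^3_dA\xrightarrow{\pi^{3,2}_{d,A}}J^2_dA\xrightarrow{\dh^2_A}H^{1,2}(A)$.

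For the formula, I recall that $\dh^{n-1}$ in \eqref{diag:eshwsnake} is the connecting map of the snake lemma applied to the two central rows; tracing it at $n=3$ gives: for $\xi\in J^2_dA$ pick any lift $\tilde\xi\in J^{\{3\}}_dA$ along the epimorphism $\pi^{\{3,2\}}_d$ (which exists by Proposition~\ref{prop:hol=>sesqui+1}), and then $\dh^2_A(\xi)=[\widetilde{\DH}_{J^1_dA}(J^1_d(l^2_d)(l^{\{3\}}_d(\tilde\xi)))]$, the class in $H^{1,2}(A)=\ker(\delta^{1,2})/\im(\delta^{2,1})$ of an element lying in $\Omega^2_d\otimes_A\Omega^1_d\cap\ker(\delta^{1,2})$ by Lemmas~\ref{lemma:imageD} and~\ref{lemma:edhinkerspencer}. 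Now I would exploit additivity together with the presentation \eqref{eq:exp2jet}--\eqref{eq:repS2forJ3}, writing $\xi=\sum_i a_ij^2_d(b_i)+\iota^2_{d,A}(\eta_0)$ with $\eta_0=\sum_j dx^2_j\otimes_A dx^1_j\otimes_A x^0_j\in S^2_d(A)$. For the prolongation part, $l^3_d(\sum_i a_ij^3_d(b_i))\in J^3_dA\subseteq J^{\{3\}}_dA$ is a lift (using $\pi^{3,2}_d\circ j^3_d=j^2_d$ and left $A$-linearity of $l^3_d,\pi^{3,2}_d$, cf.\ Lemma~\ref{lemma:holprol} and Remark~\ref{rmk:holprolpi}); since $J^3_d=\ker(\widetilde{\DH}_{J^1_d}\circ J^1_d(l^2_d))$ by Definition~\ref{def:njet}, applying $\widetilde{\DH}_{J^1_dA}\circ J^1_d(l^2_d)$ kills it, so $\dh^2_A(\xi)=\dh^2_A(\iota^2_{d,A}(\eta_0))$.

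The substantive step is the symmetric part. I would build an explicit lift $\tilde\xi_2\in J^{\{3\}}_dA$ of $\iota^2_{d,A}(\eta_0)$ by correcting the naive prolongation $j^1_{d,J^2_dA}(\iota^2_{d,A}(\eta_0))=[1\otimes\iota^2_d\eta_0]$: using $\widetilde{\DH}^I_{J^1_dA}\circ j^1_{d,J^1_d(J^1_dA)}=-\rho_{d,J^1_dA}$ (Remark~\ref{rmk:rhodif}, Proposition~\ref{prop:defeth}), the identity $l^2_d\circ\iota^2_d=(\iota^1_d\otimes_A\iota^1_d)\circ\iota^2_\wedge$ (Remark~\ref{rmk:arrows2j}) and $\rho_d\circ\iota^1_d=\id_{\Omega^1_d}$, the obstruction to $[1\otimes\iota^2_d\eta_0]$ lying in $J^{\{3\}}_dA$ is $-\sum_j dx^2_j\otimes_A\iota^1_d(dx^1_j)x^0_j$, which is cancelled by subtracting $\iota^1_{d,J^2_dA}(\beta)$ for $\beta=-\sum_j dx^2_j\otimes_A\zeta_j$ with $\zeta_j=(j^2_d(x^1_j)-x^1_jj^2_d(1))x^0_j\in J^2_dA$ (so that $\pi^{2,1}_d(\zeta_j)=\iota^1_d(dx^1_j)x^0_j$). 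One then checks $\pi^{\{3,2\}}_d(\tilde\xi_2)=\iota^2_{d,A}(\eta_0)$ using $\pi^{1,0}_d\circ\iota^1_d=0$. Evaluating $\widetilde{\DH}^{II}_{J^1_dA}\circ J^1_d(l^2_d)$ on $\tilde\xi_2$ term by term via the explicit formula of \eqref{eq:FE-WE}: the $[1\otimes\iota^2_d\eta_0]$-part vanishes (every term carries a factor $d1=0$), and the correction term yields $\sum_j dx^2_j\wedge dx^1_j\otimes_A j^1_d(x^0_j)$; rewriting $j^1_d(x^0_j)=\iota^1_d(dx^0_j)+[x^0_j\otimes 1]$ and using $\wedge(\eta_0)=0$ (as $\eta_0\in S^2_d(A)$) to kill the $[x^0_j\otimes1]$-part, this is identified in $\Omega^2_d\otimes_A\Omega^1_d=\Omega^2_d\circ S^1_d(A)$ with $\sum_j dx^2_j\wedge dx^1_j\otimes_A dx^0_j$, whence $\dh^2_A(\iota^2_{d,A}(\eta_0))=[\sum_j dx^2_j\wedge dx^1_j\otimes_A dx^0_j]$. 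Independence of the presentation \eqref{eq:repS2forJ3} is then automatic, since $\dh^2_A$ is a well-defined natural transformation and the displayed class has been shown to equal $\dh^2_A(\xi)$ for whichever presentation was chosen.

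The parts I expect to be delicate are (i) producing and verifying the corrected lift $\tilde\xi_2$ --- one must confirm both that it lies in the kernel defining $J^{\{3\}}_dA$ and that it still projects to $\iota^2_{d,A}(\eta_0)$ --- and (ii) the bookkeeping inside the threefold iterated jet $J^{(2)}_d(J^1_dA)=J^1_d(J^1_d(J^1_dA))$ when applying $\widetilde{\DH}^{II}_{J^1_dA}$, since the formula~\eqref{eq:FE-WE} has to be applied with the inner $J^1_dA$-slot carried along correctly; a sign may need to be absorbed into the chosen identification $H^{1,2}(A)=\ker(\delta^{1,2})/\im(\delta^{2,1})$.
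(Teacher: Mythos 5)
Your proposal is correct and follows essentially the same route as the paper: exactness comes from Theorem \ref{theo:higherwolves} at $n=3$ with flatness of $\Omega^3_d$ removed via Remark \ref{rmk:weakO3flat} (since $\iota^{0}_d=\id$), and the formula for $\dh^2_A$ is obtained by chasing the snake-lemma connecting map on an explicit lift to $J^{\{3\}}_dA$ after disposing of the $Aj^2_d(A)$ part. The only divergence is the choice of lift of the $S^2_d$-part --- you use the prolongation $[1\otimes\iota^2_d\eta_0]$ corrected by $-\iota^1_{d,J^2_dA}(\beta)$, whereas the paper uses the symmetric three-term lift $\sum_j\bigl([1\otimes1]\otimes_A dx^2_j\otimes_A dx^1_j\otimes_A x^0_j+dx^2_j\otimes_A[1\otimes1]\otimes_A dx^1_j\otimes_A x^0_j+dx^2_j\otimes_A dx^1_j\otimes_A[1\otimes x^0_j]\bigr)$ --- and since the connecting homomorphism is independent of the lift, both computations yield the same class $\bigl[\sum_j dx^2_j\wedge dx^1_j\otimes_A dx^0_j\bigr]$.
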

\begin{proof}
The first step is to prove Theorem \ref{theo:higherwolves} for $n=3$ and $E=A$ without assuming $\Omega^3_d$ to be flat.
This property is used in particular in Lemma \ref{lemma:edhinkerspencer} and Remark \ref{rmk:weakO3flat} shows that flatness of $\Omega^3_d$ is superfluous.
In this case we can also provide an explicit alternative proof that the image of $\widetilde{\DH}_{d,J^{1}_d}\circ J^1_d(l^{2}_d)\circ l^{\{3\}}_d$ is contained in $\ker(\delta^{1,2}_d)$ without using any assumption on $\Omega^3_d$.
Let $\eta = \sum_{i,j} [a^3_i \otimes b^3_i] \otimes_A [a^2_{i,j} \otimes b^2_{i,j}] \otimes_A [a^1_{i,j} \otimes b^1_{i,j}]\in J^{\{3\}}_d A\subseteq J^1_d\circ J^{2}_d$ be such that $\sum_j [a^2_{i,j} \otimes b^2_{i,j}] \otimes_A [a^1_{i,j} \otimes b^1_{i,j}]\in J^2_d$ for all $i$.
All the projections of $\eta$ coincide, in particular $\pi^{(n,n-1;1)}_{d,A}(\eta) = \sum_{i,j} [a^3_i \otimes b^3_i] \otimes_A [a^2_{i,j} \otimes b^2_{i,j}a^1_{i,j} b^1_{i,j}]$ is holonomic.
Applying $\widetilde\DH_d$, we get
	\begin{equation}
		0
		=\widetilde{\DH}_d (\pi^{(n,n-1;1)}_{d,A} (\eta))
		=\sum_{i,j}\sum_{i,j} a^3_i d( b^3_i a^2_{i,j}) b^2_{i,j}a^1_{i,j} b^1_{i,j}
		+\sum_{i,j}\sum_{i,j} da^3_i\wedge d( b^3_i a^2_{i,j}) b^2_{i,j}a^1_{i,j} b^1_{i,j}.
	\end{equation}
	Next, we apply $\widetilde{\DH}_{d,J^1_d A}$ to $\eta$, and obtain
	\begin{equation}
	\begin{split}
	\widetilde{\DH}_{d,J^1_d A}(\eta)
	&=\sum_{i,j} da^3_i \wedge d(b^3_i a^2_{i,j}) \otimes_A [b^2_{i,j}a^1_{i,j} \otimes b^1_{i,j}]\\
	&=\sum_{i,j} da^3_i \wedge d(b^3_i a^2_{i,j}) \otimes_A \left([b^2_{i,j}a^1_{i,j} b^1_{i,j}\otimes 1]+b^2_{i,j}a^1_{i,j} db^1_{i,j}\right)\\
	&=\sum_{i,j} da^3_i \wedge d(b^3_i a^2_{i,j})b^2_{i,j}a^1_{i,j} b^1_{i,j} \otimes_A [1\otimes 1]
	+\sum_{i,j} da^3_i \wedge d(b^3_i a^2_{i,j}) b^2_{i,j}a^1_{i,j} \otimes_A db^1_{i,j}\\
	&=\sum_{i,j} da^3_i \wedge d(b^3_i a^2_{i,j}) b^2_{i,j}a^1_{i,j} \otimes_A db^1_{i,j}.
	\end{split}
	\end{equation}
	We now apply $\delta^{1,2}_d$, obtaining
	\begin{equation}
	\begin{split}
		\delta^{1,2}_d(\widetilde{\DH}_{d,J^1_d A})(\eta)
		&=\sum_{i,j} da^3_i \wedge d(b^3_i a^2_{i,j}) b^2_{i,j}a^1_{i,j} \wedge db^1_{i,j}\\
		&=\sum_{i,j} d\left( da^3_i \wedge d(b^3_i a^2_{i,j}) b^2_{i,j}a^1_{i,j} b^1_{i,j}\right)
		-\sum_{i,j} da^3_i \wedge d(b^3_i a^2_{i,j}) \wedge d(b^2_{i,j}a^1_{i,j}) b^1_{i,j}\\
		&=0-\sum_{i} da^3_i\wedge\widetilde{\DH}_d\left(\sum_j [b^3_i a^2_{i,j} \otimes b^2_{i,j}] \otimes_A [a^1_{i,j} \otimes b^1_{i,j}]\right)\\
		&=0.
	\end{split}
	\end{equation}
	The last equality follows from the fact that $\sum_j [b^3_i a^2_{i,j} \otimes b^2_{i,j}] \otimes_A [a^1_{i,j} \otimes b^1_{i,j}]\in J^2_d A$.

Consider an element
\begin{equation}
\xi=\sum_i a_i j^2_d(b_i)+\sum_j dx^2_j\otimes_A dx^{1}_j\otimes_A x^0_j.
\end{equation}

By Proposition \ref{prop:dhdiffop} and $A$-linearity of $\dh^{2}_d$, we know that $\dh^2_d$ vanishes on $AJ^1_d A$.
We thus compute $\dh^2_{d,A}$ on the $S^2_d$ part alone.
By Remark \ref{rmk:TnEstd}, we can write any element in $S^2_d$ in the form $\xi=\sum_{j} dx^2_{j}\otimes_A dx^{1}_{j}\otimes_A x^0_{j}$ for $x^h_{j}\in A$.
We now compute $\dh^2_{d,A}$ as prescribed by the snake lemma.
We start by lifting this element to $J^{\{n\}}_d A$ by taking
\begin{equation}
\widetilde{\xi}
=\sum_{j}[1\otimes 1]\otimes_A dx^2_{j}\otimes_A dx^{1}_{j}\otimes_A x^0_{j}
+\sum_{j} dx^2_{j}\otimes_A [1\otimes 1]\otimes_A dx^{1}_{j}\otimes_A x^0_{j}
+\sum_{j} dx^2_{j}\otimes_A dx^{1}_{j}\otimes_A [1\otimes x^0_{j}].
\end{equation}
Since every projection $\pi^{(3,2;m)}_d$ gives $\xi$, the element $\widetilde{\xi}$ belongs to $J^{[3]}_d A$.
Furthermore, applying $J^1_d \widetilde{\DH}^{II}_d$ gives
\begin{equation}
J^1_d \widetilde{\DH}^{II}_d(\widetilde{\xi})
=\sum_{j}[1\otimes 1]\otimes_A dx^2_{j}\wedge dx^{1}_{j}\otimes_A x^0_{j}
+0
+\sum_{j} dx^2_{j}\otimes_A d(d x^{1}_{j}) x^0_{j}
=0.
\end{equation}
Thus, by Lemma \ref{lemma:charsesqui}, $\widetilde{\xi}\in J^{\{3\}}_d A$.
We now apply $\widetilde{\DH}_{d,J^1_d}$, which will coincide with $\widetilde{\DH}^{II}_{d,J^1_d}$, obtaining
\begin{equation}
\widetilde{\DH}_{d,J^1_d}(\widetilde{\xi})
=0+0+\sum_{j} dx^2_{j}\wedge dx^{1}_{j}\otimes_A [1\otimes x^0_{j}]
=\sum_{j} dx^2_{j}\wedge dx^{1}_{j}\otimes_A d x^0_{j}
\end{equation}
where the last equality follows from the fact that $\sum_{j} (dx^2_{j}\wedge dx^{1}_{j} )x^0_{j}=0$, as $\xi\in S^2_d$.
The element $\sum_{j} dx^2_{j}\wedge dx^{1}_{j}\otimes_A d x^0_{j}$ is in $\ker(\delta^{1,2}_d)$, and its class in $H^{1,2}_{\delta_d}$ is the image of $\xi$ via $\dh^2_{d,A}$.
The map $\dh^2_{d,A}$ is canonical, so it is independent of the chosen representation.
\end{proof}
\subsection{Stability and exactness}
We can deduce the following result regarding the stability of certain subcategories with respect to the functor $J^{n}_d$.
\begin{prop}\label{prop:Jhstable}
Let $A$ be a $\bk$-algebra and let $\Omega^\bullet_d$ be an exterior algebra over it.
Consider the functor
\begin{equation}
J^{n}_d\colon \AMod\longrightarrow\AMod.
\end{equation}
\begin{enumerate}
\item\label{prop:Jhstable:1} If the $m$-jet sequence is exact and $S^m_d$ is in $\AFlat$ for all $1\le m \le n$, then $J^{n}_d$ preserves $\AFlat$;
\item\label{prop:Jhstable:2}If the $m$-jet sequence is exact and $S^m_d$ is in $\AProj$ for all $1\le m \le n$, then $J^{n}_d$ preserves $\AProj$;
\item\label{prop:Jhstable:3} If the $m$-jet sequence is exact and $S^m_d$ is in $\AFGP$ for all $1\le m \le n$, then $J^{n}_d$ preserves $\AFGP$.
\end{enumerate}
\end{prop}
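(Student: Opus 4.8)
The plan is to argue by induction on $n$, exactly mirroring the structure of the proofs of Proposition \ref{prop:Jnhstable} and Proposition \ref{prop:Jshstable}, but now feeding on the holonomic $n$-jet short exact sequence \eqref{es:hjsesn}. First I would dispatch the base case $n=0$ (where $J^0_d=\id_{\AMod}$, so there is nothing to prove) and $n=1$ (where $J^1_d$ preserves all four subcategories by Proposition \ref{prop:OmJstab}, and $S^1_d=\Omega^1_d$, so the hypotheses of the statement are exactly those of that proposition). Since all three items have parallel proofs, I would write out \eqref{prop:Jhstable:1} in detail and note that \eqref{prop:Jhstable:2} is identical mutatis mutandis, with \eqref{prop:Jhstable:3} following by combining the projective and finitely generated cases as in Proposition \ref{prop:OmJstab}.

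For the inductive step, fix $n>1$ and assume $J^{n-1}_d$ preserves $\AFlat$. The key observation is that the hypotheses $\dh^{m-1}_A=0$ for all $1\le m\le n$ together with Proposition \ref{prop:Spencertensorrep} (which identifies, on flat modules, the Spencer cohomology with $H^{\bullet,\bullet}(A)\otimes_A-$) let me upgrade the vanishing $\dh^{n-1}_A=0$ to the statement that $\dh^{n-1}$ vanishes on all of $\AFlat$ — here one uses that $\dh^{n-1}$ is natural and, when restricted to flat modules, is the tensor of $\dh^{n-1}_A$ with the identity. Hmm, I need to be slightly careful: Theorem \ref{theo:higherwolves} gives the exact sequence \eqref{es:hjsesi} with last term $H^{n-2,2}$, and \eqref{es:hjsesn} is short exact precisely when $\dh^{n-1}=0$; but that theorem also requires the holonomic $(n-1)$-jet sequence to be exact, which by induction on exactness (again Theorem \ref{theo:higherwolves}, using $\dh^{m-1}_A=0$ for $m<n$ and Proposition \ref{prop:Spencertensorrep}) I can assume. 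So, on $\AFlat$, I obtain the short exact sequence
\begin{equation*}
0\longrightarrow S^n_d(E)\longrightarrow J^n_d(E)\longrightarrow J^{n-1}_d(E)\longrightarrow 0.
\end{equation*}
Given $E$ in $\AFlat$: $S^n_d(E)$ is in $\AFlat$ by hypothesis on $S^n_d$ (and Proposition \ref{prop:exactnessSn} / Proposition \ref{prop:Stensorcomparison} guarantee $S^n_d$ behaves as a tensor functor on flat modules, so $S^n_d(E)$ is flat when $S^n_d=S^n_d(A)$ is flat and $E$ is flat — actually the cleaner route is to invoke Proposition \ref{prop:Stensorcomparison} to write $S^n_d(E)\cong S^n_d(A)\otimes_A E$ and then Lemma \ref{lemma:flpr}), and $J^{n-1}_d(E)$ is in $\AFlat$ by the inductive hypothesis. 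Then Lemma \ref{lemma:2-3}(i) forces $J^n_d(E)$ to be in $\AFlat$, completing the induction for \eqref{prop:Jhstable:1}.

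For \eqref{prop:Jhstable:2} the argument is word-for-word the same, replacing $\AFlat$ by $\AProj$ and invoking Lemma \ref{lemma:2-3}(ii) and the projective case of Lemma \ref{lemma:flpr} (together with Proposition \ref{prop:Stensorcomparison}, whose tensor-comparison isomorphism $\tau^n$ holds on $\AProj\subseteq\AFlat$); note the exact sequence on $\AProj$ splits, which is in fact what Lemma \ref{lemma:2-3}(ii) uses. For \eqref{prop:Jhstable:3}, having both the $\AProj$ case and — via Lemma \ref{lemma:2-3}(iii) plus the finitely generated case of Lemma \ref{lemma:flpr} applied to $S^n_d(A)\otimes_A E$ — the $\AFG$ case, one intersects them to get $\AFGP$, exactly as in the final line of the proof of Proposition \ref{prop:OmJstab}.

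The main obstacle I anticipate is making rigorous the passage from ``$\dh^{m-1}_A=0$'' to ``the holonomic $m$-jet sequence is exact on $\AFlat$''. Theorem \ref{theo:higherwolves} is stated with hypotheses on $A$ alone (via $\dh^{m-1}_A$) but its conclusion — the exact sequence — is a statement about functors, hence about all modules. One must check that the obstruction map $\dh^{m-1}_E$ for flat $E$ is indeed $\dh^{m-1}_A\otimes_A\id_E$ (using that $\Omega^1_d,\Omega^2_d,\Omega^3_d$ flat in $\ModA$ makes every functor in the Spencer complex and in the snake-lemma diagram \eqref{diag:eshwsnake} a tensor functor on $\AFlat$, by Proposition \ref{prop:Stensorcomparison} and Proposition \ref{prop:Spencertensorrep}), so that $\dh^{m-1}_A=0$ yields $\dh^{m-1}_E=0$ for all flat $E$. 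Once this naturality-plus-flatness bookkeeping is in place the rest is a routine application of the $2$-out-of-$3$ lemmas, so the write-up should be short.
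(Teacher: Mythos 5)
Your proposal is correct and follows essentially the same route as the paper: induction on $n$, base case via Proposition \ref{prop:OmJstab}, then Theorem \ref{theo:higherwolves} to get the holonomic $n$-jet short exact sequence on $\AFlat$ (using Proposition \ref{prop:Spencertensorrep} to propagate $\dh^{m-1}_A=0$ to flat modules), the tensor description of $S^n_d$ on flat modules plus Lemma \ref{lemma:flpr} for the kernel term, and Lemma \ref{lemma:2-3} to conclude. The naturality-plus-flatness bookkeeping you flag is exactly what Proposition \ref{prop:Spencertensorrep} supplies, and the paper handles it the same way, if more tersely.
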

\begin{proof}
We proceed by induction on $n$.
For $n=1$, all points are true by Proposition \ref{prop:OmJstab}.
Now consider $n> 1$, and assume \eqref{prop:Jhstable:1} holds for $n-1$.
In the $n$ case, the hypotheses required contain the hypotheses for the $n-1$ case, and thus we can conclude that $J^{n-1}_d$ preserves the subcategory $\AFlat$.
Now we evaluate the $n$-jet short exact sequence at $E$ in $\AFlat$.
By Proposition \ref{prop:Spencertensorrep}, we can write $S^n_d(E)=S^n_d\otimes_A E$, and by Lemma \ref{lemma:flpr} we get that $S^n_d(E)$ is in $\AFlat$.
It follows by Lemma \ref{lemma:2-3} that also $J^n_d(E)$ is in $\AFlat$, ending the proof of \eqref{prop:Jhstable:1}.
The proof for \eqref{prop:Jhstable:2} and \eqref{prop:Jhstable:3} is analogous.
\end{proof}

The following proposition discusses exactness of $J^{n}_d$ as a functor.
\begin{prop}\label{prop:hJexact}
Let $\Omega^{\bullet}_d$ an exterior algebra over a $\bk$-algebra $A$.
The functor $J^n_d$ preserves flat colimits of flat modules.
If $\Omega^1_d$ and $\Omega^2_d$ are flat in $\ModA$, then $J^n_d$ is left exact.

Moreover, if the $m$-jet sequence is exact and $S^m_d$ is exact for all $3\le m\le n$, then $J^n_d$ is exact.
\end{prop}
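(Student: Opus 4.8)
The plan is to establish the final statement by induction on $n$, with Theorem \ref{theo:higherwolves} supplying the holonomic $n$-jet short exact sequence and a snake-lemma argument (as in the proof of Proposition \ref{prop:shJregular}) propagating exactness from one stage to the next. Since $\Omega^1_d$ and $\Omega^2_d$ are flat in $\ModA$, the second part of this proposition already gives that $J^n_d$ is left exact, so only right exactness remains; it will be convenient to carry along, as an auxiliary part of the induction, the stronger assertion that the sequence $0\to S^n_d\to J^n_d\to J^{n-1}_d\to 0$ of functors $\AMod\to\AMod$ is short exact.

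First I would dispose of the low-degree cases. For $n=0,1$ there is nothing to prove: $J^0_d=\id_{\AMod}$ is exact and $J^1_d$ is exact by Corollary \ref{cor:Jex}. For $n=2$ I would first observe that $S^2_d$ is an exact functor: applying Lemma \ref{lemma:2-3} in $\ModA$ to the bimodule short exact sequence $0\to S^2_d\to \Omega^1_d\otimes_A\Omega^1_d\to \Omega^2_d\to 0$ (namely \eqref{es:wedge} with $E=A$), whose two outer terms are flat in $\ModA$ by hypothesis and by Lemma \ref{lemma:flpr}, shows that $S^2_d A$ is flat in $\ModA$; together with Remark \ref{rmk:symm2surj}, which makes $\tau^2$ an isomorphism when $\Omega^2_d$ is flat in $\ModA$, this identifies $S^2_d$ with the exact functor $S^2_d A\otimes_A-$. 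The functorial $2$-jet exact sequence $0\to S^2_d\to J^2_d\to J^1_d\to 0$ (valid since $\Omega^1_d$ is flat in $\ModA$), with $S^2_d$ and $J^1_d$ both exact, then forces $J^2_d$ to be exact by the diagram chase described below.

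For the inductive step, fix $n\ge 3$ and assume both assertions for all smaller indices; in particular $J^{n-1}_d$ is exact and the holonomic $(n-1)$-jet sequence is short exact. Because $\Omega^1_d,\Omega^2_d,\Omega^3_d$ are flat in $\ModA$ and, by the hypothesis applied with $m=n$, $\dh^{n-1}=0$, Theorem \ref{theo:higherwolves} applies and yields the short exact sequence $0\to S^n_d\to J^n_d\to J^{n-1}_d\to 0$, which is the auxiliary assertion for $n$. To deduce exactness of $J^n_d$, I would apply this short exact sequence of functors to an arbitrary short exact sequence $0\to M\to N\to Q\to 0$ in $\AMod$, obtaining a commutative $3\times 3$ diagram whose three rows are exact, whose first column is exact since $S^n_d$ is exact (hypothesis, $m=n$), whose last column is exact by the inductive hypothesis on $J^{n-1}_d$, and whose middle column is left exact by the second part of the proposition; applying the snake lemma to the bottom two rows then forces the middle column to be a short exact sequence, exactly as in the proof of Proposition \ref{prop:shJregular}. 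Hence $J^n_d$ is exact, completing the induction.

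I do not expect a serious obstacle, since the substantive work is already contained in Theorem \ref{theo:higherwolves}; the only points needing a little care are that the hypothesis ``$S^m_d$ exact'' only covers $m\ge 3$, so exactness of $S^2_d$ must be extracted separately from the flatness of $\Omega^1_d$ and $\Omega^2_d$ as above, and that at stage $n$ the inductive hypothesis must genuinely furnish both inputs required by Theorem \ref{theo:higherwolves}, namely exactness of the $(n-1)$-jet sequence and the vanishing $\dh^{n-1}=0$.
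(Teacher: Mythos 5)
Your treatment of the final (``Moreover'') clause is correct and follows the paper's own route: induction on $n$, with Theorem \ref{theo:higherwolves} supplying the holonomic $n$-jet short exact sequence at each stage and the $3\times 3$ snake-lemma argument of Proposition \ref{prop:shJregular} propagating exactness, using exactness of $S^n_d$ in the first column and of $J^{n-1}_d$ in the last. Your separate handling of the base case $n=2$ is a genuine improvement in explicitness over the paper's terse ``the proof proceeds analogously\dots using Proposition \ref{prop:exactnessSn}'': since the hypothesis only grants exactness of $S^m_d$ for $m\ge 3$, one does need to extract exactness of $S^2_d$ from the flatness of $\Omega^1_d$ and $\Omega^2_d$ in $\ModA$, and your argument via Lemma \ref{lemma:2-3} applied to \eqref{es:wedge} together with Remark \ref{rmk:symm2surj} does this correctly.

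The gap is that you prove only one of the proposition's three assertions. You invoke ``the second part of this proposition'' for left exactness of $J^n_d$ inside your induction, but you never prove that part, nor the first assertion about preservation of flat colimits of flat modules; as written these are used without justification. Both are needed and neither is immediate from what you wrote. The paper obtains them by the method of Proposition \ref{prop:exactnessSn}: for flat colimits, one uses Proposition \ref{prop:tensorcomparison} to identify $J^n_d$ with the right exact functor $J^n_d A\otimes_A -$ on $\AFlat$ and then commutes the colimit past the tensor product; for left exactness, one argues by induction that $J^n_d$ is defined as the kernel of a natural transformation $J^1_d\circ J^{n-1}_d\to(\Omega^1_d\ltimes\Omega^2_d)\circ J^{n-2}_d$ between left exact functors (using Corollary \ref{cor:Jex} for $J^1_d$ and Lemma \ref{lemma:spplacederfun} for $\Omega^1_d\ltimes\Omega^2_d$), and kernels of maps of left exact functors are left exact since limits commute with limits. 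You should supply these two arguments, or at least an explicit reduction to Proposition \ref{prop:exactnessSn}, before relying on left exactness in the snake-lemma step.
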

\begin{proof}
The proof of the first two statements is analogous to that of Proposition \ref{prop:exactnessSn} via Corollary \ref{cor:Jex} and Lemma \ref{lemma:spplacederfun}.

We prove the last statement by induction on $n$.
For $n=0,1$, we already know $J^n_d$ is exact.
For $n\ge 2$, we proceed by induction, and assuning the $n-1$ case gives us the exactness of $J^{n-1}_d$.
Further, we also have the $n$-jet short exact sequence \eqref{es:hjsesn}.
If we now take a short exact sequence $0\to M\to N\to Q\to 0$ and apply the $n$-jet short exact sequence to it, we obtain
\begin{equation}
\begin{tikzcd}[column sep=50pt]
0\ar[r]&S^n_d M\ar[r,hookrightarrow,"\iota^{n}_{d,M}"]\ar[d,hookrightarrow]&J^{n}_d M\ar[r,twoheadrightarrow,"\pi^{n,n-1}_{d,M}"]\ar[d,hookrightarrow]&J^{n-1}_d M\ar[r]\ar[d,hookrightarrow]&0\\
0\ar[r]&S^n_d N\ar[r,hookrightarrow,"\iota^{n}_{d,N}"]\ar[d,twoheadrightarrow]&J^{n}_d N\ar[r,twoheadrightarrow,"\pi^{n,n-1}_{d,N}"]\ar[d]&J^{n-1}_d N\ar[r]\ar[d,twoheadrightarrow]&0\\
0\ar[r]&S^n_d Q\ar[r,hookrightarrow,"\iota^{n}_{d,Q}"]&J^{n}_d Q\ar[r,twoheadrightarrow,"\pi^{n,n-1}_{d,Q}"]&J^{n-1}_d Q\ar[r]&0
\end{tikzcd}
\end{equation}
The three rows are exact, and by hypothesis, so it the left column.
The right column is exact by inductive hypothesis.
By the nine lemma, also the central column is exact, proving the exactness of $J^n_d$.
\end{proof}

\subsection{Classical holonomic jet functors}
We make our final extension of Theorem \ref{theo:classical1jet} to cover the holonomic jet functors.
\begin{theo}\label{theo:classicalnjet}
	Let $M$ be a smooth manifold, $A=\smooth{M}$ its algebra of smooth functions, endowed with the associated de Rham exterior algebra $\Omega^\bullet(M)$, and $E=\Gamma(M,N)$ the space of smooth sections of a vector bundle $N\rightarrow M$.
	Then $J^n_dE \simeq \Gamma(M,J^nN)$ in $\AMod$, the classical bundle of $n$-jets of sections of $N\to M$, and the isomorphism takes our prolongation to the classical prolongation.
\end{theo}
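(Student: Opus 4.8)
The plan is to proceed by induction on $n$, using the inductive construction of $J^n_d$ from Definition~\ref{def:njet} together with the classical analogue of the same characterization. The base cases $n=0,1,2$ are already established: $n=0$ and $n=1$ are Theorem~\ref{theo:classical1jet}, and $n=2$ is Theorem~\ref{theo:classical2jet}. For the inductive step, I would first observe that since $A=\smooth{M}$ is a commutative ring that is flat (indeed, a $\mathbb{R}$-algebra) and $\Omega^\bullet(M)$ is the de Rham exterior algebra, all the flatness hypotheses on $\Omega^1_d$, $\Omega^2_d$, $\Omega^3_d$ in $\ModA$ hold automatically (the classical forms are even projective and finitely generated over $\smooth{M}$ by Serre--Swan), so every structural result from the preceding sections — in particular Lemma~\ref{lemma:generalspencer}, Lemma~\ref{lemma:Jchar}, and the identification of $J^n_d$ as $\ker(\widetilde{\DH}_{J^{n-2}_d}\circ J^1_d(l^{n-1}_d))$ inside $J^1_d\circ J^{n-1}_d$ — is available without extra assumptions.

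The heart of the argument is to match $\widetilde{\DH}$ with the classical Goldschmidt operator $\rho$. By inductive hypothesis we have isomorphisms $J^{n-1}_dE\cong\Gamma(M,J^{n-1}N)$ and $J^{n-2}_dE\cong\Gamma(M,J^{n-2}N)$ compatible with prolongations and projections, and by Theorem~\ref{theo:classicalnonsemi} the nonholonomic functors agree on the nose, $J^{(k)}_dE\cong\Gamma(M,J^k_{\mathrm{non}}N)$. I would then invoke the uniqueness clause of \cite[Proposition 3, p.~432]{Goldschmidt} (just as in the proof of Theorem~\ref{theo:classical2jet}): the classical $\rho\colon J^1(J^{n-2}N)\to C^1_{n-2}N$ is the unique order-$1$ differential operator whose symbol is the natural projection $T^\ast\otimes J^{n-2}N\to C^1_{n-2}N$ and which makes $0\to J^{n-1}N\to J^1(J^{n-2}N)\to C^1_{n-2}N\to 0$ exact. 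Under the Serre--Swan equivalence, $C^1_{n-2}N$ corresponds to $(\Omega^1_d\ltimes\Omega^2_d)\circ S^{n-3}_d$ restricted appropriately — more precisely, one checks that the cokernel object $\coker(l^2_{d})$ built in \eqref{es:2jetDt}--\eqref{es:W}, when evaluated through the classical identifications, is exactly Goldschmidt's $C^1$ space, using Lemma~\ref{lemma:spplsplit} and the explicit formula \eqref{eq:FE-WE} for $\widetilde{\DH}$. Then $\widetilde{\DH}_{J^{n-2}_dE}\circ J^1_d(l^{n-1}_d)$ has the same symbol as the classical $\rho$ and, since both fit in the same kind of exact sequence, uniqueness forces them to coincide; hence their kernels — $J^n_dE$ on our side, $\Gamma(M,J^nN)$ on the classical side — are canonically identified as subfunctors of $J^1_d\circ J^{n-1}_d\cong\Gamma(M,J^1(J^{n-1}N))$.

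Finally, compatibility of the prolongation maps follows from Lemma~\ref{lemma:holprol}: our $j^n_d$ is the unique natural transformation with $l^n_d\circ j^n_d=j^1_{d,J^{n-1}_d}\circ j^{n-1}_d$, and the classical $n$-jet prolongation satisfies the analogous recursion $j^n = j^1\circ j^{n-1}$ after restriction along $J^nN\hookrightarrow J^1(J^{n-1}N)$ (this is the classical fact used implicitly in \cite{GoldschmidtII}, cf.\ \eqref{eq:prol2}). Since the isomorphism $J^{n-1}_dE\cong\Gamma(M,J^{n-1}N)$ already intertwines $j^{n-1}_d$ with $j^{n-1}$ by inductive hypothesis, and the $1$-jet prolongation is intertwined by Theorem~\ref{theo:classical1jet}, uniqueness of the factorization propagates the compatibility to level $n$. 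The step I expect to be the main obstacle is the precise identification of the codomain $\Omega^1_d\ltimes\Omega^2_d$ (and of $W$, $\widetilde{\DH}$) with Goldschmidt's $C^1_n$ and $\rho$ at the classical level: one must verify not just that the two cokernel objects are abstractly isomorphic but that the isomorphism is compatible with the symbol maps, so that the uniqueness statement can actually be applied; this is where the explicit descriptions in Lemma~\ref{lemma:spplsplit}, Proposition~\ref{prop:defeth}, and the classical Spencer $\delta$-sequence must be carefully aligned. Everything else is bookkeeping with the Serre--Swan equivalence (Lemma~\ref{lemma:gammacontinuous} ensures limits are preserved, so kernels match) and with the naturality diagrams already assembled in this section.
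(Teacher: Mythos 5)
Your overall scaffolding (induction on $n$, base cases $n=1,2$ from Theorems \ref{theo:classical1jet} and \ref{theo:classical2jet}, the observation that all flatness hypotheses hold classically, and the prolongation compatibility via the uniqueness of the recursion in Lemma \ref{lemma:holprol}) is sound, but the central step has a genuine gap, and it sits exactly where you flag ``the main obstacle.'' For $n\ge 3$ the codomain of $\widetilde{\DH}_{J^{n-2}_d}\circ J^1_d(l^{n-1}_d)$ is $(\Omega^1_d\ltimes\Omega^2_d)\circ J^{n-2}_d$, which under the classical dictionary is $C^1_1$ \emph{evaluated at the bundle $J^{n-2}N$}, i.e.\ $\bigl(T^\ast\otimes J^1(J^{n-2}N)\bigr)/\bigl(S^2T^\ast\otimes J^{n-2}N\bigr)$. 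This is not Goldschmidt's $C^1_{n-1}N=(T^\ast\otimes J^{n-1}N)/\delta(S^nT^\ast\otimes N)$, the codomain of the operator his Proposition 3 characterizes; the two spaces coincide only when $n=2$. Since the uniqueness clause of \cite{Goldschmidt} singles out an operator \emph{into $C^1_{n-1}N$} with a prescribed symbol and a prescribed exact sequence, it cannot be invoked to force your operator (which lands in a different space) to agree with $\rho$, and so the conclusion ``their kernels coincide'' does not follow from uniqueness. The statement that the two kernels nevertheless agree is precisely the classical intersection lemma of Quillen (\cite[Lemma 1.2.1, p.\ 184]{Spencer}), $J^nN=J^1(J^{n-1}N)\cap J^2(J^{n-2}N)$, which is an independent classical input, not a consequence of the symbol-plus-uniqueness argument.

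The repair is short and is the route the paper takes: by Lemma \ref{lemma:generalspencer} (available since $\Omega^1(M)$ is flat, indeed finitely generated projective), $J^n_dE=J^1_d(J^{n-1}_dE)\cap J^2_d(J^{n-2}_dE)$ inside $J^{(n)}_dE$; the two factors are identified with their classical counterparts by the inductive hypothesis together with Theorems \ref{theo:classical1jet} and \ref{theo:classical2jet}; the functor $\Gamma(M,-)$ preserves intersections by Lemma \ref{lemma:gammacontinuous}; and Quillen's lemma then identifies the intersection with $\Gamma(M,J^nN)$. Your operator-matching strategy is the correct mechanism at order $2$ (where the paper does use Goldschmidt's uniqueness), but it does not propagate to higher order without passing through the intersection characterization.
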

\begin{proof}
	Suppose the result holds for order $n-1$ and $n-2$.
	By Lemma \ref{lemma:generalspencer}, we have that $J^n_d E = J^1_d(J^{n-1}_d E) \cap J^2_d(J^{n-2}_d E)$.
	Then, by Theorem \ref{theo:classical1jet} and Theorem \ref{theo:classical2jet}, we get that the two components of the intersection coincide with the classical analogues.
	By Lemma \ref{lemma:gammacontinuous}, intersections of $\smooth{M}$-modules of sections correspond to the module of sections of the intersection of the respective bundles.
	Therefore, by the classical result \cite[Lemma 1.2.1, p.~184]{Spencer}, we obtain the desired isomorphism.
	The base of the induction is provided by $J^2_d E$ and $J^1_d E$ via Theorem \ref{theo:classical1jet} and Theorem \ref{theo:classical2jet}.
	The prolongation map coincides with the $n$th iterate of the first prolongation map both classically and for our construction, and so by Theorem \ref{theo:classical1jet}, they coincide.
\end{proof}

\section{Infinity jet functors}\label{s:infinityjets}
\subsection{The $\infty$-jet functors}
Let $A$ be a $\bk$-algebra and $\Omega^\bullet_d$ an exterior algebra over it.
Consider the following diagram in the abelian category of functors $\AMod\to \AMod$, constructed using the jet projections.
\begin{equation}\label{diag:jettower}
\begin{tikzcd}
\cdots J^n_d\ar[r,"\pi^{n,n-1}_d"]&J^{n-1}_d\cdots \ar[r,"\pi^{3,2}_d"]& J^2_d \ar[r,"\pi^{2,1}_d"] & J^1_d \ar[r,twoheadrightarrow,"\pi^{1,0}_d"] & \id_{\AMod}.
\end{tikzcd}
\end{equation}
We can do the same for the semiholonomic and nonholonomic jets and their respective projections, using solely the data of a $\bk$-algebra $A$ and a first order differential calculus $\Omega^1_d$ over it.
\begin{defi}[$\infty$-jet functor]
We call the diagram \eqref{diag:jettower} the \emph{(holonomic) jet tower}, and its limit in the category of functors $\AMod\to \AMod$ the \emph{(holonomic) $\infty$-jet functor}, denoted
\begin{equation}
J^{\infty}_d\colonequals \lim_{n\in\N} J^n_d.
\end{equation}

We term the corresponding diagrams constructed with the semiholonomic and nonholonomic jet functors and projections the \emph{semiholonomic} and \emph{nonholonomic jet tower}, respectively.
We call the respective limits the \emph{semiholonomic} and \emph{nonholonomic $\infty$-jet functor}, denoted by $J^{[\infty]}_d$ and $J^{(\infty)}_d$ respectively.
\end{defi}

By construction, we obtain the following maps for all $n\ge 0$
\begin{align}
\pi^{\infty,n}_d\colon J^{\infty}_d\longrightarrow J^n_d,
&\hfill&
\pi^{[\infty,n]}_d\colon J^{[\infty]}_d\longrightarrow J^{[n]}_d,
&\hfill&
\pi^{(\infty,n)}_d\colon J^{(\infty)}_d\longrightarrow J^{(n)}_d.
\end{align}
\begin{defi}
We also call $\pi^{\infty,k}_d$, $\pi^{[\infty,k]}_d$, and $\pi^{(\infty,k)}_d$, \emph{(holonomic), semiholonomic, and nonholonomic jet projections}, respectively.
\end{defi}
By the universal property of the limit, for all $0\le m\le n$ we have
\begin{align}
\pi^{n,m}_d\circ \pi^{\infty,n}_d=\pi^{\infty,m}_d,
&\hfill&
\pi^{[n,m]}_d\circ \pi^{[\infty,n]}_d=\pi^{[\infty,m]}_d,
&\hfill&
\pi^{(n,m)}_d\circ \pi^{(\infty,n)}_d=\pi^{(\infty,m)}_d.
\end{align}

We now interpret the jet tower in the category of functors $\AMod\to \Mod$.
Since the forgetful functor $\AMod\to \Mod$ preserves limits, $J^\infty_d$ is the limit of the jet tower in this category as well.
Consider now the family of all natural prolongations $j^n_d\colon \id_{\AMod}\rightarrow J^n_d$.
This family is a cone over the jet tower (cf.\ Remark \ref{rmk:holprolpi}) in the category of functors $\AMod\to \Mod$.
By the universal property of the limit, there exists a unique map
\begin{equation}
j^\infty_d\colon \id_{\AMod}\longrightarrow J^\infty_d.
\end{equation}
such that for all $n\ge 0$
\begin{equation}\label{eq:compprojprol}
\pi^{\infty,n}_d\circ j^\infty_d=j^n_d.
\end{equation}
The same construction can be done for semiholonomic and nonholonomic jet functors using the semiholonomic and nonholonomic prolongation, respectively.
We thus obtain the unique natural transformations
\begin{align}
j^{[\infty]}_d\colon \id_{\AMod}\longrightarrow J^{[\infty]}_d,
&\hfill&
j^{(\infty)}_d\colon \id_{\AMod}\longrightarrow J^{(\infty)}_d,
\end{align}
such that for all $n\ge 0$
\begin{align}\label{eq:sncompprojprol}
\pi^{[\infty,n]}_d\circ j^{[\infty]}_d=j^{[n]}_d,
&\hfill&
\pi^{(\infty,n)}_d\circ j^{(\infty)}_d=j^{(n)}_d.
\end{align}
\begin{defi}
We call the natural transformations $j^\infty_d$, $j^{[\infty]}_d$, and $j^{(\infty)}_d$ the \emph{(holonomic)}, \emph{semiholonomic}, and \emph{nonholonomic $\infty$-jet prolongation}.
\end{defi}
It follows from \eqref{eq:compprojprol} and \eqref{eq:sncompprojprol} that for $n=0$,
\begin{align}
\pi^{\infty,0}_d\circ j^\infty_d=j^0_d=\id,
&\hfill&
\pi^{[\infty,0]}_d\circ j^{[\infty]}_d=\id,
&\hfill&
\pi^{(\infty,0)}_d\circ j^{(\infty)}_d=\id.
\end{align}
Hence the $\infty$-jet prolongations are sections of the projections from the $\infty$-jet to the $0$-jet, and as such are necessarily monomorphisms, while the maps $\pi^{\infty,0}_d$, $\pi^{[\infty,0]}_d$, and $\pi^{(\infty,0)}_d$ are epimorphisms.

\begin{rmk}[Infinity jet functors on bimodules]\label{rem:infinityjetfunctoronbimodules}
We can also define the $\infty$-jet functors on the category of functors $\AMod_B\to \AMod_B$.
Since the forgetful functor $\AMod_B\rightarrow\AMod$ creates limits, the functors $J^{\infty}_d\colon \AMod_B\to \AMod_B$ and $J^{\infty}_d\colon \AMod\to \AMod$ are compatible with the forgetful functor.

In this case, the morphism components of the jet projections are in $\AMod_B$ and morphism components of the jet prolongations are in $\Mod_B$.
These maps are also compatible with the forgetful functor, as all the $j^n_d$ are compatible with it.
\end{rmk}

\subsection{Natural maps between infinity jets}
We will now use Proposition \ref{prop:semiholcpxfact} to prove the existence of certain natural maps between the various types of $\infty$-jets.
\begin{prop}
The natural morphisms $\iota_{J^n_d}$ and $\iota_{J^{[n]}_d}$ induce natural transformations
\begin{align}
\iota_{J^{\infty}_d}\colon J^\infty_d\longrightarrow J^{(\infty)}_d,
&\hfill&
\iota_{J^{[\infty]}_d}\colon J^{[\infty]}_d\longhookrightarrow J^{(\infty)}_d.
\end{align}

Moreover, we have a unique factorization
\begin{equation}\label{diag:inftyjethsnfact}
\begin{tikzcd}
J^\infty_d\ar[rr,bend left,"\iota_{J^{\infty}_d}"]\ar[r,"h^{\infty}_d"']&J^{[\infty]}_d\ar[r,hookrightarrow,"\iota_{J^{[\infty]}_d}"']&J^{(\infty)}_d
\end{tikzcd}
\end{equation}

The morphism $\iota_{J^{[\infty]}_d}$ is a mono, and if $\Omega^1_d$ is flat in $\ModA$, then $\iota_{J^\infty_d}$ and $h^{\infty}_d$ are also monos.
\end{prop}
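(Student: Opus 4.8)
The statement has three parts: constructing the two natural transformations $\iota_{J^\infty_d}$ and $\iota_{J^{[\infty]}_d}$, producing the factorization \eqref{diag:inftyjethsnfact}, and establishing the monomorphism claims. The plan is to obtain all three by passing to the limit over the towers, using that the families $(\iota_{J^{(n)}_d})$, $(\iota_{J^{[n]}_d})$, $(h^n)$ are compatible with the jet projections (which is exactly the content of Proposition \ref{prop:hjc}, Proposition \ref{prop:complexinclusionseminon}, and Proposition \ref{prop:semiholcpxfact}).

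First I would construct $\iota_{J^{[\infty]}_d}$. For each $n$, Proposition \ref{prop:complexinclusionseminon} gives $\iota_{J^{[n]}_d}\colon J^{[n]}_d\hookrightarrow J^{(n)}_d$, and the commutativity of the right square there says $\pi^{(n,n-1)}_d\circ \iota_{J^{[n]}_d}=\iota_{J^{[n-1]}_d}\circ\pi^{[n,n-1]}_d$; composing these, the family $(\iota_{J^{[n]}_d})_n$ is a morphism of towers, hence (the forgetful functor to $\AMod\to\AMod$ preserving the relevant limits, or just by the universal property of the limit applied object-wise) induces $\iota_{J^{[\infty]}_d}\colon J^{[\infty]}_d\to J^{(\infty)}_d$ characterized by $\pi^{(\infty,n)}_d\circ\iota_{J^{[\infty]}_d}=\iota_{J^{[n]}_d}\circ\pi^{[\infty,n]}_d$. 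The same argument with Proposition \ref{prop:hjc} gives $\iota_{J^\infty_d}\colon J^\infty_d\to J^{(\infty)}_d$, and with the $h^n$ of Proposition \ref{prop:semiholcpxfact} (which satisfy $\pi^{[n,n-1]}_d\circ h^n=h^{n-1}\circ\pi^{n,n-1}_d$) gives $h^\infty\colon J^\infty_d\to J^{[\infty]}_d$. The factorization $\iota_{J^\infty_d}=\iota_{J^{[\infty]}_d}\circ h^\infty$ then follows by uniqueness in the universal property of $J^{(\infty)}_d$: both sides composed with $\pi^{(\infty,n)}_d$ yield $\iota_{J^{[n]}_d}\circ h^n\circ\pi^{\infty,n}_d=\iota_{J^{(n)}_d}\circ\pi^{\infty,n}_d$ (using $\iota_{J^{(n)}_d}=\iota_{J^{[n]}_d}\circ h^n$ from Proposition \ref{prop:semiholcpxfact}), so they agree; uniqueness of such a factorization is the same argument since any map $g$ with $\iota_{J^{[\infty]}_d}\circ g=\iota_{J^\infty_d}$ must, after composing with each $\pi^{[\infty,n]}_d$ and using that $\iota_{J^{[n]}_d}$ is mono, equal $h^\infty$.

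For the monomorphism claims I would argue that a limit of monomorphisms in an abelian category is a monomorphism — limits are right adjoints, hence left exact, so they preserve kernels and in particular monos — applied to the tower morphisms whose components are monos. Since $\iota_{J^{[n]}_d}$ is a mono for every $n$ (Proposition \ref{prop:complexinclusionseminon}), $\iota_{J^{[\infty]}_d}$ is a mono. When $\Omega^1_d$ is flat in $\ModA$, Remark \ref{rmk:iotaholinj} makes every $\iota_{J^n_d}$ a mono and Proposition \ref{prop:semiholcpxfact} makes every $h^n$ a mono, so the same limit argument gives that $\iota_{J^\infty_d}$ and $h^\infty$ are monos. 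The one point needing care — and the only real obstacle — is the precise abelian-category setting: the limit here is over the (cofiltered) tower $\N^{\op}$, which \emph{does} exist in the functor category $\AMod\to\AMod$ because it is computed object-wise as a limit in $\AMod$, and object-wise a countable inverse limit of monomorphisms is again a monomorphism. I would spell this out by noting that for each $E$ in $\AMod$ the sequence $J^{[n]}_d E$ with the maps $\iota_{J^{[n]}_d,E}$ is a tower of monos, so $\lim_n J^{[n]}_d E\hookrightarrow\lim_n J^{(n)}_d E$ is a mono (a subobject of a product of submodules), and naturality is automatic; alternatively, cite that the limit functor on $\AMod^{\N^{\op}}$ is left exact. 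Either phrasing closes the proof.
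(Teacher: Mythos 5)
Your proposal is correct and follows essentially the same route as the paper: both pass to the limit of the tower morphisms (whose compatibility with the projections is supplied by Propositions \ref{prop:hjc}, \ref{prop:complexinclusionseminon}, and \ref{prop:semiholcpxfact}), obtain the factorization from $\iota_{J^n_d}=\iota_{J^{[n]}_d}\circ h^n$ via the universal property of $J^{(\infty)}_d$, and deduce the monomorphism claims from left exactness of the limit functor. The only cosmetic issues are the stray notation $\iota_{J^{(n)}_d}$ (you mean $\iota_{J^{n}_d}$) and that, for $h^\infty$ being mono, the paper's shortcut — $\iota_{J^{[\infty]}_d}\circ h^\infty=\iota_{J^{\infty}_d}$ is mono, hence so is $h^\infty$ — is marginally quicker than taking the limit of the monos $h^n$, though both work.
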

\begin{proof}
The maps $\iota_{J^n_d}$ and $\iota_{J^{[n]}_d}$ commute with the projection maps by Proposition \ref{prop:hjc} and Proposition \ref{prop:complexinclusionseminon}, respectively.
Hence, they form morphisms between the jet towers.
The universal property of the limit implies that there are induced maps between the respective limits, which yields
\begin{align}
\iota_{J^{\infty}_d}\colonequals \lim_{n\in \N} \iota_{J^n_d},
&\hfill&
\iota_{J^{[\infty]}_d}\colonequals \lim_{n\in \N} \iota_{J^{[n]}_d}.
\end{align}

Proposition \ref{prop:semiholcpxfact} provides the factorization \eqref{diag:inftyjethsnfact}, where $h^{\infty}_d=\lim_{n\in \N} h^n_d$.

Since the $\infty$-jets are defined as limits, the functor preserves limits.
In particular, it preserves kernels, and hence injective maps.
It follows that $\iota_{J^{[\infty]}_d}$ is always a monomorphism.
Analogously, $\iota_{J^{\infty}_d}$ is a mono if $\Omega^1_d$ is flat in $\ModA$ by Remark \ref{rmk:iotaholinj}.
In this case, $h^{\infty}_d$ is also a mono, as composing it with $\iota_{J^{[\infty]}_d}$ gives a mono.
\end{proof}

\begin{rmk}[Classical $\infty$-jet functor]
Although the $\infty$-jet bundles are not finite rank vector bundles, they are bundles that are limits of finite rank vector bundles.
If we apply Lemma \ref{lemma:gammacontinuous}, together with Theorem \ref{theo:classicalnjet}, we obtain
\begin{equation}
\Gamma(M,J^\infty E)
\cong \Gamma(M,\lim_{n\in \N} J^n E)
\cong \lim_{n\in \N} \Gamma(M,J^n E)
\cong \lim_{n\in \N} J^n_d(\Gamma(M,E))
\cong J^\infty_d(\Gamma(M,E)).
\end{equation}
Naturality in $E$ provides the equivalence between our notion of $\infty$-jet and the classical one, when realized as a limit in the category of $\smooth{M}$-modules.
The same holds analogously for $J^{(\infty)}_d$ and $J^{[\infty]}_d$.
\end{rmk}

\subsection{Exactness}
The following technical lemma provides sufficient conditions for the jet tower to satisfy the Mittag-Leffler condition (cf.\ \cite[Definition 5.3.5, p.~82]{Weibel}).
\begin{lemma}\label{lemma:M-L}
Let $A$ be a $\bk$-algebra.
\begin{enumerate}
\item Let $\Omega^1_d$ be a first order differential calculus for $A$, then the corresponding nonholonomic jet tower satisfies the Mittag-Leffler condition;
\item Let $\Omega^1_d$ be a first order differential calculus for $A$ which is flat in $\ModA$.
Then the corresponding semiholonomic jet tower satisfies the Mittag-Leffler condition;
\item Let $\Omega^\bullet_d$ be an exterior algebra over $A$, with $\Omega^1_d$, $\Omega^2_d$, and $\Omega^3_d$ flat in $\ModA$.
If $\dh^n_d=0$ for all $n\in\N$, then the holonomic jet tower satisfies the Mittag-Leffler condition.
\end{enumerate}
\end{lemma}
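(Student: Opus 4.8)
The plan is to recall what the Mittag-Leffler condition requires for an inverse system of abelian groups (equivalently, functors valued in abelian categories): for each $n$, the decreasing chain of images $\im\left(\pi^{n+k,n}_d\right)\subseteq J^n_d$ (or its semiholonomic/nonholonomic analogue), indexed by $k\ge 0$, should stabilise. In all three cases the jet towers come equipped, at every level, with a short exact sequence relating consecutive jets: for the nonholonomic tower this is the split sequence \eqref{es:nhjets}, for the semiholonomic tower it is \eqref{es:shJet} from Theorem \ref{theo:Jshes} (which needs $\Omega^1_d$ flat in $\ModA$), and for the holonomic tower it is \eqref{es:hjsesn} from Theorem \ref{theo:higherwolves} (which needs $\Omega^1_d$, $\Omega^2_d$, $\Omega^3_d$ flat in $\ModA$ and $\dh^{n-1}=0$ for all $n$, exactly our hypotheses). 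The strategy is uniform: in each case the projection $\pi^{n+1,n}_d$ is an epimorphism, hence so is every composite $\pi^{n+k,n}_d = \pi^{n+1,n}_d\circ\cdots\circ\pi^{n+k,n+k-1}_d$. Therefore $\im\left(\pi^{n+k,n}_d\right)=J^n_d$ for all $k$, the chain is constant, and the Mittag-Leffler condition holds trivially.

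Concretely, I would organise the proof in three short parts. First, for (i): the sequence \eqref{es:nhjets} shows $\pi^{1,0}_{d,J^{(n-1)}_d}=\pi^{(n,n-1)}_d$ is surjective (it is split, with splitting $j^1_{d,J^{(n-1)}_d}$, cf. \eqref{eq:jsecpi}), so every transition map in the nonholonomic jet tower is an epimorphism; a composite of epimorphisms is an epimorphism, so for each $n$ the images $\im\left(\pi^{(n+k,n)}_d\right)$ are all equal to $J^{(n)}_d$, giving Mittag-Leffler. No flatness is needed here. Second, for (ii): under the hypothesis that $\Omega^1_d$ is flat in $\ModA$, Theorem \ref{theo:Jshes} furnishes the short exact sequence $0\to T^n_d\to J^{[n]}_d\xrightarrow{\pi^{[n,n-1]}_d} J^{[n-1]}_d\to 0$, so each $\pi^{[n,n-1]}_d$ is surjective; arguing as before, the semiholonomic jet tower satisfies Mittag-Leffler. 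Third, for (iii): under the hypotheses that $\Omega^1_d$, $\Omega^2_d$, $\Omega^3_d$ are flat in $\ModA$ and $\dh^{n-1}=0$ for all $n$, Theorem \ref{theo:higherwolves} (applied inductively, exactly as in the corollary following it: if all the relevant $\dh$ vanish then all holonomic jet sequences \eqref{es:hjsesn} are short exact) shows each $\pi^{n,n-1}_d$ is surjective, and again the holonomic jet tower satisfies Mittag-Leffler.

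A point that deserves a line of care, rather than being the real difficulty, is the bookkeeping in case (iii): Theorem \ref{theo:higherwolves} is stated conditionally — the $n$-jet sequence \eqref{es:hjsesn} is exact provided the $(n-1)$-jet sequence is exact and $\dh^{n-1}=0$. So I would first run the obvious induction on $n$ to deduce that, under the stated hypotheses ($\dh^{m}=0$ for all $m$, and the three flatness conditions), \eqref{es:hjsesn} holds for every $n\ge 1$, with the base cases $n=0,1,2$ already established in the excerpt; only then do I conclude surjectivity of every $\pi^{n,n-1}_d$. Honestly, there is no serious obstacle in this lemma: once one observes that Mittag-Leffler for a tower of epimorphisms is automatic, the entire content is reduced to citing the three short exact sequences already proved, and the only thing to watch is invoking them under precisely the hypotheses listed. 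If I wanted to be thorough I would also remark that these arguments take place in the functor categories $\AMod\to\AMod$ (or pointwise in $\AMod$), where epimorphisms and images behave as in any abelian category, so the notion of Mittag-Leffler condition applies verbatim.
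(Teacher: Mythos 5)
Your proof is correct and follows exactly the paper's own argument: in each case the cited short exact sequence (the split nonholonomic sequence, Theorem \ref{theo:Jshes}, and Theorem \ref{theo:higherwolves} respectively) makes every transition map of the tower surjective, which renders the Mittag-Leffler condition automatic. The extra remark about running the induction to remove the conditional hypothesis in Theorem \ref{theo:higherwolves} is a point the paper leaves implicit, but it is the same proof.
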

\begin{proof}
The proof follows by the definition of nonholonomic jet projections, Theorem \ref{theo:Jshes}, and Theorem \ref{theo:higherwolves}, respectively.
The conditions specified are in fact sufficient conditions for the jet projections to be surjective, which implies the Mittag-Leffler condition.
\end{proof}
We can now prove the following result concerning exactness of the $\infty$-jet functors.
\begin{prop}\label{prop:Jinfexact}
Let $A$ be a $\bk$-algebra.
\begin{enumerate}
\item\label{propJinfexact:1} Let $\Omega^{\bullet}_d$ an exterior algebra over $A$.
If $\Omega^1_d$ and $\Omega^2_d$ are flat in $\ModA$, then $J^{\infty}_d$ is left exact.
Moreover, if $\Omega^3_d$ is flat in $\ModA$, and $\dh^{n-1}_d=0$, and $S^n_d$ is exact for all $n\ge 3$, then $J^{\infty}_d$ is exact;
\item\label{propJinfexact:2} Let $\Omega^1_d$ be a first order differential calculus for $A$ which is flat in $\ModA$, then $J^{[\infty]}_d$ is exact;
\item\label{propJinfexact:3} Let $\Omega^1_d$ be a first order differential calculus for $A$ which is flat in $\ModA$, then $J^{(\infty)}_d$ is exact.
\end{enumerate}
\end{prop}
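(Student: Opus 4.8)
The plan is to reduce each statement to a combination of two facts: first, that each jet functor at finite level is exact (or left exact) under the stated hypotheses, and second, that the relevant jet tower satisfies the Mittag-Leffler condition, so that the derived limit $\varprojlim^1$ vanishes and the limit functor $\varprojlim_{n\in\N}$ is exact on the tower. The $\infty$-jet functors are defined as $\varprojlim_{n\in\N}$ of the respective jet towers in the abelian category of functors $\AMod\to\AMod$, and limits (in particular countable inverse limits) are always left exact; the subtlety is only right exactness, which is governed by $\varprojlim^1$ of the kernel subtowers.

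For \eqref{propJinfexact:1}: left exactness of $J^\infty_d$ is immediate, since it is a limit of the left exact functors $J^n_d$ (left exactness of each $J^n_d$ under $\Omega^1_d,\Omega^2_d$ flat in $\ModA$ is Proposition \ref{prop:hJexact}), and limits commute with limits, hence with kernels. For the exactness claim, first I would invoke Proposition \ref{prop:hJexact} again to get that each $J^n_d$ is exact under the additional hypotheses ($\Omega^3_d$ flat in $\ModA$, $\dh^{n-1}=0$ and $S^n_d$ exact for $3\le n$); in particular all $\dh^n=0$, so Lemma \ref{lemma:M-L}(3) applies and the holonomic jet tower is Mittag-Leffler. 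Then given a short exact sequence $0\to M\to N\to Q\to 0$ in $\AMod$, applying the exact functors $J^n_d$ yields a short exact sequence of inverse systems indexed by $\N$; the standard $\varprojlim$–$\varprojlim^1$ six-term exact sequence (cf.\ \cite[§3.5]{Weibel}) together with the Mittag-Leffler vanishing of $\varprojlim^1$ of the leftmost system gives exactness of $0\to J^\infty_d M\to J^\infty_d N\to J^\infty_d Q\to 0$.

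For \eqref{propJinfexact:2} and \eqref{propJinfexact:3}: these are the same argument but cleaner, because no intermediate cohomological obstruction appears. Under $\Omega^1_d$ flat in $\ModA$, each $J^{[n]}_d$ is exact by Proposition \ref{prop:shJregular} and each $J^{(n)}_d$ is exact by Remark \ref{rmk:nhJexact}; moreover the semiholonomic jet tower is Mittag-Leffler by Lemma \ref{lemma:M-L}(2) and the nonholonomic one by Lemma \ref{lemma:M-L}(1) (which needs no flatness at all, though we have it). So applying $J^{[n]}_d$ (resp.\ $J^{(n)}_d$) to a short exact sequence produces a short exact sequence of $\N$-indexed inverse systems with Mittag-Leffler kernel subtower, and the $\varprojlim$–$\varprojlim^1$ sequence delivers the exactness of $J^{[\infty]}_d$ (resp.\ $J^{(\infty)}_d$). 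I expect the only genuine point requiring care is the bookkeeping for \eqref{propJinfexact:1}: one must check that the hypotheses ``$\dh^{n-1}=0$ and $S^n_d$ exact for all $3\le n$'' are exactly what is needed to invoke both Proposition \ref{prop:hJexact} (finite-level exactness) and Lemma \ref{lemma:M-L}(3) (Mittag-Leffler), and that these two inputs are logically independent and both genuinely used; everything else is the formal $\varprojlim^1$ machinery, which is routine.
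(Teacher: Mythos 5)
Your proposal is correct and follows essentially the same route as the paper: left exactness from the limit of left exact functors, and exactness from finite-level exactness (Proposition \ref{prop:hJexact}, Proposition \ref{prop:shJregular}, Remark \ref{rmk:nhJexact}) combined with the Mittag-Leffler condition of Lemma \ref{lemma:M-L} to kill $\varprojlim^1$ of the leftmost tower. The bookkeeping point you flag is handled exactly as you anticipate.
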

\begin{proof}\
\begin{enumerate}
\item Consider a short exact sequence
\begin{equation}\label{es:test}
\begin{tikzcd}
0\ar[r]&E\ar[r,hookrightarrow]&F\ar[r,twoheadrightarrow]&Q\ar[r]&0.
\end{tikzcd}
\end{equation}
Under the first set of conditions, each functor $J^n_d$ is left exact by Proposition \ref{prop:hJexact}.
Applying $J^n_d$ to \eqref{es:test}, we have the left exact sequences
\begin{equation}\label{es:Jntest}
\begin{tikzcd}
0\ar[r]&J^n_dE\ar[r,hookrightarrow]&J^n_dF\ar[r]&J^n_dQ.
\end{tikzcd}
\end{equation}
By naturality of the projection maps, it follows that we have a left exact sequence of jet towers.
Being a right adjoint, the limit functor is left exact.
Computing the limit of \eqref{es:Jntest}, we obtain
\begin{equation}
\begin{tikzcd}
0\ar[r]&J^\infty_dE\ar[r,hookrightarrow]&J^\infty_dF\ar[r]&J^\infty_dQ.
\end{tikzcd}
\end{equation}
This proves that $J^\infty_d$ is left exact.

Under the second set of assumptions, we can apply Theorem \ref{theo:higherwolves}, obtaining by induction that all jet sequences are exact.
The functors $J^n_d$ are exact by Proposition \ref{prop:hJexact}.
Applying them to \eqref{es:test}, we obtain a short exact sequence
\begin{equation}\label{es:Jnteste}
\begin{tikzcd}
0\ar[r]&J^n_dE\ar[r,hookrightarrow]&J^n_dF\ar[r,twoheadrightarrow]&J^n_dQ\ar[r]&0.
\end{tikzcd}
\end{equation}
Similarly to the previous case, this is a short exact sequence of jet towers.
By Lemma \ref{lemma:M-L}, the set of assumptions also allow us to say that the jet tower at $E$ in particular satisfies the Mittag-Leffler condition.
This condition implies that the first grade of the right derived functor of the limit functor vanishes on the jet tower (cf.\ \cite[Proposition 3.5.7, p.~83]{Weibel}).
This implies that the following is a short exact sequence
\begin{equation}
\begin{tikzcd}
0\ar[r]&J^\infty_dE\ar[r,hookrightarrow]&J^\infty_dF\ar[r,twoheadrightarrow]&J^\infty_dQ\ar[r]&0.
\end{tikzcd}
\end{equation}
It follows that $J^\infty_d$ is exact.

\item The proof is based on the same principle on which the last part of \eqref{propJinfexact:1} is proved.
It differs in that in place of Proposition \ref{prop:hJexact}, we use Proposition \ref{prop:shJregular}.
\item The proof is as for \eqref{propJinfexact:2}, but using Remark \ref{rmk:nhJexact}.\qedhere
\end{enumerate}
\end{proof}

\section{The category of differential operators}\label{s:differentialoperators}
\subsection{Linear differential operators of order at most $n$}
\begin{defi}
\label{def:differentialoperators}	
Let $E,F\in \AMod$.
A $\bk$-linear map $\Delta\colon E \rightarrow F$ is called a \textit{(holonomic) linear differential operator} of order at most $n$ with respect to the exterior algebra $\Omega^\bullet_d$, if it factors through the holonomic prolongation operator $j^{n}_d$, i.e.\ there exists an $A$-module map $\widetilde \Delta \in \AHom(J^n_d E,F)$ such that the following diagram commutes:
	\begin{equation}
	\begin{tikzcd}\label{diag:universaldifferentialoperator}
		J_d^nE \arrow[dr, "\widetilde\Delta"] & \\
		E \arrow[r,"\Delta"] \arrow[u,"j^n_{d,E}"] & F 
	\end{tikzcd}
	\end{equation}
	If $n$ is minimal, we say that $\Delta$ is a \emph{(holonomic) linear differential operator of order $n$} with respect to the exterior algebra $\Omega^\bullet_d$.
\end{defi}
Replacing the functor $J^n_d $ with either $J^{[n]}_d$ or $J^{(n)}_d$, and the natural transformation $j^{n}_d$ with either $j^{[n]}_d$ or $j^{(n)}_d$, in Definition \ref{def:differentialoperators} gives the definition of semiholonomic or nonholonomic differential operators, respectively.

We will denote by $\Diff_d(E,F)$, $\SDiff_d(E,F)$, and $\NDiff_d(E,F)$ the filtered $\bk$-modules of holonomic, semiholonomic, and nonholonomic differential operators of finite order from $E$ to $F$, and when $E = F = A$, we will shorten this to $\DO_d$, $\SDO_d$, or $\NDO_d$.
In particular, we will denote by $\Diff^n_d(E,F)\subseteq \Diff_d(E,F)$, ($\SDiff^n_d(E,F)\subseteq \SDiff_d(E,F)$, and $\NDiff^n_d(E,F)\subseteq \NDiff_d(E,F)$) the submodules of holonomic, (semiholonomic, and nonholonomic respectively) differential operators of order at most $n$.

\begin{prop}\label{prop:operatorsalsohigherorder}
Let $n\le m$, then a differential operator of order at most $n$ is also a differential operator of order at most $m$.
Similarly for semiholonomic and nonholonomic differential operators.
\end{prop}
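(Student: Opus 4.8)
The statement is purely formal: it says that the filtration on differential operators by order is increasing, i.e.\ $\Diff^n(E,F)\subseteq\Diff^m(E,F)$ for $n\le m$ (and likewise in the semiholonomic and nonholonomic cases). The plan is to exhibit, for each pair $n\le m$, a natural factorization of the prolongation $j^m_d$ through $j^n_d$ via an $A$-linear map, and then to compose lifts. Concretely, I would use the jet projections $\pi^{m,n}_d\colon J^m_d\to J^n_d$ constructed in Definition \ref{def:piiotan}, together with the compatibility identity $\pi^{m,n}_d\circ j^m_d=j^n_d$ established in Remark \ref{rmk:holprolpi}.

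\textbf{Key steps.} First I would reduce to the case $m=n+1$, since the general case follows by iterating (the projections compose: $\pi^{m,n}_d=\pi^{n+1,n}_d\circ\cdots\circ\pi^{m,m-1}_d$). Then, given a differential operator $\Delta\colon E\to F$ of order at most $n$ with $A$-linear lift $\widetilde\Delta\colon J^n_d E\to F$ satisfying $\widetilde\Delta\circ j^n_{d,E}=\Delta$, I would define the candidate lift of order $m$ by
\begin{equation}
\widetilde\Delta'\colonequals \widetilde\Delta\circ\pi^{m,n}_{d,E}\colon J^m_d E\longrightarrow F.
\end{equation}
This is $A$-linear as a composite of $A$-linear maps ($\pi^{m,n}_d$ is a natural transformation of functors $\AMod\to\AMod$, hence $A$-linear in each component, and $\widetilde\Delta$ is $A$-linear by hypothesis). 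Finally I would verify the factorization condition using Remark \ref{rmk:holprolpi}:
\begin{equation}
\widetilde\Delta'\circ j^m_{d,E}
=\widetilde\Delta\circ\pi^{m,n}_{d,E}\circ j^m_{d,E}
=\widetilde\Delta\circ j^n_{d,E}
=\Delta,
\end{equation}
so $\Delta$ is a differential operator of order at most $m$. For the semiholonomic case one repeats the argument verbatim with $J^{[\bullet]}_d$, the projections $\pi^{[n,m]}_d$, and the identity $\pi^{[n,m]}_d\circ j^{[n]}_d=j^{[m]}_d$ from the remark following Proposition \ref{prop:nhjtosh}; for the nonholonomic case one uses $J^{(\bullet)}_d$, the projections $\pi^{(n,m)}_d$ of \eqref{eq:nhpink}, and identity \eqref{eq:jpi}.

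\textbf{Main obstacle.} There is essentially no obstacle here: every ingredient (the projections, their compatibility with prolongations, their $A$-linearity) has already been established in the excerpt, so the proof is a one-line composition in each of the three cases. The only point requiring a moment's care is bookkeeping — making sure that in the semiholonomic and nonholonomic settings one invokes the correct compatibility identity and that the projections are indeed $A$-linear (not merely $\bk$-linear like the prolongations), which is the case since they are natural transformations of endofunctors of $\AMod$ rather than of functors $\AMod\to\Mod$. I would present the holonomic case in full and remark that the other two are identical after the obvious substitutions.
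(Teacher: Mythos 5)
Your proof is correct and is essentially identical to the paper's: both compose the $A$-linear lift $\widetilde\Delta$ with the jet projection $\pi^{m,n}_{d,E}$ and invoke the compatibility $\pi^{m,n}_d\circ j^m_d=j^n_d$, with the same substitutions for the semiholonomic and nonholonomic cases. The reduction to $m=n+1$ is unnecessary (the projections $\pi^{m,n}_d$ are already defined as composites), but this is only a stylistic difference.
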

\begin{proof}
Given $\Delta\in\Diff^n_d(E,F)$, let $\widetilde{\Delta}\colon J^n_d E\to F$ be the corresponding $A$-linear lift of $\Delta$.
Then the map $\widetilde{\Delta}\circ \pi^{m,n}_{d,E}\colon J^n_d E\to F$ is also left $A$-linear, and
\begin{equation}
\widetilde{\Delta}\circ \pi^{m,n}_{d,E}\circ j^m_{d,E}
=\widetilde{\Delta}\circ j^n_{d,E}
=\Delta.
\end{equation}
It follows that $\Delta\in \Diff^m_d(E,F)$.

The proof is analogous for the semiholonomic and nonholonomic case, using the semiholonomic and nonholonomic projections respectively.
\end{proof}
This gives the following filtration
\begin{equation}\label{eq:DOfiltration}
\AHom(E,F)=\Diff^0_d(E,F)\subseteq \Diff^1_d(E,F)\subseteq \cdots \subseteq \Diff^n_d(E,F)\subseteq \cdots\subseteq \Diff_d(E,F)\subseteq \Hom(E,F).
\end{equation}
Analogously for $\SDiff_d$ and $\NDiff_d$.

\begin{prop}\label{prop:differentialoperatorcomposition}
	Let $\Delta_1\colon E \to F$ and $\Delta_2\colon F \to G$ be differential operators of order at most $n$ and $m$, respectively.
	Then the composition $\Delta_2 \circ \Delta_1\colon E \to G$ is a differential operator of order at most $n+m$.
	
	An analogous property holds for nonholonomic differential operators.
Moreover, if $\Omega^1_d$ is flat in $\ModA$, then the same is true for semiholonomic differential operators.
\end{prop}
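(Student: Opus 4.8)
The plan is to compose the two lifts using the jet functors, mimicking the classical argument for composition of differential operators. Suppose $\Delta_1\colon E\to F$ factors as $\widetilde{\Delta}_1\circ j^n_{d,E}$ with $\widetilde{\Delta}_1\in\AHom(J^n_dE,F)$, and $\Delta_2\colon F\to G$ factors as $\widetilde{\Delta}_2\circ j^m_{d,F}$ with $\widetilde{\Delta}_2\in\AHom(J^m_dF,G)$. Applying the functor $J^m_d$ to the $A$-linear map $\widetilde{\Delta}_1$ gives an $A$-linear map $J^m_d(\widetilde{\Delta}_1)\colon J^m_d\circ J^n_d E\to J^m_d F$, which we can postcompose with $\widetilde{\Delta}_2$. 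The key is to produce a natural $A$-linear map $J^{n+m}_d\to J^m_d\circ J^n_d$ whose composition with $j^{n+m}_d$ equals $j^m_{d,J^n_d}\circ j^n_d$. This is exactly the content of Lemma \ref{lemma:smM}: the natural transformation $l^{m,n}_d\colon J^{m+n}_d\to J^m_d\circ J^n_d$ satisfies $l^{m,n}_d\circ j^{m+n}_d=j^m_{d,J^n_d}\circ j^n_d$ (equation \eqref{eq:commjlmn}). So I would define the candidate lift of $\Delta_2\circ\Delta_1$ to be
\begin{equation}
\widetilde{\Delta_2\circ\Delta_1}\colonequals \widetilde{\Delta}_2\circ J^m_d(\widetilde{\Delta}_1)\circ l^{m,n}_{d,E}\colon J^{m+n}_d E\longrightarrow G,
\end{equation}
which is $A$-linear as a composition of $A$-linear maps (note $l^{m,n}_{d,E}$ is $A$-linear, $J^m_d(\widetilde{\Delta}_1)$ is $A$-linear by functoriality, and $\widetilde{\Delta}_2$ is $A$-linear by hypothesis).

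The verification then runs as follows. Evaluating on $j^{m+n}_{d,E}$ and using \eqref{eq:commjlmn} gives
\begin{equation}
\widetilde{\Delta_2\circ\Delta_1}\circ j^{m+n}_{d,E}
=\widetilde{\Delta}_2\circ J^m_d(\widetilde{\Delta}_1)\circ j^m_{d,J^n_d E}\circ j^n_{d,E}.
\end{equation}
Now I would invoke naturality of the holonomic prolongation $j^m_d\colon\id_{\AMod}\to J^m_d$ (as a natural transformation in the category of functors $\AMod\to\Mod$, cf.\ Lemma \ref{lemma:holprol}) applied to the $A$-linear morphism $\widetilde{\Delta}_1\colon J^n_d E\to F$: this yields $J^m_d(\widetilde{\Delta}_1)\circ j^m_{d,J^n_d E}=j^m_{d,F}\circ\widetilde{\Delta}_1$. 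Substituting, the right-hand side becomes
\begin{equation}
\widetilde{\Delta}_2\circ j^m_{d,F}\circ\widetilde{\Delta}_1\circ j^n_{d,E}
=\Delta_2\circ\Delta_1,
\end{equation}
using the two factorization hypotheses. Hence $\Delta_2\circ\Delta_1$ is a holonomic differential operator of order at most $n+m$. For the nonholonomic case the same argument works verbatim once one notes that $J^{(n+m)}_d=J^{(m)}_d\circ J^{(n)}_d$ by definition of the nonholonomic jet functor as the iterated composite, so the role of $l^{m,n}_d$ is played by the identity and \eqref{eq:nhjpdec} supplies the analogue of \eqref{eq:commjlmn}; naturality of $j^{(m)}_d$ closes the argument. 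For the semiholonomic case, the flatness of $\Omega^1_d$ in $\ModA$ is what is needed: under this hypothesis Proposition \ref{prop:semiholcpxfact} and its companion give an $A$-linear natural inclusion realizing $J^{[n+m]}_d$ inside $J^{[m]}_d\circ J^{[n-1]}_d$ (more precisely, one uses the inductive description \eqref{theo:sjchar:2} of Theorem \ref{theo:sjchar} to build a natural $A$-linear transformation $l^{[m,n]}_d\colon J^{[m+n]}_d\to J^{[m]}_d\circ J^{[n]}_d$ with $l^{[m,n]}_d\circ j^{[m+n]}_d=j^{[m]}_{d,J^{[n]}_d}\circ j^{[n]}_d$), after which the same composition-and-naturality computation applies.

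The main obstacle is the semiholonomic case: unlike the holonomic situation, where Lemma \ref{lemma:smM} is already stated and does the bookkeeping, and unlike the nonholonomic case, where the composite structure is definitional, for semiholonomic jets one must actually construct the natural $A$-linear map $J^{[m+n]}_d\to J^{[m]}_d\circ J^{[n]}_d$ intertwining the prolongations, and it is precisely here that right-flatness of $\Omega^1_d$ enters — it guarantees (via Corollary \ref{cor:Jex}) that $J^1_d$ is exact, so that the iterated pullback descriptions of $J^{[\bullet]}_d$ from Theorem \ref{theo:sjchar} behave well under applying $J^{[m]}_d$, and that the relevant inclusions remain monomorphisms. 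I would set this construction up by induction on $m$, paralleling the proof of Lemma \ref{lemma:smM}: for $m=1$ take $l^{[1,n]}_d=l^{[n+1]}_d$, and for the inductive step use the kernel characterization of $J^{[m+n]}_d$ together with naturality of $\widetilde{\DH}^I$ to obtain the dashed factorization, then verify the prolongation compatibility via a cube diagram exactly as in \eqref{diag:cube}. Once that lemma-style statement is in hand, the composition result is immediate from the three-line naturality computation above.
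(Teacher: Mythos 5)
Your proof is correct and, for the holonomic and nonholonomic cases, is essentially identical to the paper's: the same candidate lift $\widetilde{\Delta}_2\circ J^m_d(\widetilde{\Delta}_1)\circ l^{m,n}_{d,E}$, verified via \eqref{eq:commjlmn} and the naturality of the prolongation. The only divergence is in the semiholonomic case: you propose to build $l^{[m,n]}_d$ by an induction on $m$ mimicking the proof of Lemma \ref{lemma:smM}, whereas the paper obtains it more directly from the characterization in Theorem \ref{theo:sjchar} --- under right-flatness of $\Omega^1_d$ one has $J^{[m+n]}_d\subseteq J^{(m)}_d\circ J^{[n]}_d$ and then $J^{[m+n]}_d\subseteq\bigcap_{k=1}^{m-1}\ker\bigl(J^{(n-k-1)}_d\widetilde{\DH}^I_{J^{(k-1)}_d\circ J^{[n]}_d}\bigr)=J^{[m]}_d\circ J^{[n]}_d$, so $l^{[m,n]}_d$ is simply an inclusion of subfunctors of $J^{(m+n)}_d$, and its compatibility with the prolongations is immediate from Proposition \ref{prop:nhjtosh} rather than requiring a cube-diagram argument. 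Your inductive route would also work (the paper's own remark after the proof notes that with the inductive definition of $J^{[n]}_d$ one gets a semiholonomic analogue of Lemma \ref{lemma:smM} with an analogous proof), but it is more laborious than necessary given that Theorem \ref{theo:sjchar} is already available.
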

\begin{proof}
	First, let $\widetilde{\Delta}_1\colon J^n_d E \to F$ be a lift of $\Delta_1$.
	Since this is $A$-linear, it is a morphism in $\AMod$, so we can apply $J^m_d$ to obtain
	\begin{equation}
		J^m_d(\widetilde{\Delta}_1)\colon J^m_d (J^n_d E) \to J^m_d F
	\end{equation}
	Next, let $\widetilde{\Delta}_2\colon J^m_dF \to G$ be a lift of $\Delta_2$.
	Finally, consider the left $A$-linear map $l^{n,m}_{d,E}\colon J^{n+m}_dE \to J^m_d(J^n_dE)$ from Lemma \ref{lemma:smM}, which is such that $l^{m,n}_d\circ j^{m+n}_d=j^{m}_{d,J^n_d}\circ j^n_d$.
	We combine these maps in the following commutative diagram.
\begin{equation}\label{operatorcomposition}
\begin{tikzcd}
J^{m+n}_d E\ar[r,"l^{m,n}_{d,E}"]&J^m_d(J^n_dE) \arrow[rd,near start, "J^m_d(\widetilde{\Delta}_1)"] \\
&J^n_dE \arrow[rd,near start, "\widetilde\Delta_1"] \arrow[u,"j^m_{d,J^n_d E}"]	&	J^m_dF \arrow[rd, near start, "\widetilde\Delta_2"]	&[20pt]\\
&E\ar[uul,"j^{m+n}_{d,E}"] \arrow[r, "\Delta_1"] \arrow[u,"j^n_{d,E}"']	&	F \arrow[r, "\Delta_2"] \arrow[u,"j^m_{d,F}"]	&	G
\end{tikzcd}
\end{equation}
The composition $\widetilde{\Delta}_2 \circ J^m_d(\widetilde{\Delta}_1)\circ l^{m,n}_{d,E}$ is left $A$-linear, and
\begin{equation}
\widetilde{\Delta}_2 \circ J^m_d(\widetilde{\Delta}_1)\circ l^{m,n}_{d,E}\circ j^{m+n}_{d,E}
=\Delta_2\circ \Delta_1.
\end{equation}
It follows that $\Delta_2\circ \Delta_1$ is a differential operator of order at most $m+n$, with lift given by $ \widetilde{\Delta}_2 \circ J^m_d(\widetilde{\Delta}_1)\circ l^{m,n}_{d,E}$.

The claims for semiholonomic and nonholonomic differential operators are proven similarly by finding appropriate
\begin{align}
l^{[m,n]}_d\colon J^{[n+m]}_d \longrightarrow J^{[m]}_d\circ J^{[n]}_d,
&\hfill&
l^{(m,n)}_d\colon J^{(n+m)}_d \longrightarrow J^{(m)}_d\circ J^{(n)}_d,
\end{align}
that are compatible with the corresponding prolongations.
In the nonholonomic case $J^{(m)}_d\circ J^{(n)}_d=J^{(m+n)}_d$, so we can take $l^{(m,n)}_d$ to be the identity.

For the semiholonomic case, the flatness hypothesis allows us to apply Theorem \ref{theo:sjchar}, which implies
\begin{equation}
J^{[m+n]}_d\subseteq J^{(m)}_d\circ J^{[n]}_d.
\end{equation}
We then have
\begin{equation}
J^{[m+n]}_d
\subseteq \bigcap_{k=1}^{m-1}\ker\left(J^{(n-k-1)}_d \widetilde{\DH}^I_{d,J^{(k-1)}_d\circ J^{[n]}_d} \right)
=J^{[m]}_d\circ J^{[n]}_d.
\end{equation}
We thus choose $l^{[m,n]}_d$ to be this inclusion map, which satisfies the desired property as a consequence of Proposition \ref{prop:nhjtosh}.
\end{proof}
\begin{rmk}
If we defined instead the semiholonomic jet functors by induction, in the style of point \eqref{theo:sjchar:2} of Theorem \ref{theo:sjchar}, we would get a result similar to Lemma \ref{lemma:smM} also for semiholonomic jets with an analogous proof.
By adopting that definition, this theorem would then hold without any assumption on $\Omega^1_d$.
\end{rmk}
The composition in $\Mod$ thus restricts to a map
\begin{equation}
\circ \colon \Diff^m_d(F,G)\times \Diff^n_d(E,F)\longrightarrow \Diff^{m+n}_d(E,G).
\end{equation}
\begin{rmk}
The composition of differential operators of order $n$ and $m$ respectively is not necessarily of order $n+m$, but only at most $n+m$.
A counterexample is given by Proposition \ref{prop:excoddiffop}, that shows how $R_\nabla=d_\nabla\circ d_\nabla$ is a differential operator of order zero, albeit in general $d_\nabla$ is a differential operator of order one.
\end{rmk}
\begin{cor}\label{cor:diffcat}
	There is a category $\Diff_d$ with the same objects as $\AMod$ and with maps between $E$ and $F$ in $\AMod$ given by $\Diff_d(E,F)$.
	The same holds for $\NDiff_d$.
	If $\Omega^1_d$ is flat in $\ModA$, the same holds for $\SDiff_d$.
\end{cor}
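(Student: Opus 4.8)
The plan is to verify the axioms of a category directly, leveraging the results already established about composition of differential operators. First I would define the data: the objects are exactly the objects of $\AMod$, and for objects $E,F$ the hom-set is $\Diff(E,F)$, the filtered right $A$-module of holonomic differential operators of finite order. Composition is the restriction of the ordinary composition of $\bk$-linear maps; by Proposition \ref{prop:differentialoperatorcomposition}, if $\Delta_1\in\Diff^n(E,F)$ and $\Delta_2\in\Diff^m(F,G)$ then $\Delta_2\circ\Delta_1\in\Diff^{m+n}(E,G)\subseteq\Diff(E,G)$, so the composition is well-defined on these hom-sets. The identity morphism on $E$ is $\id_E$, which lies in $\Diff^0(E,E)=\AHom(E,E)$ by the convention recorded just before Proposition \ref{prop:zeroorder} (a differential operator of order $0$ is precisely an $A$-linear map, with $\widetilde\Delta=\Delta$).

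Next I would check the axioms. Associativity of composition is inherited from associativity of composition in $\Mod$, since $\Diff(E,F)\subseteq\Hom(E,F)$ and composition is just the ambient one; there is nothing to prove beyond observing that all the relevant composites stay inside the $\Diff$ hom-sets, which again follows from Proposition \ref{prop:differentialoperatorcomposition}. The identity laws $\Delta\circ\id_E=\Delta=\id_F\circ\Delta$ likewise hold in $\Mod$ and are inherited, once one notes $\id_E,\id_F\in\Diff^0$. This establishes that $\Diff$ is a category.

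The argument for $\Diffn$ is identical, using the nonholonomic statement of Proposition \ref{prop:differentialoperatorcomposition} (with $l^{(m,n)}_d=\id$ since $J^{(m)}_d\circ J^{(n)}_d=J^{(m+n)}_d$) and the fact that $\Diffn^0=\AHom$. For $\Diffs$, the only additional input needed is that the composition of a semiholonomic differential operator of order at most $m$ with one of order at most $n$ is again semiholonomic of order at most $m+n$; this is the last clause of Proposition \ref{prop:differentialoperatorcomposition}, which requires precisely the hypothesis that $\Omega^1_d$ is flat in $\ModA$. Under that hypothesis the same verification of associativity and units goes through verbatim. I do not expect any genuine obstacle here: the corollary is a bookkeeping consequence of Proposition \ref{prop:differentialoperatorcomposition} together with the identification of order-zero operators with $A$-linear maps, and the only subtlety is remembering to invoke the flatness hypothesis in the semiholonomic case so that composition stays within $\Diffs$.
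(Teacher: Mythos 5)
Your proposal is correct and follows essentially the same route as the paper, which states this corollary as an immediate consequence of Proposition \ref{prop:differentialoperatorcomposition}: composition stays within the filtered hom-sets, identities live in $\Diff^0=\AHom$, and associativity and unit laws are inherited from $\Mod$. You correctly isolate the flatness hypothesis as being needed only to make semiholonomic composition well-defined.
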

\begin{defi}
We call the category $\Diff_d$ described by Corollary \ref{cor:diffcat}, the \emph{category of (holonomic) finite order linear differential operators}.
Analogously for $\SDiff_d$ and $\NDiff_d$ with the adjectives semiholonomic and nonholonomic, respectively.
\end{defi}

\begin{cor}\label{cor:algebrasofDOs}
	The modules $\Diff_d(E,E)$ and $\DO_d$ form filtered algebras with multiplication given by the composition of the category.

	The spaces $\Diff_d(E,F)$ are filtered $(\Diff_d(F,F),\Diff_d(E,E))$-bimodules with actions given by the composition.
Moreover, each $\Diff^n_d(E,F)$ and $\Diff_d(E,F)$ have an induced structure of $(\AHom(F,F),\AHom(E,E))$-bimodule.
	
	The analogous statements hold for nonhomolonomic differential operators and, if $\Omega^1_d$ is flat in $\ModA$, for semiholonomic differential operators. 
\end{cor}

Another consequence of Proposition \ref{prop:differentialoperatorcomposition} is the following.
\begin{cor}\label{cor:Aopzeroorder}
	The algebra $A^{\op}$ embeds into each of $\DO_d$, $\SDO_d$, and $\NDO_d$ as zero order differential operators via the right action.
\end{cor}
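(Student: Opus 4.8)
The plan is to exhibit an explicit injective algebra homomorphism $A^{\op}\hookrightarrow \DO$ (and likewise into $\DOs$ and $\DOn$) realized by right multiplications. Recall from Proposition \ref{prop:zeroorder} that for any $f\in A$, the map $R_f\colon A\to A$, $a\mapsto af$, is a differential operator of order zero, i.e.\ it is a morphism in $\AMod$ (left $A$-linear, since $(ga)f=g(af)$). Hence $R_f$ lies in $\Diff^0(A,A)=\AHom(A,A)\subseteq \DO$, and the same element, viewed under the inclusions $\DO\hookrightarrow\DOs\hookrightarrow\DOn$ coming from $J^n_d\to J^{[n]}_d\to J^{(n)}_d$ in low orders (where all three jet functors agree at $n=0$), is a zero order differential operator in each of the three algebras.

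The key step is to check that $f\mapsto R_f$ is an algebra homomorphism from $A^{\op}$. For $f,g\in A$ one computes $(R_f\circ R_g)(a)=R_f(ag)=(ag)f=a(gf)=R_{gf}(a)$, so $R_f\circ R_g=R_{gf}=R_{f\cdot_{\op} g}$, where $\cdot_{\op}$ denotes the multiplication in $A^{\op}$. Moreover $R_1=\id_A$, which is the unit of $\DO$ under composition (the identity is the order zero operator lifting via $\widetilde{\id_A}=\id_A$ through $j^0_d=\id$). Thus $f\mapsto R_f$ is a unital algebra map $A^{\op}\to \DO$, and composing with the algebra inclusions gives unital algebra maps $A^{\op}\to \DOs$ and $A^{\op}\to \DOn$.

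Finally I would verify injectivity: if $R_f=R_g$ as maps $A\to A$, then evaluating at $1\in A$ gives $f=1\cdot f=R_f(1)=R_g(1)=g$. Since the inclusions of $\DO$ into $\DOs$ and $\DOn$ are injective (they come from the monomorphisms of jet functors evaluated on $A$, or simply from the fact that a differential operator is in particular a $\bk$-linear map $A\to A$ and the lift does not change the underlying map), the composite maps $A^{\op}\to\DOs$ and $A^{\op}\to\DOn$ are injective as well. This establishes the embedding in all three cases.

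\textbf{Main obstacle.} There is essentially no hard step here; the only point requiring a moment's care is the bookkeeping of conventions, namely confirming that it is $A^{\op}$ and not $A$ that embeds — this is forced by the order-reversal $R_f\circ R_g=R_{gf}$ — and that the ambient algebra structure on $\DO$, $\DOs$, $\DOn$ is the one given by composition of operators (as in the corollary immediately preceding the statement), so that the unit is $\id_A$ and the product is $\circ$. With these conventions pinned down the proof is a one-line computation, so in the paper this would be stated as a short remark-style proof rather than a worked argument.
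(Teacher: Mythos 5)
Your proposal is correct and matches the paper's (one-line) proof, which simply cites Proposition \ref{prop:zeroorder}; your expansion of the computation $R_f\circ R_g=R_{gf}$, the unit, and injectivity via evaluation at $1$ is exactly what that citation leaves implicit. One small bookkeeping slip: the inclusions of the paper run $\DOn\subseteq\DOs\subseteq\DO$ (Proposition \ref{prop:inclusiondiffopreg}), not the reverse as you wrote, though this is immaterial here since the zero-order operators in all three algebras are literally the same set $\AHom(A,A)$.
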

\begin{proof}
	Follows from Proposition \ref{prop:zeroorder}.
\end{proof}
\begin{prop}\label{prop:inclusiondiffopreg}
	For all $n$, the maps $h^n_d$ (cf.\ Proposition \ref{prop:semiholcpxfact}), and $\iota_{J^{[n]}_d}$ (cf.\ Definition \ref{def:shj}) induce the following natural inclusions
\begin{equation}
\AHom(E,F)\subseteq \NDiff^n_d(E,F)\subseteq \SDiff^n_d(E,F)\subseteq \Diff^n_d(E,F)\subseteq \Hom(E,F).
\end{equation}
\end{prop}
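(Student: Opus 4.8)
The plan is to unwind the definitions of the three classes of differential operators and produce, for each inclusion, the explicit natural transformation of jet functors that converts a lift with respect to the smaller jet functor into a lift with respect to the larger one. Recall that a $\bk$-linear map $\Delta\colon E\to F$ is a nonholonomic (resp.\ semiholonomic, holonomic) differential operator of order at most $n$ precisely when it factors as $\widetilde\Delta\circ j^{(n)}_{d,E}$ (resp.\ $\widetilde\Delta\circ j^{[n]}_{d,E}$, $\widetilde\Delta\circ j^{n}_{d,E}$) for some $A$-linear $\widetilde\Delta$ on the corresponding jet module. The first inclusion $\AHom(E,F)\subseteq\Diffn^n(E,F)$ is the observation that for $\phi\in\AHom(E,F)$ we may take $\widetilde\phi\colonequals\phi\circ\pi^{(n,0)}_{d,E}$, which is $A$-linear and satisfies $\widetilde\phi\circ j^{(n)}_{d,E}=\phi\circ\pi^{(n,0)}_{d,E}\circ j^{(n)}_{d,E}=\phi$ by \eqref{eq:jpi}; similarly the last inclusion $\Diff^n(E,F)\subseteq\Hom(E,F)$ is immediate from the definition, since every such operator is in particular $\bk$-linear.

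For the middle inclusion $\Diffn^n(E,F)\subseteq\Diffs^n(E,F)$, suppose $\Delta=\widetilde\Delta\circ j^{(n)}_{d,E}$ with $\widetilde\Delta\colon J^{(n)}_d E\to F$ in $\AMod$. Using Proposition \ref{prop:nhjtosh}, which gives $j^{(n)}_d=\iota_{J^{[n]}_d}\circ j^{[n]}_d$, define $\widehat\Delta\colonequals\widetilde\Delta\circ(\iota_{J^{[n]}_d})_E\colon J^{[n]}_d E\to F$. This is a composite of $A$-linear maps, hence $A$-linear, and $\widehat\Delta\circ j^{[n]}_{d,E}=\widetilde\Delta\circ(\iota_{J^{[n]}_d})_E\circ j^{[n]}_{d,E}=\widetilde\Delta\circ j^{(n)}_{d,E}=\Delta$, so $\Delta\in\Diffs^n(E,F)$. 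For the inclusion $\Diffs^n(E,F)\subseteq\Diff^n(E,F)$, the relevant fact is Lemma \ref{lemma:holjfact}, which states $j^{(n)}_d=\iota_{J^n_d}\circ j^n_d$, together with Proposition \ref{prop:semiholcpxfact} which factors $\iota_{J^n_d}=\iota_{J^{[n]}_d}\circ h^n$; combining these yields $j^{(n)}_d=\iota_{J^{[n]}_d}\circ h^n\circ j^n_d$, and comparing with $j^{(n)}_d=\iota_{J^{[n]}_d}\circ j^{[n]}_d$ and using that $\iota_{J^{[n]}_d}$ is a monomorphism (established in Proposition \ref{prop:complexinclusionseminon}) gives $j^{[n]}_d=h^n\circ j^n_d$. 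Then for $\Delta=\widetilde\Delta\circ j^{[n]}_{d,E}$ with $\widetilde\Delta\colon J^{[n]}_d E\to F$ in $\AMod$, the map $\widetilde\Delta\circ h^n_E\colon J^n_d E\to F$ is $A$-linear and satisfies $(\widetilde\Delta\circ h^n_E)\circ j^n_{d,E}=\widetilde\Delta\circ j^{[n]}_{d,E}=\Delta$, so $\Delta\in\Diff^n(E,F)$.

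All four inclusions are manifestly natural in $E$ and $F$, since they are induced by precomposition with the natural transformations $\pi^{(n,0)}_d$, $\iota_{J^{[n]}_d}$, and $h^n$, which are fixed once and for all and do not depend on the particular modules or operators. I do not anticipate a genuine obstacle here: the only subtlety is to make sure we quote the correct identities relating the three prolongations ($j^{(n)}_d=\iota_{J^{[n]}_d}\circ j^{[n]}_d$ from Proposition \ref{prop:nhjtosh}, $j^{(n)}_d=\iota_{J^n_d}\circ j^n_d$ from Lemma \ref{lemma:holjfact}, and the factorization $\iota_{J^n_d}=\iota_{J^{[n]}_d}\circ h^n$ from Proposition \ref{prop:semiholcpxfact}) and to invoke monomorphicity of $\iota_{J^{[n]}_d}$ at the one place where we need to cancel it; the hypothesis that $\Omega^1_d$ is flat in $\ModA$ is in fact not needed for this particular proposition, since monomorphicity of $\iota_{J^{[n]}_d}$ holds unconditionally.
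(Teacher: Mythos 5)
Your proof is correct and follows essentially the same route as the paper: each inclusion is obtained by precomposing the lift with the appropriate natural transformation ($\iota_{J^{[n]}_d}$ for nonholonomic-to-semiholonomic, $h^n$ for semiholonomic-to-holonomic), using $j^{(n)}_d=\iota_{J^{[n]}_d}\circ j^{[n]}_d$ and the factorization $\iota_{J^n_d}=\iota_{J^{[n]}_d}\circ h^n$. You are slightly more explicit than the paper in deriving $j^{[n]}_d=h^n\circ j^n_d$ by cancelling the monomorphism $\iota_{J^{[n]}_d}$, and in building the lift for $\AHom(E,F)\subseteq\Diffn^n(E,F)$ directly via $\pi^{(n,0)}_{d,E}$ where the paper instead cites $\AHom(E,F)=\Diff^0(E,F)=\Diffn^0(E,F)$ together with the filtration; these are cosmetic differences, and your observation that no flatness hypothesis is needed is also accurate.
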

\begin{proof}
All holonomic linear differential operators are $\bk$-linear maps, so $\Diff_d(E,F)\subseteq \Hom(E,F)$.

Given a nonholonomic differential operator $\Delta\in\NDiff_d(E, F)$, we have an $A$-linear lift $\widetilde{\Delta}\colon J^{(n)}_d E\to F$.
Precomposing it with $\iota_{J^{[n]}_d}\colon J^{[n]}_d\to J^{(n)}_d$, gives us a semiholonomic lift of $\Delta$.

The inclusion $\SDiff^n_d(E,F)\subseteq \Diff^n_d(E,F)$ can be obtained similarly using $h^n_d\colon J^n_d\to J^{[n]}_d$.

The final inclusion follows from the fact that $\AHom(E,F)=\Diff^0_d(E,F) = \NDiff^0_d(E,F)$.
\end{proof}
\begin{rmk}
When $E=F=A$, the corresponding inclusion maps $\NDO_d\rightarrow \SDO_d\rightarrow \DO_d$ are algebra maps.
\end{rmk}

\begin{rmk}\label{rem:DOcatareenrichedinMod}
When they are defined (cf.\ Proposition \ref{prop:differentialoperatorcomposition}), the categories $\Diff_d$, $\SDiff_d$, and $\NDiff_d$ are enriched in $\Mod$.
The inclusions in Proposition \ref{prop:inclusiondiffopreg} are inclusions of $\bk$-modules.
\end{rmk}
Proposition \ref{prop:inclusiondiffopreg} provides the following faithful inclusions: $\NDiff_d\to \Diff_d$, $\Diff_d\to \Mod$, and their composition.
When $\Omega^1_d$ is flat in $\ModA$, and hence $\SDiff_d$ forms a category, the first inclusion factors through this category, providing two more faithful inclusions: $\SDiff_d\to \Diff_d$ and $\NDiff_d\to \SDiff_d$.
Moreover, the functor $\AMod\to \Mod$ factors through $\Diff_d\to \Mod$.

\begin{prop}
	Suppose there exists a morphism $\Omega^1_{d} \twoheadrightarrow \Omega^1_{d'}$ in $\CalcA$.
	Let $\Diff^1_d(E,F)$ and $\Diff^1_{d'}(E,F)$ be the differential operators of order at most $1$ from $E$ to $F$ with respect to $\Omega^1_d$ and $\Omega^1_{d'}$, respectively.
	Then $\Diff^1_{d'}(E,F) \subseteq \Diff^1_d(E,F)$.
\end{prop}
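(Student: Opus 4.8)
The plan is to reduce the statement to the order-$1$ characterization already established in Proposition \ref{1storderwrtd} together with the inclusion of kernels provided by Proposition \ref{prop:Nd-inclusion}. Recall that a morphism $\varphi\colon\Omega^1_d\twoheadrightarrow\Omega^1_{d'}$ in $\CalcA$ yields, by Proposition \ref{prop:Nd-inclusion}, the inclusion $N_{d'}(E)\subseteq N_d(E)$ of submodules of $J^1_uE=A\otimes E$. By Proposition \ref{1storderwrtd}, a $\bk$-linear map $\Delta\colon E\to F$ is a differential operator of order at most $1$ with respect to $d$ if and only if $\sum_i n_i\Delta(e_i)=0$ for all $\sum_i n_i\otimes e_i\in N_d(E)$, and similarly for $d'$ with $N_{d'}(E)$ in place of $N_d(E)$.

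First I would take an arbitrary $\Delta\in\Diff^1_{d'}(E,F)$ and apply the $d'$-version of Proposition \ref{1storderwrtd}: this tells us exactly that the universal lift $\widetilde\Delta\colon J^1_uE\to F$, $a\otimes e\mapsto a\Delta(e)$ (cf.\ Proposition \ref{prop:1diffuni}), vanishes on $N_{d'}(E)$. Second, since $N_{d}(E)\supseteq N_{d'}(E)$, vanishing on the larger submodule is the stronger condition, so this is where one has to check the direction of the inclusion is the favorable one; because $\varphi$ goes from $d$ to $d'$, Proposition \ref{prop:Nd-inclusion} gives $N_{d'}(E)\subseteq N_{d}(E)$, which is the \emph{wrong} direction for a naive argument and is in fact the main subtlety. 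The resolution is to observe that what we actually need is the reverse: being a $d$-differential operator requires vanishing on $N_d(E)$, which is a priori a \emph{stronger} requirement than vanishing on $N_{d'}(E)$. So the correct route is to argue at the level of jet modules: the epimorphism $\varphi$ induces, via $\widehat p$ and the quotient presentations $J^1_dE=J^1_uE/N_d(E)$ and $J^1_{d'}E=J^1_uE/N_{d'}(E)$ of \eqref{eq:J1dE=J1dA/NdE}, a natural epimorphism $q\colon J^1_dE\twoheadrightarrow J^1_{d'}E$ with $q\circ j^1_{d,E}=j^1_{d',E}$. Then, given $\Delta\in\Diff^1_{d'}(E,F)$ with $A$-linear lift $\widetilde\Delta'\colon J^1_{d'}E\to F$ satisfying $\widetilde\Delta'\circ j^1_{d',E}=\Delta$, the composite $\widetilde\Delta'\circ q\colon J^1_dE\to F$ is $A$-linear and satisfies $\widetilde\Delta'\circ q\circ j^1_{d,E}=\widetilde\Delta'\circ j^1_{d',E}=\Delta$, exhibiting $\Delta$ as a $d$-differential operator of order at most $1$.

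The key steps, in order, are: (i) produce the natural surjection $q\colon J^1_dE\to J^1_{d'}E$ compatible with the prolongations, either directly from the diagram \eqref{eq:1jetd}/\eqref{diag:1jetfromuni} and the relation $N_{d'}(E)\supseteq N_d(E)$ — wait, here one must be careful: since $N_d(E)\subseteq N_{d'}(E)$ is what is needed for the quotient map $J^1_dE\to J^1_{d'}E$ to exist, and Proposition \ref{prop:Nd-inclusion} gives precisely $N_{d'}(E)\subseteq N_{d}(E)$, one instead gets a map $J^1_{d'}E\to J^1_dE$; (ii) therefore the composition one should form is $\widetilde\Delta'$ with this map the other way, or better, note that a $d'$-operator lifts through $J^1_{d'}E$ and $J^1_{d'}E$ is a quotient of $J^1_dE$ only if $N_d(E)\subseteq N_{d'}(E)$. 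The cleanest formulation avoiding all sign-of-inclusion confusion is to use Proposition \ref{1storderwrtd} twice together with Remark \ref{rmk:tensorNddiffop}: $\Delta\in\Diff^1_{d'}(E,F)$ means $\sum_i n_i\Delta(e_i)=0$ for all $\sum n_i\otimes e_i\in N_{d'}(E)$; since the epimorphism in $\CalcA$ forces $N_d\subseteq N_{d'}$ as submodules of $\Omega^1_u$ (this is the $E=A$ case of Proposition \ref{prop:Nd-inclusion}, read correctly: $\varphi\colon\Omega^1_d\to\Omega^1_{d'}$ gives $N_d=\ker p_d\subseteq\ker(p^{d'}_d\circ p_d\text{-type map})$... ), hence $N_d(E)\subseteq N_{d'}(E)$ by right exactness of $-\otimes_A E$, so the vanishing condition for $d'$ implies the one for $d$. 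The main obstacle is thus purely bookkeeping: pinning down unambiguously that a surjection $\Omega^1_d\to\Omega^1_{d'}$ yields $N_d\subseteq N_{d'}$ (more relations are killed going to the smaller calculus $\Omega^1_{d'}$, so $N_{d'}$ is larger), reconciling this with the statement of Proposition \ref{prop:Nd-inclusion}, and then transporting the inclusion along $-\otimes_A E$; once that is straight, the conclusion $\Diff^1_{d'}(E,F)\subseteq\Diff^1_d(E,F)$ is immediate from Proposition \ref{1storderwrtd}.
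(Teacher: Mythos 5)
Your final argument is correct and is essentially the paper's own proof: for $\Delta\in\Diff^1_{d'}(E,F)$ the universal lift vanishes on $N_{d'}(E)$ by Proposition \ref{1storderwrtd}, and since $N_d(E)\subseteq N_{d'}(E)$ it vanishes on $N_d(E)$ as well, so $\Delta\in\Diff^1_d(E,F)$. Your vacillation over the direction of the inclusion resolves exactly as you finally conclude: the inclusion as \emph{stated} in Proposition \ref{prop:Nd-inclusion} is reversed (a typo), since its own proof and the diagram \eqref{diag:butterfly} produce a monomorphism $N_d(E)\hookrightarrow N_{d'}(E)$ --- quotienting further to $\Omega^1_{d'}$ enlarges the kernel of the projection from $\Omega^1_u$, consistent with the extreme case $N_u=0$ --- and it is this corrected direction that the paper's proof of the present proposition actually invokes; your alternative packaging via the induced epimorphism $J^1_dE\twoheadrightarrow J^1_{d'}E$ composed with the lift $\widetilde\Delta'$ is an equally valid way to finish once that orientation is fixed.
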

\begin{proof}
	Let $\Delta \in \Diff^1_{d'}(E,F)$.
	Then $\widetilde \Delta |_{N_{d'}(E)} = 0$.
	Since $N_{d}(E) \subseteq N_{d'}(E)$ by Proposition \ref{prop:Nd-inclusion}, it follows that $\widetilde \Delta |_{N_{d}(E)} = 0$.
\end{proof}
\begin{cor}
	Let $S\subset A$.
	For each calculus in $\CalcSA$, and each object $E$ in $\AMod$, there is a canonical subalgebra of differential operators generated by $\Diff^1_{S}(E,E)$, the linear differential operators of order at most $1$ with respect to $\Omega^1_S$.
\end{cor}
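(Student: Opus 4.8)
The plan is to unwind the definitions and combine two facts already available. Recall that $\CalcSA$ is the full subcategory of $\CalcA$ of calculi for which left multiplication by elements of $S$ is a first order differential operator, and that by Proposition~\ref{prop:Sterminalcalc} the $S$-terminal calculus $d_S\colon A\to\Omega^1_S$ is the terminal object in $\CalcSA$. So for any calculus $\Omega^1_d$ in $\CalcSA$ there is a morphism $\Omega^1_d\twoheadrightarrow\Omega^1_S$ in $\CalcA$. First I would invoke the immediately preceding proposition: this morphism gives $\Diff^1_S(E,E)\subseteq\Diff^1_d(E,E)$, and in particular $\Diff^1_S(E,E)$ sits inside $\Hom(E,E)$, hence inside the $\bk$-module underlying $\Mod$.

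Next I would use Proposition~\ref{prop:differentialoperatorcomposition} together with Corollary~\ref{cor:diffcat}: differential operators of finite order compose (composition of operators of order at most $n$ and $m$ lands in order at most $n+m$), so for a fixed calculus the finite-order operators $E\to E$ form an algebra under composition, containing $\Diff^1_S(E,E)$ as the degree-$\le 1$ part. Therefore the subalgebra of $\Diff(E,E)$ generated by $\Diff^1_S(E,E)$ — namely the $\bk$-span of all finite composites of elements of $\Diff^1_S(E,E)$ — is well-defined: each such composite of $k$ factors lies in $\Diff^k_S(E,E)\subseteq\Diff^k_d(E,E)$ by iterating the order-additivity, and the span is closed under composition by bilinearity. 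Uniqueness (canonicity) of this subalgebra is automatic: the subalgebra generated by a subset of an algebra is the intersection of all subalgebras containing it, which is determined by the data of $S$, $E$, and the calculus $\Omega^1_S$ alone.

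In writing this up I would present it as: fix $E$ in $\AMod$ and a calculus in $\CalcSA$; note $\Diff^1_S(E,E)$ is a $\bk$-submodule of $\Hom(E,E)$; observe that $\Hom(E,E)$ is a $\bk$-algebra under composition; let $\mathcal A$ be the subalgebra generated by $\Diff^1_S(E,E)$; then remark that $\mathcal A$ consists of finite sums of composites of order-$\le 1$ operators, hence of finite-order differential operators with respect to $\Omega^1_S$ by Proposition~\ref{prop:differentialoperatorcomposition}, so $\mathcal A\subseteq\Diff_S(E,E)$, and a fortiori $\mathcal A\subseteq\Diff_d(E,E)$ by the preceding proposition. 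There is essentially no obstacle here — the corollary is a direct bookkeeping consequence of the two cited results; the only point requiring a sentence of care is that $\Diff^1_S(E,E)$ really is closed under $\bk$-linear combinations and that composition of differential operators is $\bk$-bilinear, both of which are immediate from the definitions in §\ref{s:differentialoperators}. If one wanted to be fully explicit one could also note that $\Diff^1_S(E,E)$ is nontrivial (it contains $\AHom(E,E)$ and the operators of Proposition~\ref{prop:zeroorder}, as well as the left-multiplication operators $L_f$ for $f\in S$ by Lemma~\ref{lemma:fterminalcalc}), so the generated subalgebra is genuinely interesting, but this is not needed for the statement.
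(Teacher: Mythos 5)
Your argument is correct and is exactly the intended one: the paper leaves this corollary without an explicit proof precisely because it follows, as you say, from the terminality of $\Omega^1_S$ in $\CalcSA$ (Proposition \ref{prop:Sterminalcalc}), the preceding proposition giving $\Diff^1_S(E,E)\subseteq\Diff^1_d(E,E)$, and closure of finite-order operators under composition (Proposition \ref{prop:differentialoperatorcomposition}). The only clause to trim is the claimed inclusion $\Diff^k_S(E,E)\subseteq\Diff^k_d(E,E)$ for $k>1$, which the paper does not establish (the preceding proposition covers only order at most $1$) and which you do not need, since your alternative phrasing — each composite of elements of $\Diff^1_S(E,E)\subseteq\Diff^1_d(E,E)$ is a finite-order operator with respect to $d$ by order-additivity applied to the calculus $d$ itself — already closes the argument.
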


\subsection{Toy example on the quaternions}\label{ss:quaternions}
Let $\Hq$ denote the quaternions, i.e.\ the unital associative $\mathbb{R}$-algebra generated by $\{1,i,j,k\}$ subject to the relations $i^2 = j^2 = k^2 = ijk = -1$.
The universal calculus is given by
\begin{equation}
	\Omega^1_u(\Hq) = {}_{\Hq}\!\spn{ i \otimes i + 1 \otimes 1,\, j \otimes j + 1 \otimes 1,\, k \otimes k + 1 \otimes 1 }.
\end{equation}
First consider the $i$-terminal calculus, as in Definition \ref{defi:Sterminal}.
We find that 
\begin{equation}
	N_i = {}_{\Hq}\!\spn{ i \otimes i + 1 \otimes 1,\, j \otimes j -k\otimes k }.
\end{equation}
Similarly, we can determine $N_j$ and $N_k$, from which we deduce
\begin{equation}
	N_{\{i,j\}} \colonequals N_i\cap N_j = {}_{\Hq}\!\spn{ 1 \otimes 1 + i \otimes i + j \otimes j - k \otimes k }.
\end{equation}
Let $\Omega^1_d = \Omega^1_{\{i,j\}} \colonequals \Omega^1_u(\Hq)/N_{\{i,j\}}$.
This first order differential calculus is free and generated by $\{di, dj\}$.
Note that it is not canonical, but depends on a choice of two elements, which is unique up to algebra automorphism of $\Hq$.
We then compute the structure equation for the first order differential calculus as follows
\begin{equation}
\begin{split}
	dk & \colonequals [1\otimes k -k \otimes 1] 
	 = [-k(k\otimes k + 1\otimes 1)] 
	 = [-k (2\otimes 1 + i\otimes i + j \otimes j)]
	 = -j di + i dj.
\end{split}
\end{equation}
We also compute the bimodule structure
\begin{align}
\begin{split}
	idi &= -(di)i,\\
	jdi &= -(di)j,\\
	kdi &= (di)k,
	\end{split}
	\hfill
	\begin{split}
	idj &= -(dj)i,\\
	jdj &= -(dj)j,\\
	kdj &= (dj)k.
	\end{split}
\end{align}
\subsubsection{Differential operators}
Now we consider the algebra $\DO_d$.
The zero order operators form a subalgebra isomorphic to $\Hq^{\op}$, generated by the right multiplication maps $1, R_i, R_j, R_k$, i.e.\ $R_i \colonequals x\mapsto xi$, etc.
We also have the first order operators, which include the partial derivatives,
\begin{align}
	\partial_i\colon \begin{bmatrix}
		1 \\ i \\ j \\ k\\
	\end{bmatrix} \longmapsto \begin{bmatrix}
		0 \\ 1 \\ 0 \\ -j
	\end{bmatrix},
&\hfill&	
	 \partial_j\colon \begin{bmatrix}
		1 \\ i \\ j \\ k\\
	\end{bmatrix} \longmapsto \begin{bmatrix}
		0 \\ 0 \\ 1 \\ i
	\end{bmatrix},
\end{align}
where the column vectors encode the action on the basis of $\Hq$.
Since $\Omega^1_d$ is $\{i,j\}$-terminal, the left multiplication maps $L_i$ and $L_j$ by $i$ and $j$, respectively, are also first order differential operators (cf.\ Proposition \ref{prop:Sterminalcalc}).
To compute all higher order differential operators, we extend $\Omega^1_d$ to the maximal exterior algebra.
This gives
\begin{align}
	S^2_d = {}_{\Hq}\!\spn{ di \otimes_{\Hq} dj - dj\otimes_{\Hq} di},
&\hfill&
	S^n_d = 0,\text{ for } n\geq 3.
\end{align}
Moreover, this exterior algebra has vanishing Spencer $\delta$-cohomology.
Hence, by Corollary \ref{cor:spencerdelta_jes}, the jets stabilize beginning at order $2$.
\begin{prop}
	The algebra $\DO_d$ is generated by $1, R_i, R_j, R_k$ and $\partial_i, \partial_j$.
	The relations are $\partial_i^2 = \partial_j^2 = \llbracket\partial_i ,\,\partial_j\rrbracket= 0$ together with the $\Hq^{\op}$-bimodule structure 
	\begin{align}
	\begin{split}
		&\llbracket \partial_i, R_j \rrbracket = \llbracket \partial_j, R_i \rrbracket = 0,\\
		&\llbracket \partial_i, R_i \rrbracket = \llbracket \partial_j, R_j \rrbracket = 1,
		\end{split}
		\hfill
		\begin{split}
		&[ \partial_j, R_k ] = R_i,\\
		&[ \partial_i, R_k ] = -R_j.
		\end{split}
	\end{align}
	Here $\llbracket\cdot,\cdot\rrbracket$ and $[\cdot,\cdot]$ are the anticommutator and commutator of operators, respectively.
	An $\mathbb{R}$-basis is given by $R_q$, $\partial_p \circ R_q$, and $\partial_i \circ \partial_j \circ R_q$, where $q \in \{1,i,j,k\}$, $p\in \{i,j\}$, and $R_1 = 1$.
\end{prop}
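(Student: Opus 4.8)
The plan is to compute $\DO$ directly, using that $\Hq$ is a division ring (so every $\Hq$-module is free and every short exact sequence of $\Hq$-modules splits) together with the stabilization of the jet functors forced by the vanishing of $S^{\ge 3}_d$ and of the Spencer cohomology.

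First I would show that the holonomic jets stabilize at order $2$ and bound $\dim_{\mathbb R}\DO$. Since $\Omega^1_d,\Omega^2_d,\Omega^3_d$ are free, hence flat in $\ModA$, since $S^n_d=0$ for $n\ge 3$, and since $H^{m,2}=0$ for all $m\ge 1$, Theorem \ref{theo:higherwolves} gives short exact sequences $0\to S^n_d\to J^n_d\to J^{n-1}_d\to 0$ for all $n\ge 1$, which for $n\ge 3$ force $\pi^{n,n-1}_d$ to be a natural isomorphism. Combining these isomorphisms with $\pi^{n,2}_d\circ j^n_d=j^2_d$ (Remark \ref{rmk:holprolpi}) one checks $\Diff^n(A,A)=\Diff^2(A,A)$ for every $n\ge 2$, so $\DO=\Diff^2(A,A)$. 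From $0\to\Omega^1_d\to J^1_dA\to A\to 0$ and $0\to S^2_d\to J^2_dA\to J^1_dA\to 0$, with $A$, $\Omega^1_d$ (left-rank $2$, basis $di,dj$) and $S^2_d$ (left-rank $1$) all free over $\Hq$, the sequences split and $J^2_dA$ is free of left-rank $4$. Hence $\DO=\Diff^2(A,A)$ is a quotient of $\AHom(J^2_dA,A)\cong(\Hq^{\op})^4$, so $\dim_{\mathbb R}\DO\le 16$.

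Next I would check that the six listed operators lie in $\DO$ with the claimed orders — the $R_q$ of order $0$ (Proposition \ref{prop:zeroorder}), $\partial_i,\partial_j$ of order $1$ (Proposition \ref{prop:1diffop}), and all $\partial_p\circ R_q$, $\partial_i\circ\partial_j\circ R_q$ of order $\le 2$ (Proposition \ref{prop:differentialoperatorcomposition}) — and that the stated relations hold, by evaluating both sides on the $\mathbb R$-basis $\{1,i,j,k\}$ of $\Hq$ using the partial-derivative tables and the bimodule structure equations $i\,di=-(di)i$, etc.; in particular $\partial_i^2=\partial_j^2=0$ and $\partial_j\partial_i=-\partial_i\partial_j$ are immediate from the tables. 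Then, letting $\mathcal A$ be the abstract associative $\mathbb R$-algebra on $1,R_i,R_j,R_k,\partial_i,\partial_j$ modulo the listed relations, I would run a normal-form argument: the commutator relations rewrite each $[\partial_p,R_q]$ as an $\mathbb R$-combination of $1,R_i,R_j,R_k$, allowing every monomial to be put with all $R$'s to the right of all $\partial$'s; the $R$-subalgebra is a quotient of $\Hq^{\op}$, collapsing the $R$-tail to a single $R_q$; and $\partial_i^2=\partial_j^2=0$, $\partial_j\partial_i=-\partial_i\partial_j$ reduce the $\partial$-head to one of $1,\partial_i,\partial_j,\partial_i\partial_j$. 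Thus $\mathcal A$ is spanned over $\mathbb R$ by the $16$ elements $R_q$, $\partial_p\,R_q$, $\partial_i\partial_j\,R_q$, so $\dim_{\mathbb R}\mathcal A\le 16$.

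Finally I would assemble the pieces. Because the relations hold in $\DO$, evaluation gives a surjective algebra homomorphism $\mathcal A\to\DO'$ onto the subalgebra $\DO'\subseteq\DO$ generated by the six operators. A direct computation — evaluating on $\{1,i,j,k\}$, or comparing symbols along the order filtration $\Diff^0\subset\Diff^1\subset\Diff^2$ — shows the $16$ candidate basis elements are $\mathbb R$-linearly independent, so $\dim_{\mathbb R}\DO'\ge 16$. Combined with $\dim_{\mathbb R}\DO\le 16$ and $\DO'\subseteq\DO$, all three spaces coincide and have dimension $16$; hence the six operators generate $\DO$ (so in particular $L_i,L_j$ are expressible in them), the $16$ monomials form an $\mathbb R$-basis, and $\mathcal A\to\DO$ is an isomorphism, i.e. the listed relations are a complete presentation. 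The main obstacle is the bookkeeping in the normal-form step: one must verify the rewriting rules are confluent (a small instance of Bergman's diamond lemma) so that the spanning set is exactly the claimed $16$ elements, and then the two dimension estimates must be matched precisely — the linear-independence check and the left-rank computation of $J^2_dA$ are the load-bearing calculations.
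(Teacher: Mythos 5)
The paper states this proposition without proof (it appears as part of the worked example on $\Hq$ and is left as a computation), so there is no argument in the text to compare yours against; judged on its own, your strategy is correct and complete. The key structural points all hold: $\Hq$ is a division ring, so $\Omega^1_d$, $S^2_d$, and hence $J^1_d\Hq$ and $J^2_d\Hq$ are free of left ranks $2$, $1$, $3$, $4$ respectively, the jet sequences split, and Theorem \ref{theo:higherwolves} with $S^{n}_d=0$ for $n\ge 3$ and vanishing Spencer cohomology forces $\pi^{n,n-1}_d$ to be an isomorphism for $n\ge 3$, whence $\DO=\Diff^2(\Hq,\Hq)$ is a quotient of $\AHom(J^2_d\Hq,\Hq)\cong(\Hq^{\op})^4$ and has $\mathbb{R}$-dimension at most $16$. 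Your relations are the twisted Leibniz rule $\partial_p(xq)=\partial_p(x)\sigma_p(q)+x\,\partial_p(q)$ in disguise (where $\sigma_p$ is the automorphism implementing $(dp)q=\sigma_p(q)\,dp$), and they do check out against the tables. One simplification you could make: you do not need confluence of the rewriting system or the diamond lemma. Termination of the rewriting alone shows the $16$ monomials span the abstract algebra $\mathcal{A}$, and the lower bound $\dim_{\mathbb{R}}\DO'\ge 16$ comes from realizing them as operators and checking linear independence (most cleanly via the order filtration: the symbols of $\partial_p\circ R_q$ span the $8$-dimensional $\AHom(\Omega^1_d,\Hq)$ and those of $\partial_i\circ\partial_j\circ R_q$ span the $4$-dimensional $\AHom(S^2_d,\Hq)$); squeezing $16\le\dim\DO'\le\dim\DO\le 16$ then yields both the basis claim and the completeness of the presentation without ever proving $\dim\mathcal{A}=16$ directly. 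That linear-independence verification and the left-rank computation of $J^2_d\Hq$ are, as you say, the load-bearing steps, and both go through.
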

\begin{rmk}
	$\Hq$ can be embedded into $\DO_d$ by its left action.
	$L_i$ and $L_j$ are first order, so $L_k = L_i \circ L_j$ is at most second order by Proposition \ref{prop:differentialoperatorcomposition}.
	Additionally, $N_d$ is not in the kernel of the universal lift $\widetilde{L_k}$, so $L_k$ is of second order.
\end{rmk}
Next we consider the notion of quantum metric from \cite[§1.3]{BeggsMajid}.
For general $A$, this is an element of $S^2_d$ satisfying some assumptions with respect to an $A$-bimodule inner product $(\cdot,\cdot)\colon \Omega^1_d \otimes_A \Omega^1_d \rightarrow A$ (cf.\ \cite[Definition 1.15, p.~14]{BeggsMajid}).
For our calculus $\Omega^1_{\{i,j\}}$, we have $S^2_d \cong \Hq$ as a bimodule, and so $\Omega^1_d$ admits a unique quantum metric of this kind, up to scale (and sign).
We choose as metric $di\otimes_{\Hq}dj-dj\otimes_{\Hq} di$.
Then the formula
\begin{equation}
	\Delta(ab) = \Delta(a)b +a\Delta(b) + 2(da,db)
\end{equation}
defines the quantum Laplacian $\Delta$ (cf.\ \cite[Definition 1.17, p.~15]{BeggsMajid}).
This is a second order operator, and in this case, we may compute
\begin{equation}
	\Delta = 2\partial_j \circ \partial_i = [\partial_j,\partial_i].
\end{equation}

\bibliography{../Bibliography}

\begin{thebibliography}{MRVDB88}

\bibitem[BD04]{beilinson2004chiral}
A.~Beilinson and V.~Drinfeld.
\newblock {\em Chiral algebras}, volume~51.
\newblock American Mathematical Soc., may 2004.

\bibitem[BM20]{BeggsMajid}
E.~J. Beggs and S.~Majid.
\newblock {\em Quantum Riemannian Geometry}.
\newblock Springer International Publishing, 2020.

\bibitem[Bou07]{bourbaki2007algebre}
N.~Bourbaki.
\newblock {\em Alg{\`e}bre: Chapitres 1 {\`a} 3}.
\newblock Springer Science \& Business Media, 2007.

\bibitem[CE16]{CE}
H.~Cartan and S.~Eilenberg.
\newblock {\em Homological Algebra (PMS-19), Volume 19}.
\newblock Princeton university press, 2016.

\bibitem[Con95]{Connes}
A.~Connes.
\newblock Noncommutative geometry and reality.
\newblock {\em Journal of Mathematical Physics}, 36(11):6194--6231, 1995.

\bibitem[CSS15]{Crainic}
M.~Crainic, M.~A. Salazar, and I.~Struchiner.
\newblock Multiplicative forms and {S}pencer operators.
\newblock {\em Mathematische Zeitschrift}, 279(3):939--979, 2015.

\bibitem[DF99]{deligne1999sign}
P.~Deligne and D.~Freed.
\newblock Sign manifesto.
\newblock {\em Quantum fields and strings: a course for mathematicians}, pages
  357--363, 1999.

\bibitem[Dri87]{Drinfeld1987}
V.~G. Drinfeld.
\newblock Quantum groups.
\newblock In {\em Proceedings of the International Congress of Mathematicians
  (Berkeley, 1986)}, volume~1, pages 798--820, 1987.

\bibitem[Gol67]{GoldschmidtII}
H.~Goldschmidt.
\newblock Integrability criteria for systems of nonlinear partial differential
  equations.
\newblock {\em Journal of Differential Geometry}, 1(3-4):269--307, 1967.

\bibitem[Gol68]{Goldschmidt}
H.~Goldschmidt.
\newblock Prolongations of linear partial differential equations. {I}. a
  conjecture of \'{E}lie {Cartan}.
\newblock In {\em Annales scientifiques de l'{\'E}cole Normale Sup{\'e}rieure},
  volume~1, pages 417--444, 1968.

\bibitem[HK03]{HeckenbergerKolb}
I.~Heckenberger and S.~Kolb.
\newblock Differential calculus on quantum homogeneous spaces.
\newblock {\em Letters in Mathematical Physics}, 63(3), 2003.

\bibitem[KMS13]{NaturalOperations}
I.~Kol{\'a}r, P.~W. Michor, and J.~Slov{\'a}k.
\newblock {\em Natural operations in differential geometry}.
\newblock Springer Science \& Business Media, 2013.

\bibitem[Koc10]{kock2010synthetic}
A.~Kock.
\newblock {\em Synthetic geometry of manifolds}.
\newblock Number 180. Cambridge University Press, 2010.

\bibitem[KS17]{Schreiber}
I.~Khavkine and U.~Schreiber.
\newblock Synthetic geometry of differential equations: I. jets and comonad
  structure.
\newblock arXiv:1701.06238, 2017.

\bibitem[Lib97]{libermann1997}
P.~Libermann.
\newblock Introduction to the theory of semi-holonomic jets.
\newblock {\em Archivum Mathematicum}, 33(2):173--189, 1997.

\bibitem[LT00]{laksov2000}
D.~Laksov and A.~Thorup.
\newblock The algebra of jets.
\newblock {\em Michigan Mathematical Journal}, 48(1):393--416, 2000.

\bibitem[Mar86]{Marvan1}
M.~Marvan.
\newblock A note on the category of partial differential equations.
\newblock In {\em Differential Geometry and Its Applications, Proc. Conf.
  Brno}, 1986.

\bibitem[ML98]{MacLane}
S.~Mac~Lane.
\newblock {\em Categories for the working mathematician}, volume~5 of {\em
  Graduate Texts in Mathematics}.
\newblock Springer-Verlag, New York, second edition, 1998.

\bibitem[MRVDB88]{Manin1988}
Y.~I. Manin, T.~Raedschelders, and M.~Van Den~Bergh.
\newblock {\em Quantum groups and non-commutative geometry}.
\newblock Springer, 1988.

\bibitem[MS23]{majid2023quantum}
S.~Majid and F.~Sim{\~a}o.
\newblock Quantum jet bundles.
\newblock {\em Letters in Mathematical Physics}, 113(6):120, 2023.

\bibitem[Nes20]{nestruev2020smooth}
J.~Nestruev.
\newblock {\em Smooth Manifolds and Observables}, volume 220.
\newblock Springer Nature, 2020.

\bibitem[Pal65]{Palais}
R.~S. Palais.
\newblock {\em Seminar on the {A}tiyah-{S}inger index theorem}.
\newblock Annals of Mathematics Studies, No. 57. Princeton University Press,
  Princeton, N.J., 1965.
\newblock With contributions by M. F. Atiyah, A. Borel, E. E. Floyd, R. T.
  Seeley, W. Shih and R. Solovay.

\bibitem[Qui64]{Quillen}
D.~Quillen.
\newblock {\em Formal properties of over-determined systems of linear partial
  differential equations}.
\newblock PhD thesis, Harvard University, 1964.

\bibitem[Sar94]{sardanashvily1994five}
G.~Sardanashvily.
\newblock Five lectures on the jet methods in field theory.
\newblock {\em arXiv preprint hep-th/9411089}, 1994.

\bibitem[Sar03]{Sardanashvily2003jets}
G.~Sardanashvily.
\newblock Jets of modules in noncommutative geometry.
\newblock {\em arXiv preprint math-ph/0310046}, 2003.

\bibitem[Spe69]{Spencer}
D.~C. Spencer.
\newblock Overdetermined systems of linear partial differential equations.
\newblock {\em Bulletin of the American Mathematical Society}, 75(2):179--239,
  1969.

\bibitem[Ver97]{verbovetsky1997}
A.~Verbovetsky.
\newblock Differential operators over quantum spaces.
\newblock {\em Acta Applicandae Mathematica}, 49(3):339--361, 1997.

\bibitem[Vin84]{Vinogradov}
A.~M. Vinogradov.
\newblock The category of differential equations and its significance for
  physics.
\newblock In {\em Proceedings of the conference on differential geometry and
  its applications, {P}art 2}, pages 289--301. Univ. J. E. Purkyn\v{e}, Brno,
  1984.

\bibitem[Wei94]{Weibel}
C.~A. Weibel.
\newblock {\em An Introduction to Homological Algebra}.
\newblock Cambridge University Press, apr 1994.

\bibitem[Wor89]{woronowicz1989}
S.~L. Woronowicz.
\newblock Differential calculus on compact matrix pseudogroups (quantum
  groups).
\newblock {\em Communications in Mathematical Physics}, 122(1):125--170, 1989.

\bibitem[ZL90]{Lychagin1990}
L.~V. Zi{\'l}bergle\u{\i}t and V.~V. Lychagin.
\newblock Spencer cohomology of differential equations.
\newblock In {\em Global Analysis-Studies and Applications IV}, pages 121--136.
  Springer, 1990.

\end{thebibliography}
\bibliographystyle{alpha}

\end{document}